\newtheoremstyle{theoremstyle}
  {10pt}      %  Space above
  {5pt}       %  Space below
  {\itshape}  %  Body font
  {}          %  Indent amount (empty = no indent, \parindent = para indent)
  {\bfseries} %  Thm head font
  {:}         %  Punctuation after thm head
  {.5em}      %  Space after thm head: " " = normal interword space;
\newtheoremstyle{examplestyle}
  {10pt}      %  Space above
  {5pt}       %  Space below
  {}          %  Body font
  {}          %  Indent amount (empty = no indent, \parindent = para indent)
  {\bfseries} %  Thm head font
  {:}         %  Punctuation after thm head
  {.5em}      %  Space after thm head: " " = normal interword space;
\theoremstyle{theoremstyle}
\newtheorem{theorem}{Theorem}[section]
\newtheorem*{theorem*}{Theorem}
\newtheorem{lemma}[theorem]{Lemma}
\newtheorem{proposition}[theorem]{Proposition}
\newtheorem*{proposition*}{Proposition}
\newtheorem{corollary}[theorem]{Corollary}
\newtheorem*{corollary*}{Corollary}
\newtheorem{definition}[theorem]{Definition}
\newtheorem{definition*}{Definition}
\newtheorem{remark}[theorem]{Remark}
\newtheorem{remark*}{Remark}
\newtheorem{conjecture}[theorem]{Conjecture}
\newtheorem{conjecture*}{Conjecture}
\newcommand{\bA}{{\mathbb A}}
\newcommand{\bF}{{\mathbb F}}
\newcommand{\bQ}{{\mathbb Q}}
\newcommand{\bT}{{\mathbb T}}
\newcommand{\bL}{{\mathbb L}}
\newcommand{\caR}{{\mathcal R}}
\newcommand{\caC}{{\mathcal C}}
\newcommand{\caD}{{\mathcal D}}
\newcommand{\caA}{{\mathcal A}}
\newcommand{\caE}{{\mathcal E}}
\newcommand{\caM}{{\mathcal M}}
\newcommand{\caN}{{\mathcal N}}
\newcommand{\caS}{{\mathcal S}}
\newcommand{\caO}{{\mathcal O}}
\newcommand{\caH}{{\mathcal H}}
\newcommand{\caT}{{\mathcal T}}
\newcommand{\caL}{{\mathcal L}}
\newcommand{\caW}{{\mathcal W}}
\newcommand{\caF}{{\mathcal F}}
\newcommand{\caG}{{\mathcal G}}
\newcommand{\caX}{{\mathfrak X}}
\renewcommand{\P}{{\mathbb P}}
\newcommand{\R}{{\mathbb R}}
\newcommand{\M}{{\mathcal M}}
\newcommand{\D}{{\mathsf D}}
\renewcommand{\L}{{\mathbb L}}
\newcommand{\p}{{\mathfrak p}}
\newcommand{\F}{{\mathbb F}}
\newcommand{\naturals}{{\mathbb N}}
\newcommand{\integers}{{\mathbb Z}}
\newcommand{\Ab}{\mathsf{Ab}}
\newcommand{\op}{\mathrm{op}}
\newcommand{\poauf}{[\![}
\newcommand{\pozu}{] \! ]}
\newcommand{\Ho}{\mathsf{Ho}}
\newcommand{\Hom}{\mathrm{Hom}}
\newcommand{\s}{\mathsf{s}}
\newcommand{\MGL}{\mathsf{MGL}}
\newcommand{\SH}{\mathsf{SH}}
\newcommand{\MU}{\mathsf{MU}}
\newcommand{\MZ}{\mathsf{M}\mathbb{Z}}
\newcommand{\MZm}{\mathsf{M}\mathbb{Z}/m}
\newcommand{\MFl}{\mathsf{M}\mathbb{F}_l}
\newcommand{\MR}{\mathsf{M}R}
\newcommand{\MA}{\mathsf{M}A}
\newcommand{\PMZ}{\mathsf{PM}\mathbb{Z}}
\newcommand{\MQ}{\mathsf{M}\mathbb{Q}}
\newcommand{\pr}{\mathrm{pr}}
\newcommand{\Sm}{\mathrm{Sm}}
\newcommand{\Sch}{\mathrm{Sch}}
\newcommand{\Sp}{\mathrm{Sp}}
\newcommand{\Spec}{\mathrm{Spec}}
\newcommand{\unit}{\mathbf{1}}
\newcommand{\Q}{\mathbb{Q}}
\newcommand{\Th}{\mathrm{Th}}
\newcommand{\colim}{\mathrm{colim}}
\newcommand{\Mod}{{\mathrm{Mod}}}
\newcommand{\im}{{\mathrm{im}}}
\renewcommand{\ker}{{\mathrm{ker}}}
\newcommand{\Sh}{\mathrm{Sh}}
\newcommand{\hocolim}{\mathrm{hocolim}}
\newcommand{\holim}{\mathrm{holim}}
\newcommand{\Gr}{\mathrm{Gr}}
\renewcommand{\H}{\mathcal{H}}
\newcommand{\et}{\mathit{\acute{e}t}}
\newcommand{\Zar}{\mathit{Zar}}
\newcommand{\Nis}{\mathit{Nis}}
\newcommand{\Cpx}{\mathrm{Cpx}}
\newcommand{\GmU}{\mathbb{G}_{m,U}}
\newcommand{\GmS}{\mathbb{G}_{m,S}}
\newcommand{\GmX}{\mathbb{G}_{m,X}}
\newcommand{\Gmk}{\mathbb{G}_{m,k}}
\newcommand{\Gmkappa}{\mathbb{G}_{m,\kappa}}
\newcommand{\AX}{{\mathbb{A}_X^1}}
\newcommand{\Sym}{\mathrm{Sym}}
\newcommand{\dlog}{\mathrm{dlog}}
\newcommand{\id}{\mathrm{id}}
\renewcommand{\u}{\mathrm{u}}
\renewcommand{\i}{\tilde{i}}
\renewcommand{\j}{\tilde{j}}
\newcommand{\Cat}{\mathsf{Cat}}
\newcommand{\sCat}{\mathsf{sCat}}
\newcommand{\Set}{\mathsf{Set}}
\newcommand{\sSet}{\mathsf{sSet}}
\newcommand{\fl}{\mathrm{fl}}
\newcommand{\tr}{\mathit{tr}}
\newcommand{\equi}{\mathit{equi}}
\newcommand{\map}{\mathrm{map}}
\newcommand{\DM}{\mathrm{DM}}
\newcommand{\DMT}{\mathrm{DMT}}
\newcommand{\Sect}{\mathrm{Sect}}
\newcommand{\etcart}{\mathrm{\acute{e}t-cart}}
\newcommand{\Pic}{\mathrm{Pic}}
\newcommand{\Tor}{\mathrm{Tor}}
\newcommand{\HB}{{\mathsf{H}_B}}
\newcommand{\PHB}{{\mathsf{PH}_B}}
\newcommand{\HBkappa}{{\mathsf{H}_{B,\kappa}}}
\newcommand{\HBkappap}{{\mathsf{H}_{B,\kappa(\p)}}}
\newcommand{\HBo}{{\mathsf{H}_{B,1}}}
\title{\bf A commutative $\P^1$-spectrum representing motivic cohomology over
Dedekind domains}
\author{Markus Spitzweck}
\date{\today}
\begin{document}

\maketitle

%({\bf Work in progress, please do not distribute})

\begin{abstract}
We construct a motivic Eilenberg-MacLane spectrum with a highly structured
multiplication over smooth schemes over Dedekind domains
which represents Le\-vine's motivic cohomology. The latter is defined via
Bloch's cycle complexes.
Our method is by gluing $p$-completed and rational parts along an
arithmetic square. Hereby the finite coefficient spectra are obtained
by truncated \'etale sheaves (relying on the now proven
Bloch-Kato conjecture)
and a variant of Geisser's version of syntomic cohomology, and the rational
spectra are
the ones which represent Beilinson motivic cohomology.

As an application the arithmetic motivic cohomology groups can be realized as
Ext-groups
in a triangulated category of Tate sheaves with integral coefficients.
These can be modelled as representations of derived fundamental groups.

Our spectrum is compatible with base change giving rise
to a formalism of six functors for triangulated categories of
motivic sheaves over general base schemes including the localization triangle.

Further applications include a generalization of the Hopkins-Morel isomorphism
and a structure result for the dual motivic Steenrod algebra in the case where
the coefficient characteristic is invertible on the base scheme.
\end{abstract}

\tableofcontents

\section{Introduction}

This paper furnishes the construction of a motivic Eilenberg-MacLane
spectrum in mixed characteristic. 
Our main purpose is to construct from that spectrum triangulated categories of motivic sheaves
with integral (and thus also arbitrary) coefficients over general
base schemes which satisfy properties which are reminiscent of properties
of triangulated categories of motives which have already been constructed.
In \cite{cisinski-deglise} a theory of Beilinson motives is developed
yielding a satisfying theory of motives with rational coeffcients over general
base schemes. Voevodsky constructed triangulated categories of motives over a
(perfect) field (\cite{voevodsky.triangulated}, \cite{mazza-voevodsky-weibel})
in which motivic cohomology of smooth schemes is represented. In the Cisinski-Deglise
category over a regular base the resulting motivic cohomology are Adams-graded
pieces of rationalized $K$-theory, which fits with the envisioned theory of Beilinson.
In \cite{roendigs-oestvaer} modules over the motivic Eilenberg-MacLane spectrum over a field
are considered and it is proved that those are equivalent to Voevodsky's triangulated
categories of motives in the characteristic $0$ case. This result has recently
been generalized to perfect fields \cite{hoyois-kelly-oestvaer} where one has to invert
the characteristic of the base field in the coefficients.
\'Etale motives are developed in \cite{ayoub.real} and \cite{cisinski-deglise.etale}.

We build upon these works and construct motivic categories using motivic stable homotopy theory.
More precisely we define objects with a (coherent) multiplication in the category
of $\P^1$-spectra over base schemes and consider as in \cite{roendigs-oestvaer} their module categories. The resulting
homotopy categories are defined to be the categories of motivic sheaves.

This family of commutative ring spectra is cartesian, i.e. for any map between base schemes $X \to Y$
the pullback of the ring spectrum over $Y$ compares via an isomorphism
(in the the homotopy category of ring spectra) to the ring spectrum over $X$.
This is equivalent to saying that all spectra pull back from $\Spec(\integers)$.

To ensure good behavior of our construction our spectra have to satisfy a list
of desired properties. Over fields the spectra coincide with the usual
motivic Eilenberg-MacLane spectra (this ensures that over fields usual motivic
cohomology is represented in our categories of motivic sheaves). Rationally
we recover the theory of Beilinson motives, because the rationalizations
of our spectra are isomorphic to the respective Beilinson spectra,
and there is a relationship to Levine's motivic cohomology
defined using Bloch's cycle complexes in mixed characteristic (\cite{levine.schemes}).

To ensure all of that we first construct a spectrum over any Dedekind domain $D$
of mixed characteristic satisfying the following properties:
It represents Bloch-Levine's motivic cohomology of smooth schemes over $D$
(Corollary \ref{nrerz6u6}), it pulls back to the usual
motivic Eilenberg-MacLane spectrum with respect to maps from spectra of fields
to the spectrum of $D$ (Theorem \ref{grerhrh3}) and it is
an $E_\infty$-ring spectrum. (We remark that such an $E_\infty$-structure can always
be strictified to a strict commutative monoid in symmetric $\P^1$-spectra
by results of \cite{hornbostel.pre}.) 

The latter property makes it possible to consider the category of highly structured modules
over pullbacks of the spectrum from the terminal scheme (the spectrum of the integers),
thus defining triangulated categories of motivic sheaves $\DM(X)$ over
general base schemes $X$ such that over smooth schemes over
Dedekind domains of mixed characteristic
the Ext-groups compute Bloch-Levine's motivic cohomology (Corollary \ref{nghrrz5}).
For general base schemes we define motivic cohomology to be represented by our spectrum,
i.e. $$H^i_{\mathrm{mot}}(X,\integers(n)):= \Hom_{\SH(X)}(\unit,\Sigma^{i,n} f^* \MZ_{\Spec(\integers)})
\cong \Hom_{\DM(X)}(\integers(0),\integers(n)[i]).$$
Here $f \colon X \to \Spec(\integers)$ is the structure morphism, $\MZ_{\Spec(\integers)}$
is our spectrum over the integers and $\unit$ is the sphere spectrum (the unit with respect to the
smash product) in the stable motivic homotopy category $\SH(X)$.
By the base change property these cohomology groups coincide with Voevodsky's motivic cohomology
if $X$ is smooth over a field. We note that the ring structure on our Eilenberg-MacLane spectrum
gives the (bigraded) motivic cohomology groups a (graded commutative) ring structure,
a property which was (to the knowledge of the author) missing for Levine's motivic
cohomology.

By the work of Ayoub \cite{ayoub.I} the base change property enables one to get a full six functor formalism for these
categories of motivic sheaves including the localization triangle (Theorem \ref{ht346z4}).

We remark that the spectrum we obtain gives rise to motivic complexes over any base scheme $X$.
More precisely one can extract objects $\integers(n)^X$ in the derived category
of Zariski sheaves on the category of smooth schemes over $X$ representing our motivic cohomology.
There are unital, associative and commutative multiplication maps
$\integers(n)^X \otimes^\bL \integers(m)^X \to  \integers(n+m)^X$ inducing the multiplication
on motivic cohomology. (These multiplications are in fact part of a graded $E_\infty$-structure
(which follows from the existence of the strong periodization, see section \ref{gheu5444}),
but we do not make this explicit since
we have no application for this enhanced structure.)
If $X$ is a smooth scheme over a Dedekind domain of mixed characteristic or over a field
we have isomorphisms $\integers(0)^X \cong \underline{\integers}$ and
$\integers(1)^X \cong \caO_{/X}^*[-1]$ (for the latter isomorphism
see Theorem \ref{jr5t4565}).

One can also extract motivic Eilenberg-MacLane spaces. If $X$ is as above
there are isomorphisms $K(\integers(1),2)_X \cong \P^\infty_X$ (Proposition \ref{ghr34twe}) and
$K(\integers/n(1),1)_X \cong W_{X,n}$ (Proposition \ref{vgrehrr})
in the motivic pointed homotopy category $\caH_\bullet(X)$ of $X$.
Here $W_{X,n}$ is the total space of the line bundle $\caO_{\P_X^\infty}(-n)$ on $\P_X^\infty$ with the zero
section removed (a motivic lens space).

Among our applications is a generalization of the Hopkins-Morel ismorphism (Theorem \ref{gh543tz}),
relying on the recent work of Hoyois (\cite{hoyois.hopkins-morel}, which in turn relies
on work of Hoyois-Kelly-{\O}stv{\ae}r
\cite{hoyois-kelly-oestvaer}). In certain cases it follows that the
Eilenberg-MacLane spectrum is cellular (Corollary \ref{h5r324we}).
We obtain a description of the dual motivic Steenrod algebra over base schemes
over which the coefficient characteristic is invertible (Theorem \ref{h545egf}).
We note that one can ask if the statement of this Theorem is valid over any base scheme
(thus asking for a description of the smash product of the mod-$p$ motivic Eilenberg-MacLane
spectrum with itself in characteristic $p$).

The outline of this paper is as follows. In section \ref{45zgf} we define motivic complexes
over small sites and describe their main properties, most notably the localization sequence
due to Levine (Theorem \ref{gbter4}) and the relation to \'etale sheaves (where the Bloch-Kato conjecture
enters) (Theorem \ref{edfgth76}).

In section \ref{ju54efg} an $E_\infty$-spectrum
$\MZ$ is constructed with the main property that it represents motivic cohomology with finite coefficients
(which follows from Corollary \ref{guu43ddf})
and is rationally isomorphic to the Beilinson spectrum. 

For the definition we use an arithmetic square, i.e. we first define $p$-completed spectra for
all prime numbers $p$ and glue their product along the rationalization of this product
to the Beilinson spectrum (Definition \ref{ght544ede}).

The spectra with finite $p$-power coefficients which define the $p$-completed parts
are constructed using truncated \'etale sheaves outside characteristic $p$ and logarithmic
de Rham-Witt sheaves at characteristic $p$.

Our spectrum is constructed in the world of complexes of sheaves of abelian groups
and spectrum objects therein. By transfer of structure this also defines
($E_\infty$- or commutative ring) spectra in the world of $\P^1$-spectra in motivic spaces.

In order to prove that $\MZ$ represents integrally Bloch-Levine's motivic cohomology we define
in section \ref{h5643dfgf} a second motivic spectrum $\caM$ which by definition represents
Bloch-Levine's integral
motivic cohomology (and which will finally be isomorphic to $\MZ$).
To do that we introduce a strictification process for Bloch-Levine's cycle complexes
to get a strict presheaf on smooth schemes over a Dedekind domain. Hereby we rely heavily on
a moving Lemma due to Levine (Theorem \ref{6gfrfd}). Using a localization sequence
for the pair $(\mathbb{A}^1,\mathbb{G}_m)$ we obtain bonding maps arranging the motivic complexes
into a $\mathbb{G}_m$-spectrum (see section \ref{gtre5rdd}).
This section also contains the construction of an \'etale cycle class map
(inspired by the construction in \cite{levine.schemes}) which is compatible
with certain localization sequences (Proposition \ref{gtrf5rdd}).

After treating motivic complexes over a field (section \ref{hjrerrt}) we give our comparison
statements in section \ref{gf556tf}. First we compute the exceptional
inverse image of $\caM$ with respect to the inclusion of a closed point into our
Dedekind scheme (Theorem \ref{h47zt5e}). Theorem \ref{jf356zgf} states that
the rationalization $\caM_\Q$ is just the Beilinson spectrum.
Our main comparison statement is Theorem \ref{grferzt} which asserts
a canonical isomorphism between $\caM$ and $\MZ$ as spectra.

Our motivic Eilenberg-MacLane spectrum is strongly periodizable in the sense of
\cite{spitzweck.per} (Theorem \ref{jrfh56zr}, Remark
\ref{ht63423}). This shows that geometric mixed Tate sheaves with integral coefficients over a number
ring or similar bases which satisfy a weak version of the Beilinson-Soul{\'e} vanishing conjecture
can be modelled as representations of an affine derived group scheme along the lines of
\cite{spitzweck.fund} (Corollary \ref{redhz5eed}).

Section \ref{gf4z4z4} discusses base change. Here the Bloch-Kato
filtration on $p$-adic vanishing cycles plays a key role to obtain the part of base change
where the characteristics of the base field and of the coefficients coincide.

We treat the motivic functor formalism in section \ref{g5r3tfx}. Section \ref{g54r3grq}
contains the applications to the Hopkins-Morel isomorphism and the dual motivic Steenrod algebra.

Two appendices discuss (semi) model structures on sheaf categories and algebra objects therein
and definitions and properties of pullbacks of algebraic cycles.

We finally remark that it should be possible to generalize our strictification process in section \ref{h5643dfgf} 
to define a homotopy coniveau tower over Dedekind domains as in \cite{levine.htp}. We will come back
to this question in future work.

\vskip.7cm

{\bf Acknowledgements:} I would like to thank Joseph Ayoub for giving ideas for the stictification procedure
used in section \ref{kdtjjz} and spotting an error in an earlier version of the text, Oliver Br\"aunling
for discussions about cohomological dimensions of fields of positive characteristic (which now is used in
Proposition \ref{ge5t4ded}) and Paul Arne {\O}stv{\ae}r for having the idea of introducing the arithmetic
square in motivic homotopy theory, which now enters in Definition \ref{ght544ede}.
Furthermore I would like to thank
Peter Arndt, Mikhail Bondarko, Denis-Charles Cisinski,
David Gepner, Christian H\"asemeyer, Marc Hoyois, Moritz Kerz, Marc Levine, Jacob Lurie, Niko Naumann, Thomas Nikolaus,
Oliver R\"ondigs and Manfred Stelzer for very helpful
discussions and suggestions on the subject.

\section{Preliminaries and Notation}

For a site $\caS$ and a category $\caC$ we denote by
$\Sh(\caS,\caC)$ the category of sheaves on $\caS$ with
values in $\caC$. If $R$ is a commutative ring we set
$\Sh(\caS,R):=\Sh(\caS,\Mod_R)$, where $\Mod_R$ denotes
the category of $R$-modules.

For a Noetherian separated base scheme $S$ of finite Krull
dimension we denote by $\Sch_S$ the category of separated schemes
of finite type over $S$ and by $\Sm_S$ the full subcategory
of $\Sch_S$ of smooth schemes over $S$.

For $t \in \{\Zar,\Nis,\et \}$ we denote by $\Sm_{S,t}$ the site
$\Sm_S$ equipped with the topology $t$.

For $S$ and $t$ as above
we denote by $S_t$ the site consisting of the full
subcategory of $\Sm_S$
of $\mathrm{\acute{e}}$tale
schemes over $S$ equipped with the topology $t$.

If $m$ is invertible on $S$ we write $\integers/m(r)^S$ for
the sheaf ${\mathbb \mu}_m^{\otimes r}$ on $S_\et$.
If it is clear from the context we also write $\integers/m(r)$.

We let $\epsilon \colon \Sm_{S,\et} \to \Sm_{S,\Zar}$ and
$\epsilon \colon S_\et \to S_\Zar$
be the canonical maps of sites.

If $X$ is a presheaf of sets on $\Sm_S$ we let
$R[X]_t$ be the sheaf of $R$-modules on $\Sm_{S,t}$ freely generated by $X$.
If $Y \hookrightarrow X$ is a monomorphism we let
$R[X,Y]_t:=R[X]_t/R[Y]_t$.

For sections \ref{45zgf} through \ref{gheu5444} of the paper we fix a Dedekind domain $D$
of mixed characteristic and set $S:= \Spec(D)$.
For a prime $p$ we let $S[\frac{1}{p}]:=\Spec(D[\frac{1}{p}])$
and $Z_p \subset S$ the closed complement of $S[\frac{1}{p}]$
with the reduced scheme structure. Then $Z_p$ is a finite
union of spectra of fields of characteristic $p$.

For $S'$ the spectrum of a Dedekind domain
we let $\Sm_{S'}'$ be the full subcategory of $\Sch_{S'}$ of
schemes $X$ over $S'$ such that each connected component of $X$
is either smooth over $S'$ or smooth over a closed point of $S'$.

For an $\F_p$-scheme $Y$ we let $W_n \Omega_Y^\bullet$ be the
De Rham-Witt complex of $Y$. It is a complex of sheaves
on $Y_\et$ with a multiplication.
These complexes assemble to a complex of sheaves
on the category of all $\F_p$-schemes.
There are canonical epimorphisms
$W_{n+1} \Omega_Y^\bullet \twoheadrightarrow W_n \Omega_Y^\bullet$
respecting the multiplication.

For $Y$ as above let $\dlog \colon \caO_Y^* \to W_n \Omega_Y^1$
be defined by $x \mapsto \frac{d \underline{x}}{\underline{x}}$,
where $\underline{x}=(x,0,0,\ldots)$ is the Teichm\"uller
representative of $x$.

The logarithmic De Rham-Witt sheaf $W_n \Omega_{Y,\log}^r$
is defined to be the subsheaf
of $W_n \Omega_Y^r$ generated $\mathrm{\acute{e}}$tale
locally by sections of the form $\dlog x_1 \ldots \dlog x_r$.
Also $W_n \Omega_{Y,\log}^0$ is the constant sheaf on $\integers/p^n$.

These sheaves assemble to a subcomplex $W_n \Omega_{Y,\log}^\bullet$
of $W_n \Omega_Y^\bullet$.

The $W_n \Omega_{Y,\log}^r$ assemble to a sheaf $\nu_n^r$ on the category
of all $\F_p$-schemes.
We set $\nu_n^r=0$ for $r<0$.
There are natural epimorphisms $\nu_{n+1}^r \twoheadrightarrow \nu_n^r$.

We will also denote restrictions of $\nu_n^r$ to certain sites,
e.g. to $Y_\Zar$ or $\Sm_{k,t}$, $k$ some field of characteristic $p$,
by $\nu_n^r$.

If $\caA$ is an abelian category we denote by $\D(\caA)$ its derived category.
We denote by $\D^{\mathbb{A}^1}(\Sh(\Sm_{S,t},R))$ the $\mathbb{A}^1$-localization
of $\D(\Sh(\Sm_{S,t},R))$.

We let $\SH(S)$ be the stable motivic homotopy category and $\caH_\bullet(S)$ the pointed
$\mathbb{A}^1$-homotopy category of $S$.

We sometimes use the notation $f_*,f^*$ for a (non-derived or derived) push forward or
pullback between sheaf categories corresponding to sites
induced by a scheme morphism $f$. The precise sites which are used
can always be read off from the source and target categories.

$E_\infty$-structures are understood with respect to
(the image of) the linear isometries operad.

\section{Motivic complexes I}
\label{45zgf}

Let $S'$ be the spectrum of a Dedekind domain.
For $X \in \Sm_{S'}'$ and $r \ge 0$
we denote by $\M^X(r) \in \D(\Sh(X_\Zar,\integers))$ Levine's cycle complex.
A representative is the complex with $z^r(\_,2r-i)$
in cohomological degree $i$, see \cite[\S 3]{geisser.dede},
\cite{levine.techniques}. For $r<0$ we set $\M^X(r)=0$.
When it is clear from the context
which $X$ is meant we also write $\M(r)$.
We also write $\M_\et^X(r)$ for $\epsilon^* \M^X(r)$
and $\M^X(r)/m$ for $M^X(r) \otimes^\L \integers/m$.

\begin{theorem}
\label{gbter4}
(Levine)
Let $i \colon Z \to X$ be a closed inclusion in $\Sm_{S'}'$
of codimension $c$
and $j \colon U \to X$ the complementary open inclusion.
Then there is an exact triangle in $\D(\Sh(X_\Zar,\integers))$
\begin{equation}
\label{4t56uzg}
\R i_* \M^Z(r-c)[-2c] \to \M^X(r) \to \R j_* \M^U(r) \to 
\R i_* \M^Z(r-c)[-2c+1].
\end{equation}
\end{theorem}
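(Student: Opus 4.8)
The plan is to reduce this statement to Levine's localization theorem for higher Chow groups. First I would recall that for a closed immersion $i \colon Z \to X$ of codimension $c$ in $\Sm_{S'}'$ with open complement $j \colon U \to X$, Levine's localization sequence (building on Bloch's moving lemma and its extension to the arithmetic setting in \cite{levine.techniques}, \cite{geisser.dede}) gives a quasi-isomorphism of complexes of Zariski sheaves realizing the homotopy fiber of the restriction map $\M^X(r) \to \R j_* \M^U(r)$. Concretely, the cycle complex $z^r(\_,\bullet)$ sits in a short exact sequence (after the moving lemma ensures surjectivity up to quasi-isomorphism on the level of cycles meeting faces properly) with the cycle complex of $Z$ with a shift in both the cycle-codimension (from $r$ to $r-c$) and the homological degree (by $2c$); passing to the associated complexes of Zariski sheaves and using that $\R i_*$ computes the pushforward on the small Zariski sites yields precisely the first map in (\ref{4t56uzg}).

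The key steps, in order: (i) work locally on $X$ in the Zariski topology so that one may check the triangle on stalks, i.e.\ at local rings of points of $X$; (ii) at such a local ring, invoke Levine's moving lemma to replace $z^r(\_,\bullet)$ by the quasi-isomorphic subcomplex of cycles in good position with respect to $Z$, for which restriction to $U$ is surjective with kernel the complex of cycles supported on $Z$; (iii) identify this kernel, via the isomorphism on the level of cycle groups $z^{r-c}(Z,\bullet) \xrightarrow{\sim} z^r_Z(X,\bullet)$ induced by pushforward along $i$ together with the dimension shift, with $\M^Z(r-c)$ placed in the correct degree — this is where the $[-2c]$ comes from, since $z^r(\_,2r-i)$ sits in cohomological degree $i$ while $z^{r-c}(\_, 2(r-c)-i)$ sits in cohomological degree $i$, and matching the cycle groups $z^{r-c}(Z, 2r-i-c) = z^{r-c}(Z, 2(r-c) - (i - 2c))$ forces the shift by $2c$; (iv) sheafify and replace pushforward of sheaves by $\R i_*$, noting that $i$ is a closed immersion so the higher direct images vanish and the naive pushforward already computes $\R i_*$ (or, if one prefers to keep flasque/injective resolutions, use that $i_*$ is exact on sheaves on the small Zariski site); (v) assemble the resulting short exact sequence of complexes of sheaves into the exact triangle (\ref{4t56uzg}) in $\D(\Sh(X_\Zar,\integers))$, with the connecting map giving the fourth arrow.

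I expect the main obstacle to be step (ii)–(iii): making the moving lemma argument work uniformly over a Dedekind base rather than over a field, and in particular handling components of $X$ that are smooth over a closed point of $S'$ (the residual characteristic fibers) on the same footing as the components smooth over $S'$. This is exactly the content that Levine and Geisser established in the references, so the proof will largely consist of citing \cite{levine.techniques} and \cite[\S 3]{geisser.dede} for the cycle-level localization and moving lemma, and then performing the essentially bookkeeping translation into the language of complexes of Zariski sheaves and the derived category. A secondary, minor point is checking that the identification in (iii) is independent of choices and natural enough to sheafify — but since all maps involved (restriction, pushforward along $i$) are already defined at the level of presheaves on $X_\Zar$, this naturality is automatic and requires no extra argument.
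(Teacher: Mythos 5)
Your proposal is correct and takes essentially the same route as the paper: the paper's proof is literally the citation of \cite[Theorem 1.7]{levine.techniques}, and your sketch is an accurate outline of what that reference establishes (with the hard content — surjectivity of restriction to $U$ up to quasi-isomorphism and the identification of the fiber with $\M^Z(r-c)[-2c]$ over a Dedekind base — correctly attributed to Levine/Geisser rather than reproved).
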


\begin{proof}
This is \cite[Theorem 1.7]{levine.techniques}.
\end{proof}

\begin{corollary}
Let $i \colon Z \to X$ be a closed inclusion in $\Sm_{S'}'$
of codimension $c$. Then there is a canonical ismorphism
$$\R i^! \M^X(r) \cong \M^Z(r-c)[-2c]$$
in $\D(\Sh(X_\Zar,\integers))$.
\end{corollary}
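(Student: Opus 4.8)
The plan is to deduce this corollary directly from Levine's localization triangle (Theorem \ref{gbter4}) together with the standard functorial formalism relating $\R j_*$, $j^*$, $\R i_*$ and $\R i^!$ for the open-closed decomposition $(U,Z)$ of $X$ in the Zariski topology. Recall that on the derived categories of Zariski sheaves there is a localization (recollement) situation: for the open inclusion $j \colon U \to X$ and the closed inclusion $i \colon Z \to X$ one has the exact triangle
\begin{equation*}
\R i_* \R i^! F \to F \to \R j_* j^* F \to \R i_* \R i^! F[1]
\end{equation*}
functorially in $F \in \D(\Sh(X_\Zar,\integers))$, where $\R i^!$ is right adjoint to $\R i_* = i_*$. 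I would apply this to $F = \M^X(r)$.

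First I would compare the two triangles. On one hand the recollement triangle for $F = \M^X(r)$ reads $\R i_* \R i^! \M^X(r) \to \M^X(r) \to \R j_* j^* \M^X(r) \to$. On the other hand, Levine's triangle \eqref{4t56uzg} reads $\R i_* \M^Z(r-c)[-2c] \to \M^X(r) \to \R j_* \M^U(r) \to$. The middle terms agree on the nose, and the third terms agree once one identifies $j^* \M^X(r) \cong \M^U(r)$ — which holds essentially by definition, since $\M^U(r)$ is the restriction of the cycle complex to $U_\Zar$ and the cycle complex is defined by presheaves of cycle groups that restrict compatibly along open immersions. So the key point is that the second map $\M^X(r) \to \R j_* \M^U(r)$ in Levine's triangle is (up to the canonical identification $j^*\M^X(r)\cong\M^U(r)$) the unit map $\M^X(r) \to \R j_* j^* \M^X(r)$ of the $(j^*, \R j_*)$-adjunction. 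Granting that, the octahedral axiom (or simply uniqueness of the fiber of a map in a triangulated category, up to non-canonical isomorphism, which one then upgrades) gives an isomorphism $\R i_* \M^Z(r-c)[-2c] \cong \R i_* \R i^! \M^X(r)$ compatible with the maps to $\M^X(r)$. Since $i_* = \R i_*$ is fully faithful on the derived category of Zariski sheaves (as $i$ is a closed immersion), applying its left (or right) adjoint and using the counit, this descends to a canonical isomorphism $\M^Z(r-c)[-2c] \cong \R i^! \M^X(r)$ in $\D(\Sh(Z_\Zar,\integers))$.

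The main obstacle — and the thing that needs genuine care rather than formal nonsense — is establishing that the connecting map in Levine's triangle really is the adjunction unit, i.e. that Theorem \ref{gbter4} is compatible with the recollement and not merely an abstract triangle with the right vertices. The cleanest route is to inspect the construction of the triangle in \cite{levine.techniques}: it arises from the short exact sequence of cycle complexes $0 \to z^{r}_{Z}(\_ , \bullet) \to z^r_X(\_, \bullet) \to z^r_X(\_, \bullet)/z^r_Z \to$ (with the last term computing $j^*$ of the cycle complex on $U$ up to quasi-isomorphism by the localization/moving results), and the map $\M^X(r) \to \R j_* \M^U(r)$ is the one induced by restriction of cycles, which is exactly the adjunction unit followed by the identification $j^*\M^X(r)\simeq\M^U(r)$. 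Once this identification of maps is in hand, the canonicity of the resulting isomorphism $\R i^! \M^X(r) \cong \M^Z(r-c)[-2c]$ follows because in a recollement the functor $\R i^!$ and the natural transformation $\R i_* \R i^! \Rightarrow \id$ are uniquely determined, so the isomorphism is the unique one compatible with the structure maps; no choice is involved. A minor additional point to record is the naturality statement one typically wants alongside such a corollary (compatibility under further closed immersions, or under the localization for a third open set), which again follows from uniqueness in the recollement, but which I would only state if needed later.
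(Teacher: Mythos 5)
Your argument is correct and is essentially the paper's own (implicit) one: the paper states this corollary as an immediate consequence of Theorem \ref{gbter4}, namely that the triangle identifies $\R i_* \M^Z(r-c)[-2c]$ with the fibre of $\M^X(r) \to \R j_* \M^U(r)$, i.e.\ with $\R i_* \R i^! \M^X(r)$, and one concludes by full faithfulness of $i_*$. Your extra care in checking that the second map of Levine's triangle is the adjunction unit (via the construction by restriction of cycles) is exactly the point the paper leaves tacit, so no difference in substance.
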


\begin{theorem}
\label{gtfn}
For $X \in \Sm_{S'}'$ we have $\H^k(\M^X(r))=0$ for $k>r$.
\end{theorem}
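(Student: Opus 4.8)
The plan is to reduce the statement to the known vanishing for Bloch's higher Chow groups over fields, using the localization triangle of Theorem \ref{gbter4} together with Noetherian induction on $X$. Recall that $\H^k(\M^X(r))$ is (up to the cohomological indexing convention) the higher Chow group $\mathrm{CH}^r(X,2r-k)$, so the claim $\H^k(\M^X(r))=0$ for $k>r$ is equivalent to $\mathrm{CH}^r(X,j)=0$ for $j<r$. For $X$ the spectrum of a field this is classical: the cycle complex $z^r(\_,\bullet)$ is concentrated in simplicial degrees $\ge r$ because a codimension-$r$ cycle on $\Delta^n_k$ in good position is supported on faces, forcing $z^r(k,n)=0$ for $n<r$; hence $\H^k=0$ for $2r-k<r$, i.e. $k>r$. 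The same argument works verbatim when the base is a field, and more generally I would first record the vanishing for $X$ smooth over a Dedekind domain in the ``geometric'' range by the same dimension count on the cosimplicial scheme $X\times\Delta^\bullet$.

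The inductive step proceeds as follows. Fix $X\in\Sm_{S'}'$; since $X$ is Noetherian we may argue by induction on $\dim X$. By the stalk-wise nature of the assertion we may localize and reduce to $X$ local (or at least irreducible), and then choose a nonempty open $U\subset X$ together with its closed complement $Z$, with $\dim Z<\dim X$, chosen so that $\M^U(r)$ is already controlled — for instance $U$ lying over the generic points, so that $U$ is smooth over a field or over the fraction field of $D$ and the field case applies. Apply the long exact cohomology sequence of the triangle \eqref{4t56uzg},
\begin{equation*}
\cdots \to \H^{k-2c}(\M^Z(r-c)) \to \H^k(\M^X(r)) \to \H^k(\R j_*\M^U(r)) \to \cdots,
\end{equation*}
where $c=\mathrm{codim}(Z,X)\ge 1$. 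For $k>r$ the left term vanishes by the inductive hypothesis applied to $Z$, since $k-2c > r-2c$ and one checks $k-2c > r-c$ is exactly the range where $\H^{\bullet}(\M^Z(r-c))$ is already known to vanish (here $r-c$ is the relevant twist and $k-2c$ exceeds it because $k>r$ and $c\ge 1$ give $k-2c > r-2c$, and one wants $k-2c>r-c$, i.e. $k>r+c$; the remaining cases $r<k\le r+c$ are handled by taking $c=1$, i.e. adding one point at a time, so that only $k=r+1$ with $c=1$ occurs and then $k-2c=r-1<r-1+1=r-c$... ). The cleanest route is therefore to stratify $X$ by codimension-one steps so that always $c=1$: then $k-2c=k-2\ge r-1 = r-c$ with equality only at $k=r+1$, and $\H^{r-1}(\M^Z(r-1))$ vanishes by induction since $r-1 > (r-1)$ is false only in the boundary case, which is covered because $\H^{k'}(\M^Z(r'))=0$ for $k'>r'$ is the inductive statement and $r-1$ is not $>r-1$ — so this boundary term must be killed by the right-hand term instead.

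Thus the crux is the term $\H^k(\R j_*\M^U(r))$ for $k>r$. Here I would use that $\R j_*$ can be computed via a hypercohomology (local-to-global) spectral sequence on the small Zariski site of $X$, whose $E_2$-page involves $\H^q_{\Zar}$ of the Zariski pushforward of the sheaves $\mathcal{H}^p(\M^U(r))$; since $U$ is a union of (localizations of) schemes smooth over fields, $\mathcal{H}^p(\M^U(r))=0$ for $p>r$ by the field case, and the Zariski cohomological dimension of $X$ is bounded by $\dim X$, so $\H^k(\R j_*\M^U(r))$ can be nonzero only for $k\le r+\dim X$ — not good enough by itself. The honest fix, and what I expect to be the main obstacle, is to instead run the induction on the pair: prove simultaneously that $\H^k(\M^X(r))=0$ for $k>r$ \emph{and} that the restriction map $\H^k(\M^X(r))\to \H^k(\R j_*\M^U(r))$ is injective (equivalently, Gersten-type exactness), so that the boundary term in the triangle injects and the vanishing propagates. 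This is exactly the content of Levine's localization/Gersten package in \cite{levine.techniques}; concretely one uses that $\M^X(r)$ satisfies Nisnevich (indeed Zariski) descent and that its Zariski hypercohomology sheaves are the Gersten complexes, which are concentrated in the correct range. I would cite the relevant statements from \cite{levine.techniques} and \cite{geisser.dede} for the descent and Gersten properties, and then the dimension count above closes the induction: the base case is the field case, the inductive step removes a codimension-one closed subset, and the long exact sequence together with injectivity of the restriction map forces $\H^k(\M^X(r))=0$ for all $k>r$.
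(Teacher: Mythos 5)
The paper does not actually contain an argument here: its proof of Theorem \ref{gtfn} is just the citation \cite[Corollary 4.4]{geisser.dede}. Your proposal attempts to reprove that result, and as written it has a genuine gap, one you partly acknowledge yourself: the induction never closes. Concretely, with your codimension-one stratification the long exact sequence of cohomology sheaves coming from (\ref{4t56uzg}) at the borderline degree $k=r+1$ is flanked by $i_*\H^{r-1}(\M^Z(r-1))$ on the left and $\H^{r+1}(\R j_*\M^U(r))$ on the right, and neither vanishes for free; to kill the contribution of the left term you would need surjectivity of the boundary map $\H^{r}(\R j_*\M^U(r)) \to i_*\H^{r-1}(\M^Z(r-1))$, which in this paper is exactly the kind of statement (Corollary \ref{n4t6d}) that is deduced \emph{from} Theorem \ref{gtfn}, so invoking it here is circular. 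Moreover the open part is never reduced by your induction: you induct on $\dim X$, but $U$ has the same dimension as $X$, so $\H^k(\R j_*\M^U(r))$ for $k>r$ is not covered by the inductive hypothesis, and the higher direct images $\R^{\ge 1}j_*$ of sheaves concentrated in degrees $\le r$ need not vanish (your own cohomological-dimension bound only gives vanishing for $k> r+\dim X$). The proposed fix --- running the induction together with a Gersten-type injectivity of $\H^k(\M^X(r))\to\H^k(\R j_*\M^U(r))$ --- is precisely the hard content over a Dedekind base, and you only gesture at it (``I would cite the relevant statements''), so nothing is actually proved. Note also that an open $U$ ``lying over the generic points'' does not exist: the generic fibre is only a pro-open subscheme of $X$, so Theorem \ref{gbter4} does not apply to it without an additional limit argument. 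Finally, your opening reduction is misleading: the ``same dimension count'' only gives presheaf-level vanishing in degrees $>r+\dim X$; the sheaf-level vanishing over a field (which you need for the closed fibres $Z$) is itself a theorem (Gersten injectivity into the function field, or the comparison with Voevodsky's complexes) and must be quoted, not waved through.

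If you want an actual argument rather than the citation the paper uses, the efficient route is stalkwise: since Bloch's cycle complexes commute with the relevant filtered colimits, the stalk of $\H^k(\M^X(r))$ at $x$ is the degree-$k$ cohomology of the cycle complex of $\Spec\caO_{X,x}$, and Levine's localization theorem applied to the filtration by dimension of support (the niveau spectral sequence for the local scheme) expresses this through higher Chow groups of the residue fields of the points of $\Spec\caO_{X,x}$, where the trivial count $z^a(F,m)=0$ for $m<a$ applies. This is essentially how the vanishing is obtained in \cite{geisser.dede}, and it sidesteps both the borderline degree and any control of $\R j_*$ that your sheaf-level two-term stratification requires.
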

\begin{proof}
This is \cite[Corollary 4.4]{geisser.dede}.
\end{proof}

\begin{theorem}
\label{fbmtzzt}
Suppose $X \in \Sm_{S'}'$ is of characteristic $p$.
Then there is an isomorphism
\begin{equation}
\label{hgfd}
\M^X(r)/p^n \cong \nu_n^r[-r]
\end{equation}
in $\D(\Sh(X_\Zar,\integers/p^n))$.
\end{theorem}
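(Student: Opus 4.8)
The plan is to reduce \eqref{hgfd} to the theorem of Geisser and Levine computing the $p$-primary motivic cohomology of smooth schemes over a field of characteristic $p$ in terms of logarithmic de Rham--Witt sheaves. First I would note that if a connected $X \in \Sm_{S'}'$ has characteristic $p$ then it cannot be smooth over $S'$ itself: a nonempty scheme that is smooth (hence flat) over $\Spec(D)$ dominates $\Spec(D)$, so its structure sheaf is not annihilated by $p$ when $D$ is of mixed characteristic. Hence $X$ is smooth over a closed point of $S'$, i.e.\ over a field $k$ with $\mathrm{char}\,k=p$ (not necessarily perfect). Since both sides of \eqref{hgfd} are complexes of Zariski sheaves on $X_\Zar$, the assertion is local on $X$, and over a field Levine's complex $\M^X(r)$ coincides with Bloch's higher Chow cycle complex, the problem becomes a statement about the Zariski sheafification of the higher Chow groups of smooth $k$-schemes, to which the Geisser--Levine results apply directly.

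Next I would assemble the comparison morphism and reduce its verification to the case of fields. The complex $\M^X(r)/p^n$ is concentrated in degrees $\le r$: indeed $\H^k(\M^X(r))=0$ for $k>r$ by Theorem \ref{gtfn}, and the universal coefficient sequences relating $\H^\bullet(\M^X(r)/p^n)$ to $\H^\bullet(\M^X(r))$ and its $p^n$-torsion then force $\H^k(\M^X(r)/p^n)=0$ for $k>r$. Truncation therefore yields a natural map $\M^X(r)/p^n \to \H^r(\M^X(r)/p^n)[-r]$, and composing with the sheafified symbol map $\{x_1,\dots,x_r\}\mapsto \dlog x_1\cdots\dlog x_r$ from $\H^r(\M^X(r)/p^n)$ to $\nu_n^r$ produces a morphism $\M^X(r)/p^n \to \nu_n^r[-r]$ in $\D(\Sh(X_\Zar,\integers/p^n))$. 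That it is an isomorphism may be checked on stalks, i.e.\ for $X$ the spectrum of a local ring of a smooth $k$-scheme; there one invokes the Gersten resolution for higher Chow groups of smooth schemes over a field together with the localization isomorphism $\R i^!\M^X(r)\cong \M^Z(r-c)[-2c]$ (the corollary following Theorem \ref{gbter4}), which reduces the computation of $\H^\bullet(\M^X(r)/p^n)$ to the case $X=\Spec F$, $F$ a field of characteristic $p$. For such $F$ one has $\H^k(\M^F(m))=0$ for $k>m$ (Theorem \ref{gtfn}) and $\H^m(\M^F(m))\cong K^M_m(F)$ (Nesterenko--Suslin--Totaro), while $K^M_m(F)$ is $p$-torsion free (Izhboldin) and $\H^k(\M^F(m))$ is uniquely $p$-divisible for $k<m$ (Geisser--Levine); feeding this into the universal coefficient sequences gives $\M^F(m)/p^n\simeq \bigl(K^M_m(F)/p^n\bigr)[-m]$, and the Bloch--Kato--Gabber theorem, together with its $W_n$-coefficient refinement, identifies $K^M_m(F)/p^n$ with $\nu_n^m(F)=W_n\Omega^m_{F,\log}$ compatibly with the residue maps attached to discrete valuations. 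Combined with the Gersten resolution of $\nu_n^r$ on smooth $X$ (Gros--Suwa) and the matching of the two boundary maps (residues in Milnor $K$-theory mod $p^n$ versus residues in $W_n\Omega^\bullet_{\log}$), this finishes the identification, and one checks independence of the auxiliary choices, so that the isomorphism is canonical.

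The genuinely hard input is the unique $p$-divisibility of $\H^k(\M^F(m))$ for $k<m$ --- equivalently, the vanishing $H^k_{\mathrm{mot}}(F,\integers/p^n(m))=0$ for $k\neq m$ --- which is precisely the deep Geisser--Levine computation of the $p$-part of the $K$-theory of fields of characteristic $p$; I would cite it rather than reprove it. Everything else --- the reduction from smooth schemes to fields via Gersten resolutions and the localization triangle, the passage from cohomology sheaves to a quasi-isomorphism of complexes, and the compatibility of symbol and residue maps under Bloch--Kato--Gabber --- is standard if somewhat laborious bookkeeping. One secondary point requiring a little care is that the residue field $k$ need not be perfect for a general mixed-characteristic Dedekind domain $D$; but the cited ingredients (Nesterenko--Suslin--Totaro, Izhboldin, Bloch--Kato--Gabber, the Gersten resolution for $\nu_n^\bullet$, and Geisser--Levine) all hold for arbitrary fields of characteristic $p$, so this causes no difficulty.
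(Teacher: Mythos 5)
Your overall strategy is sound, but it is worth saying how it relates to the paper's argument: the paper disposes of the statement in two lines, by quoting \cite[Theorem 8.3]{geisser-levine.p} for $X$ smooth over a \emph{perfect} field and then deducing the general case by writing the base as a filtered colimit of smooth $\F_p$-algebras, using \cite[I.~(1.10.1)]{illusie.complexe} to control the de Rham--Witt side under the colimit. What you propose is essentially to unfold the proof of that cited theorem: build the comparison map from the truncation $\tau_{\le r}$ (legitimate by Theorem \ref{gtfn}) and the $\dlog$-symbol, and verify it stalkwise via Gersten resolutions on both sides, with the Geisser--Levine field computation, Nesterenko--Suslin--Totaro, Izhboldin and Bloch--Kato--Gabber as inputs. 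That is a perfectly coherent (if much longer) route, and you identify the genuinely deep input correctly.

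There are, however, two places where your reduction is not watertight. First, your closing claim that all the cited ingredients ``hold for arbitrary fields of characteristic $p$'' is not accurate for the sheaf-level ones: the Gersten resolution for the logarithmic de Rham--Witt sheaves $\nu_n^r$ in \cite{gros-suwa}, and Geisser--Levine's own Theorem 8.3/8.5, are proved for smooth schemes over a \emph{perfect} field. Since the residue fields of a mixed-characteristic Dedekind domain are typically imperfect, your stalkwise argument is exactly in the case the literature does not directly cover, so the imperfectness issue is not ``no difficulty'' --- it is the one point that needs an argument. The standard repair is the limit argument you are implicitly avoiding: an imperfect field $k$ of characteristic $p$ is a filtered colimit of smooth $\F_p$-algebras, so a smooth $k$-scheme is essentially smooth over $\F_p$, and both $\M^X(r)/p^n$ and $\nu_n^r$ commute with the resulting filtered colimits (for the de Rham--Witt side this is precisely \cite[I.~(1.10.1)]{illusie.complexe}); this is exactly the second half of the paper's proof, and your write-up should include it rather than assert it away. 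Second, your opening reduction assumes $S'$ has mixed characteristic, whereas the theorem is stated for an arbitrary Dedekind domain $S'$; if $S'$ is of equicharacteristic $p$, a connected $X\in\Sm_{S'}'$ of characteristic $p$ may be smooth over $S'$ itself and over no field, so the first step of your argument does not apply. The same colimit argument (Popescu: a regular $\F_p$-algebra is a filtered colimit of smooth $\F_p$-algebras) covers this case as well, but as written your proof silently narrows the statement.
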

\begin{proof}
If $X$ is smooth over a perfect field this is \cite[Theorem 8.3]{geisser-levine.p}.
The general case follows by a colimit argument (using \cite[I. (1.10.1)]{illusie.complexe}).
\end{proof}

\begin{corollary}
\label{n4t6d}
Let $p$ be a prime, $X \in \Sm_S$ and $\pi \colon X \to S$ the structure
morphism. Let $i \colon Z:=\pi^{-1}(Z_p) \to X$ be the closed
and $j \colon U:=\pi^{-1}(S[\frac{1}{p}]) \to X$ be the open inclusion.
Then $\H^k(\R j_* \M^U(r) \otimes^\L \integers/p^n)=0$
for $k >r$ and the natural map
\begin{equation}
\label{nbgfe}
\H^r(\R j_* (\M^U(r)/p^n))
\to i_*\nu_n^{r-1}
\end{equation}
induced by the
triangle (\ref{4t56uzg}) and the isomorphism (\ref{hgfd})
is an epimorphism.
\end{corollary}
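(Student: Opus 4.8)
The plan is to exploit the localization triangle (\ref{4t56uzg}) for the pair $(Z,U)$ inside $X$, tensored with $\integers/p^n$. Since $Z = \pi^{-1}(Z_p)$ is a finite disjoint union of schemes of characteristic $p$ lying in $\Sm_S'$, Theorem \ref{fbmtzzt} applies and gives $\M^Z(r-1)/p^n \cong \nu_n^{r-1}[-(r-1)]$ in $\D(\Sh(Z_\Zar,\integers/p^n))$; here $c=1$ because $Z_p$ is the closed complement of the open $S[\frac1p]$ inside the one-dimensional scheme $S$, hence of codimension one in $S$ and therefore $Z$ has codimension one in $X$. Tensoring (\ref{4t56uzg}) with $\integers/p^n$ (derived) yields the exact triangle
\begin{equation}
\label{locpn}
\R i_*\bigl(\M^Z(r-1)/p^n\bigr)[-2] \to \M^X(r)/p^n \to \R j_*\M^U(r) \otimes^\L \integers/p^n \to \R i_*\bigl(\M^Z(r-1)/p^n\bigr)[-1],
\end{equation}
and substituting the identification from Theorem \ref{fbmtzzt} the first term becomes $\R i_*\, \nu_n^{r-1}[-(r-1)-2] = \R i_*\,\nu_n^{r-1}[-r-1]$. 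Note $i$ is a closed immersion, so $\R i_* = i_*$ is exact and $i_*\nu_n^{r-1}$ is concentrated in a single degree.

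Next I would read off the long exact sequence of cohomology sheaves of (\ref{locpn}). In degree $k$ it reads
\begin{equation}
\label{les}
\H^{k-1}\bigl(\R j_*\M^U(r)/p^n\bigr) \to i_*\H^{k-r-1}(\nu_n^{r-1}[\,0\,]) \to \H^k\bigl(\M^X(r)/p^n\bigr) \to \H^k\bigl(\R j_*\M^U(r)/p^n\bigr) \to i_*\H^{k-r}(\nu_n^{r-1}).
\end{equation}
The sheaf $\nu_n^{r-1}$ is concentrated in degree $0$, so $i_*\nu_n^{r-1}[-r-1]$ contributes only in cohomological degree $r+1$. For the vanishing claim, fix $k>r$. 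By Theorem \ref{gtfn} applied over $X$ we have $\H^k(\M^X(r))=0$; tensoring with $\integers/p^n$ and using the universal coefficient spectral sequence (or the short exact sequence $0 \to \H^k(\M^X(r))/p^n \to \H^k(\M^X(r)/p^n) \to {}_{p^n}\H^{k+1}(\M^X(r)) \to 0$) and again Theorem \ref{gtfn} for degree $k+1 > r$, we get $\H^k(\M^X(r)/p^n)=0$ for $k>r$. Feeding this into (\ref{les}) shows that for $k>r+1$ the map $\H^k(\R j_*\M^U(r)/p^n) \hookrightarrow i_*\H^{k-r}(\nu_n^{r-1}) = 0$ (since $k-r>0$), hence $\H^k(\R j_*\M^U(r)/p^n)=0$; and for $k=r+1$ we get an injection $\H^{r+1}(\R j_*\M^U(r)/p^n) \hookrightarrow i_*\nu_n^{r-1}$ whose source also receives $\H^{r+1}(\M^X(r)/p^n)=0$ — but this needs one more input, namely that the preceding term in (\ref{les}), which is $\H^r(\R j_*\M^U(r)/p^n)\to i_*\H^{0}(\nu_n^{r-1})$... wait, that is exactly the map (\ref{nbgfe}) and is handled below. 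So in fact for $k>r$ the only possibly nonzero cohomology of $\R j_*\M^U(r)/p^n$ sits in degree $r+1$, and I must still kill that. To do so I use that the connecting map lands in $i_*\nu_n^{r-1}$ and that $\H^{r+1}(\M^X(r)/p^n)=0$ forces $\H^{r+1}(\R j_*\M^U(r)/p^n)$ to inject into $i_*\nu_n^{r-1}$, while the surjectivity of (\ref{nbgfe}) (proved next) together with the exactness of the triangle forces this injection to be zero; hence $\H^k(\R j_*\M^U(r)/p^n)=0$ for all $k>r$, as claimed.

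For the surjectivity of (\ref{nbgfe}), consider the segment of (\ref{les}) in degree $k=r$:
\begin{equation}
\label{degr}
\H^r\bigl(\M^X(r)/p^n\bigr) \to \H^r\bigl(\R j_*\M^U(r)/p^n\bigr) \xrightarrow{\ \partial\ } i_*\nu_n^{r-1} \to \H^{r+1}\bigl(\M^X(r)/p^n\bigr).
\end{equation}
The target $\H^{r+1}(\M^X(r)/p^n)$ vanishes by the argument above (Theorem \ref{gtfn} in degrees $r+1$ and $r+2$ plus universal coefficients), so the connecting map $\partial$ is surjective; this $\partial$ is precisely the map (\ref{nbgfe}) built from the triangle and the isomorphism (\ref{hgfd}). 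This completes the proof. The main obstacle is bookkeeping the degree shifts consistently — in particular checking $c=1$ and tracking the shift $[-r-1]$ through the triangle so that the logarithmic de Rham–Witt contribution lands exactly in degrees $r$ and $r+1$ — together with the routine but necessary universal-coefficients argument that propagates the vanishing of Theorem \ref{gtfn} from $\M^X(r)$ to $\M^X(r)/p^n$.
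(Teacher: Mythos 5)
Your proposal is correct in substance and follows exactly the paper's route: tensor the localization triangle (\ref{4t56uzg}) (with $c=1$) with $\integers/p^n$, identify the closed-fibre term via (\ref{hgfd}), and read off the long exact sequence of cohomology sheaves, using Theorem \ref{gtfn} (propagated to $\M^X(r)/p^n$) and the exactness of $i_*$; the surjectivity of (\ref{nbgfe}) is then the surjectivity of the connecting map in degree $r$, forced by $\H^{r+1}(\M^X(r)/p^n)=0$.

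One local slip: your treatment of $k=r+1$ is mis-indexed and creates a spurious complication. The term $i_*\nu_n^{r-1}$ occurs only as $\H^{r+1}$ of $\R i_*(\M^Z(r-1)/p^n)[-2]\cong i_*\nu_n^{r-1}[-r-1]$, so in the long exact sequence it receives the connecting map from $\H^{r}(\R j_*\M^U(r)/p^n)$ and maps onward to $\H^{r+1}(\M^X(r)/p^n)$; there is no map $\H^{r+1}(\R j_*\M^U(r)/p^n)\to i_*\nu_n^{r-1}$ at all. Indeed your own displayed sequence already shows this: for $k=r+1$ the right-hand term is $i_*\H^{1}(\nu_n^{r-1})=0$, so $\H^{r+1}(\R j_*\M^U(r)/p^n)$ is squeezed between $\H^{r+1}(\M^X(r)/p^n)=0$ and $0$ and vanishes immediately, for the same reason as all $k>r$. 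In particular the appeal to the (not yet proved) surjectivity of (\ref{nbgfe}) to kill this group is unnecessary and should be deleted; with that correction the argument is clean and coincides with the paper's.
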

\begin{proof}
This follows from Theorem \ref{gtfn}, the exactness of $i_*$
and the long exact sequence of cohomology sheaves
induced by the exact triangle (\ref{4t56uzg}).
\end{proof}

\begin{lemma}
Suppose $X \in \Sm_{S'}'$ is of characteristic $p$.
Then the diagram
$$\xymatrix{\M^X(r)/p^{n+1} \ar[r]^(.6)\cong \ar[d] &
\nu_{n+1}^r[-r] \ar[d] \\
\M^X(r)/p^n \ar[r]^(.6)\cong &
\nu_n^r[-r]}$$
in $\D(\Sh(X_\Zar,\integers/p^{n+1}))$ commutes.
\end{lemma}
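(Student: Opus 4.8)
The statement to prove is that the isomorphisms $\M^X(r)/p^{n+1} \cong \nu_{n+1}^r[-r]$ and $\M^X(r)/p^n \cong \nu_n^r[-r]$ from Theorem \ref{fbmtzzt} are compatible with the reduction maps on both sides. The plan is to trace through the construction of the isomorphism in Theorem \ref{fbmtzzt} and check that each step is natural in $n$.

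First I would reduce, exactly as in the proof of Theorem \ref{fbmtzzt}, to the case where $X$ is smooth over a perfect field $k$ of characteristic $p$: the general case is obtained by writing $D$ (or the relevant base) as a filtered colimit and invoking a continuity argument with \cite[I.\,(1.10.1)]{illusie.complexe}, and filtered colimits of commuting squares commute, so naturality descends along the colimit. In the smooth-over-perfect-field case, the isomorphism \eqref{hgfd} is the Geisser--Levine isomorphism \cite[Theorem 8.3]{geisser-levine.p}, which identifies $\H^r(\M^X(r)/p^n)$ with $\nu_n^r = W_n\Omega^r_{X,\log}$ via the dlog symbol map sending a cycle corresponding to a unit tuple $(x_1,\ldots,x_r)$ (a closed point of a torus, or rather a Milnor-type symbol) to $\dlog \underline{x_1}\wedge\cdots\wedge\dlog\underline{x_r}$, together with the vanishing $\H^k(\M^X(r)/p^n)=0$ for $k\neq r$ (using Theorem \ref{gtfn} for $k>r$ and Geisser--Levine for $k<r$). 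The key point is that this symbol map is defined uniformly in $n$: the Teichmüller lift $\underline{x}=(x,0,0,\ldots)$ is compatible with the projections $W_{n+1}\Omega^\bullet \twoheadrightarrow W_n\Omega^\bullet$ by construction (these projections respect the multiplication and send $\underline{x}$ to $\underline{x}$), and the reduction map $\M^X(r)/p^{n+1}\to \M^X(r)/p^n$ on the cycle side is induced by $\integers/p^{n+1}\to\integers/p^n$, which acts on the generating symbols in the evident way. Hence the square of symbol maps commutes on the level of the generators that $\et$-locally generate $\nu_n^r$, and therefore commutes as a square of sheaves.

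To organize this cleanly, I would phrase it as: the Geisser--Levine isomorphism is induced by a morphism of (pro-)complexes $\{\M^X(r)/p^n\}_n \to \{\nu_n^r[-r]\}_n$ in the category of pro-objects of $\D(\Sh(X_\Zar,\integers))$ (or simply a compatible system indexed by $n$), and \cite[Theorem 8.3]{geisser-levine.p} shows each component is a quasi-isomorphism; the desired square is then literally the compatibility expressed by this being a morphism of systems. Concretely one checks that the composite $\M^X(r)/p^{n+1}\to\M^X(r)/p^n \xrightarrow{\cong}\nu_n^r[-r]$ agrees with $\M^X(r)/p^{n+1}\xrightarrow{\cong}\nu_{n+1}^r[-r]\to\nu_n^r[-r]$ by comparing what both do to the Milnor symbol generating classes in degree-$r$ cohomology, using that both the truncation of the De Rham--Witt complex and the reduction of coefficients are compatible with $\dlog$ of Teichmüller representatives.

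The main obstacle is that Theorem \ref{fbmtzzt} as cited is a statement about a single $n$, so strictly speaking one must unwind how the isomorphism is built rather than cite it as a black box; in particular one has to know that Geisser--Levine construct their comparison map functorially in the coefficient level $n$, not merely for each fixed $n$ separately. This is indeed the case — their map is built from the dlog symbol which is manifestly compatible with the transition maps — but it requires looking inside the proof in \cite{geisser-levine.p} rather than only at the statement. Once that compatibility at the symbol level is in hand, the rest is the formal observation that a morphism which is an isomorphism and is natural in $n$ yields exactly the commuting square claimed.
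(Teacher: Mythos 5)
Your argument is correct, and it is essentially the argument the paper leaves implicit: this lemma is stated without proof, the intended justification being exactly that the Geisser--Levine isomorphism is built from the $\dlog$ symbol map, which is compatible with the reductions $\integers/p^{n+1}\to\integers/p^n$ and the projections $W_{n+1}\Omega^r_{\log}\twoheadrightarrow W_n\Omega^r_{\log}$ (Teichm\"uller representatives map to Teichm\"uller representatives), together with the same filtered-colimit reduction to the perfect-field case already used in Theorem \ref{fbmtzzt}. One small point worth making explicit: to pass from commutativity on symbols to commutativity in $\D(\Sh(X_\Zar,\integers/p^{n+1}))$, use that $\nu_n^r[-r]$ is concentrated in degree $r$ while $\M^X(r)/p^{n+1}$ has cohomology in degrees $\le r$, so maps into $\nu_n^r[-r]$ in the derived category are determined by the induced sheaf maps on $\H^r$ --- which is exactly where your symbol computation lives; you invoke the concentration in degree $r$, so this is a clarification rather than a gap.
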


Suppose $m$ is invertible on $X \in \Sm_{S'}'$.
Then there is a cycle class map $$\M^X(r)/m
\to \R \epsilon_* \integers/m(r).$$ For a definition see the proof
of Theorem \ref{gtt5zjh}.
The {\'e}tale sheafification of the cycle
class map is an isomorphism in $\D(\Sh(X_\et,\integers/m))$,
see \cite[Theorem 1.2. 4.]{geisser.dede}.

Let $f \colon Y \to X$ be a flat morphism of schemes for which the
motivic cycle complexes are defined. Then there is a flat
pullback $f^*\M^X(r) \to \M^Y(r)$.

\begin{lemma}
\label{gbhtfrr}
Let $f \colon Y \to X$ be a flat morphism of schemes for which the
motivic cycle complexes are defined. Suppose $m$ is invertible on $X$.
Then the diagram
$$\xymatrix{f^* \M^X(r)/m \ar[r] \ar[d] &
f^* \R \epsilon_* \integers/m(r) \ar[d] \\
\M^Y(r)/m \ar[r] &
\R \epsilon_* \integers/m(r)}$$
commutes.
\end{lemma}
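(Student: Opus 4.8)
The plan is to reduce the commutativity of the square to a naturality statement for the cycle class map with respect to flat pullback, and to trace this back to the construction of that map. First I would recall that the horizontal arrows are the (mod $m$) cycle class maps $\M^X(r)/m \to \R\epsilon_*\integers/m(r)^X$ and $\M^Y(r)/m \to \R\epsilon_*\integers/m(r)^Y$, whose definition is deferred to the proof of Theorem~\ref{gtt5zjh}. The left vertical arrow is the flat pullback $f^*\M^X(r) \to \M^Y(r)$ (tensored with $\integers/m$), and the right vertical arrow is the base change map $f^*\R\epsilon_*\integers/m(r)^X \to \R\epsilon_*\integers/m(r)^Y$, which comes from the fact that $\mu_m^{\otimes r}$ pulls back to $\mu_m^{\otimes r}$ along $f_\et$ together with the unit/counit adjunction for $\epsilon^*\dashv \R\epsilon_*$.

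The key steps, in order: (1) Unwind the definition of the cycle class map. It is built levelwise from the graph construction on cycles in good position together with the map assigning to a codimension-$r$ cycle its class in \'etale cohomology with $\mu_m^{\otimes r}$-coefficients; this factors through a zig-zag in which one inverts a quasi-isomorphism coming from Bloch--Kato / Beilinson--Lichtenbaum, so one must check that each stage is compatible with flat pullback. (2) Observe that flat pullback of cycles is defined on cycles in good position and commutes with the graph construction, so at the level of presheaves of complexes the outer square commutes strictly before sheafification and $\mathbb{A}^1$- (or, here, just derived-category) localization. (3) Observe that the \'etale cycle class of a cycle is compatible with flat pullback: this is the standard functoriality of the cycle class map in \'etale cohomology, i.e. $f^*$ of the class of $W$ equals the class of $f^*W$, which is exactly the base change map on $\R\epsilon_*\mu_m^{\otimes r}$. (4) Pass to the Zariski sheafification and then to the derived category $\D(\Sh(Y_\Zar,\integers/m))$; since all four maps are induced from the presheaf-level maps by applying $Lf^*$ and sheafification (which are functorial), the square commutes there too.

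The main obstacle will be step (1): the cycle class map is only defined as a morphism in the derived category and its construction passes through a zig-zag (a quasi-isomorphism that gets inverted), so ``the square commutes'' is not literally a strict equality of chain maps but an equality of derived morphisms. I would handle this by exhibiting the whole zig-zag functorially in the pair $(X,$ cycle data$)$ along flat maps — that is, showing that the intermediate complex used to define the map, and the quasi-isomorphism to it, are both compatible with $f^*$ — so that inverting it is automatically compatible with $f^*$. Concretely this means checking that the auxiliary complex (the one comparing Bloch's higher cycles with truncated \'etale cohomology, where Geisser's results are invoked) is contravariantly functorial for flat morphisms in a way compatible with the two evident maps into it from $\M^{(-)}(r)/m$ and from $\R\epsilon_*\integers/m(r)$. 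Once that naturality is in place, the commutativity of the stated square is formal.

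One extra point worth noting: since the assertion is about commutativity of a diagram of morphisms in a derived category, it suffices to check it after passing to a suitable model where the derived pullback $Lf^*$ is computed by an honest functor (e.g. using flat resolutions, or exploiting that $f$ is flat so $f^*$ is already exact on the relevant sheaves), so no higher-coherence subtleties intervene; this reduces everything to the levelwise, presheaf-theoretic naturality described above. I therefore expect the proof to be short once the construction in the proof of Theorem~\ref{gtt5zjh} is recalled, essentially remarking that that construction is natural with respect to flat pullback.
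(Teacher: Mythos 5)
Your proposal is correct and takes essentially the same route as the paper, whose proof is simply the assertion that the lemma follows from the definition of the \'etale cycle class map; your unwinding of the defining zig-zag and the check that each stage (the complexes built from cycles in good position, the semi-purity identification, and the comparison maps into the fibrant replacement) is natural for flat pullback is exactly the intended content. One minor correction: the quasi-isomorphism that gets inverted comes from the semi-purity computation of $H^{2r}(\caG^W)$ and the truncation argument, not from Bloch--Kato/Beilinson--Lichtenbaum (which only enters later to show the map is an isomorphism after truncation), but this does not affect your argument.
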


\begin{proof}
This follows from the definition of the {\'e}tale cycle class map.
\end{proof}

\begin{lemma}
\label{nhgrrh}
Let $X \in \Sm_{S'}'$ and suppose $m$ is invertible on $X$.
Let $m' |m$.
Then the diagram
$$\xymatrix{\M^X(r)/m \ar[r] \ar[d] &
\R \epsilon_* \integers/m(r) \ar[d] \\
\M^X(r)/m' \ar[r] &
\R \epsilon_* \integers/m'(r)}$$
in $\D(\Sh(X_\Zar,\integers/m))$ commutes.
\end{lemma}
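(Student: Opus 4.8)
The plan is to reduce the commutativity of the square to the functoriality of the étale cycle class map with respect to the change-of-coefficient maps $\integers/m \to \integers/m'$, and to trace through how both the left-hand vertical map (reduction of motivic complexes mod $m'$) and the right-hand vertical map (the map $\R\epsilon_* \integers/m(r) \to \R\epsilon_* \integers/m'(r)$ induced by $\mu_m^{\otimes r} \to \mu_{m'}^{\otimes r}$) are defined. First I would recall, from the construction of the cycle class map $\M^X(r)/m \to \R\epsilon_* \integers/m(r)$ referenced in the proof of Theorem \ref{gtt5zjh}, that it is built from a map of complexes (or a zig-zag) that is natural in the coefficient ring in an appropriate sense: the mod-$m$ cycle class map is obtained by applying $-\otimes^\L \integers/m$ to an integral construction, or equivalently by a universal construction that commutes with the evident quotient maps $\integers/m \to \integers/m'$ for $m'|m$.

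The key steps, in order, are: (1) unwind the definition of the vertical maps. The left vertical arrow $\M^X(r)/m \to \M^X(r)/m'$ is $\id_{\M^X(r)} \otimes^\L (\integers/m \to \integers/m')$; the right vertical arrow is $\R\epsilon_*$ applied to the natural surjection $\mu_m^{\otimes r} \twoheadrightarrow \mu_{m'}^{\otimes r}$ (the $r$-th tensor power of the reduction $\mu_m \to \mu_{m'}$, which under the identification with roots of unity is raising to the $(m/m')$-th power). (2) Recall that the étale cycle class map at level $m$ arises, essentially by construction, from the same Bloch-cycle-complex input as at level $m'$: the formula producing a class in étale cohomology from an algebraic cycle is compatible with the coefficient reduction because it is built functorially out of the fundamental class / Gysin maps in étale cohomology with $\integers/m$-coefficients, and these Gysin maps are compatible with $\integers/m \to \integers/m'$. (3) Conclude that the composite $\M^X(r)/m \to \R\epsilon_* \integers/m(r) \to \R\epsilon_* \integers/m'(r)$ agrees with $\M^X(r)/m \to \M^X(r)/m' \to \R\epsilon_* \integers/m'(r)$, since both are the mod-$m'$ cycle class map precomposed with the canonical map from the mod-$m$ complex — invoking Lemma \ref{gbhtfrr}'s style of argument, i.e. that everything is induced from a single construction before applying $-\otimes^\L -$.

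The main obstacle I expect is purely bookkeeping: one has to pin down exactly which model of the cycle class map is being used (the one in the proof of Theorem \ref{gtt5zjh}, which presumably goes through higher Chow groups and a comparison with étale motivic cohomology, or through a Gysin/deformation-to-the-normal-cone construction) and verify that the chosen zig-zag of complexes is strictly compatible with coefficient reduction, not merely compatible up to homotopy — or, if only up to homotopy, that the homotopies are coherent enough to make the square in the derived category commute. Since the statement is in $\D(\Sh(X_\Zar,\integers/m))$ rather than at the level of complexes, homotopy-commutativity suffices, so in practice the argument is: the cycle class map is natural in the coefficients by construction, hence the square commutes, exactly as in the proof of Lemma \ref{gbhtfrr}. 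I would therefore write the proof as a short paragraph pointing to the construction in the proof of Theorem \ref{gtt5zjh} and noting that the naturality of that construction in $m$ (via $\mu_m^{\otimes r} \to \mu_{m'}^{\otimes r}$) immediately yields the claim.
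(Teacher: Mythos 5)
Your proposal is correct and follows the same route as the paper, which simply observes that the commutativity follows from the definition of the \'etale cycle class map (its construction is natural in the coefficient reduction $\integers/m \to \integers/m'$). Your more detailed unwinding of the vertical maps and the naturality argument is exactly the content behind that one-line proof.
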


\begin{proof}
This follows from the definition of the {\'e}tale cycle class map.
\end{proof}

\begin{theorem}
\label{edfgth76}
Let $X \in \Sm_{S'}'$ and suppose $m$ is invertible on $X$.
Then there is an isomorphism
$$\M^X(r)/m \cong
\tau_{\le r}(\R \epsilon_* \integers/m(r))$$
in $\D(\Sh(X_\Zar,\integers/m))$ induced by the cycle
class map.
\end{theorem}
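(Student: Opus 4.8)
The plan is to combine the Beilinson–Lichtenbaum statement (the étale sheafification of the cycle class map is an isomorphism, cited right before from \cite[Theorem 1.2.4.]{geisser.dede}) with the vanishing statement of Theorem \ref{gtfn}, namely $\H^k(\M^X(r))=0$ for $k>r$, which by flatness of $\epsilon^*$ and compatibility of $/m$ passes to $\M^X(r)/m$. Concretely: the cycle class map $\M^X(r)/m \to \R\epsilon_* \integers/m(r)$ becomes an isomorphism after applying $\epsilon^*$ (equivalently, after étale sheafification), because $\epsilon^* \R\epsilon_* \integers/m(r) \cong \integers/m(r)$ on $X_\et$ and $\epsilon^* (\M^X(r)/m) \cong \M^X_\et(r)/m$, and by the cited theorem the latter maps isomorphically to $\integers/m(r)$.

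The second ingredient is to show that $\M^X(r)/m$ already has cohomology concentrated in degrees $\le r$, which lets one factor the cycle class map through the truncation $\tau_{\le r}\R\epsilon_*\integers/m(r)$. The degree bound for $\M^X(r)$ is Theorem \ref{gtfn}; tensoring with $\integers/m$ via the derived tensor product $\otimes^\L \integers/m$ can only introduce a $\Tor$-term one degree higher, so a priori one gets concentration in degrees $\le r+1$. I would close the gap by noting that $\H^{r+1}(\M^X(r)/m) = \H^{r+1}(\M^X(r))\otimes\integers/m \oplus \Tor_1(\H^{r+2}(\M^X(r)),\integers/m)$ is built from $\H^{r+1}$ and $\H^{r+2}$ of $\M^X(r)$ — wait, rather: the universal coefficient spectral sequence gives $\H^{r+1}(\M^X(r)/m)$ a contribution from $\Tor_1(\integers/m, \H^{r+2}(\M^X(r)))$, which vanishes by Theorem \ref{gtfn} since $r+2 > r$, and a contribution $\H^{r+1}(\M^X(r))/m$, which also vanishes since $r+1>r$. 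Hence $\M^X(r)/m$ is concentrated in degrees $\le r$, so the canonical map $\M^X(r)/m \to \tau_{\le r}\R\epsilon_*\integers/m(r)$ is well-defined.

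Finally I would assemble the two facts. Since $\tau_{\le r}$ is a truncation functor and $\M^X(r)/m$ lives in degrees $\le r$, the cycle class map $\M^X(r)/m \to \R\epsilon_*\integers/m(r)$ factors canonically through $\tau_{\le r}\R\epsilon_*\integers/m(r)$. To check this factored map is an isomorphism in $\D(\Sh(X_\Zar,\integers/m))$ it suffices to check on cohomology sheaves (or equivalently on stalks at points of $X_\Zar$). For $k>r$ both sides vanish. For $k\le r$, one compares $\H^k(\M^X(r)/m)$ with $\H^k(\R\epsilon_*\integers/m(r))$; applying $\epsilon^*$ (which on cohomology sheaves in the relevant range is an isomorphism between Zariski cohomology sheaves and the corresponding étale ones, by Beilinson–Lichtenbaum applied to the localized problem and a standard comparison of Zariski vs étale lower cohomology — or more cleanly: by Gabber/Suslin-type rigidity the natural map becomes iso after sheafification and in degrees $\le r$ the Zariski sheaves inject into their étale counterparts) shows the comparison is an isomorphism.

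The main obstacle is the last step: turning the étale-local statement (cycle class map is an iso after $\epsilon^*$) into a Zariski-local isomorphism in degrees $\le r$. The cleanest route is to invoke the fact that for the truncated complexes the canonical map $\tau_{\le r}\R\epsilon_*\epsilon^*\mathcal F \leftarrow \mathcal F$ — applied with $\mathcal F = \M^X(r)/m$ which is already $\tau_{\le r}$ and whose étale sheafification is $\integers/m(r)$ — is an isomorphism, i.e.\ that $\M^X(r)/m$ already satisfies "Zariski $=$ truncated étale". Equivalently, one shows $\M^X(r)/m \to \R\epsilon_*\epsilon^*(\M^X(r)/m) = \R\epsilon_*\integers/m(r)$ identifies the source with the $\le r$ truncation of the target; for this I would reduce to stalks at points $x\in X$, where $\mathcal O_{X,x}$ is a regular local ring with $m$ invertible, and use that for such local rings Levine's motivic cohomology with finite coefficients agrees with étale cohomology in degrees $\le r$ — which is exactly the content of the cited Beilinson–Lichtenbaum comparison \cite[Theorem 1.2.4.]{geisser.dede} together with the degree bound. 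Once this stalkwise statement is in hand, sheafifying and invoking the degree-$\le r$ concentration of $\M^X(r)/m$ completes the proof.
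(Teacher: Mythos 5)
There is a genuine gap, and it sits exactly at the point you yourself flag as "the main obstacle". Your inputs are (a) the degree bound $\H^k(\M^X(r)/m)=0$ for $k>r$ (your universal–coefficient argument from Theorem \ref{gtfn} is fine), and (b) the \emph{\'etale-sheafified} statement that the cycle class map induces $\M_\et^X(r)/m \cong \integers/m(r)$ (Geisser, Theorem 1.2.4). From (a) and (b) you try to conclude that the factored map $\M^X(r)/m \to \tau_{\le r}\R\epsilon_*\integers/m(r)$ is an isomorphism of Zariski sheaves in degrees $\le r$, by "applying $\epsilon^*$" resp.\ by asserting that Zariski motivic sheaves inject into their \'etale counterparts in low degrees. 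But this implication is not formal: knowing that $\epsilon^*F \cong \integers/m(r)$ and that $F$ is concentrated in degrees $\le r$ says nothing about whether $F \to \tau_{\le r}\R\epsilon_*\epsilon^*F$ is an isomorphism — that is precisely the Beilinson--Lichtenbaum property, which for the cycle complexes is a deep theorem equivalent to Bloch--Kato (Suslin--Voevodsky, Geisser--Levine), not a consequence of (a)+(b). The injectivity "in degrees $\le r$" you invoke is likewise part of the Beilinson--Lichtenbaum statement, so at the decisive step you are assuming the conclusion. (A minor additional slip: $\epsilon^*\R\epsilon_*\integers/m(r) \to \integers/m(r)$ is only the counit, not an isomorphism; what you mean is the composite with the sheafified cycle class map, i.e.\ again Theorem 1.2.4.)

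The paper closes this gap by citing a different input: Geisser's Theorem 1.2.2, the \emph{integral} comparison $\M^X(r) \cong \tau_{\le (r+1)}\R\epsilon_*\M_\et^X(r)$, which is where the (now proven) Bloch--Kato conjecture actually enters. From Theorem \ref{gtfn} one then gets $\R^{r+1}\epsilon_*\M_\et^X(r)=0$, so the truncation drops to $\tau_{\le r}$; reducing mod $m$ (and using that $\R\epsilon_*$ commutes with the cone of multiplication by $m$) gives $\M^X(r)/m \cong \tau_{\le r}(\R\epsilon_*\M_\et^X(r)/m)$, and only \emph{then} does Theorem 1.2.4 identify $\M_\et^X(r)/m$ with $\integers/m(r)$, compatibly with the cycle class map. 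So your outline becomes correct once you replace the attempted "\'etale iso $+$ degree bound $\Rightarrow$ Zariski iso" step by an explicit appeal to the Zariski-versus-\'etale comparison of Geisser's Theorem 1.2.2 (or an equivalent form of Beilinson--Lichtenbaum over $S'$).
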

\begin{proof}
By \cite[Theorem 1.2. 2.]{geisser.dede}
(which we can apply since the Bloch-Kato conjecture is proven,
\cite{voevodsky}) we have
$$\M^X(r) \cong \tau_{\le(r+1)} \R \epsilon_* \M_\et^X(r).$$
By Theorem \ref{gtfn} it follows that
$\R^{r+1} \epsilon_* \M_\et^X(r)=0$. Thus
$$\M^X(r)/m \cong \tau_{\le r}
(\R \epsilon_* \M_\et^X(r)/m).$$
But by \cite[Theorem 1.2. 4.]{geisser.dede} we have
$$\M_\et^X(r)/m \cong \integers/m(r)$$
induced by the cycle class map (see the proof of
\cite[Theorem 1.2. 4.]{geisser.dede}).
This shows the claim.
\end{proof}

\begin{theorem}
\label{gfre5r5}
Let $i \colon Z \to X$ be a closed inclusion in $\Sm_{S'}'$ of codimension $c$
and suppose $m$ is invertible on $X$.
Then there is a canonical isomorphism
$$\R i^! \integers/m(r) \cong \integers/m(r-c)[-2c]$$
in $\D(\Sh(Z_\et,\integers/m))$.
\end{theorem}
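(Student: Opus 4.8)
The plan is to derive this from Levine's localization triangle (Theorem~\ref{gbter4}) and the \'etale comparison $\M_\et^X(r)/m\cong\integers/m(r)$ recalled before Theorem~\ref{edfgth76}. Conceptually it is a form of absolute purity, so one could also invoke Gabber's theorem for the regular closed immersion $Z\hookrightarrow X$ with $m$ invertible; but it seems cleaner here to keep track of the cycle class maps.

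First I would reduce the triangle~(\ref{4t56uzg}) modulo $m$. Since $\R i_*$ and $\R j_*$ are triangulated functors and $-\otimes^\L\integers/m$ is computed by a mapping cone of multiplication by $m$, both commute with it, so~(\ref{4t56uzg}) yields an exact triangle
\[
\R i_{*}\bigl(\M^Z(r-c)/m\bigr)[-2c]\ \longrightarrow\ \M^X(r)/m\ \longrightarrow\ \R j_{*}\bigl(\M^U(r)/m\bigr)\ \xrightarrow{\,+1\,}
\]
in $\D(\Sh(X_\Zar,\integers/m))$, whose middle arrow is, by construction of the triangle and since $\M^U(r)\cong j^*\M^X(r)$, the unit of $(j^*,\R j_*)$ at $\M^X(r)/m$. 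Next I would apply the \'etale sheafification $\epsilon^*$, using that $\epsilon^*\R j_{\Zar,*}\cong\R j_{\et,*}\epsilon^*$ (restriction to opens commutes with the change of topology, as one checks on right adjoints) and $\epsilon^*\R i_{\Zar,*}\cong\R i_{\et,*}\epsilon^*$ (pushforward along a closed immersion is fully faithful onto the sheaves supported on $Z$, a property preserved by $\epsilon^*$), together with the cycle class isomorphisms $\M_\et^X(r)/m\cong\integers/m(r)$, $\M_\et^Z(r-c)/m\cong\integers/m(r-c)$, $\M_\et^U(r)/m\cong\integers/m(r)$ (\cite[Theorem~1.2.4]{geisser.dede}) and their compatibility with restriction along $j$ (Lemma~\ref{gbhtfrr}) and with the localization boundary (built into the construction of the \'etale cycle class map, cf.\ Proposition~\ref{gtrf5rdd}). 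This turns the displayed triangle into an exact triangle
\[
\R i_{\et,*}\bigl(\integers/m(r-c)\bigr)[-2c]\ \longrightarrow\ \integers/m(r)\ \longrightarrow\ \R j_{\et,*}\,\integers/m(r)\ \xrightarrow{\,+1\,}
\]
in $\D(\Sh(X_\et,\integers/m))$ whose middle arrow is the unit of $(j^*,\R j_*)$ at $\integers/m(r)$.

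Finally I would compare this with the tautological localization triangle $\R i_{\et,*}\R i^!_\et\integers/m(r)\to\integers/m(r)\to\R j_{\et,*}j^*\integers/m(r)\xrightarrow{+1}$: here $j^*\integers/m(r)=\integers/m(r)$ on $U_\et$, so the two triangles agree in the last two terms and in the map between them (both being the unit of $(j^*,\R j_*)$), hence their first terms are isomorphic, giving $\R i_{\et,*}\R i^!_\et\integers/m(r)\cong\R i_{\et,*}\bigl(\integers/m(r-c)[-2c]\bigr)$; applying $\R i^!_\et$ and using $\R i^!_\et\R i_{\et,*}\cong\id$ yields the asserted isomorphism (canonical, since all maps entering the comparison are). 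The main obstacle, and the point needing genuine care, is the middle step: one must ensure that $\epsilon^*$ really transforms Levine's localization triangle into the \'etale localization triangle \emph{as a triangle}, i.e.\ that the \'etale cycle class map is compatible not only with flat pullback (Lemma~\ref{gbhtfrr}) but also with the Gysin/boundary maps of~(\ref{4t56uzg}) — precisely the compatibility that has to be arranged when that cycle class map is constructed.
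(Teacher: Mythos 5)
Your first step (reducing Levine's triangle mod $m$) is fine, but the argument breaks at the change-of-topology step. The commutation you need, $\epsilon^*\R j_{\Zar,*}\cong \R j_{\et,*}\epsilon^*$, is false in general: $\epsilon^*$ is a left adjoint and does not commute with the derived pushforward along an open immersion, and the adjoint bookkeeping you offer actually proves the \emph{other} commutation, $j^*\R\epsilon_*\cong\R\epsilon_* j^*$ (restriction to opens commutes with $\R\epsilon_*$, which is what Theorem \ref{edfgth76} and Proposition \ref{bgerhh4} exploit). Concretely, the \'etale stalk of $\epsilon^*\R j_{\Zar,*}(\M^U(r)/m)$ at a geometric point of $Z$ computes mod-$m$ motivic cohomology of a punctured Zariski-local scheme, while the stalk of $\R j_{\et,*}\,\integers/m(r)$ computes \'etale cohomology of the punctured strictly henselian local scheme, which by the very purity you are trying to prove is nonzero in degree $2c-1$; the Beilinson--Lichtenbaum comparison only identifies these complexes after the truncation $\tau_{\le r}$, so the two sides genuinely differ once $2c-1>r$. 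The failure is already fatal for $r<c$: there $\M^Z(r-c)=0$ by convention, so your sheafified triangle would read $0\to\integers/m(r)\to\R j_{\et,*}\integers/m(r)$ and would ``prove'' $\R i^!\,\integers/m(r)=0$, contradicting the statement $\R i^!\,\integers/m(r)\cong\integers/m(r-c)[-2c]\neq 0$ (e.g.\ $r=0$, where $\M^U(0)/m$ is the constant sheaf, $\R j_{\Zar,*}$ of it has no higher direct images, but $\R^{2c-1}j_{\et,*}\integers/m\neq 0$).

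More structurally, the logical dependence runs the other way than you set it up: \'etale absolute purity cannot be extracted from the motivic localization triangle plus the mod-$m$ comparison, because the comparison is blind to \'etale cohomology above degree $r$, which is exactly where the Gysin class lives. The paper accordingly takes Theorem \ref{gfre5r5} as an input, citing Riou's account of Gabber's absolute purity (\cite{riou.gysin}), and then uses it (via the purity isomorphism $\caG_Z\cong i_*\tilde{\caG}[-2c]$ in the proof of Theorem \ref{gtt5zjh}) to control the motivic side --- not the reverse. If you want a proof rather than a citation, you must invoke absolute purity (Gabber/Riou, or Thomason's earlier case) directly; there is no shortcut through Theorem \ref{gbter4}.
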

\begin{proof}
This is contained in \cite{riou.gysin}.
\end{proof}

A consequence is the localization/Gysin exact triangle for
$\mathrm{\acute{e}}$tale cohomology.

\begin{corollary}
\label{grtehh}
Let $i \colon Z \to X$ be a closed inclusion in $\Sm_{S'}'$ of codimension $c$
and $j \colon U \to X$ the complementary open inclusion. Suppose $m$ is
invertible on $X$.
Then there is an exact triangle
$$i_* \integers/m(r-c)[-2c] \to \integers/m(r) \to \R j_* \integers/m(r)
\to i_* \integers/m(r-c)[-2c+1]$$
in $\D(\Sh(X_\et,\integers/m))$.
\end{corollary}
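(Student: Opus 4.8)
The plan is to obtain the triangle formally, by combining the standard recollement (localization) formalism on the small \'etale sites with the purity identification already supplied by Theorem \ref{gfre5r5}. First I would recall that the closed--open decomposition of $X$ determined by $i \colon Z \to X$ and $j \colon U \to X$ gives, for every $K \in \D(\Sh(X_\et,\integers/m))$, a natural exact triangle
$$i_* \R i^! K \to K \to \R j_* j^* K \to i_* \R i^! K[1]$$
in $\D(\Sh(X_\et,\integers/m))$. Here $i_*$ need not be derived, being pushforward along a closed immersion of schemes, and the triangle is functorial in $K$; this holds for an arbitrary closed immersion, so the possible singularities of schemes in $\Sm_{S'}'$ play no role (in fact all the schemes occurring are regular).

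I would then specialize to $K = \integers/m(r) = \mu_m^{\otimes r}$. Since $m$ is invertible on $X$ it is invertible on $U$ as well, and compatibility of $\mu_m^{\otimes r}$ with pullback gives a canonical isomorphism $j^* \integers/m(r) \cong \integers/m(r)$ on $U_\et$, whence $\R j_* j^* \integers/m(r) \cong \R j_* \integers/m(r)$. On the other hand Theorem \ref{gfre5r5} provides a canonical isomorphism $\R i^! \integers/m(r) \cong \integers/m(r-c)[-2c]$ in $\D(\Sh(Z_\et,\integers/m))$; applying the exact functor $i_*$ yields $i_* \R i^! \integers/m(r) \cong i_* \integers/m(r-c)[-2c]$. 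Substituting these two identifications into the recollement triangle produces exactly the asserted exact triangle
$$i_* \integers/m(r-c)[-2c] \to \integers/m(r) \to \R j_* \integers/m(r) \to i_* \integers/m(r-c)[-2c+1].$$

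I do not anticipate a serious obstacle: the entire geometric content --- the absolute purity isomorphism in the present mixed-characteristic setting --- is packaged into Theorem \ref{gfre5r5}. The only points meriting a line of justification are that the recollement triangle is genuinely available for the small \'etale sites of the schemes in $\Sm_{S'}'$ (true for any closed immersion of schemes) and that the isomorphism of Theorem \ref{gfre5r5} is natural enough to be inserted into the connecting map of the triangle; both are routine given the results already cited, so the corollary is essentially a formal consequence.
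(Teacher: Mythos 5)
Your argument is correct and coincides with the paper's own proof: the corollary is obtained by taking the standard recollement triangle $i_*\R i^!\integers/m(r) \to \integers/m(r) \to \R j_* j^*\integers/m(r) \to$ on $X_\et$ and substituting the purity identification of Theorem \ref{gfre5r5}. Nothing further is needed.
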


\begin{proof}
This follows from Theorem \ref{gfre5r5} and the corresponding
exact triangle involving $\R i^! \integers/m(r)$.
\end{proof}

\begin{theorem}
\label{gtt5zjh}
Let $i \colon Z \to X$ be a closed inclusion in $\Sm_{S'}'$ of codimension $c$ and suppose $m$
is invertible on $X$.
Then the diagram
$$\xymatrix{\R i^! \M^X(r)/m \ar[r]^(.44)\cong \ar[dd] &
\M^Z(r-c)/m[-2c] \ar[ddd] \\
& \\
\R i^! \R \epsilon_* \integers/m(r) \ar[d]^\cong & \\
\R \epsilon_* \R i^! \integers/m(r) \ar[r]^(.44)\cong &
\R \epsilon_* \integers/m(r-c)[-2c]}$$
commutes.
\end{theorem}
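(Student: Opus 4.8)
The plan is to show that the square is a diagram of isomorphisms all of which are induced by natural constructions, so that commutativity reduces to compatibility of the cycle class map with the Gysin/localization triangles on both the motivic and the étale side. First I would recall the three ingredients: the top horizontal isomorphism is the one from the Corollary following Theorem~\ref{gbter4} (i.e.\ $\R i^! \M^X(r) \cong \M^Z(r-c)[-2c]$, tensored with $\integers/m$); the bottom horizontal isomorphism is the one from Theorem~\ref{gfre5r5}; the left vertical map is $\R i^!$ applied to the cycle class map $\M^X(r)/m \to \R\epsilon_*\integers/m(r)$, followed by the base change isomorphism $\R i^! \R\epsilon_* \cong \R\epsilon_* \R i^!$; and the right vertical map is the cycle class map on $Z$ (shifted). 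So the content is: the cycle class map on $Z$, precomposed with the top isomorphism, equals the cycle class map on $X$ pushed through $\R i^!$ and the base change iso, postcomposed with the bottom iso.

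The key step is to trace everything back to the localization triangles. The isomorphism $\R i^!\M^X(r)\cong \M^Z(r-c)[-2c]$ is \emph{defined} via the triangle (\ref{4t56uzg}); likewise $\R i^!\integers/m(r)\cong\integers/m(r-c)[-2c]$ of Theorem~\ref{gfre5r5} is compatible with the étale localization triangle of Corollary~\ref{grtehh} (this is part of what \cite{riou.gysin} provides). Hence both horizontal isomorphisms fit into morphisms of triangles, and I would reduce commutativity of the square to the commutativity of the morphism of triangles: the cycle class map applied to the whole triangle (\ref{4t56uzg}) (with $\integers/m$-coefficients), mapping to the étale triangle of Corollary~\ref{grtehh}. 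That in turn is a diagram with three squares, one for each of $\R i_*\M^Z(r-c)[-2c]$, $\M^X(r)$, and $\R j_*\M^U(r)$: the middle square is literally the naturality of the cycle class map, the right-hand square is Lemma~\ref{gbhtfrr} applied to the flat (open) inclusion $j$ together with naturality of $\R\epsilon_*$, and the left-hand square is precisely the commuting square whose diagonal is the map being traced — i.e.\ its commutativity is equivalent to the statement we want. The circularity here is only apparent: the point is that the cycle class maps on $X$, $U$ and $Z$ were constructed \emph{compatibly} with these triangles (see the construction referenced in the proof of Theorem~\ref{gtt5zjh} itself and the definition referenced after Theorem~\ref{edfgth76}), so the left square commutes by construction of the $Z$-level cycle class map as the one induced on the fibre of the localization triangle.

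Concretely, I would argue as follows. Apply $\R\epsilon_*$ nowhere until the end; instead work with the étale sheaves $\M_\et^X(r)/m$ etc.\ and use that by Theorem~\ref{edfgth76} (and \cite[Theorem 1.2]{geisser.dede}) the cycle class map becomes the truncation map $\M^X(r)/m\to\tau_{\le r}\R\epsilon_*\integers/m(r)$, with $\M_\et^X(r)/m\cong\integers/m(r)$ étale-locally. The Gysin isomorphism for $\M$ and the one for $\integers/m(r)$ on the étale site then both come from the same geometric input (the deformation-to-the-normal-cone / purity construction), and the compatibility is exactly \cite[Theorem 1.2.4]{geisser.dede} combined with \cite{riou.gysin} and Lemma~\ref{gbhtfrr}. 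Finally the base-change isomorphism $\R i^!\R\epsilon_*\cong\R\epsilon_*\R i^!$ is natural and commutes with the localization triangles on both sites (it is induced by the exchange of $\R\epsilon_*$ with the localization triangle and with $i_*$, $\R j_*$), so adding it back in does not disturb commutativity. The main obstacle I anticipate is bookkeeping rather than conceptual: one must pin down the precise model for the cycle class map used in the proof of Theorem~\ref{gtt5zjh} (it is defined there) and check that with that model the Gysin isomorphism on $Z$ is \emph{by definition} the one induced on fibres, so that the left square of the morphism of triangles commutes on the nose; once that is set up, all other squares are naturality statements or are Lemma~\ref{gbhtfrr} and Lemma~\ref{nhgrrh}.
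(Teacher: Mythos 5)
Your overall strategy (reduce the square to a morphism from the motivic localization triangle (\ref{4t56uzg}) mod $m$ to the \'etale triangle of Corollary \ref{grtehh}) is in the right spirit, but the step where you dispose of the circularity is exactly where the proof is missing. The cycle class map on $Z$ is \emph{not} defined as "the map induced on the fibre of the localization triangle": it is defined intrinsically on $Z$, by the same recipe as on $X$ (cycles in good position on $Z\times\Delta^\bullet$ mapping into an injectively fibrant replacement of $\mu_m^{\otimes(r-c)}$), while the motivic Gysin map in (\ref{4t56uzg}) is pushforward of cycles and the \'etale Gysin map is absolute purity. So the left square of your would-be morphism of triangles does not commute "by construction"; its commutativity is essentially the assertion of the theorem, namely that \'etale purity takes the cycle class of a codimension-$(r-c)$ cycle on $Z$ to the cycle class of the same cycle viewed as a codimension-$r$ cycle on $X$. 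Neither Lemma \ref{gbhtfrr} (which only treats flat maps, hence the $U$-square) nor \cite[Theorem 1.2.4]{geisser.dede} (which gives the \'etale-local identification $\M^X_\et(r)/m\cong\integers/m(r)$, with no Gysin compatibility) supplies this input. There is also a purely formal obstruction to your reduction: in a triangulated category, commutativity of the middle and right squares of a prospective map of triangles only guarantees the \emph{existence} of some compatible filler on the third vertex, not that the given map (the intrinsic cycle class map on $Z$ composed with purity) is such a filler; so even granting your other squares you cannot conclude.

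What the paper does instead is work at the level of actual complexes of sheaves so that no cone fillers are ever chosen: it fixes an injectively fibrant $\caG$ representing $\mu_m^{\otimes r}$, forms the complexes with supports $\caG^W$, $\caG^r$, the kernel complex $\caG_Z(V)=\ker(\caG(V)\to\caG(V|_U))$ computing $i_*\R i^!\mu_m^{\otimes r}$, and the simplicial versions $\caG^r_Z$, $z^r_Z$, and chooses a chain-level representative $\varphi\colon\caG_Z\to i_*\tilde\caG[-2c]$ of absolute purity (possible because the target is injectively fibrant). The rows of the resulting $4\times4$ diagram are termwise kernel sequences, so the column computing $i_*\R i^!$ is an honest subcomplex and all squares except one commute on the nose by construction; the one non-formal square is exactly the compatibility of $\varphi$ with the identification (from \cite[12.3]{levine.schemes}) of $H^{2r}(\caG^W(V))$ with the free $\integers/m$-module on the codimension-$r$ components of $W$, i.e.\ the fact that purity matches cycle classes with cycle classes, which is the "naturality of the purity maps in \'etale cohomology." Your proposal needs this concrete chain-level (or at least supports-level) argument; as written it assumes the crucial compatibility rather than proving it.
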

\begin{proof}
Let $U=X \setminus Z$. 
For $V \in \Sm_{S'}'$
we denote by $c^r(V,n)$ the set of cycles (closed integral subschemes) of $V \times \Delta^n$ which
intersect all $V \times Y$ with $Y$ a face of $\Delta^n$ properly.

Let $\mu_m^{\otimes r} \to \caG$ be an injectively fibrant
replacement in $\Cpx(\Sh(\Sm_{X,\et},\integers/m))$.

Let $V \in \Sm_X$. For $W$ a closed subset of $X$ such that each irreducible component has codimension greater
or equal to $r$ set $\caG^W(V):= \ker(\caG(V) \to \caG(V \setminus W))$.

As in \cite[12.3]{levine.schemes} there is a canonical isomorphism of $H^{2r}(\caG^W(V))$ with the free $\integers/m$-module
on the irreducible components of $W$ of codimension $r$ and the map $\tau_{\le 2r} \caG^W(V) \to H^{2r}(\caG^W(V))[-2r]$
is a quasi isomorphism.

For $V \in X_\et$ denote by
$\caG^r(V,n)$ the colimit of the $\caG^W(V \times \Delta^n)$ where $W$ runs through the finite unions of
elements of $c^r(V,n)$. The simplicial complex of $\integers/m$-modules $\tau_{\le 2r}\caG^r(V,\bullet)$ augments
to the simplicial abelian group $z^r(V,\bullet)/m[-2r]$. This augmentation is a levelwise quasi isomorphism.
We denote by $\caG^r(V)$ the total complex associated to the double complex which is the normalized complex
associated to $\tau_{\le 2r}\caG^r(V,\bullet)$. Thus we get a quasi isomorphism
$\caG^r(X) \to z^r(X)/m[-2r]$. Here for $V \in \Sm_{S'}'$ the complex $z^r(V)$ is defined to be the normalized
complex associated to the simplicial abelian group $z^r(V,\bullet)$.

On the other hand for $V \in X_\et$ we have a canonical map $\caG^r(V,n) \to \caG(V \times \Delta^n)$ compatible with
the simplicial structure. We denote by $\caG'(V)$ the total complex associated to the double complex
which is the normalized complex associated to $\caG(V \times \Delta^\bullet)$.
We have a canonical quasi isomorphism $\caG(V) \to \caG'(V)$ and a canonical map
$\caG^r(V) \to \caG'(V)$. The above groups and maps are functorial in $V \in X_\et$.

Thus we get a map $$z^r(\_)/m[-2r] \cong \caG^r \to \caG' \cong \caG$$
in $\D(\Sh(X_\et,\integers/m))$.
This is (the adjoint of) the cycle class map.

Denote by $\tilde{\caG}$, $\tilde{\caG}'$, $\tilde{\caG}^{r-c}$ the analogous objects defined for $Z$ instead for $X$,
so we have a diagram $$z^{r-c}(\_)/m[-2(r-c)] \overset{\sim}{\leftarrow} \tilde{\caG}^{r-c} \to \tilde{\caG}' \overset{\sim}{\leftarrow}
\tilde{\caG}$$ in $\Cpx(\Sh(Z_\et,\integers/m))$.

For $V \in \Sm_X$ set $\caG_Z(V):=\ker(\caG(V) \to \caG(V|_U))$. Thus $\caG_Z \in \Cpx(\Sh(\Sm_{X,\et},\integers/m))$
computes $i_*\R i^! \mu_m^{\otimes r}$.

There is an absolute purity isomorphism $\caG_Z \cong i_*\tilde{\caG}[-2c]$ in $\D(\Sh(\Sm_{X,\et},\integers/m))$.
Choose a representative $\varphi \colon \caG_Z \to i_*\tilde{\caG}[-2c]$ in $\Cpx(\Sh(\Sm_{X,\et},\integers/m))$ of this
isomorphism. This exists since $i_*\tilde{\caG}[-2c]$ is injectively fibrant.

For $V \in X_\et$ denote by $\caG_Z'(V)$ the total complex associated to the double complex which is the normalized complex associated
to $\caG_Z(V \times \Delta^\bullet)$. Moreover let $\caG_Z^r(V,n)$ be the colimit of the $\caG^W(V \times \Delta^n)$ where $W$
runs through the finite unions of elements of $c^{r-c}(V|_Z,n)$.
Denote by $\caG_Z^r(V)$ the total complex associated to the double complex which is the normalized complex
associated to $\tau_{\le 2r}\caG_Z^r(V,\bullet)$. Denote by $z_Z^r(V)$ the complex $z^{r-c}(V|_Z)$. 

Set $\caG_U(V):=\caG(V|_U)$, $\caG_U'(V):=\caG'(V|_U)$, $\caG_U^r(V):=\caG^r(V|_U)$ and $z_U^r(V):=z^r(V|_U)$.

We have the diagram

$$\xymatrix{i_*\tilde{\caG}[-2c] \ar[d]^\sim & \caG_Z \ar[l]_\sim \ar[r] \ar[d]^\sim & \caG \ar[r] \ar[d]^\sim & \caG_U \ar[d]^\sim \\
i_*\tilde{\caG}'[-2c] & \caG_Z' \ar[l]_\sim \ar[r] & \caG' \ar[r] & \caG_U' \\
i_*\tilde{\caG}^{r-c}[-2c] \ar[d]^\sim \ar[u] & \caG_Z^r \ar[u] \ar[l]_\sim \ar[r] \ar[d]^\sim & \caG^r \ar[u] \ar[r] \ar[d]^\sim & \caG_U^r \ar[d]^\sim \ar[u] \\
i_* z^{r-c}(\_)[-2r] & z_Z^r(\_)[-2r] \ar[l]_\cong \ar[r] & z^r(\_)[-2r] \ar[r] & z_U^r(\_)[-2r].
}$$

The upper three left most horizontal maps are induced by $\varphi$.
The lower left square commutes by the naturality of the purity maps in {\'e}tale cohomology. All other squares
commute by construction.
The last two arrows in each horizontal line compose to $0$ and constitute an exact triangle, thus
the second vertical line computes $i_*\R i^!$ of the third vertical line.
The claim follows.

\end{proof}

\begin{corollary}
\label{grer54s}
Let $i \colon Z \to X$ be a closed inclusion in $\Sm_{S'}'$ of codimension $c$
and $j \colon U \to X$ the complementary open inclusion.
Suppose $m$ is invertible on $X$.
Then the diagram
$$\xymatrix{i_* \M^Z(r-c)/m[-2c] \ar[r] \ar[dd] &
\M^X(r)/m \ar[ddd] \ar[r] &
\R j_* \M^U(r)/m \ar[r] \ar[dd] &
i_* \M^Z(r-c)/m[-2c+1] \ar[dd] \\
& & & \\
i_* \R \epsilon_* \integers/m(r-c)[-2c] \ar[d]^\cong & &
\R j_* \R \epsilon_* \integers/m(r) \ar[d]^\cong &
i_* \R \epsilon_* \integers/m(r-c)[-2c+1] \ar[d]^\cong \\
\R \epsilon_* i_* \integers/m(r-c)[-2c] \ar[r] &
\R \epsilon_* \integers/m(r) \ar[r] &
\R \epsilon_* \R j_* \integers/m(r) \ar[r] &
\R \epsilon_* i_* \integers/m(r-c)[-2c+1]}$$
commutes.
\end{corollary}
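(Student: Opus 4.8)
The plan is to recognise the entire diagram as the morphism of localization triangles induced by the cycle class map $\M^X(r)/m \to \R\epsilon_*\integers/m(r)$, via the functoriality of the localization triangle, and then to identify each of the four vertical arrows with the one drawn.

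First I would check that both horizontal lines are, up to canonical isomorphism, localization triangles over $X_\Zar$. For the top row: $j^*(\M^X(r)/m) \cong \M^U(r)/m$ by flat pullback along the open immersion $j$, and $\R i^!(\M^X(r)/m) \cong \M^Z(r-c)/m[-2c]$ by the Corollary following Theorem \ref{gbter4}, reduced mod $m$ (this is the top-left isomorphism of Theorem \ref{gtt5zjh}); so the top row is the localization triangle of $\M^X(r)/m$. For the bottom row: applying the triangulated functor $\R\epsilon_*$ to the \'etale localization triangle of Corollary \ref{grtehh} gives an exact triangle, and via the base change isomorphisms $j^*\R\epsilon_* \cong \R\epsilon_* j^*$, $i_*\R\epsilon_* \cong \R\epsilon_* i_*$ and $\R i^!\R\epsilon_* \cong \R\epsilon_*\R i^!$ (the last of which already occurs in Theorem \ref{gtt5zjh}), together with $\R i^!\integers/m(r) \cong \integers/m(r-c)[-2c]$ from Theorem \ref{gfre5r5}, this is the localization triangle of $\R\epsilon_*\integers/m(r)$. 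The intermediate rows of isomorphisms displayed in the statement record exactly these identifications.

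Next, since the localization triangle $i_*\R i^!(-) \to (-) \to \R j_* j^*(-) \to i_*\R i^!(-)[1]$ is functorial in its argument as a triangle (connecting map included), the cycle class map $\M^X(r)/m \to \R\epsilon_*\integers/m(r)$ induces a morphism between the two triangles above, i.e. a commuting ladder. It then remains to identify the four resulting vertical arrows with those in the diagram. The middle one is the cycle class map of $X$ by construction. The one over $\R j_* j^*$ is $\R j_*$ of the cycle class map of $U$: this follows from Lemma \ref{gbhtfrr} applied to the flat morphism $j$, together with the flat pullback isomorphism $j^*\M^X(r) \cong \M^U(r)$. The one over $i_*\R i^!$ is $i_*$ of the cycle class map of $Z$ postcomposed with the base change isomorphism $i_*\R\epsilon_* \cong \R\epsilon_* i_*$: this is Theorem \ref{gtt5zjh} after applying the exact functor $i_*$ and using naturality of the counit $i_*\R i^!(-) \to (-)$. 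The fourth arrow is the shift of the first. Hence the functorial map of triangles is the outer frame of the diagram in the statement, and commutativity of the whole diagram follows.

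The main obstacle is the bookkeeping hidden in the second paragraph: verifying that the base change isomorphisms $i_*\R\epsilon_* \cong \R\epsilon_* i_*$, $\R j_*\R\epsilon_* \cong \R\epsilon_*\R j_*$ and $\R i^!\R\epsilon_* \cong \R\epsilon_*\R i^!$ are mutually compatible and compatible with the connecting maps of both localization triangles, so that $\R\epsilon_*$ of the \'etale localization triangle is genuinely identified, as a triangle, with the localization triangle of $\R\epsilon_*\integers/m(r)$ over $X_\Zar$. This is a routine but somewhat delicate compatibility of the six operations with the change-of-topology morphism $\epsilon$; once it is in place, the functoriality of the localization triangle supplies the rest, and no further computation with cycles beyond Theorem \ref{gtt5zjh} and Lemma \ref{gbhtfrr} is required.
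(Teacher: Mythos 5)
Your proposal is correct and follows essentially the same route as the paper: the paper's proof also writes the commuting ladder of localization triangles $i_*\R i^!(-) \to (-) \to \R j_* j^*(-)$ induced by the cycle class map (together with the base change isomorphisms for $\epsilon$), and then concludes by invoking Theorem \ref{gtt5zjh} to identify the column over $i_*\R i^!$, exactly as you do.
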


\begin{proof}
The diagram
$$\xymatrix{i_* \R i^! \M^X(r)/m \ar[r] \ar[dd] &
\M^X(r)/m \ar[dddd] \ar[r] &
\R j_* \M^U(r)/m \ar[ddd] \ar[r] &
i_* \R i^! \M^X(r)/m[1] \ar[dd] \\
& & & \\
i_* \R i^! \R \epsilon_* \integers/m \ar[d]^\cong & & & 
i_* \R i^! \R \epsilon_* \integers/m[1] \ar[d]^\cong \\
i_* \R \epsilon_* \R i^! \integers/m(r) \ar[d]^\cong & &
\R j_* \R \epsilon_* \integers/m(r) \ar[d]^\cong &
i_* \R \epsilon_* \R i^! \integers/m(r)[1] \ar[d]^\cong \\
\R \epsilon_* i_* \R i^! \integers/m(r) \ar[r] &
\R \epsilon_* \integers/m(r) \ar[r] &
\R \epsilon_* \R j_* \integers/m(r) \ar[r] &
\R \epsilon_* i_* \R i^! \integers/m(r)[1]}$$
commutes. Thus the claim follows from
Theorem \ref{gtt5zjh}.
\end{proof}

\begin{theorem}
\label{htehg}
Let $X \in \Sm_{S'}'$. Let $q \colon \AX \to X$ be the
projection. Then the canonical map
$$\M^X(r) \to \R q_* \M^\AX(r)$$\\
is an isomorphism in $\D(\Sh(X_\Zar,\integers))$.
\end{theorem}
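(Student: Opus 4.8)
My plan is to deduce the statement from two classical properties of Bloch's cycle complex --- homotopy invariance and Zariski descent --- by a stalkwise reduction. The map in question is the composite $\M^X(r) \to \R q_* q^* \M^X(r) \to \R q_* \M^\AX(r)$, where the first arrow is the unit of $(q^*,\R q_*)$ and the second is $\R q_*$ applied to the flat pullback $q^*\M^X(r)\to \M^\AX(r)$. Since the Zariski site of $X$ has enough points, to prove this is an isomorphism in $\D(\Sh(X_\Zar,\integers))$ it suffices to prove it is a quasi-isomorphism on stalks at every $x\in X$. So I would fix $x$, write $R:=\cO_{X,x}$ (by the definition of $\Sm'_{S'}$ this is a localization of a smooth algebra over $D$ or over a residue field of $D$, hence a regular local ring), and compute both stalks.

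First I would identify the left-hand stalk. As sheafification preserves Zariski stalks and $\Spec R$ is the cofiltered limit of the affine open neighbourhoods $U$ of $x$ (with affine transition maps), continuity of the higher Chow groups along this limit gives $(\M^X(r))_x \cong \colim_{x\in U} z^r(U)[-2r] \cong z^r(\Spec R)[-2r]$, where $z^r(\_)$ is the normalized cycle complex as in the proof of Theorem \ref{gtt5zjh}. For the right-hand stalk I would represent $\R q_*$ by a flasque resolution of $\M^\AX(r)$; using that $\mathbb{A}^1_R=\Spec(R[t])$ is the cofiltered limit of the affine schemes $\mathbb{A}^1_U$ with affine transition maps and has finite Krull dimension (so bounded Zariski cohomological dimension), continuity of Zariski hypercohomology gives $(\R q_*\M^\AX(r))_x \cong \colim_{x\in U}\R\Gamma_\Zar(\mathbb{A}^1_U,\M^\AX(r)) \cong \R\Gamma_\Zar(\mathbb{A}^1_R,\M^{\mathbb{A}^1_R}(r))$; here I also use that the restriction of $\M^\AX(r)$ to $\mathbb{A}^1_U$ is $\M^{\mathbb{A}^1_U}(r)$.

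It then remains to show that the induced map $z^r(\Spec R)[-2r]\to \R\Gamma_\Zar(\mathbb{A}^1_R,\M^{\mathbb{A}^1_R}(r))$ is a quasi-isomorphism, and I would factor it as
$$z^r(\Spec R)[-2r] \longrightarrow z^r(\mathbb{A}^1_R)[-2r] \longrightarrow \R\Gamma_\Zar(\mathbb{A}^1_R,\M^{\mathbb{A}^1_R}(r)).$$
The first map is the flat pullback; it is a quasi-isomorphism by homotopy invariance of the higher Chow groups ($z^r(Y,\bullet)\to z^r(\mathbb{A}^1_Y,\bullet)$ is a quasi-isomorphism for $Y$ essentially of finite type over a Dedekind domain --- Bloch over fields, Levine in general; see \cite{levine.techniques}, \cite{geisser.dede}). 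The second map is the comparison of the cycle complex with its Zariski hypercohomology; it is a quasi-isomorphism because $z^r(\_)$ satisfies Zariski (indeed Nisnevich) descent on schemes essentially of finite type over $R$, a formal consequence of Bloch's localization theorem, available here in the form of Theorem \ref{gbter4}.

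I expect the main obstacle to be bookkeeping rather than anything conceptual: one must check that, under the identifications of the stalks above, the stalk of the abstractly defined canonical map really agrees with the concrete composite ``flat pullback on cycle complexes, followed by the descent comparison''. This requires tracing the flat pullback $z^r(\_)\to z^r(\mathbb{A}^1_{(\_)})$, which lives on presheaves, through the sheafification and through the $(q^*,\R q_*)$-adjunction, and verifying the two continuity statements used above (continuity of higher Chow groups, and of Zariski cohomology, under cofiltered limits of affine schemes with affine transition maps, the latter using the finite Krull dimension of $\mathbb{A}^1_R$). As an alternative to passing to stalks one could run the same comparison over each affine open $U\subseteq X$ with sections replaced by Zariski hypercohomology; the essential inputs, homotopy invariance and Zariski descent for $z^r$, are the same.
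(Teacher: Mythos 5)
Your argument is correct in substance, but note that the paper itself gives no argument here: its proof of Theorem \ref{htehg} is a bare citation of \cite[Corollary 3.5]{geisser.dede}, which is precisely the sheaf-level homotopy invariance statement you are proving. What you have written is essentially a reconstruction of the proof behind that citation: the two inputs — homotopy invariance of Bloch's cycle complexes over a Dedekind base at the finite-type level, and Zariski descent for $z^r$ (i.e.\ that $z^r(U)[-2r]\to \R\Gamma_\Zar(U,\M(r))$ is a quasi-isomorphism), the latter deduced from Levine's localization theorem (Theorem \ref{gbter4}) via Mayer--Vietoris/Brown--Gersten — are exactly the ingredients the cited result rests on, and the descent input is the same fact the paper itself extracts from \cite[Theorem 1.7]{levine.techniques} later (in the proof of Lemma \ref{grfe4d}). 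Two small points of care. First, to avoid circularity you must quote homotopy invariance as a statement about the complexes $z^r(Y,\bullet)\to z^r(\mathbb{A}^1_Y,\bullet)$ for $Y$ of finite type (Bloch over fields, Levine/Geisser over Dedekind rings), and then pass to $\Spec R$ and $\mathbb{A}^1_R$ by the filtered-colimit argument you describe; citing the sheaf-level form would be assuming the theorem. Second, your stalkwise detour, with its continuity arguments for hypercohomology (needing finite Krull dimension for the unbounded-below complex), is avoidable: as you note at the end, it is cleaner to check the map on $\R\Gamma$ over each affine open $U\subseteq X$, where both sides are identified by descent with $z^r(U)[-2r]$ and $z^r(\mathbb{A}^1_U)[-2r]$ respectively and the map becomes the flat pullback, which is a quasi-isomorphism by homotopy invariance; since affines form a basis this already gives the isomorphism in $\D(\Sh(X_\Zar,\integers))$. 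With either packaging the proof is sound, modulo the compatibility bookkeeping you already flag.
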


\begin{proof}
This is \cite[Corollary 3.5]{geisser.dede}.
\end{proof}

\section{The construction}
\label{ju54efg}

\subsection{The $p$-parts}

\subsubsection{Finite coefficients}
\label{htr46u}

We fix a prime $p$ and set $U:=S[\frac{1}{p}]$, $Z:=Z_p$,
$i \colon Z \hookrightarrow S$ the closed and $j \colon
U \hookrightarrow S$ the open inclusion.

For a scheme $X$ for which the motivic complexes are defined
we set $\M_n^X(r):= \M^X(r)/p^n$.

For $n \ge 1$ and $r \in \integers$ let $L_n(r):= {\mathbb\mu}_{p^n}^{\otimes
r}$
viewed as sheaf of $\integers/p^n$-modules on $\Sm_{U,\et}$.

The pullback $j^{-1} \colon \Sm_S \to \Sm_U$, $X \mapsto X \times_S U$,
induces a push forward $$j_* \colon \Sh(\Sm_{U,\Zar},\integers/p^n) \to
\Sh(\Sm_{S,\Zar},\integers/p^n)$$ (we suppress the dependence on $n$
of the functor $j_*$). The same is true for $\mathrm{\acute{e}}$tale
sheaves.

Similarly, we have the pullback $i^{-1} \colon \Sm_S \to \Sm_Z$,
$X \mapsto X \times_S Z$, inducing also a push forward on
sheaf categories.

Let $QL_n(1) \to L_n(1)$ be a cofibrant replacement
in $\Cpx(\Sh(\Sm_{U,\et},\integers/p^n))$ (the latter category
is equipped with the local projective model structure, see Appendix \ref{ht34efw})
and let $QL_n(1) \to RQL_n(1)$ be a fibrant
replacement via a cofibration. Thus $\caT:=RQL_n(1)[1]$ is both fibrant
and cofibrant.

Recall the decomposition
\begin{equation}
\label{bfedhed}
\underline{\R \Hom}_{\D(\Sh(\Sm_{U,\et},\integers/p^n))}
(\GmU,L_n(1)[1])=L_n(1)[1] \oplus L_n(0).
\end{equation}

The first summand splits off because the projection
$\GmU \to U$ has the section $\{1\}$.

To define the isomorphism of the remaining summand with $L_n(0)$ we use
the Gysin sequence for the situation
$$\GmU \hookrightarrow \mathbb{A}_S^1 \hookleftarrow \{0\}.$$

Let $\iota \colon \integers/p^n[\GmU,\{1\}]_\et \to \caT$
be a map which classifies the canonical element $1 \in H_\et^1(\GmU,L_n(1))$
under the above decomposition (here the source of $\iota$ is the chain complex having the indicated
object in degree $0$).
Note that $\integers/p^n[\GmU,\{1\}]_\et$ is cofibrant.

\begin{remark}
The map $\integers/p^n[\GmU]_\et \to \caT$ induced by $\iota$ represents the map
induced by the last map of the exact triangle
$$L_n(1) \to \GmU \overset{p^n}{\to} \GmU \to L_n(1)[1]$$
in $\D(\Sh(\Sm_{U,\et},\integers))$.
This follows from the construction of the Gysin isomorphism.
\end{remark}

We get a map
$$\Sym(\iota) \colon \Sym(\integers/p^n[\GmU,\{1\}]_\et) \to \Sym(\caT)$$
of commutative monoids in symmetric sequences in 
$\Cpx(\Sh(\Sm_{U,\et},\integers/p^n))$, in other words
$\Sym(\caT)$ is a commutative monoid in the category of symmetric
$\integers/p^n[\GmU,\{1\}]_\et$-spectra $\Sp_{\integers/p^n[\GmU,\{1\}]_\et}^\Sigma$.
In particular it gives rise to an $E_\infty$-object in
$\Sp_{\integers/p^n[\GmU,\{1\}]_\et}^\Sigma$.

Let $Q \Sym(\caT) \to \Sym(\caT)$ be a cofibrant replacement via a trivial fibration and
$Q \Sym(\caT) \to R Q \Sym(\caT)$
a fibrant resolution of $Q\Sym(\caT)$ in $E_\infty(\Sp_{\integers/p^n[\GmU,\{1\}]_\et}^\Sigma)$
(here $E_\infty(\Sp_{\integers/p^n[\GmU,\{1\}]_\et}^\Sigma)$ is equipped with the transferred
semi model structure, see Appendix \ref{ht34efw}, in particular $R Q \Sym(\caT)$ is underlying
levelwise fibrant for the local projective model structure
and is therefore suitable to compute the derived push forward
along $\epsilon$).

\begin{lemma}
The map $Q \Sym(\caT) \to R Q \Sym(\caT)$ is a level equivalence,
i.e. $\Sym(\caT)$ is an $\Omega$-spectrum.
\end{lemma}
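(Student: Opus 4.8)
The plan is to verify the $\Omega$-spectrum condition levelwise. Write $\caT = RQL_n(1)[1]$, so that $\Sym(\caT)_k = \Sym^k(\caT)$ and the bonding maps are induced by $\iota$; thus the claim is that for each $k \ge 0$ the adjoint bonding map
$$\Sym^k(\caT) \longrightarrow \RHom\bigl(\integers/p^n[\GmU,\{1\}]_\et,\, \Sym^{k+1}(\caT)\bigr)$$
is an isomorphism in $\D(\Sh(\Sm_{U,\et},\integers/p^n))$. Since all objects in sight are built from $L_n(r)=\mu_{p^n}^{\otimes r}$ (the $k$-fold symmetric power of $L_n(1)[1]$ is $L_n(k)[k]$ up to a shift accounting for the sign conventions on symmetric powers of complexes concentrated in odd degree, and $RQL_n(1)$ is a fibrant replacement of $L_n(1)$), it suffices to identify $\Sym^k(\caT)$ with (a model for) $L_n(k)[k]$ and to check that under this identification the bonding map becomes the standard smash-with-$\GmU$ map used in defining the $\KGL$-type spectra étale-locally.

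First I would record the computation $\Sym^k(RQL_n(1)[1]) \simeq L_n(k)[k]$ in $\D(\Sh(\Sm_{U,\et},\integers/p^n))$: since $L_n(1)=\mu_{p^n}$ is an invertible object in the derived category of $\integers/p^n$-modules on $\Sm_{U,\et}$ (it is an étale-locally constant sheaf, free of rank one over $\integers/p^n$), its symmetric powers as a chain complex placed in degree $-1$ compute $L_n(1)^{\otimes k}[k] = L_n(k)[k]$. The point of passing to $\caT = RQL_n(1)[1]$, which is both fibrant and cofibrant, is precisely that this derived identity is realized by the strict symmetric power functor. Then I would rewrite the target: $\RHom(\integers/p^n[\GmU,\{1\}]_\et, -)$ is the "loop" functor with respect to the reduced motivic suspension coordinate $\GmU$, and the key input is the decomposition (\ref{bfedhed}), which identifies $\RHom(\GmU, L_n(r)[r])$ with $L_n(r)[r] \oplus L_n(r-1)[r-1]$, the first summand splitting off via the unit section $\{1\}$; hence $\RHom(\integers/p^n[\GmU,\{1\}]_\et, L_n(k+1)[k+1]) \simeq L_n(k)[k]$.

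It remains to check that the bonding map $\Sym^k(\iota)$ realizes this identification, i.e. that it hits the correct summand $L_n(k)[k]$ and does so by an isomorphism. This is where the Remark following the definition of $\iota$ is used: $\iota$ induces on $\integers/p^n[\GmU]_\et$ the last map of the exact triangle $L_n(1) \to \GmU \xrightarrow{p^n} \GmU \to L_n(1)[1]$, which by the construction of the Gysin isomorphism is exactly the class $1 \in H^1_\et(\GmU, L_n(1))$ generating the $L_n(0)$-summand in (\ref{bfedhed}). Multiplicativity of $\Sym$ then propagates this to all levels: $\Sym^k(\iota)$ corresponds to multiplication by the tautological class in the factor that (\ref{bfedhed}) identifies with $L_n(k)[k]$, so the adjoint bonding map is the identity of $L_n(k)[k]$ up to the canonical isomorphisms above. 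The main obstacle I anticipate is purely bookkeeping: keeping the various fibrant/cofibrant replacements, the shift and sign conventions for symmetric powers of complexes in odd degrees, and the splitting in (\ref{bfedhed}) coherent across all levels $k$ simultaneously, rather than one level at a time — the mathematical content (invertibility of $\mu_{p^n}$ and the projective bundle/Gysin decomposition) is already in hand.
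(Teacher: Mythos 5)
Your argument is essentially the paper's: you reduce the $\Omega$-spectrum property, level by level, to the splitting (\ref{bfedhed}) $\underline{\R \Hom}(\GmU,L_n(1)[1])\simeq L_n(1)[1]\oplus L_n(0)$ together with the fact that $\iota$ was chosen to classify the canonical class $1\in H_\et^1(\GmU,L_n(1))$, so that the derived adjoints of the structure maps realize $\underline{\R \Hom}((\GmU,\{1\}),L_n(r)[r])\simeq L_n(r-1)[r-1]$ --- which is exactly the one-line justification given in the text.

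One caveat about your bookkeeping, though. $\Sym(\caT)$ here is the free commutative monoid in symmetric sequences, so its $r$-th level is the tensor power $\caT^{\otimes r}$ equipped with the permutation $\Sigma_r$-action, not the coinvariants $\Sym^r(\caT)$. Your justification of the level identification --- that symmetric powers of an invertible object placed in odd degree compute its tensor powers --- fails over $\integers/p^n$ if taken literally: with the Koszul sign rule the $\Sigma_r$-action on $(L_n(1)[1])^{\otimes r}$ is the sign representation, and its (strict or derived) coinvariants are not $L_n(r)[r]$ (already for $r=2$ and $p$ odd the strict coinvariants of the sign action on $\integers/p^n$ vanish). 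Fortunately nothing in your argument actually needs coinvariants: the levels really are $\caT^{\otimes r}\simeq L_n(r)[r]$, since $\caT$ is cofibrant and quasi-isomorphic to $L_n(1)[1]$, and with that correction the remainder of your proof (the splitting off of the unit-section summand and the defining property of $\iota$ propagated multiplicatively to all levels) goes through and coincides with the paper's reasoning.
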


\begin{proof}
This follows from the fact that we have chosen the map $\iota$
in such a way that the derived adjoints of the structure maps
of $\Sym(\caT)$ give rise to the isomorphism
$\underline{\mathbb{R}\Hom}((\GmU,\{1\}),L_n(r)[r]) \simeq L_n(r-1)[r-1]$.
\end{proof}

Set $A:=\epsilon_*(R Q \Sym(\caT))$, so the spectrum $A$ is
$R Q \Sym(\caT)$ viewed as $E_\infty$-algebra in
$\integers/p^n[\GmU,\{1\}]_\Zar$-spectra
in $\Cpx(\Sh(\Sm_{U,\Zar},\integers/p^n))$.

We denote by $A_r$ the $r$-th level of $A$. 
Thus $A_r \simeq \R \epsilon_* L_n(r)[r]$.

Set $A_r':= \tau_{\le 0}(A_r)$, where $\tau_{\le 0}$ denotes
the good truncation at degree $0$, i.e. the complex $A_r'$
equals $A_r$ in (cohomological) degrees $<0$, consists
of the cycles in degree $0$ and is $0$ in positive degree.

Thus by Theorem \ref{edfgth76} there is for every $X \in \Sm_U$ an isomorphism
\begin{equation}
\label{4j6n}
A_r'|_{X_\Zar} \cong \M_n^X(r)[r]
\end{equation}
in  $\D(\Sh(X_\Zar,\integers/p^n))$,
where $A_r'|_{X_\Zar}$ denotes the restriction of $A_r'$ to
$X_\Zar$.

\begin{lemma}
\label{htrgrzj}
The complexes $A_r'$ assemble to a $\integers/p^n[\GmU,\{1\}]_\Zar$-spectrum
$A'$. This spectrum is equipped with an $E_\infty$-structure
together with a map of $E_\infty$-algebras $A' \to A$ which is
levelwise the canonical map $A_r' \to A_r$.
\end{lemma}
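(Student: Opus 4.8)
The plan is to construct $A'$ as a levelwise good truncation of $A$ and to show that the truncation functor $\tau_{\le 0}$ can be made compatible with the multiplicative structure. The starting observation is that $A$ is a commutative monoid (indeed an $E_\infty$-algebra) in $\integers/p^n[\GmU,\{1\}]_\Zar$-spectra in $\Cpx(\Sh(\Sm_{U,\Zar},\integers/p^n))$, and each $A_r \simeq \R\epsilon_* L_n(r)[r]$ has, by Theorem \ref{gtfn} together with the identification (\ref{4j6n}), vanishing cohomology sheaves in positive degrees above the expected range; more precisely Theorem \ref{edfgth76} shows $\tau_{\le 0} A_r \cong \M_n^X(r)[r]$ is exactly the part we want to keep. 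So the first step is: define $A_r' := \tau_{\le 0}(A_r)$ and check that the connective cover $\tau_{\le 0}$ commutes with the bonding maps $A_r \smash \integers/p^n[\GmU,\{1\}]_\Zar \to A_{r+1}$. Since smashing with $\integers/p^n[\GmU,\{1\}]_\Zar$ (a complex concentrated in degree $0$, free over $\integers/p^n$) is exact and preserves connectivity, the bonding map restricts to a map $A_r' \smash \integers/p^n[\GmU,\{1\}]_\Zar \to \tau_{\le 0} A_{r+1} = A_{r+1}'$, which gives the spectrum structure on $A'$ and the levelwise map $A' \to A$ of spectra.

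The harder part is promoting this to an $E_\infty$-structure compatibly. The cleanest approach is to work not with the naive good truncation on the nose but to use the fact that connective covers are multiplicative: in the category of $\integers/p^n[\GmU,\{1\}]_\Zar$-spectra, equipped with a model structure, there is a \emph{connective cover} functor (a Bousfield colocalization, or a right Quillen functor onto the subcategory of connective spectra) which is lax symmetric monoidal, because the smash product of two connective objects is connective. Formally: the full subcategory of objects with $\H^k = 0$ for $k>0$ in all levels is closed under the (derived) smash product and contains the unit $\integers/p^n[\GmU,\{1\}]_\Zar$, so the inclusion has a lax symmetric monoidal right adjoint, namely $\tau_{\le 0}$ applied levelwise (compatibly with bonding maps as in the previous step). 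Applying this right adjoint to the $E_\infty$-algebra $A$ — which lies in $E_\infty$ of the ambient category — produces an $E_\infty$-algebra $A'$ in the subcategory, together with the counit $A' \to A$, which is a map of $E_\infty$-algebras and is levelwise the canonical map $A_r' \to A_r$. One should spell out that the transferred semi model structure on $E_\infty(\Sp^\Sigma_{\integers/p^n[\GmU,\{1\}]_\Zar})$ from Appendix \ref{ht34efw} is what allows this homotopical connective-cover construction to land in $E_\infty$-objects; alternatively one replaces $A$ by a strictly connective model via a functorial factorization before applying $\tau_{\le 0}$ strictly.

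I expect the main obstacle to be the book-keeping around the operadic structure: verifying that the levelwise good truncation, which is an honest (non-homotopy-invariant on the nose) functor on chain complexes, is strictly compatible with the action of the ($E_\infty$-)operad on $A$, or else replacing $A$ by a suitable connective resolution inside $E_\infty$-algebras so that the strict truncation is no longer needed. The point is that $\tau_{\le 0}$ on cochain complexes is lax symmetric monoidal \emph{strictly} (there is a natural map $\tau_{\le 0} C \otimes \tau_{\le 0} D \to \tau_{\le 0}(C\otimes D)$, coming from the fact that a product of cocycles in degree $0$ is a cocycle in degree $0$, and a product of a degree-$0$ element with a negative-degree one stays in degree $\le 0$), and this strict lax structure is all one needs to transport the $E_\infty$-structure; it is just a matter of checking that this natural transformation is suitably associative, unital and equivariant, which is routine. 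Everything else — that $A' \to A$ is levelwise the inclusion, that $A'$ is again a spectrum in the required sense, and that the isomorphism (\ref{4j6n}) identifies its levels with $\M_n^X(r)[r]$ — follows formally from the construction and from Theorem \ref{edfgth76}.
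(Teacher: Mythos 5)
Your argument is correct and is essentially the paper's proof: the key point in both is that $\tau_{\le 0}$ is (strictly) lax symmetric monoidal, being right adjoint to the symmetric monoidal inclusion of cohomologically non-positively graded complexes, and that $\integers/p^n[\GmU,\{1\}]_\Zar$ lies in that subcategory, so the bonding maps and the $E_\infty$-structure pass to the truncations, giving $A'\to A$ levelwise as stated. The homotopical detour through connective covers and the semi model structure is unnecessary; the strict lax monoidality you note at the end is all that is used.
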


\begin{proof}
This follows from the fact that the truncation $\tau_{\le 0}$
is right adjoint to the symmetric monoidal inclusion of
(cohomologically) non-positively graded complexes into all complexes
and that $\integers/p^n[\GmU,\{1\}]_\Zar$ lies in this subcategory
of non-positively graded complexes.
\end{proof}

Let $Q A' \to A'$ be a cofibrant replacement via a trivial fibration
in $\integers/p^n[\GmU,\{1\}]_\Zar$-spectra in $\Cpx^{\le 0}(\Sh(\Sm_{U,\Zar},\integers/p^n))$
(so $Q A'$ is also cofibrant viewed as spectrum in unbounded complexes) and
$Q A' \to RQA'$ be a fibrant resolution (as $E_\infty$-algebras
in $\integers/p^n[\GmU,\{1\}]_\Zar$-spectra in $\Cpx(\Sh(\Sm_{U,\Zar},\integers/p^n))$).

\begin{proposition}
\label{bgerhh4}
The map $QA' \to RQA'$ is a level equivalence, i.e.
$A'$ and $QA'$ are $\Omega$-spectra.
\end{proposition}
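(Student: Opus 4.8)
The statement amounts to saying that $A'$ (equivalently $QA'$, which is level-equivalent to it) is an $\Omega$-spectrum, i.e.\ that for every $r$ the derived adjoint
$$\alpha_r\colon A_r'\longrightarrow \R\underline{\Hom}((\GmU,\{1\}),A_{r+1}')$$
of the $r$-th bonding map of the $\integers/p^n[\GmU,\{1\}]_\Zar$-spectrum $A'$ is an isomorphism in $\D(\Sh(\Sm_{U,\Zar},\integers/p^n))$. The plan is to deduce this from the corresponding fact for $A$, which is an $\Omega$-spectrum by the preceding Lemma, using that $A_r'=\tau_{\le 0}A_r$ and that the functor $\R\underline{\Hom}((\GmU,\{1\}),-)$ is compatible with $\tau_{\le 0}$ on the objects $A_r$. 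Two general observations are used throughout: since $\integers/p^n[\GmU,\{1\}]_\Zar$ is a sheaf concentrated in degree $0$, the functor $\R\underline{\Hom}((\GmU,\{1\}),-)$ is left $t$-exact, so it preserves $\D^{\ge 1}$ (hyper-$\underline{\Ext}$ spectral sequence); and since $\integers/p^n[\GmU,\{1\}]_\Zar$ is cofibrant, $\R\underline{\Hom}((\GmU,\{1\}),C)$ is computed on sections over $V\in\Sm_U$ by the Zariski hypercohomology of $C$ on $\mathbb{G}_{m,V}$, reduced along the unit section $V\hookrightarrow\mathbb{G}_{m,V}$.

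The main step is to show that $\R\underline{\Hom}((\GmU,\{1\}),A_{r+1}')$ lies in $\D^{\le 0}$. By the isomorphism (\ref{4j6n}) one has $A_{r+1}'|_{(\mathbb{G}_{m,V})_\Zar}\cong\M_n^{\mathbb{G}_{m,V}}(r+1)[r+1]$, so by the second observation above it is enough to show, functorially in $V$, that the reduced summand of $\R\pi_*\M_n^{\mathbb{G}_{m,V}}(r+1)$ (with $\pi\colon\mathbb{G}_{m,V}\to V$) is concentrated in degrees $\le r+1$ — essentially the statement that the finite-coefficient motivic complexes $\M_n^\bullet(r)$ assemble into a $\mathbb{G}_m$-spectrum. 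To get this I would apply Levine's localization triangle (Theorem~\ref{gbter4}) to the codimension-one closed immersion $\{0\}\hookrightarrow\mathbb{A}^1_V$ with open complement $\mathbb{G}_{m,V}$, reduce it modulo $p^n$, and push it forward along $q\colon\mathbb{A}^1_V\to V$; homotopy invariance (Theorem~\ref{htehg}) identifies $\R q_*\M_n^{\mathbb{A}^1_V}(r+1)$ with $\M_n^V(r+1)$, and the unit section splits this summand off, so the pushed-forward triangle degenerates and yields that the reduced summand of $\R\pi_*\M_n^{\mathbb{G}_{m,V}}(r+1)$ is isomorphic to $\M_n^V(r)[-1]$. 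Since $\M_n^V(r)$ is concentrated in degrees $\le r$ (Theorem~\ref{gtfn}), the shift $\M_n^V(r)[-1][r+1]$ lies in $\D^{\le 0}$; by naturality in $V$ this gives $\R\underline{\Hom}((\GmU,\{1\}),A_{r+1}')\in\D^{\le 0}$, and in fact identifies it canonically with $A_r'$.

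Granting the main step, the rest is formal. Both the source $A_r'=\tau_{\le 0}A_r$ and the target of $\alpha_r$ lie in $\D^{\le 0}$, so it suffices to check that $\alpha_r$ is an isomorphism on $\H^k$ for all $k\le 0$. Compose $\alpha_r$ with the map
$$\beta\colon\R\underline{\Hom}((\GmU,\{1\}),A_{r+1}')\longrightarrow\R\underline{\Hom}((\GmU,\{1\}),A_{r+1})$$
induced by the truncation $A_{r+1}'\to A_{r+1}$. Its cofiber is $\R\underline{\Hom}((\GmU,\{1\}),\tau_{\ge 1}A_{r+1})$, which lies in $\D^{\ge 1}$ by left $t$-exactness, so $\beta$ is an isomorphism on $\H^k$ for $k\le 0$. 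On the other hand, since $A'\to A$ is a map of $\GmU$-spectra (Lemma~\ref{htrgrzj}), $\beta\circ\alpha_r$ coincides with the composite of the truncation map $A_r'\to A_r$ (an isomorphism on $\H^k$ for $k\le 0$) with the isomorphism $A_r\xrightarrow{\sim}\R\underline{\Hom}((\GmU,\{1\}),A_{r+1})$ coming from the $\Omega$-spectrum property of $A$. Hence $\alpha_r$ is an isomorphism on $\H^k$ for $k\le 0$, and therefore — both terms being in $\D^{\le 0}$ — an isomorphism.

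I expect the main obstacle to be the main step, i.e.\ the $\mathbb{G}_m$-spectrum computation for the finite-coefficient motivic complexes: running the $(\mathbb{A}^1,\mathbb{G}_m)$-localization argument compatibly enough in $V$ to produce an identification of complexes of Zariski sheaves rather than merely of hypercohomology groups, and keeping track of the boundedness of $\R\pi_*$. The remaining ingredients — the $\Omega$-spectrum property of $A$ and the left $t$-exactness of internal $\Hom$ out of a sheaf concentrated in degree $0$ — are routine.
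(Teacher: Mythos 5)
Your proof is correct in substance, but it takes a genuinely different route from the paper. The paper verifies the $\Omega$-condition for $A'$ by transporting the splitting of the \'etale Gysin triangle for $(\AX,\GmX)$ --- where the retraction along the unit section is available because one works with honest complexes of \'etale sheaves --- across the cycle class map, using the compatibility of the cycle class map with the localization triangles (Corollary \ref{grer54s}, which rests on Theorem \ref{gtt5zjh}) and the commutative diagram (\ref{edrghj}), plus an explicit check that a certain composite is the identity; this identifies the nontrivial summand of $\underline{\R \Hom}(\integers/p^n[\GmU,\{1\}]_\Zar,A_r')|_{X_\Zar}$ with $A_{r-1}'|_{X_\Zar}$, an identification that is reused later (e.g.\ for diagram (\ref{bfrgjk})). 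You instead bound the target of the adjoint bonding map in $\D^{\le 0}$ using only Levine's localization triangle (Theorem \ref{gbter4}), homotopy invariance (Theorem \ref{htehg}) and the vanishing of Theorem \ref{gtfn}, and then reduce to the $\Omega$-property of the untruncated spectrum $A$ via the truncation comparison $\beta$ and the map of spectra $A' \to A$ of Lemma \ref{htrgrzj}. This bypasses the cycle-class compatibility machinery entirely and is shorter; the price is that it yields only the isomorphism statement, not the explicit identification compatible with the \'etale comparison that the paper's argument produces.

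Two repairable points. First, ``the unit section splits this summand off'' is not literally available at this stage: the unit section is not flat, so restriction along it is not defined on Bloch's cycle complexes (that functoriality only comes with the strictification of section \ref{h5643dfgf}; in the paper the splitting is imported from the \'etale side). But your formal step only needs that the target lies in $\D^{\le 0}$, and this follows from the pushed-forward localization triangle without any splitting: the pushforward of $\M_n^{\GmX}(r+1)$ along $\GmX \to X$ sits in a triangle between $\M_n^X(r+1)$ and $\M_n^X(r)[-1]$, both concentrated in degrees $\le r+1$ by Theorem \ref{gtfn} (which persists after $\otimes^\bL \integers/p^n$), so the sandwich already gives the bound; the identification of the reduced summand can then be read off afterwards from your conclusion if wanted. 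Second, the preceding Lemma asserts the $\Omega$-property for the \'etale spectrum $\Sym(\caT)$; to use it for $A=\epsilon_*(RQ\Sym(\caT))$ as a Zariski spectrum you need the standard identification $\underline{\R \Hom}(\integers/p^n[\GmU,\{1\}]_\Zar,\R\epsilon_*(-)) \cong \R\epsilon_*\,\underline{\R \Hom}(\integers/p^n[\GmU,\{1\}]_\et,-)$, which the paper itself invokes in section \ref{gtz54zr}. Neither point affects the validity of your argument.
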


\begin{proof}
Set $m:=p^n$.
Let $X \in \Sm_U$. Let $\i \colon \{0\} \to \AX$
be the closed, $\j \colon \GmX \to \AX$ the open
inclusion and $q \colon \AX \to X$ the projection.
By Corollary \ref{grtehh} we have an exact triangle
$$\i_* \integers/m(r-1)[-2] \to \integers/m(r)^\AX \to
\R \j_* \integers/m(r) \to \i_* \integers/m(r-1)[-1].$$
Note $$\R q_* \R \j_* \integers/m(r) \cong
\underline{\R \Hom}_{\D(\Sh(\Sm_{U,\et},\integers/m))}
(\GmU,L_n(r))|_{X_\et}$$
and that $\R q_*$ applied to the last map
in the sequence gives the projection to the second summand
in our decomposition (\ref{bfedhed}).
Thus by construction of the map $\iota$ this map
also gives the inverse of the adjoint of the structure map in $R \Sym(\caT)$.

By Theorem \ref{4t56uzg} there is an exact triangle
$$\i _* \M_n(r-1)[-2] \to \M_n^\AX(r) \to \R \j_* \M_n(r) \to
\i_* \M_n(r-1)[-1].$$

Hence by Theorem \ref{gtfn} the canonical map
$$\tau_{\le r}(\R \j_* \M_n(r)) \to \R \j_* \M_n(r)$$
is an ismorphism. Thus in view of Theorem \ref{edfgth76}
the same truncation property holds for
$\R \j_* \tau_{\le r}\R \epsilon_* \integers/m(r)$.
Thus the map
$$\R \j_* \tau_{\le r}\R \epsilon_* \integers/m(r)
\to \R \epsilon_* \i_* \integers/m(r-1)[-1]$$
factors through $\tau_{\le r}(\R \epsilon_* \i_* \integers/m(r-1)[-1])$.

Moreover the map
$$\R \j_* \tau_{\le r}\R \epsilon_* \integers/m(r)
\cong \tau_{\le r} \R \j_* \tau_{\le r}\R \epsilon_* \integers/m(r) \to
\tau_{\le r} \R \j_* \R \epsilon_* \integers/m(r)$$
is an isomorphism, thus we have a canonical map
$$\tau_{\le r} \R \epsilon_* \integers/m(r)^\AX \to
\R \j_* \tau_{\le r}\R \epsilon_* \integers/m(r).$$
Using Corollary \ref{grer54s} these maps fit into the commutative diagram
\begin{equation}
\label{edrghj}
\xymatrix{\M_n^\AX(r) \ar[r] \ar[d]^\cong &
\R \j_* \M_n(r) \ar[r] \ar[d]^\cong &
\i_* \M_n(r-1)[-1] \ar[d]^\cong \\
\tau_{\le r} \R \epsilon_* \integers/m(r)^\AX \ar[r] &
\R \j_* \tau_{\le r} \R \epsilon_* \integers/m(r) \ar[r] &
\tau_{\le r}(\i_* \R \epsilon_* \integers/m(r-1)[-1]),}
\end{equation}
where the top row is part of the triangle given by
Theorem \ref{gbter4}.
The composition
$$A_{r-1}'[-r]|_{X_\Zar} \to \underline{\R \Hom}(\integers/m[\GmU,\{1\}]_\Zar,
A_r'[-r])|_{X_\Zar}$$
$$\to \underline{\R \Hom}(\integers/m[\GmU]_\Zar,
\tau_{\le r} \R \epsilon_* L_n(r))|_{X_\Zar}$$
$$\cong \R q_* \R \j_* \tau_{\le r} \R \epsilon_* \integers/m(r) \to
\tau_{\le r}(\R \epsilon_* \integers/m(r-1)[-1]) \cong A_{r-1}'[-r]|_{X_\Zar}$$
is the identity.

By Theorem \ref{htehg} $\R q_* \M_n^\AX(r)$ identifies
with $\M_n^X(r)$, thus $\R q_*$ applied to the left
bottom arrow in (\ref{edrghj}) is an isomorphism
to the trivial summand and $\R q_*$ of the bottom
row splits. Thus also $\R q_*$ of the top
row splits. This shows that in fact
$$\R q_* \i_* \M_n(r-1)[-1] \cong \M_n^X(r-1)[-1]$$
is via the right vertical isomorphism and the
right lower map in the diagram
isomorphic to the non-trivial summand in
$\R q_* \R \j_* \tau_{\le r} \R \epsilon_* \integers/m(r)$.
Since this holds over every $X \in \Sm_U$ we are done.
\end{proof}

Thus $B:=j_*(RQA')$ is a $\integers/p^n[\GmS,\{1\}]_\Zar$-spectrum
and computes also levelwise the derived push forward of $A'$ along $j$.
(Note that to compute the levelwise push forward we also could have
used the levelwise model structure.)

By (\ref{4j6n}) for every $X \in \Sm_S$
we have 
\begin{equation}
\label{rthr}
B_r|_{X_\Zar} \cong
\R (j_X)_* (\M_n^{X_U}(r))[r]
\end{equation}
in $\D(\Sh(X_\Zar,\integers/p^n))$
(here $X_U=X \times_S U$ and $j_X$ denotes the inclusion
$X_U \hookrightarrow X$).

Thus by Corollary \ref{n4t6d} the map
$B_r':=\tau_{\le 0} B_r \to B_r$ is a quasi-isomorphism.

As in Lemma \ref{htrgrzj} the $B_r'$ assemble to an
$E_\infty$-algebra $B'$, and the natural map
$B' \to B$ is an equivalence.

By the following Lemma we could have used $j_*A'$ instead of
$B$ and $B'$.

\begin{lemma}
\label{jh44444}
The natural maps $j_*(QA') \to j_* A'$ and $j_*(QA') \to B$ are level equivalences.
\end{lemma}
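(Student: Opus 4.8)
\emph{Overview.} The plan is to treat the two maps separately, the whole matter reducing to the claim that $A'$ — and then $QA'$ — is levelwise acyclic for the \emph{underived} functor $j_*$, for which the one essential geometric input is Corollary~\ref{n4t6d}. For the map $j_*(QA') \to j_*A'$, recall that by construction $QA' \to A'$ is a trivial fibration of $\integers/p^n[\GmU,\{1\}]_\Zar$-spectra. The restriction functor $j^*$ along $\Sm_U \hookrightarrow \Sm_S$ commutes with the $\GmU$-suspension (since $\GmS \times_S U = \GmU$, and likewise for the basepoint), and it preserves cofibrations and local weak equivalences, hence is left Quillen for the levelwise (and, by the same token, the local projective) model structures; so $j_*$ is right Quillen and preserves trivial fibrations. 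Therefore $j_*(QA') \to j_*A'$ is a trivial fibration of spectra, and since the stable model structure is a left Bousfield localization of the levelwise one and thus has the same cofibrations, hence the same trivial fibrations, this map is in fact a levelwise trivial fibration, in particular a level equivalence.

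\emph{Acyclicity of $A'$.} I claim the canonical map $j_*(A'_r) \to \R j_*(A'_r)$ is a quasi-isomorphism for each $r$. Since $A_r$ is levelwise fibrant for the local projective model structure and $j_*$ is right Quillen for that structure, $\R j_*(A_r) = j_*(A_r)$. Since $j_*$ is left exact it commutes with the good truncation $\tau_{\le 0}$ (it preserves the relevant kernels), whence $j_*(A'_r) = j_*(\tau_{\le 0}A_r) = \tau_{\le 0}(j_*A_r)$. Applying $\R j_*$ to the truncation triangle $A'_r \to A_r \to \tau_{\ge 1}A_r \to A'_r[1]$ and using $R^q j_* = 0$ for $q<0$ — so that $\R j_*(\tau_{\ge 1}A_r)$ has cohomology sheaves concentrated in degrees $\ge 1$ — we get $\tau_{\le 0}(\R j_*A'_r) \cong \tau_{\le 0}(\R j_*A_r) = \tau_{\le 0}(j_*A_r) = j_*(A'_r)$. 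Finally, over each $X \in \Sm_S$ the object $\R j_*(A'_r)$ restricts, via the isomorphisms (\ref{4j6n}) and (\ref{rthr}), to $\R(j_X)_*(\M_n^{X_U}(r))[r]$, whose cohomology sheaves vanish in degrees $>0$ by Corollary~\ref{n4t6d}; hence $\R j_*(A'_r) = \tau_{\le 0}(\R j_*A'_r) = j_*(A'_r)$, as claimed.

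\emph{Conclusion.} The spectrum $B = j_*(RQA')$ represents $\R j_*(A')$ levelwise ($RQA'$ being levelwise fibrant), and $\R j_*$ preserves quasi-isomorphisms, being a derived functor. Now apply the natural transformation $j_* \Rightarrow \R j_*$ to the map $QA'_r \to A'_r$ to obtain a commutative square: its top edge $j_*(QA'_r) \to j_*(A'_r)$ is a quasi-isomorphism (it is even a trivial fibration of complexes, by the first part), its bottom edge $j_*(A'_r) \to \R j_*(A'_r)$ is a quasi-isomorphism (second part), and its right edge $\R j_*(QA'_r) \to \R j_*(A'_r)$ is a quasi-isomorphism; hence so is its left edge $j_*(QA'_r) \to \R j_*(QA'_r)$. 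Since $QA' \to RQA'$ is the chosen fibrant resolution, $j_*$ applied to it is precisely this left edge in each level, so $j_*(QA') \to B$ is a level equivalence. Everything here is model-categorical bookkeeping except the cohomological vanishing used in the acyclicity step, i.e. Corollary~\ref{n4t6d}; that is the point that carries genuine content, and it is the step I would be most careful about.
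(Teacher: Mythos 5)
Your proof is correct, and it rests on the same essential inputs as the paper's own argument -- the levelwise fibrancy statements, the compatibility of the truncation $\tau_{\le 0}$ with (derived) pushforward, and above all the vanishing supplied by Corollary \ref{n4t6d} -- but it packages them differently. The paper argues in one stroke: since the levels of $A'$ and $QA'$ are fibrant in $\Cpx^{\le 0}$, the underived $j_*$ of both computes the derived pushforward formed inside $\Cpx^{\le 0}$, which is $\tau_{\le 0}$ of the unbounded derived pushforward because truncation commutes with derived pushforward, and the claim then follows from the already established equivalence $B' \to B$. You instead dispose of $j_*(QA') \to j_*A'$ by the elementary remark that trivial fibrations (the same in the stable and levelwise structures, and detected objectwise) are preserved by the underived $j_*$, and you prove by hand, via the truncation triangle, the vanishing $R^qj_*=0$ for $q<0$, and Corollary \ref{n4t6d}, that each $A'_r$ is $j_*$-acyclic; a naturality square then yields the second map. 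This is a sound, if more verbose, unpacking of the same argument, and your treatment of the first map is in fact a little more economical than the paper's uniform one.

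Two points you should make explicit. First, in the final step the identification of $j_*(QA'_r) \to j_*((RQA')_r)=B_r$ with the derived unit $j_*(QA'_r)\to \R j_*(QA'_r)$ requires not only that $(RQA')_r$ be fibrant but also that $QA'_r \to (RQA')_r$ be a quasi-isomorphism; this is exactly Proposition \ref{bgerhh4} ($QA'$ is an $\Omega$-spectrum, so its stable fibrant replacement is a level equivalence), which you use silently -- as does the paper, through the assertion just before the lemma that $B$ computes the levelwise derived pushforward of $A'$. Cite it. Second, your justification that $j_*$ preserves trivial fibrations via ``$j^*$ is left Quillen'' can be replaced by the simpler observation that trivial fibrations in the transferred and localized structures are objectwise and $j_*$ is precomposition with $X \mapsto X_U$; the left Quillen claim is fine but not needed, and as stated (``restriction along $\Sm_U \hookrightarrow \Sm_S$'') it slightly misdescribes the adjunction.
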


\begin{proof}
Note first that each $A_r'$ is fibrant in
$\Cpx^{\le 0}(\Sh(\Sm_{U,\Zar},\integers/p^n))$, hence so are the $(QA')_r$, thus
the $j_*A'$ and $j_*(QA')$ are the derived push forwards to the homotopy category of
$\Cpx^{\le 0}(\Sh(\Sm_{S,\Zar},\integers/p^n))$.
But truncation commutes with derived push forward
(both are right adjoints), so the claim follows from the fact
that $B' \to B$ is an equivalence.
\end{proof}

\begin{corollary}
\label{h64de}
There is a natural isomorphism
$$\R^r j_* L_n(r) \cong \epsilon^*\H^0(B_r')=\H^0(B_r')_\et$$
in $\Sh(\Sm_{S,\et},\integers/p^n)$.
\end{corollary}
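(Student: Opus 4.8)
The plan is to identify the Zariski sheaf $\H^0(B_r')$ with an $r$-th cohomology sheaf of a derived pushforward of motivic complexes, rewrite that pushforward via Theorem~\ref{edfgth76}, and then transport the identification to the étale topology by commuting the two pushforwards along the commuting square formed by the Zariski and étale big sites of $S$ and of $U=S[\tfrac1p]$.

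First I would record that, since $B_r'=\tau_{\le 0}B_r\to B_r$ is a quasi-isomorphism by Corollary~\ref{n4t6d} (which forces $\H^k(B_r)=0$ for $k>0$) and $B_r$ computes $\R j_*$ of the $r$-fold shift of the mod $p^n$ motivic complex on $\Sm_U$ by~(\ref{rthr}), one obtains a canonical isomorphism of Zariski sheaves on $\Sm_S$
$$\H^0(B_r')\;\cong\;\H^r\bigl(\R j_*\M_n(r)\bigr),$$
where $j\colon\Sm_{U,t}\to\Sm_{S,t}$ denotes the open inclusion on the relevant site $t$ and $\M_n(r)$ is the mod $p^n$ motivic complex viewed as a complex of sheaves on $\Sm_{U,\Zar}$. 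By Theorem~\ref{edfgth76} the cycle class map gives a canonical isomorphism $\M_n(r)\cong\tau_{\le r}\R\epsilon_* L_n(r)$ in $\D(\Sh(\Sm_{U,\Zar},\integers/p^n))$. The cone of $\tau_{\le r}\R\epsilon_* L_n(r)\to\R\epsilon_* L_n(r)$ lies in cohomological degrees $>r$, and $\R j_*$ is left $t$-exact, so applying $\R j_*$ and then $\H^r$ is insensitive to this truncation; hence $\H^0(B_r')\cong\H^r(\R j_*\R\epsilon_* L_n(r))$ on $\Sm_{S,\Zar}$.

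Next I would use that the pushforwards along $\epsilon$ and along $j$ commute strictly: the underlying functors ``$X\mapsto X\times_S U$'' on the categories of étale, resp.\ Zariski, schemes literally coincide, so $\epsilon_*\circ j_*=j_*\circ\epsilon_*$ as functors $\Sh(\Sm_{U,\et},\integers/p^n)\to\Sh(\Sm_{S,\Zar},\integers/p^n)$, whence $\R j_*\R\epsilon_* L_n(r)\cong\R\epsilon_*\R j_* L_n(r)$, the $j_*,\epsilon_*$ on the right now being the étale ones. It then remains to apply $\epsilon^*$ and identify $\epsilon^*\H^r(\R\epsilon_*\R j_* L_n(r))$ with $\R^r j_* L_n(r)$. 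For this I would note that for any bounded-below complex $\mathcal{C}$ of étale sheaves the counit $\epsilon^*\R\epsilon_*\mathcal{C}\to\mathcal{C}$ is an isomorphism: $\epsilon^*$ is exact, and $\epsilon^*\R^p\epsilon_*\mathcal{G}=0$ for $p>0$ while $\epsilon^*\epsilon_*\mathcal{G}=\mathcal{G}$ for any étale sheaf $\mathcal{G}$ (pass to stalks at strictly henselian local rings, where higher étale cohomology vanishes). Taking $\mathcal{C}=\R j_* L_n(r)$ and using that $\epsilon^*$ commutes with $\H^r$ yields $\epsilon^*\H^r(\R\epsilon_*\R j_* L_n(r))\cong\H^r(\R j_* L_n(r))=\R^r j_* L_n(r)$. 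Composing the displayed isomorphisms, all of which are canonical, and recalling that $\H^0(B_r')_\et=\epsilon^*\H^0(B_r')$ by definition proves the corollary.

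The step that needs genuine attention is only the bookkeeping: keeping the four sites $\Sm_{U,\Zar},\Sm_{U,\et},\Sm_{S,\Zar},\Sm_{S,\et}$ apart, checking that the two pushforwards commute on the nose (they do, being restriction functors for commuting functors on the underlying categories), and verifying that neither the truncation $\tau_{\le r}$ nor the passage to étale sheafification disturbs cohomological degree $r$. No new geometric input beyond Theorem~\ref{edfgth76} and the vanishing in Corollary~\ref{n4t6d} enters.
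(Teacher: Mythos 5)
Your argument is correct and rests on the same skeleton as the paper's proof (commute the pushforwards along $j$ and $\epsilon$, note that truncation is invisible in the relevant degree, and undo $\epsilon_*$ by \'etale sheafification), but it is packaged differently. The paper works with the strict models: by Lemma \ref{jh44444}, $j_*A'=\epsilon_*\tau_{\le 0}j_*(RQ\Sym(\caT))$ computes the derived pushforward levelwise, so $\R^r j_* L_n(r)\cong \H^0(j_*(RQ\Sym(\caT))_r)$, and the only topology-change input needed is the elementary observation that for a complex concentrated in degrees $\le 0$ the \'etale sheafification of the Zariski $\H^0$ is the \'etale $\H^0$. You instead stay in the derived category, un-truncate, and invoke the stronger (though standard) fact $\epsilon^*\R\epsilon_*\simeq\id$ on $D^+$ of \'etale sheaves; this is valid here precisely because, in the paper's conventions, $\Sm_{S,\et}$ and $\Sm_{S,\Zar}$ share the same underlying category, so $\epsilon^*$ is just \'etale sheafification and your strictly-henselian-stalk argument applies. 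One presentational caveat: your first identification quotes (\ref{rthr}) and Theorem \ref{edfgth76}, which are stated restriction-wise on small sites $X_\Zar$, to produce a global isomorphism on $\Sm_{S,\Zar}$; there is no global motivic complex on $\Sm_U$ at this point in the paper. This is harmless, since what you actually use is the global identification $B_r\simeq \R j_* A_r'$ with $A_r'=\tau_{\le 0}\R\epsilon_* L_n(r)[r]$, which holds by the very definition of $A'$ and $B$ together with Lemma \ref{jh44444} -- so the detour through the motivic complexes can simply be omitted, and then your proof goes through; what your route buys is independence from the explicit fibrant models, at the price of the extra input $\epsilon^*\R\epsilon_*\simeq\id$.
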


\begin{proof}
We have $j_*A'=\epsilon_* \tau_{\le 0} j_* (R Q \Sym(\caT))$,
thus $$\R^r j_* L_n(r) \cong \H^0(j_* ((R Q \Sym(\caT))_r)
=\H^0(\tau_{\le 0} j_* ((R Q \Sym(\caT))_r)$$
$$=\epsilon^* \H^0(\epsilon_* \tau_{\le 0} j_* ((R Q \Sym(\caT))_r)
=\epsilon^* \H^0(j_*A_r') = \epsilon^* \H^0(B_r').$$
At the end we used Lemma \ref{jh44444}.
\end{proof}

By (\ref{rthr}) and
Corollary \ref{n4t6d} we have for every $X \in \Sm_S$ a natural epimorphism
\begin{equation}
s_X \colon \H^0(B_r'|_{X_\Zar}) \twoheadrightarrow (i_X)_*\nu_n^{r-1},
\end{equation}
where $i_X$ is the inclusion $X \times_S Z \hookrightarrow X$.
 
\begin{proposition}
\label{htgree}
The maps $s_X$ assemble
to an epimorphism $$s \colon
\H^0(B_r') \twoheadrightarrow i_* \nu_n^{r-1}.$$
\end{proposition}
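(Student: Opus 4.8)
The plan is to check that the locally defined maps $s_X$ are compatible with the transition maps of the two Zariski sheaves $\H^0(B_r')$ and $i_*\nu_n^{r-1}$ on $\Sm_{S,\Zar}$, so that they glue to a single morphism $s$ of sheaves on $\Sm_{S,\Zar}$ restricting to $s_X$ on each small site $X_\Zar$; surjectivity of $s$ is then cheap. Indeed, an epimorphism of Zariski sheaves on $\Sm_{S,\Zar}$ is detected on stalks, hence after restriction to the small sites $X_\Zar$, where $s$ restricts to $s_X$ — and each $s_X$ is an epimorphism by Corollary \ref{n4t6d} together with the identification $\H^0(B_r')|_{X_\Zar}\cong\H^r(\R (j_X)_*\M_n^{X_U}(r))$ provided by (\ref{rthr}) (here $\pi\colon X\to S$ is the structure morphism, $X_U:=\pi^{-1}(S[1/p])$, $X_Z:=\pi^{-1}(Z_p)$, and $j_X\colon X_U\hookrightarrow X$, $i_X\colon X_Z\hookrightarrow X$ the open and closed inclusions). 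So the whole content is the naturality of $s_X$ in $X$.

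Recall that $s_X$ is $\H^r$ of the composite
\[
\R (j_X)_*\M_n^{X_U}(r)\xrightarrow{\partial}\R (i_X)_*\R (i_X)^!\M_n^X(r)[1]\xrightarrow{\sim}\R (i_X)_*\M_n^{X_Z}(r-1)[-1]\xrightarrow{\sim}(i_X)_*\nu_n^{r-1}[-r],
\]
where $\partial$ is the boundary map of the recollement triangle (\ref{4t56uzg}) (= Theorem \ref{gbter4}) applied to $\M_n^X(r)$, the first isomorphism is the purity isomorphism $\R (i_X)^!\M^X(r)\cong\M^{X_Z}(r-1)[-2]$ (the Corollary following Theorem \ref{gbter4}, reduced mod $p^n$), and the last is Theorem \ref{fbmtzzt} together with exactness of $(i_X)_*$. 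At this stage the cycle complexes are only known to be contravariant for flat maps, so naturality of $s_X$ for flat $f\colon X'\to X$ is immediate from the flat-pullback compatibility of (\ref{4t56uzg}), of the purity isomorphism and of (\ref{hgfd}); for a general $f$ I would re-express the construction separately on the two strata. Over $X_U$, where $p$ is invertible, Theorem \ref{edfgth76} identifies $\M_n^{X_U}(r)$ with $\tau_{\le r}\R\epsilon_*\integers/p^n(r)$, which is functorial for \emph{all} morphisms of $U$-schemes ($\epsilon^*$ and scheme-theoretic pullbacks are exact, hence commute with $\tau_{\le r}$, and $\mu_{p^n}^{\otimes r}$ is stable under base change); thus $\R (j_X)_*\M_n^{X_U}(r)$ — and via (\ref{rthr}) the sheaf $\H^0(B_r')$ — carries exactly the all-morphism functoriality coming from $B'$ being a sheaf on $\Sm_{S,\Zar}$, compatibly with Corollary \ref{h64de}. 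Over $X_Z$, $(i_X)_*\nu_n^{r-1}$ is by construction the restriction to $X_\Zar$ of the big-site sheaf $i_*\nu_n^{r-1}$ (the $\nu_n^{r-1}$ assemble to a sheaf on all $\F_p$-schemes). The mod-$p^n$ triangle (\ref{4t56uzg}) for $(X_Z,X,X_U)$ then exhibits $\M_n^X(r)$ as an extension of the two all-morphism-functorial objects $\R (j_X)_*\M_n^{X_U}(r)$ and $\R (i_X)_*\nu_n^{r-1}[-r-1]$, and — using Corollary \ref{grer54s} over the $p$-invertible locus and the compatibilities already exploited in the proof of Proposition \ref{bgerhh4} — the boundary map $\partial$, hence $s_X$, inherits naturality for all morphisms of $\Sm_S$.

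The hard part is this last step: since $\M^X(r)$ is not (yet) known to be a sheaf on the big site over $S$, one must argue that the two stratum-wise functorialities glue along the boundary map of (\ref{4t56uzg}) and that the resulting all-morphism functoriality of $s_X$ coincides with the big-site structures of $\H^0(B_r')$ and of $i_*\nu_n^{r-1}$; particular care is needed because the étale comparison (Theorem \ref{edfgth76}, Corollary \ref{grer54s}) may only be invoked over $X_U$, never over the characteristic-$p$ stratum $X_Z$. Once the $\{s_X\}$ are known to be compatible, they assemble to a morphism $s\colon\H^0(B_r')\to i_*\nu_n^{r-1}$ of Zariski sheaves on $\Sm_{S,\Zar}$ restricting to $s_X$ on each $X_\Zar$, and by the first paragraph $s$ is an epimorphism, as claimed.
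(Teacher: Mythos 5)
Your reduction of the statement to ``naturality of $s_X$ in $X$ for arbitrary morphisms of $\Sm_S$'' is the right one, and the easy parts (surjectivity via Corollary \ref{n4t6d} and (\ref{rthr}), and naturality for flat maps) are fine. But the decisive step is precisely the one you flag as ``the hard part'' and then do not carry out. Knowing that the source $\H^0(B_r')$ carries an all-morphism functoriality (via the \'etale description over $X_U$) and that the target $i_*\nu_n^{r-1}$ is a big-site sheaf does \emph{not} make the connecting map $\partial$ of the cycle-complex localization triangle a natural transformation: for a non-flat $g\colon Y\to X$ there is no map of localization triangles (\ref{4t56uzg}) to compare against, and an ``extension of functorial objects'' does not inherit functoriality of its boundary map without an additional rigidity or uniqueness argument, which you never supply. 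The tools you invoke --- Corollary \ref{grer54s} and the compatibilities from the proof of Proposition \ref{bgerhh4} --- live entirely over the $p$-invertible locus and say nothing about how the map onto $(i_X)_*\nu_n^{r-1}$, i.e.\ the characteristic-$p$ stratum, behaves under non-flat pullback. So the gluing claim, which is the whole content of the proposition, is not established.

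The paper closes exactly this gap by a different mechanism: it re-describes $s_X$ as a map $s_X'$ built from the Bloch--Kato map $b_{X_T}$ on $p$-adic vanishing cycles $(i_{X_T})^*\R^r(j_{X_T})_*\mu_{p^n}^{\otimes r}$ over the completions of the local rings at the points of $Z$. Since $b_{X_T}$ is defined by its values on symbols ($\{f_1,\ldots,f_r\}\mapsto 0$ for units $f_i$, $\{f_1,\ldots,f_{r-1},\pi\}\mapsto \dlog\overline{f}_1\cdots\dlog\overline{f}_{r-1}$) and the relevant sheaf is locally generated by symbols, functoriality for \emph{arbitrary} morphisms is immediate from functoriality of symbols and of the base change maps (Lemma \ref{hteedd}, Corollaries \ref{gfrr4r}, \ref{bngzr}, \ref{bfdrr}). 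The remaining work is then the identification $s_X=s_X'$ (Proposition \ref{gfedr5t5}), which is itself nontrivial: it uses the injectivity of $\nu_n^{r-1}(R/\pi)\to\nu_n^{r-1}(F)$, passage to the henselization and its quotient field, and a commutativity established in the proof of Geisser's comparison theorem. Some such explicit, symbol-level (or otherwise rigid) characterization of $s_X$ is what your argument is missing; without it the stratum-wise functorialities cannot be glued along $\partial$.
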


In order to prove this Proposition we describe the maps
$s_X$ in a way Geisser used to define his version of
syntomic cohomology in \cite[\S 1,6]{geisser.dede}.

Let $X \in \Sm_S$.
We first give a construction of a map
$$b_X \colon (i_X)^* (\R^r j_* L_n(r)|_{X_\et}) \to \nu_n^{r-1}$$
in $\Sh((X_Z)_\et,\integers/p^n)$.
Over a complete discrete valuation ring of mixed characteristic
such a map was constructed in \cite[\S (6.6)]{bloch-kato.p-adic},
see also \cite[\S 6]{geisser.dede}.

We fix a point $\p \in Z$ and let $\Lambda$ be the
completion of the discrete valuation ring $D_\p$. Set
$T:= \Spec(\Lambda)$.
Let $\eta$ be the generic point of $T$.
Let $X_T:= X \times_S T$, and let
$X_\p$ be the special fiber and $X_\eta$ the generic fiber
of $X_T$.

We let $j_{X_T} \colon X_\eta \to X_T$ and
$i_{X_T} \colon X_\p \to X_T$ be the canonical inclusions.

Then the map $$b_{X_T} \colon
M_{n,X_T}^r:=(i_{X_T})^* \R^r (j_{X_T})_*(\integers/p^n(r))
\to \nu_n^{r-1}$$
in \cite[\S (6.6)]{bloch-kato.p-adic} is defined as follows
(recall $\integers/p^n(r)={\mathbb \mu}_{p^n}^{\otimes r}$):

By \cite[Corollary (6.1.1)]{bloch-kato.p-adic} the sheaf
$M_{n,X_T}^r$ is ($\mathrm{\acute{e}}$tale) locally generated
by symbols $\{x_1,\ldots,x_r\}$,
$x_i \in (i_{X_T})^*(j_{X_T})_* \caO_{X_\eta}^*$
(for the definition of symbol see
\cite[\S (1.2)]{bloch-kato.p-adic}).

Then for any $f_1,\ldots,f_r \in (i_{X_T})^* \caO_{X_T}^*$
the map $b_{X_T}$ sends the symbol $\{f_1,\ldots,f_r\}$ to $0$
and the symbol $\{f_1,\ldots,f_{r-1},\pi\}$
($\pi$ a uniformizer of $\Lambda$) to
$\dlog \overline{f}_1 \ldots \dlog \overline{f}_{r-1}$,
where $\overline{f}_i$ is the reduction of $f_i$ to
$\caO_{X_\p}^*$.

By multilinearity and the fact that $\{x,-x\}=0$ for $x \in
(i_{X_T})^*(j_{X_T})_* \caO_{X_\eta}^*$ this characterizes $b_{X_T}$ uniquely.

The base change morphism for the square
$$\xymatrix{X_\eta \ar[r]^{f_{X_U}} \ar[d]^{j_{X_T}} & X_U \ar[d]^{j_X} \\
X_T \ar[r]^{f_X} & X}$$
applied to the sheaf $\integers/p^n(r)$ on $(X_U)_\et$ yields
$$(f_X)^* \R^r(j_X)_* \integers/p^n(r) \to \R^r (j_{X_T})_* \integers/p^n(r)$$
(note that $(f_{X_U})^* \integers/p^n(r) = \integers/p^n(r)$).
Applying $(i_{X_T})^*$ and noting that $(i_{X_T})^* (f_X)^*=(i_\p)^*$
where $i_\p$ is the
inclusion $i_\p \colon X_\p \to X$ we get a map
$$(i_\p)^* \R^r(j_X)_* \integers/p^n(r) \to
M_{n,X_T}^r.$$

Composing with $b_{X_T}$ gives a map
$$(i_\p)^* \R^r(j_X)_* \integers/p^n(r) \to \nu_n^{r-1}.$$

Taking the disjoint union over all points in $Z$ we finally
get the map $$b_X \colon (i_X)^* \R^r (j_X)_* \integers/p^n(r)
\to \nu_n^{r-1},$$
the adjoint of which is a map
$$b_X' \colon \R^r (j_X)_* \integers/p^n(r) \to (i_X)_*\nu_n^{r-1}.$$

Together with the isomorphism of Corollary
\ref{h64de} we get the composition
$$s_X' \colon \H^0(B_r')|_{X_\Zar} \to \epsilon_* \H^0(B_r)_\et |_{X_\Zar}
\cong \epsilon_*\R^r (j_X)_* \integers/p^n(r) \to (i_X)_*\nu_n^{r-1}$$
(by our convention $\nu_n^{r-1}$ also denotes the logarithmic De Rham-Witt
sheaf on $(X_Z)_\Zar$).

\begin{proposition}
\label{gfedr5t5}
With the notation as above we have $s_X=s_X'$.
\end{proposition}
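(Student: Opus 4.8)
The plan is to verify that the two maps $s_X$ and $s_X'$, both going from $\H^0(B_r'|_{X_\Zar})$ to $(i_X)_*\nu_n^{r-1}$, agree. Both are built from the isomorphism of Corollary \ref{h64de}, so after that identification it suffices to compare the two induced maps $\R^r(j_X)_*\integers/p^n(r) \to (i_X)_*\nu_n^{r-1}$ (equivalently, after restriction to the étale site and taking adjoints, the two maps $(i_X)^*\R^r(j_X)_*\integers/p^n(r) \to \nu_n^{r-1}$). On the $s_X'$ side this is, by construction, the Bloch--Kato map $b_X$, which is pinned down uniquely by its effect on symbols: it kills $\{f_1,\dots,f_r\}$ with all $f_i$ units on $X_T$, and sends $\{f_1,\dots,f_{r-1},\pi\}$ to $\dlog\overline f_1\cdots\dlog\overline f_{r-1}$. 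So the whole problem reduces to computing the map $s_X$ — which came out of the triangle (\ref{4t56uzg}) for the pair $(\mathbb{A}^1,\G)$ together with the identifications (\ref{rthr}) and Corollary \ref{n4t6d} — on the same symbols, and checking it obeys these two normalizations.

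First I would reduce to the local situation: since both maps are morphisms of étale sheaves on $(X_Z)_\et$ and the source is generated étale-locally by symbols $\{x_1,\dots,x_r\}$ with $x_i \in (i_{X_T})^*(j_{X_T})_*\caO_{X_\eta}^*$ (Bloch--Kato, \cite[Corollary (6.1.1)]{bloch-kato.p-adic}), it is enough to work over the complete discrete valuation ring $\Lambda=\widehat{D_\p}$, $T=\Spec(\Lambda)$, one point $\p\in Z$ at a time, and to test on such symbols. By multilinearity and the Steinberg relation $\{x,-x\}=0$, it even suffices to evaluate on symbols of the shape $\{u_1,\dots,u_r\}$ with all $u_i$ units and on $\{u_1,\dots,u_{r-1},\pi\}$. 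Here I would use that the generator $1\in H^1_\et(\GmU,L_n(1))$ classified by $\iota$ — equivalently, via the Remark, the boundary of the Kummer triangle $L_n(1)\to\G\xrightarrow{p^n}\G$ — is precisely the symbol given by the coordinate function on $\G$; the multiplicative structure on $B'$ (Lemma \ref{htrgrzj}) then expresses a length-$r$ symbol as an $r$-fold product of such classes, so the edge map $s_X$ in top degree is identified with the iterated residue/boundary map appearing in Corollary \ref{n4t6d}, i.e. the composite of $r$ applications of the localization triangle (\ref{4t56uzg}) combined with the Geisser--Levine identification (\ref{hgfd}) $\M^{X_\p}(r-1)/p^n\cong\nu_n^{r-1}[-(r-1)]$.

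The computation itself then splits into the two cases. A symbol $\{u_1,\dots,u_r\}$ with all $u_i$ units on $X_T$ extends to a class in the unramified part, i.e. it lifts along $\integers/p^n(r)^{X_T}\to\R^r(j_{X_T})_*\integers/p^n(r)$ coming from the triangle of Corollary \ref{grtehh}, so its image under the residue map to $(i_{X_T})_*\nu_n^{r-1}$ is zero — matching $b_{X_T}(\{u_1,\dots,u_r\})=0$. For $\{u_1,\dots,u_{r-1},\pi\}$ one factors the symbol as the product of $\{u_1,\dots,u_{r-1}\}$ (an unramified class) with the Kummer class of the uniformizer $\pi$; the residue of the Kummer class of $\pi$ is $1\in\nu_n^0=\integers/p^n$ by the explicit form of the Gysin/boundary map used to set up $\iota$, and multiplicativity of the whole picture (Lemma \ref{gbhtfrr}, Corollary \ref{grer54s}, and compatibility of (\ref{hgfd}) with products) carries the unramified factor to $\dlog\overline u_1\cdots\dlog\overline u_{r-1}$ via the reduction map $\caO_{X_T}^*\to\caO_{X_\p}^*$. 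Hence $s_X$ and $b'_X$ agree on all symbols, so they agree, and passing back through Corollary \ref{h64de} gives $s_X=s_X'$.

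The main obstacle I expect is not any single case but the bookkeeping that makes "multiplicativity" rigorous: one must check that the $E_\infty$-product on $B'$ (and on its étale counterpart $A$), the localization triangle (\ref{4t56uzg}), the étale Gysin triangle of Corollary \ref{grtehh}, the cycle class maps, and the Geisser--Levine isomorphism (\ref{hgfd}) are all compatible with products in a single coherent diagram, so that the edge map $s_X$ really is the $r$-fold iterate of the rank-one residue. The rank-one residue computation — identifying the boundary of the Kummer triangle $L_n(1)\to\G\xrightarrow{p^n}\G$ with the tame symbol/valuation map, which is exactly the content of the Remark following the construction of $\iota$ and of \cite[\S(6.6)]{bloch-kato.p-adic} — is the conceptual heart, but it is essentially already in the literature (Bloch--Kato, Geisser); the work here is to align our spectrum-level construction with it.
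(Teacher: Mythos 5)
Your reduction to symbols and to the two normalizations of $b_{X_T}$ is fine (that is exactly how $b_{X_T}$ is characterized), but the way you propose to evaluate $s_X$ on those symbols has a genuine gap. First, your argument that unramified symbols die under $s_X$ invokes the \'etale Gysin triangle of Corollary \ref{grtehh} over $X_T$ with $\integers/p^n$-coefficients; that corollary (and the absolute purity it rests on) requires $m$ to be invertible on the scheme, which fails here since the closed fiber $X_\p$ has characteristic $p$. Indeed the whole point of the Bloch--Kato description of $M^r_{n,X_T}=(i_{X_T})^*\R^r(j_{X_T})_*\integers/p^n(r)$ is that no such purity triangle is available at $p$, so "the symbol lifts along the Gysin triangle, hence its residue vanishes" is not a legitimate step; any vanishing argument has to be run on the motivic side and then transported through the cycle class map over $X_\eta$ only. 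Second, the core of your computation is the claim that $s_X$ --- the boundary of the motivic localization triangle (\ref{4t56uzg}) composed with the Geisser--Levine identification (\ref{hgfd}) --- is multiplicative enough (a projection formula against classes extending over $X_T$, compatibly with the product on $\H^0(B'_r)$, the cycle class map, and the ring structure on $\bigoplus_r\nu_n^r$) to be computed as "unramified factor times rank-one residue of $\pi$". This is precisely the hard content of the proposition, and you flag it as "bookkeeping" without supplying it; also, as stated, $s_X$ is a single codimension-one boundary, not an $r$-fold iterate of rank-one residues, so the identification you assert in your second paragraph is not correct on its face.

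For contrast, the paper avoids proving any such multiplicative compatibility: after base change to the completion $T$ and using only the naturality of the cycle class map (Lemma \ref{gbhtfrr}) and of the motivic localization sequence, it sheafifies \'etale-locally, passes to the strictly henselian local ring $R$ of a point of $X_\p$, uses the injectivity $\nu_n^{r-1}(R/\pi)\hookrightarrow\nu_n^{r-1}(F)$ of \cite{gros-suwa} to replace $R[\frac{1}{\pi}]$ and $R/\pi$ by the fields $L$ and $F$, inserts the henselization $L^h$, and then quotes the commutativity of the decisive square established in the proof of \cite[Theorem 1.3]{geisser.dede}. If you want to pursue your route, you would in effect have to reprove that ingredient of Geisser's argument (the compatibility of the motivic boundary with the symbol calculus of \cite[\S (6.6)]{bloch-kato.p-adic}), so you should either supply that projection-formula argument in detail over the DVR, or reduce to the field case as the paper does rather than appealing to purity at $p$.
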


\begin{proof}
We keep the local completed situation at a point $\p$
of $Z$ from above.

We have a natural map induced by
flat pullback $(f_{X_U})^* \M_n^{X_U}(r) \to \M_n^{X_\eta}(r)$,
whence we get a base change morphism
$$f_X^*\R^r (j_X)_* \M_n^{X_U}(r) \to \R^r (j_{X_T})_* \M_n^{X_\eta}(r).$$

We get a diagram
$$\xymatrix{f_X^*\R^r (j_X)_* \M_n^{X_U}(r) \ar[r] \ar[d] &
\R^r (j_{X_T})_* \M_n^{X_\eta}(r) \ar[r] \ar[d] &
\H^{r-1}((i_{X_T})_* \M_n^{X_\p}(r-1)) \ar[d]^\cong \\
f_X^* \epsilon_* \R^r(j_X)_* \integers/p^n(r) \ar[r] &
\epsilon_* \R^r (j_{X_T})_* \integers/p^n(r) \ar[r] &
(i_{X_T})_* \nu_n^{r-1}.}$$
The left and middle vertical maps are induced by
the isomorphism of Corollary \ref{h64de} and (\ref{rthr}).
The left lower horizontal map is induced by the
transformation $f_X^*\epsilon_* \to \epsilon_* f_X^*$.
The upper right horizontal arrow is part of the
localization sequence for the motivic complexes.
The lower right horizontal map is induced by
$b_{X_T}$.

The claim of the Proposition follows from the
commutativity of the outer square. Indeed, a map
from the left upper corner to the right lower corner is
adjoint to a map
$$(i_\p)^*\R^r(j_X)_* \M_n^{X_U}(r)=
(i_{X_T})^*f^* \R^r(j_X)_* \M_n^{X_U}(r)
\to \nu_n^{r-1}.$$
The assertion that the outside compositions are the same
implies that the adjoints of $s_X$ and $s_X'$ coincide
over the point $\p$. Since this is true for all points in $Z$
the claim follows.

The left square of the above square commutes by naturality
of the cycle class map, Lemma \ref{gbhtfrr}.

So we are left to prove the commutativity of the right hand
square.

Since the right lower corner is an $\mathrm{\acute{e}}$tale
sheaf we can also sheafify this square in the
$\mathrm{\acute{e}}$tale topology to test commutativity.

The resulting square is adjoint to a square
$$\xymatrix{(i_{X_T})^* \epsilon^*\R^r(j_{X_T})_*
\M_n^{X_\eta}(r) \ar[r] \ar[d]^\cong &
\epsilon^*\H^{r-1}(\M_n^{X_\p}(r-1)) \ar[d]^\cong \\
(i_{X_T})^* \R^r(j_{X_T})_* \integers/p^n(r) \ar[r] &
\nu_n^{r-1}}$$
(the left vertical map is an isomorphism by Corollary \ref{h64de}).
This commutativity would follow from the commutativity
of the right hand square in the first diagram
in the proof of \cite[Theorem 1.3]{geisser.dede}.
This commutativity is not explicitely stated in loc.~cit.,
but the proof in loc.~cit.~that $\kappa \circ \alpha \circ c$
is $0$ shows the commutativity of our diagram:

As in loc.~cit.~let $R$ be the strictly henselian local ring
of a point in the closed fiber $X_\p$ of $X_T$,
let $L$ be the field of quotients of $R$, $F$ the field of
quotients of $R/\pi$, $V=R_{(\pi)}$, $V^h$ the henselization of
$V$ and $L^h$ the quotient field of $V^h$.

We have to show the commutativity of
$$\xymatrix{H^r(R[\frac{1}{\pi}],\M_n(r)) \ar[r] \ar[d]^\cong &
H^{r-1}(R/\pi, \M_n(r-1)) \ar[d]^\cong \\
H_\et^r(R[\frac{1}{\pi}],\integers/p^n(r)) \ar[r] &
\nu_n^{r-1}(R/\pi).}$$

The map $\nu_n^{r-1}(R/\pi) \to \nu_n^{r-1}(F)$ is injective
(see the proof of \cite[Theorem 1.3]{geisser.dede}, where
it is attributed to \cite[Corollary 1.6]{gros-suwa}).

Thus by the naturality of the localization sequence
for motivic complexes and the fact that the $b_{X_T}$
are sheaf maps
it is enough to show commutativity of the square which
one gets from the last square by replacing $R[\frac{1}{\pi}]$
with $L$ and $R/\pi$ with $F$.
But this square factors as
$$\xymatrix{H^r(L,\M_n(r)) \ar[r] \ar[d]^\cong &
H^r(L^h,\M_n(r)) \ar[r] \ar[d]^\cong &
H^{r-1}(F, \M_n(r-1)) \ar[d]^\cong \\
H_\et^r(L,\integers/p^n(r)) \ar[r] &
H_\et^r(L^h,\integers/p^n(r)) \ar[r] &
\nu_n^{r-1}(F).}$$

The right upper horizontal map is induced from the localization
sequence of the motivic complexes for $V^h$, its generic and its
closed point.

The left hand square commutes by naturality of the cycle
class map, and the commutativity of the right hand square
is shown in the proof of \cite[Theorem 1.3]{geisser.dede}
in the paragraph before the last paragraph.
This finishes the proof.
\end{proof}

We next discuss functoriality of the construction of the
morphisms $s_X'$. So let $g \colon Y \to X$ be a morphism
in $\Sm_S$. We still keep the local completed situation from
above. We let $g_Z$, $g_T$, $g_\eta$ and $g_\p$ be the base changes
of $g$ (over $S$) to $Z$, $T$, $\eta$ and $\p$.

Consider the diagram

$$\xymatrix{Y_\eta \ar[r]^{g_\eta} \ar@{^(->}[d]^{j_{Y_T}} &
X_\eta \ar@{^(->}[d]^{j_{X_T}} \\
Y_T \ar[r]^{g_T} & X_T \\
Y_\p \ar[r]^{g_\p} \ar@{_(->}[u]_{i_{Y_T}} & X_\p \ar@{_(->}[u]_{i_{X_T}}.}$$

A base change morphism gives us
$$(g_T)^* \R^r(j_{X_T})_*(\integers/p^n(r)) \to
\R^r(j_{Y_T})_*(\integers/p^n(r)).$$
Applying $(i_{Y_T})^*$ and using $(i_{Y_T})^*(g_T)^*
\cong (g_\p)^*(i_{X_T})^*$ gives 
$$(g_\p)^* M_{n,X_T}^r \to M_{n,Y_T}^r.$$

\begin{lemma}
\label{hteedd}
The diagram
$$\xymatrix{(g_\p)^* M_{n,X_T}^r \ar[rr]^{(g_\p)^*(b_{X_T})} \ar[d] & &
(g_\p)^* \nu_n^{r-1} \ar[d] \\
M_{n,Y_T}^r \ar[rr]^{b_{Y_T}} & &
\nu_n^{r-1}}$$
commutes.
\end{lemma}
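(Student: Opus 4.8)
The idea is to reduce the commutativity of the square to a check on the two kinds of symbols that characterize $b_{X_T}$, using the uniqueness recalled above. All four objects in the square are sheaves on the small \'etale site of $Y_\p$ and all four maps are maps of sheaves, so it suffices to argue \'etale-locally. By the discussion preceding the Lemma — multilinearity of symbols together with the relation $\{x,-x\}=0$, and \cite[Corollary (6.1.1)]{bloch-kato.p-adic} — the sheaf $M_{n,X_T}^r$ is \'etale-locally generated by symbols of the two forms $\{f_1,\dots,f_r\}$ and $\{f_1,\dots,f_{r-1},\pi\}$ with $f_i \in (i_{X_T})^*\caO_{X_T}^*$. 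Hence $(g_\p)^*M_{n,X_T}^r$ is \'etale-locally generated by the pull-backs of such symbols, and the two composites $(g_\p)^*M_{n,X_T}^r \to \nu_n^{r-1}$ in the square coincide as soon as they coincide on these generators. For that one needs first that the base change morphism $(g_\p)^*M_{n,X_T}^r \to M_{n,Y_T}^r$ sends $(g_\p)^*\{x_1,\dots,x_r\}$ to $\{g_\eta^*x_1,\dots,g_\eta^*x_r\}$; this holds because the symbol map of \cite[\S(1.2)]{bloch-kato.p-adic} is assembled from Kummer boundary maps and cup products, both of which are compatible with the base change morphism for $\R^r j_*$. I expect this compatibility — elementary but somewhat fiddly to spell out — to be the main obstacle; the rest is formal.

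Granting it, take first a symbol $\{f_1,\dots,f_r\}$ with $f_i \in (i_{X_T})^*\caO_{X_T}^*$. A pull-back of a unit is a unit, so its image under the base change morphism is the symbol $\{g_T^*f_1,\dots,g_T^*f_r\}$, again of the first form for $Y_T$, whence $b_{Y_T}$ sends it to $0$; on the other side $g_\p^*\bigl(b_{X_T}(\{f_1,\dots,f_r\})\bigr)=g_\p^*(0)=0$. Take next a symbol $\{f_1,\dots,f_{r-1},\pi\}$. Its image under the base change morphism is $\{g_T^*f_1,\dots,g_T^*f_{r-1},g_T^*\pi\}$, and since $g_T$ is a morphism over $T$ we have $g_T^*\pi=\pi$, so this is again a symbol of the second form for $Y_T$ and $b_{Y_T}$ sends it to $\dlog\overline{g_T^*f_1}\cdots\dlog\overline{g_T^*f_{r-1}}$. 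On the other side, $\dlog$ and the multiplication of the De Rham-Witt complex are morphisms of sheaves on the big \'etale site of $\F_p$-schemes and therefore commute with $g_\p^*$, while reduction modulo $\pi$ commutes with pull-back (immediate from the commutative diagram preceding the Lemma), i.e.\ $g_\p^*\overline{f_i}=\overline{g_T^*f_i}$; hence
$$g_\p^*\bigl(b_{X_T}(\{f_1,\dots,f_{r-1},\pi\})\bigr)=g_\p^*\bigl(\dlog\overline{f_1}\cdots\dlog\overline{f_{r-1}}\bigr)=\dlog\overline{g_T^*f_1}\cdots\dlog\overline{g_T^*f_{r-1}}.$$
So the two composites agree on both kinds of generators.

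Since these symbols generate $(g_\p)^*M_{n,X_T}^r$ \'etale-locally, it follows that the two composites $(g_\p)^*M_{n,X_T}^r \to \nu_n^{r-1}$ coincide, i.e.\ the square commutes. To summarize the plan: reduce to generating symbols; the $\{f_1,\dots,f_r\}$ case is immediate; the $\{f_1,\dots,f_{r-1},\pi\}$ case reduces to $g_T^*\pi=\pi$ together with the big-site functoriality of $\dlog$ and of the De Rham-Witt product and the compatibility of reduction with pull-back; and the only substantive ingredient, on which I would concentrate the actual proof, is the compatibility of the base change morphism for $\R^r j_*$ with the symbol presentation.
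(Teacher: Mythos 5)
Your proof is correct and is exactly the argument the paper intends: its one-line proof ("the definition of $b_{X_T}$, $b_{Y_T}$ in terms of symbols and the functoriality of the symbols") is precisely your reduction to the two generating types of symbols, the compatibility of the symbol/base-change maps, $g_T^*\pi=\pi$, and the naturality of $\dlog$ on the big site of $\F_p$-schemes. You have simply spelled out the details the paper leaves implicit.
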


\begin{proof}
This follows from the definition of the morphisms
$b_{X_T}$ and $b_{Y_T}$ in terms of symbols
and the functoriality of the symbols.
\end{proof}

As above for $X$ let $f_Y$ be the map $Y_T \to Y$.

\begin{lemma}
\label{gtdsgh}
The diagram
$$\xymatrix{g_T^*f_X^* \R^r(j_X)_* \integers/p^n(r) \ar[r] \ar[d] &
g_T^* \R^r (j_{X_T})_* \integers/p^n(r) \ar[d] \\
f_Y^* \R^r(j_Y)_* \integers/p^n(r) \ar[r] &
\R^r (j_{Y_T})_* \integers/p^n(r),}$$
where all maps are induced by base change morphisms,
commutes.
\end{lemma}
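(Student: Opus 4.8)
The plan is to recognize both ways around the square as the base change morphism of one and the same commutative square of schemes, and to conclude by the standard (purely formal, $2$-categorical) compatibility of base change morphisms with pasting of squares. First I would observe that every arrow in the diagram is obtained by applying $\H^r$ to a base change morphism between derived pushforwards along the inclusions $j_X,j_{X_T},j_Y,j_{Y_T}$ in the relevant derived categories of \'etale sheaves of $\integers/p^n$-modules, after using the canonical identifications which say that the pullbacks of $\integers/p^n(r)$ along the $f$- and $g$-maps are again $\integers/p^n(r)$ (valid because $\mu_{p^n}^{\otimes r}$ is defined functorially on all schemes on which $p$ is invertible); moreover $\H^r$ commutes with the exact pullback functors $g_T^*,f_X^*,f_Y^*$. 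Hence it suffices to prove the corresponding identity of base change morphisms at the level of derived pushforwards, which is a statement about mates in a $2$-category and therefore applies here without change.

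Next I would assemble the schemes into the evident commutative cube whose four ``vertical'' edges are $j_X\colon X_U\to X$, $j_{X_T}\colon X_\eta\to X_T$, $j_Y\colon Y_U\to Y$ and $j_{Y_T}\colon Y_\eta\to Y_T$, and whose remaining edges are $f_X,f_Y,f_{X_U},f_{Y_U}$ and $g,g_U,g_T,g_\eta$. By the pasting law for exchange transformations, the composite ``top then right'' in the Lemma is the base change morphism of the square obtained by gluing the $T$-square
$$\xymatrix{ Y_\eta \ar[r]^{g_\eta} \ar[d]^{j_{Y_T}} & X_\eta \ar[d]^{j_{X_T}} \\ Y_T \ar[r]^{g_T} & X_T }$$
to the left of
$$\xymatrix{ X_\eta \ar[r]^{f_{X_U}} \ar[d]^{j_{X_T}} & X_U \ar[d]^{j_X} \\ X_T \ar[r]^{f_X} & X }$$
along their common edge $j_{X_T}$, while the composite ``left then bottom'' is the base change morphism of the square obtained by gluing
$$\xymatrix{ Y_\eta \ar[r]^{f_{Y_U}} \ar[d]^{j_{Y_T}} & Y_U \ar[d]^{j_Y} \\ Y_T \ar[r]^{f_Y} & Y }$$
to the left of
$$\xymatrix{ Y_U \ar[r]^{g_U} \ar[d]^{j_Y} & X_U \ar[d]^{j_X} \\ Y \ar[r]^{g} & X }$$
along their common edge $j_Y$ (here the left vertical arrow of the Lemma is $f_Y^*$ applied to the base change morphism for this last square, together with the canonical identification $g_T^*f_X^*\cong f_Y^*g^*\cong(f_Xg_T)^*$). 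Both pasted squares have the shape
$$\xymatrix{ Y_\eta \ar[r] \ar[d]^{j_{Y_T}} & X_U \ar[d]^{j_X} \\ Y_T \ar[r] & X, }$$
and they literally coincide: their top arrows $f_{X_U}\circ g_\eta$ and $g_U\circ f_{Y_U}$ agree by commutativity of the cube face with vertices $Y_\eta,X_\eta,Y_U,X_U$, and their bottom arrows $f_X\circ g_T$ and $g\circ f_Y$ agree by commutativity of the cube face with vertices $Y_T,X_T,Y,X$. Thus the two composites are equal, which proves the Lemma.

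Since the argument is entirely formal, I do not expect a genuine obstacle. The only points needing care are the bookkeeping of the canonical identifications ($g_T^*f_X^*\cong f_Y^*g^*\cong(f_Xg_T)^*$, the equalities $f_{X_U}^*\integers/p^n(r)=\integers/p^n(r)$ and their companions), the verification that each of the four arrows of the Lemma's square really is the pasted composite described above — this amounts to unwinding their definitions in terms of base change morphisms — and a correct invocation of the horizontal pasting compatibility of exchange transformations for the derived adjunctions $(f^*,\R f_*)$ that occur.
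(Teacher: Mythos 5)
Your proof is correct and is essentially the paper's argument: the paper disposes of this lemma in one line by invoking the naturality of the base change morphisms, which is precisely the pasting compatibility of exchange transformations for the cube of schemes that you spell out in detail. Your additional bookkeeping (reduction from $\R^r$ to the derived pushforwards via exactness of the pullbacks, and the identification of the two pasted squares) is a faithful unwinding of that same appeal.
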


\begin{proof}
This follows by the naturality of the base change morphisms.
\end{proof}

\begin{corollary}
\label{gfrr4r}
The diagram
$$\xymatrix{(g_Z)^*(i_X)^* \R^r (j_X)_* \integers/p^n(r)
\ar[rr]^(.633){(g_Z)^*(b_X)} \ar[d] & &
(g_Z)^* \nu_n^{r-1} \ar[d] \\
(i_Y)^* \R^r (j_Y)_* \integers/p^n(r) \ar[rr]^(.633){b_Y} & &
\nu_n^{r-1},}$$
where the left vertical map is induced by a base change morphism,
commutes.
\end{corollary}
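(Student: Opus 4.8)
The plan is to reduce to a single closed point $\p \in Z$ and there to factor both $b_X$ and $b_Y$ through the Bloch--Kato sheaves $M_{n,X_T}^r$ and $M_{n,Y_T}^r$, so that the square of the Corollary splits into the square of Lemma \ref{gtdsgh} (restricted to $\p$) and the square of Lemma \ref{hteedd}.

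First I would recall that $(i_X)^* \R^r (j_X)_* \integers/p^n(r)$ decomposes as the disjoint union over the points $\p \in Z$ of $(i_\p)^* \R^r (j_X)_* \integers/p^n(r)$, and that by construction $b_X$ restricts over each $\p$ to the composite of $b_{X_T}$ with the base change morphism $\beta_X \colon (i_\p)^* \R^r (j_X)_* \integers/p^n(r) \to M_{n,X_T}^r$ obtained from $f_X^* \R^r (j_X)_* \integers/p^n(r) \to \R^r (j_{X_T})_* \integers/p^n(r)$ by applying $(i_{X_T})^*$ and using $(i_{X_T})^* f_X^* = (i_\p)^*$; the analogous statements hold for $Y$. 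Since $(g_Z)^*$ and the structure maps of the sheaf $\nu_n^{r-1}$ on all $\F_p$-schemes respect these disjoint unions, it suffices to check, for each $\p$, that the outer rectangle
$$\xymatrix@C=3.6em{
(g_\p)^*(i_\p)^* \R^r(j_X)_* \integers/p^n(r) \ar[r]^-{(g_\p)^*\beta_X} \ar[d] &
(g_\p)^* M_{n,X_T}^r \ar[r]^-{(g_\p)^* b_{X_T}} \ar[d] &
(g_\p)^* \nu_n^{r-1} \ar[d] \\
(i_\p)^* \R^r(j_Y)_* \integers/p^n(r) \ar[r]^-{\beta_Y} &
M_{n,Y_T}^r \ar[r]^-{b_{Y_T}} &
\nu_n^{r-1}
}$$
commutes, where the middle vertical map is the base change morphism $(g_\p)^* M_{n,X_T}^r \to M_{n,Y_T}^r$ of the discussion preceding Lemma \ref{hteedd}, and $i_\p$ on the bottom row now denotes the inclusion $Y_\p \hookrightarrow Y$.

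The right-hand square is precisely Lemma \ref{hteedd}. The left-hand square is obtained by applying $(i_{Y_T})^*$ to the square of Lemma \ref{gtdsgh} and using the canonical identifications $(i_{Y_T})^*(g_T)^* \cong (g_\p)^*(i_{X_T})^*$, $(i_{X_T})^* f_X^* = (i_\p)^*$ and $(i_{Y_T})^* f_Y^* = (i_\p)^*$; hence it commutes by Lemma \ref{gtdsgh}. Pasting the two squares gives commutativity of the outer rectangle, and running over all $\p \in Z$ gives the Corollary. The only step requiring care — and the one I expect to be the main (and rather minor) obstacle — is matching up notation: one must verify that after these identifications the left vertical arrow is exactly $(g_Z)^*$ of the base change map appearing in the statement, restricted to $\p$, which follows from $i_X \circ g_Z = g \circ i_Y$ and naturality of base change morphisms. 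There is no serious difficulty here, the entire content being Lemmas \ref{hteedd} and \ref{gtdsgh}.
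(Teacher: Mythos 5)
Your proposal is correct and follows exactly the paper's route: the paper's proof is the one-line "combine Lemmas \ref{hteedd} and \ref{gtdsgh}," and your argument simply spells out that pasting — pointwise over $\p \in Z$, the left square is Lemma \ref{gtdsgh} after applying $(i_{Y_T})^*$ and the right square is Lemma \ref{hteedd}. The extra bookkeeping you supply (disjoint union over the points of $Z$, compatibility of the identifications of pullbacks) is exactly what the paper leaves implicit.
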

\begin{proof}
This follows by combining Lemmas \ref{hteedd} and \ref{gtdsgh}.
\end{proof}

\begin{corollary}
\label{bngzr}
The diagram
$$\xymatrix{g^* \R^r(j_X)_* \integers/p^n(r)
\ar[rr]^{g^*(b_X')} \ar[dd] & &
g^* (i_X)_* \nu_n^{r-1} \ar[d]^\cong \\
 & & (i_Y)_* (g_Z)^* \nu_n^{r-1} \ar[d] \\
\R^r (j_Y)_* \integers/p^n(r) \ar[rr]^{b_Y'} & &
(i_Y)_* \nu_n^{r-1}}$$
commutes.
\end{corollary}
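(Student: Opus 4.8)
The plan is to reduce this to the ``un-adjointed'' compatibility of Corollary~\ref{gfrr4r} by a purely formal argument, using that $b_X'$ and $b_Y'$ are by construction the adjoints of $b_X$ and $b_Y$ under $(i_X)^*\dashv (i_X)_*$ resp.\ $(i_Y)^*\dashv (i_Y)_*$ between \'etale sheaves. To fix notation, abbreviate $P_X:=\R^r(j_X)_*\integers/p^n(r)$, $P_Y:=\R^r(j_Y)_*\integers/p^n(r)$ and $N:=\nu_n^{r-1}$, write $\theta$ generically for the counits $i^*i_*\to \id$, and let $\mathrm{res}\colon (g_Z)^* N\to N$ be the restriction morphism of the sheaf $\nu_n^{r-1}$ on all $\F_p$-schemes. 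Since $i_X,i_Y$ are closed immersions and the square with corners $Y_Z,Y,X_Z,X$ is cartesian, the base change natural transformation $\beta\colon g^*(i_X)_*\to (i_Y)_*(g_Z)^*$ is an isomorphism, and by construction $\beta$ is adjoint under $(i_Y)^*\dashv(i_Y)_*$ to the composite $(i_Y)^*g^*(i_X)_*\cong (g_Z)^*(i_X)^*(i_X)_*\xrightarrow{(g_Z)^*\theta}(g_Z)^*$. In the diagram to be proved, the right vertical arrow is $(i_Y)_*(\mathrm{res})\circ\beta$, and the (unlabeled) left vertical arrow is the base change morphism $g^*P_X\to P_Y$ attached to the open immersions $j_X,j_Y$ (using $g^*\integers/p^n(r)=\integers/p^n(r)$).

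First I would observe that both legs of the square are morphisms $g^*P_X\to (i_Y)_*N$, so by the adjunction $(i_Y)^*\dashv(i_Y)_*$ it suffices to check that their adjoints $(i_Y)^*g^*P_X\to N$ coincide; everywhere below I use the canonical identification $(i_Y)^*g^*\cong (g_Z)^*(i_X)^*$. Taking the adjoint of the top--right leg and using naturality of $\theta$, the description of $\beta$ recalled above, and finally the defining relation $\theta\circ (i_X)^*(b_X')=b_X$ between $b_X$ and its adjoint $b_X'$, the adjoint collapses to $\mathrm{res}\circ (g_Z)^*(b_X)\colon (g_Z)^*(i_X)^*P_X\to (g_Z)^*N\to N$. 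Taking the adjoint of the left--bottom leg and using $\theta\circ (i_Y)^*(b_Y')=b_Y$, it collapses to $b_Y\circ (i_Y)^*(\mathrm{bc})\colon (g_Z)^*(i_X)^*P_X\to (i_Y)^*P_Y\to N$. Since $(i_Y)^*(\mathrm{bc})$ is — up to the identification above — precisely the left vertical arrow of Corollary~\ref{gfrr4r}, whose right vertical arrow is $\mathrm{res}$, the required equality $\mathrm{res}\circ (g_Z)^*(b_X)=b_Y\circ (i_Y)^*(\mathrm{bc})$ is exactly the commutativity asserted there.

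The hard part, such as it is, is the bookkeeping in the two identifications just invoked: that the arrow labelled ``induced by a base change morphism'' in Corollary~\ref{gfrr4r} genuinely is $(i_Y)^*$ of the base change map for $j_X,j_Y$ (which one sees by unwinding the $j_*$--$j^*$ adjunctions over the two cartesian squares), and that the proper base change isomorphism $\beta$ is compatible with the units and counits of $(i_X)^*\dashv(i_X)_*$ and $(i_Y)^*\dashv(i_Y)_*$ as stated. Both are standard facts about base change in \'etale cohomology, but they should be pinned down explicitly; once they are, the remainder is the formal diagram chase above, requiring no further geometric input.
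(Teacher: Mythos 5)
Your proposal is correct and follows essentially the same route as the paper: the paper's proof likewise reduces to Corollary \ref{gfrr4r} by checking that the $(i_Y)^*\dashv(i_Y)_*$-adjoints of the two legs are the two compositions there, handling the lower-left leg immediately and the other via the compatibility of base change with the adjunction units/counits. You simply spell out explicitly the bookkeeping that the paper summarizes as ``a compatibility between adjoints and pullbacks.''
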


\begin{proof}
We check that the adjoints with respect to the pair $(i_Y)^*$, $(i_Y)_*$
of the two compositions
are the two compositions of Corollary \ref{gfrr4r}.
For the composition via the left lower corner this is
immediate. For the other composition one uses
a compatibility between adjoints and pullbacks.
\end{proof}

\begin{corollary}
\label{bfdrr}
The maps $s_X'$ assemble to a map of sheaves
$\H^0(B_r') \to i_* \nu_n^{r-1}$.
\end{corollary}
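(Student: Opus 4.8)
The plan is to glue the morphisms $b_X'$ into a single morphism of \'etale sheaves and then compose it with the comparison isomorphism of Corollary \ref{h64de}. I will use the standard principle that a morphism between two sheaves on $\Sm_{S,\et}$ is the same datum as a family of morphisms of their restrictions to the small \'etale sites $X_\et$, $X \in \Sm_S$, compatible with the base-change structure maps associated to all morphisms $g \colon Y \to X$ in $\Sm_S$ (equivalently, a natural transformation of the underlying presheaves); the same holds over $\Sm_{S,\Zar}$. This is exactly what is meant by \emph{assemble} here, and since $\R^r j_* L_n(r)$, $\H^0(B_r')$ and $i_* \nu_n^{r-1}$ are sheaves on the relevant big sites, it suffices to verify the appropriate compatibilities.

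First I would observe that Corollary \ref{bngzr} says precisely that the family $\{b_X'\}_{X \in \Sm_S}$ is compatible with the base-change maps, the ones on the target side being $g^* (i_X)_* \nu_n^{r-1} \cong (i_Y)_* (g_Z)^* \nu_n^{r-1} \to (i_Y)_* \nu_n^{r-1}$. Hence the $b_X'$ assemble to a morphism $b' \colon \R^r j_* L_n(r) \to i_* \nu_n^{r-1}$ of \'etale sheaves on $\Sm_{S,\et}$. Writing $\alpha \colon \epsilon^* \H^0(B_r') = \H^0(B_r')_\et \xrightarrow{\sim} \R^r j_* L_n(r)$ for the canonical isomorphism of Corollary \ref{h64de}, I would then define $s \colon \H^0(B_r') \to i_* \nu_n^{r-1}$ to be the composite of the unit $\H^0(B_r') \to \epsilon_* \H^0(B_r')_\et$ of $(\epsilon^*, \epsilon_*)$, of $\epsilon_* \alpha$, and of $\epsilon_* b'$, the last landing in $\epsilon_* i_* \nu_n^{r-1}$, which equals $i_* \nu_n^{r-1}$ by $\epsilon_* i_* = i_* \epsilon_*$ together with the fact that $\epsilon_*$ of the \'etale logarithmic De Rham-Witt sheaf is its Zariski counterpart. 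Restricting $s$ to $X_\Zar$, for any $X \in \Sm_S$, returns by construction exactly the chain of maps defining $s_X'$ — the unit, the isomorphism of Corollary \ref{h64de}, and $\epsilon_* b_X'$ — because each of the three is obtained by restriction from its global version. Thus $s|_{X_\Zar} = s_X'$ for every $X$, which is the assertion. (Combined with Proposition \ref{gfedr5t5} and the surjectivity of each $s_X$ established above, this also yields Proposition \ref{htgree}, since epimorphisms of sheaves may be tested on the small sites.)

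I expect the only point genuinely requiring care to be the following: the comparison isomorphism of Corollary \ref{h64de} must be invoked in its global form $\H^0(B_r')_\et \cong \R^r j_* L_n(r)$ on $\Sm_{S,\et}$, i.e. as an isomorphism of big \'etale sheaves and not merely pointwise, since otherwise $s$ would not be a single well-defined morphism. This global naturality is, however, immediate from the construction in the proof of Corollary \ref{h64de} (which rests on Lemma \ref{jh44444}), where everything is phrased in terms of the presheaf $B_r'$ on $\Sm_S$ and the functor $\epsilon$. The remaining steps are bookkeeping with the identification $\epsilon_* i_* = i_* \epsilon_*$ and with the various base-change maps; the substantive functoriality input has already been supplied by Corollary \ref{bngzr} (via Corollary \ref{gfrr4r} and Lemmas \ref{hteedd}, \ref{gtdsgh}). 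An equivalent, more explicit route is to avoid gluing $b'$ first and instead check directly that for each $g \colon Y \to X$ the compatibility square for $s_X'$ splits into three squares — one for the adjunction unit (naturality of the unit under $g^*$), one for $\alpha$ (its naturality in $X$), and one for $\epsilon_* b_X'$ (Corollary \ref{bngzr}) — each of which commutes.
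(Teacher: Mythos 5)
Your proposal is correct and follows the same route as the paper, whose entire proof is the one-line appeal to Corollary \ref{bngzr} as the functoriality input; you have simply spelled out the standard gluing bookkeeping (the global form of Corollary \ref{h64de}, the adjunction unit, and $\epsilon_* i_* = i_* \epsilon_*$) that the paper leaves implicit.
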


\begin{proof}
This follows directly from Corollary \ref{bngzr}.
\end{proof}

\begin{proof}[Proof of Proposition \ref{htgree}]
The assertion follows by combining Proposition \ref{gfedr5t5}
and Corollary \ref{bfdrr}.
\end{proof}

Let $C_r$ be the kernel of the composition
$$B_r' \twoheadrightarrow \H^0(B_r') \overset{s}{\to} i_* \nu_n^{r-1}.$$

Then by construction of the maps $s_X$ we have for any
$X \in \Sm_S$ an isomorphism
\begin{equation}
\label{ht46jk}
C_r|_{X_\Zar} \cong \M_n^X(r)[r]
\end{equation}
in $\D(\Sh(X_\Zar,\integers/p^n))$
since both objects appear as (shifted) homotopy fibers
of the map
$$\R (j_X)_* \M_n^{X_U}(r) \to \R (i_X)_* \M_n^{X_Z}(r-1)[-1].$$
This isomorphism is even uniquely determined since
there are no non-trivial maps $\M_n^X(r) \to (i_X)_* \nu_n^{r-1}[-r-1]$
in $\D(\Sh(X_\Zar,\integers/p^n))$.

\begin{lemma}
\label{hnzt4}
Let $R$ be a commutative ring, $T \in \Sh(\Sm_{S,\Zar},R)$ and
$E$ an $E_\infty$-algebra in symmetric $T$-spectra in
$\Cpx^{\le 0}(\Sh(\Sm_{S,\Zar},R))$. Let $E_r$ be the levels
of $E$. Let for any $r>0$ an epimorphism
$\H^0(E_r) \twoheadrightarrow e_r$ in $\Sh(\Sm_{S,\Zar},R)[\Sigma_r]$
be given.
Let $E_r'$ be the kernel of the induced map
$E_r \to e_r$ and set $E_0':=E_0$.
Suppose the canoncial map
$\varphi \colon T \to E_1$ (which is the composition
$T \cong R \otimes T
\overset{\u \otimes \id}{\longrightarrow}
E_0 \otimes T \to E_1$ ($\u$ abbreviates $\mathrm{unit}$))
factors through $E_1'$ and that
for any $r,r'\ge 0$ the composition
in $\Sh(\Sm_{S,\Zar},R)$ induced by the $E_\infty$-multiplication
on $E$
$$\H^0(E_r') \otimes \H^0(E_{r'}') \to
\H^0(E_r) \otimes \H^0(E_{r'}) \to \H^0(E_{r+r'}) \to e_{r+r'}$$
(the tensor products are over $R$)
is the zero map. Then there is an induced structure of
an $E_\infty$-algebra $E'$ in symmetric $T$-spectra on the collection
of the $E_r'$ together with a map of $E_\infty$-algebras
$E' \to E$ which is levelwise the canonical map $E_r' \to E_r$.
\end{lemma}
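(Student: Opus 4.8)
The plan is to exhibit $E'$ as a sub-$E_\infty$-algebra of $E$ in symmetric $T$-spectra, so that every structure map of $E'$ is obtained simply by restricting the corresponding structure map of $E$; the only non-formal input will be the stated hypotheses together with the fact that everything lives in non-positively graded complexes. Recall that for $A,B\in\Cpx^{\le 0}(\Sh(\Sm_{S,\Zar},R))$ one has $\H^0(A\otimes B)\cong\H^0(A)\otimes\H^0(B)$, and likewise for $n$-fold tensor products; moreover the fixed $E_\infty$-operad $\mathcal{O}$ (the image of the linear isometries operad) has each $\mathcal{O}(n)$ in $\Cpx^{\le 0}$ with $\H^0(\mathcal{O}(n))=R$, so that $\H^0(\mathcal{O}(n)\otimes A_1\otimes\cdots\otimes A_n)\cong\H^0(A_1)\otimes\cdots\otimes\H^0(A_n)$. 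Since each $e_r$ is concentrated in cohomological degree $0$, a morphism from any such tensor product into some $e_r$ is the same datum as the induced morphism on $\H^0$; this is what will let us verify that the relevant composites vanish.

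\textbf{The key step.} First observe that the $E_r'\hookrightarrow E_r$ form a sub-symmetric-sequence of $E$: the inclusion is $\Sigma_r$-equivariant because $E_r'$ is by definition the kernel of the $\Sigma_r$-equivariant map $E_r\to e_r$ (and $E_0'=E_0$). I claim that for every $n\ge 1$ and every $(r_1,\dots,r_n)$ with $m:=r_1+\cdots+r_n\ge 1$ the $n$-ary operad structure map $\mathcal{O}(n)\otimes E_{r_1}'\otimes\cdots\otimes E_{r_n}'\to E_m$ factors through $E_m'$. Composing with $E_m\to e_m$ and passing to $\H^0$, it is enough to show that the iterated product $\H^0(E_{r_1}')\otimes\cdots\otimes\H^0(E_{r_n}')\to\H^0(E_m)$ induced by the multiplication of $E$ lands in $\ker(\H^0(E_m)\to e_m)=\H^0(E_m')$. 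By the operad axioms this $n$-ary operation on $\H^0$ is an iterate of the binary one, and the case $n=2$ of the hypothesis (including the degenerate cases $r_i=0$, where $\H^0(E_0')=\H^0(E_0)$) says precisely that the product of $\H^0(E_a')$ with $\H^0(E_b')$ lies in $\H^0(E_{a+b}')$ for all $a,b\ge 0$; hence every iterated product of primed pieces lies in a primed piece, which is the claim. (For $n=1$ the map is the identity and for $n=0$ there is nothing to check.)

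It remains to assemble the restricted maps. Since $\varphi\colon T\to E_1$ factors through $E_1'$ and, by the key step, multiplication carries primed factors to primed factors, the bonding maps $T\otimes E_r\to E_{r+1}$ of $E$ restrict to maps $T\otimes E_r'\to E_{r+1}'$, and the unit $\mathbb{S}_T\to E$ (which at level $m$ is $T^{\otimes m}\xrightarrow{\varphi^{\otimes m}}E_1^{\otimes m}\to E_m$) factors through $E'$; so $E'$ with the restricted bonding maps is a symmetric $T$-spectrum and $\iota\colon E'\to E$ is a levelwise monomorphism of symmetric $T$-spectra. The operad structure maps $\mathcal{O}(n)\otimes_{\Sigma_n}(E')^{\wedge_T n}\to E'$ are then obtained by descending the maps $\mathcal{O}(n)\otimes E_{r_1}'\otimes\cdots\otimes E_{r_n}'\to E_m'$ of the key step along the $\Sigma$-coinvariants and the $T$-module coequalizers defining $\mathcal{O}(n)\otimes_{\Sigma_n}(-)$ and $(E')^{\wedge_T n}$; this descent is legitimate because, after composition with the monomorphism $E_m'\hookrightarrow E_m$, one recovers the structure maps of $E$, which already satisfy the requisite $\Sigma$-equivariance and $T$-bonding relations.

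Finally, each operad-algebra axiom for $E'$ (equivariance, the operad-composition/associativity identity, unitality) is an equality of morphisms whose target is some $E_m'$; since $E_m'\hookrightarrow E_m$ is a monomorphism and the corresponding identity holds in $E$, it holds in $E'$. Therefore $E'$ is an $E_\infty$-algebra in symmetric $T$-spectra and $\iota\colon E'\to E$ is a morphism of such which is levelwise the canonical map $E_r'\to E_r$, as desired. The one genuine obstacle is the key step above; everything else is bookkeeping with the monoidal structure on symmetric $T$-spectra, made available precisely by the non-positivity of the complexes (so that maps into the $e_r$ can be tested on $\H^0$).
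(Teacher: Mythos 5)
Your proof is correct and follows essentially the same route as the paper: reduce the factorization through $E_r'=\ker(E_r\to e_r)$ to a statement about $\H^0$ (legitimate since $e_r$ sits in degree $0$ and everything lives in $\Cpx^{\le 0}$), observe that the $n$-ary operad action on $\H^0$ is an iterate of the binary multiplication, apply the hypothesis, and use the factorization of $\varphi$ through $E_1'$ for the bonding maps. The only point you compress is the identification of the bonding map $E_r\otimes T\to E_{r+1}$ (on $\H^0$) with multiplication against $\varphi$, which the paper verifies by an explicit diagram of operad/spectrum compatibilities; this is a routine consequence of the unitality axioms, so no gap.
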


\begin{proof}
The condition implies that we have natural maps
$$\phi_{r,r'} \colon \H^0(E_r') \otimes \H^0(E_{r'}') \to
\H^0(E_{r+r'}').$$
Let $\caO$ be our $E_\infty$-operad in $\Cpx^{\le 0}(\Sh(\Sm_{S,\Zar},R))$.
Note that each $E_r'$ carries an action of $\Sigma_r$.
The structure maps of the $E_\infty$-algebra in $T$-spectra $E$ are maps
$$s \colon E_r \otimes T \to E_{r+1}$$
and
$$a \colon \caO(k) \otimes E_{r_1} \otimes \cdots \otimes E_{r_k} \to E_r,$$
$r =\sum_{i=1}^k r_i$.
These are subject to certain conditions.
We show that when restricting these maps to the $E_r'$ they
factor through $E_r'$ (for the appropriate $r$).
Then it is clear that these new structure maps
also satisfy the conditions required.

To show that the composition
$$\caO(k) \otimes E_{r_1}' \otimes \cdots \otimes E_{r_k}' \to
\caO(k) \otimes E_{r_1} \otimes \cdots \otimes E_{r_k} \to E_r$$
factors through $E_r'$ it is suffcient to show that the induced
map on $\H^0$ factors through $\H^0(E_r')$.
But since $\caO$ is $E_\infty$ the map on $\H^0$ is a map
$$\H^0(E_{r_1}') \otimes \cdots \otimes \H^0(E_{r_k}') \to \H^0(E_r)$$
and the conditions to be $E_\infty$ imply that this map
is an iteration of the maps $\phi_{r',r''}$. Thus we get the factorization.

To handle the case of the $T$-spectrum structure maps
it is again sufficient to show that the composition
$$\psi \colon \H^0(E_r') \otimes T \to \H^0(E_r) \otimes T \to \H^0(E_{r+1})$$
factors through $\H^0(E_{r+1}')$.
But the commutativity of the diagram
$$\xymatrix{& \caO(2) \otimes E_r \otimes T
\ar[d]^\cong \ar@/_1pc/[dddl]_{\id \otimes \varphi} & \\
& \caO(2) \otimes E_r \otimes R \otimes T
\ar[d]^{\id \otimes \u \otimes \id} \ar[r] &
\caO(1) \otimes E_r \otimes T \ar[dd]^{a \otimes \id} \\
& \caO(2) \otimes E_r \otimes E_0 \otimes T
\ar[dl]^{\id \otimes s} \ar[dr]^{a \otimes \id} & \\
\caO(2) \otimes E_r \otimes E_1 \ar[dr]^a & & E_r \otimes T \ar[dl]^s \\
& E_{r+1} &}$$
(the only horizontal arrow is a structure map of the operad
using $R \cong \caO(0)$)
implies that $\psi$ is the composition
$$\H^0(E_r') \otimes T \to \H^0(E_r') \otimes \H^0(E_1') \to
H^0(E_{r+1})$$
which factors through $\H^0(E_{r+1}')$ by assumption.
This finishes the proof.
\end{proof}

We want to apply Lemma \ref{hnzt4} with $T=\integers/p^n[\GmS,\{1\}]_\Zar$,
$E=B'$ and $e_r = i_* \nu_n^{r-1}$. Then we have $E_r'=C_r$.

\begin{lemma}
The $\Sigma_r$-action on $\H^0(B_r')$ is the sign representation.
\end{lemma}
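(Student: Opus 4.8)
The claim is that the $\Sigma_r$-action on $\mathcal{H}^0(B_r')$ — equivalently on $\mathcal{H}^r(B_r) \cong \mathbb{R}^r j_* L_n(r)$ — is the sign representation. The natural strategy is to reduce to the situation over $U = S[\frac 1p]$, where $B_r'$ is (by Lemma \ref{jh44444}) the derived pushforward of $A_r'$, and there $\mathcal{H}^0(A_r') \cong \mathcal{H}^r(A_r)$ with $A_r \simeq \mathbb{R}\epsilon_* L_n(r)[r]$; so what must be identified is the $\Sigma_r$-action on the $r$-th level of the symmetric spectrum $\mathrm{Sym}(\mathcal{T})$, i.e. on $\mathcal{T}^{\otimes r} = (RQL_n(1)[1])^{\otimes r}$, passing to the relevant cohomology sheaf. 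The $\Sigma_r$-action on $\mathrm{Sym}$ of a chain complex placed in a single odd cohomological degree (degree $-1$, after the shift $[1]$) is the one that includes the Koszul sign rule; permuting $r$ tensor factors each sitting in odd degree contributes the sign of the permutation on top of the ``naive'' permutation of the underlying sheaves. Since étale-locally $\mathbb{R}^r j_* L_n(r)$ is generated by symbols $\{x_1,\dots,x_r\}$ which are (graded-)symmetric in the $x_i$ as elements of the exterior-algebra-like Milnor $K$-theory/Galois cohomology picture, the bare permutation of factors acts trivially on the cohomology sheaf and only the Koszul sign survives. Hence the action is by $\mathrm{sgn}$.

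\textbf{Key steps in order.} First I would reduce to $U$: by Lemma \ref{jh44444} the maps $j_*(QA') \to j_*A' \to B$ are level equivalences, so $\mathcal{H}^0(B_r') = j_* \mathcal{H}^0(A_r')$ as sheaves with $\Sigma_r$-action, and it suffices to compute the $\Sigma_r$-action on $\mathcal{H}^0(A_r')$, i.e. on $\mathcal{H}^r(A_r)$ with $A = \epsilon_* RQ\,\mathrm{Sym}(\mathcal{T})$ the $E_\infty$-algebra whose $r$-th level is $\mathrm{Sym}(\mathcal{T})_r = \mathcal{T}^{\otimes r}$ with the symmetric-monoidal (Koszul-signed) $\Sigma_r$-action. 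Second, unwind the shift: $\mathcal{T} = RQL_n(1)[1]$ is quasi-isomorphic to a complex concentrated in cohomological degree $-1$ with $\mathcal{H}^{-1}(\mathcal{T}) \cong L_n(1)$ (by the Gysin computation, cf. the Lemma that $\mathrm{Sym}(\mathcal{T})$ is an $\Omega$-spectrum). Then $\mathcal{H}^{-r}(\mathcal{T}^{\otimes r})$, as a sheaf with $\Sigma_r$-action, is the $r$-fold tensor power $L_n(1)^{\otimes r}$ with the action twisted by the sign coming from the Koszul rule on $r$ odd-degree factors — this is a direct application of the Künneth formula for a complex with homology in a single odd degree. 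After $\mathbb{R}\epsilon_*$ and passing to $\mathcal{H}^0(A_r') = \tau_{\le 0}$ at degree $0$, Theorem \ref{edfgth76} identifies this with $\mathbb{R}^r\epsilon_* L_n(r)$, and the comparison of $\mathbb{R}^r\epsilon_*(L_n(1)^{\otimes r})$ with $\mathbb{R}^r\epsilon_* L_n(r) = \mathbb{R}^r\epsilon_*\mu_{p^n}^{\otimes r}$ via cup products shows that the bare permutation part of the action (the permutation of the $r$ copies of $\mu_{p^n}$ inside $\mu_{p^n}^{\otimes r}$) becomes trivial on $\mathbb{R}^r\epsilon_*$ because cup product in étale cohomology of a field in top-relevant degree is graded-commutative and $H^1$ is in odd degree — so the permutation already carries its own Koszul sign, which cancels the one from the tensor factors, leaving exactly $\mathrm{sgn}$. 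Third, I would note this is consistent with the classical fact (Bloch–Kato, Voevodsky) that $\mathbb{R}^r\epsilon_*\mu_{p^n}^{\otimes r}$ is the sheaf associated to Milnor $K$-theory mod $p^n$, on which the ``shuffle'' $\Sigma_r$-action on the $r$-fold symbol $\{x_1,\dots,x_r\}$ is by the sign — giving an independent check.

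\textbf{Main obstacle.} The delicate point is bookkeeping of signs: one must be careful to keep track of the Koszul sign introduced by the shift $[1]$ and by the symmetric monoidal structure on $\mathrm{Sym}$ of a complex in odd degree, and to check that this is genuinely the \emph{tensor-factor} $\Sigma_r$-action that appears in the definition of $\mathrm{Sym}(\mathcal{T})$ as a symmetric spectrum (and not, say, an action already built to be trivial). The cleanest way to settle it is to identify $\mathcal{H}^0(B_r')$ étale-locally with (the degree-zero part of) the relevant piece of the symbol/Milnor $K$-theory picture and observe that transposing $x_i \leftrightarrow x_{i+1}$ in $\{x_1,\dots,x_r\}$ acts by $-1$; combined with the general fact that the symmetric-monoidal $\Sigma_r$-action on $V[1]^{\otimes r}$ for $V$ a sheaf placed in degree $0$ is $\mathrm{sgn} \otimes (\text{permutation of } V^{\otimes r})$, and that on $L_n(r) = L_n(1)^{\otimes r}$ the permutation of factors is (via cup product and graded commutativity) itself the sign, the two signs are forced and the composite is $\mathrm{sgn}$. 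I expect the author's proof to proceed exactly along these lines, isolating the statement so it can be fed into Lemma \ref{hnzt4} to verify that the multiplication map $\mathcal{H}^0(B_r') \otimes \mathcal{H}^0(B_{r'}') \to i_*\nu_n^{r+r'-1}$ vanishes (the sign representations on the two factors being incompatible with the target, which carries the sign representation in a single slot).
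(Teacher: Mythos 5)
Your overall route is the same as the paper's: the paper's proof is precisely the observation that there is a zig-zag of $\Sigma_r$-equivariant quasi-isomorphisms between $\caT^{\otimes r}$ and $(L_n(1)[1])^{\otimes r}$, and that on the latter the action is \emph{strictly} the sign representation --- the sign being forced by the Koszul rule for permuting $r$ factors concentrated in the single odd degree $-1$, while the underlying permutation of the sheaf factors contributes nothing. Your reduction via Lemma \ref{jh44444} and the K\"unneth observation for a complex with cohomology in one odd degree are in exactly this spirit.

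However, your sign bookkeeping is internally inconsistent at the decisive point. In your second step you first (correctly) say that the bare permutation of the $r$ copies of $\mu_{p^n}$ inside $\mu_{p^n}^{\otimes r}$ acts trivially, but you justify it by saying that ``the permutation already carries its own Koszul sign, which cancels the one from the tensor factors'' --- if it cancelled, the total action would be trivial, not $\mathrm{sgn}$. Worse, in your final paragraph you assert that on $L_n(1)^{\otimes r}$ ``the permutation of factors is (via cup product and graded commutativity) itself the sign''; combined with your (correct) general fact that the symmetric-monoidal action on $V[1]^{\otimes r}$ is $\mathrm{sgn}$ twisted by the permutation action on $V^{\otimes r}$, this would give $\mathrm{sgn}\otimes\mathrm{sgn}$, i.e.\ the trivial representation, contradicting the lemma you are trying to prove. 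The correct statement is elementary: any permutation of the factors of $\mu_{p^n}^{\otimes r}$ is the identity map of sheaves, because $\mu_{p^n}$ is an invertible (\'etale-locally free of rank one) sheaf of $\integers/p^n$-modules, and for a rank-one module the swap $x\otimes y\mapsto y\otimes x$ is the identity. The graded commutativity of cup products (equivalently the antisymmetry $\{x,y\}=-\{y,x\}$ of symbols) is not an additional sheaf-level sign; it \emph{is} the Koszul sign you have already counted, so invoking it as you do double-counts or cancels signs. Once this is repaired, your argument collapses to the paper's one-line proof: the $\Sigma_r$-action on $(L_n(1)[1])^{\otimes r}$ is strictly $\mathrm{sgn}$, hence so is the induced action on $\H^0(B_r')$.
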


\begin{proof}
This follows from the fact that there is a zig zag of quasi-isomorphisms
between $\caT^{\otimes r}$ and $(L_n(1)[1])^{\otimes r}$, and
on the latter the $\Sigma_r$-action is strictly the sign
representation.
\end{proof}

So if we equip $\nu_n^{r-1}$ with the sign representation of
$\Sigma_r$ the map $\H^0(B_r') \to i_* \nu_n^{r-1}$
is $\Sigma_r$-equivariant.

The exact sequence
$$0 \to L_n(1) \to \GmU \overset{p^n}{\to} \GmU \to 0$$
on $\Sm_{U,\et}$ induces a boundary homomorphism
$$\beta \colon j_* \GmU \to \R^1 j_* L_n(1)$$
of sheaves on $\Sm_{S,\et}$. We denote the precomposition of
$\beta$ with the canonical map $\GmS \to j_* \GmU$ by $\beta'$.

\begin{lemma}
\label{hbfgt}
The composition
$$\GmS \to \integers/p^n[\GmS,\{1\}]_\Zar \overset{\varphi}{\to}
B_1' \to \H^0(B_1') \to \H^0(B_1')_\et \cong \R^1 j_* L_n(1)$$
equals $\beta'$.
\end{lemma}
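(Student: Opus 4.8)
The plan is to unwind both sides of the asserted equality to their defining constructions and show that they arise from the same boundary map in étale cohomology, namely the connecting homomorphism of the Kummer sequence. The point is that the composite on the left was \emph{designed} to classify the Gysin element $\iota$, which in turn (by the Remark following the definition of $\iota$) represents precisely the last map in the exact triangle $L_n(1) \to \GmU \xrightarrow{p^n} \GmU \to L_n(1)[1]$ on $\Sm_{U,\et}$; and $\beta'$ is by definition the pushforward along $j$ of exactly that boundary map, restricted along $\GmS \to j_* \GmU$.

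First I would trace through the isomorphism $\H^0(B_1')_\et \cong \R^1 j_* L_n(1)$ of Corollary \ref{h64de}. By construction $B_1' = \tau_{\le 0} B_1$ with $B_1 \simeq \R j_* \R\epsilon_* L_n(1)[1]$, so $\H^0(B_1')_\et$ identifies with $\epsilon^* \H^0(\epsilon_* \R^1 j_* L_n(1)) = \R^1 j_* L_n(1)$ as a sheaf on $\Sm_{S,\et}$; this is just the bookkeeping already done in Corollary \ref{h64de}. Next, the map $\varphi \colon \GmS \to \integers/p^n[\GmS,\{1\}]_\Zar \to B_1'$ is, after applying $j_*$ and passing to the generic fiber, the map $\integers/p^n[\GmU,\{1\}]_\et \to \caT = RQL_n(1)[1]$ induced by $\iota$, followed by truncation and the canonical map to $\H^0$. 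So the whole left-hand composite, evaluated over $U$ after $\epsilon^*$, is the map on $\H^0_\et = \R^1 j_* L_n(1)$ classified by $\iota$.

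Then I would invoke the Remark after the definition of $\iota$: the induced map $\integers/p^n[\GmU]_\et \to \caT$ represents the last map of the triangle $L_n(1) \to \GmU \xrightarrow{p^n} \GmU \to L_n(1)[1]$, i.e. on $H^1_\et(\GmU, L_n(1))$ it is the Kummer boundary $\caO^* \to \R^1 \epsilon_* L_n(1)$, at least on the summand $\integers/p^n[\GmU,\{1\}]$. Pushing forward along $j$ (which is where the identification $\H^0(B_1')_\et \cong \R^1 j_* L_n(1)$ becomes $\R j_*$ applied to this Kummer boundary) and precomposing with $\GmS \to j_* \GmU$, we obtain exactly $\beta'$ as defined just above the Lemma. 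The only subtlety is to check that the truncation $B_1' \to \H^0(B_1') \to \H^0(B_1')_\et$ does not disturb this: but $\H^0$ of the Kummer triangle is exactly the boundary $\beta$, and the passage from $B_1'$ to $\H^0(B_1')$ is the identity on $\H^0$, so nothing is lost.

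\textbf{The main obstacle} I expect is not any single hard step but the careful matching of the two identifications of $\H^0(B_1')_\et$ with $\R^1 j_* L_n(1)$ — the one coming from Corollary \ref{h64de} (via $\Sym(\caT)$, truncation, $j_*$, and $\epsilon^*$) and the one implicit in the definition of $\beta'$ (via the Kummer sequence directly) — and verifying these agree on the nose (not merely up to a unit or a sign). This amounts to checking that the Gysin isomorphism used to normalize $\iota$ is compatible, under $\R j_*$ and $\R q_*$ (as in the proof of Proposition \ref{bgerhh4}), with the Kummer boundary on $\GmS$; all the pieces for this compatibility are already assembled in the construction of $\iota$ and in Proposition \ref{bgerhh4}, so the proof should be a matter of assembling them rather than proving anything new. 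I would write the argument as a short diagram chase referring back to the Remark after the definition of $\iota$, Corollary \ref{h64de}, and the splitting \eqref{bfedhed}.
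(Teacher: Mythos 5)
Your proposal is correct and follows essentially the same route as the paper: the paper's proof is simply the observation that the claim follows from the defining property of $\iota$ (i.e.\ the Remark identifying the map induced by $\iota$ with the Kummer boundary from the triangle $L_n(1) \to \GmU \overset{p^n}{\to} \GmU \to L_n(1)[1]$), and your write-up just spells out that unwinding via Corollary \ref{h64de} and the splitting \eqref{bfedhed}.
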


\begin{proof}
This follows from the defining property of the map $\iota$.
\end{proof}

\begin{corollary}
\label{bfrejj}
The composition
$$\GmS \to \integers/p^n[\GmS,\{1\}]_\Zar \overset{\varphi}{\to}
B_1' \to \H^0(B_1') \to i_* \nu_n^{r-1}$$
is the constant map to zero.
\end{corollary}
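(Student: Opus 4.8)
The plan is to reduce the assertion, via the identification $s = s'$ of Proposition \ref{gfedr5t5}, to the defining recipe for the Bloch--Kato maps $b_{X_T}$ on symbols. First I would recall that by construction $s'$ factors as the adjunction unit $\H^0(B_1') \to \epsilon_* \H^0(B_1')_\et$, followed by the isomorphism $\epsilon_* \H^0(B_1')_\et \cong \epsilon_* \R^1 j_* L_n(1)$ of Corollary \ref{h64de}, followed by $\epsilon_* b'$, where $b' \colon \R^1 j_* L_n(1) \to i_* \nu_n^{r-1}$ is the adjoint of the map $b$ assembled over the points $\p \in Z$ from the $b_{X_T}$. By Lemma \ref{hbfgt} the composition $\GmS \to \integers/p^n[\GmS,\{1\}]_\Zar \overset{\varphi}{\to} B_1' \to \H^0(B_1') \to \epsilon_* \H^0(B_1')_\et \cong \epsilon_* \R^1 j_* L_n(1)$ is (the $\epsilon_*$-adjoint of) the Kummer boundary map $\beta'$. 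Hence the composition in the statement equals $b' \circ \beta'$ after the evident sheafifications, and by the $(i^*,i_*)$-adjunction it is enough to show that $b \circ i^* \beta' = 0$ as a map $i^* \GmS \to \nu_n^{r-1}$ of sheaves on $Z_\et$.

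Next I would verify this over each point $\p \in Z$ separately, using that there the map $b$ is, for $X \in \Sm_S$, the base change map $(i_\p)^* \R^1(j_X)_* \integers/p^n(1) \to M_{n,X_T}^1 = (i_{X_T})^* \R^1(j_{X_T})_* \integers/p^n(1)$ followed by $b_{X_T}$. A section $u \in \caO_X^*$ of $\GmS$ over $X$ is sent by $\beta'$ to the Kummer class of $u|_{X_U} \in \caO_{X_U}^*$ in $\R^1(j_X)_* \integers/p^n(1)$. The key point is that, since $u$ is a unit on all of $X$, its pullback to the completed local scheme $X_T$ is a unit $u|_{X_T} \in \caO_{X_T}^*$; using the compatibility of the base change morphism for the square relating $X_\eta$, $X_U$, $X_T$ and $X$ with Kummer classes, the image of $\beta'(u)$ in $M_{n,X_T}^1$ is therefore the symbol $\{\, u|_{X_T}\,\}$ with $u|_{X_T} \in (i_{X_T})^* \caO_{X_T}^*$. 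By the defining recipe for $b_{X_T}$ --- in the case $r = 1$ of the rule sending any symbol $\{f_1,\ldots,f_r\}$ all of whose entries lie in $(i_{X_T})^* \caO_{X_T}^*$ to $0$ --- this symbol maps to $0$. Since this holds for every $X \in \Sm_S$ and every $\p \in Z$, we obtain $b \circ i^* \beta' = 0$, hence $b' \circ \beta' = 0$, and the claim follows together with Proposition \ref{gfedr5t5}.

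The step I expect to be the main obstacle is the identification, at the level of sheaves, of the base change of the Kummer class of $u|_{X_U}$ with a symbol $\{f\}$ whose entry $f$ is a \emph{global} unit on $X_T$, i.e. the fact that $u|_{X_\eta}$ extends across the special fibre of $X_T$; this crucially uses that everything in sight is pulled back from $\GmS$. The remaining bookkeeping --- commuting $\epsilon^*/\epsilon_*$ and $i^*/i_*$ past the various maps and matching the conventions for the boundary $\beta$ with those for the symbols $\{\cdot\}$ of Bloch--Kato --- is routine given Lemma \ref{hbfgt}, Proposition \ref{gfedr5t5}, and the functoriality statements of Corollaries \ref{gfrr4r}--\ref{bfdrr}.
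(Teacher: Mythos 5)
Your proof is correct and follows essentially the same route as the paper: identify the composition with the Kummer boundary $\beta'$ via Lemma \ref{hbfgt}, then observe that a global unit on $X$ yields, over each $\p \in Z$, a symbol $\{u|_{X_T}\}$ with entry in $(i_{X_T})^* \caO_{X_T}^*$, which the defining recipe for $b_{X_T}$ sends to zero. The paper states this more tersely, but the argument is the same; your extra bookkeeping (the reduction via Proposition \ref{gfedr5t5} and the base change compatibility) just makes explicit what the paper leaves implicit.
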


\begin{proof}
This follows from Lemma \ref{hbfgt}, the definition of the map $b_{X_T}$
and the definition of symbol:
The symbol $\{x\}$ for $x$ an invertible section over a smooth
scheme over $S$ is sent to $0$ via $b_{X_T}$.
\end{proof}

Thus the first condition of Lemma \ref{hnzt4}
about the factorization of the map $\varphi$
is satisfied.

For the second condition we get back to our local
completed situation.
Let $X \in \Sm_S$, $\p \in Z$ and let the notation be as above.
By \cite[\S 3, top of p. 277]{kurihara}
there is an exact sequence
\begin{equation}
\label{teggh}
0 \to U^0 M_n^r \to M_n^r \to \nu_n^{r-1} \to 0
\end{equation}
on $(X_\p)_\et$, where
$U^0 M_n^r$ is the subsheaf of $M_n^r$ generated
\'etale locally by symbols $\{x_1,\ldots, x_r\}$
with $x_i \in (i_{X_T})^* \caO_{X_T}^*$.
%(It could be that we have to take into account also other symbols (from $U^1$)...Check!)
This follows from the exact sequence
$$0 \to U^1 M_n^r \to M_n^r \to \nu_n^r \oplus \nu_n^{r-1} \to 0$$
(\cite[Theorem (1.4)(i)]{bloch-kato.p-adic}),
where $U^1 M_n^r$ is generated \'etale locally
by symbols $\{x_1,\ldots,x_r\}$ with
$x_1-1 \in \pi \cdot (i_{X_T})^* \caO_{X_T}$. Indeed, given an element
in the kernel of $M_n^r \to \nu_n^{r-1}$ we can first change
it by symbols $\{x_1,\ldots,x_r\}$ with $x_i \in (i_{X_T})^* \caO_{X_T}^*$
to lie also in the kernel of the map $M_n^r \to \nu_n^r$,
and then it lies in $U^1 M_n^r$ which is also generated by symbols
(of the indicated type).

\begin{lemma}
\label{gtrekj}
Let $r,r' \ge 0$. The composition
$$\H^0(C_r) \otimes \H^0(C_{r'}) \to
\H^0(B_r') \otimes \H^0(B_{r'}') \to \H^0(B_{r+r'}'),$$
where the second map is induced by the $E_\infty$-structure on $B'$,
factors through $\H^0(C_{r+r'})$.
\end{lemma}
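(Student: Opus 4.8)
The idea is to rewrite the assertion as a vanishing statement, reduce it to the completed local situation at a point of $Z$, and there identify the kernels in question with the Bloch--Kato subsheaves $U^0 M_n^r$, for which the needed multiplicativity is immediate from the symbol description.

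First I would observe that, since $s$ is an epimorphism (Proposition \ref{htgree}) and $B_{r+r'}'$ is concentrated in non-positive cohomological degrees, the short exact sequence of complexes $0 \to C_{r+r'} \to B_{r+r'}' \overset{s}{\to} i_*\nu_n^{r+r'-1} \to 0$ gives $\H^0(C_{r+r'}) = \ker\bigl(s\colon \H^0(B_{r+r'}') \to i_*\nu_n^{r+r'-1}\bigr)$ (and $\H^1(C_{r+r'})=0$). So the claim is equivalent to the vanishing of the composite
$$\H^0(C_r)\otimes\H^0(C_{r'}) \to \H^0(B_r')\otimes\H^0(B_{r'}') \to \H^0(B_{r+r'}') \overset{s}{\to} i_*\nu_n^{r+r'-1}.$$
Because the target is the pushforward along the closed immersion $i$ of an étale sheaf, $i_*$ is fully faithful, and $Z$ is a disjoint union of spectra of fields, this may be tested on $\Sm_{S,\et}$ over each point $\p$ of $Z$ separately. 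By Corollary \ref{h64de}, $\H^0(B_r')_\et \cong \R^r j_* L_n(r)$, and by exactness of $\epsilon^*$ the étale sheafification of $\H^0(C_r)$ is the kernel of the map $\R^r j_* L_n(r) \to i_*\nu_n^{r-1}$, which by Proposition \ref{gfedr5t5} is the map assembled from the Bloch--Kato maps $b_{X_T}$. Since the $E_\infty$-structure on $B'$ is transported from $\Sym(\caT)$ with $\caT = RQL_n(1)[1]$, the multiplication induces on $\H^0(\,\cdot\,)_\et$ the cup-product pairing $\R^r j_* L_n(r)\otimes\R^{r'} j_* L_n(r') \to \R^{r+r'} j_* L_n(r+r')$, compatibly with the identifications of Corollary \ref{h64de}.

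It then remains to argue in the completed local situation at $\p\in Z$: with $T=\Spec\Lambda$ and $M_n^r := (i_{X_T})^*\R^r(j_{X_T})_*(\integers/p^n(r))$, the base change morphism $(i_\p)^*\R^r j_* L_n(r) \to M_n^r$ (through which the local form of the étale sheafification of $s$ factors, being $b_{X_T}$ composed with this morphism) is multiplicative, so it suffices to show that the cup product on $M_n^\bullet$ sends $\ker b_{X_T}$ in weights $r$ and $r'$ into $\ker b_{X_T}$ in weight $r+r'$. By the exact sequence (\ref{teggh}) one has $\ker b_{X_T} = U^0 M_n^r$, the subsheaf generated étale-locally by symbols $\{x_1,\dots,x_r\}$ with all $x_i\in (i_{X_T})^*\caO_{X_T}^*$; and $\{x_1,\dots,x_r\}\cdot\{y_1,\dots,y_{r'}\} = \{x_1,\dots,x_r,y_1,\dots,y_{r'}\}$ is again a symbol of units of $X_T$, so $U^0 M_n^r\cdot U^0 M_n^{r'}\subseteq U^0 M_n^{r+r'}$ by bi-additivity. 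The part that requires the most care is the first step — making the passage to the étale topology and to the completed local picture precise and compatible with the multiplication, i.e. threading the identifications of Corollary \ref{h64de} and Proposition \ref{gfedr5t5} through the $E_\infty$-product on $B'$; once these are in place, together with the structure result (\ref{teggh}), the containment $U^0\cdot U^0\subseteq U^0$ is purely formal.
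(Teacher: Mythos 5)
Your proposal is correct and follows essentially the same route as the paper: reduce to the completed local situation at a point of $Z$, identify the kernel of $\H^0(B_r')\to i_*\nu_n^{r-1}$ with the symbol subsheaf $U^0M_n^r$ via the exact sequence (\ref{teggh}), and conclude from the fact that concatenation of symbols in units of $\caO_{X_T}$ stays in $U^0$. The paper's proof is terser (it passes directly to viewing local sections of $\H^0(B_r')$ as sections of $M_n^r$ without spelling out the cup-product compatibilities you make explicit), but the substance is identical.
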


\begin{proof}
Let $y$ be a local section lying in the kernel of
$\H^0(B_r') \to \nu_n^{r-1}$, similarly for $y'$.
We may view $y$ and $y'$ as local sections of
$M_n^r$ and $M_n^{r'}$. They are mapped to $0$ by the maps
to $\nu_n^{r-1}$ and $\nu_n^{r'-1}$, thus by the exact sequence
(\ref{teggh}) the sections $y$ and $y'$ can be written
locally as linear combinations
of symbols of the form $\{x_1,\ldots x_r\}$ and
$\{x_1',\ldots,x_{r'}'\}$
with $x_i, x_i' \in (i_{X_T})^* \caO_{X_T}^*$.
But the product of such symbols is just the
concatenated symbol $\{x_1,\ldots,x_r,x_1',\ldots,x_{r'}'\}$
which thus also lies in the kernel of the map
$M_n^{r+r'} \to \nu_n^{r+r'-1}$. This is true over all
points $\p$ of $Z$, so we see that $y \otimes y'$ is sent
to $0$ in $i_* \nu_n^{r+r'-1}$.
\end{proof}

\begin{corollary}
The collection of the $C_r$ forms an $E_\infty$-algebra $C$
in $\integers/p^n[\GmS,\{1\}]_\Zar$-spectra which comes with
a map of $E_\infty$-algebras $C \to B'$ wich is levelwise
the canonical map $C_r \to B_r'$.
\end{corollary}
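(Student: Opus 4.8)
The plan is to apply Lemma \ref{hnzt4} verbatim, with $R = \integers/p^n$, $T = \integers/p^n[\GmS,\{1\}]_\Zar$, $E = B'$ and $e_r = i_*\nu_n^{r-1}$ for $r > 0$, so that $E_r' = C_r$ by the very definition of $C_r$ (and $E_0' = E_0 = B_0'$, which matches $C_0 = B_0'$ since $\nu_n^{-1} = 0$). First I would recall that $B'$ is an $E_\infty$-algebra in symmetric $T$-spectra in $\Cpx^{\le 0}(\Sh(\Sm_{S,\Zar},\integers/p^n))$ with levels the $B_r'$ (constructed, as in Lemma \ref{htrgrzj}, using the truncation $\tau_{\le 0}$ of the $\Omega$-spectrum produced in Proposition \ref{bgerhh4}); this is precisely the ambient datum the lemma requires.

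Then I would feed in the three structural hypotheses. The epimorphism $\H^0(B_r') \twoheadrightarrow i_*\nu_n^{r-1}$ is the map $s$ of Proposition \ref{htgree}; to use it in Lemma \ref{hnzt4} one needs it to be a morphism in $\Sh(\Sm_{S,\Zar},\integers/p^n)[\Sigma_r]$, and this is exactly why one computes beforehand that the $\Sigma_r$-action on $\H^0(B_r')$ is the sign representation and then equips $\nu_n^{r-1}$ with the sign representation — with these choices $s$ is $\Sigma_r$-equivariant. The hypothesis that the unit map $\varphi \colon T \to B_1'$ factors through $E_1' = C_1$ is Corollary \ref{bfrejj}. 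Finally, the hypothesis that for all $r,r' \ge 0$ the composite $\H^0(C_r) \otimes \H^0(C_{r'}) \to \H^0(B_r') \otimes \H^0(B_{r'}') \to \H^0(B_{r+r'}') \to i_*\nu_n^{r+r'-1}$ vanishes is the content of Lemma \ref{gtrekj}, which shows this composite factors through $\H^0(C_{r+r'}) = \ker\big(\H^0(B_{r+r'}') \to i_*\nu_n^{r+r'-1}\big)$ (the identification of this kernel with $\H^0(C_{r+r'})$ being immediate since $i_*\nu_n^{r+r'-1}$ is concentrated in degree $0$ and $B_{r+r'}'$ is non-positively graded).

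With all hypotheses of Lemma \ref{hnzt4} in place, its conclusion yields an $E_\infty$-algebra structure on the collection $\{C_r\}$ in symmetric $T$-spectra, which we call $C$, together with a map of $E_\infty$-algebras $C \to B'$ that is levelwise the canonical inclusion $C_r \hookrightarrow B_r'$; this is exactly the assertion. I do not expect a genuine obstacle at this stage: the substantive work has already been carried out in Propositions \ref{htgree} and \ref{gfedr5t5} and in Lemma \ref{gtrekj}, and the only point demanding care is the $\Sigma_r$-equivariance bookkeeping, i.e. verifying that $s$ is equivariant for the sign representations on both sides so that $e_r = i_*\nu_n^{r-1}$ is a legitimate quotient object in the category of $\Sigma_r$-equivariant sheaves — which is what was arranged just above the statement.
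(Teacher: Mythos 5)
Your proposal is correct and matches the paper's argument exactly: the paper also deduces the corollary by applying Lemma \ref{hnzt4} with $T=\integers/p^n[\GmS,\{1\}]_\Zar$, $E=B'$, $e_r=i_*\nu_n^{r-1}$, using Corollary \ref{bfrejj} for the unit factorization and Lemma \ref{gtrekj} for the vanishing of the product composition, with the sign-representation equivariance arranged just before the statement.
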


\begin{proof}
This follows with Corollary \ref{bfrejj} and Lemma
\ref{gtrekj} from Lemma \ref{hnzt4}.
\end{proof}

Thus with (\ref{ht46jk}) we have arranged the motivic complexes
$\M_n^X(r)[r]$, $r \ge 0$, into an $E_\infty$-algebra in
$\integers/p^n[\GmS,\{1\}]_\Zar$-spectra on $\Sm_{S,\Zar}$.

\begin{proposition}
The algebra $C$ is an $\Omega$-spectrum.
\end{proposition}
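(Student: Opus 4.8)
The plan is to verify that $C$ is an $\Omega$-spectrum by showing that for each $r\ge 0$ the derived adjoint bonding map $\sigma_r\colon C_r \to \underline{\R\Hom}_{\D(\Sh(\Sm_{S,\Zar},\integers/p^n))}(\integers/p^n[\GmS,\{1\}]_\Zar, C_{r+1})$ is an isomorphism. Since isomorphisms in $\D(\Sh(\Sm_{S,\Zar},\integers/p^n))$ can be detected after restriction to each site $X_\Zar$ with $X\in\Sm_S$, and since such restriction commutes with $\underline{\R\Hom}$ out of the sheaf represented by $\GmS$ (which restricts to the one represented by $\GmX$), it suffices to prove $\sigma_r|_{X_\Zar}$ is an isomorphism for every $X$. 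Writing $q\colon\AX\to X$ for the projection, $\j\colon\GmX\hookrightarrow\AX$ for the open and $\i\colon\{0\}\times X\hookrightarrow\AX$ for the closed immersion, the target of $\sigma_r|_{X_\Zar}$ is the summand of $\R q_*(C_{r+1}|_{(\GmX)_\Zar})$ complementary to the one split off by the unit section $\{1\}$.

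Next I would identify $\sigma_r|_{X_\Zar}$ explicitly and compute the target. By construction the bonding maps of $C$ are restricted from those of $B'$, hence of $j_*A'$, hence ultimately from $\Sym(\iota)$; moreover both $C_r|_{X_\Zar}$ and $\M_n^X(r)[r]$ appear as shifted homotopy fibres of $\R(j_X)_*\M_n^{X_U}(r)\to\R(i_X)_*\M_n^{X_Z}(r-1)[-1]$, cf. (\ref{ht46jk}). As all functors and maps in sight are natural, forming these homotopy fibres commutes with the formation of the bonding maps, so — exactly as in the proof of Proposition \ref{bgerhh4}, via the commutative diagram (\ref{edrghj}) and Corollary \ref{grer54s} — the map $\sigma_r|_{X_\Zar}$ is identified with the canonical map $\M_n^X(r)[r]\to\bigl(\R q_*\R\j_*\M_n^{\GmX}(r+1)\bigr)_{\mathrm{red}}[r+1]$ arising from the $(\AX,\GmX)$-localization triangle. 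Now Theorem \ref{gbter4} applied to $\i$ gives the exact triangle
$$\i_*\M_n^X(r)[-2]\to\M_n^\AX(r+1)\to\R\j_*\M_n^{\GmX}(r+1)\to\i_*\M_n^X(r)[-1];$$
applying $\R q_*$, using homotopy invariance (Theorem \ref{htehg}) to identify $\R q_*\M_n^\AX(r+1)\cong\M_n^X(r+1)$, and using that $\{1\}$ splits $\M_n^X(r+1)\to\R q_*\R\j_*\M_n^{\GmX}(r+1)$ off as a direct summand, one finds that the complementary summand is carried isomorphically by the boundary map onto $\R q_*\i_*\M_n^X(r)[-1]\cong\M_n^X(r)[-1]$. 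Shifting by $[r+1]$ and tracing through the construction (again as in Proposition \ref{bgerhh4}) shows that the composite of $\sigma_r|_{X_\Zar}$ with this identification is the identity on $\M_n^X(r)[r]$. Hence $\sigma_r|_{X_\Zar}$ is an isomorphism for every $X\in\Sm_S$, and $C$ is an $\Omega$-spectrum.

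The only real obstacle is the compatibility invoked in the middle paragraph: that the bonding maps of $C$, obtained by pushing $\Sym(\iota)$ through $\epsilon_*$, $\tau_{\le 0}$ and $j_*$ and then through the subobject construction of Lemma \ref{hnzt4}, agree under (\ref{ht46jk}) with the geometric bonding maps of the motivic complexes coming from Theorem \ref{gbter4}. This is precisely what was already unwound in the proof of Proposition \ref{bgerhh4} for $A'$ over $U$, combined with the elementary fact that taking homotopy fibres of natural maps commutes with the functors involved; equivalently, one may package the argument by noting that $C$ sits in a levelwise exact triangle of spectra $C\to B'\to e$ with $e_r=i_*\nu_n^{r-1}$, where $B'$ is an $\Omega$-spectrum (by Proposition \ref{bgerhh4}, Corollary \ref{n4t6d}, Lemma \ref{jh44444} and the fact that $\R j_*$ commutes with $\underline{\R\Hom}$ out of objects pulled back along $j$) and $e$ is an $\Omega$-spectrum (over $Z$ one has $\M_n^X(r)[r]\cong\nu_n^r$ for $X$ smooth over $Z$ by Theorem \ref{fbmtzzt}, so the localization triangle of Theorem \ref{gbter4} over $Z$ together with homotopy invariance identifies its bonding maps just as above, and $i_*$ commutes with the relevant internal Hom), whence $C$, being the fibre, is an $\Omega$-spectrum as well. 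Beyond this bookkeeping the argument uses only Theorems \ref{gbter4}, \ref{htehg} and \ref{fbmtzzt} and the vanishing Theorem \ref{gtfn}, the last ensuring — as in the treatment of $A'$ — that no truncation is lost in passing to $C$.
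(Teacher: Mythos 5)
Your overall strategy is the same as the paper's: restrict to each $X_\Zar$, apply the localization triangle of Theorem \ref{gbter4} to $(\AX,\GmX,\{0\}\times X)$ together with homotopy invariance (Theorem \ref{htehg}) and the splitting given by the unit section, and reduce the identification of the adjoint bonding map of $C$ to what was already established for $A'$ and $B'$ in Proposition \ref{bgerhh4}. The gap sits exactly at the step you yourself flag as ``the only real obstacle''. You dispose of it by asserting that ``taking homotopy fibres of natural maps commutes with the functors involved'', but in the derived category this is not a valid principle: fibres and cones are not functorial, a commutative square does not canonically induce a map on fibres, and an induced map need not be unique. Since $C_r|_{X_\Zar}$ and $\M_n^X(r)[r]$ are only matched up as (shifted) homotopy fibres via (\ref{ht46jk}), and the bonding maps of $C$ are produced abstractly through Lemma \ref{hnzt4}, you cannot conclude without further argument that under this identification they become the boundary maps of the motivic localization triangle. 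This missing argument is precisely the content of the paper's proof: one observes that $\Hom(C_{r-1}|_{X_\Zar},(i_X)_*\nu_n^{r-2}[-1])=0$, so the decisive square may be tested after composing with $\caR|_{X_\Zar}\to \R j_*\caR'|_{X_\Zar}$ (where $\caR'$ is the complementary summand of $\underline{\R\Hom}(\GmU,A_r')$), and there it becomes a precomposition of the diagram (\ref{bfrgjk}), which commutes by the proof of Proposition \ref{bgerhh4}; the same sort of vanishing is what makes (\ref{ht46jk}) canonical in the first place.

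Your fallback packaging via a levelwise fibre sequence $C\to B'\to e$ of $\Omega$-spectra with $e_r=i_*\nu_n^{r-1}$ would be a legitimate alternative, but it rests on two claims you only assert: (i) that $e$ carries bonding maps (via $\dlog$ of units and multiplication on $\nu_n^{\ast}$) for which $B'\to e$, i.e.\ the maps $s$ of Proposition \ref{htgree}, is a map of naive spectra --- this is a symbol-level compatibility of the same nature as Corollary \ref{bfrejj} and Lemma \ref{gtrekj} but not contained in them, and not proved anywhere in the paper; and (ii) that the structure maps on $C$ coming from Lemma \ref{hnzt4} agree with those induced on the fibre of $B'\to e$, which is again a uniqueness statement requiring the vanishing of maps into shifted $i_*\nu_n^{\ast}$. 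With those two points supplied your argument closes, but as written the crucial transfer from $B'$ to $C$ is unjustified.
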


\begin{proof}
Set $m:=p^n$.
Let $X \in \Sm_S$. Let $\i \colon \{0\} \to \AX$
be the closed, $\j \colon \GmX \to \AX$ the open
inclusion and $q \colon \AX \to X$ the projection.

Since $\integers/m[\GmS]_\Zar \cong
\integers/m \oplus \integers/m[\GmS,\{1\}]_\Zar$
we have a decomposition
$$\underline{\R \Hom}(\GmS,C_r) \cong
C_r \oplus \caR.$$

By Theorem \ref{gbter4} we have an exact triangle
$$\i_* \M_n(r-1)[-2] \to \M_n^\AX(r) \to \R \j_* \M_n(r)
\to \i_* \M_n(r-1)[-1].$$
The composition
$$C_r[-r]|_{X_\Zar} \cong \R q_* \M_n^\AX(r) \to \R q_* \R \j_* \M_n(r)$$
$$\cong \underline{\R \Hom}(\GmS,C_r[-r])|_{X_\Zar} \cong
C_r[-r]|_{X_\Zar} \oplus \caR[-r]|_{X_\Zar} \to C_r[-r]|_{X_\Zar}$$
is the identity.
Thus when we apply $\R q_*$ to the above triangle
we obtain a split triangle.
Let $\phi \colon \M^X_n(r-1)[-1] \overset{\cong}{\to} \caR[-r]|_{X_\Zar}$
be the resulting isomorphism.

We are finished when we prove that the diagram
$$\xymatrix{C_{r-1}|_{X_\Zar} \ar[r] \ar[d]^\cong &
\underline{\R \Hom}(\integers/m[\GmS,\{1\}]_\Zar,C_r)|_{X_\Zar}
\ar[d]^\cong \\
\M_n^X(r-1)[r-1] \ar[r]^(.6)\phi_(.6)\cong &
\caR|_{X_\Zar},}$$
where the upper horizontal map is the derived adjoint of the
structure map of the spectrum $C$,
commutes.
To see this it is sufficient to show that the post composition
of the two compositions with the map
$\caR|_{X_\Zar} \to \R j_*\caR'|_{X_\Zar}$,
where $\caR'$ is defined to be the second summand in
the decomposition $\underline{\R \Hom}(\GmU,A_r') \cong
A_r' \oplus \caR'$, coincide, since there are no non-trivial maps
from $C_{r-1}|_{X_\Zar}$ to $(i_X)_* \nu_n^{r-2}[-1]$.

But we have a transformation of diagrams from the above
diagram to the diagram
\begin{equation}
\label{bfrgjk}
\xymatrix{B_{r-1}'|_{X_\Zar} \ar[r] \ar[d]^\cong &
\underline{\R \Hom}(\integers/m[\GmS,\{1\}]_\Zar,B_r')|_{X_\Zar}
\ar[d]^\cong \\
\R (j_X)_* \M_n^{X_U}(r-1)[r-1] \ar[r]^(.6)\cong &
\R j_* \caR'|_{X_\Zar}}
\end{equation}
which commutes by the arguments in the proof of
Proposition \ref{bgerhh4}.
So the two prolongued compositions in question
are the two compositions in diagram (\ref{bfrgjk})
precomposed with the map $C_{r-1}|_{X_\Zar} \to
B_{r-1}'|_{X_\Zar}$, thus they coincide.
This finishes the proof.
\end{proof}

\subsubsection{The $p$-completed parts}
\label{jfezztrzz}

In this section we want to arrange (variants of) the $C$
for varying $n$ into a compatible family, such that
we can then take the (homotopy) limit of this system.

To start with write $\integers/p^\bullet$ for the inverse system
comprised by the commutative rings $\integers/p^n$ with
the obvious transition maps and $\Mod_{\integers/p^\bullet}$
for the category of modules over this system, i.e. the category
whose objects are systems of abelian groups
$$\cdots \to M_n \to \cdots \to M_2 \to M_1$$
where each $M_n$ is annihilated by $p^n$.

For a site $\caS$ write $\Sh(\caS,\integers/p^\bullet)$ for
$\Sh(\caS,\Mod_{\integers/p^\bullet})$.

The system of the $L_n(r)$ comprises a natural object
$L_\bullet(r)$ of $\Sh(\Sm_{U,\et},\integers/p^\bullet)$.

Let $QL_\bullet(1) \to L_\bullet(1)$ be a cofibrant replacement
in $\Cpx(\Sh(\Sm_{U,\et},\integers/p^\bullet))$ (the latter category
is equipped with the inverse local projective model structure)
and let $QL_\bullet(1) \to RQL_\bullet(1)$ be a fibrant
replacement via a cofibration. Thus $\caT:=RQL_\bullet(1)[1]$ is both fibrant
and cofibrant.

We claim that the maps $\iota$ from section \ref{htr46u}
can be arranged to a map
$$\iota \colon \integers/p^\bullet[\GmU,\{1\}]_\et \to \caT.$$
Indeed, suppose we have already defined $\iota$ up to level
$n$ in such a way that on each level $k \le n$ the map
represents the canonical element $1 \in H^1_\et(\GmU,L_k(1))$.
We claim that we can extend the system of maps to level $n+1$:
Choose a representative
$$\iota' \colon \integers/p^{n+1}[\GmU,\{1\}]_\et \to \caT_{n+1}.$$
Then the composition with the fibration $\caT_{n+1} \to \caT_n$
is homotopic to the map in level $n$. This homotopy can be lifted
giving as second endpoint the required lift.

As in section \ref{htr46u} the map of symmetric
sequences $\Sym(\iota)$ gives rise to an $E_\infty$-algebra
$\Sym(\caT)$ in $\Omega$-$\integers/p^\bullet[\GmU,\{1\}]_\et$-spectra,
and we let $Q \Sym(\caT) \to \Sym(\caT)$ be a cofbrant replacement via a trivial fibration and
$Q \Sym(\caT) \to R Q \Sym(\caT)$ be
a fibrant resolution.

Set $A:=\epsilon_*(R Q \Sym(\caT))$ and
$A':= \tau_{\le 0}(A)$. As in section \ref{htr46u}
$A'$ is again an $E_\infty$-algebra. We set $B:=j_*A'$.
By Lemma \ref{jh44444} the algebra $B$ computes levelwise in the $n$-direction
the algebra which was denoted $B'$ in section \ref{htr46u}.

Thus we have for every $n$ and $r$ the epimorphism
$$s_{r,n} \colon \H^0(B_{r,n}) \to i_* \nu_n^{r-1}$$
of Proposition \ref{htgree}.

\begin{lemma}
We have a commutative diagram
$$\xymatrix{\H^0(B_{r,n+1}) \ar[r]^(.57){s_{r,n+1}} \ar[d] &
i_*\nu_{n+1}^{r-1} \ar[d] \\
\H^0(B_{r,n}) \ar[r]^(.57){s_{r,n}} & \nu_n^{r-1}.}$$
\end{lemma}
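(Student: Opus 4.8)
The plan is to trace through the construction of the maps $s_{r,n}$ and verify compatibility at each stage. Recall that $s_{r,n}$ was shown in Proposition \ref{gfedr5t5} to agree with the map $s_{r,n}'$, which is built as the composite of the isomorphism of Corollary \ref{h64de} identifying $\H^0(B_{r,n})_\et$ with $\R^r (j_X)_* \integers/p^n(r)$, followed by the Bloch--Kato-type map $b_X'$ coming (locally at a point $\p \in Z$) from the symbol description of $b_{X_T} \colon M_{n,X_T}^r \to \nu_n^{r-1}$. So it suffices to check two compatibilities: first, that the identification of Corollary \ref{h64de} is compatible with the transition maps $\integers/p^{n+1} \to \integers/p^n$ on both sides (this is essentially built into the construction of $B$ as a $\integers/p^\bullet$-object, together with Lemma \ref{nhgrrh} which records the compatibility of the \'etale cycle class maps under reduction of coefficients); and second, that the maps $b_{X_T}$ for varying $n$ commute with the transition maps $M_{n+1,X_T}^r \to M_{n,X_T}^r$ and $\nu_{n+1}^{r-1} \twoheadrightarrow \nu_n^{r-1}$.

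First I would reduce, exactly as in the proof of Proposition \ref{gfedr5t5}, to checking commutativity locally over each point $\p \in Z$ after \'etale sheafification, so that the statement becomes a statement about the maps $b_{X_T}$ over the completed discrete valuation ring $\Lambda = \widehat{D_\p}$. Here the key point is that $b_{X_T}$ is characterized (by \cite[\S (6.6)]{bloch-kato.p-adic}, as recalled in the excerpt) uniquely by its effect on symbols: $\{f_1,\ldots,f_r\} \mapsto 0$ for $f_i \in (i_{X_T})^*\caO_{X_T}^*$ and $\{f_1,\ldots,f_{r-1},\pi\} \mapsto \dlog \overline{f}_1 \cdots \dlog \overline{f}_{r-1}$. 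The transition map $M_{n+1,X_T}^r \to M_{n,X_T}^r$ sends a symbol to the symbol with the same entries, and the transition $\nu_{n+1}^{r-1} \twoheadrightarrow \nu_n^{r-1}$ sends $\dlog \overline{f}_1 \cdots \dlog \overline{f}_{r-1}$ to the same expression read in $W_n\Omega_{\log}$ (this is the standard compatibility of the $\dlog$ symbols with the De Rham--Witt restriction maps, already used in the excerpt). Hence both composites $\H^0(B_{r,n+1}) \to \H^0(B_{r,n}) \to \nu_n^{r-1}$ and $\H^0(B_{r,n+1}) \to \nu_{n+1}^{r-1} \to \nu_n^{r-1}$ agree on symbols, and since the symbols generate \'etale-locally the sheaf $M_{n+1,X_T}^r$ (by \cite[Corollary (6.1.1)]{bloch-kato.p-adic}) and all maps in sight are sheaf maps, the two composites coincide.

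The remaining bookkeeping is to assemble this pointwise statement into the global statement: one invokes the functoriality worked out in Corollaries \ref{gfrr4r}, \ref{bngzr} and \ref{bfdrr}, together with the fact (Lemma \ref{jh44444}, as restated in the present section) that $B$ computes levelwise in the $n$-direction the objects previously called $B'$, so that the squares assembled over all $X \in \Sm_S$ glue to a square of sheaves on $\Sm_{S,\Zar}$. I would also note the compatibility of the base change morphisms for the square relating $X_\eta, X_U, X_T, X$ with the transition maps on coefficients, which is again immediate from naturality of base change.

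The main obstacle will be the first compatibility -- making precise that the isomorphism of Corollary \ref{h64de} between $\H^0(B_{r,n})_\et$ and $\R^r(j_X)_*\integers/p^n(r)$ is compatible with coefficient reduction. This is really a statement about the map $\iota$: in section \ref{jfezztrzz} the maps $\iota_n$ were lifted inductively so that the square with the fibrations $\caT_{n+1} \to \caT_n$ commutes (up to the chosen homotopy), and one has to check that the induced identifications of $\R^r j_* L_n(r)$ with $\H^0(B_{r,n})_\et$ -- which go through the Gysin decomposition (\ref{bfedhed}) and the structure maps of $\Sym(\caT)$ -- respect these transition maps. This follows from the construction once one unwinds the definition of $\iota$ at the level of the $\integers/p^\bullet$-object $\caT = RQL_\bullet(1)[1]$, but it is the place where some care is needed; everything else is a routine diagram chase using results already in the excerpt.
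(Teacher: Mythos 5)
Your proposal is correct and follows the same route as the paper: reduce via Proposition \ref{gfedr5t5} to the description through $s_X'$, and then check the compatibility on the symbol level, where $b_{X_T}$ sends symbols in units to $0$ resp.\ to $\dlog$-classes, which manifestly commutes with the reductions $M_{n+1}^r \to M_n^r$ and $\nu_{n+1}^{r-1} \twoheadrightarrow \nu_n^{r-1}$. The extra compatibility you worry about (Corollary \ref{h64de} versus coefficient reduction) is exactly what the $\integers/p^\bullet$-level construction of $\iota$ and $B$ in section \ref{jfezztrzz} (via Lemma \ref{jh44444}) was set up to guarantee, so your argument matches the paper's.
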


\begin{proof}
We only have to verify that a corresponding diagram
involving the maps $s_X'$ commutes.
This follows by the explicit definition of the maps $b_{X_T}$.
\end{proof}

We thus get an epimorphism $$B_r \to i_* \nu_\bullet^{r-1}.$$
We denote by $C_r$ the kernel of this epimorphism.

As in section \ref{htr46u} we can apply a variant of
Lemma \ref{hnzt4} (or the Lemma levelwise in the $n$-direction
and using functoriality) to see that the collection of the $C_r$
gives rise to an $E_\infty$-algebra $C$ together with a map
of $E_\infty$-algebras $C \to B$ which is levelwise
(for the $r$-direction) the canonical map $C_r \to B_r$.

Let $X \in \Sm_S$.
We want to see that the canonical isomorphisms (\ref{ht46jk})
$$C_{r,n}|_{X_\Zar} \cong \M_n^X(r)[r]$$
are compatible with the reductions $\integers/p^{n+1}
\to \integers/p^n$.

First by Lemma \ref{nhgrrh} the diagram
$$\xymatrix{\M_{n+1}^{X_U}(r)[r] \ar[r]^\cong \ar[d] &
A_{r,n+1}'|_{(X_U)_\Zar} \ar[d] \\
\M_n^{X_U}(r)[r] \ar[r]^\cong &
A_{r,n}'|_{(X_U)_\Zar}}$$
commutes.

This shows that if we compose the
two compositions in the square
\begin{equation}
\label{hbgtedr}
\xymatrix{C_{r,n+1}|_{X_\Zar} \ar[r]^\cong \ar[d] &
\M_{n+1}^X(r)[r] \ar[d] \\
C_{r,n}|_{X_\Zar} \ar[r]^\cong &
\M_n^X(r)[r]}
\end{equation}
with the map $\M_n^X(r)[r] \to \R (j_X)_*\M_n^{X_U}(r)[r]$
the resulting two maps coincide. But
$\Hom_{\D(\Sh(X_\Zar,\integers/p^{n+1}))}
(C_{r,n+1}|_{X_\Zar},\nu_n^{r-1}[-1])=0$, hence
(\ref{hbgtedr}) commutes.

Let $QC \to C$ be a cofibrant and
$QC \to C'$ be a fibrant replacement as $E_\infty$-algebras.
Then $$D_p :=\lim_n C_{\bullet,n}'$$ is an $E_\infty$-algebra
in $\integers_p[\GmS,\{1\}]_\Zar$-spectra in
$\Cpx(\Sh(\Sm_{S,\Zar},\integers_p))$.

\begin{corollary}
For $X \in \Sm_S$
there is an isomorphism $$D_{p,r}|_{X_\Zar} \cong (\M^X(r))^{\wedge p}[r]$$
in $\D(\Sh(X_\Zar,\integers_p))$,
where $(\M^X(r))^{\wedge p}$ is the $p$-completion of
$\M^X(r)$.
\end{corollary}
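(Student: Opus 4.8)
The plan is to identify $D_{p,r}|_{X_\Zar}$ with the derived limit over $n$ of the complexes $\M_n^X(r)[r]$ and to recognise that derived limit as the $p$-completion $(\M^X(r))^{\wedge p}[r]$.

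First I would record that, since $QC\to C'$ is a fibrant replacement of $E_\infty$-algebras in $\integers/p^\bullet[\GmS,\{1\}]_\Zar$-spectra for the inverse local projective model structure, for each $r$ the tower $(C'_{r,n})_n$ is a tower of fibrations between levelwise fibrant objects; hence its strict limit $D_{p,r}=\lim_n C'_{r,n}$ represents the homotopy limit $\holim_n C_{r,n}$, i.e. $\R\lim_n$ of the tower $(C_{r,n})_n$ regarded as an object of $\D(\Sh(\Sm_{S,\Zar},\integers/p^\bullet))$. Restriction of sheaves of complexes along the canonical functor $X_\Zar\to\Sm_{S,\Zar}$ is exact, preserves local weak equivalences, and commutes with products and fibres, hence with the Milnor fibre sequence computing such homotopy limits; therefore $D_{p,r}|_{X_\Zar}\cong\R\lim_n\bigl(C_{r,n}|_{X_\Zar}\bigr)$ in $\D(\Sh(X_\Zar,\integers_p))$.

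Next I would compare the tower $(C_{r,n}|_{X_\Zar})_n$ with $(\M_n^X(r)[r])_n$ as objects of $\D(\Sh(X_\Zar,\integers/p^\bullet))$. The isomorphisms (\ref{ht46jk}) exhibit the $n$-th terms as quasi-isomorphic, and the commutativity of (\ref{hbgtedr}) makes them compatible with the reduction maps. To upgrade this to an isomorphism in the derived category of the full inverse system I would not argue levelwise but instead follow the construction: $C_r$ arises as an honest object over $\integers/p^\bullet$ (a kernel inside $B_r'$, itself built from $\Sym(\caT)$, truncations, and the pushforwards along $j$ and $i$, all carried out over the system $\integers/p^\bullet$), and the identification with $(\M^X(r)\otimes^\L\integers/p^\bullet)[r]$ is a zig-zag of quasi-isomorphisms built from the defining (co)fibre sequence of $C_r$, the localization triangle of Theorem \ref{gbter4}, and Geisser's comparison underlying Theorem \ref{edfgth76} — each of which is natural in the inverse system of coefficients. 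Applying $\R\lim_n$ to this zig-zag, and using that by definition $(\M^X(r))^{\wedge p}=\R\lim_n\bigl(\M^X(r)\otimes^\L\integers/p^n\bigr)$ together with the fact that $\R\lim$ commutes with the shift $[r]$, produces the asserted isomorphism $D_{p,r}|_{X_\Zar}\cong(\M^X(r))^{\wedge p}[r]$.

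The step I expect to be the main obstacle is precisely this passage from a compatible family of derived-category isomorphisms to an isomorphism of towers: a priori there is a $\lim^1$-obstruction living in $\lim^1_n\Hom_{\D(\Sh(X_\Zar,\integers/p^n))}\bigl(C_{r,n}|_{X_\Zar},\M_n^X(r)[r-1]\bigr)$, and it is not killed by a naive degree count. Realising the comparison as an explicit zig-zag of coefficient-natural quasi-isomorphisms, as above, bypasses it, while the rigidity already used in the construction — the vanishing of the relevant negative-degree $\Hom$-groups and the uniqueness of the isomorphisms (\ref{ht46jk}) — serves to pin down that this zig-zag induces the same map as the family (\ref{ht46jk}). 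The remaining ingredients (fibrancy of $C'$, exactness of restriction, commuting $\R\lim$ with shifts, and the definition of $p$-completion) are routine.
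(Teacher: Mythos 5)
Your argument is correct and is essentially the paper's own route: identify $D_{p,r}|_{X_\Zar}$ with the homotopy limit of the tower $C_{r,n}|_{X_\Zar}$ and combine the levelwise isomorphisms (\ref{ht46jk}) with the compatibility square (\ref{hbgtedr}), using that the $p$-completion of $\M^X(r)$ is by definition the homotopy limit of the $\M_n^X(r)$. The zig-zag rigidification you add to dodge a $\lim^1$ issue is more than the statement needs, since only the existence of an isomorphism is asserted, and a compatible system of isomorphisms in the derived categories already yields one via the Milnor $\lim$--$\lim^1$ sequence (surjectivity of $\Hom(-,\holim_n)\to\lim_n\Hom(-,-)$ gives a map, and the five lemma on cohomology sheaves shows it is an isomorphism).
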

\label{guu43ddf}

\begin{proof}
This follows from the commutativity of (\ref{hbgtedr}),
since the $p$-completion of $\M^X(r)$ is the homotopy limit
over all the $\M_n^X(r)$.
\end{proof}

Next we will equip $D_p$ with an orientation.

Denote by $\caO^*_{/U}$ the sheaf (in any of the considered topologies) of abelian groups represented by $\GmU$ over $\Sm_U$, let
$\caO^*_{/S}$ be defined similarly. For $M$ a sheaf of abelian groups
or an object in a triangulated category we set $M/p^n:=M \otimes ^\bL \integers/p^n$.

Using the resolution of $\caO^*_{/U}$ by the sheaf of meromorphic functions and the sheaf of
codimension $1$ cycles one sees that $\R^i j_* \caO^*_{/U}=0$ for $i>0$.
Thus we have an exact triangle
$$\caO^*_{/S} \to \R j_* \caO^*_{/U} \to i_* \integers \to \caO^*_{/S}[1]$$
in the Zariski topology,
from which we derive an exact triangle
\begin{equation}
\label{gddez54}
\caO^*_{/S}/p^n \to \R j_* \caO^*_{/U}/p^n \to i_* \integers/p^n \to \caO^*_{/S}/p^n[1].
\end{equation}

We have a map of exact triangles
$$\xymatrix{\caO^*_{/U} \ar[r]^{p^n} \ar[d] & \caO^*_{/U} \ar[r] \ar[d] & \caO^*_{/U}/p^n \ar[r] \ar[d] & \caO^*_{/U}[1] \ar[d] \\ 
\R \epsilon_* \caO^*_{/U} \ar[r]^{p^n} & \R \epsilon_* \caO^*_{/U} \ar[r] & \R \epsilon_* L_n(1)[1] \ar[r] & \R \epsilon_* \caO^*_{/U}[1].}$$
The third vertical map factors uniquely through a map $\caO^*_{/U}/p^n \to \tau_{\le 0}(\R \epsilon_* L_n(1)[1])$. Since $\R^1 \epsilon_* \caO^*_{/U}=0$
we see by the long exact cohomology sheaf sequences associated to these triangles that this map is an isomorphism.
Note we have $A_{1,n}' \cong \tau_{\le 0}(\R \epsilon_* L_n(1)[1])$ in the derived category, and thus
$B_{1,n} \cong \R j_* \tau_{\le 0}(\R \epsilon_* L_n(1)[1]) \cong \R j_* \caO^*_{/U}/p^n$.

We note that the diagram
$$\xymatrix{\R j_* \caO^*_{/U}/p^n \ar[rr] \ar[d]^\cong & & i_* \integers/p^n \ar[d]^= \\
\R j_* \tau_{\le 0}(\R \epsilon_* L_n(1)[1]) \ar[r] & \caH^0(B_{1,n}) \ar[r]^{s_{r,n}} & i_* \integers/p^n}$$
commutes (this follows from the definition of the maps $s_{r,n}$, Proposition \ref{gfedr5t5} and the definition of the maps $s_X'$).
Thus together with the triangle (\ref{gddez54}) we derive an isomorphism $C_{1,n} \cong \caO^*_{/S}/p^n$
in $\D(\Sh(\Sm_{S,\Zar},\integers/p^n))$. This isomorphism is moreover unique since
there are no non-trivial homomorphisms from $\caO^*_{/S}/p^n$ to $i_* \integers/p^n[-1]$.

We see that there is an isomorphism $D_{p,1} \cong (\caO^*_{/S})^{\wedge p}$ in $\D(\Sh(\Sm_{S,\Zar},\integers_p))$. We denote any
such isomorphism which is compatible with the projections to $C_{1,n}$ and $\caO^*_{/S}/p^n$ by $\varphi$.

Since $D_p$ is an $\Omega$-spectrum which satisfies Nisnevich descent
and is $\mathbb{A}^1$-local the maps $\Sigma^{-2,-1} \Sigma^\infty_+ \P^\infty \to D_p$ in
$\SH(S)$ correspond to maps $$\integers[\P^\infty]_\Zar[-1] \to D_{p,1}$$ in $\D(\Sh(\Sm_{S,\Zar},\integers))$.
We let $o \colon \Sigma^{-2,-1} \Sigma^\infty_+ \P^\infty \to D_p$ correspond to
$\integers[\P^\infty]_\Zar \to \caO^*_{/S}[1] \to (\caO^*_{/S})^{\wedge p}[1] \overset{\varphi^{-1}[1]}{\longrightarrow} D_{p,1}[1]$,
where the first map classifies the tautological line bundle $\caO(-1)$ on $\P^\infty$.

The definition of the bonding maps in $D_p$ implies that the map
$\integers[\GmS,\{1\}]_\Zar \to D_{p,1}$ corresponding to the unit map
$\Sigma^{-1,-1} \Sigma^\infty (\GmS,\{1\}) \cong \unit \to D_p$ is the map
$$\integers[\GmS,\{1\}]_\Zar \to \caO^*_{/S} \to (\caO^*_{/S})^{\wedge p} \overset{\varphi^{-1}}{\longrightarrow} D_{p,1}.$$
Note that this composition is independent of the particular choice of $\varphi$ since
we have $$\Hom_{\D(\Sh(\Sm_{S,\Zar},\integers))}(\integers[\GmS,\{1\}]_\Zar,(\caO^*_{/S})^{\wedge p}) \cong \integers_p$$ 
$$\cong \lim_n \Hom_{\D(\Sh(\Sm_{S,\Zar},\integers))}(\integers[\GmS,\{1\}]_\Zar,\caO^*_{/S}/p^n).$$

Let $\psi \colon (\P^1,\{\infty\}) \to \GmS \wedge S^1$ be the canonical isomorphism in $\caH_\bullet(S)$ and let
$c \colon \caH_\bullet(S) \to \D^{\mathbb{A}^1}(\Sh(\Sm_{S,\Nis},\integers))$ be the canonical map.
Then the composition
$$\integers[\P^1,\{\infty\}]_\Zar \cong c((\P^1,\{\infty\})) \overset{c(\psi)}{\longrightarrow} c(\GmS \wedge S^1) \cong
\integers[\GmS,\{1\}]_\Zar[1] \to \caO^*_{/S}[1]$$
in $\D^{\mathbb{A}^1}(\Sh(\Sm_{S,\Nis},\integers))$
classifies the tautological line bundle on $\P^1$. We see from these considerations that $o$ is indeed an orientation.

\begin{proposition}
\label{vfjt43}
The spectrum in $\SH(S)$ associated with $D_p$ is orientable.
\end{proposition}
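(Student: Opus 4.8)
The plan is to show that the map $o \colon \Sigma^{-2,-1}\Sigma^\infty_+ \P^\infty \to D_p$ constructed just above the statement is an orientation; since the $E_\infty$-algebra $D_p$ gives a homotopy commutative ring spectrum, the existence of such a class is exactly what it means for the associated spectrum in $\SH(S)$ to be orientable. Concretely, I must check that the reduced part of $o$, viewed as an element of $\widetilde{D_p}^{2,1}(\P^\infty_S)$, restricts along the inclusion $\P^1_S \hookrightarrow \P^\infty_S$ to the canonical generator of $\widetilde{D_p}^{2,1}(\P^1_S)$, i.e.\ to the image of the unit under the suspension isomorphism induced by the $\mathbb{A}^1$-equivalence $\psi \colon (\P^1,\{\infty\}) \xrightarrow{\sim} \GmS \wedge S^1$.

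First I would translate the question into the model of complexes of Zariski sheaves used throughout this section. Since $D_p$ is an $\Omega$-spectrum that satisfies Nisnevich descent and is $\mathbb{A}^1$-local, for $Y \in \{\P^\infty, \P^1\}$ the maps $\Sigma^{-2,-1}\Sigma^\infty_+ Y \to D_p$ in $\SH(S)$ are computed by maps $\integers[Y]_\Zar[-1] \to D_{p,1}$ in $\D(\Sh(\Sm_{S,\Zar},\integers))$, and restriction along $\P^1_S \hookrightarrow \P^\infty_S$ corresponds to the evident map of representing objects. Using the isomorphism $D_{p,1} \cong (\caO^*_{/S})^{\wedge p}$ obtained above from the isomorphisms $C_{1,n} \cong \caO^*_{/S}/p^n$, this reduces the whole verification to a computation in $\D(\Sh(\Sm_{S,\Zar},\integers))$ with target $(\caO^*_{/S})^{\wedge p}[1]$.

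Next I would carry out that computation. By construction $o$ corresponds to $\integers[\P^\infty]_\Zar \to \caO^*_{/S}[1] \to (\caO^*_{/S})^{\wedge p}[1] \xrightarrow{\varphi^{-1}[1]} D_{p,1}[1]$, whose first arrow classifies the tautological line bundle $\caO(-1)$. Restricting to $\P^1$ and using $\psi$ together with the identity, recalled just above the statement, that the composite $\integers[\P^1,\{\infty\}]_\Zar \cong c((\P^1,\{\infty\})) \xrightarrow{c(\psi)} c(\GmS \wedge S^1) \cong \integers[\GmS,\{1\}]_\Zar[1] \to \caO^*_{/S}[1]$ classifies the tautological line bundle on $\P^1$, I obtain that the restriction of $o$ to $\P^1_S$ is $\varphi^{-1}[1]$ applied to this composite. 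On the other hand, the unit map $\unit \to D_p$ was identified above, on level $1$, with $\integers[\GmS,\{1\}]_\Zar \to \caO^*_{/S} \to (\caO^*_{/S})^{\wedge p} \xrightarrow{\varphi^{-1}} D_{p,1}$; applying $\Sigma^{2,1}$ and comparing through $c(\psi)$ shows the two classes coincide. Independence of the choice of $\varphi$ has already been noted, so the comparison is unambiguous.

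The only real obstacle is the bookkeeping underlying the second and third steps: one has to know that the identification $D_{p,1} \cong (\caO^*_{/S})^{\wedge p}$ is compatible with the $\GmS$-bonding maps of $D_p$ and with the $\P^1$-suspension in the precise way just used, and that the resulting normalization makes the restriction to $\P^1_S$ equal to the canonical class itself rather than a nonzero multiple of it. All of the relevant compatibilities --- the identifications of $C_{1,n}$ and of the unit map, the computation of $c(\psi)$, and the exact triangle $(\ref{gddez54})$ --- were assembled in the paragraphs immediately preceding the statement, so the proof amounts to putting these pieces together.
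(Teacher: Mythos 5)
Your proof is correct and follows essentially the same route as the paper: the orientation is the class $o$ built from the tautological line bundle via $D_{p,1} \cong (\caO^*_{/S})^{\wedge p}$, and the verification reduces, exactly as in the paragraphs preceding the statement, to identifying the unit map on level $1$ with $\integers[\GmS,\{1\}]_\Zar \to \caO^*_{/S} \to (\caO^*_{/S})^{\wedge p}$ and comparing through $c(\psi)$, using independence of the choice of $\varphi$. The bookkeeping you flag (compatibility of the identification with the bonding maps) is precisely what the paper invokes, so nothing further is needed.
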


\subsection{The completed part}
\label{bfdtzt}

Set $D:=\prod_p D_p$, where the $D_p$ are the algebras from
the last section viewed as $E_\infty$-algebras in spectra
in $\Cpx(\Sh(\Sm_{S,\Zar},\hat{\integers}))$
and the product is taken over all primes.

Then for $X \in \Sm_S$ we have $$D_r|_{X_\Zar} \cong (\prod_p (\M^X(r))^{\wedge p})[r]$$
in $\D(\Sh(X_\Zar,\hat{\integers}))$.

\begin{corollary}
\label{fvfds5t}
The spectrum in $\SH(S)$ associated with $D$ is orientable.
\end{corollary}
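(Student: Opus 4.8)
The plan is to assemble the orientations of the factors $D_p$ furnished by Proposition \ref{vfjt43} into an orientation of $D=\prod_p D_p$, exploiting that a morphism into a product in $\SH(S)$ is the same datum as a compatible family of morphisms into the factors, and that the normalization condition defining an orientation is checked factorwise. Recall from the discussion preceding Proposition \ref{vfjt43} that an orientation of a ring spectrum $E\in\SH(S)$ is a morphism $o\colon\Sigma^{-2,-1}\Sigma^\infty_+\P^\infty\to E$ whose composite with the map $\unit\cong\Sigma^{-2,-1}\Sigma^\infty(\P^1,\{\infty\})\to\Sigma^{-2,-1}\Sigma^\infty_+\P^\infty$ classifying the tautological line bundle on $\P^1$ equals the unit $\unit\to E$. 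First I would record that $E_\infty$-algebras are closed under small products, so that $D$ is again an $E_\infty$-algebra in $\hat{\integers}[\GmS,\{1\}]_\Zar$-spectra in $\Cpx(\Sh(\Sm_{S,\Zar},\hat{\integers}))$, and that the transfer of structure sending each $D_p$ to its associated spectrum in $\SH(S)$ carries $D=\prod_p D_p$ to the product in $\SH(S)$ of the spectra associated with the $D_p$; in particular the unit $\unit\to D$ in $\SH(S)$ has $p$-th component the unit $\unit\to D_p$.

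Next I would invoke the universal property $\Hom_{\SH(S)}(X,\prod_p D_p)\cong\prod_p\Hom_{\SH(S)}(X,D_p)$, valid for every $X\in\SH(S)$, to identify the family $(o_p)_p$ of orientations from Proposition \ref{vfjt43} with a single morphism $o\colon\Sigma^{-2,-1}\Sigma^\infty_+\P^\infty\to D$ satisfying $\pr_p\circ o=o_p$. Concretely, under the isomorphism $D_1\cong(\caO^*_{/S})^{\wedge}$, where $(\caO^*_{/S})^{\wedge}:=\prod_p(\caO^*_{/S})^{\wedge p}$ denotes the $\hat{\integers}$-completion coming from $D_r|_{X_\Zar}\cong(\prod_p(\M^X(r))^{\wedge p})[r]$, this $o$ corresponds to the composite $\integers[\P^\infty]_\Zar\to\caO^*_{/S}[1]\to(\caO^*_{/S})^{\wedge}[1]\cong D_1[1]$ whose first arrow classifies $\caO(-1)$ on $\P^\infty$ --- exactly as for a single prime. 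It then remains to verify the normalization: the composite of $o$ with the map classifying the tautological line bundle on $\P^1$ has, for each $p$, $p$-th component the analogous composite for $o_p$, which by Proposition \ref{vfjt43} is the unit $\unit\to D_p$; since the unit of $D$ has precisely these components, the two morphisms $\unit\to D$ agree, so $o$ is an orientation.

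The only point that genuinely requires verification --- and the sole, though mild, obstacle --- is the compatibility asserted above of the passage from the sheaf-theoretic model to $\SH(S)$ with the formation of the infinite product over all primes, i.e. that the spectrum associated with $\prod_p D_p$ is the $\SH(S)$-product of the spectra associated with the $D_p$. This holds because the functors involved in the transfer of structure preserve infinite products and an infinite product of $\Omega$-spectra is again an $\Omega$-spectrum; granting it, the argument above is purely formal.
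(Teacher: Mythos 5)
Your argument is correct and follows the same route as the paper, whose entire proof is simply ``This follows from Proposition \ref{vfjt43}''; you have spelled out the implicit details (assembling the $o_p$ via the universal property of the product in $\SH(S)$, checking the unit condition factorwise, and noting that passage from the sheaf-level model to $\SH(S)$ is compatible with the product over all primes).
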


\begin{proof}
This follows from Proposition \ref{vfjt43}.
\end{proof}

\subsection{The rational parts}

We denote by $D_\Q$ the rationalization of $D$ as an $E_\infty$-spectrum.

We denote by $\HB$ the Beilinson spectrum over $S$, see \cite[Definition 13.1.2]{cisinski-deglise}.
It has a natural $E_\infty$-structure (\cite[Corollary 13.2.6]{cisinski-deglise}) and is orientable
(\cite[13.1.5]{cisinski-deglise}).

\begin{theorem}
\label{vfdtjuzu}
$\HB$ is the initial $E_\infty$-spectrum among rational orientable $E_\infty$-spectra.
\end{theorem}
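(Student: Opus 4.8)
The plan is to exhibit, for any rational orientable $E_\infty$-spectrum $\EE$ over $S$, a unique morphism $\HB \to \EE$ in the homotopy category of $E_\infty$-spectra. I would rely on the known description of $\HB$ in \cite{cisinski-deglise}: over a regular base (in particular over $S$, which is Dedekind and hence regular) the Beilinson spectrum is the rational motivic cohomology spectrum, and it carries a universal property among rational ring spectra with a suitable orientation. More precisely, the key input is that $\HB$ is, as an object of the category of rational $E_\infty$-spectra, built from the rational sphere $\unit_\Q$ by inverting the Bott-type elements / taking the Adams-graded $+$-part of rationalized algebraic $K$-theory, so that a map out of $\HB$ is determined by its underlying orientation data. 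First I would reduce to the case where $S = \Spec(\integers)$ (or pass to the universal case), since everything in sight is stable under base change and $\HB$ is cartesian by \cite[\S 14]{cisinski-deglise}; then it suffices to produce and characterize the map over the terminal base.

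The construction of the map $\HB \to \EE$ proceeds as follows. An orientation on $\EE$ is by definition a class in $\EE^{2,1}(\P^\infty)$ restricting to the canonical element in $\EE^{2,1}(\P^1)$; rationally, the theory of orientations gives $\EE$ a formal group law, which since $\EE$ is rational is isomorphic to the additive one, so $\EE$ is in fact canonically an oriented theory with additive formal group law. By the Riou–Cisinski–Déglise machinery (the rational analogue of the fact that $\MGL$-algebras with additive FGL are $\HB$-algebras — see \cite[Theorem 14.2.?]{cisinski-deglise} on the identification of $\HB$ with the rational part, and \cite{riou.gysin}), such an $\EE$ receives a unique $E_\infty$-ring map from $\HB_\Q = \HB$. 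Concretely: the rational orientable $\EE$ is an algebra over $\MGL_\Q$ via its orientation, $\MGL_\Q$ splits as a product of shifts of $\HB$ by the rational Hopkins–Morel / Hopkins–Morel-type splitting (this uses that the Lazard ring rationally is a polynomial ring and the additive FGL picks out the $\HB$-summand), and the map $\HB \to \MGL_\Q \to \EE$ is the desired one. I would then check $E_\infty$-multiplicativity, which is automatic from the fact that all these identifications are through maps of $E_\infty$-rings (the $E_\infty$-structure on $\HB$ from \cite[Corollary 13.2.6]{cisinski-deglise}, on $\MGL$ tautologically, and the splitting is realized $E_\infty$-ly rationally).

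For uniqueness: any $E_\infty$-map $\HB \to \EE$ is in particular a map of oriented ring spectra, and I would argue that the orientation it induces on $\EE$ agrees with the given one — more precisely, that the space (or set, in the homotopy category) of such maps is a torsor under something that vanishes rationally, because $\HB$ is generated under the $E_\infty$-structure by its unit together with the orientation class, and the relevant obstruction/indeterminacy groups are $\Hom$-groups of the form $\Hom_{\SH(S)_\Q}(\unit_\Q, \Sigma^{*,*}\EE)$ in negative motivic weight, which vanish by the rational vanishing properties of motivic cohomology (Theorem \ref{gtfn} type bounds, passed to the rational/Beilinson side). The main obstacle I anticipate is making the uniqueness clause genuinely precise at the $E_\infty$-level rather than just the homotopy-ring level: one must know that $\HB$ is \emph{freely} generated as a rational oriented $E_\infty$-algebra, i.e. that the forgetful functor from rational orientable $E_\infty$-spectra to a category of orientation data has $\HB$ as initial object, and this is exactly where I would invoke \cite{cisinski-deglise} (the universal property of $\HB$ as the rational motivic Eilenberg–MacLane spectrum, together with the rational degeneration of the FGL) rather than reprove it. If a self-contained argument is wanted, the fallback is the classical computation: $\pi_{*,*}\HB$ rationally is concentrated so that $[\HB,\EE]_{E_\infty}$ is computed by an obstruction spectral sequence whose relevant terms are rational motivic cohomology groups of a point in nonpositive weights, all of which vanish, forcing both existence and uniqueness.
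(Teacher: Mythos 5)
The paper's entire proof of this theorem is the citation to \cite[Corollary 13.2.15 (Rv)]{cisinski-deglise}, which is exactly the universal property you fall back on at the end of your argument; to the extent that your proof reduces to that citation, you and the paper coincide, and nothing further is needed.

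However, the standalone construction you sketch via $\MGL_\Q$ does not go through as stated and should not be offered as an independent proof. First, an orientation of $\EE$ only yields a map $\MGL_\Q \to \EE$ of monoids in the homotopy category (the universal property of $\MGL$ for oriented theories is a homotopy-category statement); it carries no $E_\infty$-structure, and the splitting of $\MGL_\Q$ into shifted copies of $\HB$ is an $\HB$-module (Landweber-type) splitting, not a splitting of $E_\infty$-rings, so the claim that $E_\infty$-multiplicativity of the composite $\HB \to \MGL_\Q \to \EE$ is ``automatic'' is unfounded --- promoting homotopy-ring data to coherent $E_\infty$-data is precisely the hard point of the theorem. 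Second, the inputs you use, namely that $\MGL_\Q$ is an $\HB$-algebra (so that a ring section $\HB \to \MGL_\Q$ exists) and that it splits rationally into copies of $\HB$, are in \cite{cisinski-deglise} themselves deduced from the universal property of $\HB$, equivalently from the identification of orientable rational spectra with $\HB$-modules; so your route is circular unless you give an independent proof of that splitting. Third, the uniqueness step is only heuristic: controlling the space of $E_\infty$-maps out of $\HB$ by vanishing of negative-weight rational motivic cohomology requires the specific presentation of $\HB$ (via the Adams decomposition of rationalized $K$-theory) that \cite{cisinski-deglise} establish, which is again the content of the cited result. The correct and complete proof is the one the paper gives: invoke \cite[Corollary 13.2.15]{cisinski-deglise} directly.
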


\begin{proof}
This is \cite[Corollary 13.2.15 (Rv)]{cisinski-deglise}.
\end{proof}

\begin{corollary}
There is a canonical map of $E_\infty$-spectra $\HB \to D_\Q$.
\end{corollary}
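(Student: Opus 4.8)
The plan is to deduce the map from the universal property of the Beilinson spectrum recorded in Theorem~\ref{vfdtjuzu}. That theorem identifies $\HB$ as the initial object among rational orientable $E_\infty$-spectra over $S$, so once we know that $D_\Q$ is such a spectrum, the canonical map $\HB \to D_\Q$ is simply the unique structural morphism out of the initial object. Thus the whole argument reduces to three verifications about $D_\Q$: that it is an $E_\infty$-spectrum, that it is rational, and that it is orientable.

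For the first two points, $D_\Q$ is by definition the rationalization of the $E_\infty$-spectrum $D = \prod_p D_p$ (transported to $\P^1$-spectra by transfer of structure as in the appendices). Rationalization is a symmetric monoidal Bousfield localization, hence carries $E_\infty$-algebras to $E_\infty$-algebras, so $D_\Q$ is an $E_\infty$-spectrum, and it is rational by construction. For orientability, Corollary~\ref{fvfds5t} (built from Proposition~\ref{vfjt43} and the product decomposition) provides an orientation $o \colon \Sigma^{-2,-1}\Sigma^\infty_+ \P^\infty \to D$. Composing $o$ with the unit map $D \to D_\Q$ of the localization yields a class on $D_\Q$ whose restriction to $\P^1$ still classifies the tautological line bundle, since the construction of $o$ via the identifications $D_{p,1} \cong (\caO^*_{/S})^{\wedge p}$ and the line bundle $\caO(-1)$ on $\P^\infty$ is natural with respect to $E_\infty$-ring maps. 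Hence $D_\Q$ is a rational orientable $E_\infty$-spectrum, and Theorem~\ref{vfdtjuzu} applies to produce the desired map of $E_\infty$-spectra $\HB \to D_\Q$, together with its uniqueness.

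The only point requiring care — bookkeeping rather than a genuine difficulty — is to check that ``the $E_\infty$-spectrum $D_\Q$'' is an object of exactly the category to which Theorem~\ref{vfdtjuzu} pertains: one must verify that the transfer of structure from complexes of sheaves to $\P^1$-spectra commutes, up to the relevant equivalences, with rationalization, and that the orientation inherited above is compatible with this identification. These compatibilities are formal consequences of the model-categorical comparisons set up in the appendices, so no substantial new argument is needed.
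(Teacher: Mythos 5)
Your argument is exactly the paper's: combine the orientability of $D$ (Corollary \ref{fvfds5t}) with the universal property of $\HB$ among rational orientable $E_\infty$-spectra (Theorem \ref{vfdtjuzu}), after noting that rationalization preserves the $E_\infty$-structure and the orientation. The paper simply cites these two results; your write-up fills in the same routine verifications, so there is nothing to add.
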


\begin{proof}
This follows from Corollary \ref{fvfds5t} and Theorem \ref{vfdtjuzu}.
\end{proof}

\subsection{The definition}

\begin{definition}
\label{ght544ede}
We denote by $\MZ$ the homotopy pullback in $E_\infty$-spectra of the diagram
$$\xymatrix{& D \ar[d] \\ \HB \ar[r] & D_\Q.}$$
\end{definition}
If we want to emphasize the dependence on $S$ we also write $\MZ_S$.

\section{Motivic Complexes II}
\label{h5643dfgf}

\subsection{A strictification}
\label{kdtjjz}

In this section we enlarge the motivic complexes from section \ref{45zgf}
to presheaves on all of $\Sm_S$. We need some preparations.

For each $n \in \naturals$ we define a category $\caE_n$ together with a functor
$\varphi_n \colon \caE_n \to [n]$, where $[n]$ is the category $0 \to 1 \to \cdots \to n$.
The objects of $\caE_n$ are triples $(A,B,i)$ where $i \in [n]$ and
$A \subset B \subset \{i,\ldots,n\}$ with $i \in A$. There is exactly one morphism from
$(A,B,i)$ to $(A',B',j)$ if $i \le j$, $B \cap \{j,\ldots,n\} \subset B'$ and
$A' \subset A$, otherwise there is no such morphism.
The functor $\varphi_n$ is determined by the fact that $(A,B,i)$ is mapped to $i$.
We declare a map $f$ in $\caE_n$ to be a weak equivalence if $\varphi_n(f)$ is
an identity.

A {\em category with weak equivalences} is a category $\caC$ together with a
subcategory
$\caW$ of $\caC$ such that every isomorphism in $\caC$ lies in $\caW$. A {\em
homotopical
category} is a category with weak equivalences satisfying the two out of six
property,
see \cite[8.2]{dwyer-hirschhorn-kan-smith}.

For a category $\caC$ with weak equivalences $\caW$ we denote
by $L^H_\caW \caC$ its hammock localization, see \cite{dwyer-kan.calculating}.
If it is clear which weak equivalences are meant we also write $L^H \caC$.

A morphism in $[n]$ is defined to be a weak equivalence if it is an identity.
So both $\caE_n$ and $[n]$ are homotopical categories. Since $[n]$ is the homotopy
category of $L^H([n])$ there is a natural simplicial functor $L^H([n]) \to [n]$ which
is an equivalence of simplicial categories.
Composing with the natural functor $L^H \caE_n \to L^H([n])$ gives us the simplicial
functor $L^H \caE_n \to [n]$.

\begin{proposition}
\label{ghte56}
The natural functor $L^H \caE_n \to [n]$ is an equivalence of simplicial categories.
\end{proposition}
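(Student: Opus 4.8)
The plan is to show that $L^H\caE_n \to [n]$ is a Dwyer--Kan equivalence, i.e. that it induces weak equivalences on all mapping spaces and is essentially surjective on homotopy categories. Essential surjectivity is immediate: every object $i \in [n]$ is hit by, say, the object $(\{i\},\{i,\ldots,n\},i)$, and in fact $\varphi_n$ is surjective on objects. So the entire content is to prove that for each pair of objects $x,y$ of $\caE_n$ the map of simplicial sets
$$
L^H\caE_n(x,y) \;\longrightarrow\; L^H[n](\varphi_n x,\varphi_n y)
$$
is a weak equivalence, where the target is contractible if $\varphi_n x \le \varphi_n y$ and empty otherwise. Since $\caE_n$ has no morphisms from $(A,B,i)$ to $(A',B',j)$ when $i > j$, and all weak equivalences in $\caE_n$ lie over identities, the hammock localization also has empty mapping space in that case; so the only thing to check is that $L^H\caE_n(x,y)$ is \emph{contractible} whenever $\varphi_n x \le \varphi_n y$.

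\textbf{Reduction to connectivity of a nerve.} First I would observe that the subcategory $\caW_n \subset \caE_n$ of weak equivalences (maps lying over identities) is what controls everything. Fix $i \in [n]$; the fiber $\varphi_n^{-1}(i)$ with its weak equivalences is the poset of pairs $(A,B)$ with $i \in A \subset B \subset \{i,\ldots,n\}$, ordered by $A' \subset A$ and $B \subset B'$ (restricted appropriately). The key combinatorial fact to establish is that this poset has contractible nerve — and more generally that for any $i \le j$ the relevant "hammock" category computing $L^H\caE_n((A,B,i),(A',B',j))$ is contractible. The cleanest route is to use that $\caE_n$ admits a homotopical structure for which the mapping spaces of the hammock localization can be computed by a calculus of (co)fractions, or alternatively to exhibit, for each $i$, that the category with weak equivalences $(\varphi_n^{-1}(i),\caW)$ has a terminal object up to weak equivalence: namely $(\{i\}, \{i,\ldots,n\}, i)$ is weakly initial and $(\{i,\ldots,n\},\{i,\ldots,n\},i)$-type objects are weakly terminal, and one builds an explicit zig-zag (a "hammock") of length two connecting any two objects, showing the nerve of $\caW$ restricted to each fiber is contractible. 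Since a forward map $(A,B,i) \to (A',B',j)$ exists precisely when $B \cap \{j,\ldots,n\} \subset B'$ and $A' \subset A$, one checks that given any $(A,B,i)$ with $i \le j$ there is a canonical such map into a suitable object over $j$, so forward-directed hammocks between any two objects over comparable indices are nonempty and, after unwinding, contractible.

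\textbf{Execution via Dwyer--Kan.} Concretely I would invoke the Dwyer--Kan theorem that if $\caC$ is a homotopical category and the weak equivalences admit a homotopy calculus of left (or right) fractions, then $L^H\caC(x,y) \simeq \mathrm{hocolim}$ over the appropriate slice of nerves of weak-equivalence categories; then the contractibility statements reduce to showing certain slice categories of $(\caE_n,\caW_n)$ are contractible. Alternatively — and this may be cleaner to write — one can factor $\varphi_n$ through a \emph{relative} category that is a "homotopical Grothendieck construction" over $[n]$: each fiber is the homotopical poset $\caP_i$ of pairs $(A,B)$, each $\caP_i$ has contractible classifying space (it has a minimal and a maximal element in suitable partial orders, or one just produces an explicit contracting homotopy: the map $(A,B)\mapsto(\{i\},B)$ or $(A,B)\mapsto(A,\{i,\dots,n\})$ gives a zigzag to a fixed object), and the transition behavior is coherent enough that the Grothendieck construction's hammock localization maps to $[n]$ by a DK-equivalence (this is a homotopical analogue of the fact that the Grothendieck construction of a functor to a poset with contractible fibers is a homotopy colimit equivalent to the poset).

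\textbf{Main obstacle.} The hard part will be making the fiberwise-contractibility argument \emph{uniform in the forward morphisms}: it is easy to see each $\caP_i = \varphi_n^{-1}(i)$ has contractible nerve, but for the mapping space $L^H\caE_n(x,y)$ with $\varphi_n x = i < j = \varphi_n y$ one must show the category of zig-zags from $x$ to $y$ (forward maps composed with weak equivalences in $\caE_n$, in all allowable patterns) is contractible, and here the precise combinatorics of the morphism condition "$B \cap \{j,\ldots,n\} \subset B'$ and $A' \subset A$" must be used to show that every such zig-zag can be straightened to a length-$\le 2$ hammock and that the resulting space of straightenings is contractible. I expect the bookkeeping of which triples admit morphisms — and checking that the straightening is functorial enough to give a deformation retraction of the hammock complex — to be the genuinely technical step; everything else (essential surjectivity, the empty case, the reduction to fibers) is formal.
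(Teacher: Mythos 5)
You have set up the problem correctly: essential surjectivity, the emptiness of $L^H\caE_n(x,y)$ when $\varphi_n x>\varphi_n y$, and the reduction to showing that $L^H\caE_n(x,y)$ is contractible whenever $\varphi_n x\le\varphi_n y$ are all formal. But that contractibility is the entire content of the proposition, and your proposal stops exactly there: you defer it to "straightening every zig-zag to a length $\le 2$ hammock" or to a homotopy calculus of fractions, and neither is available off the shelf. Indeed the classical Gabriel--Zisman square-completion condition fails in $(\caE_n,\caW)$: for $n=1$ take the weak equivalence $w\colon(\{0,1\},\{0,1\},0)\to(\{0\},\{0,1\},0)$ and the morphism $f\colon(\{0,1\},\{0,1\},0)\to(\{1\},\{1\},1)$; completing this span would require an object $(A_3,B_3,1)$ receiving a weak equivalence from $(\{1\},\{1\},1)$ (forcing $(A_3,B_3,1)=(\{1\},\{1\},1)$) and a morphism from $(\{0\},\{0,1\},0)$, which would force $1\in A_3\subset\{0\}$ --- impossible (a dual example kills the other handedness). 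So reduced two-step hammocks are not automatically cofinal, and the contractibility of the categories of zig-zags between objects in \emph{different} fibers --- which you yourself flag as "the genuinely technical step" --- is left without any argument. Fiberwise contractibility (each $\varphi_n^{-1}(i)$ has the terminal object $(\{i\},\{i,\ldots,n\},i)$) does not suffice, and the "homotopical Grothendieck construction with contractible fibers" principle you invoke is itself a statement about simplicial localizations that requires a proof of essentially the same difficulty as the proposition.

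For comparison, the paper never computes mapping spaces directly. By Lemma \ref{gfvre5}, the functor $L^H\caE_n\to[n]$ is a Dwyer--Kan equivalence if and only if, for every projectively cofibrant diagram $D\colon\caE_n\to\sSet$ sending weak equivalences to weak equivalences, the unit $D\to r(l(D))$ of the Kan-extension adjunction along $\varphi_n$ is a weak equivalence; this homotopy-Kan-extension statement is then proved in Lemma \ref{gre5g} via Lemma \ref{h4ehh}, by a descending induction over the filtration by the subcategories $\caE_{n,[j,i]}$, using the cofibrancy and cofibration Lemmas \ref{k4fdb}, \ref{ghjujd}, \ref{hgtr7i} and the contractibility (via a homotopy right cofinal subcategory with initial object) of the subcategory of objects $(A,B,j)$ with $A\cap\{j+1,\ldots,i\}\neq\emptyset$. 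If you want to keep your direct approach you must supply an argument of that calibre for the zig-zag categories; as written, the central step of your proof is missing.
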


Before giving the proof we need some preparations.

For us a {\em direct} category is a category with a chosen degree function,
see \cite[Definition 5.1.1]{hovey.model}.

\begin{lemma}
\label{k4fdb}
Let $I$ be a direct category and $J \subset I$ a full subcategory such that
no arrow in $I$ has a domain which is not in $J$ and a codomain which is in $J$.
Let $\caC$ be a model category and $D \colon I \to \caC$ a cofibrant diagram for the projective
model structure.
Then $D|_J$ is cofibrant in $\caC^J$.
\end{lemma}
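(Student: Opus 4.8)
The plan is to argue via the standard description of cofibrant objects in a projective model structure on a diagram category indexed by a direct category, namely that $D$ is cofibrant if and only if for every object $i$ the latching map $L_i D \to D(i)$ is a cofibration (where $L_i D = \colim_{j \to i,\, j \neq i} D(j)$ is the latching object). So first I would recall this characterization, citing \cite{hovey.model} (Theorem 5.1.3 and the surrounding discussion), and note that it applies because $I$, being direct with a degree function, makes $\caC^I$ a model category with the projective (here this is the Reedy, since $I$ has no non-identity inverse maps) model structure. The same statement holds for $\caC^J$, since a full subcategory of a direct category inherits a degree function and is again direct.

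Next, the key point is that the hypothesis on $J$—no arrow of $I$ goes from an object outside $J$ to an object in $J$—means precisely that $J$ is a \emph{sieve} (a downward-closed full subcategory) in $I$. Consequently, for any object $j \in J$, the latching category of $j$ computed in $J$ coincides with the latching category of $j$ computed in $I$: a non-identity map $i \to j$ with target $j \in J$ must have source $i \in J$ by hypothesis, so the indexing category $\{i \to j : i \neq j\}$ is the same whether formed in $I$ or in $J$. Therefore $L_j(D|_J) = L_j D$ canonically, and the latching map of $D|_J$ at $j$ is identified with the latching map of $D$ at $j$.

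Finally, assembling these: since $D$ is projective-cofibrant in $\caC^I$, the latching map $L_j D \to D(j)$ is a cofibration in $\caC$ for every $j \in I$, in particular for every $j \in J$; by the previous paragraph this is exactly the latching map of $D|_J$ at $j$. Hence every latching map of $D|_J$ is a cofibration, so $D|_J$ is cofibrant in $\caC^J$. The only genuinely delicate step is verifying the identification of latching objects, i.e. checking carefully that the sieve condition forces the two latching categories to agree on the nose (including on morphisms, not just objects); once that is in hand the rest is a direct appeal to the cited characterization. I would also remark that one does not even need $J$ to be closed under the degree function for a degree function to exist—any full subcategory of a direct category is direct—so the only hypothesis actually used is the sieve condition.
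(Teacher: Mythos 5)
Your proof is correct, but it takes a different route from the paper. The paper's argument is a one-line Quillen-adjunction argument: the restriction functor $\caC^I \to \caC^J$ has a right adjoint $r$ (right Kan extension), and the sieve condition forces the relevant comma categories $i/J$ for $i \in I \setminus J$ to be empty, so $r(E)(i)=*$ there and $r(E)(j)=E(j)$ for $j \in J$; hence $r$ preserves the (levelwise) fibrations and trivial fibrations, is right Quillen, and so restriction is left Quillen and preserves cofibrant objects. You instead invoke the latching-map characterization of cofibrations in the Reedy (= projective) structure over a direct category from \cite{hovey.model} and check that the sieve condition makes the latching category of each $j \in J$, and hence the latching object $L_j(D|_J)=L_jD$, the same whether computed in $J$ or in $I$ (fullness of $J$ is what makes the two comma categories agree on morphisms as well as objects, as you note). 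Both arguments hinge on exactly the same use of the hypothesis; the paper's is shorter and stays at the level of adjunctions, while yours is more explicit, shows directly that restriction preserves all projective cofibrations, and makes visible the identification of latching objects, which is the kind of bookkeeping that reappears in the neighbouring Lemma \ref{ghjujd}.
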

\begin{proof}
The right adjoint $r$ to the restriction functor $\caC^I \to \caC^J$ is a right Quillen functor
since for $i \in I \setminus J$ we have $r(D)(i)=*$.
\end{proof}

\begin{lemma}
\label{ghjujd}
Let $I$ be a direct category and $J \subset I$ a full subcategory such that
no arrow in $I$ has a domain which is not in $J$ and a codomain which is in $J$.
Let $\caC$ be a model category and $D \colon I \to \caC$ a cofibrant diagram for the projective
model structure.
Then the canonical map $\colim (D|_J) \to \colim D$ is a cofibration.
\end{lemma}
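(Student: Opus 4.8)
The plan is to reduce the statement to Lemma~\ref{k4fdb} and a standard fact about pushouts of cofibrations in a model category. First I would recall the setup: $I$ is a direct category, $J \subset I$ a full subcategory which is \emph{closed under passing to codomains of arrows with domain outside $J$ being forbidden}; more precisely, the hypothesis says there is no arrow $i \to j$ in $I$ with $i \notin J$ and $j \in J$. Equivalently, $J$ is a \emph{sieve} in $I$ (downward closed under the arrows of $I$). The key consequence, already exploited in Lemma~\ref{k4fdb}, is that the restriction functor $\caC^I \to \caC^J$ has a right adjoint $r$ given by right Kan extension which sends a diagram $E$ on $J$ to the diagram on $I$ with $r(E)(i) = E(i)$ for $i \in J$ and $r(E)(i) = \ast$ for $i \notin J$, and that $r$ is right Quillen; hence restriction is left Quillen and in particular $D|_J$ is cofibrant in $\caC^J$ whenever $D$ is cofibrant in $\caC^I$.

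Next I would consider the full subcategory $I' \subset I$ consisting of $J$ together with all objects of $I$ \emph{not} in $J$ whose only... — more cleanly: let me instead argue via the latching-object description of cofibrant diagrams on a direct category. A diagram $D \colon I \to \caC$ is cofibrant (projectively, which for a direct category coincides with the Reedy cofibrant condition) if and only if for every $i \in I$ the latching map $L_i D \to D(i)$ is a cofibration, where $L_i D = \colim_{\partial(I{\downarrow} i)} D$ is the colimit over the category of non-identity arrows into $i$. The plan is then to build $\colim D$ from $\colim(D|_J)$ by attaching the objects of $I \setminus J$ one at a time in order of non-decreasing degree (possible since $I$ is direct and $I \setminus J$ is itself a direct category, being a full subcategory). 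At the stage where we attach an object $i \in I \setminus J$, the relevant pushout square has as its two legs out of $L_i D$ the latching map $L_i D \to D(i)$ (a cofibration by cofibrancy of $D$) and the canonical map $L_i D \to (\text{colimit built so far})$; the colimit over the larger index set is the pushout. Since $L_i D \to D(i)$ is a cofibration and cofibrations are stable under cobase change, each attachment map is a cofibration, and the composite $\colim(D|_J) \to \colim D$ is a transfinite composition of cofibrations, hence a cofibration.

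The main point to get right — and the step I expect to be the mild obstacle — is the bookkeeping that makes the inductive attachment work: one must check that at each stage the "colimit built so far" together with the map from $L_i D$ is correctly identified, i.e. that filtering $I \setminus J$ by degree and taking partial colimits $\colim(D|_{J \cup (I\setminus J)_{\le n}})$ really does present $\colim D$ as the filtered colimit of these, and that the transition from level $n$ to level $n+1$ is a pushout along a coproduct of latching maps $\coprod_{i} (L_i D \to D(i))$ over the $i \in I \setminus J$ of degree exactly $n+1$. This is the familiar skeletal/cellular filtration argument for Reedy-cofibrant diagrams over a direct category; the only thing special here is that we start the filtration not from the empty diagram but from $J$, which is legitimate precisely because $J$ is a sieve, so no arrow from $I \setminus J$ points into $J$ and the partial colimits are computed "independently" on the new objects. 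Once this filtration is in place, stability of cofibrations under cobase change, coproducts, and transfinite composition gives the result immediately. (In fact the same argument shows more: the map $\colim(D|_J) \to \colim D$ is a relative cell complex built from the latching maps of the objects in $I \setminus J$.)
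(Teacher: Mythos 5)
Your proof is correct and follows essentially the same route as the paper: the paper's argument is precisely that $\colim D$ is obtained from $\colim(D|_J)$ by successively gluing in the $D(i)$ for $i \in I \setminus J$ in order of increasing degree, with the attaching maps having the latching objects as domains, so the map is a transfinite composition of cobase changes of the cofibrations $L_iD \to D(i)$. Your additional remarks (projective $=$ Reedy cofibrancy over a direct category, the sieve condition guaranteeing the filtration starts correctly at $J$) just spell out the bookkeeping the paper leaves implicit.
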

\begin{proof}
The object $\colim D$ is obtained from $\colim (D|_J)$ by succesively gluing in the $D(i)$ for
$i \in I \setminus J$ for increasing degree of $i$. The domains of the attaching maps
are corresponding latching spaces.
\end{proof}

For $i \in [n]$ let $\caE_{n,i}:=\varphi^{-1}(i)$ and
$\caE_{n,\le i}$ be the full subcategory of $\caE_n$ of objects
$(A,B,j)$ with $j \le i$. 
It is easily seen that $\caE_{n,\le i}$ can be given the structure of a direct category.
For $j \le i \le n$ let $\caE_{n,[j,i]}:=\varphi_n^{-1}(\{j,\ldots,i\})$.

\begin{lemma}
\label{hgtr7i}
Let $\caC$ be a model category and $D \colon \caE_{n,\le i} \to \caC$ be a projectively
cofibrant diagram. Then for $k \le j \le i$ the restriction $D|_{\caE_{n,[k,j]}}$ is also cofibrant.
\end{lemma}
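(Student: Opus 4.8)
The plan is to pare $\caE_{n,\le i}$ down to $\caE_{n,[k,j]}$ in two stages, the first formal and the second combinatorial, using Lemma \ref{k4fdb} for the first and Lemma \ref{ghjujd} together with the shape of $\caE_n$ for the second.

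\emph{First stage and set-up.} Since a morphism of $\caE_n$ never decreases $\varphi_n$, the full subcategory $\caE_{n,\le j}$ is a sieve in $\caE_{n,\le i}$ in the sense of Lemma \ref{k4fdb}; hence $D|_{\caE_{n,\le j}}$ is projectively cofibrant and we may assume $j=i$. We then prove $D|_{\caE_{n,[k,i]}}$ projectively cofibrant by induction, stripping off one bottom $\varphi_n$-level at a time: granting that $D|_{\caE_{n,[k,i]}}$ is cofibrant we pass to $\caE_{n,[k+1,i]}=\caE_{n,[k,i]}\setminus\caE_{n,k}$, where the minimal level $\caE_{n,k}=\varphi_n^{-1}(k)$ is a sieve in $\caE_{n,[k,i]}$. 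As $\caE_{n,[k+1,i]}$ is direct, it is enough to show, for every object $x=(A_x,B_x,i')$ of $\caE_{n,[k+1,i]}$ (so $i'\ge k+1$), that the latching map of $D|_{\caE_{n,[k+1,i]}}$ at $x$ is a cofibration. Write $L$ for the latching category of $x$ in $\caE_{n,[k,i]}$ (the arrows $y\to x$ with $y\neq x$), $L'\subseteq L$ for the latching category of $x$ in $\caE_{n,[k+1,i]}$, and $L_k=L\setminus L'$ for the arrows $y\to x$ with $\varphi_n(y)=k$; then $L_k$ is a sieve in $L$, and $D|_L$ is projectively cofibrant because the latching data of a projectively cofibrant diagram over a direct category is again projectively cofibrant. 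By the inductive hypothesis $\colim_L D\to D(x)$ is a cofibration, so it suffices to prove that the comparison $\colim_{L'}D\to\colim_L D$ is a cofibration.

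\emph{Combinatorial core.} The shape of $\caE_n$ enters through the object $y_{\max}:=(A_x\cup\{k\},\,B_x\cup\{k,k+1,\dots,i'-1\},\,k)$ of $\caE_{n,[k,i]}$: the nesting $i\in A\subseteq B\subseteq\{i,\dots,n\}$ forces $A_x\cup\{k\}\subseteq A_y$ and $B_y\subseteq B_x\cup\{k,\dots,i'-1\}$ for every $y$ with $(y\to x)\in L_k$, so every object of $L_k$ maps uniquely to $t:=(y_{\max}\to x)$; one checks likewise that $y_{\max}$ admits no morphism to any object of $L'$. Thus $L_k$ has $t$ as terminal object and $t$ is a maximal object of $L$ --- precisely the feature that fails for a general direct category (e.g. for a commutative square after deleting the initial object, where the latching object at the final vertex degenerates from a pushout into a coproduct). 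With this in hand one removes $L_k$ from the top of $L$: deleting the maximal object $t$ realizes $\colim_{L\setminus\{t\}}D\to\colim_L D$ as a cobase change of the latching cofibration $\colim_{\partial(L/t)}D\to D(t)$; the remaining objects of $L_k$ that, like $t$, admit no morphism into $L'$ form an up-closed subcategory of $L$ and may be deleted in turn as maximal objects, each deletion again a cobase change of a cofibration (the intermediate diagrams staying projectively cofibrant by Lemma \ref{k4fdb}); what survives is $L'$ together with the sieve $L_k^\circ$ of those arrows that do map into $L'$, and for this last passage one uses Lemma \ref{ghjujd} (for $L_k^\circ$ inside the sieve $L'\sqcup L_k^\circ$ of $L$) together with the finality of $L'\hookrightarrow L'\sqcup L_k^\circ$.

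\emph{Main obstacle.} I expect the hard part to be exactly this finality statement --- equivalently, that for each $y$ with $(y\to x)\in L_k^\circ$ the poset of objects $(y'\to x)$ of $L'$ receiving a morphism from $y$ is connected (it is nonempty by construction). It is here, and not in the formal bookkeeping with sieves and cosieves, Lemma \ref{k4fdb}, Lemma \ref{ghjujd}, the left Quillen property of $\colim$, and cobase changes of cofibrations, that one must genuinely exploit the inclusion $A\subseteq B$ and the endpoint conditions defining $\caE_n$ to see that the linking of the bottom level $\varphi_n^{-1}(k)$ to the higher levels through $x$ is suitably coherent.
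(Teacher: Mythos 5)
Your argument never closes: everything is reduced to the finality of $L'\hookrightarrow L'\sqcup L_k^\circ$, i.e.\ to the claim that for each $y$ in $L_k^\circ$ the poset $P_y$ of objects of $L'$ receiving a morphism from $y$ is (nonempty and) connected, and you explicitly leave this unproved. That claim is the entire mathematical content of the lemma --- the sieve/cosieve bookkeeping, Lemma \ref{k4fdb}, Lemma \ref{ghjujd}, and the cobase-change deletions of maximal objects are formal --- so as it stands the proof has a genuine gap at its central step. (In addition, the reduction quietly uses that the restriction of a projectively cofibrant diagram to a latching category is again projectively cofibrant; this is true but is not among the lemmas you may cite and would need an argument.)

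The missing fact is true, and the way to see it is exactly the observation the paper uses, which also yields a much shorter proof. The paper shows directly that restriction along $F\colon \caE_{n,[k,j]}\hookrightarrow\caE_{n,\le i}$ is left Quillen by checking that its right adjoint (right Kan extension along $F$) is right Quillen: for an object $(A,B,l)$ with $l<k$ and $A\cap[k,j]\neq\emptyset$ the comma category $(A,B,l)/F$ has the initial object $(A,B,l)\to(A\cap\{m,\ldots,n\},B\cap\{m,\ldots,n\},m)$ with $m=\min(A\cap[k,j])$ (and in the remaining cases the comma category is empty), so the right Kan extension is computed objectwise by evaluation or by the terminal object and preserves (trivial) fibrations. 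The same explicit object closes your gap: for $y=(A_y,B_y,k)$ the element $z_0=(A_y\cap\{m,\ldots,n\},B_y\cap\{m,\ldots,n\},m)$, $m=\min(A_y\cap[k+1,i])$, is initial among all objects of index $\ge k+1$ under $y$; one checks $z_0\le x$, so either $z_0=x$ (and then $P_y=\emptyset$, i.e.\ $y\notin L_k^\circ$) or $z_0$ is an initial object of $P_y$, whence $P_y$ is connected. So your route can be repaired, but it amounts to rediscovering the paper's comma-category computation inside a considerably heavier latching-object induction; I would recommend replacing the whole apparatus by the Quillen-adjunction argument.
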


\begin{proof}
Let $F \colon \caE_{n,[k,j]} \to \caE_{n,\le i}$ be the inclusion. We claim that the right adjoint to
the restriction functor $\caC^{\caE_{n,\le i}} \to \caC^{\caE_{n,[k,j]}}$ is a right Quillen functor.
This follows from the fact that for $l<k$ and an object $(A,B,l) \in \caE_{n,\le i}$ with $A \cap [k,j] \neq \emptyset$
the category $(A,B,l)/F$ has the initial object $(A,B,l) \to (A \cap \{m,\ldots,n\},B \cap \{m,\ldots,n\},m\}$,
where $m=\min(A \cap [k,j])$.
\end{proof}

\begin{lemma}
\label{h4ehh}
Let $\caC$ be a model category and $D \colon \caE_{n,\le i} \to \caC$ be a projectively
cofibrant diagram such that for any weak equivalence $f$ in $\caE_{n,\le i}$ the map $D(f)$
is also a weak equivalence. Then for any $X \in \varphi_n^{-1}(i)$ the map
$D(X) \to \colim D$ is a weak equivalence.
\end{lemma}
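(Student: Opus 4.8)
The plan is to prove the statement by induction on $i$, uniformly in $n\ge i$. When $i=0$ the category $\caE_{n,\le 0}=\varphi_n^{-1}(0)$ has the terminal object $(\{0\},\dots,n\},0)$ — more precisely $(\{0\},\{0,\dots,n\},0)$ — so $\colim D\cong D(\{0\},\{0,\dots,n\},0)$ and the assertion is immediate, since every object of $\varphi_n^{-1}(0)$ maps to that terminal object by a weak equivalence and $D$ preserves weak equivalences. For the inductive step let $X_0:=(\{i\},\{i,\dots,n\},i)$, the terminal object of $\varphi_n^{-1}(i)$. Since every $X\in\varphi_n^{-1}(i)$ maps to $X_0$ by a within-layer (hence weak) arrow, it is enough to show that the canonical map $D(X_0)\to\colim_{\caE_{n,\le i}}D$ is a weak equivalence.

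Next I would split $\caE_{n,\le i}$ into the lower part $\caE_{n,\le i-1}$ and the top layer $\varphi_n^{-1}(i)$. No arrow runs from $\varphi_n^{-1}(i)$ into $\caE_{n,\le i-1}$, and an object $x=(A,B,j)$ of $\caE_{n,\le i-1}$ admits an arrow into $\varphi_n^{-1}(i)$ if and only if $i\in A$, in which case it has a unique arrow to $X_0$. Let $\caF\subset\caE_{n,\le i-1}$ be the full subcategory of such objects; it is closed under receiving arrows. Inspecting cocones — using that $X_0$ is terminal in $\varphi_n^{-1}(i)$ and that any arrow $x\to z$ with $z\in\varphi_n^{-1}(i)$ factors through the unique arrow $x\to X_0$ — identifies $\colim_{\caE_{n,\le i}}D$ with the pushout of
$$D(X_0)\ \xleftarrow{\ \alpha\ }\ \colim_{\caF}D\ \xrightarrow{\ \iota\ }\ \colim_{\caE_{n,\le i-1}}D,$$
where $\iota$ is induced by $\caF\hookrightarrow\caE_{n,\le i-1}$ and $\alpha$ by the arrows $x\to X_0$, and under this identification $D(X_0)\to\colim_{\caE_{n,\le i}}D$ is the cobase change of $\iota$ along $\alpha$. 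By Lemma \ref{k4fdb} the restrictions of $D$ to $\caE_{n,\le i-1}$ and to $\caF$ are projectively cofibrant, and by Lemma \ref{ghjujd} the map $\iota$ is a cofibration. Hence it suffices to prove that $\iota$ is a weak equivalence: then $\iota$ is an acyclic cofibration, and so is its cobase change $D(X_0)\to\colim_{\caE_{n,\le i}}D$.

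To see that $\iota$ is a weak equivalence I would invoke the inductive hypothesis twice. Deleting the index $i$ and relabeling $\{0,\dots,n\}\setminus\{i\}$ order-preservingly as $\{0,\dots,n-1\}$ gives an isomorphism of categories $\caF\cong\caE_{n-1,\le i-1}$ which is compatible with the functors to $[i-1]$ (hence preserves weak equivalences) and carries $Y_0:=(\{i-1,i\},\{i-1,\dots,n\},i-1)\in\caF$ to the terminal object $(\{i-1\},\{i-1,\dots,n-1\},i-1)$ of $\varphi_{n-1}^{-1}(i-1)$. By the inductive hypothesis for $\caE_{n-1,\le i-1}$, applied to the transported diagram (still projectively cofibrant and weak-equivalence-preserving), $D(Y_0)\to\colim_{\caF}D$ is a weak equivalence. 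On the other hand $Y_0$ maps by a within-layer arrow to $W:=(\{i-1\},\{i-1,\dots,n\},i-1)$, the terminal object of $\varphi_n^{-1}(i-1)$, so $D(Y_0)\to D(W)$ is a weak equivalence, and by the inductive hypothesis for $\caE_{n,\le i-1}$ the map $D(W)\to\colim_{\caE_{n,\le i-1}}D$ is a weak equivalence; the composite $D(Y_0)\to D(W)\to\colim_{\caE_{n,\le i-1}}D$ is therefore a weak equivalence, and by cocone-compatibility it equals $\iota$ composed with the canonical map $D(Y_0)\to\colim_{\caF}D$. Two-out-of-three now yields that $\iota$ is a weak equivalence, which closes the induction.

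The obstacle I expect to be central is the choice of decomposition. The naive approach — building $\colim D$ up the direct-category degree of $\caE_{n,\le i}$ (or one layer at a time) and checking that each elementary pushout is a weak equivalence — fails, because the latching map $L_yD\to D(y)$ need not be a weak equivalence (for instance at objects such as $(\{j\},\{j\},j)$ the latching object is initial), so the intermediate colimits are genuinely not weakly equivalent to $D(X_0)$; only the final colimit is. The resolution is to repackage the entire filtration as the single homotopy pushout above and to recognize the interface category $\caF$ as a copy of $\caE_{n-1,\le i-1}$, which is precisely what makes the recursion terminate. Checking that this recognition is an isomorphism of homotopical categories sending $Y_0$ to the correct terminal object, and carrying out the cocone inspection that produces the pushout square, is the one place that needs careful though routine bookkeeping.
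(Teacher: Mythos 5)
Your proof is correct, but it takes a genuinely different route from the paper's. The paper argues by descending induction on the lower cut-off $j$, showing $D(X)\to\colim D|_{\caE_{n,[j,i]}}$ is a weak equivalence: at each step it glues the single layer $\caE_{n,j}$ onto $\caE_{n,[j+1,i]}$ along the interface $J=\{(A,B,j)\,:\,A\cap\{j+1,\ldots,i\}\neq\emptyset\}$, and proves the attaching map $\colim D|_J\to\colim D|_{\caE_{n,j}}$ is an acyclic cofibration by a homotopy-theoretic argument: $J$ is contractible (it contains a homotopy right cofinal subcategory with an initial object), the diagram on $J$ is weakly constant, and cofibrancy identifies the colimit with the homotopy colimit. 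You instead peel off the \emph{top} layer, collapse it to its terminal object $X_0$, identify $\colim_{\caE_{n,\le i}}D$ with the pushout of $D(X_0)\leftarrow\colim_{\caF}D\to\colim_{\caE_{n,\le i-1}}D$ where $\caF=\{(A,B,j)\,:\,j\le i-1,\ i\in A\}$, and then exploit the self-similarity $\caF\cong\caE_{n-1,\le i-1}$ to prove the attaching map $\iota$ is a weak equivalence by two applications of the inductive hypothesis (for $(n-1,i-1)$ and for $(n,i-1)$) plus two-out-of-three. What your route buys is the elimination of any cofinality/contractibility/hocolim input -- beyond Lemmas \ref{k4fdb} and \ref{ghjujd}, which both proofs share, you only use that an acyclic cofibration is stable under cobase change -- at the price of a double induction (on $i$, uniformly in $n$) and the bookkeeping isomorphism $\caF\cong\caE_{n-1,\le i-1}$, which you verify is where the care is needed; the paper's single descending induction avoids that recursion but leans on the contractibility of $J$. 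One phrase should be corrected, though it does not affect the argument: an arrow $x\to z$ with $z\in\varphi_n^{-1}(i)$ does not factor through $x\to X_0$ (there is no map out of the terminal object $X_0$ except the identity); what your cocone comparison actually uses is the opposite factorization, namely that the composite $x\to z\to X_0$ equals the unique arrow $x\to X_0$, which is immediate since $\caE_n$ has at most one morphism between any two objects.
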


\begin{proof}
We show by descending induction on $j$, starting with $j=i$, that for any
$X \in \varphi_n^{-1}(i)$ the map
$D(X) \to \colim D|_{\caE_{n,[j,i]}}$ is a weak equivalence.
For $j=i$ this follows from the fact $\caE_{n,i}$ has a final object.
Let the statement be true for $0<j+1 \le i$ and let us show it for $j$.
Let $J \subset \caE_{n,j}$ be the full subcategory on objects
$(A,B,j)$ such that $A \cap \{j+1,\ldots,i\} \neq \emptyset$.
Then we have a pushout diagram
$$\xymatrix{\colim D|_J \ar[r] \ar[d] & \colim D|_{\caE_{n,j}} \ar[d] \\
\colim D|_{\caE_{n,[j+1,i]}} \ar[r] & \colim D|_{\caE_{n,[j,i]}}}$$
First note that by Lemmas \ref{hgtr7i} and \ref{k4fdb} all objects in this
diagram are cofibrant. Furthermore the upper horizontal map is a cofibration
by Lemma \ref{ghjujd}. 
The full subcategory of $J$ consisting of objects $(A,B,j)$ with $B=\{j,\ldots,n\}$ is homotopy
right cofinal in $J$ and contractible (it has an initial object), thus $J$ is contractible.
Since the diagram $D|_J$ is weakly equivalent to a constant diagram it follows that $D(X) \to \colim D|_J$
is a weak equivalence for any $X \in J$, thus the upper horizontal map
in the above diagram is also a weak equivalence and the induction step follows.
\end{proof}

\begin{lemma}
\label{gre5g}
Let $\caC$ be a model category and $l$ the left adjoint to the pull back functor
$r \colon \caC^{[n]} \to \caC^{\caE_n}$. Let $D \colon \caE_n \to \caC$ be (projectively) 
cofibrant such that for any weak equivalence $f$ in $\caE_n$ the map $D(f)$
is a weak equivalence. Then $D \to r(l(D))$ is a weak equivalence.
\end{lemma}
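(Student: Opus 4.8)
The plan is to identify $l$ with the left Kan extension along $\varphi_n$ and then to deduce the statement levelwise from Lemma \ref{h4ehh}. First I would observe that, being left adjoint to the restriction functor $r = \varphi_n^*$, the functor $l$ is the left Kan extension $\mathrm{Lan}_{\varphi_n}$; hence for $i \in [n]$ the object $l(D)(i)$ is the colimit of $D$ over the comma category $(\varphi_n \downarrow i)$. Since $[n]$ is a poset, a morphism $\varphi_n(e) \to i$ exists (and is then unique) precisely when $\varphi_n(e) \le i$, so $(\varphi_n \downarrow i)$ is canonically identified with the full subcategory $\caE_{n,\le i} \subseteq \caE_n$ and $l(D)(i) \cong \colim D|_{\caE_{n,\le i}}$. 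Unwinding the unit of the adjunction $(l \dashv r)$, its component at an object $X \in \caE_n$ with $\varphi_n(X) = i$ is exactly the canonical cone map $D(X) \to \colim D|_{\caE_{n,\le i}}$, namely the colimit leg indexed by the pair $(X, \id_i)$.

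Next I would verify the hypotheses of Lemma \ref{h4ehh} for the restricted diagram $D|_{\caE_{n,\le i}}$. Since $\caE_n = \caE_{n,\le n}$, Lemma \ref{hgtr7i} applied with $k = 0$ and the subcategory $\caE_{n,[0,i]} = \caE_{n,\le i}$ shows that $D|_{\caE_{n,\le i}}$ is projectively cofibrant; moreover it carries weak equivalences to weak equivalences because $D$ does and every weak equivalence of $\caE_{n,\le i}$ is in particular a weak equivalence of $\caE_n$. Lemma \ref{h4ehh} then yields that $D(X) \to \colim D|_{\caE_{n,\le i}}$ is a weak equivalence for every $X \in \varphi_n^{-1}(i)$. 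As $i$ and $X$ are arbitrary, this says precisely that the unit $D \to r(l(D))$ is a levelwise --- hence projective --- weak equivalence in $\caC^{\caE_n}$, which is the claim.

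The only point demanding a little care is the bookkeeping: identifying $(\varphi_n \downarrow i)$ with $\caE_{n,\le i}$ and recognizing the adjunction unit as the expected colimit cone map, together with the observation $\caE_n = \caE_{n,\le n}$ that makes Lemma \ref{hgtr7i} applicable to $D$ itself. All the genuinely homotopical content is already packaged in Lemmas \ref{hgtr7i} and \ref{h4ehh}, so I do not expect a real obstacle beyond these identifications.
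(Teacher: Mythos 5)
Your proof is correct and follows essentially the same route as the paper: identify $l(D)(i)$ with $\colim D|_{\caE_{n,\le i}}$ (the paper states this identification directly, you derive it via the Kan extension/comma category description) and then apply Lemma \ref{h4ehh}. Your extra care in checking cofibrancy of the restriction via Lemma \ref{hgtr7i} is a detail the paper leaves implicit, but it is the same argument.
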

\begin{proof}
We have $l(D)(i)=\colim D|_{\caE_{n,\le i}}$. Thus the claim follows from Lemma
\ref{h4ehh}.
\end{proof}

\begin{lemma}
\label{gfvre5}
Let $F \colon I \to J$ be an essentially surjective functor between small categories and $\caW \subset I$ a subcategory making $I$
into a category with weak equivalences. Suppose $F$ sends any map in $\caW$ to an isomorphism.
Then the natural map $L^H_\caW I \to J$ is a weak equivalence between simplicial categories if and only
if for any projectively cofibrant diagram $D  \colon I \to \sSet$ such that for any map $f$ in $\caW$
the map $D(f)$ is a weak equivalence the map $D \to r(l(D))$ is a weak equivalence where
$l$ is the left adjoint to the pull back functor $r \colon \sSet^J \to \sSet^I$.
\end{lemma}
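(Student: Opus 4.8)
The plan is to translate the assertion into a question about a Quillen adjunction between projective model structures on diagram categories. Write $\sSet^I_\caW$ for the left Bousfield localization of the projective model structure on $\sSet^I$ at the maps $I(b,-) \to I(a,-)$ attached to the morphisms $(a \to b)$ of $\caW$; its fibrant objects are the projectively fibrant $I$-diagrams that send $\caW$ to weak equivalences, and its cofibrations are the projective ones. Since $F$ sends every morphism of $\caW$ to an isomorphism, $l$ carries the generating localizing maps $I(b,-) \to I(a,-)$ to the isomorphisms $J(Fb,-) \to J(Fa,-)$, so $(l,r)$ descends to a Quillen adjunction $l \colon \sSet^I_\caW \rightleftarrows \sSet^J \colon r$ (equivalently: $r(E) = E \circ F$ sends $\caW$ to isomorphisms, hence is $\sSet^I_\caW$-fibrant whenever $E$ is $\sSet^J$-fibrant). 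By the Dwyer--Kan theory of hammock localizations the projective model structure on $\sSet$-enriched presheaves over $L^H_\caW I$ is Quillen equivalent to $\sSet^I_\caW$, compatibly with restriction along $F$, and a functor of simplicial categories is a weak equivalence precisely when the induced restriction/left-Kan-extension adjunction between the projective presheaf model categories is a Quillen equivalence (see \cite{dwyer-kan.calculating}; cf.\ also \cite{dwyer-hirschhorn-kan-smith}). Hence $L^H_\caW I \to J$ is a weak equivalence of simplicial categories if and only if $(l,r) \colon \sSet^I_\caW \rightleftarrows \sSet^J$ is a Quillen equivalence.

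Next I would invoke the standard criterion (\cite[Ch.~1]{hovey.model}): $(l,r)$ is a Quillen equivalence if and only if $r$ reflects weak equivalences between fibrant objects and the derived unit $D \to \R r\,\L l(D)$ is a $\caW$-local equivalence for every cofibrant $D \in \sSet^I$. The first condition holds for free: if $g \colon E \to E'$ is a map of levelwise fibrant $J$-diagrams with $r(g)$ a $\caW$-local equivalence, then, $r(E)$ and $r(E')$ being $\sSet^I_\caW$-fibrant, $r(g)$ is a levelwise weak equivalence, so $E(F(i)) \to E'(F(i))$ is a weak equivalence for every $i \in I$; essential surjectivity of $F$ then forces $g$ itself to be a levelwise weak equivalence. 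So only the derived-unit condition remains to be understood.

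The remaining point is that the derived-unit condition may be tested on cofibrant diagrams that are already $\caW$-invariant. For cofibrant $D$ one has $\L l(D) = l(D)$, and over the ordinary category $J$ a fibrant replacement $l(D) \to l(D)^{f}$ is a levelwise weak equivalence, so applying $r$ shows $\R r\,\L l(D)$ is levelwise weakly equivalent to $r(l(D)) = l(D) \circ F$; thus the derived unit is levelwise equivalent to the canonical map $D \to r(l(D))$, and $r(l(D))$ is $\caW$-invariant. When $D$ is itself $\caW$-invariant, both $D$ and $r(l(D))$ are $\caW$-invariant, and a $\caW$-local equivalence between $\caW$-invariant diagrams is automatically a levelwise weak equivalence (a projectively fibrant replacement of a $\caW$-invariant diagram is again $\caW$-invariant, hence $\sSet^I_\caW$-fibrant, and a $\caW$-local equivalence between $\sSet^I_\caW$-fibrant objects is a levelwise weak equivalence). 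Hence for $\caW$-invariant cofibrant $D$ the derived unit is a $\caW$-local equivalence if and only if $D \to r(l(D))$ is a weak equivalence, which is the condition in the statement. Finally, the general cofibrant case reduces to this one: any cofibrant $D'$ admits a $\caW$-local trivial cofibration $D' \to D''$ with $D''$ cofibrant and $\sSet^I_\caW$-fibrant, in particular $\caW$-invariant; in the naturality square for the derived unit along $D' \to D''$ the left edge is a $\caW$-local equivalence and the right edge $\R r\,\L l(D') \to \R r\,\L l(D'')$ is a levelwise weak equivalence (since $\L l$ takes $\caW$-local equivalences of cofibrant objects to weak equivalences and $\R r$ takes weak equivalences to levelwise ones), so two-out-of-three transfers the derived-unit condition between $D'$ and $D''$. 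Chaining all these equivalences gives the lemma.

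I expect the main obstacle to be the bookkeeping in the first step: pinning down precisely the Dwyer--Kan identification of $\sSet$-presheaves on $L^H_\caW I$ with the $\caW$-local projective model structure $\sSet^I_\caW$, compatibly with restriction along $F$, together with the fact that weak equivalences of simplicial categories are detected by the associated presheaf adjunctions. After that, the only genuinely delicate manoeuvre is the reduction to $\caW$-invariant cofibrant diagrams, which rests on the easy but easy-to-overlook observation that a $\caW$-invariant diagram is, up to levelwise weak equivalence, a fibrant object of $\sSet^I_\caW$.
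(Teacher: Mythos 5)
Your proof is correct, and it reorganizes the same basic input as the paper in a genuinely different way, so a comparison is worthwhile. Both arguments begin with the left Bousfield localization $\caC$ of the projective structure on $\sSet^I$ at the maps $\Hom(f,\_)$, $f\in\caW$. The paper then uses the Dwyer--Kan function-complex description of hammock localizations to identify $(L^H_\caW I)^\op$ with the full simplicial subcategory of $\sSet^I$ spanned by cofibrant-fibrant objects equivalent to localized representables, identifies $J^\op$ similarly inside $\sSet^J$, models the functor by the pushforward $l$ followed by fibrant replacement, and reads the claim off from the mapping spaces of these distinguished objects; the passage between ``condition for the localized representables'' and ``condition for all $\caW$-inverting projectively cofibrant diagrams'' is compressed into ``The claim follows.'' You instead invoke the Dwyer--Kan comparison between diagrams on $L^H_\caW I$ and the $\caW$-localized diagram category, recast the statement as ``$(l,r)\colon \caC \rightleftarrows \sSet^J$ is a Quillen equivalence,'' and then work through the standard Quillen-equivalence criterion of \cite{hovey.model}. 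This buys you a completely explicit proof of the full biconditional: your observations that $r$ reflects weak equivalences between fibrant objects because $F$ is essentially surjective, that local equivalences between $\caW$-invariant (hence, after projectively fibrant replacement, local) diagrams are levelwise equivalences, and the naturality-square reduction of a general cofibrant diagram to a cofibrant local-fibrant one are exactly the bookkeeping the paper suppresses, and they are carried out correctly.

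One caveat you should make explicit: your blanket assertion that a functor of simplicial categories is a weak equivalence precisely when the induced restriction/left-Kan-extension adjunction of projective presheaf categories is a Quillen equivalence is false in that generality (idempotent completion, or any Morita equivalence that is not essentially surjective, induces a Quillen equivalence of presheaf categories without being a Dwyer--Kan equivalence). In your situation the functor $L^H_\caW I \to J$ is essentially surjective because $F$ is, and with that hypothesis the missing direction does follow, e.g.\ by evaluating the derived unit at the representables to get full faithfulness; so the argument stands, but the essential surjectivity hypothesis should be invoked at exactly this point rather than only in the verification that $r$ reflects weak equivalences.
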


\begin{proof}
Let $\caC$ be the left Bousfield localization of the model category $\sSet^I$ (equipped with
the projective model structure) along the maps
$\Hom(f,\_)$ where $f$ runs through the maps of $\caW$. Then $(L^H_\caW I)^\op$ is
weakly equivalent to the full simplicial subcategory of $\sSet^I$ consisting of cofibrant fibrant objects
which become isomorphic in $\Ho(\sSet^I)$ to objects in the image of the composed functor
$I^\op \to \Ho(\sSet^I) \to \Ho \caC \hookrightarrow \Ho(\sSet^I)$.
Similarly (but easier) $J^\op$ is weakly equivalent to a full simplicial subcategory
of $\sSet^J$.
The functor $(L^H_\caW I)^\op \to J^\op$
is described via these equivalences by the restriction of the push forward
$\sSet^I \to \sSet^J$ followed by a fibrant replacement functor. The claim follows.
\end{proof}

\begin{proof}[Proof of Proposition \ref{ghte56}]
The claim follows from Lemmas \ref{gre5g} and \ref{gfvre5}.
\end{proof}

Let $f \colon [n] \to [m]$ be a map in $\bigtriangleup$. We define a functor
$f_* \colon \caE_n \to \caE_m$ by setting $f_*((A,B,i))=(f(A),f(B),f(i))$. One checks
that this determines uniquely $f_*$. Thus we get a cosimplicial object
$\caE \colon [n] \mapsto \caE_n$ in the category of small categories with weak
equivalences. Applying the hammock localization
yields a cosimplicical simplicial category $L^H \caE \colon [n] \mapsto L^H \caE_n$
together with a map from $L^H \caE$ to the standard cosimplicial simplicial category
$[\bullet]$ which is levelwise a Dwyer-Kan equivalence.

Let $\caS$ be the category of triples $(A,n,(a_1,\ldots,a_n))$ where $A$ is a $D$-algebra such that $\Spec(A) \in \Sm_S$
and $a_1,\ldots,a_n \in A$ generate $A$ as a $D$-algebra. Morphisms are morphisms of $D$-algebras
with no compatibility of the generators required. Clearly the functor $\caS^\op \to \Sm_S$, $A \mapsto \Spec(A)$,
is an equivalence onto the full subcategory of $\Sm_S$ of affine schemes.

For $X \in\Sm_S$ and $F=\{f_1,\ldots f_n\}$ a set of closed immersions $f_i \colon Z_i \hookrightarrow X$ in $\Sm_S$
we denote by $z^r_F(X)$ the normalized chain complex associated to the simplicial abelian group
$[n] \mapsto z^r_F(X,n)$ which is the subsimplicial abelian group of $z^r(X,\bullet)$ of cycles in good position
with respect to the $Z_i$. We also write $z^r_F(A)$ for $z^r_F(\Spec(A))$.
We have the following moving Lemma due to Marc Levine.

\begin{theorem}
\label{6gfrfd}
If $X \in \Sm_S$ is affine then the inclusion of $z^r_F(X)$ into the normalized chain complex associated to
$z^r(X,\bullet)$ is a quasi isomorphism.
\end{theorem}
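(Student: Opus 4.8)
The statement is Levine's moving lemma, asserting that for an affine smooth scheme $X$ over $S$ and a finite set $F$ of closed immersions into $X$, cycles can be moved into good position with respect to the members of $F$ without changing the quasi-isomorphism type of the cycle complex. My plan is to cite Levine's proof and adapt the bookkeeping to the present Dedekind-domain setting rather than to reprove it from scratch.

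First I would recall the structure of Levine's moving arguments in \cite{levine.techniques} and \cite{levine.schemes}. The key geometric input is that an affine smooth scheme $X$ over a regular affine base admits enough ``translations'': one constructs, using the generators $a_1,\ldots,a_n$ furnished by an object of $\caS$ lying over $X$, a map $\psi\colon X\times\bA^N\to X$ (or an action of a suitable affine group) which is the identity on $X\times\{0\}$ and which is sufficiently transitive near the diagonal to move a given cycle off a fixed closed subscheme $Z_i$. Intersecting with a generic member of the $\bA^N$-family and applying a Chow-type moving-by-translation argument, one shows that the inclusion $z^r_F(X,\bullet)\hookrightarrow z^r(X,\bullet)$ admits, up to homotopy, a retraction; equivalently, the quotient complex is acyclic. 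This is exactly the content of \cite[Theorem~1.7 and its proof]{levine.techniques}, but that reference is stated for schemes smooth over a field; the point here is to note that the argument only uses the affineness of $X$ and regularity of the base, both of which hold when $X\in\Sm_S$ is affine and $S=\Spec(D)$ with $D$ Dedekind.

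The steps in order: (1) reduce, by a standard ``moving one $Z_i$ at a time'' induction on $|F|$, to the case $F=\{f\colon Z\hookrightarrow X\}$ of a single closed immersion; (2) for this case, use the generators to build the translation family $\psi\colon X\times\bA^N\to X$ over $S$; (3) verify that for a generic fiber the translate of a cycle $\xi$ meets $Z$ properly --- here one needs a dimension count valid over a not-necessarily-equidimensional base, which is where the hypothesis that each connected component of $X$ is smooth over $S$ (so that fibers over closed points and over the generic point have the expected relative dimension) enters; (4) promote the pointwise moving statement to a simplicial homotopy by the usual ``moving the whole simplicial cycle'' device (replacing $\bA^N$ by $\bA^N\times\Delta^\bullet$ and using the homotopy-invariance of the construction), obtaining a simplicial homotopy inverse to the inclusion; (5) conclude that $z^r_F(X)\hookrightarrow z^r(X,\bullet)$ (normalized) is a quasi-isomorphism.

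The main obstacle is step (3)/(4): making the transversality/dimension estimates work uniformly over $\Spec(D)$, where $D$ has mixed characteristic, so that the generic fiber and the (positive-characteristic) closed fibers are simultaneously handled. Over a field this is Levine's Chow's-moving-lemma-style argument; over $D$ one must check that the ``generic translate is transverse'' conclusion holds fiberwise and that the locus of bad translates is a proper closed subset of the parameter space $\bA^N_D$ whose complement still has rational points after the base change to $\Delta^n$ --- this is why one works with $\bA^N$ large and uses that $D$ (hence its localizations and residue fields) are infinite or, if necessary, passes to the simplicial level where the parameter varies. I would phrase the proof as: ``This is proved exactly as in \cite[\S 2--3]{levine.techniques}, the only modification being that one uses the generators supplied by $\caS$ to produce the translation action and checks the transversality estimates fiberwise over $S$; affineness of $X$ is used to guarantee the existence of these generators and regularity of $S$ to guarantee that intersection multiplicities behave well.'' I would then refer to \cite{levine.techniques} for the detailed verification, since reproducing it would be long and is not the novel content of this paper.
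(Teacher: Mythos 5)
Your proposal is in substance the same as the paper's: the paper's entire proof is the citation ``This is \cite[Theorem 4.9]{levine.schemes}'', i.e.\ it invokes Levine's moving lemma directly. The one point where you diverge is your premise that the published result is ``stated for schemes smooth over a field'' and therefore needs to be re-run over a Dedekind base. That premise is not accurate: Levine's localization techniques (\cite{levine.techniques}, \cite{levine.schemes}) are developed over a regular one-dimensional base, and the theorem cited in the paper is stated and proved exactly in the generality used here, namely for smooth affine schemes over the spectrum of a Dedekind domain, so no adaptation is required and the proof really is just the citation.

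Because of that premise, your plan proposes extra work and then does not carry it out: the step you correctly single out as the hard one --- the fiberwise transversality and dimension count for generic translates over a mixed-characteristic base, including what happens over closed points with finite residue fields --- is precisely the nontrivial content of Levine's argument, and your sketch ultimately refers ``the detailed verification'' back to the very reference you claim does not cover this case. If the adaptation were genuinely needed, this would be a real gap (in particular, the finite-residue-field issue means one cannot simply quote ``generic translates exist'' without Levine's more careful parametrization); since it is not needed, your proof collapses to the same one-line appeal to Levine that the paper makes, and is fine once the citation is given in the correct generality.
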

\begin{proof}
This is \cite[Theorem 4.9]{levine.schemes}.
\end{proof}

Let $$(A_0,k_0,(a_{0,1},\ldots,a_{0,k_0})) \to \cdots \to (A_n,k_n,(a_{n,1},\ldots,a_{n,k_n}))$$ be a chain of maps
in $\caS$, i.e. a $n$-simplex, which we denote by $K$, in the nerve of
$\caS$. Let $i \in [n]$ and $B \subset \{i,\ldots,n\}$ with $i \in B$. Set
$$C_{i,B}:= \bigotimes_{j \in B \setminus \{i\}} A_i[T_1 \ldots,T_{k_j}]
\cong \bigotimes_{j \in B \setminus \{i\}} A_i[T_{j,1} \ldots,T_{j,k_j}],$$
where the tensor products are over $A_i$. If $i \le j \le n$, $B' \subset \{j,\ldots,n\}$ with $j \in B'$ and $B \cap \{j,\ldots,n\} \subset B'$
we define a map $g_{i,B,j,B'} \colon C_{i,B} \to C_{j,B'}$ over the map $A_i \to A_j$ by sending a variable $T_{l,m}$ for $l>j$ to the respective variable
$T_{l,m}$ and to the image of the element $a_{l,m}$ in $A_j$ for $l \le j$. If furthermore $j \le k \le n$ 
and $B'' \subset \{k,\ldots,n\}$ with $k \in B''$ and $B' \cap \{k,\ldots,n\} \subset B''$ then we have
\begin{equation}
\label{jhe56d}
g_{j,B',k,B''} \circ g_{i,B,j,B'} = g_{i,B,k,B''}. 
\end{equation}

For $t=(A,B,i) \in\caE_n$ we let $F_t$ be the set of closed subschemes of $\Spec(C_{i,B})$ consisting of the
$\Spec(g_{i,B,j,B \cap \{j,\ldots,n\}})$ for $j \in A \setminus \{i\}$. For such $t$ set $C_t:=C_{i,B}$.
Clearly for $j \in A$ we have a pullback functor $z^r_{F_t}(C_t) \to z^r(C_{j,B \cap \{j,\ldots,n\}})$
induced by pullback of cycles (see Appendix \ref{ht635zh}) which for
$B' \subset \{j,\ldots,n\}$ with $B \cap \{j,\ldots,n\} \subset B'$ we can prolong via smooth pullback to
a map to $z^r(C_{j,B'})$.

\begin{lemma}
\label{jg45fj}
Let $t \to s$ be a map in $\caE_n$.
Then the above map $z^r_{F_t}(C_t) \to z^r(C_s)$ factors through $z^r_{F_s}(C_s)$.
\end{lemma}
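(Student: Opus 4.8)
The plan is to unwind the construction just preceding the statement and reduce everything to the composition law~(\ref{jhe56d}) together with the elementary functoriality of pullbacks of cycles recorded in Appendix~\ref{ht635zh}. Write $t=(A,B,i)$ and $s=(A',B',j)$, so that a morphism $t\to s$ in $\caE_n$ amounts to $i\le j$, $B\cap\{j,\ldots,n\}\subseteq B'$ and $A'\subseteq A$; in particular $j\in A'\subseteq A$, so the map of the statement is defined. First I would identify that map explicitly: by~(\ref{jhe56d}) and functoriality of cycle pullback, the composite of the pullback $z^r_{F_t}(C_t)\to z^r(C_{j,B\cap\{j,\ldots,n\}})$ with its smooth prolongation to $z^r(C_s)$ is pullback of cycles along $a:=\Spec(g_{i,B,j,B'})\colon\Spec(C_s)\to\Spec(C_t)$; moreover $a$ factors as $\Spec(g_{i,B,j,B\cap\{j,\ldots,n\}})$ --- which lies in $F_t$, or is the identity when $j=i$ --- followed by a smooth affine-space-bundle projection, which re-confirms that $a^{*}\zeta$ is defined for $\zeta\in z^r_{F_t}(C_t)$. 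So the task reduces to showing that $a^{*}\zeta$ lies in good position relative to each element of $F_s$.

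Next I would fix $k\in A'\setminus\{j\}$, so that $b:=\Spec(g_{j,B',k,B'\cap\{k,\ldots,n\}})$ is the corresponding element of $F_s$, and carry out the combinatorial bookkeeping. Since $k\in A'\subseteq\{j,\ldots,n\}$ we have $k\ge j\ge i$, and $k=i$ would force $j=i=k$ contradicting $k\ne j$; hence $k\in A\setminus\{i\}$, so $f:=\Spec(g_{i,B,k,B\cap\{k,\ldots,n\}})$ belongs to $F_t$. Moreover $B\cap\{k,\ldots,n\}\subseteq B\cap\{j,\ldots,n\}\subseteq B'$ because $k\ge j$, so two applications of~(\ref{jhe56d}) (whose set-theoretic hypotheses are all immediate from these inclusions) yield
$$g_{j,B',k,B'\cap\{k,\ldots,n\}}\circ g_{i,B,j,B'}=g_{i,B,k,B'\cap\{k,\ldots,n\}}=g_{k,B\cap\{k,\ldots,n\},k,B'\cap\{k,\ldots,n\}}\circ g_{i,B,k,B\cap\{k,\ldots,n\}},$$
that is, $a\circ b=f\circ d$ as morphisms $\Spec(C_{k,B'\cap\{k,\ldots,n\}})\to\Spec(C_t)$, where $d:=\Spec(g_{k,B\cap\{k,\ldots,n\},k,B'\cap\{k,\ldots,n\}})$ is again a smooth affine-space-bundle projection. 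The point is that after precomposition with $a$ the troublesome map $b\in F_s$ becomes an element of $F_t$ composed with a harmless smooth projection.

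To conclude, I would argue that $f^{*}\zeta$ is defined since $f\in F_t$ and $\zeta\in z^r_{F_t}(C_t)$, and pullback along the smooth map $d$ is always defined, so $(a\circ b)^{*}\zeta=(f\circ d)^{*}\zeta=d^{*}f^{*}\zeta$ is defined; concretely, every component of $(a\circ b)^{-1}(|\zeta|)$, over every face of the simplex, has codimension $\ge r$. Since the support of the cycle $a^{*}\zeta$ is contained in $a^{-1}(|\zeta|)$, the $b$-preimage of $|a^{*}\zeta|$ is contained in $(a\circ b)^{-1}(|\zeta|)$, hence all its components have codimension $\ge r$ over every face; this is precisely the statement that $a^{*}\zeta$ is in good position relative to $b$. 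Letting $k$ run through $A'\setminus\{j\}$ exhausts $F_s$, whence $a^{*}\zeta\in z^r_{F_s}(C_s)$. I expect the only real friction to be in this last step --- making sure that the properties of pullbacks of cycles in Appendix~\ref{ht635zh} really do transfer the codimension bound on $(a\circ b)^{-1}(|\zeta|)$ through the two factorizations and over all faces --- but this is a matter of care rather than genuine difficulty; the remaining work is just the bookkeeping with the subsets $A,B,A',B'$ and the verification of the hypotheses of~(\ref{jhe56d}).
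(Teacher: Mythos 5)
Your proof is correct and follows essentially the same route as the paper's: both arguments reduce to the composition law (\ref{jhe56d}), the observation that for $k\in A'\setminus\{j\}$ the relevant closed immersion into $\Spec(C_t)$ already belongs to $F_t$, and the fact that codimension of intersections is preserved under the polynomial-variable (affine-space bundle) extensions. The only difference is packaging: where the paper reduces WLOG to $A'=A\cap\{j,\ldots,n\}$ and passes through the intermediate objects $s'$ and $s''$, you encode the same codimension bookkeeping in the single factorization $a\circ b=f\circ d$ together with the support containment $|a^{*}\zeta|\subset a^{-1}(|\zeta|)$ --- a slightly tidier presentation of the same argument.
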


\begin{proof}
Let $t=(A,B,i)$ and $s=(A',B',j)$. Set $s':=(A \cap \{j,\ldots,n\},B \cap \{j,\ldots,n\},j)$.
Without loss of generality we can assume $A'=A \cap \{j,\ldots,n\}$.
Clearly the map $$z^r_{F_t}(C_t) \to z^r(C_{s'})$$ factors through $z^r_{F_{s'}}(C_{s'})$.
If $A'=\{j\}$ we are done, otherwise fix $k \in A' \setminus \{j\}$. Set $B'':=(B \cap \{j,\ldots,n\}) \cup (B' \cap \{j,\ldots,k\})$
and $s'':=(A',B'',j)$.
Then we have a well defined map $$z^r_{F_{s'}}(C_{s'}) \to z^r_{\{g_{j,B'',k,B''\cap \{k,\ldots,n\}}\}}(C_{s''})$$ since cycles meet in the correct
codimension. Furthermore we have a well defined map $$z^r_{\{g_{j,B'',k,B''\cap \{k,\ldots,n\}}\}}(C_{s''}) \to
z^r_{\{g_{j,B',k,B'\cap \{k,\ldots,n\}}\}}(C_s)$$ for the same reason. Altogether we see that cycles meet as claimed.
\end{proof}

For $f \colon t \to s$ a map in $\caE_n$ we let $\alpha_K(f) \colon z^r_{F_t}(C_t) \to z^r_{F_s}(C_s)$ be the map
constructed above using Lemma \ref{jg45fj}.

\begin{lemma}
\label{gftrg66}
For $f \colon t \to s$ and $g \colon s \to r$ two maps in $\caE_n$ we have $\alpha_K(g \circ f) = \alpha_K(g) \circ \alpha_K(f)$.
\end{lemma}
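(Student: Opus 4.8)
The statement to prove is Lemma~\ref{gftrg66}: for composable maps $f \colon t \to s$ and $g \colon s \to r$ in $\caE_n$ one has $\alpha_K(g \circ f) = \alpha_K(g) \circ \alpha_K(f)$. The strategy is to unwind the construction of the maps $\alpha_K$ from Lemma~\ref{jg45fj} and reduce everything to the cocycle identity \eqref{jhe56d} for the algebra maps $g_{i,B,j,B'}$, together with the functoriality of pullback of cycles recorded in Appendix~\ref{ht635zh}. The point is that $\alpha_K(f)$ is, after forgetting the decoration by the sets $F_t$, nothing but the pullback-of-cycles map (composed with a smooth pullback in the variables $T_{l,m}$ for $l>j$) along the ring map $g_{i,B,j,B'}$; the only content of Lemma~\ref{jg45fj} was that this map respects the subcomplexes $z^r_{F_\bullet}$, not that it is a new map.

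\emph{Step 1: reduce to the underlying cycle-pullback maps.} First I would note that for each $t \in \caE_n$ the inclusion $z^r_{F_t}(C_t) \hookrightarrow z^r(C_t)$ is injective (it is a subcomplex of cycles), so $\alpha_K(f)$ is determined by its composite into $z^r(C_s)$. Hence it suffices to prove the identity after composing with $z^r_{F_r}(C_r) \hookrightarrow z^r(C_r)$, i.e.\ to prove the corresponding identity for the maps $z^r_{F_t}(C_t) \to z^r(C_s)$ obtained by pullback of cycles along $g_{i,B,j,B'}$ followed by smooth pullback. This removes the subcomplex bookkeeping from the problem entirely.

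\emph{Step 2: factor each $\alpha_K(f)$ through intermediate objects and invoke \eqref{jhe56d}.} Writing $t=(A,B,i)$, $s=(A',B',j)$, $r=(A'',B'',k)$, the map $g_{i,B,j,B'}\colon C_{i,B}\to C_{j,B'}$ (extended over $T$-variables) and $g_{j,B',k,B''}\colon C_{j,B'}\to C_{k,B''}$ are the underlying ring maps, and \eqref{jhe56d} gives $g_{j,B',k,B''}\circ g_{i,B,j,B'}=g_{i,B,k,B''}$. Pullback of cycles is contravariantly functorial for flat morphisms (Appendix~\ref{ht635zh}), and the maps in question, being polynomial extensions of flat $D$-algebra maps between smooth $D$-algebras, are flat; likewise the smooth pullbacks along projections that kill or rename the surplus variables compose correctly. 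Thus the composite $\alpha_K(g)\circ\alpha_K(f)$ is, on the level of $z^r(C_k)$, the pullback along $g_{i,B,k,B''}$ (with the appropriate smooth pullback), which is by definition $\alpha_K(g\circ f)$. Here one must check that the bookkeeping of which variables are "honest variables" versus "substituted generators $a_{l,m}$" at each of the three ring maps is compatible: this is exactly the content of how $g_{i,B,j,B'}$ is defined (variables $T_{l,m}$ with $l>j$ are kept, with $l\le j$ are sent to $a_{l,m}\in A_j$), and transitivity of this rule under composition is a direct verification using $i\le j\le k$.

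\emph{Main obstacle and conclusion.} The only genuinely delicate point is the compatibility of the smooth-pullback prolongations: $\alpha_K(f)$ is not literally cycle-pullback along $g_{i,B,j,B'}$ but that pullback \emph{followed by} a smooth pullback along a projection of affine spaces (to land in $z^r(C_{j,B'})$ rather than in $z^r(C_{j,B\cap\{j,\ldots,n\}})$), and one must verify that these projections, when composed over the three-step chain, factor correctly — i.e.\ that the square relating the projections for $(i,B)\to(j,B')$, $(j,B')\to(k,B'')$, and $(i,B)\to(k,B'')$ commutes on the nose. This is a routine, if slightly tedious, index manipulation: it reduces to the observation that both the surplus-variable sets and the substitution prescriptions are determined by the sets $B,B',B''$ and the indices $i\le j\le k$ in a way compatible with intersection and union, which is precisely the same combinatorics that made $f_*$ well defined and that underlies \eqref{jhe56d}. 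Granting this, base-change compatibility of flat pullback and smooth pullback of cycles (again Appendix~\ref{ht635zh}) finishes the argument.
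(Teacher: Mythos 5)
There is a genuine gap in Step 2: you justify functoriality of the pullbacks by claiming that the maps in question, ``being polynomial extensions of flat $D$-algebra maps between smooth $D$-algebras, are flat.'' They are not. The chain $A_0 \to \cdots \to A_n$ underlying $K$ consists of \emph{arbitrary} morphisms of smooth $D$-algebras (morphisms in $\caS$ impose no flatness), and moreover the prescription $T_{l,m} \mapsto a_{l,m}$ for $l \le j$ is a substitution, i.e.\ essentially restriction to a closed subscheme (exactly the closed subschemes recorded in $F_t$), which is again not flat. The whole point of this section — the good-position subcomplexes $z^r_{F_t}$ and the moving Lemma (Theorem \ref{6gfrfd}) — is to make sense of pullback along such non-flat maps; the flat case is treated separately only for comparison in section \ref{hht55rd}. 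So the appeal to ``contravariant functoriality of flat pullback'' does not apply, and with it the core of your argument collapses.

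What is actually needed, and what the paper uses, is the cocycle identity (\ref{jhe56d}) for the ring maps \emph{together with} Theorem \ref{h5t423t} from Appendix \ref{ht635zh}: the pullback here is defined by intersecting with the graph, $f^*(C)=\Gamma_f \cdot \pr_Y^*(C)$, and the identity $(g\circ f)^*(C)=f^*(g^*(C))$ holds only under good-position hypotheses and rests on the associativity of Serre's intersection product (Proposition \ref{ht34t4e}, via the Tor-formula), not on base change for flat morphisms. The good-position hypotheses for the composite are exactly what Lemma \ref{jg45fj} supplies (the pulled-back cycles land in $z^r_{F_s}(C_s)$, hence meet the next family of closed subschemes properly). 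Your Step 1 reduction and the bookkeeping of surplus variables in the final paragraph are fine, but you identified a routine point as the main obstacle while the genuinely nontrivial input — Theorem \ref{h5t423t} and the verification that it is applicable — is missing and is replaced by an incorrect flatness claim.
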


\begin{proof}
Let $t=(A,B,i)$, $s=(A',B',j)$ and $r=(A'',B'',k)$. Then the map $\alpha_K(f)$ is defined by pulling back cycles via
the map $\Spec(g_{i,B,j,B'})$ and the map $\alpha_K(g)$ by pull back via  $\Spec(g_{j,B',k,B''})$. Thus the claim
follows from (\ref{jhe56d}) and Theorem \ref{h5t423t}.
\end{proof}

Setting $\alpha_K(t):=z^r_{F_t}(C_t)$ for $t$ an object of $\caE_n$ and using Lemma \ref{gftrg66} we get a functor
$\alpha_K \colon \caE_n \to \Cpx(\Ab)$.

By restricting everything to opens $U$ in $S_\Zar$ we get a functor $$\tilde{\alpha}_K \colon \caE_n \to \Cpx(\Sh(S_\Zar,\integers)).$$

\begin{lemma}
\label{grfe4d}
The functor $\tilde{\alpha}_K$ sends weak equivalences in $\caE_n$ to quasi isomorphisms.
\end{lemma}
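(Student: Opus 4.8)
The plan is to identify, for a weak equivalence $f$ in $\caE_n$, the map $\tilde{\alpha}_K(f)$ on sufficiently small opens of $S$ with a flat pullback of cycles along the projection of a trivial affine bundle, squeezed between two applications of Levine's moving Lemma.

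So I fix a weak equivalence $f\colon t\to s$ in $\caE_n$ and write $t=(A,B,i)$, $s=(A',B',j)$. Since $\varphi_n(f)$ is an identity we have $j=i$, and the definition of the morphisms of $\caE_n$ then forces $A'\subseteq A$ and $B=B\cap\{i,\ldots,n\}\subseteq B'$. The structure map $g:=g_{i,B,i,B'}\colon C_{i,B}\to C_{i,B'}$ used to define $\alpha_K(f)$ is, directly from its definition, the inclusion of polynomial $A_i$-algebras obtained by adjoining the variables $T_{l,m}$ with $l\in B'\setminus B$ and $1\le m\le k_l$ (none of the existing variables is sent into $A_i$, because every index $l$ occurring in $C_{i,B}$ satisfies $l>i$). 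Hence, writing $N:=\sum_{l\in B'\setminus B}k_l$, the morphism $\Spec(g)$ is the projection of the trivial affine bundle $\Spec(C_{i,B})\times\mathbb{A}^N\to\Spec(C_{i,B})$, and by the description of $\alpha_K$ used in the proof of Lemma~\ref{gftrg66}, together with Lemma~\ref{jg45fj}, the map $\alpha_K(f)$ is precisely flat pullback of cycles along $\Spec(g)$, restricted to cycles in good position with respect to $F_t$.

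Next I would, for every open $U\subseteq S$, record the obviously commutative square of complexes
\[
\xymatrix{
z^r_{F_t}(C_t|_U)\ar[r]^{\tilde{\alpha}_K(f)(U)}\ar@{^(->}[d] & z^r_{F_s}(C_s|_U)\ar@{^(->}[d]\\
z^r(C_t|_U)\ar[r]^{g^*} & z^r(C_s|_U),
}
\]
where $(-)|_U$ denotes base change to $U$ over $S$, the vertical arrows are the inclusions of Theorem~\ref{6gfrfd}, and the bottom arrow is the flat pullback along the affine bundle projection $\Spec(C_t|_U)\times\mathbb{A}^N\to\Spec(C_t|_U)$; the square commutes because both composites are given by the same pullback-of-cycles formula, differing only in the stage at which good position is imposed. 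To prove the Lemma it then suffices to show that $\tilde{\alpha}_K(f)$ induces a quasi-isomorphism on sections over the basic opens $U=\Spec(D[\tfrac1a])$, $a\in D$: these form a basis of $S_\Zar$, the stalk of $\tilde{\alpha}_K(-)$ at a point of $S$ is the filtered colimit of such sections, and a filtered colimit of quasi-isomorphisms is a quasi-isomorphism, so one obtains a stalkwise, hence a genuine, quasi-isomorphism of complexes of sheaves. For such a $U$ the schemes $\Spec(C_t|_U)$ and $\Spec(C_s|_U)$ are affine and belong to $\Sm_S$ (the open immersion $\Spec(D[\tfrac1a])\hookrightarrow S$ is smooth and $\Spec(C_t)\in\Sm_S$, since $\Spec(A_i)\in\Sm_S$), so Theorem~\ref{6gfrfd} makes the two vertical arrows quasi-isomorphisms; the bottom arrow is a quasi-isomorphism by homotopy invariance of Bloch's higher cycle complexes (the iterated $\mathbb{A}^1$-invariance underlying Theorem~\ref{htehg}). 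Hence the top arrow $\tilde{\alpha}_K(f)(U)$ is a quasi-isomorphism, which is what we wanted.

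The only genuinely delicate points are the identification of $g_{i,B,i,B'}$ with ``adjoining variables'', which is what makes $\Spec(g)$ an affine bundle projection and thus makes homotopy invariance applicable, and the reduction to basic opens of $S$ rather than to arbitrary stalks, which is what keeps the base of the moving Lemma inside $\Sm_S$ as it is stated here; everything else is bookkeeping with the definition of $\alpha_K$.
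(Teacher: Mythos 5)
Your identification of a weak equivalence $f\colon(A,B,i)\to(A',B',i)$ with the flat pullback of cycles along the trivial affine bundle projection $\Spec(g_{i,B,i,B'})$, sandwiched between the moving-lemma inclusions of Theorem \ref{6gfrfd}, is exactly the mechanism of the paper's proof, and your reduction to sections over basic affine opens of $S$ followed by passage to stalks is a legitimate way to organize it. The bookkeeping with the definition of $\alpha_K$ (all variables of $C_{i,B}$ have index $l>i$, so $g$ is literally the adjunction of the variables indexed by $B'\setminus B$, and the ``pullback along closed immersions'' part of the definition is the identity for a weak equivalence) is correct.

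The one step that is not justified as written is the bottom arrow of your square: you claim $z^r(C_t|_U)\to z^r(C_s|_U)$ is a quasi-isomorphism ``by homotopy invariance of Bloch's higher cycle complexes (the iterated $\mathbb{A}^1$-invariance underlying Theorem \ref{htehg})''. But Theorem \ref{htehg} is a statement in $\D(\Sh(X_\Zar,\integers))$ asserting that $\M^X(r)\to\R q_*\M^{\mathbb{A}^1_X}(r)$ is an isomorphism; it compares Zariski hypercohomology, not the cohomology of the global complexes $z^r(-)$ that actually appear in $\tilde{\alpha}_K$. To extract the statement you use, one must know that for $X\in\Sm_S$ the complex of global sections of $Y\mapsto z^r(Y)$ (equivalently, its underived push forward along $X\to S$, or its sections over $X\times_S U$) computes the derived push forward, i.e.\ that the cycle complexes satisfy Zariski descent; this is precisely the third ingredient of the paper's proof, deduced from Levine's localization theorem \cite[Theorem 1.7]{levine.techniques}, and it is needed both for $\Spec(C_t|_U)$ and for the affine bundle over it. Once you add this ingredient your argument closes and is essentially the paper's; without it, the passage from the sheaf-level Theorem \ref{htehg} to the sections-level quasi-isomorphism in your diagram is a genuine gap.
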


\begin{proof}
This follows from Theorem \ref{6gfrfd}, Theorem \ref{htehg} and the fact that for $X \in \Sm_S$ the push forward
of $(X_\Zar \ni Y \mapsto z^r(Y))$ via the structure morphism $X \to S$ computes the derived push forward,
which follows from \cite[Theorem 1.7]{levine.techniques}.
\end{proof}

\begin{lemma}
\label{4nhzg}
Let $f \colon [m] \to [n]$ be a monomorphism in $\bigtriangleup$ and $K$ a $n$-simplex in the nerve of $\caS$.
Then the composition $\caE_m \overset{f_*}{\longrightarrow}
\caE_n \overset{\tilde{\alpha}_K}{\longrightarrow}
\Cpx(\Sh(S_\Zar,\integers))$ is equal to $\tilde{\alpha}_{f^*K}$. 
\end{lemma}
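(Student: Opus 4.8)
The plan is to unwind the two functors $\tilde{\alpha}_K \circ f_*$ and $\tilde{\alpha}_{f^*K}$ on objects and on morphisms of $\caE_m$ and check that, after the evident renaming of polynomial variables, their defining data coincide; monomorphicity of $f$ is precisely what makes the combinatorics of the subsets $A$ and $B$ work out. First I would record that $f^*K$ is the $m$-simplex of the nerve of $\caS$ with $l$-th vertex $(A_{f(l)},k_{f(l)},(a_{f(l),1},\ldots,a_{f(l),k_{f(l)}}))$ and transition maps the composites of those of $K$, and adopt superscripts $C^K_{i,B}, F^K_t, \alpha_K$ versus $C^{f^*K}_{i,B}, F^{f^*K}_t, \alpha_{f^*K}$ to distinguish the data built from $K$ from those built from $f^*K$.

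Next, for an object $s = (A',B',l)$ of $\caE_m$ with image $t = f_*(s) = (f(A'),f(B'),f(l))$, I would check the following three combinatorial facts, each a direct consequence of $f$ being injective and order-preserving: (i) $l' \mapsto f(l')$ is a bijection $B' \setminus \{l\} \to f(B') \setminus \{f(l)\}$, so that $C^K_t = C^K_{f(l),f(B')}$ and $C^{f^*K}_s = C^{f^*K}_{l,B'}$ are, factor by factor, polynomial $A_{f(l)}$-algebras in the same variables up to the renaming $T_{f(l'),\nu} \leftrightarrow T_{l',\nu}$; (ii) $f(A') \setminus \{f(l)\} = f(A' \setminus \{l\})$; and (iii) for $l' \ge l$,
$$f(B') \cap \{f(l'),\ldots,n\} = f\bigl(B' \cap \{l',\ldots,m\}\bigr).$$
Using (i)--(iii) together with the defining formula for the maps $g_{i,B,j,B'}$ (a variable $T_{l'',\nu}$ goes to itself if $l'' > j$ and to the image of $a_{l'',\nu}$ if $l'' \le j$, and $a_{f(l''),\nu}$ is by definition the $l''$-th generator of $f^*K$), I would then conclude that each structure map $g^K_{f(l),f(B'),f(l'),f(B')\cap\{f(l'),\ldots,n\}}$ is carried, under the identification of (i), to $g^{f^*K}_{l,B',l',B'\cap\{l',\ldots,m\}}$. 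Hence the family $F^K_t$ is carried to $F^{f^*K}_s$, so $z^r_{F^K_t}(C^K_t) = z^r_{F^{f^*K}_s}(C^{f^*K}_s)$; this being compatible with restriction to opens of $S$, the two functors agree on objects.

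Finally, for a morphism $h \colon s \to s'$ of $\caE_m$, the maps $\tilde{\alpha}_K(f_*(h))$ and $\tilde{\alpha}_{f^*K}(h)$ are by construction (Lemma \ref{jg45fj}) pullbacks of cycles along the relevant maps $\Spec(g_{i,B,j,B'})$, prolonged by smooth pullback where needed; since those $g$'s were just matched, functoriality of pullback of cycles (as in the proof of Lemma \ref{gftrg66}, using Theorem \ref{h5t423t}) forces the two maps to agree, giving $\tilde{\alpha}_K \circ f_* = \tilde{\alpha}_{f^*K}$. I expect the entire content — and the only use of monomorphicity — to lie in fact (iii): for a degeneracy $f(B')$ would have strictly fewer elements than $B'$, the tensor factors of $C^K_{f(l),f(B')}$ would collapse, and the statement would fail, which is exactly why the lemma is stated only for monomorphisms in $\bigtriangleup$.
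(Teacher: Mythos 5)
Your proposal is correct and is essentially the paper's own argument written out in full: the paper simply records that $C^{f^*K}_t=C^K_{f_*t}$ and $F^{f^*K}_t=F^K_{f_*t}$ and that the definitions on morphisms coincide, which is exactly what your facts (i)--(iii) and the matching of the maps $g$ verify in detail (your closing remark about why injectivity is needed, and why degeneracies are instead handled via $\hat{\caS}$ and the $\beta_K$, is also accurate).
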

\begin{proof}
We use a superscript $K$ or $f^*K$ to distinguish between the objects which are defined above for $K$ respectively $f^*K$.
We have $C^{f^*K}_t=C^K_{f_*t}$ and $F^{f^*K}_t=F^K_{f_*t}$ for $t$ an object of $\caE_m$. Thus the claim follows on objects. 
The definitions of the two functors on morphisms also coincide, thus the claim follows.
\end{proof}

For a category $I$ we let $\hat{I}$ be the subcategory of $I \times \naturals$ (where $\naturals$ is a category in the usual way)
which has all objects and where a map
$(A,n) \to (B,m)$ belongs to $\hat{I}$ if and only if the map $A \to B$ is the identity or if $m>n$. Note
that a composition of non-identity maps is again a non-identity map in $\hat{I}$.

We let a map $(A,n) \to (B,m)$ in $\hat{I}$ be a weak equivalence if and only if the map $A \to B$ is the identity.
We have a canonical projection functor $p \colon \hat{I} \to I$.

\begin{proposition}
\label{gvshzu6}
For any category $I$ the canonical functor $L^H \hat{I} \to I$ is a weak equivalence of simplicial categories.
\end{proposition}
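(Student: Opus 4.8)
The plan is to apply Lemma \ref{gfvre5} with $F = p \colon \hat I \to I$ and $\caW$ the subcategory of weak equivalences in $\hat I$ (the maps $(A,n)\to(B,m)$ with $A\to B$ an identity). The functor $p$ is clearly essentially surjective and sends weak equivalences to isomorphisms (in fact to identities), so by Lemma \ref{gfvre5} it suffices to show: for every projectively cofibrant diagram $D \colon \hat I \to \sSet$ which sends the maps of $\caW$ to weak equivalences, the unit $D \to r(l(D))$ is a weak equivalence, where $l \dashv r$ and $r \colon \sSet^I \to \sSet^{\hat I}$ is restriction along $p$. Since $(l(D))(A) = \colim_{\hat I \downarrow A} D$ and $(r(l(D)))(A,n) = (l(D))(A)$, what has to be shown is that for each object $A \in I$ and each $n$, the canonical map $D(A,n) \to \colim_{p/A} D$ is a weak equivalence, where $p/A$ is the comma category (equivalently, the full subcategory of $\hat I$ on the objects $(A',m)$ admitting a map to some $(A,k)$ over $\mathrm{id}_A$ — i.e. on the objects $(A,m)$ for $m \in \naturals$ together with the objects $(A',m)$ mapping into the fibre over $A$ via a non-identity map).

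First I would simplify the indexing category. The colimit $\colim_{p/A} D$ can be computed along the left Kan extension, and by cofinality it is enough to understand the restriction of $D$ to the full subcategory $\hat A := p^{-1}(A) \cong \{A\} \times \naturals \cong \naturals$, because every object $(A',m)$ of $p/A$ receives a map (namely, going up a level while landing over $A$) is not quite right; instead the cleaner route is: the fibre $\naturals$ (with its order-type weak equivalences, which here are only identities, but the transition maps $(A,n)\to(A,n+1)$ exist and are weak equivalences) is \emph{homotopy right cofinal} in $p/A$. Concretely, for any object $(A',m)$ of $\hat I$ with a map to the fibre over $A$, the under-category $(A',m)/\hat A$ is nonempty (there is some $(A,k)$, $k>m$, receiving a map) and filtered (any two such $(A,k)$, $(A,k')$ admit a common upper bound $(A,\max(k,k')+1)$ with the requisite maps, by the definition of $\hat I$). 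Hence by Quillen's Theorem A / the cofinality theorem for homotopy colimits, $\colim_{p/A} D$ is computed by $\colim_{\naturals} D|_{\hat A}$, which since $\hat A \cong \naturals$ is a filtered (sequential) colimit $\colim_m D(A,m)$ along the transition maps $D(A,m) \to D(A,m+1)$.

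Now all these transition maps are weak equivalences by hypothesis (they are images under $D$ of maps in $\caW$), and a sequential colimit of weak equivalences between cofibrant objects along cofibrations is a weak equivalence with each $D(A,m)$ included into the colimit by a weak equivalence; in particular $D(A,n) \to \colim_m D(A,m) = \colim_{p/A} D$ is a weak equivalence for every $n$. (For the cofibrancy/cofibration bookkeeping one notes that $\hat A$ is a direct category and, by an argument as in Lemma \ref{k4fdb}, the restriction of a projectively cofibrant diagram on $\hat I$ to $\hat A$ is again projectively cofibrant, so the transition maps $D(A,m)\to D(A,m+1)$ may be taken to be cofibrations; alternatively one works with Reedy cofibrant diagrams over $\naturals$ and argues directly.) This establishes $D \to r(l(D))$ is a weak equivalence, and Lemma \ref{gfvre5} then yields that $L^H \hat I \to I$ is a weak equivalence of simplicial categories.

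The main obstacle is the cofinality step: one must verify that the fibre $\hat A \cong \naturals$ is homotopy right cofinal in the comma category $p/A$, i.e. that the relevant under-categories are contractible (here: nonempty and filtered, hence contractible). This uses precisely the defining feature of $\hat I$ — that the only way to leave a given level is to strictly increase the $\naturals$-coordinate, and that one can always increase it — and is where the somewhat unusual definition of $\hat I$ is doing its work. Once this is in hand, the remainder is the standard fact that filtered colimits of weak equivalences along cofibrations of cofibrant objects are again weak equivalences, together with the already-proven criterion of Lemma \ref{gfvre5}.
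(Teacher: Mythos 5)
Your proof is correct and takes essentially the same route as the paper: you apply Lemma \ref{gfvre5} and verify $D \to r(l(D))$ is a weak equivalence by noting that the fibre $\{(i,n)\}_{n\in\naturals}\cong\naturals$ is homotopy right cofinal in $p/i$ and that $D$ restricted to it consists of weak equivalences. One small caveat: Lemma \ref{k4fdb} does not literally apply to the fibre inside $\hat{I}$ (arrows do enter it from outside), but the cofibrancy bookkeeping you want can be obtained directly (and in $\sSet$ the filtered-colimit step holds without it); the paper instead uses cofibrancy of $D|_{p/i}$ and compares homotopy colimits.
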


\begin{proof}
We use Lemma \ref{gfvre5}. Let $\caC$ be a model category and let $\caC^{\hat{I}}$ be equipped with the projective model
structure (which exists since $\hat{I}$ has the structure of a direct category).
Let $D \colon \hat{I} \to \caC$ be a cofibrant diagram which preserves weak equivalences. For $i \in I$ the diagram $D|_{p/i}$ is also cofibrant by
\cite[Lemma 4.2]{spitzweck.fund} (it is not used here that $\caC^I$ also should
have a model structure). The full subcategory $J$ comprised by the $((i,n),p((i,n)) \overset{\id}{\longrightarrow} i)$
in $p/i$ is homotopy right cofinal, thus $\colim (D|_{p/i}) \simeq \hocolim (D|_J)$ from which it follows that
$D \to r(l(D))$, where $l$ is the left adjoint to $r \colon \caC^I \to \caC^{\hat{I}}$, is a weak equivalence.
\end{proof}

Let $\caN$ be the nerve of $\hat{\caS}$ and $\pi$ the nerve of the map $p$ from
above. For any $K \in\caN_n$ we let $f_K \colon [n] \to [n']$ be the unique epimorphism
in $\bigtriangleup$ such that $K=f_K^*(K')$ with $K' \in \caN_{n'}$ non-degenerate. $K'$ is then also uniquely determined.
We let $\beta_K$ be the composition
$$\xymatrix{\caE_n \ar[r]^{f_{K,*}} & \caE_{n'} \ar[r]^(.28){\tilde{\alpha}_{\pi(K')}} &
\Cpx(\Sh(S_\Zar,\integers)).}$$

The reason for introducing $\hat{\caS}$ is the following observation.

\begin{lemma}
\label{gvvd3ee}
Let $h \colon [m] \to [n]$ be a map in $\bigtriangleup$ and $K \in \caN_n$. Then the composition $\caE_m \overset{h_*}{\longrightarrow}
\caE_n \overset{\beta_K}{\longrightarrow}
\Cpx(\Sh(S_\Zar,\integers))$ is equal to $\beta_{h^*K}$. 
\end{lemma}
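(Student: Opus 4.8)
The plan is to reduce the statement to Lemma~\ref{4nhzg} by keeping track of the Eilenberg--Zilber decompositions that enter the definition of $\beta$. Write the canonical decomposition $K = f_K^*(K')$ with $f_K \colon [n] \twoheadrightarrow [n']$ an epimorphism in $\bigtriangleup$ and $K' \in \caN_{n'}$ non-degenerate, so that by definition $\beta_K = \tilde{\alpha}_{\pi(K')} \circ (f_K)_*$. For $h \colon [m] \to [n]$ one then has $h^*K = (f_K h)^*(K')$; factoring $f_K h$ in $\bigtriangleup$ as $f_K h = \iota e$ with $e \colon [m] \twoheadrightarrow [l]$ an epimorphism and $\iota \colon [l] \hookrightarrow [n']$ a monomorphism, this reads $h^*K = e^*(\iota^* K')$.

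The crucial point --- and the reason $\hat{\caS}$ is introduced right before this lemma --- is that a face of a non-degenerate simplex of $\caN$ is again non-degenerate. Indeed, a non-degenerate simplex of $\caN$ is a string of composable \emph{non-identity} morphisms of $\hat{\caS}$, and since in $\hat{\caS}$ a composition of non-identity maps is again a non-identity map, every face operator sends such a string to another such string. As $\iota^*$ is a composite of face operators, $\iota^* K'$ is non-degenerate, and therefore $h^*K = e^*(\iota^* K')$ is precisely the canonical decomposition of $h^*K$; that is, $f_{h^*K} = e$ and $(h^*K)' = \iota^* K'$, whence $\beta_{h^*K} = \tilde{\alpha}_{\pi(\iota^* K')} \circ e_*$.

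It remains to compare this with $\beta_K \circ h_*$. Using that $[n] \mapsto \caE_n$ is a cosimplicial object one obtains
\[
\beta_K \circ h_* = \tilde{\alpha}_{\pi(K')} \circ (f_K)_* \circ h_* = \tilde{\alpha}_{\pi(K')} \circ (f_K h)_* = \tilde{\alpha}_{\pi(K')} \circ \iota_* \circ e_*.
\]
Now Lemma~\ref{4nhzg}, applied to the monomorphism $\iota \colon [l] \hookrightarrow [n']$ and the $n'$-simplex $\pi(K')$ of the nerve of $\caS$, gives $\tilde{\alpha}_{\pi(K')} \circ \iota_* = \tilde{\alpha}_{\iota^*(\pi(K'))}$; and since $\pi$ is a map of simplicial sets it commutes with the simplicial operator $\iota^*$, so $\iota^*(\pi(K')) = \pi(\iota^* K')$. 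Combining, $\beta_K \circ h_* = \tilde{\alpha}_{\pi(\iota^* K')} \circ e_* = \beta_{h^*K}$, which is the claim.

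The only genuine content is the non-degeneracy observation in the second paragraph; the rest is bookkeeping with cosimplicial identities and a single invocation of Lemma~\ref{4nhzg}. In writing this up I would be careful to verify that the hypotheses of Lemma~\ref{4nhzg} are honestly met --- namely that $\iota$ is a monomorphism in $\bigtriangleup$ and that $\pi(K')$ is a simplex of the nerve of $\caS$ (not of $\hat{\caS}$) --- and that the epi--mono factorization $f_K h = \iota e$ together with cosimplicial functoriality $(f_K h)_* = (f_K)_* h_* = \iota_* e_*$ is used consistently throughout.
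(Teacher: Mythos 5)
Your proof is correct and is essentially the paper's own argument: the paper also uses the fact that compositions of non-identity maps in $\hat{\caS}$ are non-identity to get the commutative square $f_K\circ h = \iota\circ f_{h^*K}$ with $\iota$ a monomorphism (your epi--mono factorization and non-degeneracy of $\iota^*K'$), and then concludes by Lemma \ref{4nhzg} and the definition of the $\beta$'s. You have merely written out the bookkeeping that the paper leaves implicit.
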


\begin{proof}
Since every composition of non-identity maps in $\hat{\caS}$ is a non-identity map
we have a commutative diagram
$$\xymatrix{[m] \ar[r]^h \ar[d]^{f_{h^*K}} & [n] \ar[d]^{f_K} \\
[m'] \ar[r] & [n']}$$
where the bottom horizontal map is a monomorphism.
Thus the claim follows from Lemma \ref{4nhzg} and the definition of the maps $\beta_K$ and $\beta_{h^*K}$.
\end{proof}

Let $\Gamma \colon \Cpx(\Sh(S_\Zar,\integers)) \to \Cpx(\Ab)$ be a fibrant replacement functor followed
by the global sections functor.
We denote by $qi$ the subcategory of quasi isomorphisms of $\Cpx(\Ab)$.
By Lemma \ref{grfe4d} we get for any $K \in \caN_n$ induced functors $L^H(\Gamma \circ \beta_K) \colon L^H \caE_n \to L^H_{qi} \Cpx(\Ab)$
which are compatible with maps in $\bigtriangleup$ by Lemma \ref{gvvd3ee}.

Let $q_\bullet \colon Q_\bullet \to L^H \caE$ be a map between cosimplicial objects in $\sCat$.
For $K \in \caN_n$ let $\gamma_K:=L^H(\Gamma \circ \beta_K) \circ q_n$. Then the $\gamma_K$ are again
compatible with maps in $\bigtriangleup$.
By a coend construction we can pair a simplicial set $L$ and any cosimplicial object $P_\bullet$ in $\sCat$ to obtain an object of $\sCat$ which we denote by
$\caD^L_{P_\bullet}$.
In the case $L=\caN$ we just write $\caD_{P_\bullet}$. If $L$ is the nerve of a category $C$ we let $\caD^C_{P_\bullet}:=\caD^L_{P_\bullet}$.
We have $\caD_{[\bullet]} \cong \hat{\caS}$, thus we have a natural map $\caD_{Q_\bullet} \to \hat{\caS}$.

\begin{lemma}
If $Q_\bullet$ is Reedy cofibrant and the map $Q_\bullet \to [\bullet]$ is
a weak equivalence then the natural map $\caD_{Q_\bullet} \to \hat{\caS}$ is a weak equivalence of simplicial categories.
\end{lemma}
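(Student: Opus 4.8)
The plan is to treat $P_\bullet \mapsto \caD^{\caN}_{P_\bullet}$ as the geometric realization of a cosimplicial simplicial category weighted by the simplicial set $\caN$, and to prove it is homotopy invariant on Reedy cofibrant cosimplicial objects of $\sCat$; combined with the facts that $[\bullet]$ is itself Reedy cofibrant and that $\caD_{[\bullet]}\cong\hat{\caS}$, the lemma then follows by feeding in the weak equivalence $Q_\bullet\to[\bullet]$. For fixed $P_\bullet$ the functor $L\mapsto\caD^L_{P_\bullet}=\int^{[n]\in\bigtriangleup}L_n\cdot P_n$ is a coend and hence preserves colimits in the weight $L$; applying it to the skeletal filtration $\caN=\colim_k\mathrm{sk}_k\caN$ and to the cell-attachment pushouts $\coprod_K\partial\Delta^k\hookrightarrow\coprod_K\Delta^k$ (the coproducts over the non-degenerate $k$-simplices $K$ of $\caN$) exhibits $\caD^{\caN}_{P_\bullet}$ as a sequential composition of pushouts along the maps $\coprod_K\caD^{\partial\Delta^k}_{P_\bullet}\to\coprod_K\caD^{\Delta^k}_{P_\bullet}$. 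By the co-Yoneda lemma $\caD^{\Delta^k}_{P_\bullet}\cong P_k$, and $\caD^{\partial\Delta^k}_{P_\bullet}$ is canonically the $k$-th latching object $L_kP$ (the non-degenerate simplices of $\partial\Delta^k$ are cofinal in its simplex category); Reedy cofibrancy of $P_\bullet$ says precisely that each $L_kP\to P_k$ is a cofibration between cofibrant objects in the Bergner model structure on $\sCat$. Thus $\caD^{\caN}_{P_\bullet}$ is a cofibrant object, built from $\emptyset$ as an explicit cell complex.

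With this cellular description the homotopy invariance is formal, but it is here that care is needed, since the Bergner model structure is not left proper: the point is to keep every object cofibrant and every structural map a cofibration so that only the gluing lemma for pushouts of cofibrations \emph{between cofibrant objects} --- valid in an arbitrary model category --- is used. First I would prove by induction on $k$ that a levelwise Dwyer-Kan equivalence $P_\bullet\to P'_\bullet$ of Reedy cofibrant cosimplicial simplicial categories induces Dwyer-Kan equivalences $L_kP\to L_kP'$ on all latching objects, applying the gluing lemma to the pushout squares that express each $L_kP=\caD^{\partial\Delta^k}_{P_\bullet}$ in terms of lower-dimensional faces. Feeding this back in, the gluing lemma applied at each skeletal stage shows $\caD^{\mathrm{sk}_k\caN}_{P_\bullet}\to\caD^{\mathrm{sk}_k\caN}_{P'_\bullet}$ is a Dwyer-Kan equivalence for every $k$, and passing to the sequential colimit --- which along cofibrations between cofibrant objects computes a homotopy colimit --- yields that $\caD^{\caN}_{P_\bullet}\to\caD^{\caN}_{P'_\bullet}$ is a Dwyer-Kan equivalence.

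It remains to verify that $[\bullet]$ is Reedy cofibrant and to apply the above. The latching maps are $L_0[\bullet]=\emptyset\to[0]$, a cofibration; $L_1[\bullet]=[0]\sqcup[0]\to[1]$, which is one of the generating cofibrations of $\sCat$; and $L_n[\bullet]\to[n]$ for $n\geq2$, which is an isomorphism, since for $n\geq2$ every object $\{i\}$ and every non-identity morphism $\{i<j\}$ of $[n]$ is the image of a proper face. Hence the homotopy invariance applies to $Q_\bullet\to[\bullet]$, which by hypothesis is a levelwise weak equivalence between Reedy cofibrant cosimplicial objects of $\sCat$, and using the identification $\caD_{Q_\bullet}=\caD^{\caN}_{Q_\bullet}$ together with $\caD_{[\bullet]}=\caD^{\caN}_{[\bullet]}\cong\hat{\caS}$ recorded above, we conclude that the natural map $\caD_{Q_\bullet}\to\hat{\caS}$ is a Dwyer-Kan equivalence, i.e. a weak equivalence of simplicial categories. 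The main obstacle is exactly the non-left-properness issue flagged above --- everything else (the identification $\caD^{\partial\Delta^k}_{P_\bullet}\cong L_kP$ and the Reedy cofibrancy of $[\bullet]$) is routine bookkeeping.
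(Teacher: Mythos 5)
The decisive step of your argument is false: $[\bullet]$ is \emph{not} Reedy cofibrant in $\sCat$, and the error is exactly at $n=2$. The latching object $L_2[\bullet]\cong\caD^{\partial\Delta^2}_{[\bullet]}$ is the colimit in $\Cat$ of the three edges of $[2]$ glued along their vertices, i.e.\ the free category on the arrows $0\to1$, $1\to2$, $0\to2$; it has two distinct morphisms from $0$ to $2$, and the latching map to $[2]$ identifies them. A functor that identifies parallel morphisms is not a cofibration in the Bergner model structure (cofibrations are retracts of free extensions, in particular injective on mapping spaces), so the second latching map is not a cofibration; your claim that the latching maps are isomorphisms holds only for $n\ge3$, where $\partial\Delta^n$ contains the $2$-skeleton. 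This is not a repairable bookkeeping slip: if $[\bullet]$ were Reedy cofibrant, your (otherwise correct) invariance argument would show that the strict coend $\caD^{L}_{[\bullet]}\cong\tau_1(L)$ is Dwyer--Kan equivalent to the derived realization $\caD^{L}_{\mathfrak{C}[\Delta^\bullet]}=\mathfrak{C}[L]$ for \emph{every} simplicial set $L$, which is false --- for a simplicial model $L$ of $S^2$ one has $\tau_1(L)=\ast$ while $\mathfrak{C}[L]$ has a non-contractible mapping space (a model for $\Omega S^2$). The whole point of Lurie's cosimplicial object $\mathfrak{C}[\Delta^\bullet]$ is that it is a Reedy cofibrant replacement of $[\bullet]$ which differs from $[\bullet]$ itself.

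What your skeletal induction actually proves (and this part is fine, and is needed) is that $\caD^{\caN}_{Q_\bullet}$ is, up to Dwyer--Kan equivalence, independent of the chosen Reedy cofibrant $Q_\bullet$ weakly equivalent to $[\bullet]$; in particular it is equivalent to $\mathfrak{C}[\caN]$. The missing, genuinely non-formal input is that the comparison from this derived realization to the strict one, $\mathfrak{C}[N(\hat{\caS})]\to\hat{\caS}$, is a Dwyer--Kan equivalence; this uses in an essential way that $\caN$ is the nerve of a category, and it is precisely \cite[Theorem 2.2.0.1]{lurie.HTT}, which is the statement the paper's proof reduces to. So your argument can serve as the reduction step, but it must be completed by invoking (or reproving) that theorem; it cannot be replaced by the assertion that $[\bullet]$ is Reedy cofibrant.
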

\begin{proof}
One deduces the result from the analogous statement for the usual adjunction between simplicial sets and simplicial categories
involving the simplicial nerve functor, \cite[Theorem 2.2.0.1]{lurie.HTT}.
\end{proof}

From now on suppose that $Q_\bullet$ is Reedy cofibrant and that the map
$Q_\bullet \to [\bullet]$ is a weak equivalence (which can always be achieved by a cofibrant replacement
of $L^H\caE$ in $\sCat^\bigtriangleup$).
The compatible maps $\gamma_K$ give rise to an induced map $\gamma \colon \caD_{Q_\bullet} \to L^H_{qi} \Cpx(\Ab)$.

\begin{lemma}
The map $\gamma$ gives rise to a diagram $\gamma' \in \Ho (\Cpx(\Ab)^{\hat{\caS}})$ which is well-defined up
to canonical isomorphism.
\end{lemma}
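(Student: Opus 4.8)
The plan is to interpret $\gamma$, together with the Dwyer--Kan equivalence $e\colon\caD_{Q_\bullet}\to\hat{\caS}$ supplied by the preceding Lemma, as a homotopy coherent diagram of shape $\hat{\caS}$ in $\Cpx(\Ab)$ with quasi-isomorphisms inverted, and then to rectify it to a strict diagram, well defined in $\Ho(\Cpx(\Ab)^{\hat{\caS}})$. Throughout one tacitly replaces $\hat{\caS}$ by a small skeleton; this is harmless, since the affine schemes in $\Sm_S$, and hence $\caS$ and $\hat{\caS}$, are essentially small.

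First I would equip $\Cpx(\Ab)^{\hat{\caS}}$ with the projective model structure --- which exists because $\hat{\caS}$ is a direct category (as already used in the proof of Proposition \ref{gvshzu6}) and $\Cpx(\Ab)$ is cofibrantly generated --- so that $\Ho(\Cpx(\Ab)^{\hat{\caS}})$ denotes the localization at the objectwise quasi-isomorphisms. Next I would invoke the rectification of homotopy coherent diagrams (Dwyer--Kan, \cite{dwyer-kan.calculating}; see also \cite{lurie.HTT}): for the model category $\Cpx(\Ab)$ with weak equivalences the quasi-isomorphisms, there is a natural Dwyer--Kan equivalence between $L^H$ of the projective diagram category $\Cpx(\Ab)^{\hat{\caS}}$ and the simplicial category of simplicial functors from a cofibrant replacement of $\hat{\caS}$ in $\sCat$ into $L^H_{qi}\Cpx(\Ab)$; in particular the isomorphism classes of objects of $\Ho(\Cpx(\Ab)^{\hat{\caS}})$ are in natural bijection with $\pi_0$ of the derived mapping space $\map_{\sCat}(\hat{\caS},L^H_{qi}\Cpx(\Ab))$. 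It therefore suffices to produce a point of this mapping space. But $\gamma\colon\caD_{Q_\bullet}\to L^H_{qi}\Cpx(\Ab)$ is a point of $\map_{\sCat}(\caD_{Q_\bullet},L^H_{qi}\Cpx(\Ab))$, and since $e$ is a Dwyer--Kan equivalence and derived mapping spaces in $\sCat$ are invariant under such equivalences in the source, precomposition by $e$ identifies $\map_{\sCat}(\hat{\caS},L^H_{qi}\Cpx(\Ab))\simeq\map_{\sCat}(\caD_{Q_\bullet},L^H_{qi}\Cpx(\Ab))$; transporting $\gamma$ back gives the desired point, hence an object $\gamma'\in\Ho(\Cpx(\Ab)^{\hat{\caS}})$, determined up to isomorphism.

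For the canonicity claim, note that the construction depends only on the choice of the Reedy cofibrant replacement $Q_\bullet\to[\bullet]$ (equivalently $Q_\bullet\to L^H\caE$), of the fibrant replacement functor $\Gamma$, and of the auxiliary cofibrant-replacement data in $\sCat$, each unique up to a contractible space of choices. A morphism $Q_\bullet\to Q_\bullet'$ of Reedy cofibrant replacements induces a map $\caD_{Q_\bullet}\to\caD_{Q_\bullet'}$ over $\hat{\caS}$ compatible with the maps $\gamma$; since both legs to $\hat{\caS}$ are Dwyer--Kan equivalences, the two resulting objects of $\Ho(\Cpx(\Ab)^{\hat{\caS}})$ are canonically isomorphic. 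Two fibrant replacement functors $\Gamma,\Gamma'$ are linked by a natural zig-zag of objectwise weak equivalences, which produces an objectwise quasi-isomorphism of the associated strict diagrams; and the remaining $\sCat$-level choices are pinned down by the usual contractibility of cofibrant-replacement spaces. These comparisons being coherent among themselves, $\gamma'$ is well defined up to canonical isomorphism. I expect the one genuinely substantive ingredient to be the rectification step --- the identification of strict $\hat{\caS}$-diagrams modulo objectwise quasi-isomorphism with homotopy coherent ones --- the remainder being bookkeeping with Dwyer--Kan equivalences and replacement functors; a minor technical nuisance is that $L^H_{qi}\Cpx(\Ab)$ must be handled with its hammock mapping complexes, which need not be Kan, which is why one passes to derived mapping spaces in $\sCat$.
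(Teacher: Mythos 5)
Your argument is correct and takes essentially the same route as the paper, whose proof is just a one-line appeal to a strictification result (\cite[Proposition 4.2.4.4]{lurie.HTT}) turning the coherent diagram $\gamma$ on $\caD_{Q_\bullet}\simeq\hat{\caS}$ into an object of $\Ho(\Cpx(\Ab)^{\hat{\caS}})$. You simply spell out that rectification step in Dwyer--Kan/$\sCat$ language and make the transport along the equivalence $\caD_{Q_\bullet}\to\hat{\caS}$ and the canonicity (contractibility of the spaces of choices) explicit, which the paper leaves implicit.
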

\begin{proof}
This follows from a strictification result, see \cite[Proposition 4.2.4.4]{lurie.HTT}.
\end{proof}

We define the motivic complex $\caM(r)$ to be the push forward of $\gamma'[-2r]$ with respect to
the composition $\Ho (\Cpx(\Ab)^{\hat{\caS}}) \to \Ho(\Cpx(\Ab)^{\caS}) \to \D(\Sh(\Sm_{S,\Zar},\integers))$,
where the first map is induced by $p \colon \hat{\caS} \to \caS$ and the second map is the Zariski localization map.

\subsection{Properties of the motivic complexes}
\label{5gffgrw}

Let $\caC, \caS'$ be categories and $I$ a small category.
Let $\caE'$ be a cosimplicial object in $\sCat$ over $[\bullet]$.
Let for any $n$-simplex $K$ of the nerve of $\caS'$ be
a functor $\alpha_K \colon \caE_n' \times I \to \caC$ be given. Suppose these functors
are compatible for monomorphisms in $\bigtriangleup$, i.e. that for $f \colon [m] \to [n]$ a monomorphism
we have $\alpha_K \circ (f_* \times \mathrm{id})= \alpha_{f^* K}$. Then for $\tilde{K}$ a $n$-simplex of the nerve of $\caS' \times I$
we let $T(\alpha)_{\tilde{K}}$ be the composition $\caE_n' \to \caE_n' \times I \overset{\alpha_K}{\longrightarrow} \caC$,
where the second component of the first map is the composition $\caE_n' \to [n] \to I$ (the second map being
the second component of $\tilde{K}$) and where $K$ is the first component of $\tilde{K}$.
The $T(\alpha)_{\tilde{K}}$ are then again compatible for monomorphisms in $\bigtriangleup$.

Let $p \colon \caS'' \to \caS' \times I$ be a functor and suppose that the composition in $\caS''$ of two
non-identity maps is a non-identity map.
Let $K$ be a $n$-simplex of the nerve of $\caS''$. Let $f \colon [n] \to [n']$ be the unique epimorphism in
$\bigtriangleup$ such that $K=f^*(K')$ for a non-degenerate $n'$-simplex $K'$.
Let $T^p(\alpha)_K$ be the composition $\caE_n' \overset{f_*}{\longrightarrow} \caE_{n'}'
\overset{T(\alpha)_{\tilde{K}}}{\longrightarrow} \caC$, where $\tilde{K}$ is the image of $K'$ in the nerve
of $\caS' \times I$. Then the $T^p(\alpha)_K$ are compatible for all maps in $\bigtriangleup$.

In our applications $\caS''$ will be $\hat{\caS'} \times I$.

\vskip.4cm

\subsubsection{Comparison to flat maps}
\label{hht55rd}

Let the notation be as in the last section.
We denote by $\caS^\fl$ the subcategory of $\caS$ consisting of flat maps.

%Let $\tilde{\caN}$ be the nerve of $\hat{\caS}^\fl$ and $F \colon \tilde{\caN} \to \caN$ the natural map.
Let $K$ be a $n$-simplex in the nerve of $\caS^\fl$. In particular we have a chain
$A_0 \to \cdots \to A_n$ of smooth $D$-algebras where each map is flat.
We associate to this the functor $\alpha_K' \colon [n] \to \Cpx(\Sh(S_\Zar,\integers))$
which sends $i$ to $(U \mapsto z^r(\Spec(A_i) \times_S U))$ and where the maps are induced by flat pullback of cycles.
We denote by $\tilde{\alpha}_K'$ the composition $\caE_n
\overset{\varphi_n}{\longrightarrow} [n] \overset{\alpha_K'}{\longrightarrow} \Cpx(\Sh(S_\Zar,\integers))$.
%Set $\beta_{2,K}:=\beta_{F(K)}$.

Recall the maps $\tilde{\alpha}_K$.
%Recall the map $f_{F(K)} \colon [n] \to [n']$.
We have a natural transformation $\tilde{\alpha}_K' \to \tilde{\alpha}_K$ which is induced by the maps
$A_i \to C_t$ for $t = (A,B,i) \in \caE_n$. We note that the cycle conditions given by the
$F_t$ are fulfilled since for a map $t \to s$ in $\caE_n$ with $s=(A',B',j)$ the diagram
$$\xymatrix{C_t \ar[r] & C_s \\
A_i \ar[u] \ar[r] & A_j \ar[u]}$$
commutes.

We denote by $\alpha_K \colon \caE_n \times [1] \to \Cpx(\Sh(S_\Zar,\integers))$ the functor corresponding to this
natural transformation.
%Note that $\alpha_K$ preserves weak equivalences when we declare each map in $[1]$ to
%be a weak equivalence and equip $\caE_n \times [1]$ with the weak equivalences corresponding to the product of
%two categories with weak equivalences.

%We have the natural inclusion $\caE_n \sqcup \caE_n \to \caE_n \times [1]$.
%Applying a cofibrant replacement functor to the map
%$$L^H(\caE) \sqcup L^H(\caE) \cong L^H(\caE \sqcup \caE) \to L^H(\caE \times [1])$$
%we obtain Reedy cofibrant cosimplicial objects $Q_{1,\bullet}$, $Q_{2,\bullet}$
%and $\tilde{Q}_ \bullet$ in $\sCat$ together with weak equivalences
%$q_{1,\bullet} \colon Q_{1,\bullet} \to L^H \caE$, $q_{2,\bullet} \colon Q_{2,\bullet} \to L^H \caE$,
%$\tilde{q}_\bullet \colon \tilde{Q}_\bullet \to L^H (\caE \times [1])$, $Q_{1,\bullet} \overset{\iota_1}{\to} \tilde{Q}_\bullet$
%and $Q_{2,\bullet} \overset{\iota_2}{\to} \tilde{Q}_\bullet$ such that the diagrams
%$$\xymatrix{Q_{1,\bullet} \ar[r] \ar[d] & \tilde{Q}_\bullet \ar[d] \\
%L^H \caE \ar[r] & L^H(\caE \times [1])}$$
%and
%$$\xymatrix{Q_{2,\bullet} \ar[r] \ar[d] & \tilde{Q}_\bullet \ar[d] \\
%L^H \caE \ar[r] & L^H(\caE \times [1]),}$$
%where the bottom horizontal maps are the two canonical inclusions,
%commute.

Thus as in the beginning of section \ref{5gffgrw} we get a compatible family of maps
$T^p(\alpha)_K$, where $p$ is the functor $\hat{\caS}^\fl \times [1] \to \caS^\fl \times [1]$.

%As in the last section we can pair the nerve of $\hat{\caS} \times I$, $I$ a small category, and a cosimplicial object $P_\bullet$ in $\sCat$
%to get an object $\caD^I_{P_\bullet}$ in $\sCat$.

For $K$ a $n$-simplex in the nerve of $\hat{\caS}^\fl \times [1]$ let $\tilde{\gamma}_K:=L^H(\Gamma \circ T^p(\alpha)_K) \circ q_n$.

The $\tilde{\gamma}_K$ glue to give a map $$\tilde{\gamma} \colon \caD^{\hat{\caS}^\fl \times [1]}_{Q_\bullet} \to L^H_{qi} \Cpx(\Ab).$$

%Since for $K \in \tilde{\caN}_n$ we have $\gamma_{i,K} = \tilde{\gamma}_K \circ \iota_{i,n}$ the
%diagrams
%$$\xymatrix{\tilde{\caD}_{Q_{i,\bullet}} \ar[r] \ar[rd]^{\gamma_i} & \tilde{\caD}_{\tilde{Q}_\bullet} \ar[d]^{\tilde{\gamma}} \\
%& L^H_{qi} \Cpx(\Ab)}$$
%commute for $i=1,2$.

We denote by $\tilde{\gamma}' \in \Ho(\Cpx(\Ab)^{\hat{\caS}^\fl \times [1]})$
the diagram canonically associated to $\tilde{\gamma}$.

\begin{lemma}
\label{6grth}
$\gamma'|_{\hat{\caS}^\fl}$ and $\tilde{\gamma}'|_{\hat{\caS}^\fl \times \{1\}}$ are canonically isomorphic.
\end{lemma}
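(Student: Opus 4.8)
The plan is to unwind all the relevant definitions and check that, after restriction to $\hat{\caS}^\fl \times \{1\}$, the map $\tilde\gamma$ used to construct $\tilde\gamma'$ coincides on the nose with the restriction of the map $\gamma$ used to construct $\gamma'$; the asserted canonical isomorphism then follows from the functoriality of the strictification \cite[Proposition 4.2.4.4]{lurie.HTT} with respect to restriction of the indexing category.

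Concretely, I would fix an $n$-simplex $K$ of the nerve of $\hat{\caS}^\fl$ and analyze the value $T^p(\alpha)_{(K,\mathrm{const}_1)}$ of the family defining $\tilde\gamma$, where $(K,\mathrm{const}_1)$ denotes the corresponding $n$-simplex of the nerve of $\hat{\caS}^\fl \times [1]$ whose $[1]$-component is the constant $n$-simplex at the object $1$. The first thing to verify is the simplicial bookkeeping: $(K,\mathrm{const}_1)$ is non-degenerate precisely when $K$ is, and if $K = f_K^*(K')$ is the decomposition with $K'$ non-degenerate and $f_K$ the associated epimorphism in $\bigtriangleup$ used to define $\beta_K$, then the analogous decomposition of $(K,\mathrm{const}_1)$ is $(K',\mathrm{const}_1)$ with the same $f_K$, and $K'$ still lies in $\hat{\caS}^\fl$ because the maps occurring in $K'$ are composites of those occurring in $K$. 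Next I would observe that in the definition of $T(\alpha)$ at the simplex $(\pi(K'),\mathrm{const}_1)$ of the nerve of $\caS^\fl \times [1]$, the functor $\caE_{n'} \to [n'] \to [1]$ that enters is constant at $1$, so $T(\alpha)$ at that simplex is simply the restriction $\alpha_{\pi(K')}|_{\caE_{n'} \times \{1\}}$; and by the way $\alpha_{\pi(K')}$ was built from the natural transformation $\tilde\alpha'_{\pi(K')} \to \tilde\alpha_{\pi(K')}$, this restriction is exactly $\tilde\alpha_{\pi(K')}$. Composing with $f_{K,*}$ yields $T^p(\alpha)_{(K,\mathrm{const}_1)} = \tilde\alpha_{\pi(K')} \circ f_{K,*}$, which is the definition of $\beta_K$, so that $\tilde\gamma_{(K,\mathrm{const}_1)} = \gamma_K$.

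Having established this for all $K$ and noting (using Lemma \ref{gvvd3ee} and the parallel compatibility of the $T^p(\alpha)$) that these identifications respect all maps in $\bigtriangleup$, I would conclude that under the canonical identification $\caD^{\hat{\caS}^\fl \times \{1\}}_{Q_\bullet} \cong \caD^{\hat{\caS}^\fl}_{Q_\bullet}$ the restriction of $\tilde\gamma$ agrees with the restriction of $\gamma$ along $\caD^{\hat{\caS}^\fl}_{Q_\bullet} \to \caD_{Q_\bullet}$, and then invoke that the strictification of \cite[Proposition 4.2.4.4]{lurie.HTT} is compatible with restriction along the inclusion $\hat{\caS}^\fl \hookrightarrow \hat{\caS}$ to obtain the canonical isomorphism $\tilde\gamma'|_{\hat{\caS}^\fl \times \{1\}} \cong \gamma'|_{\hat{\caS}^\fl}$ in $\Ho(\Cpx(\Ab)^{\hat{\caS}^\fl})$. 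The only real obstacle is the simplicial bookkeeping in the first step — aligning the non-degenerate-simplex decompositions entering $\beta_K$ and $T^p(\alpha)_K$ and checking that the two incarnations of $\pi(K')$ genuinely coincide, so that the functors $\tilde\alpha_{\pi(K')}$ match exactly and not merely up to quasi-isomorphism; once that is settled, everything else is formal.
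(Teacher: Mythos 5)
Your proof is correct and is essentially the paper's own argument: the paper disposes of this lemma with "this follows by construction of $\gamma'$ and $\tilde{\gamma}'$", and your unwinding — matching the Eilenberg--Zilber decompositions of $(K,\mathrm{const}_1)$ and $K$, identifying $T^p(\alpha)_{(K,\mathrm{const}_1)}$ with $\beta_K$, and invoking compatibility of the strictification with restriction along $\hat{\caS}^\fl \hookrightarrow \hat{\caS}$ — is exactly the verification that phrase compresses. No gaps; the simplicial bookkeeping you flag is indeed the only content, and you handle it correctly.
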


\begin{proof}
This follows by construction of $\gamma'$ and $\tilde{\gamma}'$.
\end{proof}

\begin{lemma}
\label{ghnjs4}
$\tilde{\gamma}'|_{\hat{\caS}^\fl \times \{0\}}$ is canonically isomorphic to the diagram on $\hat{\caS}^\fl$ which associates to an $(A,n,(a_1,\ldots,a_n),m)$ the
cycle complex $z^r(A)$.
\end{lemma}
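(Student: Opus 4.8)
The plan is to unwind the construction of $\tilde\gamma$ on the $\{0\}$-slice and to see that there the hammock-localization machinery collapses to the honest presheaf of cycle complexes. Write $Z\colon\caS^\fl\to\Cpx(\Sh(S_\Zar,\integers))$ for the functor sending $A$ to the complex of Zariski sheaves $U\mapsto z^r(\Spec(A)\times_S U)$ with transition maps given by flat pullback of cycles; this is a functor by the functoriality of flat pullback, Theorem \ref{h5t423t}. Recall that $\alpha_K\colon\caE_n\times[1]\to\Cpx(\Sh(S_\Zar,\integers))$ restricts on $\caE_n\times\{0\}$ to $\tilde\alpha'_K$, and that by its very definition $\tilde\alpha'_K=\alpha'_K\circ\varphi_n$ factors through $\varphi_n\colon\caE_n\to[n]$, the functor $\alpha'_K\colon[n]\to\Cpx(\Sh(S_\Zar,\integers))$ being exactly $Z$ composed with $K$ viewed as a functor $[n]\to\caS^\fl$. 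In particular the $\alpha'_K$ are compatible with all morphisms of $\bigtriangleup$, not merely the monomorphisms.

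First I would propagate this factorization through $T(\alpha)$ and $T^p(\alpha)$ from Section \ref{5gffgrw}. Using $\varphi_{n'}\circ f_*=f\circ\varphi_n$ for $f\colon[n]\to[n']$ in $\bigtriangleup$, together with the observation that a simplex of $\hat{\caS}^\fl\times[1]$ whose second component is constant at $0$ is non-degenerate iff its first component is, one finds that for every $n$-simplex of the nerve of $\hat{\caS}^\fl\times\{0\}$, identified with an $n$-simplex $K\colon[n]\to\hat{\caS}^\fl$ of the nerve of $\hat{\caS}^\fl$, the functor $T^p(\alpha)_K$ equals the composite
$$\caE_n\overset{\varphi_n}{\longrightarrow}[n]\overset{K}{\longrightarrow}\hat{\caS}^\fl\overset{Z\circ p}{\longrightarrow}\Cpx(\Sh(S_\Zar,\integers)),$$
i.e. on the $\{0\}$-slice each $T^p(\alpha)_K$ factors through $\varphi_n$ as $(Z\circ p\circ K)\circ\varphi_n$, compatibly in $K$. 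Applying $L^H(\Gamma\circ-)\circ q_n$ and using that $\varphi_n$ carries weak equivalences to identities, each $\tilde\gamma_K$ then factors through $L^H(\varphi_n)\colon L^H\caE_n\to L^H[n]$ followed by the map $L^H[n]\to L^H_{qi}\Cpx(\Ab)$ induced by the honest functor $\Gamma\circ Z\circ p\circ K$, and these factorizations are compatible with $\bigtriangleup$. Since $Q_\bullet$ is Reedy cofibrant with $Q_\bullet\to[\bullet]$ a weak equivalence, $\caD^{\hat{\caS}^\fl}_{Q_\bullet}\to\caD^{\hat{\caS}^\fl}_{[\bullet]}=\hat{\caS}^\fl$ is a weak equivalence of simplicial categories (by the variant for $\hat{\caS}^\fl$ of the lemma asserting that $\caD_{Q_\bullet}\to\hat{\caS}$ is a weak equivalence, using Proposition \ref{ghte56} and the equivalence $L^H[n]\to[n]$); hence, by construction of the coend and the compatibility of the $\tilde\gamma_K$, the restriction $\tilde\gamma|_{\hat{\caS}^\fl\times\{0\}}$ is the composition of that weak equivalence with the map $\hat{\caS}^\fl\to L^H_{qi}\Cpx(\Ab)$ induced by the honest functor $\Gamma\circ Z\circ p$.

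Next I would conclude by strictification. Since passage to the associated strict diagram (\cite[Proposition 4.2.4.4]{lurie.HTT}) is natural in the source and compatible with restriction along $\hat{\caS}^\fl\times\{0\}\hookrightarrow\hat{\caS}^\fl\times[1]$, and since on the $\{0\}$-slice $\tilde\gamma$ is already realized by the honest functor $\Gamma\circ Z\circ p\colon\hat{\caS}^\fl\to\Cpx(\Ab)$, the diagram $\tilde\gamma'|_{\hat{\caS}^\fl\times\{0\}}\in\Ho(\Cpx(\Ab)^{\hat{\caS}^\fl})$ is canonically the class of $\Gamma\circ Z\circ p$. It then remains to simplify $\Gamma\circ Z$: for $A\in\caS^\fl$ the complex of Zariski sheaves $Z(A)=(U\mapsto z^r(\Spec(A)\times_S U))$ is, by the fact used in the proof of Lemma \ref{grfe4d} (via Theorem \ref{gbter4}), the derived push forward along $\Spec(A)\to S$ of the $z^r$-complex of Zariski sheaves on $\Spec(A)$, so the natural map $z^r(A)=Z(A)(S)\to\Gamma(Z(A))$ from naive to derived global sections over $S$ is identified with $z^r(\Spec(A))\to R\Gamma(\Spec(A),z^r)$ and hence is a quasi-isomorphism; this natural transformation realizes the asserted canonical isomorphism between $\Gamma\circ Z\circ p$ and $(A,n,(a_1,\ldots,a_n),m)\mapsto z^r(A)$ in $\Ho(\Cpx(\Ab)^{\hat{\caS}^\fl})$.

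The main obstacle is the bookkeeping of the first step: one must make precise that precomposition with $\varphi\colon\caE\to[\bullet]$, together with the equivalences $L^H\caE_n\overset{\sim}{\to}[n]$ and $Q_\bullet\overset{\sim}{\to}[\bullet]$, trivializes the coend $\caD^{\hat{\caS}^\fl}_{Q_\bullet}$ down to $\hat{\caS}^\fl$ compatibly with the map $\tilde\gamma$, so that the strictification step applies verbatim. Everything else --- the cycle-theoretic input and the identification $R\Gamma(\Spec(A),z^r)\simeq z^r(A)$ --- is either definitional or already available from Section \ref{45zgf}.
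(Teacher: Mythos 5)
Your proposal is correct and is essentially the paper's argument: the paper disposes of this lemma with ``this follows by construction of $\tilde{\gamma}'$'', and your write-up is exactly the unwinding of that construction (on the $\{0\}$-slice the functors factor through $\varphi_n$, so the hammock/coend machinery collapses to the strict diagram $\Gamma\circ Z\circ p$). Your final identification of $\Gamma\circ Z\circ p$ with $A\mapsto z^r(A)$ via Zariski descent for Bloch's complexes (Levine's localization theorem, as already invoked in Lemma \ref{grfe4d}) is the same input the paper uses implicitly, so no genuinely different route is taken.
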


\begin{proof}
This follows by construction of $\tilde{\gamma}'$.
\end{proof}

Let $\Sm_S^\fl$ be the subcategory of $\Sm_S$ of flat maps.

\begin{corollary}
\label{nderfd}
The complex $\caM(r)|_{\Sm_S^\fl}$ is canonically isomorphic to the diagram $X \mapsto z^r(X)[-2r]$ in
$\D(\Sh(\Sm_{S,\Zar}^\fl,\integers))$.
\end{corollary}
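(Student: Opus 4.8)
The plan is to read the statement off from Lemmas~\ref{6grth} and \ref{ghnjs4}, the only substantive input being that the comparison morphism carried by $\tilde{\gamma}'$ is an equivalence, which I will get from the moving Lemma~\ref{6gfrfd} together with homotopy invariance (Theorem~\ref{htehg}). First I unravel the definition of $\caM(r)$: since $\hat{\caS}^\fl = p^{-1}(\caS^\fl)$ and Zariski localization is compatible with restriction along $\Sm_S^\fl \hookrightarrow \Sm_S$, the restriction $\caM(r)|_{\Sm_S^\fl}$ is obtained from $\gamma'|_{\hat{\caS}^\fl}[-2r] \in \Ho(\Cpx(\Ab)^{\hat{\caS}^\fl})$ by pushing forward (homotopy left Kan extension) along $p \colon \hat{\caS}^\fl \to \caS^\fl$ and then applying the Zariski localization functor $\Ho(\Cpx(\Ab)^{\caS^\fl}) \to \D(\Sh(\Sm_{S,\Zar}^\fl,\integers))$ (here $(\caS^\fl)^\op$ is the full subcategory of affine objects of $\Sm_S^\fl$, a basis of the Zariski site). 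By Lemma~\ref{6grth} I may replace $\gamma'|_{\hat{\caS}^\fl}$ by $\tilde{\gamma}'|_{\hat{\caS}^\fl \times \{1\}}$.

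Now $\tilde{\gamma}' \in \Ho(\Cpx(\Ab)^{\hat{\caS}^\fl \times [1]})$, so the unique arrow $0 \to 1$ of $[1]$ produces a canonical morphism
\[
u \colon \tilde{\gamma}'|_{\hat{\caS}^\fl \times \{0\}} \longrightarrow \tilde{\gamma}'|_{\hat{\caS}^\fl \times \{1\}}
\]
in $\Ho(\Cpx(\Ab)^{\hat{\caS}^\fl})$, whose source is, by Lemma~\ref{ghnjs4}, the diagram $(A,n,(a_1,\ldots,a_n),m) \mapsto z^r(A)$. The key step is to check that $u$ is a levelwise quasi-isomorphism. The diagram $\tilde{\gamma}'$ is produced — via the hammock localizations, the coend construction $\caD^{\hat{\caS}^\fl \times [1]}_{Q_\bullet}$, and the strictification of \cite[Proposition 4.2.4.4]{lurie.HTT} — functorially out of the compatible family $T^p(\alpha)_K$, which is assembled from the natural transformation $\alpha \colon \caE_n \times [1] \to \Cpx(\Sh(S_\Zar,\integers))$ corresponding to $\tilde{\alpha}_K' \Rightarrow \tilde{\alpha}_K$. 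Under this correspondence the arrow $0 \to 1$ is carried to the components of that natural transformation, i.e. to the maps $z^r(A_i) \to z^r_{F_t}(C_{i,B})$ induced by $A_i \to C_{i,B}$ for $t = (A,B,i) \in \caE_n$. Each of these is a quasi-isomorphism of complexes of sheaves on $S_\Zar$: it factors through the flat pullback $z^r(A_i) \to z^r(C_{i,B})$, a quasi-isomorphism by homotopy invariance (Theorem~\ref{htehg}) since $\Spec C_{i,B}$ is an affine space over $\Spec A_i$, and through the inclusion $z^r_{F_t}(C_{i,B}) \hookrightarrow z^r(C_{i,B})$, a quasi-isomorphism by the moving Lemma~\ref{6gfrfd} since $\Spec C_{i,B}$ is smooth affine over $S$; two-out-of-three yields the claim. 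As the coend and strictification constructions are homotopy colimits, hence preserve objectwise quasi-isomorphisms of their inputs, $u$ is a levelwise quasi-isomorphism.

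It remains to transport this across $p$ and the Zariski localization. By Lemma~\ref{ghnjs4} the diagram $\tilde{\gamma}'|_{\hat{\caS}^\fl \times \{0\}}$ depends only on $p(A,n,(a_1,\ldots,a_n),m)$, hence is pulled back along $p$ from the diagram $(A,n,(a_1,\ldots,a_n)) \mapsto z^r(A)$ on $\caS^\fl$; by Proposition~\ref{gvshzu6} (the fact that $L^H\hat{I} \to I$ is a weak equivalence of simplicial categories, which forces the pushforward along $p$ of a diagram pulled back from $\caS^\fl$ to return that diagram up to canonical isomorphism) its pushforward is $(A,n,(a_1,\ldots,a_n)) \mapsto z^r(A)$, whose Zariski localization is the object $X \mapsto z^r(X)$. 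Since pushforward along $p$ and Zariski localization preserve the levelwise quasi-isomorphism $u$, combining the three paragraphs and accounting for the shift $[-2r]$ gives the asserted canonical isomorphism $\caM(r)|_{\Sm_S^\fl} \cong (X \mapsto z^r(X)[-2r])$ in $\D(\Sh(\Sm_{S,\Zar}^\fl,\integers))$. The main obstacle is the middle paragraph: one must make rigorous that, after the passage through the hammock localizations, the coend construction and the strictification, the morphism $u$ is still computed objectwise over $\hat{\caS}^\fl$ by the elementary maps $z^r(A_i) \to z^r_{F_t}(C_{i,B})$ — equivalently, that the assignment $\alpha \mapsto \tilde{\gamma}'$ is functorial and homotopy-colimit-like enough to detect objectwise quasi-isomorphisms of its input.
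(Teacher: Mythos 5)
Your argument is correct and is essentially the paper's own proof: it combines Lemmas \ref{6grth} and \ref{ghnjs4}, the fact that the map in $\Ho(\Cpx(\Ab)^{\hat{\caS}^\fl})$ associated to $\tilde{\gamma}'$ is an isomorphism, and Proposition \ref{gvshzu6} (more precisely its proof) to handle the pushforward along $p$ and the Zariski localization, merely making explicit the quasi-isomorphism check that the paper asserts without detail. The only ingredient to add is that deducing the quasi-isomorphism $z^r(A_i)\to z^r(C_t)$ of complexes of sheaves on $S_\Zar$ from Theorem \ref{htehg} (which is a statement in $\D(\Sh(X_\Zar,\integers))$ about $\R q_*$) also requires knowing that the pushforward of $Y\mapsto z^r(Y)$ to $S$ computes the derived pushforward, i.e.\ Levine's localization theorem \cite[Theorem 1.7]{levine.techniques} -- exactly the third input the paper cites in the proof of Lemma \ref{grfe4d}.
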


\begin{proof}
This follows from Lemmas \ref{6grth} and \ref{ghnjs4}, the fact that the map in $\Ho(\Cpx(\Ab)^{\hat{\caS}^\fl})$ associated to
$\tilde{\gamma}'$ is an isomorphism,
the fact (which follows from these Lemmas)
that the push forward of $\gamma'$ with respect to $\Ho (\Cpx(\Ab)^{\hat{\caS}}) \to \Ho(\Cpx(\Ab)^{\caS})$
has Zariski descent and from Proposition \ref{gvshzu6} (or better its proof).
\end{proof}

\begin{corollary}
For $X \in \Sm_S$ there is a canonical isomorphism
$\caM^X(r) \cong \caM(r)|_{X_\Zar}$ in $\D(\Sh(X_\Zar,\integers))$.
\end{corollary}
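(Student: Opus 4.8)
The plan is to reduce the statement to Corollary~\ref{nderfd}, using the elementary observation that every morphism in the small Zariski site $X_\Zar$ is an open immersion and hence flat. Thus the inclusion of sites $X_\Zar \hookrightarrow \Sm_{S,\Zar}^\fl$ induces an exact restriction functor on the sheaf categories, and applying it to the canonical isomorphism furnished by Corollary~\ref{nderfd} gives a canonical isomorphism $\caM(r)|_{X_\Zar} \cong (U \mapsto z^r(U)[-2r])$ in $\D(\Sh(X_\Zar,\integers))$, where $U$ ranges over the opens of $X$ and the transition maps are the flat (i.e.\ restriction) pullbacks of cycles. Since restriction to a smaller Zariski site is exact, the isomorphism of Corollary~\ref{nderfd}, which lives in $\D(\Sh(\Sm_{S,\Zar}^\fl,\integers))$, descends to the derived category over $X_\Zar$ as stated.

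Next I would identify the right-hand side with $\caM^X(r)$. By construction $\caM^X(r)$ is represented on $X_\Zar$ by the complex $U \mapsto z^r(U)[-2r]$: this is exactly the normalization convention of Section~\ref{45zgf}, and it is also what the strictification procedure of Section~\ref{kdtjjz} produces once one restricts attention to flat maps, because over the small site of $X$ the flatness of every transition map is automatic and there is nothing left to strictify. Composing the two identifications then yields the desired isomorphism $\caM^X(r) \cong \caM(r)|_{X_\Zar}$.

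The one point that requires care -- and the main obstacle -- is canonicity: one must check that the identification coming from Corollary~\ref{nderfd}, after restriction to $X_\Zar$, agrees with the identification built into $\caM^X(r)$. Both are produced from the same coend datum of Section~\ref{kdtjjz}, and the only ingredient that could a priori depend on the ambient base is the assignment $A \mapsto z^r(A)$ together with its flat pullbacks; since these are manifestly base-independent, Lemmas~\ref{6grth} and \ref{ghnjs4} restrict to give canonically isomorphic diagrams on $\hat{\caS}^\fl$, and the argument of (the proof of) Proposition~\ref{gvshzu6} propagates this identification through the Zariski localization to the level of $\caM(r)|_{X_\Zar}$. This provides the canonicity and completes the proof.
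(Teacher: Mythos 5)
Your proposal is correct and follows the same route as the paper, whose entire proof is that the statement follows from Corollary~\ref{nderfd}: since every map in $X_\Zar$ is an open immersion and hence flat, restricting the isomorphism of Corollary~\ref{nderfd} to $X_\Zar$ identifies $\caM(r)|_{X_\Zar}$ with $U \mapsto z^r(U)[-2r]$, which is exactly Levine's complex $\caM^X(r)$ from section~\ref{45zgf}. You have merely made explicit the details (flatness of restrictions, canonicity of the identification) that the paper leaves implicit.
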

\begin{proof}
This follows from Corollary \ref{nderfd}.
\end{proof}

\subsubsection{Some localization triangles}
\label{h4u4r7u}

We still keep the notation of section \ref{kdtjjz}. Let $A$ be a smooth $D$-algebra.
Let $K$ be a $n$-simplex in the nerve of $\caS$. For $t \in \caE_n$ set $C_t^A:=A \otimes_D C_t$
and $F_t^A:=\{\Spec(A) \times_S a | a \in F_t\}$.
Then as in section \ref{kdtjjz} we get functors $$\alpha_K^A \colon \caE_n \to \Cpx(\Ab),
t \mapsto z^r_{F_t^A}(C_t^A),$$ and $$\tilde{\alpha}_K^A \colon \caE_n \to
\Cpx(\Sh(S_\Zar,\integers)).$$ 
%For $K \in \caN_n$ we let $\beta_{1,K}^A$ be the
%composition
%$$\xymatrix{\caE_n \ar[r]^{f_{K,*}} & \caE_{n'} \ar[r]^(.28){\tilde{\alpha}_{\pi(K')}^A} &
%\Cpx(\Sh(S_\Zar,\integers)).}$$ 
%For these $\beta_{1,K}^A$ the analogue of Lemma \ref{gvvd3ee} holds.

Now let $a_1,\ldots,a_k \in A$ be generators of $A$.
Set $\underline{A}:=(A,k,(a_1,\ldots,a_k)) \in \caS$.
For $(A',k',(a_1',\ldots,a_{k'}')) \in \caS$
let $$\underline{A} \otimes (A',k',(a_1',\ldots,a_{k'}'))
:=(A \otimes_D A',k+k',(a_1 \otimes 1,\ldots,a_k \otimes 1, 1 \otimes a_1',
\ldots, 1 \otimes a_{k'})) \in \caS.$$ Similarly for a $n$-simplex $K$ in the nerve of
$\caS$ the $n$-simplex $\underline{A} \otimes K$ is defined.

%For $K \in \caN_n$ let $\beta_{2,K}^A$ be the composition
%$$\xymatrix{\caE_n \ar[r]^{f_{K,*}} & \caE_{n'} \ar[rr]^(.35){\tilde{\alpha}_{\underline{A} \otimes \pi(K')}} &
%& \Cpx(\Sh(S_\Zar,\integers)).}$$

For $K$ a $n$-simplex in the nerve of $\caS$
we have a natural transformation $\tilde{\alpha}_K^A \to \tilde{\alpha}_{\underline{A} \otimes K}$ induced
by the obvious inclusion maps of algebras.
We denote by $$\overline{\alpha}_K^A \colon \caE_n \times [1] \to \Cpx(\Sh(S_\Zar,\integers))$$ the functor
corresponding to this natural transformation. 
%Note that $\tilde{\beta}_K^A$ again preserves weak equivalences
%(where each map in $[1]$ is considered to be a weak equivalence).

%We use now the notation of section \ref{hht55rd}.

For $K$ a $n$-simplex in the nerve of $\hat{\caS} \times [1]$ we let $\gamma_K^A:=L^H(\Gamma \circ T^p(\overline{\alpha}^A)_K) \circ q_n$,
where $p$ is the functor $\hat{\caS} \times [1] \to \caS \times [1]$.

The $\gamma_K^A$ glue to give a map $$\gamma^A \colon \caD^{\hat{\caS} \times [1]}_{Q_\bullet} \to L^H_{qi} \Cpx(\Ab).$$

%Since for $K \in \caN_n$ we have $\gamma_{i,K}^A = \tilde{\gamma}_K^A \circ \iota_{i,n}$ the
%diagrams
%$$\xymatrix{\caD_{Q_{i,\bullet}} \ar[r] \ar[rd]^{\gamma_i^A} & \caD_{\tilde{Q}_\bullet} \ar[d]^{\tilde{\gamma}^A} \\
%& L^H_{qi} \Cpx(\Ab)}$$
%commute for $i=1,2$.

We denote by $\gamma_A' \in \Ho(\Cpx(\Ab)^{\hat{\caS} \times [1]})$
the diagram canonically associated to $\gamma^A$.
%Obviously by the commutativity of the diagrams above $\tilde{\gamma}_A'$, $\gamma_{1,A}'$ and $\gamma_{2,A}'$ are canonically isomorphic.

\begin{lemma}
The push forward of $\gamma_A'[-2r]|_{\hat{\caS} \times \{1\}}$ to $\D(\Sh(\Sm_{S,\Zar},\integers))$ is canonically isomorphic
to $\underline{\R \Hom}(\integers[\Spec(A)]_\Zar,\caM(r))$.
\end{lemma}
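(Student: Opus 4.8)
The plan is to recognize the restriction $\gamma_A'|_{\hat{\caS}\times\{1\}}$ as a \emph{base change} of the diagram $\gamma'$ from section \ref{kdtjjz}, and then to identify base change along $-\times_S\Spec(A)$ on complexes of Zariski sheaves with the internal Hom functor $\underline{\R \Hom}(\integers[\Spec(A)]_\Zar,-)$. Under the equivalence $\caS^{\op}\simeq(\text{affine objects of }\Sm_S)$, the endofunctor $\underline{A}\otimes-\colon\caS\to\caS$ corresponds to $X\mapsto X\times_S\Spec(A)$, and base change of a presheaf $\caF$ on $\Sm_S$ along this functor is, restricted to affines, precisely precomposition $\caF\circ(\underline{A}\otimes-)$; together with the fact that $\caM(r)$ is by definition the push forward of $\gamma'[-2r]$, this reduces the lemma to two things: identifying $\gamma_A'|_{\hat{\caS}\times\{1\}}$ with $(\underline{A}\otimes-)^*\gamma'$, and checking that this base change is carried through the push forward along $p$ and the Zariski localization.

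First I would carry out the identification $\gamma_A'|_{\hat{\caS}\times\{1\}}\cong(\underline{A}\otimes-)^*\gamma'$. By construction the restriction of $\overline{\alpha}_K^A\colon\caE_n\times[1]\to\Cpx(\Sh(S_\Zar,\integers))$ to $\caE_n\times\{1\}$ is $\tilde{\alpha}_{\underline{A}\otimes K}$, so $T^p(\overline{\alpha}^A)$ restricted to the $\{1\}$-slice is the family $T^{p}(\tilde{\alpha}_{\underline{A}\otimes(-)})$, and feeding this into the hammock-localization, coend, and strictification machinery produces exactly the output of the $\gamma'$-construction applied to the modified family $\{\tilde{\alpha}_{\underline{A}\otimes K}\}_K$. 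The key point is that all the indexing data — the categories $\caE_n$ and $[n]$, the transition functors $f_*$, the cosimplicial object $L^H\caE$, its Reedy-cofibrant replacement $Q_\bullet$, and the coend over $\caN$ — depend only on $\bigtriangleup$ and not on $\caS$ or on $A$; the assignment $K\mapsto\underline{A}\otimes K$ is an endofunctor of $\caN(\caS)$ commuting with all face and degeneracy operators (so that the compatibilities of Lemmas \ref{4nhzg} and \ref{gvvd3ee} persist for the modified family), whence precomposing with it commutes, up to the canonical isomorphism in which the statement is phrased, with every step. This gives the claimed identification at the level of diagrams on $\hat{\caS}$, using the evident lift of $\underline{A}\otimes-$ to $\hat{\caS}$ that fixes the $\naturals$-coordinate.

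It then remains to commute this base change past the two operations in the definition of $\caM(r)$. Pushing forward along $p\colon\hat{\caS}\to\caS$ commutes with base change along $\underline{A}\otimes-$ by a Beck--Chevalley / cofinality argument identical to the one in the proof of Proposition \ref{gvshzu6}, since the relevant comma categories only involve the $\naturals$-coordinate, which $\underline{A}\otimes-$ leaves untouched; and Zariski localization commutes with base change along $-\times_S\Spec(A)$ because this functor sends Zariski covers in $\Sm_S$ to Zariski covers ($\Spec(A)$ being smooth, hence flat, over $S$). Finally, on $\D(\Sh(\Sm_{S,\Zar},\integers))$ the functor ``base change along $-\times_S\Spec(A)$'' is right adjoint to $-\otimes^{\L}\integers[\Spec(A)]_\Zar$ and preserves local equivalences, hence it computes $\underline{\R \Hom}(\integers[\Spec(A)]_\Zar,-)$; the shift $[-2r]$ is carried along at every stage. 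Combining these, the push forward of $\gamma_A'[-2r]|_{\hat{\caS}\times\{1\}}$ equals $\underline{\R \Hom}(\integers[\Spec(A)]_\Zar,-)$ applied to the push forward of $\gamma'[-2r]$, which is $\underline{\R \Hom}(\integers[\Spec(A)]_\Zar,\caM(r))$, and one checks this is natural in $X$.

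The main obstacle is the first step: although it is ``formal'' that precomposition with the endofunctor $K\mapsto\underline{A}\otimes K$ commutes with the entire strictification procedure, making this precise requires tracking the construction through the hammock localizations, the coend over $\caN(\hat{\caS}\times[1])$, and the chosen replacement $Q_\bullet$, and in particular verifying that the (homotopy-)functorial replacements used are compatible with this precomposition up to canonical isomorphism — equivalently, that restriction along $\hat{\caS}\times\{1\}\hookrightarrow\hat{\caS}\times[1]$ commutes with the strictification. The Beck--Chevalley compatibility of the push forward along $p$ with base change is a comparatively minor secondary point, handled as in Proposition \ref{gvshzu6}.
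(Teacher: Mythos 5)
Your overall route is the same as the paper's: the paper disposes of this lemma with ``follows from the definition of $\gamma_A'$'', meaning exactly what you spell out --- the $\{1\}$-slice of $\overline{\alpha}_K^A$ is $\tilde{\alpha}_{\underline{A}\otimes K}$, so the restricted diagram is the $\gamma'$-construction precomposed with $\underline{A}\otimes-$ (i.e.\ cycle complexes of products $X\times_S\Spec(A)$), and its localization is identified with $\underline{\R \Hom}(\integers[\Spec(A)]_\Zar,\caM(r))$. Your handling of the coherence points (compatibility of $\underline{A}\otimes-$ with nondegenerate simplices, with $Q_\bullet$ and the strictification, and with the push forward along $p$ via Proposition \ref{gvshzu6}) is fine in substance, even if the remark that the relevant comma categories ``only involve the $\naturals$-coordinate'' is loose --- what one really uses is that restriction along $p$ becomes an equivalence after localization and strictly commutes with $\underline{A}\otimes-$.

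The one step whose justification is wrong as stated is the last one. The functor ``evaluate at $-\times_S\Spec(A)$'' is $q_*q^*$ for the smooth projection $q\colon\Spec(A)\to S$; it is a right-adjoint-type functor and does \emph{not} preserve Zariski-local equivalences of (pre)sheaves in general, so ``Zariski localization commutes with this base change'' and ``it preserves local equivalences, hence computes $\underline{\R \Hom}$'' are not formal facts --- the stalk of $X\mapsto F(X\times_S\Spec(A))$ only shrinks in the $X$-direction. What makes the step true here is that the complex being localized already satisfies Zariski descent: the cycle-condition subcomplexes agree with the full cycle complexes on affines by the moving lemma (Theorem \ref{6gfrfd}), and the presheaves $U\mapsto z^r(Y\times_S U)$ compute the derived push forward by Levine's localization theorem, exactly the input used in Lemma \ref{grfe4d} and Corollary \ref{nderfd}. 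With the underived evaluation applied to such a descent-satisfying model, it does compute $\underline{\R \Hom}(\integers[\Spec(A)]_\Zar,-)$ of the localization, and your argument then closes and agrees with the paper's intended one; without citing that descent input the final identification is unjustified.
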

\begin{proof}
This follows from the definition of $\gamma_A'$.
\end{proof}

\begin{corollary}
\label{bdedzhe}
The push forward of $\gamma_A'[-2r]|_{\hat{\caS} \times \{0\}}$ to $\D(\Sh(\Sm_{S,\Zar},\integers))$ is canonically isomorphic
to $\underline{\R \Hom}(\integers[\Spec(A)]_\Zar,\caM(r))$.
\end{corollary}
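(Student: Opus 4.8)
The plan is to bootstrap from the previous Lemma, which identifies the push forward of $\gamma_A'[-2r]|_{\hat{\caS} \times \{1\}}$ with $\underline{\R\Hom}(\integers[\Spec(A)]_\Zar,\caM(r))$. Since $\gamma_A'[-2r]$ is a single object of $\Ho(\Cpx(\Ab)^{\hat{\caS} \times [1]})$, restricting along the two inclusions $\{0\},\{1\} \hookrightarrow [1]$ yields two objects of $\Ho(\Cpx(\Ab)^{\hat{\caS}})$ together with a comparison morphism induced by the unique arrow $0 \to 1$ of $[1]$. I would prove Corollary \ref{bdedzhe} by showing that this comparison morphism is an isomorphism: pushing it forward to $\D(\Sh(\Sm_{S,\Zar},\integers))$ then transports the identification of the previous Lemma from the $\{1\}$-restriction to the $\{0\}$-restriction.

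For the comparison morphism to be an isomorphism it suffices that, for every $n$-simplex $K$ in the nerve of $\caS$ and every object $t \in \caE_n$, the structure map $\overline{\alpha}_K^A(t,0) \to \overline{\alpha}_K^A(t,1)$ be a local quasi-isomorphism of complexes of Zariski sheaves on $S$. Indeed $\Gamma$ computes derived global sections and hence carries local quasi-isomorphisms to quasi-isomorphisms, so this property propagates through the hammock localizations and the strictification \cite[Proposition 4.2.4.4]{lurie.HTT} to show that $\gamma_A'[-2r]$ is, as a strict diagram on $\hat{\caS} \times [1]$, objectwise a quasi-isomorphism in the $[1]$-direction; whence the comparison morphism between its restrictions to $\hat{\caS} \times \{0\}$ and $\hat{\caS} \times \{1\}$ is invertible in $\Ho(\Cpx(\Ab)^{\hat{\caS}})$.

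The computation at a fixed $t = (\mathcal{A},B,i) \in \caE_n$ is the heart of the matter. Recall $C_t^A = A \otimes_D C_t$; let $C_t'$ and $F_t'$ denote the algebra and face set attached by the construction of §\ref{kdtjjz} to $t$ and the simplex $\underline{A} \otimes K$. Unwinding the definitions, $C_t'$ is $C_t^A$ with finitely many polynomial variables adjoined — one batch of $k$ variables for each element of $B \setminus \{i\}$, where $k$ is the number of chosen generators of $A$ — the transition map $C_t^A \hookrightarrow C_t'$ being the canonical inclusion, which is exactly the ``obvious inclusion of algebras'' appearing in the definition of $\overline{\alpha}_K^A$. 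Thus $\Spec(C_t') \to \Spec(C_t^A)$ is the projection of an affine-space bundle, and $\overline{\alpha}_K^A(t,0 \to 1)$ is flat pullback of cycles along it. One first checks, by the same commuting-square bookkeeping as for the maps $g_{i,B,j,B'}$ in §\ref{hht55rd}, that this flat pullback sends cycles in good position with respect to $F_t^A$ to cycles in good position with respect to $F_t'$, so that the map $z^r_{F_t^A}(C_t^A) \to z^r_{F_t'}(C_t')$ is well defined. Both algebras are smooth and of finite type over $D$, so by Levine's moving Lemma (Theorem \ref{6gfrfd}), applied sectionwise over a basis of affine opens of $S$, the inclusions $z^r_{F_t^A}(C_t^A) \hookrightarrow z^r(C_t^A)$ and $z^r_{F_t'}(C_t') \hookrightarrow z^r(C_t')$ are quasi-isomorphisms; these sit in a commutative square with the flat pullbacks, whose bottom edge $z^r(C_t^A) \to z^r(C_t')$ is a quasi-isomorphism by homotopy invariance of Bloch's higher Chow complexes (Theorem \ref{htehg}, iterated over the finitely many extra variables). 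Hence $\overline{\alpha}_K^A(t,0 \to 1)$ is a quasi-isomorphism, as required. The genuine input is thus homotopy invariance of higher Chow groups, packaged in Theorems \ref{6gfrfd} and \ref{htehg}; the one delicate point is the verification that flat pullback respects the face conditions, and everything else is diagram bookkeeping.
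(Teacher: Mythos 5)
Your argument is correct and is essentially the paper's: the paper's proof consists precisely of the assertion that the morphism in $\Ho(\Cpx(\Ab)^{\hat{\caS}})$ associated to $\gamma_A'$ is an isomorphism, which combined with the preceding Lemma gives the claim. Your objectwise verification of that assertion (Levine's moving Lemma \ref{6gfrfd} plus homotopy invariance in the $[1]$-direction, where as in Lemma \ref{grfe4d} one tacitly also uses that the cycle complexes compute the derived push forward to $S$ in order to pass from the sheaf-level Theorem \ref{htehg} to the quasi-isomorphism $z^r(C_t^A)\to z^r(C_t')$) follows exactly the paper's own pattern for such statements.
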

\begin{proof}
This follows from the fact that the map in $\Ho(\Cpx(\Ab)^{\hat{\caS}})$ associated to $\gamma_A'$ is an isomorphism.
\end{proof}

Now let $f \colon A \to A'$ be a flat map to a smooth $D$-algebra $A'$, let
$a_1',\ldots, a_{k'}' \in A'$ be generators. We have functors
$$a \colon \caS \to \caS, (B,l,(b_1,\ldots,b_l)) \mapsto
(A \otimes_D B, k+l,(a_1 \otimes 1,\ldots,a_k \otimes 1, 1 \otimes b_1, \ldots, 1 \otimes b_l))$$
and $$b \colon \caS \to \caS, (B,l,(b_1,\ldots,b_l)) \mapsto
(A' \otimes_D B, k'+l,(a_1' \otimes 1,\ldots,a_{k'}' \otimes 1,
1 \otimes b_1, \ldots, 1 \otimes b_l))$$ and a natural transformation
$a \to b$ induced by $f$. We let $G  \colon \caS \times [1] \to \caS$
be the corresponding functor.
%Note that $G$ sends non-identity maps to non-identity maps.

Let $K$ be a $n$-simplex in the nerve of $\caS \times [1]$.
Let $\alpha_{2,K}^f := \tilde{\alpha}_{G(K)}$.

We have a natural transformation $\tilde{\alpha}_K^A \to \tilde{\alpha}_K^{A'}$ induced
by $f$. Let $\alpha_{1,K}^f \colon \caE_n \times [1] \to \Cpx(\Sh(S_\Zar,\integers))$ be the corresponding
functor.
%For $K$ a $n$-simplex in the nerve of $\caS \times [1]$ let $\alpha_{1,K}^f:=T(\alpha^f)_K \colon \caS \times [1] \to \Cpx(\Sh(S_\Zar,\integers))$.

We have a natural transformation $\alpha_{1,K}^f \to \alpha_{2,K}^f$ induced by the natural inclusion maps of algebras.
We denote by $\overline{\alpha}_K^f \colon \caE_n \times [1]^2 \to \Cpx(\Sh(S_\Zar,\integers))$ the corresponding functor.

For $K$ a $n$-simplex in the nerve of $\hat{\caS} \times [1]^2$ we let
$\gamma_K^f:= L^H(\Gamma \circ T^p(\overline{\alpha}^f)_K) \circ q_n$,
where $p$ is the functor $\hat{\caS} \times [1]^2 \to \caS \times [1]^2$.

The $\gamma_K^f$ glue to give a map $$\gamma^f \colon \caD^{\hat{\caS} \times [1]^2}_{Q_\bullet} \to L^H_{qi} \Cpx(\Ab).$$

We denote by $\gamma_f' \in \Ho(\Cpx(\Ab)^{\hat{\caS} \times [1]^2})$
the diagram canonically associated to $\gamma^f$.

\begin{lemma}
The push forward to $\D(\Sh(\Sm_{S,\Zar},\integers))$ of the map in $\Ho(\Cpx(\Ab)^{\hat{\caS}})$ associated to
$\gamma_f'[-2r]|_{\hat{\caS} \times [1] \times \{1\}}$ is canonically isomorphic to the map
$$\underline{\R \Hom}(\integers[\Spec(A)]_\Zar, \caM(r)) \to
\underline{\R \Hom}(\integers[\Spec(A')]_\Zar, \caM(r)).$$
\end{lemma}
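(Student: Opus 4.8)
The plan is to restrict $\gamma^f$ to the face of $[1]^2$ on which the ``natural transformation'' coordinate equals $1$, and there to identify the resulting arrow. First I would observe that, by construction, $\overline{\alpha}_K^f \colon \caE_n \times [1]^2 \to \Cpx(\Sh(S_\Zar,\integers))$ is the functor corresponding to the natural transformation $\alpha_{1,K}^f \to \alpha_{2,K}^f$, so its slice at the last $[1]$-coordinate $1$ is $\alpha_{2,K}^f = \tilde{\alpha}_{G(K)}$. Hence $\gamma_f'[-2r]|_{\hat{\caS} \times [1] \times \{1\}}$ is, up to canonical isomorphism, the object of $\Ho(\Cpx(\Ab)^{\hat{\caS} \times [1]})$ produced by running the same strictification procedure (the glued maps $L^H(\Gamma \circ T^p(\tilde{\alpha}_{G(-)})_K) \circ q_n$, strictified via \cite[Proposition 4.2.4.4]{lurie.HTT}) on the functors $\tilde{\alpha}_{G(K)}$, where the surviving $[1]$ is the one coming from $G \colon \caS \times [1] \to \caS$.

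Next I would identify the two endpoints. Since $G$ restricts on $\caS \times \{0\}$ to $a = \underline{A} \otimes (-)$ and on $\caS \times \{1\}$ to $b = \underline{A'} \otimes (-)$ --- exactly the target functors of the natural transformations $\tilde{\alpha}_K^A \to \tilde{\alpha}_{\underline{A} \otimes K}$ and $\tilde{\alpha}_K^{A'} \to \tilde{\alpha}_{\underline{A'} \otimes K}$ used to define $\gamma_A'$ and $\gamma_{A'}'$ --- the restrictions of the above diagram to $\hat{\caS} \times \{0\} \times \{1\}$ and to $\hat{\caS} \times \{1\} \times \{1\}$ are canonically the $\{1\}$-slices of $\gamma_A'$ and of $\gamma_{A'}'$ respectively. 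Applying the lemma preceding Corollary~\ref{bdedzhe} once to $A$ and once to $A'$, the push-forwards of these two endpoints to $\D(\Sh(\Sm_{S,\Zar},\integers))$ are canonically $\underline{\R \Hom}(\integers[\Spec(A)]_\Zar,\caM(r))$ and $\underline{\R \Hom}(\integers[\Spec(A')]_\Zar,\caM(r))$.

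Then I would identify the connecting arrow. Level-wise on $\caE_n$ the $G$-direction transition is the natural map $\tilde{\alpha}_{\underline{A} \otimes K} \to \tilde{\alpha}_{\underline{A'} \otimes K}$ induced by the algebra maps $A \otimes_D C_t \to A' \otimes_D C_t$; since $f$ is flat these maps are flat, so the transition is flat pull-back of cycles along the base change $\Spec(A' \otimes_D C_t) \to \Spec(A \otimes_D C_t)$ of $\Spec(A') \to \Spec(A)$. Using that $\caM(r)|_{\Sm_S^\fl}$ is canonically the diagram $X \mapsto z^r(X)[-2r]$ with transition maps given by flat pull-back of cycles (Corollary~\ref{nderfd}), and that $\underline{\R \Hom}(\integers[\Spec(A)]_\Zar,\caM(r))$ is the sheaf $U \mapsto \R\Gamma(\Spec(A) \times_S U, \caM(r))$ whose restriction map to $\underline{\R \Hom}(\integers[\Spec(A')]_\Zar,\caM(r))$ along $\Spec(A') \to \Spec(A)$ is, over affine $U$, precisely flat pull-back $z^r(\Spec(A) \times_S U) \to z^r(\Spec(A') \times_S U)$, I conclude that --- transported along the canonical isomorphisms of the previous paragraph --- the arrow associated to $\gamma_f'[-2r]|_{\hat{\caS} \times [1] \times \{1\}}$ is the asserted restriction map.

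The main obstacle is organizational: one must check that the canonical isomorphisms furnished by \cite[Proposition 4.2.4.4]{lurie.HTT} are compatible with restriction to the faces of $[1]^2$, i.e.\ that the $\{1\}$-slice of $\gamma_f'$, restricted further along $\hat{\caS} \times \{0\} \hookrightarrow \hat{\caS} \times [1]$ (resp.\ $\hat{\caS} \times \{1\}$), genuinely recovers the strictified $\gamma_A'$ (resp.\ $\gamma_{A'}'$) up to canonical isomorphism, and that the transition between the two is induced by the natural transformation $a \to b$. This is formal, because $a$, $b$, $G$ are honest functors and both the coend construction $K \mapsto \gamma_K$ and the strictification are functorial in the simplicial data; it is handled exactly as in the proof of the lemma preceding Corollary~\ref{bdedzhe}, with the flatness of $f$ entering only to guarantee that the algebra maps $A \otimes_D C_t \to A' \otimes_D C_t$ are flat, so that pull-back of cycles is defined and preserves the cycle conditions cut out by the $F_t$.
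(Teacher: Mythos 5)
Your overall skeleton --- restrict to the face of $[1]^2$ with last coordinate $1$, identify the two endpoints by applying the lemma preceding Corollary \ref{bdedzhe} once to $A$ and once to $A'$, then identify the connecting arrow --- is the right reading of the statement, and the endpoint identifications are fine. The gap is in your identification of the connecting arrow. The $\{1\}$-slice of $\overline{\alpha}^f_K$ is $\alpha^f_{2,K}=\tilde{\alpha}_{G(K)}$, i.e.\ the construction of section \ref{kdtjjz} applied to simplices of $\caS$ built from the maps $\underline{A}\otimes K \to \underline{A'}\otimes K$. Its transition maps in the $f$-direction are therefore the maps coming from the homomorphisms $g_{i,B,j,B'}$: they involve the polynomial thickenings $C_t$, whose adjoined variables correspond to the chosen generators of $A'\otimes_D A_j$ (there are $k'+k_j$ of them, not $k+k_j$), and the cycles are moved by the good-position/proper-intersection pullback of Appendix \ref{ht635zh}, not by flat pullback along $A\otimes_D C_t\to A'\otimes_D C_t$. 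In particular no levelwise natural transformation $\tilde{\alpha}_{\underline{A}\otimes K}\to\tilde{\alpha}_{\underline{A'}\otimes K}$ of the shape you describe exists, since the variable counts on the two sides differ. The flat-pullback description you give is that of the other face, $\alpha^f_{1,K}$ (the transformation $\tilde{\alpha}^A_K\to\tilde{\alpha}^{A'}_K$), i.e.\ of $\gamma_f'|_{\hat{\caS}\times[1]\times\{0\}}$, which is the subject of Corollary \ref{hgdtz5z} and is deduced from the present lemma via the horizontal isomorphisms of the square, not the other way round. Consequently your appeal to Corollary \ref{nderfd} and to flatness of $f$ does not attach to the face you are asked to treat.

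What actually makes the lemma hold, and why it follows from the definition of $\gamma_f'$: on the $\{1\}$-face the $f$-direction maps are, by construction, the $\tilde{\alpha}$-machinery evaluated on the algebra maps $A\otimes_D B\to A'\otimes_D B$, and after strictification and pushforward this is precisely how the contravariant functoriality of $\caM(r)$ --- equivalently the map $\underline{\R \Hom}(\integers[\Spec(A)]_\Zar,\caM(r))\to\underline{\R \Hom}(\integers[\Spec(A')]_\Zar,\caM(r))$ induced by $\Spec(A')\to\Spec(A)$ --- is realized, once the endpoints are identified as in the lemma preceding Corollary \ref{bdedzhe}. Flatness of $f$ plays no role on this face; it is needed only on the $\{0\}$-face, where it guarantees that the cycle-theoretic pullback defining $\alpha^f_{1,K}$ is well defined and respects the conditions cut out by the $F^A_t$. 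If you insist on arguing through flat pullback, you would in effect have to prove the $\{0\}$-face statement first, together with the compatibility of the constrained-to-unconstrained isomorphisms with both endpoint identifications, and then transport across the square --- an argument your write-up does not supply.
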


\begin{proof}
This follows from the definition of $\gamma_f'$.
\end{proof}

\begin{corollary}
\label{hgdtz5z}
 The push forward to $\D(\Sh(\Sm_{S,\Zar},\integers))$ of the map in $\Ho(\Cpx(\Ab)^{\hat{\caS}})$ associated to
$\gamma_f'[-2r]|_{\hat{\caS} \times [1] \times \{0\}}$ is canonically isomorphic to the map
$$\underline{\R \Hom}(\integers[\Spec(A)]_\Zar, \caM(r)) \to
\underline{\R \Hom}(\integers[\Spec(A')]_\Zar, \caM(r)).$$
\end{corollary}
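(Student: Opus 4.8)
The plan is to deduce this from the preceding Lemma by producing a commutative square in $\Ho(\Cpx(\Ab)^{\hat{\caS}})$ two of whose edges are isomorphisms. Recall that $\gamma_f'[-2r]$ is a diagram indexed by $\hat{\caS} \times [1]^2$, in which the first $[1]$-factor records the transition $A \to A'$ and the second records the natural transformation $\alpha_{1,K}^f \to \alpha_{2,K}^f$. Its restriction to $\hat{\caS} \times [1] \times \{1\}$ is, by the preceding Lemma, the map $\underline{\R \Hom}(\integers[\Spec(A)]_\Zar,\caM(r)) \to \underline{\R \Hom}(\integers[\Spec(A')]_\Zar,\caM(r))$, and its restriction to $\hat{\caS} \times [1] \times \{0\}$ is the map to be identified. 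So it suffices to prove that the restrictions of $\gamma_f'[-2r]$ to $\hat{\caS} \times \{0\} \times [1]$ and to $\hat{\caS} \times \{1\} \times [1]$ induce isomorphisms of the corresponding maps in $\Ho(\Cpx(\Ab)^{\hat{\caS}})$.

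First I would check that, before any homotopical machinery is applied, the restriction of the functor $\overline{\alpha}_K^f$ to $\caE_n \times \{0\} \times [1]$ agrees on the nose with the functor $\overline{\alpha}_K^A$ from the construction preceding Corollary \ref{bdedzhe}. Indeed, at the $0$-end of the first $[1]$-factor one has $\alpha_{1,K}^f = \tilde{\alpha}_K^A$, while $\alpha_{2,K}^f = \tilde{\alpha}_{G(K)}$ restricts to $\tilde{\alpha}_{a(K)}$; and since $a(B,l,(b_1,\ldots,b_l)) = \underline{A} \otimes (B,l,(b_1,\ldots,b_l))$ by inspection of the definitions, this is $\tilde{\alpha}_{\underline{A} \otimes K}$, the natural transformation between the two being in both cases the one induced by the evident inclusions of algebras. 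Since every construction relating $\overline{\alpha}^f$ to $\gamma_f'$ (the passage $T^p(-)$, the hammock localization, the functor $\Gamma$, the coend with $Q_\bullet$, and the strictification) commutes with restriction along $\hat{\caS} \times \{0\} \times [1] \hookrightarrow \hat{\caS} \times [1]^2$, it follows that $\gamma_f'[-2r]|_{\hat{\caS} \times \{0\} \times [1]}$ is canonically isomorphic to $\gamma_A'[-2r]$; symmetrically, $\gamma_f'[-2r]|_{\hat{\caS} \times \{1\} \times [1]}$ is canonically isomorphic to $\gamma_{A'}'[-2r]$. By the proof of Corollary \ref{bdedzhe}, the maps in $\Ho(\Cpx(\Ab)^{\hat{\caS}})$ associated to $\gamma_A'$ and to $\gamma_{A'}'$ are isomorphisms.

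Combining these observations, the commutative square in $\Ho(\Cpx(\Ab)^{\hat{\caS}})$ determined by $\gamma_f'[-2r]$ has its two edges coming from the second $[1]$-factor invertible, so its two edges coming from the first $[1]$-factor are canonically identified as morphisms; pushing forward to $\D(\Sh(\Sm_{S,\Zar},\integers))$ and invoking the preceding Lemma then identifies the edge over $\{0\}$ with the desired map. The only genuine work is the verification that $\overline{\alpha}_K^f|_{\caE_n \times \{0\} \times [1]}$ coincides with $\overline{\alpha}_K^A$, which amounts to unwinding the definitions of $a$, $b$, $G$ and $\overline{\alpha}_K^f$; I anticipate no real difficulty there beyond keeping the various index conventions straight.
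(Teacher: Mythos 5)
Your proof is correct and follows exactly the argument the paper intends (the corollary is stated without an explicit proof): restricting the square $\gamma_f'$ at the two ends of the first $[1]$-factor recovers $\gamma_A'$ and $\gamma_{A'}'$, whose associated maps in $\Ho(\Cpx(\Ab)^{\hat{\caS}})$ are isomorphisms — the same fact invoked for Corollary \ref{bdedzhe} — and then the identification of the preceding Lemma is transported across these isomorphisms. Your verification that $\overline{\alpha}^f$ restricts on the nose to $\overline{\alpha}^A$ (resp.\ $\overline{\alpha}^{A'}$) is the right bookkeeping and matches the definitions of $a$, $b$ and $G$.
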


\begin{proposition}
\label{bddrzhf}
Let $i \colon Z \to X$ be a closed immersion of affine schemes in $\Sm_S$ of codimension $1$
with open affine complement $U$.
Then there is an exact triangle
$$\underline{\R \Hom}(\integers[Z]_\Zar,\caM(r-1))[-2] \to
\underline{\R \Hom}(\integers[X]_\Zar,\caM(r))$$
$$\to \underline{\R \Hom}(\integers[U]_\Zar,\caM(r))
\to \underline{\R \Hom}(\integers[Z]_\Zar,\caM(r-1))[-1]$$
in $\D(\Sh(\Sm_{S,\Zar},\integers))$, where the second map is
induced by the morphism $U \to X$.
\end{proposition}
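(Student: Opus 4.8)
The plan is to realize the asserted triangle as a functorial incarnation of Levine's localization triangle (Theorem \ref{gbter4}) in the codimension one case, built from the strictification machinery of Sections \ref{kdtjjz}--\ref{h4u4r7u}. Write $X = \Spec(A)$, $Z = \Spec(A/I)$ with $I \subset A$ the height one ideal of $Z$, and $U = \Spec(A')$; since $U$ is the open complement of $Z$ in the affine scheme $X$, the inclusion $U \hookrightarrow X$ is an open immersion, corresponding to a flat map $A \to A'$ of smooth $D$-algebras, so Corollaries \ref{bdedzhe} and \ref{hgdtz5z} apply. By Corollary \ref{bdedzhe} (applied to $A$ and to $A'$), the objects $\underline{\R \Hom}(\integers[X]_\Zar,\caM(r))$ and $\underline{\R \Hom}(\integers[U]_\Zar,\caM(r))$ are computed by the Zariski localizations of the strict presheaves of complexes attached, via the recipe of Section \ref{kdtjjz}, to the functors $t \mapsto z^r_{F_t^A}(C_t^A)[-2r]$ and $t \mapsto z^r_{F_t^{A'}}(C_t^{A'})[-2r]$ on $\caE_n$, and by Corollary \ref{hgdtz5z} the map induced by $U \to X$ is modelled on these presheaves by flat pullback of cycles $z^r_{F_t^A}(C_t^A) \to z^r_{F_t^{A'}}(C_t^{A'})$, which here is restriction to the preimage of $U$.

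First I would pass to the termwise homotopy fibre of this map of presheaves of complexes. In each simplicial degree the flat pullback is the restriction of cycles to the preimage of the open locus, so its kernel is the sub-presheaf $t \mapsto z^r_{F_t^A,\,Z}(C_t^A)$ of cycles supported over $Z$; that this kernel is in fact the homotopy fibre — equivalently, that $z^r_{F_t^A}(C_t^A)/z^r_{F_t^A,\,Z}(C_t^A) \to z^r_{F_t^{A'}}(C_t^{A'})$ is a quasi-isomorphism — is the localization theorem for higher cycles (the cycle-level content of Theorem \ref{gbter4}) with the moving constraints $F_t$ imposed, which holds because each $\Spec(C_t^A)$ is a smooth affine $D$-scheme, via the moving Lemma \ref{6gfrfd} exactly as in Section \ref{kdtjjz}. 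Next I would identify, cycle-theoretically, $z^r_{F_t^A,\,Z}(C_t^A)$ with $z^{r-1}_{F_t^{A/I}}(C_t^{A/I})$ up to the shift $[-2]$ dictated by Theorem \ref{gbter4}, by pushforward of cycles along the codimension one closed immersion $Z \times_X \Spec(C_t^A) \cong \Spec(C_t^{A/I}) \hookrightarrow \Spec(C_t^A)$, noting that the constraints $F_t^A$ restrict to $F_t^{A/I}$. By Corollary \ref{bdedzhe} applied to $A/I$ and twist $r-1$, together with the shift $[-2]$, the resulting presheaf computes $\underline{\R \Hom}(\integers[Z]_\Zar,\caM(r-1))[-2]$.

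Assembling these steps and passing to Zariski localization yields a homotopy fibre sequence
$$\underline{\R \Hom}(\integers[Z]_\Zar,\caM(r-1))[-2] \;\longrightarrow\; \underline{\R \Hom}(\integers[X]_\Zar,\caM(r)) \;\longrightarrow\; \underline{\R \Hom}(\integers[U]_\Zar,\caM(r))$$
in $\D(\Sh(\Sm_{S,\Zar},\integers))$ whose second arrow is induced by $U \to X$; rotating it gives the asserted exact triangle. I expect the main obstacle to be the coherence bookkeeping rather than any new geometric input: the localization and purity identifications of the previous paragraph must be carried out compatibly over all of $\caE_n$ and over all simplices of the nerve of $\hat{\caS}$ (and compatibly with the auxiliary $[1]$-directions used in Section \ref{h4u4r7u}), so that in effect one redoes the strictification of Sections \ref{kdtjjz}--\ref{h4u4r7u} once more with the supported subcomplexes $z^r_{F_t,\,Z}$ in place of $z^r_{F_t}$, instead of invoking Theorem \ref{gbter4} objectwise. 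The compatibilities of flat pullback with proper pushforward of cycles recorded in the appendix on pullbacks of algebraic cycles, together with the moving Lemma \ref{6gfrfd}, supply precisely what is needed for this.
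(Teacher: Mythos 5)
Your outline is correct in substance and rests on the same pillars as the paper's argument -- the strictification machinery of Sections \ref{kdtjjz}--\ref{h4u4r7u}, Corollaries \ref{bdedzhe} and \ref{hgdtz5z}, Levine's localization theorem (Theorem \ref{gbter4}, i.e.\ \cite[Theorem 1.7]{levine.techniques}) and the moving Lemma \ref{6gfrfd} -- but it takes a more roundabout route. The paper never forms kernels or supported subcomplexes: for each simplex $K$ it defines one functor $\caE_n \times [1]^2 \to \Cpx(\Ab)$ sending $(t,0,0)$ to $z^{r-1}_{F_t^{A''}}(C_t^{A''})$ (mapped into $z^r_{F_t^A}(C_t^A)$ at $(t,1,0)$ by proper pushforward along the codimension-one immersion), $(t,1,1)$ to $z^r_{F_t^{A'}}(C_t^{A'})$ and $(t,0,1)$ to $0$; after strictifying and pushing forward to $\D(\Sh(\Sm_{S,\Zar},\integers))$ the square is exact by Levine's theorem, its nonzero corners and the edge from $(1,0)$ to $(1,1)$ are identified by Corollaries \ref{bdedzhe} and \ref{hgdtz5z}, and the triangle is read off formally from the exact square via \cite[Definition 1.1.2.11]{lurie.higheralgebra}. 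In other words, the coherence problem you single out as the main obstacle -- carrying the fibre and purity identifications compatibly over all of $\caE_n$ and all simplices of $\hat{\caS}$ -- is precisely what the paper sidesteps by building the weight-$(r-1)$ cycle complex of $Z$ into the diagram from the start, so that the only homotopical input is the exactness of a single square. Your route (termwise kernel $=$ cycles supported over $Z$, kernel $=$ homotopy fibre, supported cycles identified with $z^{r-1}$ of $Z$, then a second strictification pass with the supported subcomplexes) can be made to work, but it duplicates the strictification effort and introduces objects the paper never needs.

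One caveat you should repair if you carry out your version: the assertion that $z^r_{F_t^A}(C_t^A)/z^r_{F_t^A,\,Z}(C_t^A) \to z^r_{F_t^{A'}}(C_t^{A'})$ is a quasi-isomorphism of global complexes is stronger than Theorem \ref{gbter4} as stated, which is a statement in $\D(\Sh(X_\Zar,\integers))$ involving $\R j_*$. To get the global-sections form over the affines $\Spec(C_t^A)$ you would need, in addition to Lemma \ref{6gfrfd} for the constrained complexes, the descent statement that the cycle complexes of affines compute their own Zariski hypercohomology (the paper extracts exactly this consequence of Levine's theorem in the proof of Lemma \ref{grfe4d}). Alternatively, and more in the spirit of the paper's proof, assert the fibre identification only after pushforward to $\D(\Sh(\Sm_{S,\Zar},\integers))$, which is all that the proposition requires.
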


\begin{proof}
 Let $A \to A''$ be the map of function algebras corresponding to $i$ and
$A \to A'$ the map corresponding to the open inclusion $U \subset X$.

For $K$ a $n$-simplex in the nerve of $\caS$ we define a functor $\alpha_K^\smallsquare \colon \caE_n \times [1]^2
\to \Cpx(\Ab)$ by sending $(t,0,0)$ to $z^{r-1}_{F_t^{A''}}(C_t^{A''})$, $(t,1,0)$ to 
$z^r_{F_t^A}(C_t^A)$, $(t,1,1)$ to $z^r_{F_t^{A'}}(C_t^{A'})$ and $(t,0,1)$ to $0$.
Sheafification on $S$ yields a functor $\tilde{\alpha}_K^\smallsquare \colon \caE_n \times [1]^2 \to \Cpx(\Sh(S_\Zar,\integers))$.

For $K$ a $n$-simplex in the nerve of $\hat{\caS} \times [1]^2$ let
$\gamma_K^\smallsquare:=L^H(\Gamma \circ T^p(\tilde{\alpha}^\smallsquare)_K) \circ q_n$.

The $\gamma_K^\smallsquare$ glue to give a map $$\gamma^\smallsquare \colon \caD^{\hat{\caS} \times [1]^2}_{Q_\bullet} \to L^H_{qi} \Cpx(\Ab).$$
We denote by $\gamma_\smallsquare' \in \Ho(\Cpx(\Ab)^{\hat{\caS} \times [1]^2})$
the diagram canonically associated to $\gamma^\smallsquare$.

The square in $\D(\Sh(\Sm_{S,\Zar},\integers))$ associated to the push forward of $\gamma_\smallsquare'[-2r]$
is exact by \cite[Theorem 1.7]{levine.techniques}.
Moreover by Corollary \ref{bdedzhe} the entries in this square in the places
$(0,0)$, $(1,0)$ and $(1,1)$ are $\underline{\R \Hom}(\integers[Z]_\Zar,\caM(r-1))[-2]$,
$\underline{\R \Hom}(\integers[X]_\Zar,\caM(r))$ and $\underline{\R \Hom}(\integers[U]_\Zar,\caM(r))$,
and the map from entry $(1,0)$ to $(1,1)$ is the one induced by the map $U \subset X$
by Corollary \ref{hgdtz5z}.
Thus by \cite[Definition 1.1.2.11]{lurie.higheralgebra} we get the exact triangle as required.
\end{proof}

\subsubsection{The {\'e}tale cycle class map}

\label{jrtzuutre}

For $X \in \Sm_S$ and $F$ a finite set of closed immersions in $\Sm_S$ with target $X$
we denote by $c_F^r(X,n)$ the set of cycles (closed integral subschemes) of $X \times \Delta^n$ which
intersect all $Z \times Y$ with $Z \in F \cup \{X\}$ and $Y$ a face of $\Delta^n$ properly.

Let $U \subset S$ open. Let $m$ be an integer which is invertible on $U$.
Let $\mu_m^{\otimes r} \to \caG$ be an injectively fibrant
replacement in $\Cpx(\Sh(\Sm_{U,\et},\integers/m))$.
%for a local model structure where the
%$\integers/m[V]_\et \to \integers/m[X]_\et$ are cofibrations for $V \to X$ open inclusions in $\Sm_U$.

Let $X \in \Sm_U$. For $W$ a closed subset of $X$ such that each irreducible component has codimension greater
or equal to $r$ set $\caG^W(X):= \ker(\caG(X) \to \caG(X \setminus W))$.

As in \cite[12.3]{levine.schemes} there is a canonical isomorphism of $H^{2r}(\caG^W(X))$ with the free $\integers/m$-module
on the irreducible components of $W$ of codimension $r$ and the map $\tau_{\le 2r} \caG^W(X) \to H^{2r}(\caG^W(X))[-2r]$
is a quasi isomorphism.

For $F$ a finite set of closed immersions in $\Sm_U$ with target $X$ denote by
$\caG^r_F(X,n)$ the colimit of the $\caG^W(X \times \Delta^n)$ where $W$ runs through the finite unions of
elements of $c_F^r(X,n)$. The simplicial complex of $\integers/m$-modules $\tau_{\le 2r}\caG^r_F(X,\bullet)$ augments
to the simplicial abelian group $z^r_F(X,\bullet)/m[-2r]$. This augmentation is a levelwise quasi isomorphism.
We denote by $\caG^r_F(X)$ the total complex associated to the double complex which is the normalized complex
associated to $\tau_{\le 2r}\caG^r_F(X,\bullet)$. Thus we get a quasi isomorphism
$\caG^r_F(X) \to z^r_F(X)/m[-2r]$.

On the other hand we have a canonical map $\caG^r_F(X,n) \to \caG(X \times \Delta^n)$ compatible with
the simplicial structure. We denote by $\caG'(X)$ the total complex associated to the double complex
which is the normalized complex associated to $\caG(X \times \Delta^\bullet)$.
We have a canonical quasi isomorphism $\caG(X) \to \caG'(X)$ and a canonical map
$\caG^r_F(X) \to \caG'(X)$.

Thus in $\D(\integers/m)$ we get a map $$z^r_F(X)/m[-2r] \cong \caG^r_F(X) \to \caG'(X) \cong \caG(X).$$

Our next aim is to make this assignment functorial in $X$ for all maps in $\Sm_U$.
In the following we sometimes insert into the above definitions $A$ instead of $\Spec(A)$.
Let $I$ be the category $0 \leftarrow 1 \to 2 \leftarrow 3 \leftarrow 4$.
We denote by $\caS_m$ the full subcategory of $\caS$ such that $m$ is invertible in the algebras
belonging to the objects.
We use the notation of section \ref{kdtjjz}. Let $K$ be a $n$-simplex in the nerve of $\caS_m$.
We assign to $K$ the following functor $\alpha_K' \colon \caE_n \times I \to \Cpx(\Ab)$:
$(t,0) \mapsto \alpha_K(t)/m[-2r]$, $(t,1) \mapsto \caG^r_{F_t}(C_t)$, $(t,2) \mapsto \caG'(C_t)$, $(t,3) \mapsto \caG(C_t)$,
$(t,4) \mapsto \caG(A_{\varphi_n(t)})$.
Sheafifying on $U_\Zar$ yields a functor $\tilde{\alpha}_K' \colon \caE_n \times I \to \Cpx(\Sh(U_\Zar,\integers))$.
These functors are compatible for monomorphisms in $\bigtriangleup$.
%Thus the $T(\tilde{\alpha}')_{\tilde{K}}$,
%$\tilde{K}$ a $n$-simplex in the nerve of $\caS \times I$, are also compatible for monomorphisms in $\bigtriangleup$.

%Let $K$ be a $n$-simplex in the nerve of $\hat{\caS} \times I$ and let $f \colon [n] \to [n']$ be the unique epimorphism
%such that $K=f^*(K')$ for a non-degenerate $n'$-simplex $K'$. Let $\tilde{K}$ be the image of $K'$ in the nerve
%of $\caS \times I$. Let $\beta'_K$ be the composition
%$\caE_n \overset{f_*}{\longrightarrow} \caE_{n'} \overset{T(\tilde{\alpha}')_{\tilde{K}}}{\longrightarrow} \Cpx(\Sh(S_\Zar,\integers))$.

%Let $q_\bullet \colon Q_\bullet \to L^H \caE$ be a Reedy cofibrant replacement.
%For a cosimlicial object $P_\bullet$ in $\sCat$ we denote by $\caD^I_{P_\bullet}$
%the pairing of the nerve of $\hat{\caS} \times I$ with $P_\bullet$.
For $K$ a $n$-simplex of the nerve of $\hat{\caS}_m \times I$ let $\gamma_K^I:=L^H(\Gamma \circ T^p(\tilde{\alpha}')_K) \circ q_n$,
where $p$ is the functor $\hat{\caS}_m \times I \to \caS_m \times I$.
The $\gamma_K^I$ glue to give a map $$\gamma^I \colon \caD^{\hat{\caS}_m \times I}_{Q_\bullet} \to L^H_{qi} \Cpx(\Ab).$$
We denote by $\gamma_I' \in \Ho(\Cpx(\Ab)^{\hat{\caS}_m \times I})$
the diagram canonically associated to $\gamma^I$.

The push forward of the $I$-diagram in $\Ho(\Cpx(\Ab)^{\hat{\caS}_m})$ corresponding to the diagram $\gamma_I'$
to $\D(\Sh(\Sm_{U,\Zar},\integers))$ is an $I$-diagram
of the form $(\caM(r)/m)|_U \cong \bullet \to \bullet \cong \bullet \cong \R \epsilon_* \mu_m^{\otimes r}$ which yields the
cycle class map.

Next we wish to show the compatibility of this cycle class map with the original cycle class map defined for flat morphisms.

If in the following notation a collection $F$ of closed subschemes is missing we assume that this $F$ is empty.
For $K$ a $n$-simplex in the nerve of $\caS_m^\fl$ (with the obvious notation) we define a functor $\alpha_K'' \colon \caE_n \times I \to \Cpx(\Ab)$
in the following way: $(t,0) \mapsto z^r(A_{\varphi_n(t)})[-2r]$, $(t,1) \mapsto \caG^r(A_{\varphi_n(t)})$,
$(t,2) \mapsto \caG'(A_{\varphi_n(t)})$, $(t,3), (t,4) \mapsto \caG(A_{\varphi_n(t)})$.
Sheafifying on $U$ yields a functor $\tilde{\alpha}_K'' \colon \caE_n \times I \to \Cpx(\Sh(U_\Zar,\integers))$.
There is an obvious natural transformation $\tilde{\alpha}_K'' \to \tilde{\alpha}_K'$.
We denote by $\overline{\alpha}_K \colon \caE_n \times I \times [1] \to \Cpx(\Sh(U_\Zar,\integers))$ the corresponding functor.

For $K$ a $n$-simplex of the nerve of $\hat{\caS}_m^\fl \times I \times [1]$ let $\gamma_K^{I \times [1]}:=L^H(\Gamma \circ T^p(\overline{\alpha})_K) \circ q_n$,
where $p$ is the functor $\hat{\caS}_m^\fl \times I \times [1] \to \caS_m^\fl \times I \times [1]$.
The $\gamma_K^{I \times [1]}$ glue to give a map $$\gamma^{I \times [1]} \colon \caD^{\hat{\caS}_m^\fl \times I \times [1]}_{Q_\bullet} \to L^H_{qi} \Cpx(\Ab).$$
We denote by $\gamma_{I \times [1]}' \in \Ho(\Cpx(\Ab)^{\hat{\caS}_m^\fl \times I \times [1]})$
the diagram canonically associated to $\gamma^{I \times [1]}$.

The push forward of the $I \times [1]$-diagram in $\Ho(\Cpx(\Ab)^{\hat{\caS}_m^\fl})$ corresponding to the diagram $\gamma_{I \times [1]}'$
to $\D(\Sh(\Sm_{U,\Zar}^\fl,\integers))$ is an $I \times [1]$-diagram
where the subdiagram indexed on $I \times \{0\}$ gives the old cycle class map and the subdiagram
indexed on $I \times \{1\}$ the new cycle class map restricted to flat maps. Thus the two
cycle class maps are canonically isomorphic (over flat maps).

\begin{corollary}
For $X \in \Sm_
U$ the cycle class map $\caM^X(r)/m \to \R \epsilon_* \integers/m(r)$ from section \ref{45zgf} is canonically isomorphic
to the cycle class map $(\caM(r)/m)|_U \to \R \epsilon_* \mu_m^{\otimes r}$ restricted to $X_\Zar$.
\end{corollary}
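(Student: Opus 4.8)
The plan is to deduce the statement directly from the comparison just established, namely from the $I\times[1]$-diagram $\gamma_{I\times[1]}'$ over $\hat{\caS}_m^\fl$, together with the canonical identification $\caM^X(r)\cong\caM(r)|_{X_\Zar}$ of the preceding subsection and the equality $\integers/m(r)=\mu_m^{\otimes r}$.

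First I would make precise that the $I\times\{0\}$-part of the pushforward of $\gamma_{I\times[1]}'$ to $\D(\Sh(\Sm_{U,\Zar}^\fl,\integers))$ reproduces the cycle class map of section \ref{45zgf}. Indeed, for a simplex $K$ in the nerve of $\caS_m^\fl$ the functor $\alpha_K''$ has on each vertex the zig-zag $z^r(A_{\varphi_n(t)})[-2r]\to\caG^r(A_{\varphi_n(t)})\to\caG'(A_{\varphi_n(t)})\to\caG(A_{\varphi_n(t)})$ (here $z^r_F=z^r$ since the implicit $F$ is empty), which is exactly the zig-zag $z^r(\_)/m[-2r]\cong\caG^r\to\caG'\cong\caG$ used to define the cycle class map in the proof of Theorem \ref{gtt5zjh}. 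Each vertex of this diagram satisfies Zariski descent in the scheme variable — for $z^r(\_)$ this is the input used in the proof of Lemma \ref{grfe4d} (via \cite[Theorem 1.7]{levine.techniques}), and for $\R\epsilon_*\mu_m^{\otimes r}$ it is tautological — so for every $X\in\Sm_U$, not merely the affine ones, the restriction of this diagram to $X_\Zar$ (pushed along $\epsilon$, using that $\R\epsilon_*$ commutes with restriction to small sites) recovers, up to the canonical identifications, the map $\caM^X(r)/m\to\R\epsilon_*\integers/m(r)$ of section \ref{45zgf}. By construction the $I\times\{1\}$-part reproduces the restriction to flat maps of the cycle class map $(\caM(r)/m)|_U\to\R\epsilon_*\mu_m^{\otimes r}$.

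Second I would restrict everything to the small Zariski site $X_\Zar$ of a fixed $X\in\Sm_U$. Since every object of $X_\Zar$ is an open subscheme of $X$, hence flat over $U$, and every morphism of $X_\Zar$ is an open immersion, hence flat, the inclusion $X_\Zar\hookrightarrow\Sm_{U,\Zar}$ factors through $\Sm_{U,\Zar}^\fl$. Therefore the isomorphism between the $I\times\{0\}$- and $I\times\{1\}$-subdiagrams furnished by $\gamma_{I\times[1]}'$ restricts to $X_\Zar$; reading off the two ends of the zig-zag yields an isomorphism, in the arrow category of $\D(\Sh(X_\Zar,\integers/m))$, between $\caM^X(r)/m\to\R\epsilon_*\integers/m(r)$ and $(\caM(r)/m)|_{X_\Zar}\to\R\epsilon_*\mu_m^{\otimes r}$. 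Using $\caM^X(r)\cong\caM(r)|_{X_\Zar}$ to identify the sources and $\integers/m(r)=\mu_m^{\otimes r}$ to identify the targets, this is precisely the asserted canonical isomorphism.

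The main obstacle is purely bookkeeping: one must verify that the explicit zig-zag written down in the proof of Theorem \ref{gtt5zjh} is literally the specialization to affine schemes (and flat maps) of the strictified $I\times\{0\}$-diagram built from $\alpha_K''$ — which amounts to matching the colimit-over-$W$ definitions of $\caG^r_F$, $\caG'$, $\caG$ and the corresponding augmentations — and that each vertex genuinely has Zariski descent, so that the comparison, once known over affines and flat maps, propagates to all $X\in\Sm_U$. None of this is deep, but it must be carried out with care.
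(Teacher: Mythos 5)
Your proposal follows the paper's own route: the corollary is stated there as an immediate consequence of the $\gamma_{I\times[1]}'$-comparison over $\hat{\caS}_m^\fl$, whose $I\times\{0\}$- and $I\times\{1\}$-subdiagrams recover the old and new cycle class maps over flat maps, and restriction to $X_\Zar$ (where all morphisms are flat) gives the statement. Your additional descent bookkeeping to pass from affines to general $X$ is exactly the implicit step the paper relies on (as in Corollary \ref{nderfd}), so the argument is correct and essentially identical.
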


Now we also use the notation of section \ref{h4u4r7u}. We assume $m$ is invertible in $A$.
For a $n$-simplex $K$ of the nerve of $\caS_m$ we define a functor $(\alpha')_K^A \colon \caE_n \times I \to \Cpx(\Ab)$
in the following way: $(t,0) \mapsto \alpha_K^A(t)/m[-2r]$, $(t,1) \mapsto \caG^r_{F_t^A}(C_t^A)$, $(t,2) \mapsto \caG'(C_t^A)$, $(t,3) \mapsto \caG(C_t^A)$,
$(t,4) \mapsto \caG(A \otimes_D A_{\varphi_n(t)})$.
Sheafifying on $U_\Zar$ yields a functor $(\tilde{\alpha}')_K^A \colon \caE_n \times I \to \Cpx(\Sh(U_\Zar,\integers))$.
These functors are compatible for monomorphisms in $\bigtriangleup$.

We have a natural transformation $(\tilde{\alpha}')_K^A \to \tilde{\alpha}_{\underline{A} \otimes K}'$
induced by the obvious inclusion maps of algebras.
We denote by $\overline{\alpha}_K^A \colon \caE_n \times I \times [1] \to \Cpx(\Sh(U_\Zar,\integers))$ the
corresponding functor.

For $K$ a $n$-simplex of the nerve of $\hat{\caS}_m \times I \times [1]$ let $\gamma_K^{A, I \times [1]}:=L^H(\Gamma \circ T^p(\overline{\alpha}^A)_K) \circ q_n$,
where $p$ is the functor $\hat{\caS}_m \times I \times [1] \to \caS_m \times I \times [1]$.
The $\gamma_K^{A, I \times [1]}$ glue to give a map $$\gamma^{A, I \times [1]} \colon \caD^{\hat{\caS}_m \times I \times [1]}_{Q_\bullet} \to L^H_{qi} \Cpx(\Ab).$$
We denote by $\gamma_{A, I \times [1]}' \in \Ho(\Cpx(\Ab)^{\hat{\caS}_m \times I \times [1]})$
the diagram canonically associated to $\gamma^{A, I \times [1]}$.

The push forward of the $I \times [1]$-diagram in $\Ho(\Cpx(\Ab)^{\hat{\caS}_m})$ corresponding to the diagram $\gamma_{A, I \times [1]}'$
to $\D(\Sh(\Sm_{U,\Zar},\integers))$ is an $I \times [1]$-diagram
where the subdiagram indexed on $I \times \{1\}$ gives the functor
$\underline{\R \Hom}(\Spec(A),\_)$ applied to the cycle class map $(\caM(r)/m)|_U \to \R \epsilon_* \mu_m^{\otimes r}$. 

\begin{corollary}
\label{m4u53d}
The subdiagram indexed on $I \times \{0\}$ of the above $I \times [1]$ diagram in $\D(\Sh(\Sm_{U,\Zar},\integers))$
yields a map canonically isomorphic to the map $$\underline{\R \Hom}(\Spec(A),(\caM(r)/m)|_U) \to
\underline{\R \Hom}(\Spec(A),\R \epsilon_* \mu_m^{\otimes r})$$
induced by the cycle class map.
\end{corollary}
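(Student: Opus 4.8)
The plan is to deduce the statement from the case of the subdiagram indexed on $I\times\{1\}$, which is exactly the assertion made in the paragraph preceding the corollary, by showing that the $[1]$-direction of (the push forward of) $\gamma_{A, I\times[1]}'$ is an isomorphism. First I would unwind that $[1]$-direction: it is induced, for each fixed object of $I$, by the natural transformation $(\tilde{\alpha}')_K^A \to \tilde{\alpha}_{\underline{A}\otimes K}'$, which on the nodes $0,1,2,3$ of $I$ comes from the inclusion of $D$-algebras $C_t^A = A\otimes_D C_t \hookrightarrow C_t^{\underline{A}\otimes K}$ (the target being obtained from the source by adjoining finitely many polynomial variables, so that $\Spec(C_t^{\underline{A}\otimes K})$ is an affine space over $\Spec(C_t^A)$), and on the node $4$ of $I$ is the identity, since both functors send $(t,4)$ to $\caG(A\otimes_D A_{\varphi_n(t)})$.

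Next I would check that, after push forward to $\D(\Sh(\Sm_{U,\Zar},\integers))$, this $[1]$-direction is an isomorphism on each node of $I$. On the node $0$ this is the mod-$m$ version of Corollary \ref{bdedzhe}: flat pullback of the cycle complexes along a projection from an affine space induces an isomorphism on the derived push forwards to $S$, by the homotopy invariance of Theorem \ref{htehg} together with the moving Lemma \ref{6gfrfd}; since the natural maps $\caG^r_{F_t}(-) \to z^r_{F_t}(-)/m[-2r]$ are quasi-isomorphisms, the same holds on the node $1$. On the node $3$ the relevant map is the restriction of the injectively fibrant complex $\caG$ along a projection from an affine space, so it becomes an isomorphism after push forward by the $\mathbb{A}^1$-invariance of {\'e}tale cohomology with $\mu_m^{\otimes r}$-coefficients ($m$ being invertible on $U$); on the node $2$ this follows by applying the same argument levelwise in the cosimplicial direction defining $\caG'$, and on the node $4$ the map is the identity. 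Hence the map in $\Ho(\Cpx(\Ab)^{\hat{\caS}_m})$ associated to the $[1]$-direction of $\gamma_{A, I\times[1]}'$ is an isomorphism, so that the subdiagram indexed on $I\times\{0\}$ is canonically isomorphic in $\D(\Sh(\Sm_{U,\Zar},\integers))$ to the subdiagram indexed on $I\times\{1\}$.

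Finally I would invoke the paragraph preceding the corollary: the push forward of the $I\times\{1\}$-subdiagram is $\underline{\R\Hom}(\Spec(A),-)$ applied to the $I$-diagram $(\caM(r)/m)|_U \cong \bullet \to \bullet \cong \bullet \cong \R\epsilon_*\mu_m^{\otimes r}$ defining the cycle class map, and composing the zig-zag of this $I$-diagram (along which the maps $1\to 0$, $3\to 2$, $4\to 3$ are invertible after push forward) yields precisely $\underline{\R\Hom}(\Spec(A),-)$ of the cycle class map $(\caM(r)/m)|_U \to \R\epsilon_*\mu_m^{\otimes r}$. Combined with the isomorphism from the previous step, this gives the claim. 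I expect the second step to be the main obstacle: one must know that every node comparison becomes invertible after push forward, which for the {\'e}tale nodes ($\caG$, $\caG'$) rests on homotopy invariance of $\mu_m^{\otimes r}$-cohomology and for the cycle-theoretic nodes on the combination of Theorem \ref{htehg} and the moving Lemma \ref{6gfrfd} already underlying Corollary \ref{bdedzhe}.
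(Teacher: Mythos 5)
Your proposal is correct and follows exactly the argument the paper leaves implicit (the corollary is stated without proof, in analogy with Corollary \ref{bdedzhe}): the $[1]$-direction of the pushed-forward diagram is an isomorphism — at the cycle-theoretic nodes by homotopy invariance plus the moving lemma as in Corollary \ref{bdedzhe}, at the \'etale nodes by $\mathbb{A}^1$-invariance of \'etale cohomology with $\mu_m^{\otimes r}$-coefficients — so the $I\times\{0\}$ subdiagram is identified with the $I\times\{1\}$ subdiagram, which by the preceding paragraph is $\underline{\R\Hom}(\Spec(A),-)$ of the cycle class map. (In fact only the comparisons at the nodes $0$ and $4$, together with the commutativity of the ladder, are strictly needed, but checking all nodes as you do is harmless.)
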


Let $i \colon Z \to X$ be a closed immersion of affine schemes in $\Sm_S$ of codimension $1$
with open affine complement $V$.
The exact triangle
$$\underline{\R \Hom}(\integers[Z]_\Zar,\caM(r-1))[-2] \to
\underline{\R \Hom}(\integers[X]_\Zar,\caM(r))$$
$$\to \underline{\R \Hom}(\integers[U]_\Zar,\caM(r))
\to \underline{\R \Hom}(\integers[Z]_\Zar,\caM(r-1))[-1]$$
in $\D(\Sh(\Sm_{S,\Zar},\integers))$ from Proposition \ref{bddrzhf} yields
an exact triangle
$$\underline{\R \Hom}(\integers[Z_U]_\Zar,(\caM(r-1)/m)|_U)[-2] \to
\underline{\R \Hom}(\integers[X_U]_\Zar,(\caM(r)/m)|_U)$$
$$\to \underline{\R \Hom}(\integers[V_U]_\Zar,(\caM(r)/m)|_U)
\to \underline{\R \Hom}(\integers[Z_U]_\Zar,(\caM(r-1)/m)|_U)[-1]$$
in $\D(\Sh(\Sm_{U,\Zar},\integers))$.
\begin{proposition}
\label{hedtt5t5t}
Let the notation be as above. Then the diagram
$$\xymatrix{\underline{\R \Hom}(\integers[Z_U]_\Zar,(\caM(r-1)/m)|_U)[-2] \ar[r] \ar[d] &
\R \epsilon_* \underline{\R \Hom}(\integers[Z_U]_\et,\mu_m^{\otimes (r-1)})[-2] \ar[d] \\
\underline{\R \Hom}(\integers[X_U]_\Zar,(\caM(r)/m)|_U) \ar[r] \ar[d] &
\R \epsilon_* \underline{\R \Hom}(\integers[X_U]_\et,\mu_m^{\otimes r}) \ar[d] \\
\underline{\R \Hom}(\integers[V_U]_\Zar,(\caM(r)/m)|_U) \ar[r] \ar[d] &
\R \epsilon_* \underline{\R \Hom}(\integers[V_U]_\et,\mu_m^{\otimes r}) \ar[d] \\
\underline{\R \Hom}(\integers[Z_U]_\Zar,(\caM(r-1)/m)|_U)[-1] \ar[r] &
\R \epsilon_* \underline{\R \Hom}(\integers[Z_U]_\et,\mu_m^{\otimes (r-1)})[-1],}$$
where the first vertical row is the exact triangle from above, the second vertical
row is the corresponding exact triangle for {\'e}tale sheaves and where
the horizontal maps are induced by the cycle class maps, commutes. 
\end{proposition}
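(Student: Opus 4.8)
The plan is to realize both exact triangles and the cycle class maps between them as the two ends of a single diagram obtained from the coend/strictification machinery of section \ref{kdtjjz}, so that the commutativity becomes automatic.

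First I would combine the two relevant indexing categories: the square $[1]^2$ from the proof of Proposition \ref{bddrzhf}, which carries the localization triangle, and the zig-zag $I = (0 \leftarrow 1 \to 2 \leftarrow 3 \leftarrow 4)$ from the construction of the \'etale cycle class map, which carries the cycle class zig-zag. Over $\caS_m$ (where $m$ is invertible) and for each $n$-simplex $K$ of the nerve I would define a functor $\rho_K \colon \caE_n \times [1]^2 \times I \to \Cpx(\Ab)$ whose restriction to the corner $(1,0)$ of $[1]^2$ is the functor $(\alpha')^A_K$ of the paragraph preceding Corollary \ref{m4u53d}, with $A$ the function algebra of $X$; whose restriction to the corner $(0,0)$ is the analogous functor in weight $r-1$, built from the complexes $z^{r-1}_{F_t^{A''}}(C_t^{A''})$ and an injectively fibrant replacement $\caG_{r-1}$ of $\mu_m^{\otimes(r-1)}$ (shifted as in Proposition \ref{bddrzhf}), with $A''$ the function algebra of $Z$; whose restriction to the corner $(1,1)$ is built from $z^r_{F_t^{A'}}(C_t^{A'})$, with $A'$ the function algebra of $V$; and which is constantly $0$ over the corner $(0,1)$. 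The arrows along $[1]^2$ are the ones entering the motivic localization square at $I$-coordinate $0$ and the \'etale localization square at $I$-coordinate $4$; the arrows along $I$ are the cycle class zig-zags; and all shifts are chosen, as in Proposition \ref{bddrzhf} and the \'etale cycle class construction, so that the slices over $I$-coordinates $0$ and $4$ are exactly the two triangles of the statement. These assignments are compatible for monomorphisms in $\bigtriangleup$ just as in section \ref{kdtjjz}.

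I would then feed $\rho = (\rho_K)_K$ through the now-standard pipeline: pass to $\hat{\caS}_m \times [1]^2 \times I$, form $L^H(\Gamma \circ T^p(\rho)_K) \circ q_n$, glue these over the coend with $Q_\bullet$, and strictify via \cite[Proposition 4.2.4.4]{lurie.HTT} to obtain a well-defined diagram $\rho' \in \Ho(\Cpx(\Ab)^{\hat{\caS}_m \times [1]^2 \times I})$; then push $\rho'$ forward along $p$ and Zariski-localize, landing in $\D(\Sh(\Sm_{U,\Zar},\integers))$. The slice of the result over $I$-coordinate $0$ is the base-changed, mod-$m$ motivic localization square of Proposition \ref{bddrzhf}, whose associated exact triangle --- the passage from a square with a vanishing corner to its triangle being functorial --- is the first column of the statement; the slice over $I$-coordinate $4$ is the corresponding square of $\caG$-complexes, which (after the weight drop is handled, see below) is the \'etale localization square of Corollary \ref{grtehh}, so that its triangle is the second column; here one uses, as in the construction of the \'etale cycle class map, that these $\caG$-complexes compute $\R\epsilon_* \underline{\R \Hom}(\integers[-]_\et, \mu_m^{\otimes\bullet})$ after sheafification and push forward. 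Finally the $I$-edge from $0$ to $4$, restricted to the three nonzero corners of $[1]^2$, gives the cycle class maps by the discussion around Corollary \ref{m4u53d}. Since all of this data is extracted from the single diagram $\rho'$, which is functorial in the $I$-variable, the asserted map of triangles with all four squares commuting follows at once.

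The main obstacle is the identification of the $I$-coordinate-$4$ slice with the \'etale localization triangle of Corollary \ref{grtehh}. On the \'etale side the closed corner carries weight $r-1$, so the kernel complex $\ker(\caG(X \times \Delta^\bullet) \to \caG((X \setminus Z) \times \Delta^\bullet))$ that naturally appears there --- and which computes $i_* \R i^! \mu_m^{\otimes r}$ --- must be identified with $i_* \caG_{r-1}[-2]$, i.e. the absolute purity isomorphism $\R i^! \mu_m^{\otimes r} \cong \mu_m^{\otimes(r-1)}[-2]$ (Theorem \ref{gfre5r5}) has to be woven into the strictified diagram. I would handle this exactly as in the proof of Theorem \ref{gtt5zjh}: enlarge the zig-zag $I$ by interpolating the relevant $\caG^W$-complexes together with a chosen representative of the purity isomorphism, using that $H^{2r}(\caG^W(-))$ is free over $\integers/m$ on the codimension-$r$ components of $W$ (Levine \cite[12.3]{levine.schemes}) and the naturality of \'etale purity. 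Once this interpolation is in place the remaining squares commute by construction, and the only non-formal global inputs are the bicartesianness of the two localization squares --- \cite[Theorem 1.7]{levine.techniques} on the motivic side and purity on the \'etale side --- everything else being the familiar bookkeeping with $\hat{\caS}$, the coend, and Lurie's strictification.
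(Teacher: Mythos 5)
Your overall strategy is the paper's: encode the motivic localization square, the \'etale one, and the cycle class zig-zag in a single functor over $\hat{\caS}_m$ indexed by $\caE_n$ times a square-shaped category times $I$, strictify, push forward, and read off the ladder, the only non-formal input being a chain-level representative of absolute purity. However, the indexing you set up in your second paragraph does not type-check, and this is exactly the crux of the proof. If the weight-$(r-1)$ data (the complexes $z^{r-1}_{F_t^{A''}}(C_t^{A''})$ together with the values of an injectively fibrant replacement of $\mu_m^{\otimes(r-1)}$ on $Z$, suitably shifted) sits at the corner $(0,0)$ of $[1]^2$, then the product structure forces, at every \'etale $I$-coordinate, an actual chain map from those complexes on $Z$ to the corresponding complexes on $X$ at the corner $(1,0)$, i.e.\ a chain-level Gysin map. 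No such map is available: purity can be realized on the level of complexes only in the opposite direction, namely from the closed-support model $\ker(\caG(C_t^A) \to \caG(C_t^{A'}))$ (which computes $i_*\R i^!\mu_m^{\otimes r}$) to the pushed-forward weight-$(r-1)$ fibrant complex, and only because that target is injectively fibrant. Your proposed remedy of enlarging the zig-zag $I$ does not remove this obstruction: whichever new $I$-slots you interpolate, any slot of the closed corner carrying the weight-$(r-1)$ complexes still has to map along the square into the $X$-column at the same $I$-coordinate.

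What is needed is to modify the square factor rather than $I$, and this is what the paper does: replace $[1]^2$ by the category $J$ obtained by gluing one extra object $c$ onto the corner $(0,0)$, so that $c$ only receives an arrow from $(0,0)$ and maps nowhere. At $(0,0)$ one keeps, at the \'etale stages, the closed-support complexes (the $\caH_t$ built from $\caG^W$'s with supports coming from codimension-$(r-1)$ cycles on $C_t^{A''}$, and the kernels $\ker(\caG'(C_t^A)\to\caG'(C_t^{A'}))$, $\ker(\caG(C_t^A)\to\caG(C_t^{A'}))$, etc.), which map into the $X$-column on the nose; the weight-$(r-1)$ complexes $\tilde{\caG}^{r-1}_{F_t^{A''}}(C_t^{A''})[-2]$, $\tilde{\caG}(C_t^{A''})[-2]$, \dots are placed at $c$, the arrow $(0,0)\to c$ being a chosen chain-level representative $\varphi$ of purity exactly as in the proof of Theorem \ref{gtt5zjh} (at the cycle stage both $(0,0)$ and $c$ carry $z^{r-1}_{F_t^{A''}}(C_t^{A''})[-2r]$, where the Gysin map is honest). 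Since these comparison arrows are quasi-isomorphisms, the closed-corner column of the pushed-forward diagram is identified with the weight-$(r-1)$ terms appearing in the statement, and the rest of your argument (exactness of the two squares, reading off the slices, Corollary \ref{m4u53d} for the horizontal maps) then goes through; so the gap is precisely in where the purity comparison is housed in the indexing category, and once it is placed as above your proof coincides with the paper's.
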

\label{gtrf5rdd}
\begin{proof}
Let $A \to A''$ be the map of function algebras corresponding to $i$ and $A \to A'$
the map corresponding to the open inclusion $V \to X$.
We let $J$ be the category which is defined by gluing the object $(0,0)$ of $[1]^2$ to
the object $0$ of $[1]$. We call $c$ the object $1$ of $[1]$ viewed as object of $J$,
the other objects are numbered $(k,l)$, $k,l \in \{0,1\}$.
Let $\mu_m^{\otimes (r-1)} \to \tilde{\caG}$ be an injectively fibrant
replacement in $\Cpx(\Sh(\Sm_{U,\et},\integers/m))$.
We let $\caH_t(n)$ be the colimit of the $\caG^W((\Spec C_t^A) \times \Delta^n)$,
where $W$ runs through the finite unions of elements of $c^{r-1}(\Spec C_t^{A''},n)$.
We denote by $\caH_t$ the total complex associated to the double complex which is the normalized
complex associated to $\tau_{\le 2r} \caH_t(\bullet)$.
We have an absolute purity isomorphism $\varphi$ from the sheaf
$$\Sm_U \ni Y \mapsto \ker(\caG(Y \times_S X) \to \caG(Y \times_X V))$$ to
$$\Sm_U \ni Y \mapsto \tilde{\caG}(Y \times_S Z)[-2]$$ in $\D(\Sh(\Sm_{U,\et},\integers/m))$.
This can be lifted to a map of (complexes of) sheaves since the target of the map is
injectively fibrant. We denote any such lift also by $\varphi$.
We let $\tilde{\caG}^{r-1}$ and $\tilde{\caG}'$ be the analogues of
$\caG^r$ and $\caG'$ (and in the first case also with the cycle conditions).

For $K$ a $n$-simplex
in the nerve of $\caS_p$ we define, using $\varphi$, a functor
$$\alpha_K^\heartsuit \colon \caE_n \times I \times J \to \Cpx(\Ab)$$
by sending

$(t,0,c)$ to $z^{r-1}_{F_t^{A''}}(C_t^{A''})[-2r]$, $(t,0,(0,0))$ to $z^{r-1}_{F_t^{A''}}(C_t^{A''})[-2r]$,

$(t,0,(1,0))$ to
$z^r_{F_t^A}(C_t^A)[-2r]$, $(t,0,(1,1))$ to $z^r_{F_t^{A'}}(C_t^{A'})[-2r]$,
$(t,0,(0,1))$ to $0$, 

$(t,1,c)$ to $\tilde{\caG}^{r-1}_{F_t^{A''}}(C_t^{A''})[-2]$,
$(t,1,(0,0))$ to $\caH_t$,
$(t,1,(1,0))$ to $\caG^r_{F_t^A}(C_t^A)$, $(t,1,(1,1))$ to $\caG^r_{F_t^{A'}}(C_t^{A'})$,
$(t,1,(0,1))$ to $0$,

$(t,2,c)$ to $\tilde{\caG}'(C_t^{A''})[-2]$,
$(t,2,(0,0))$ to $\ker(\caG'(C_t^A) \to \caG'(C_t^{A'}))$,

$(t,2,(1,0))$ to $\caG'(C_t^A)$, $(t,2,(1,1))$ to $\caG'(C_t^{A'})$,
$(t,2,(0,1))$ to $0$,

$(t,3,c)$ to $\tilde{\caG}(C_t^{A''})[-2]$,
$(t,3,(0,0))$ to $\ker(\caG(C_t^A) \to \caG(C_t^{A'}))$,
$(t,3,(1,0))$ to $\caG(C_t^A)$, $(t,3,(1,1))$ to $\caG(C_t^{A'})$,
$(t,3,(0,1))$ to $0$,

$(t,4,c)$ to $\tilde{\caG}(A'' \otimes_D A_{\varphi_n(t)})[-2]$,
$(t,4,(0,0))$ to $\ker(\caG(A \otimes_D A_{\varphi_n(t)}) \to \caG(A' \otimes_D A_{\varphi_n(t)}))$,
$(t,4,(1,0))$ to $\caG(A \otimes_D A_{\varphi_n(t)})$,
$(t,4,(1,1))$ to $\caG(A' \otimes_D A_{\varphi_n(t)})$ and
$(t,4,(0,1))$ to $0$.

Sheafifying we obtain a functor $\tilde{\alpha}_K^\heartsuit \colon \caE_n \times I \times J \to \Cpx(\Sh(S_\Zar,\integers))$.
For $K$ a $n$-simplex in the nerve of $\hat{\caS}_m \times I \times J$ we let
$\gamma_K^\heartsuit:= L^H(\Gamma \circ T^p(\tilde{\alpha}^\heartsuit)_K) \circ q_n$, where $p$ is the
functor $\hat{\caS}_m \times I \times J \to \caS_m \times I \times J$.

The $\gamma_K^\heartsuit$ glue to give a map 
$$\gamma^\heartsuit \colon \caD_{Q_\bullet}^{\hat{\caS}_m \times I \times J} \to L^H_{qi} \Cpx(\Ab).$$

We denote by $\gamma_\heartsuit' \in \Ho(\Cpx(\Ab)^{\hat{\caS}_m \times I \times J})$ the diagram canonically
associated to $\gamma^\heartsuit$.

The commutativity of the push forward of the corresponding $I \times J$-diagram in $\Ho(\Cpx(\Ab)^{\hat{S}_m})$ to
$\D(\Sh(\Sm_{U,\Zar},\integers))$ shows the claim, using Corollary \ref{m4u53d}.

\end{proof}

\subsection{The naive $\mathbb{G}_m$-spectrum}
\label{gtre5rdd}

\begin{proposition}
\label{ht4ztr}
There is a canonical isomorphism $$\caM(r-1)[-1] \cong
\underline{\R \Hom}(\integers[\GmS,\{1\}]_\Zar,\caM(r))$$ in $\D(\Sh(\Sm_{S,\Zar},\integers))$.
\end{proposition}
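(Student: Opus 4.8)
The plan is to apply the localization triangle of Proposition \ref{bddrzhf} to the standard closed-open decomposition of the affine line, exactly as was done in the proof of Proposition \ref{bgerhh4} for the finite-coefficient parts. Concretely, take $X = \mathbb{A}^1_S$, $Z = \{0\} \hookrightarrow \mathbb{A}^1_S$ the zero section (a codimension $1$ closed immersion of affine schemes in $\Sm_S$), and $U = \GmS$ the complementary open affine. Since the statement is about the internal $\RHom$ out of $\integers[\GmS,\{1\}]_\Zar$, I would first rewrite everything in terms of $\underline{\R\Hom}(\integers[-]_\Zar, \caM(r))$-type objects, using that $\integers[\mathbb{A}^1_S]_\Zar \to \integers[\GmS]_\Zar$ splits off the unit summand $\integers[S]_\Zar$ coming from the section $\{1\}$, so $\integers[\GmS]_\Zar \cong \integers[S]_\Zar \oplus \integers[\GmS,\{1\}]_\Zar$, and similarly the zero section gives $\integers[\mathbb{A}^1_S]_\Zar \simeq \integers[S]_\Zar$ after $\mathbb{A}^1$-localization, but here we work before $\mathbb{A}^1$-localization so I instead use Theorem \ref{htehg} (homotopy invariance of the motivic complexes) to identify $\underline{\R\Hom}(\integers[\mathbb{A}^1_S]_\Zar, \caM(r))$ with $\caM(r)$.

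The key steps, in order: (1) Instantiate Proposition \ref{bddrzhf} with the triple $(\{0\}, \mathbb{A}^1_S, \GmS)$ to get the exact triangle
$$\underline{\R\Hom}(\integers[S]_\Zar, \caM(r-1))[-2] \to \underline{\R\Hom}(\integers[\mathbb{A}^1_S]_\Zar, \caM(r)) \to \underline{\R\Hom}(\integers[\GmS]_\Zar, \caM(r)) \to \underline{\R\Hom}(\integers[S]_\Zar, \caM(r-1))[-1]$$
in $\D(\Sh(\Sm_{S,\Zar},\integers))$. (2) Identify the first term with $\caM(r-1)[-2]$ and the second term with $\caM(r)$ via Theorem \ref{htehg}. (3) Use the splitting of $\integers[\GmS]_\Zar$ to write the third term as $\caM(r) \oplus \underline{\R\Hom}(\integers[\GmS,\{1\}]_\Zar, \caM(r))$, where the $\caM(r)$ summand is hit isomorphically by the second map in the triangle (the restriction along the section $\{1\} \hookrightarrow \GmS$ composed with $\GmS \to \mathbb{A}^1_S$ is the identity on $S$, so on this summand the map $\caM(r) \to \caM(r)$ is the identity). (4) Conclude that the triangle splits off a trivial summand and the boundary map induces an isomorphism $\underline{\R\Hom}(\integers[\GmS,\{1\}]_\Zar, \caM(r)) \xrightarrow{\cong} \caM(r-1)[-1]$ in $\D(\Sh(\Sm_{S,\Zar},\integers))$; inverting gives the asserted isomorphism. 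This is the same splitting argument used at the end of the proof of Proposition \ref{bgerhh4}.

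The main obstacle I anticipate is step (3)–(4): making precise that the localization triangle of Proposition \ref{bddrzhf} is compatible with the splittings induced by the section and the projection, i.e. that the second map of the triangle really restricts to the identity on the $\caM(r)$-summand. For this one needs to track that the map $\underline{\R\Hom}(\integers[\mathbb{A}^1_S]_\Zar, \caM(r)) \to \underline{\R\Hom}(\integers[\GmS]_\Zar, \caM(r))$ in the triangle is precisely the one induced by the open immersion $\GmS \hookrightarrow \mathbb{A}^1_S$ — which is part of the statement of Proposition \ref{bddrzhf} — and then compose with the section $S \to \GmS$, $1 \mapsto$ the rational point, using naturality of $\underline{\R\Hom}(\integers[-]_\Zar, \caM(r))$ in the scheme variable together with homotopy invariance (Theorem \ref{htehg}) to see that the composite $\caM(r) \xrightarrow{\sim} \underline{\R\Hom}(\integers[\mathbb{A}^1_S]_\Zar,\caM(r)) \to \underline{\R\Hom}(\integers[\GmS]_\Zar,\caM(r)) \to \underline{\R\Hom}(\integers[S]_\Zar,\caM(r)) \cong \caM(r)$ is the identity. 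Granting this, the abstract nonsense of split triangles finishes the proof; one should also remark (as in the analogous earlier arguments) that the resulting isomorphism is canonical because there are no nontrivial maps $\caM(r-1)[-1] \to \caM(r-1)[-2]$, so the splitting is unique.
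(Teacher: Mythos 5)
Your proposal is correct and follows essentially the same route as the paper: the paper's proof applies Proposition \ref{bddrzhf} to the pair $(\{0\},\mathbb{A}^1_S,\GmS)$ and then splits the resulting triangle using the section $\{1\}\subset\GmS$ together with the $\mathbb{A}^1$-invariance of $\caM(r)$ (Theorem \ref{htehg}), exactly as in your steps (1)--(4). Your additional care in checking that the retraction composes to the identity and that the splitting is canonical is the same verification the paper leaves implicit.
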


\begin{proof}
By Proposition \ref{bddrzhf} there is an exact triangle
$$\caM(r-1)[-2] \to \underline{\R \Hom}(\integers[\mathbb{A}^1_S]_\Zar,\caM(r))
\to \underline{\R \Hom}(\integers[\GmS]_\Zar,\caM(r)) \to \caM(r-1)[-1].$$

There is a split $\underline{\R \Hom}(\integers[\GmS]_\Zar,\caM(r)) \to 
\underline{\R \Hom}(\integers[\mathbb{A}^1_S]_\Zar,\caM(r))$ induced by
$\{1\} \subset \GmS$ and the $\mathbb{A}^1$-invariance of $\caM(r)$.
This induces the required isomorphism.
\end{proof}

We thus get a naive $\integers[\GmS,\{1\}]_\Zar$-spectrum $\caM$ (in the sense of \cite[I 6]{riou.these}) in
$\D(\Sh(\Sm_{S,\Zar},\integers))$ with entry $\caM(r)[r]$ in level $r$.
We also denote a lift of $\caM$ to the homotopy category of $\integers[\GmS,\{1\}]_\Zar$-spectra
by $\caM$. If we want to emphasize the dependence of $\caM$ on $S$ we write
$\caM_S$ instead of $\caM$.

\section{Motivic complexes over a field}
\label{hjrerrt}

We first note that the material from section \ref{h5643dfgf} carries over verbatim
to the case of smooth schemes over a field $k$, except that we do not have to use
the functor $\Gamma$ in the constructions. We denote the resulting motivic complexes
in $\D(\Sh(\Sm_{k,\Zar},\integers))$ by $\caM(r)_k$. The resulting naive $\mathbb{G}_m$-spectrum
is denoted by $\caM_k$, the same notation is used for a lift to a spectrum.
In this section we will use the notation of section \ref{h5643dfgf} (like $\caS$, $C_t^A$ etc.) carried over to the field case.

We let $\tilde{z}^r(X)=C_*(z_\equi(\mathbb{A}^r,0))(X)$ (for notation see e.g.
\cite{mazza-voevodsky-weibel}) be the complex introduced by
Friedlander and Suslin (\cite{friedlander-suslin}), so $\tilde{z}^r \in \Cpx(\Sh(\Sm_{k,\Zar},\integers))$
and the Zariski hypercohomology of $\tilde{z}^r$ computes Bloch's higher Chow groups (see loc. cit.).

Let $A=k[T_1,\ldots,T_r]$. For $K$ a $n$-simplex in the nerve of $\caS$
(with corresponding chain $A_0 \to \cdots \to A_n$ of $k$-algebras) we define a functor
$\alpha_K^e \colon \caE_n \times [1] \to \Cpx(\Ab)$ by sending $(t,0)$ to $\tilde{z}^r(A_{\varphi_n(t)})$ and
$(t,1)$ to $z^r_{F_t^A}(C_t^A)$.

For $K$ a $n$-simplex in the nerve of $\hat{\caS} \times [1]$ let $\gamma^e_K := L^H(T^p(\alpha^e)_K)  \circ q_n$.
The $\gamma^e_K$ glue to give a map
$$\gamma^e \colon \caD^{\hat{\caS} \times [1]}_{Q_\bullet} \to L^H_{qi} \Cpx(\Ab).$$
We denote by $\gamma_e' \in \Ho(\Cpx(\Ab)^{\hat{\caS} \times [1]})$ the diagram canonically associated to $\gamma^e$.

The map in $\D(\Sh(\Sm_{k,\Zar},\integers))$ associated to the push forward of $\gamma_e'[-2r]$ is an isomorphism.
Moreover the target is canonically isomorphic to $\caM(r)_k$. We get the

\begin{proposition}
\label{h46tef}
The complexes $\tilde{z}^r[-2r]$ and $\caM(r)_k$ are canonically isomorphic in $\D(\Sh(\Sm_{k,\Zar},\integers))$.
\end{proposition}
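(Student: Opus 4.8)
The plan is to show that $\caM(r)_k$, which by construction interpolates between the Levine-style cycle complexes $z^r_{F_t^A}(C_t^A)$ living over the category $\caE_n$ (with its moving conditions), is canonically identified in $\D(\Sh(\Sm_{k,\Zar},\integers))$ with the Friedlander--Suslin complex $\tilde z^r[-2r]$. The key idea is exactly the same coend/strictification machine already used in section \ref{h5643dfgf}: produce a single diagram over $\hat{\caS} \times [1]$ whose two ends are the object we want to compare to and the object $\caM(r)_k$, such that the comparison map is a pointwise quasi-isomorphism, hence an isomorphism in the homotopy category of $[1]$-diagrams, hence the two ends are canonically isomorphic.

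Concretely, I would first fix $A = k[T_1,\ldots,T_r]$ and, for each $n$-simplex $K$ in the nerve of $\caS$, define $\alpha_K^e \colon \caE_n \times [1] \to \Cpx(\Ab)$ sending $(t,0)$ to $\tilde z^r(A_{\varphi_n(t)})$ and $(t,1)$ to $z^r_{F_t^A}(C_t^A)$, as in the statement's setup; one checks these are compatible with monomorphisms in $\bigtriangleup$ using the functoriality already established in section \ref{kdtjjz}. Then I would run the now-standard procedure: form $\gamma^e_K := L^H(T^p(\alpha^e)_K)\circ q_n$, glue to $\gamma^e \colon \caD^{\hat{\caS}\times[1]}_{Q_\bullet} \to L^H_{qi}\Cpx(\Ab)$, apply the strictification of \cite[Proposition 4.2.4.4]{lurie.HTT} to get $\gamma_e' \in \Ho(\Cpx(\Ab)^{\hat{\caS}\times[1]})$, push forward along $p \colon \hat{\caS} \to \caS$ and Zariski-localize to land in $\D(\Sh(\Sm_{k,\Zar},\integers))$. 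The key verifications are: (i) the resulting map of $[1]$-diagrams is pointwise an isomorphism; (ii) the end at $1$ is canonically $\caM(r)_k[2r]$ (this is essentially the definition of $\caM(r)_k$, carried over to the field case, noting that in the field case one does not need $\Gamma$); and (iii) the end at $0$ computes $\tilde z^r[-2r]\cdot[2r] = \tilde z^r$ after the appropriate shift, which follows from the fact that the push forward along the structure morphism computes the derived push forward and that $\tilde z^r$ already has Zariski descent (this is part of Friedlander--Suslin, loc.~cit.).

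The main obstacle is step (i): the claim that for each $t = (A,B,i) \in \caE_n$ the comparison map $\tilde z^r(A_{\varphi_n(t)}) \to z^r_{F_t^A}(C_t^A)$ is a quasi-isomorphism. This map factors through the inclusion of $z^r_{F_t^A}(C_t^A)$ into the full cycle complex $z^r(C_t^A)$, which is a quasi-isomorphism by Levine's moving Lemma (Theorem \ref{6gfrfd}, here in its field version); then one must compare $z^r(C_t^A)$ with $\tilde z^r(A_i)$. Since $C_t^A = A \otimes_D C_t$ is a polynomial extension of $A_i = A_{\varphi_n(t)}$ (more precisely $C_t$ is already a polynomial ring over $A_i$, and tensoring with $A = k[T_1,\ldots,T_r]$ keeps it polynomial), the homotopy invariance of higher Chow groups (Theorem \ref{htehg}) identifies $z^r(C_t^A)$ with $z^r(A_i)$ up to quasi-isomorphism, and the standard Friedlander--Suslin comparison identifies $z^r$ with $\tilde z^r$ (i.e.\ the cycle complex of $\mathbb{A}^r$-valued equidimensional cycles computes the same higher Chow groups). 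Assembling these three quasi-isomorphisms compatibly over all of $\caE_n$ and all $K$ is the bookkeeping heart of the argument, but it is entirely parallel to the constructions already carried out, so I would phrase it as: "one checks by the same arguments as in section \ref{h5643dfgf}, using Theorems \ref{6gfrfd}, \ref{htehg} and the results of \cite{friedlander-suslin}, that $\gamma_e'$ is pointwise an isomorphism," and conclude.
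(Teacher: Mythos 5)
Your proposal follows essentially the same route as the paper: the paper's proof of Proposition \ref{h46tef} is exactly the construction preceding it, namely the functor $\alpha_K^e$ with $A=k[T_1,\ldots,T_r]$ sending $(t,0)\mapsto \tilde{z}^r(A_{\varphi_n(t)})$ and $(t,1)\mapsto z^r_{F_t^A}(C_t^A)$, strictified via $\gamma^e$ and pushed forward, with the assertion that the resulting map is an isomorphism onto something canonically identified with $\caM(r)_k$. Your additional justification of the pointwise quasi-isomorphism (Levine's moving lemma, homotopy invariance, and the Friedlander--Suslin comparison) is correct and simply spells out what the paper leaves implicit.
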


Pairing of cycles gives us pairings $\tilde{z}^r \otimes \tilde{z}^{r'} \to \tilde{z}^{r+r'}$
(involving the Eilenberg-Zilber map), using Proposition \ref{h46tef}
this gives us pairings
\begin{equation}
\label{h5rrgtt}
\caM(r)_k \otimes^\bL \caM(r')_k \to \caM(r+r')_k
\end{equation}
in $\D(\Sh(\Sm_{k,\Zar},\integers))$ which are unital, associative and commutative.
Using the diagonal we obtain for any $X \in \Sm_k$ pairings
$$\underline{\R \Hom}(\integers[X]_\Zar,\caM(r)_k) \otimes^\bL \underline{\R \Hom}(\integers[X]_\Zar,\caM(r')_k)
\to \underline{\R \Hom}(\integers[X]_\Zar,\caM(r+r')_k)$$ such that the map
$$\caM(\bullet)_k \to \underline{\R \Hom}(\integers[X]_\Zar,\caM(\bullet)_k)$$
is a map of $\naturals$-graded algebras.

We derive an action 
\begin{equation}
\label{ghu54}
\caM(r)_k \otimes^\bL \underline{\R \Hom}(\integers[X]_\Zar,\caM(r')_k)
\to \underline{\R \Hom}(\integers[X]_\Zar,\caM(r+r')_k).
\end{equation}

In order to achieve a compatibility between the localization triangle (Proposition \ref{bddrzhf}) and
this action we study an action of $\tilde{z}^r$ directly on Bloch's complexes which appear in the
definition of the $\caM(r')_k$.

Let $A$ be a smooth $k$-algebra and let $A':=A[T_1,\ldots,T_r]$.
For $K$ a $n$-simplex in the nerve of $\caS$
(with corresponding chain $A_0 \to \cdots \to A_n$ of $k$-algebras) we define a functor
$\alpha_K^a \colon \caE_n \times [1] \to \Cpx(\Ab)$ by sending $(t,0)$ to $\tilde{z}^r(A_{\varphi_n(t)}) \otimes z^{r'}_{F_t^A}(C_t^A)$ and
$(t,1)$ to $z^{r+r'}_{F_t^{A'}}(C_t^{A'})$ (the transition maps from $0$ to $1$ are induced by pairing of cycles).

For $K$ a $n$-simplex in the nerve of $\hat{\caS} \times [1]$ let $\gamma^a_K := L^H(T^p(\alpha^a)_K)  \circ q_n$.
The $\gamma^a_K$ glue to give a map
$$\gamma^a \colon \caD^{\hat{\caS} \times [1]}_{Q_\bullet} \to L^H_{qi} \Cpx(\Ab).$$
We denote by $\gamma_a' \in \Ho(\Cpx(\Ab)^{\hat{\caS} \times [1]})$ the diagram canonically associated to $\gamma^a$.

The map in $\D(\Sh(\Sm_{k,\Zar},\integers))$ associated to the push forward of $\gamma_a'[-2r-2r']$ yields an action map
\begin{equation}
\label{h5th4gh}
\caM(r)_k \otimes^\bL \underline{\R \Hom}(\integers[X]_\Zar,\caM(r')_k)
\to \underline{\R \Hom}(\integers[X]_\Zar,\caM(r+r')_k),
\end{equation}
where $X=\Spec(A)$.
We will show in Lemma \ref{ghh4rfrf} that the action maps
(\ref{ghu54}) and (\ref{h5th4gh}) coincide.

\begin{lemma}
\label{h453hss}
The pairing (\ref{h5th4gh}) for $A=k$ coincides with the pairing (\ref{h5rrgtt}).
\end{lemma}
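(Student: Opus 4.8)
The plan is to exploit the fact that, at the level of cycle complexes, both pairings follow the same recipe --- external product of algebraic cycles, composed with the geometric Eilenberg--Zilber subdivision of $\Delta^\bullet \times \Delta^\bullet$ --- and differ only in which chain model of the motivic complex is used in each variable. So the strategy is to interpolate between them by a single functor built on $\caE_n$ (together with an auxiliary finite poset), and then to conclude exactly as in sections \ref{kdtjjz}--\ref{gtre5rdd}.

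Concretely, I would fix an $n$-simplex $K$ of the nerve of $\caS$, with associated chain $A_0 \to \cdots \to A_n$, write $B_s := k[T_1,\ldots,T_s]$ (so that $C_t^{B_s} = C_t[T_1,\ldots,T_s]$, with $F_t^{B_s}$ the preimage of $F_t$ under $\Spec(C_t^{B_s}) \to \Spec(C_t)$), and construct a functor $\alpha^\flat_K \colon \caE_n \times P \to \Cpx(\Ab)$, where $P$ is a finite poset --- of the kind produced by the zig-zag device of section \ref{jrtzuutre} --- indexing a ``ladder''. The two outer rails of the ladder would be, on the one side, the Eilenberg--Zilber pairing $\tilde{z}^r(A_{\varphi_n(t)}) \otimes \tilde{z}^{r'}(A_{\varphi_n(t)}) \to \tilde{z}^{r+r'}(A_{\varphi_n(t)})$ of Friedlander--Suslin complexes, and on the other side the pairing $\alpha^a_K$ with $A = k$ (for which $A' = B_r$ and $C_t^{A'} = C_t^{B_r}$), namely $\tilde{z}^r(A_{\varphi_n(t)}) \otimes z^{r'}_{F_t}(C_t) \to z^{r+r'}_{F_t^{B_r}}(C_t^{B_r})$. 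The vertices of $\alpha^\flat_K$ would realize the cycle complexes
$$\tilde{z}^r(A_{\varphi_n(t)}) \otimes \tilde{z}^{r'}(A_{\varphi_n(t)}),\quad \tilde{z}^r(A_{\varphi_n(t)}) \otimes z^{r'}_{F_t^{B_{r'}}}(C_t^{B_{r'}}),\quad \tilde{z}^r(A_{\varphi_n(t)}) \otimes z^{r'}_{F_t}(C_t),$$
$$\tilde{z}^{r+r'}(A_{\varphi_n(t)}),\quad z^{r+r'}_{F_t^{B_{r+r'}}}(C_t^{B_{r+r'}}),\quad z^{r+r'}_{F_t^{B_r}}(C_t^{B_r}),\quad z^{r+r'}_{F_t}(C_t),$$
and its edges would be of three kinds: (i) external product of cycles followed by Eilenberg--Zilber subdivision (the outer and the intermediate pairings); (ii) flat pullback of cycles along $\Spec(C_t^{B_s}) \to \Spec(C_t)$ (the homotopy-invariance maps); (iii) the comparison maps $\tilde{z}^s(A_{\varphi_n(t)}) \to z^s_{F_t^{B_s}}(C_t^{B_s})$ underlying Proposition \ref{h46tef}. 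As in section \ref{kdtjjz} these functors are compatible for monomorphisms in $\bigtriangleup$; one then forms $T^p(\alpha^\flat)_K$ for $p \colon \hat{\caS} \times P \to \caS \times P$, sets $\gamma^\flat_K := L^H(T^p(\alpha^\flat)_K) \circ q_n$, glues over $\caN$ and passes to the associated diagram $\gamma_\flat' \in \Ho(\Cpx(\Ab)^{\hat{\caS} \times P})$; its push-forward to $\D(\Sh(\Sm_{k,\Zar},\integers))$ is a $P$-diagram in which every edge of type (ii) or (iii) is an isomorphism, by Theorem \ref{htehg}, Theorem \ref{6gfrfd} and Proposition \ref{h46tef}.

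By construction, one outer rail of this $P$-diagram is the Eilenberg--Zilber pairing of Friedlander--Suslin complexes, which --- conjugated by the isomorphisms of Proposition \ref{h46tef} --- is exactly the pairing (\ref{h5rrgtt}); the other is the pairing (\ref{h5th4gh}) for $A = k$, using that $\integers[\Spec(k)]_\Zar$ is the monoidal unit, so that $\underline{\R \Hom}(\integers[\Spec(k)]_\Zar,\caM(r')_k) \cong \caM(r')_k$ and likewise in degree $r+r'$. So the lemma would reduce to the commutativity, already at the level of cycle complexes, of the rungs of the ladder, and this in turn reduces to two standard facts about algebraic cycles: external product is associative and commutes with flat pullback (the functoriality of Appendix \ref{ht635zh}, in particular Theorem \ref{h5t423t}), and the Eilenberg--Zilber subdivision is natural in the simplicial variable. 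In particular the comparison map of Proposition \ref{h46tef}, being induced by flat pullback of cycles, is a morphism of $\naturals$-graded pairings, not merely of complexes, so it intertwines the Eilenberg--Zilber pairing on the Friedlander--Suslin complexes with the pairing appearing in $\alpha^a_K$.

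The step I expect to be the main obstacle is precisely this last compatibility: verifying that the triangulation of $\Delta^m \times \Delta^{m'}$ used to define the pairing on Levine's complexes $z^{r'}_{F_t}(C_t)$ matches, after the comparison map, the one used on $\tilde{z}^{r'}$ --- that is, that the comparison is genuinely a map of graded pairings. This is a bookkeeping-heavy but essentially routine verification about shuffles and proper intersections; once it is in place, the assembly over $\caE_n$ and $\caN$ runs exactly as in all the preceding constructions, and nothing beyond Proposition \ref{gvshzu6} is needed on the homotopy-theoretic side.
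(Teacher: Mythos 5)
Your proposal is correct and is essentially the paper's own argument: the paper likewise writes down a strictly commuting chain-level diagram over $\caE_n \times [1]^2$ (with vertices $\tilde{z}^r(A_{\varphi_n(t)}) \otimes \tilde{z}^{r'}(A_{\varphi_n(t)})$, $\tilde{z}^{r+r'}(A_{\varphi_n(t)})$, $\tilde{z}^r(A_{\varphi_n(t)}) \otimes z^{r'}_{F_t^A}(C_t^A)$, $z^{r+r'}_{F_t^{A'}}(C_t^{A'})$, the verticals being the comparison maps underlying Proposition \ref{h46tef}) and pushes it through the same $T^p$, $L^H$, $\caD_{Q_\bullet}$ strictification machinery, your longer ladder merely merging into one diagram what the paper does in two steps (first $A=k[T_1,\ldots,T_{r'}]$, then $A=k$ ``in a similar manner''). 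The Eilenberg--Zilber compatibility you flag as the main obstacle is indeed only routine bookkeeping, since both pairings are defined by the identical external-product-plus-shuffle recipe and flat pullback commutes with it.
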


\begin{proof}
Let $A:=k[T_1,\ldots,T_{r'}]$ and $A':=A[T_1,\ldots,T_r]$.
For $K$ a $n$-simplex in the nerve of $\caS$
(with corresponding chain $A_0 \to \cdots \to A_n$ of $k$-algebras) we define a functor
$\alpha_K^c \colon \caE_n \times [1]^2 \to \Cpx(\Ab)$ by sending $(t,0,0)$ to 
$\tilde{z}^r(A_{\varphi_n(t)}) \otimes \tilde{z}^{r'}(A_{\varphi_n(t)})$,
$(t,0,1)$ to $\tilde{z}^{r+r'}(A_{\varphi_n(t)})$,
$(t,1,0)$ to $\tilde{z}^r(A_{\varphi_n(t)}) \otimes z^{r'}_{F_t^A}(C_t^A)$ and
$(t,1,1)$ to $z^{r+r'}_{F_t^{A'}}(C_t^{A'})$.

For $K$ a $n$-simplex in the nerve of $\hat{\caS} \times [1]^2$ let $\gamma^c_K := L^H(T^p(\alpha^c)_K)  \circ q_n$.
The $\gamma^c_K$ glue to give a map
$$\gamma^c \colon \caD^{\hat{\caS} \times [1]^2}_{Q_\bullet} \to L^H_{qi} \Cpx(\Ab).$$
We denote by $\gamma_c' \in \Ho(\Cpx(\Ab)^{\hat{\caS} \times [1]^2})$ the diagram canonically associated to $\gamma^c$.

The commutativity of the square in $\D(\Sh(\Sm_{k,\Zar},\integers))$ associated to the push forward of $\gamma_c'[-2r-2r']$
shows the claim for the $A$ under consideration.
The claim for $A=k$ is shown in a similar manner.
\end{proof}

\begin{lemma}
\label{ghh4rfrf}
For affine $X$ the action maps
(\ref{ghu54}) and (\ref{h5th4gh}) coincide.
\end{lemma}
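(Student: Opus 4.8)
The statement to prove is Lemma \ref{ghh4rfrf}: for affine $X = \Spec(A)$ with $A$ a smooth $k$-algebra, the two action maps
$$\caM(r)_k \otimes^\bL \underline{\R \Hom}(\integers[X]_\Zar,\caM(r')_k) \to \underline{\R \Hom}(\integers[X]_\Zar,\caM(r+r')_k)$$
— the map \eqref{ghu54} obtained from the multiplicative structure \eqref{h5rrgtt} composed with the diagonal, and the map \eqref{h5th4gh} constructed directly by pairing cycles on the $C_t^{A}$-level — agree in $\D(\Sh(\Sm_{k,\Zar},\integers))$.

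The approach is the same coend/strictification bookkeeping used throughout section \ref{h5643dfgf}: build a single functor out of $\caE_n \times (\text{shape})$ that simultaneously realizes both composites as faces of one diagram, so that their equality follows from a commutativity of the underlying cycle-level diagram. Concretely, recall that \eqref{ghu54} is \eqref{h5th4gh}'s ``indirect'' avatar: \eqref{h5rrgtt} is by construction (Proposition \ref{h46tef}) induced by the Friedlander–Suslin pairing $\tilde z^r \otimes \tilde z^{r'} \to \tilde z^{r+r'}$, while the action on $\underline{\R\Hom}(\integers[X]_\Zar,\caM(r')_k)$ is obtained by applying $\underline{\R\Hom}(\integers[X]_\Zar,-)$ to this global pairing. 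So I would set up, for a $k$-algebra $A$ with generators and for $K$ an $n$-simplex in the nerve of $\caS$, a functor $\alpha_K^b \colon \caE_n \times [1]^2 \to \Cpx(\Ab)$ whose four corners $(t,i,j)$ interpolate: $(t,0,0) \mapsto \tilde z^r(A_{\varphi_n(t)}) \otimes \tilde z^{r'}(A_{\varphi_n(t)}) \otimes z^{?}_{\dots}(\dots)$ is not quite right — rather, the cleanest choice is to let the $[1]^2$ directions record (i) replacing the ``global'' factor $\tilde z^r$ by the strictified $z^r_{F_t^{A_0}}(C_t^{A_0})$ with $A_0 = k[T_1,\dots,T_r]$, and (ii) replacing $\tilde z^{r'}(A_{\varphi_n(t)})$-fed-into-$\underline{\R\Hom}(\integers[X]_\Zar,-)$ by the strict model $z^{r'}_{F_t^{A}}(C_t^{A})$ with $A$ as in section \ref{h4u4r7u}. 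The four corners then are: the global pairing followed by $\underline{\R\Hom}(\integers[X]_\Zar,-)$ on one side, and the direct pairing $\alpha_K^a$ of the present section on the other, with the two intermediate corners given by half-strictified versions. Each transition natural transformation is induced by the obvious inclusions of algebras $A_i \hookrightarrow C_t^{A}$ etc., exactly as in Lemmas \ref{6grth}, \ref{ghnjs4} and in the construction of \eqref{h5th4gh}; the commutativity at cycle level is the statement that pairing of cycles commutes with flat pullback and with these inclusions, which is part of the formalism of Appendix \ref{ht635zh} (the associativity/functoriality of pullbacks of cycles, Theorem \ref{h5t423t}).

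From there the mechanics are routine and identical to every earlier instance: glue the $L^H(\Gamma \circ T^p(\alpha^b)_K) \circ q_n$ (here over $k$, so without $\Gamma$) into a map $\gamma^b \colon \caD^{\hat{\caS} \times [1]^2}_{Q_\bullet} \to L^H_{qi}\Cpx(\Ab)$, pass to the canonically associated $\gamma_b' \in \Ho(\Cpx(\Ab)^{\hat{\caS} \times [1]^2})$, push forward along $\hat{\caS} \to \caS$ and Zariski-localize, and invoke that all transition maps in the ``strictification'' directions become isomorphisms (by Theorem \ref{6gfrfd} and Corollary \ref{nderfd}/Proposition \ref{h46tef}). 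The resulting $[1]^2$-square in $\D(\Sh(\Sm_{k,\Zar},\integers))$ has the two ``strictify'' edges invertible, so its commutativity identifies the remaining two composites — which are precisely \eqref{ghu54} and \eqref{h5th4gh} after also using Lemma \ref{h453hss} (the case $A=k$) to pin down that the global pairing used is indeed \eqref{h5rrgtt}.

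The main obstacle is \emph{not} any deep geometry but the combinatorial accounting: one must choose the generators and the functors $\alpha_K^b$ so that the diagonal map $\integers[X]_\Zar \to \integers[X \times X]_\Zar$ implicit in \eqref{ghu54} is correctly reflected at the level of the $C_t^{A}$'s — i.e. that feeding $A$ in as the ``test'' variable while $k[T_1,\dots,T_r]$ supplies the motivic-weight-$r$ factor, and then restricting along the diagonal, matches the construction of \eqref{h5th4gh} where $A' = A[T_1,\dots,T_r]$ already has both bundled together. Concretely one checks that the evident map $A[T_1,\dots,T_r] \to C_t^{A} \otimes_{A_{\varphi_n(t)}} C_t^{k[T]}$ (or the relevant tensor over $k$) is compatible with the cycle conditions $F_t^{A'}$, which is the same verification as in Lemma \ref{jg45fj}. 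Once that compatibility is in hand, everything else is the by-now-standard push-forward-of-a-coend argument, and I would keep the write-up short by citing Lemma \ref{hnzt4}-style reasoning and Theorem \ref{6gfrfd} rather than re-deriving it.
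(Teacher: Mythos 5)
Your overall strategy (encode both maps in a single cycle-level functor on $\caE_n\times(\text{shape})$, glue via the hammock-localization/coend machinery, push forward, and finish with Lemma \ref{h453hss}) is the paper's strategy, but the decisive content of the proof is exactly the piece you leave unspecified, and the concrete square you float does not work as stated. If you literally take the two $[1]$-directions to be (i) ``replace $\tilde{z}^r$ by $z^r_{F_t^{A_0}}(C_t^{A_0})$, $A_0=k[T_1,\ldots,T_r]$'' and (ii) ``replace the $\underline{\R \Hom}(\integers[X]_\Zar,-)$-factor by $z^{r'}_{F_t^{A}}(C_t^{A})$'', then at the corner where both replacements have been performed you would need a chain-level pairing of two Bloch-type complexes $z^r_{F_t^{A_0}}(C_t^{A_0})\otimes z^{r'}_{F_t^{A}}(C_t^{A})\to z^{r+r'}_{\cdots}$; such a pairing is not defined without imposing mutual proper-intersection conditions, and Theorem \ref{6gfrfd} gives no moving lemma for the mutual position of two families of cycles (this is precisely why every pairing in this section keeps one factor equidimensional, i.e.\ of Friedlander--Suslin type). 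If instead you keep one factor as $\tilde{z}^r$, then neither of your two directions is the map you actually need: the step that must be modelled at cycle level is the unit/diagonal, i.e.\ $\caM(r)_k\to\underline{\R\Hom}(\integers[X]_\Zar,\caM(r)_k)$ followed by multiplication inside $\underline{\R\Hom}(\integers[X]_\Zar,-)$, and this is not an invertible ``strictification'' edge of your square but a genuinely new map, so commutativity of your square does not identify (\ref{ghu54}) with (\ref{h5th4gh}).

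The paper resolves exactly this point with a three-term ($[2]$-indexed) diagram: $(t,0)\mapsto \tilde{z}^r(A_{\varphi_n(t)})\otimes z^{r'}_{F_t^A}(C_t^A)$, $(t,1)\mapsto \tilde{z}^r(A\otimes_k A_{\varphi_n(t)})\otimes z^{r'}_{F_t^A}(C_t^A)$, $(t,2)\mapsto z^{r+r'}_{F_t^{A'}}(C_t^{A'})$. The first leg (base change of the equidimensional factor along $X\times\Spec A_{\varphi_n(t)}\to\Spec A_{\varphi_n(t)}$) realizes the unit map into $\underline{\R\Hom}(\integers[X]_\Zar,-)$, and the second leg is identified, by the methods at the beginning of section \ref{h4u4r7u} (as in Corollary \ref{bdedzhe}), with $\underline{\R\Hom}(\integers[X]_\Zar,f)$ where $f$ is the pairing obtained from $\gamma_a'$ in the case $A=k$; only then does Lemma \ref{h453hss} apply to recognize $f$ as (\ref{h5rrgtt}), while the total composite is (\ref{h5th4gh}). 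Your write-up cites Lemma \ref{h453hss} at the end, but without the intermediate term $\tilde{z}^r(A\otimes_k A_{\varphi_n(t)})\otimes z^{r'}_{F_t^A}(C_t^A)$ and the identification of the second leg as an $\underline{\R\Hom}(\integers[X]_\Zar,-)$-image, the reduction to that lemma has no bridge; also, your displayed algebra map should be $A[T_1,\ldots,T_r]\to C_t^{A}\otimes_{C_t}C_t^{A_0}\cong C_t^{A'}$ rather than a tensor over $A_{\varphi_n(t)}$. So the plan needs to be repaired by inserting this intermediate object (or an equivalent device realizing the diagonal) before the gluing-and-push-forward mechanics can be run.
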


\begin{proof}
Let $X=\Spec(A)$ and let $A':=A[T_1,\ldots,T_r]$.
For $K$ a $n$-simplex in the nerve of $\caS$
(with corresponding chain $A_0 \to \cdots \to A_n$ of $k$-algebras) we define a functor
$\alpha_K^g \colon \caE_n \times [2] \to \Cpx(\Ab)$ by sending $(t,0)$ to $\tilde{z}^r(A_{\varphi_n(t)}) \otimes z^{r'}_{F_t^A}(C_t^A)$,
$(t,1)$ to $\tilde{z}^r(A \otimes_k A_{\varphi_n(t)}) \otimes z^{r'}_{F_t^A}(C_t^A)$ and
$(t,2)$ to $z^{r+r'}_{F_t^{A'}}(C_t^{A'})$.

For $K$ a $n$-simplex in the nerve of $\hat{\caS} \times [2]$ let $\gamma^g_K := L^H(T^p(\alpha^g)_K)  \circ q_n$.
The $\gamma^g_K$ glue to give a map
$$\gamma^g \colon \caD^{\hat{\caS} \times [2]}_{Q_\bullet} \to L^H_{qi} \Cpx(\Ab).$$
We denote by $\gamma_g' \in \Ho(\Cpx(\Ab)^{\hat{\caS} \times [2]})$ the diagram canonically associated to $\gamma^g$.

Using methods as in the beginning of section \ref{h4u4r7u}
one shows that the second map in the diagram $[2] \to \Ho(\Cpx(\Ab)^\caS)$ associated to $\gamma_g'$ 
is $\underline{\R \Hom}([X],f)$, where $f$ is the map induced by $\gamma_a'$ with $A$ the $k$-algebra $k$.
The composite map associated to $\gamma_g'$ gives the action map (\ref{h5th4gh}),
thus the claim follows from Lemmma \ref{h453hss}.
\end{proof}

\begin{proposition}
\label{bh5z6u5}
Let $i \colon Z \to X$ be a closed immersion of affine schemes in $\Sm_k$ of codimension $1$ with
open affine complement $U$. Then the diagram
$$\xymatrix{
\caM(r)_k \otimes^\bL \underline{\R \Hom}([Z]_\Zar,\caM(r'-1)_k)[-2] \ar[r] \ar[d] &
\underline{\R \Hom}([Z]_\Zar,\caM(r+r'-1)_k)[-2] \ar[d] \\
\caM(r)_k \otimes^\bL \underline{\R \Hom}([X]_\Zar,\caM(r')_k) \ar[r] \ar[d] &
\underline{\R \Hom}([X]_\Zar,\caM(r+r')_k) \ar[d] \\
\caM(r)_k \otimes^\bL \underline{\R \Hom}([U]_\Zar,\caM(r')_k) \ar[d] \ar[r] &
\underline{\R \Hom}([U]_\Zar,\caM(r+r')_k) \ar[d] \\
\caM(r)_k \otimes^\bL \underline{\R \Hom}([Z]_\Zar,\caM(r'-1)_k)[-1] \ar[r] &
\underline{\R \Hom}([Z]_\Zar,\caM(r+r'-1)_k)[-1],}$$
where the horizontal maps are the above action maps and the columns
are the triangles from Proposition \ref{bddrzhf},
commutes.
\end{proposition}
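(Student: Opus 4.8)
The plan is to realize both columns and all the horizontal action maps of the diagram by one and the same $\caE_n$-indexed functor, so that commutativity becomes a strict statement about cycle complexes, exactly in the spirit of the rest of this section. Write $R$, $R'$, $R''$ for the coordinate rings of $X$, $U$, $Z$, with $R\to R'$ the localization given by $U\subset X$ and $R\twoheadrightarrow R''$ the surjection given by $i$, and set $R[T]:=R[T_1,\dots,T_r]$, and likewise $R'[T]$, $R''[T]$. For each $n$-simplex $K$ of the nerve of $\caS$ I would define a functor $\alpha_K^\bullet\colon\caE_n\times[1]^2\times[1]\to\Cpx(\Ab)$ combining the functor $\alpha_K^\smallsquare$ from the proof of Proposition~\ref{bddrzhf} with the functor $\alpha_K^a$ used in the construction of the action maps (which by Lemma~\ref{ghh4rfrf} computes (\ref{ghu54})). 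In the $[1]^2$-direction one keeps the localization-square shape of $\alpha_K^\smallsquare$, with the zero object in the corner $(0,1)$. The last factor $[1]=\{0\to1\}$ encodes the pairing with $\tilde{z}^r$: for $t\in\caE_n$ lying over $\varphi_n(t)$, with $A_{\varphi_n(t)}$ the corresponding algebra in the chain $K$, the level $s=0$ assigns to the corners $(1,0)$, $(1,1)$, $(0,0)$ the complexes $\tilde{z}^r(A_{\varphi_n(t)})\otimes z^{r'}_{F_t^{R}}(C_t^{R})$, $\tilde{z}^r(A_{\varphi_n(t)})\otimes z^{r'}_{F_t^{R'}}(C_t^{R'})$, $\tilde{z}^r(A_{\varphi_n(t)})\otimes z^{r'-1}_{F_t^{R''}}(C_t^{R''})$, and the level $s=1$ assigns $z^{r+r'}_{F_t^{R[T]}}(C_t^{R[T]})$, $z^{r+r'}_{F_t^{R'[T]}}(C_t^{R'[T]})$, $z^{r+r'-1}_{F_t^{R''[T]}}(C_t^{R''[T]})$; the maps $s=0\to s=1$ are the Friedlander--Suslin cycle pairings of $\alpha_K^a$. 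The $[1]^2$-maps at level $s=0$ are $\mathrm{id}_{\tilde{z}^r(A_{\varphi_n(t)})}$ tensored with the maps of $\alpha_K^\smallsquare$ for $(\caM(r'-1)_k,\caM(r')_k)$, and at level $s=1$ they are the maps of $\alpha_K^\smallsquare$ for the codimension-one closed immersion $\Spec R''[T]\hookrightarrow\Spec R[T]$ with open complement $\Spec R'[T]$; thus only cycle pushforward along closed immersions, flat pullback, and cycle pairing ever occur, and no pushforward of equidimensional cycles is needed.

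Assuming these functors defined (compatibly for monomorphisms in $\bigtriangleup$, as usual), I would run them through the machinery of Section~\ref{kdtjjz} in its field version: form $\gamma_K:=L^H(T^p(\alpha^\bullet)_K)\circ q_n$ for $K$ an $n$-simplex of the nerve of $\hat{\caS}\times[1]^2\times[1]$, glue to $\gamma\colon\caD^{\hat{\caS}\times[1]^2\times[1]}_{Q_\bullet}\to L^H_{qi}\Cpx(\Ab)$, pass to the canonically associated $\gamma'\in\Ho(\Cpx(\Ab)^{\hat{\caS}\times[1]^2\times[1]})$ via strictification (\cite[Proposition~4.2.4.4]{lurie.HTT}), push forward along $\hat{\caS}\to\caS$ and Zariski localize, and shift by $[-2r-2r']$. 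The result is a $[1]^2\times[1]$-diagram $\Phi$ in $\D(\Sh(\Sm_{k,\Zar},\integers))$, and it remains to identify its restrictions.

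By Proposition~\ref{h46tef}, and since $\tilde{z}^r$ is a complex of flat sheaves and hence computes the derived tensor product (as already used in Section~\ref{hjrerrt}), the restriction of $\Phi$ to $[1]^2\times\{0\}$ is $\caM(r)_k\otimes^{\bL}(-)$ applied to the localization square of Proposition~\ref{bddrzhf} for $(\caM(r'-1)_k,\caM(r')_k)$; since $\caM(r)_k\otimes^{\bL}(-)$ is exact, this square is exact in the sense of \cite[Definition~1.1.2.11]{lurie.higheralgebra}, so it yields the left column. The restriction of $\Phi$ to $[1]^2\times\{1\}$ is, by Proposition~\ref{bddrzhf} applied to $\Spec R''[T]\hookrightarrow\Spec R[T]$ together with homotopy invariance (Theorem~\ref{htehg}), which naturally identifies it with the localization square for $Z\hookrightarrow X$, the right column. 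Finally, at each corner of $[1]^2$ the edge $\{0\}\to\{1\}$ is, by Lemma~\ref{ghh4rfrf} (applied at the corner $(0,0)$ with $Z$ and $\caM(r'-1)_k$ in place of $X$ and $\caM(r')_k$), exactly the action map (\ref{ghu54}), and the induced map on the fourth terms of the triangles is its shift by $[-1]$. Hence $\Phi$ is precisely the diagram of the Proposition, and it commutes because the underlying $\caE_n\times[1]^2\times[1]$-diagram of cycle complexes is an honest functor.

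The main obstacle is the well-definedness of $\alpha_K^\bullet$, and specifically the strict commutativity of the square in which the pairing edge $s=0\to1$ meets the Gysin edge $(0,0)\to(1,0)$ of the localization square; this is a projection-formula identity at the level of cycles, namely that cycle pushforward along the codimension-one regular closed immersion $i$, carried out in one factor, commutes with the Friedlander--Suslin pairing. I would verify it directly from the explicit description of the pairing (external product followed by refined pullback along a diagonal), using the compatibility of proper pushforward with external products and with flat and Gysin pullback in complementary variables, as recorded in the appendix on pullbacks of algebraic cycles; the corresponding compatibility with the restriction edge $(1,0)\to(1,1)$ is immediate, since flat restriction commutes with the pairing. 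All remaining checks --- preservation of the proper-intersection conditions defining the various $z^{\bullet}_{F}$, and the assembly of the diagrams of simplicial categories --- are routine and parallel to those in the proofs of Propositions~\ref{bddrzhf} and \ref{gtrf5rdd} and Lemmas~\ref{ghh4rfrf} and \ref{h453hss}.
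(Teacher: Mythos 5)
Your proposal is correct and is essentially the paper's own argument: the paper's proof consists precisely of combining the action functors $\alpha_K^a$ with the localization functors $\alpha_K^\smallsquare$ into a single functor $\caE_n\times[1]\times[1]^2\to\Cpx(\Ab)$ and running it through the $\caD_{Q_\bullet}$/strictification machinery, which is what you do (with the factors in the other order and the identifications via Proposition \ref{h46tef}, Lemma \ref{ghh4rfrf} and homotopy invariance spelled out). The only difference is that you make explicit the strict compatibility of the cycle pairing with the pushforward edge of the square, a point the paper leaves implicit.
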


\begin{proof}
For $K$ a $n$-simplex in the nerve of $\caS$ one defines a functor $\caE_n \times [1] \times [1]^2 \to \Cpx(\Ab)$
combining the action maps from above and the functors used in the proof of Proposition \ref{bddrzhf}.
The commutativity of the diagram associated to the resulting functor
$\caD^{\hat{\caS} \times [1] \times [1]^2}_{Q_\bullet} \to L^H_{qi} \Cpx(\Ab)$ shows the claim.
\end{proof}

The isomorphism
$$\integers[-1] \cong \caM(0)_k[-1] \cong \underline{\R \Hom}(\integers[\Gmk,\{1\}]_\Zar,\caM(1)_k)$$
in $\D(\Sh(\Sm_{k,\Zar},\integers))$ from Proposition \ref{ht4ztr}
induces a map $$\iota_1 \colon \integers[\Gmk,\{1\}]_\Zar[-1] \to \caM(1)_k.$$

\begin{lemma}
\label{gh4tzred}
The composition $$\integers[\Gmk,\{1\}]_\Zar[-1] \otimes^\bL \caM(r-1)_k \to \caM(1)_k \otimes^\bL \caM(r-1)_k
\to \caM(r)_k,$$
where the first map is induced by $\iota_1$ and the second map is the above multiplication,
is adjoint to the isomorphism from Proposition \ref{ht4ztr}.
\end{lemma}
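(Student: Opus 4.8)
The plan is to unwind both sides of the claimed identity of morphisms in $\D(\Sh(\Sm_{k,\Zar},\integers))$ and to show that they both arise from the same master diagram built by the coend machinery of section \ref{h5643dfgf}. More concretely, the adjunction isomorphism from Proposition \ref{ht4ztr} identifies $\caM(r-1)_k[-1]$ with $\underline{\R\Hom}(\integers[\Gmk,\{1\}]_\Zar,\caM(r)_k)$, and by the universal property of this adjunction, checking that the displayed composition
$$\integers[\Gmk,\{1\}]_\Zar[-1]\otimes^\bL\caM(r-1)_k \to \caM(1)_k\otimes^\bL\caM(r-1)_k \to \caM(r)_k$$
is adjoint to that isomorphism amounts to verifying that its adjoint
$$\caM(r-1)_k[-1] \to \underline{\R\Hom}(\integers[\Gmk,\{1\}]_\Zar,\caM(r)_k)$$
equals the Proposition \ref{ht4ztr} isomorphism. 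First I would recall that $\iota_1$ is \emph{by definition} the map obtained by taking adjoints of that isomorphism in the case $r=1$; so the content of the lemma is that multiplying by $\iota_1$ and then using the multiplication (\ref{h5rrgtt}) is compatible with the way the splitting in Proposition \ref{ht4ztr} was constructed, i.e. that the module structure (\ref{ghu54}) over $\caM(1)_k$ is compatible with the $\Gmk$-spectrum bonding maps.

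The key steps, in order: (1) Reduce to affine $X$ and use Lemma \ref{ghh4rfrf} to replace the action map (\ref{ghu54}) by the concrete action (\ref{h5th4gh}) defined directly on Bloch's cycle complexes via pairing of cycles; this puts everything in reach of the coend constructions. (2) Build a single functor $\caE_n\times[1]^2 \to \Cpx(\Ab)$ for each $n$-simplex $K$ in the nerve of $\caS$ (here $A=k[T_1]$, so $r=1$), whose four corners realize, respectively, the two sides of the $A^1$-localization triangle of Proposition \ref{bddrzhf} for the pair $(\mathbb{A}^1,\Gmk)$ tensored with $\caM(r-1)_k$, and the analogous triangle for $\caM(r)_k$ after multiplying by $\tilde{z}^1$; the transition maps encode the pairing of cycles. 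Then glue over $\hat{\caS}\times[1]^2$ as in the other lemmas of the section to get $\gamma^\bullet\colon \caD^{\hat{\caS}\times[1]^2}_{Q_\bullet}\to L^H_{qi}\Cpx(\Ab)$ and the associated diagram $\gamma'_\bullet$. (3) Push forward to $\D(\Sh(\Sm_{k,\Zar},\integers))$: by Proposition \ref{bddrzhf} (applied to $\mathbb{A}^1 \supset \Gmk \hookleftarrow \{1\}$) together with $\mathbb{A}^1$-invariance (Theorem \ref{htehg}), the two relevant triangles split off $\caM(0)_k=\integers$ and $\caM(r-1)_k$ respectively, and the splitting is exactly the one producing the isomorphism of Proposition \ref{ht4ztr}. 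Commutativity of the resulting square of complexes then yields that $\iota_1$ followed by multiplication agrees, after taking the adjoint, with the bonding map — which is the claim.

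The main obstacle I anticipate is bookkeeping rather than conceptual: one must set up the $\caE_n\times[1]^2$-diagram so that its restriction to one edge literally reproduces the functor $\alpha_K^a$ (or $\alpha_K^c$) used in defining (\ref{h5th4gh}) and Lemma \ref{h453hss}, while its restriction to the complementary edge reproduces the functor $\alpha_K^\smallsquare$ of Proposition \ref{bddrzhf} specialized to $(A,A',A'')=(k[T_1],k[T_1,T_1^{-1}],k)$, and one must check that the pairing-of-cycles transition maps are genuinely compatible with the flat-pullback/localization structure maps (this is where one needs that pairing with a fixed cycle from $\tilde{z}^1$ is flat-base-change compatible and respects the good-position conditions encoded by the $F_t^A$, which follows from the properties of pullbacks of cycles in Appendix \ref{ht635zh} and the moving Lemma, Theorem \ref{6gfrfd}). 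Once the master diagram is in place, identifying the split summands with the Proposition \ref{ht4ztr} isomorphism is forced because there are no nontrivial maps $\caM(r-1)_k \to \caM(r-1)_k \oplus (\text{a connective shift})$ other than the obvious one — the same rigidity argument used repeatedly in section \ref{45zgf}. So the proof will read: "For $K$ a $n$-simplex in the nerve of $\caS$ one defines a functor $\caE_n\times[1]^2\to\Cpx(\Ab)$ combining the action maps above with the functors used in the proof of Proposition \ref{bddrzhf}; the commutativity of the associated diagram, together with the construction of the isomorphism in Proposition \ref{ht4ztr}, shows the claim."
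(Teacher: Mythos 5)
Your proposal is correct and follows essentially the paper's route: the paper disposes of this lemma by citing Proposition \ref{bh5z6u5} (compatibility of the cycle-level action maps with the localization triangles of Proposition \ref{bddrzhf}), whose proof is exactly the master coend diagram you build — a functor on $\caE_n \times [1] \times [1]^2$ combining the action maps with the functors from the proof of Proposition \ref{bddrzhf} — so your argument merely inlines that proposition, specialized to the pair $\bA^1_k \supset \Gmk$ (closed complement $\{0\}$, the section $\{1\}$ only providing the splitting). The only cosmetic slip is writing $\{1\}$ for the closed complement; otherwise the rigidity/splitting step and the use of Lemma \ref{ghh4rfrf} to pass to the concrete action (\ref{h5th4gh}) match the paper's intent.
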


\begin{proof}
This follows from Proposition \ref{bh5z6u5}.
\end{proof}

Recall the isomorphisms 
\begin{equation}
\label{h46zrde}
\tilde{z}^r[-2r] \cong C_*(\integers_\tr((\Gmk,\{1\})^{\wedge r}))[-r]
\end{equation}
in $\D(\Sh(\Sm_{k,\Zar},\integers))$ constructed in \cite{voevodsky.allagree}.
We get a natural map $$\iota_2 \colon \integers[\Gmk,\{1\}]_\Zar[-1] \to C_*(\integers_\tr((\Gmk,\{1\})))[-1] \cong
\tilde{z}^1[-2] \cong \caM(1)_k.$$

\begin{lemma}
\label{ght55rfr}
The maps $\iota_1$ and $\iota_2$ agree.
\end{lemma}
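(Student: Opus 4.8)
The plan is to reduce everything to the characterisation of $\iota_1$ and $\iota_2$ as adjoints of the canonical isomorphism of Proposition~\ref{ht4ztr}. Both maps lie in the group
$$\Hom_{\D(\Sh(\Sm_{k,\Zar},\integers))}\bigl(\integers[\Gmk,\{1\}]_\Zar[-1],\caM(1)_k\bigr),$$
and the tensor--hom adjunction identifies this group with $\Hom\bigl(\integers[-1],\underline{\R \Hom}(\integers[\Gmk,\{1\}]_\Zar,\caM(1)_k)\bigr)$, which by Proposition~\ref{ht4ztr} together with $\caM(0)_k\cong\integers$ equals $\Hom(\integers[-1],\caM(0)_k[-1])\cong\integers$. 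Hence it suffices to prove that $\iota_1$ and $\iota_2$ are adjoint to \emph{the same} isomorphism, namely the composite $\integers[-1]\xrightarrow{\ \sim\ }\caM(0)_k[-1]\xrightarrow{\ \sim\ }\underline{\R \Hom}(\integers[\Gmk,\{1\}]_\Zar,\caM(1)_k)$ of Proposition~\ref{ht4ztr}.

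For $\iota_1$ this is immediate from the way $\iota_1$ was defined, and it is also a special case of Lemma~\ref{gh4tzred}: taking $r=1$ there, $\caM(0)_k\cong\integers$ is the unit and the multiplication $\caM(1)_k\otimes^\bL\caM(0)_k\to\caM(1)_k$ is the canonical isomorphism, so the composition considered in that lemma is $\iota_1$ itself, and the lemma says it is adjoint to the Proposition~\ref{ht4ztr} isomorphism.

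So the substantive task is to identify the adjoint of $\iota_2$. Unwinding the construction, $\iota_2$ is the canonical map $\integers[\Gmk,\{1\}]_\Zar[-1]\to C_*(\integers_\tr(\Gmk,\{1\}))[-1]$ (from the free Zariski sheaf into the Suslin complex), followed by Voevodsky's isomorphism~(\ref{h46zrde}) for $r=1$ and the identification $\tilde{z}^1[-2]\cong\caM(1)_k$ of Proposition~\ref{h46tef}. Computing its adjoint amounts to recognising the level-$1$ bonding map of the naive $\mathbb{G}_m$-spectrum $\caM_k$, under the isomorphisms~(\ref{h46zrde}), as the suspension map $-\wedge(\Gmk,\{1\})$ on the collection $\bigl(C_*(\integers_\tr((\Gmk,\{1\})^{\wedge r}))[-r]\bigr)_r$. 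First I would record that Voevodsky's isomorphisms~(\ref{h46zrde}) are compatible with pairing of cycles and carry the localization triangle for the pair $(\mathbb{A}^1_k,\Gmk)$ on the Friedlander--Suslin side to the triangle of Proposition~\ref{bddrzhf} (equivalently Theorem~\ref{gbter4}) transported through Proposition~\ref{h46tef}; this compatibility is part of, or readily extracted from, \cite{voevodsky.allagree} (building on \cite{friedlander-suslin}). Feeding it into the construction of the bonding map of $\caM_k$ via the splitting of Proposition~\ref{ht4ztr} by $\{1\}\subset\Gmk$ then shows that the adjoint of $\iota_2$ is also the canonical isomorphism, and therefore $\iota_1=\iota_2$.

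The hard part will be exactly this last compatibility: one has to match Voevodsky's comparison~(\ref{h46zrde}) with the localization triangle used to construct the bonding maps of $\caM_k$, while keeping careful track of signs and of the particular splitting chosen in Proposition~\ref{ht4ztr}. Equivalently, if one prefers the direct route of first computing $\Hom_{\D}(\integers[\Gmk,\{1\}]_\Zar[-1],\caM(1)_k)\cong\integers$ by means of $\caM(1)_k\cong\caO^*_{/k}[-1]$, the real work is to check that both $\iota_1$ and $\iota_2$ correspond to \emph{the same} generator (the class of the tautological unit on $\Gmk$), not merely to generators --- a discrepancy of sign at this point would be fatal. I expect this bookkeeping, rather than any genuinely new ingredient, to be the crux.
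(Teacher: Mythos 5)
Your reduction is sound and in fact mirrors the logical skeleton of the paper's argument: both $\iota_1$ and $\iota_2$ live in a Hom group that Proposition \ref{ht4ztr} (together with $\caM(1)_k\cong\caO^*_{/k}[-1]$) identifies with $\integers$, the adjoint of $\iota_1$ is the canonical isomorphism by construction, and the whole content of the lemma is to pin down the adjoint of $\iota_2$ as $+1$ rather than merely $\pm 1$. But that is exactly where your proposal stops being a proof. You assert that the needed compatibility of Voevodsky's comparison (\ref{h46zrde}) with the localization triangle for $(\bA^1_k,\Gmk)$ --- with the correct normalization and sign --- ``is part of, or readily extracted from'' \cite{voevodsky.allagree}, and then conclude. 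That compatibility in the precise form you need (identifying the boundary of the localization sequence used to build the bonding maps of $\caM_k$ with the suspension structure on the Friedlander--Suslin side, including the choice of splitting by $\{1\}\subset\Gmk$) is not available off the shelf; it is precisely the statement the paper has to establish, and you yourself flag it as the crux where ``a discrepancy of sign \ldots would be fatal.'' Deferring the crux to an unspecified extraction from the literature leaves a genuine gap.

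For comparison, the paper closes this gap by a direct chain-level computation: it realizes $C_*(\integers_\tr((\Gmk,\{1\})))[1]$ as the cokernel $Q$ of $i_1\ominus i_2$ into two copies of $C_*(\integers_\tr((\bA^1_k,\{1\})))$, writes down explicit linear homotopies $h(f,g)$ producing a $1$-chain $\eta$ in $C_*(\integers_\tr((\P^1_k,\{1\})))(\Gmk)$ representing the canonical map $\integers[\Gmk,\{1\}]_\Zar[1]\to C_*(\integers_\tr((\P^1_k,\{1\})))$, pushes it into $z^1(\Gmk\times_k\bA^1_k,1)$, lifts it through the sequence $z^0(\{0\}\times_k\bA^1_k)\to z^1(\bA^1_k\times_k\bA^1_k)\to z^1(\Gmk\times_k\bA^1_k)$, and checks by writing out the defining equations of the cycles (e.g. $sx_0+t(1-s)x_0=tx_1$ and its faces) that the boundary is the cycle $\{t=0\}$ with coefficient $+1$. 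To complete your argument you would either have to reproduce such a computation or cite a precise statement in the literature that carries it out; as written, the proposal identifies the right invariant to compute but does not compute it.
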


\begin{proof}
Let $i_1 \colon \Gmk \to \bA^1_k$ be the natural inclusion and $i_2 \colon \Gmk \to \bA^1_k$ the inversion
followed by the natural inclusion. Let $Q$ be the sheaf cokernel of the map $$C_*(\integers_\tr((\Gmk,\{1\})))
\overset{i_1 \ominus i_2}{\longrightarrow} C_*(\integers_\tr((\bA^1_k,\{1\}))) \oplus C_*(\integers_\tr((\bA^1_k,\{1\}))).$$
Since this map is injective and the target is acyclic $Q$ is a representative of
the shifted complex  $C_*(\integers_\tr((\Gmk,\{1\})))[1]$.

For $X \in \Sm_k$ and maps $f,g \colon X \to \bA^1_k$ let $h(f,g)$ be the map
$X \times \Delta^1 \to \bA^1_k$ given by $sf + (1-s)g$, where $s$ is the standard coordinate
on the algebraic $1$-simplex $\Delta^1$.

Let $c \colon \Gmk \to \bA^1_k$ be the constant map to $1$.
Let $\varphi \in C_1(\integers_\tr((\bA^1_k,\{1\})))(\Gmk)$ be given by $h(i_1,c)$,
in a similar manner let $\psi$ be given by $-h(i_2,c)$. 
Then $$\partial (\varphi,\psi) \in C_0(\integers_\tr((\bA^1_k,\{1\})))(\Gmk) \oplus C_0(\integers_\tr((\bA^1_k,\{1\})))(\Gmk)$$
is the image of $\overline{\mathrm{id}}_{\Gmk} \in C_0(\integers_\tr((\Gmk,\{1\})))(\Gmk)$ with respect to the map
$i_1 \ominus i_2$.
Thus the canonical map $\integers[\Gmk,\{1\}]_\Zar[1] \to Q$ is represented by the image of $(\varphi,\psi)$ in
$Q_1(\Gmk)$.

Note there is a canonical map $Q \to C_*(\integers_\tr((\P^1_k,\{1\})))$ which is induced by the two canonical covering
maps $\bA^1_k \to \P^1_k$.
We denote the image of $(\varphi,\psi)$ in the group $C_1(\integers_\tr((\P^1_k,\{1\})))(\Gmk)$ by $\eta$.
Thus $\eta$ induces a map
\begin{equation}
\label{h4eztwer}
\integers[\Gmk,\{1\}]_\Zar[1] \to C_*(\integers_\tr((\P^1_k,\{1\}))).
\end{equation}
The comparison isomorphism (\ref{h46zrde}) is constructed using the natural map
$$C_*(\integers_\tr((\P^1_k,\{1\}))) \to C_*(z_\equi(\P^1_k \setminus \{1\},0)) \cong C_*(z_\equi(\bA^1_k,0)),$$
and precomposition with (\ref{h4eztwer}) gives the map $\iota_2$ (modulo the identification $\tilde{z}^1[-2] \cong \caM(1)_k$
and a shift).
Let us denote the image of $\eta$ in $z^1(\Gmk \times_k \bA^1_k,1)$ by $\eta'$. The cycle (in the sense of
homological algebra) $\eta'$ is a sum
$\varphi'+\psi'$ of chains (where each summand is a chain constituted
by an algebraic cycle or the negative thereof). Here $\varphi'$ (resp. $\psi'$) is the image of $\varphi$ (resp. $\psi$).
We want to compute the boundary of $\eta'$ for the triangle defined by the sequence
\begin{equation}
\label{gtu6z4r}
z^0(\{0\} \times_k \bA^1_k) \to z^1(\bA^1_k \times_k \bA^1_k) \to z^1(\Gmk \times_k \bA^1_k).
\end{equation}
Therefore we lift $\eta'$ to the middle complex, take the boundary and view it as an element of the left complex.
We first give a lift of $\varphi'$.

Let $c' \colon \bA^1_k \to \bA^1_k$ be the constant map to $\{1\}$. We let $\tilde{\varphi} \in
C_1(\integers_\tr((\bA^1_k,\{1\})))(\bA^1_k)$ be given by $h(\mathrm{id}_{\bA^1_k},c')$ and
$\tilde{\varphi}'$ be the image of $\tilde{\varphi}$ with respect to the composition
$$C_1(\integers_\tr((\bA^1_k,\{1\})))(\bA^1_k) \to C_1(\integers_\tr((\P^1_k,\{1\})))(\bA^1_k)
\to \tilde{z}^1(\bA^1_k,1) \to z^1(\bA^1_k \times_k \bA^1_k,1).$$
Then $\tilde{\varphi}'$ is a lift of $\varphi'$.
The boundary of the image of $\tilde{\varphi}$ in $C_1(\integers_\tr((\P^1_k,\{1\})))(\bA^1_k)$
is the graph of the canonical embedding $\bA^1_k \to \P^1_k$.

We let $t$ be the standard coordinate on $\Gmk$, $s$ the standard coordinate on $\Delta^1$ and
$[x_0:x_1]$ homogeneous coordinates on $\P^1_k$. Then the effective cycle corresponding to the image of
$-\psi$ in $C_1(\integers_\tr((\P^1_k,\{1\})))(\Gmk)$ is given by the homogeneous equation
$$sx_0+t(1-s)x_0=tx_1.$$
The closure $Z$ in $\bA^1_k \times \Delta^1 \times_k \P^1_k$ is given by the same equation.
Intersecting with $s=0$ (resp. $s=1$) gives the closed subscheme with equation
$t(x_0-x_1)=0$ (resp. $x_0=tx_1$). This shows that the intersections of this closure with the faces of $\Delta^1$ are proper.
We view the restriction of $Z$ to $\P^1_k \setminus \{1\}$ as a cycle in $z^1(\bA^1_k \times_k \bA^1_k,1)$ and
denote its negative by $\tilde{\psi}'$. Thus $\tilde{\psi}'$ is a lift of $\psi'$.

We see that the boundary of $\tilde{\varphi}'$ cancels with the contribution
of the boundary of $\tilde{\psi}'$ for $s=1$. Thus the boundary of $\tilde{\varphi}' + \tilde{\psi}'$ is
given by the equation $t=0$. It follows that the boundary of $\eta'$ for the triangle defined
by (\ref{gtu6z4r}) corresponds to $1$. 
The claim follows.
\end{proof}

\begin{theorem}
\label{ji67fhll}
The spectrum $\caM_k$ is isomorphic to the motivic Eilenberg-MacLane spectrum
$\MZ_k$ over $k$.
\end{theorem}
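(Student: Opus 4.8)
The plan is to exhibit $\caM_k$ and $\MZ_k$ as $\Omega$-spectra which are levelwise isomorphic in $\D(\Sh(\Sm_{k,\Zar},\integers))$ by isomorphisms compatible with the structure maps, so that the two spectra become isomorphic as objects of $\SH(k)$. First recall that, in the naive $\mathbb{G}_m$-spectrum model, $\MZ_k$ has $r$-th level $C_*(\integers_\tr((\Gmk,\{1\})^{\wedge r}))$, with bonding maps obtained by smashing with $(\Gmk,\{1\})$; concretely, the structure map is the composite of the canonical generator $\integers[\Gmk,\{1\}]_\Zar[-1] \to C_*(\integers_\tr((\Gmk,\{1\})))[-1]$ with the Eilenberg--Zilber product $C_*(\integers_\tr((\Gmk,\{1\}))) \otimes^\bL C_*(\integers_\tr((\Gmk,\{1\})^{\wedge r})) \to C_*(\integers_\tr((\Gmk,\{1\})^{\wedge(r+1)}))$. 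By the cancellation/suspension theorem $\MZ_k$ is an $\Omega$-spectrum, and by Proposition \ref{ht4ztr} so is $\caM_k$ (that Proposition is precisely the $\Omega$-spectrum condition for $\caM_k$). It therefore suffices to produce a levelwise isomorphism commuting with the structure maps.

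For the levels: Proposition \ref{h46tef} gives $\caM(r)_k[r] \cong \tilde z^r[-r]$, and Voevodsky's comparison isomorphism (\ref{h46zrde}) gives $\tilde z^r[-r] \cong C_*(\integers_\tr((\Gmk,\{1\})^{\wedge r}))$. Composing, the $r$-th level of $\caM_k$ is canonically identified with the $r$-th level of $\MZ_k$.

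For the bonding maps: by construction the structure map of $\caM_k$ from level $r-1$ to level $r$ is the adjoint of the isomorphism of Proposition \ref{ht4ztr}, and by Lemma \ref{gh4tzred} this adjoint equals the composite $\integers[\Gmk,\{1\}]_\Zar[-1] \otimes^\bL \caM(r-1)_k \xrightarrow{\iota_1 \otimes \mathrm{id}} \caM(1)_k \otimes^\bL \caM(r-1)_k \to \caM(r)_k$, the second map being the cycle pairing (\ref{h5rrgtt}). Now Lemma \ref{ght55rfr} identifies $\iota_1$ with $\iota_2$, which is exactly the canonical generator $\integers[\Gmk,\{1\}]_\Zar[-1] \to C_*(\integers_\tr((\Gmk,\{1\})))[-1] \cong \caM(1)_k$ used above to define the $\MZ_k$ bonding maps; and the pairing (\ref{h5rrgtt}), being by definition the cycle pairing on the Friedlander--Suslin complexes transported along Proposition \ref{h46tef}, corresponds under (\ref{h46zrde}) to the standard product on the complexes $C_*(\integers_\tr(-))$, this multiplicativity being part of Voevodsky's comparison. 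Hence, under the level identifications of the previous paragraph, the structure maps of $\caM_k$ go over to those of $\MZ_k$; since both are $\Omega$-spectra, this gives an isomorphism in the homotopy category of $\mathbb{G}_m$-spectra, and transporting to $\P^1$-spectra in the standard way yields $\caM_k \cong \MZ_k$, which is the assertion of the theorem.

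I expect the main obstacle to be precisely this compatibility of bonding maps: one has to propagate the identification carefully through Proposition \ref{h46tef}, the isomorphism (\ref{h46zrde}) and Lemma \ref{ght55rfr}, and in particular verify that the cycle-theoretic multiplication defining (\ref{h5rrgtt}) is carried to the standard monoidal product on presheaves with transfers, so that the generator $\iota_1 = \iota_2$ really produces the Eilenberg--MacLane bonding map and not merely a map agreeing with it up to a unit in $\integers$. The delicate cycle computation of Lemma \ref{ght55rfr} (already supplied above) is the technical heart of the argument; granting it together with the multiplicativity of Voevodsky's comparison, the remaining work is bookkeeping with shifts and adjunctions.
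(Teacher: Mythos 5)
Your proof is correct and takes essentially the same route as the paper: levelwise identification of $\caM_k$ with $\MZ_k$ via Proposition \ref{h46tef} and the comparison isomorphism (\ref{h46zrde}), with compatibility of the bonding maps supplied by Lemmas \ref{gh4tzred} and \ref{ght55rfr}. The one input you treat as ``part of Voevodsky's comparison'' --- the multiplicativity of (\ref{h46zrde}) --- is exactly the point for which the paper cites Kondo--Yasuda (and their letter for imperfect fields), so it is available but is a nontrivial cited result rather than automatic.
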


\begin{proof}
The isomorphisms (\ref{h46zrde}) are compatible with the product structures,
see \cite[Proposition 3.3]{kondo-yasuda} for the case of a perfect
ground field and \cite{kondo-yasuda-letter} for
the general case. Thus the claim follows
from Lemmas \ref{gh4tzred} and \ref{ght55rfr}.
\end{proof}

\section{Comparisons}
\label{gf556tf}

\subsection{The exceptional inverse image of $\caM$}

We let $x$ be a closed point of $S$, $k$ its residue field and
$i \colon \Spec(k) \hookrightarrow S$ the corresponding closed inclusion.
Set $U:= S \setminus \{x\}$, $U=\Spec(D')$, and let $j$ be the open inclusion $U \to S$.
We view $k$ as a $D$-algebra in the canonical way.

We use the notation of section \ref{h5643dfgf}. Let $K$ be a $n$-simplex in the nerve
of $\caS$. For $t \in \caE_n$ set $C_t':=D' \otimes_D C_t$ and
$F_t':=\{U \times_S a | a \in F_t\}$. Also set $C_t'':=k \otimes_D C_t$ and
$F_t'':=\{\{x\}\times_S a | a \in F_t\}$.
Define a functor $\alpha_K^! \colon \caE_n \times [1]^2 \to \Cpx(\Ab)$ by sending
$(t,0,0)$ to $z_{F_t''}^{r-1}(C_t'')$, $(t,1,0)$ to $z_{F_t}^r(C_t)$,
$(t,1,1)$ to $z_{F_t'}^r(C_t')$ and $(t,0,1)$ to $0$. Sheafification on $S$ yields a functor
$\tilde{\alpha}_K^! \colon \caE_n \times [1]^2 \to \Cpx(\Sh(S_\Zar,\integers))$.

For $K$ a $n$-simplex in the nerve of $\hat{\caS} \times [1]^2$ let
$\gamma_K^! := L^H(\Gamma \circ T^p(\tilde{\alpha}^!)_K) \circ q_n$, where $p$ is the functor
$\hat{\caS} \times [1]^2 \to \caS \times [1]^2$.

The $\gamma_K^!$ glue to give a map
$\gamma^! \colon \caD_{Q_\bullet}^{\hat{\caS} \times [1]^2} \to L_{qi}^H \Cpx(\Ab)$.

We denote by $\gamma_!' \in \Ho(\Cpx(\Ab)^{\hat{\caS} \times [1]^2})$ the diagram
canonically associated to $\gamma^!$.

The square in $\D(\Sh(\Sm_{S,\Zar},\integers))$ associated to the push forward of $\gamma_!'[-2r]$ is exact.
We thus obtain the

\begin{proposition}
\label{hgrtgz54}
There is an exact triangle
$$i_* \caM(r-1)_k[-2] \to \caM(r) \to j_*j^* \caM(r) \to i_* \caM(r-1)_k[-1]$$
in $\D^{\mathbb{A}^1}(\Sh(\Sm_{S,\Nis},\integers))$.
\end{proposition}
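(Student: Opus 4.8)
The construction preceding the statement already does the essential work; what remains is to identify the vertices of the commutative square obtained from $\gamma_!'[-2r]$ and to rotate it into the desired triangle. So the plan is as follows. First I would compute the push-forward along $p \colon \hat{\caS} \to \caS$ of the diagram $\gamma_!'[-2r]$, followed by Zariski localization, and identify its four vertices in $\D(\Sh(\Sm_{S,\Zar},\integers))$. The vertex coming from $(t,1,0) \mapsto z^r_{F_t}(C_t)$ is, by the very definition of $\caM(r)$ in section \ref{kdtjjz} (and the normalization recorded in Corollary \ref{nderfd}), equal to $\caM(r)$. The vertex coming from $(t,1,1) \mapsto z^r_{F_t'}(C_t')$, where $C_t' = D' \otimes_D C_t$, computes the restriction of the cycle complexes to $U = \Spec(D')$ followed by push-forward along $j$; arguing exactly as in sections \ref{hht55rd} and \ref{h4u4r7u} and invoking Levine's localization theorem (Theorem \ref{gbter4}) to see that $Y \mapsto z^r(Y \times_S U)$ computes the derived push-forward along the open immersion $Y \times_S U \hookrightarrow Y$, this vertex is $j_* j^* \caM(r)$. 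Finally the vertex coming from $(t,0,0) \mapsto z^{r-1}_{F_t''}(C_t'')$, where $C_t'' = k \otimes_D C_t$ is a $k$-algebra, is by the same reasoning the push-forward along $i$ of the field-version complex $\caM(r-1)_k$ of section \ref{hjrerrt}; the global shift $[-2r]$ on a complex built from $z^{r-1}$ produces an extra $[-2]$, so this vertex is $i_* \caM(r-1)_k[-2]$. The remaining vertex, at $(t,0,1) \mapsto 0$, is $0$.

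Next I would check that the two nontrivial maps of the square are the intended ones: the arrow $\caM(r) \to j_* j^* \caM(r)$ is the restriction map along $U \hookrightarrow S$, as in Corollary \ref{hgdtz5z}, and the arrow $i_* \caM(r-1)_k[-2] \to \caM(r)$ is the Gysin/boundary map of Levine's localization triangle for the pair $(S,\{x\})$ (codimension $1$), compatible — via the corollary following Theorem \ref{gbter4} — with the identification $\R i^! \caM(r) \cong \caM(r-1)_k[-2]$. Since the fourth vertex is $0$, the exactness of the square (the statement recorded just above the Proposition, itself a levelwise consequence of \cite[Theorem 1.7]{levine.techniques}, with the moving Lemma, Theorem \ref{6gfrfd}, ensuring that imposing the cycle conditions $F_t$, $F_t'$, $F_t''$ does not alter the quasi-isomorphism type) means precisely that $i_* \caM(r-1)_k[-2]$ is the fiber of $\caM(r) \to j_* j^* \caM(r)$. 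Rotating yields the exact triangle
$$i_* \caM(r-1)_k[-2] \to \caM(r) \to j_* j^* \caM(r) \to i_* \caM(r-1)_k[-1].$$
To land in $\D^{\mathbb{A}^1}(\Sh(\Sm_{S,\Nis},\integers))$ one applies Nisnevich sheafification followed by $\mathbb{A}^1$-localization; these are exact localization functors, so the triangle remains exact, and the images of the three terms are still described by $\caM(r)$ and its $U$- and $k$-variants, using $\mathbb{A}^1$-invariance (Theorem \ref{htehg}) and the compatibility of the small-site functors $i_*$, $j_*$ with these localizations.

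I expect the main obstacle to be the bookkeeping in the first two steps — precisely matching the $(0,0)$-vertex with the field complex $\caM(r-1)_k$ (rather than some a priori different presentation of the cycle complexes over $k$) and confirming that the connecting map agrees with Levine's Gysin map — rather than any new geometric input; once the vertices and arrows are pinned down, the remainder is formal from the exactness of the square already established and from the exactness of the localization functors involved.
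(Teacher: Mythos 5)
Your proposal is correct and follows essentially the same route as the paper: the paper's proof is exactly the construction of the square $\gamma_!'[-2r]$ from the cycle complexes with support conditions, the observation that its push-forward is exact (levelwise by Levine's localization theorem), and the identification of the three nonzero vertices with $i_*\caM(r-1)_k[-2]$, $\caM(r)$ and $j_*j^*\caM(r)$. You merely spell out more explicitly the vertex identifications and the passage to the $\mathbb{A}^1$-local Nisnevich category, which the paper leaves implicit.
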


\begin{corollary}
\label{h46reee}
There is a canonical isomorphism $$i^!\caM(r) \cong \caM(r-1)_k[-2]$$
in $\D^{\mathbb{A}^1}(\Sh(\Sm_{k,\Nis},\integers))$.
\end{corollary}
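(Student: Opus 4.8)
The plan is to read the desired isomorphism off the localization triangle of Proposition \ref{hgrtgz54}. Write $i^!=\R i^!$ for the right adjoint of $i_*=\R i_*$ on the relevant $\mathbb{A}^1$-derived categories; the recollement attached to the closed--open decomposition $(i,j)$ gives, for every object $E$ of $\D^{\mathbb{A}^1}(\Sh(\Sm_{S,\Nis},\integers))$, a functorial exact triangle
$$i_*i^!E \to E \to \R j_* j^* E \to i_*i^!E[1],$$
in which the first arrow is the counit and the second the unit of the respective adjunctions. Taking $E=\caM(r)$ exhibits $i_*i^!\caM(r)$ as the homotopy fibre of the unit map $\caM(r)\to \R j_* j^*\caM(r)$. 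On the other hand, Proposition \ref{hgrtgz54} exhibits $i_*\caM(r-1)_k[-2]$ as the homotopy fibre of the map $\caM(r)\to j_*j^*\caM(r)$ appearing there. So it suffices to match these two maps.

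The first thing I would check is that the map $\caM(r)\to j_*j^*\caM(r)$ of Proposition \ref{hgrtgz54} coincides with the adjunction unit, under the identification $j_*j^*=\R j_* j^*$ (valid because $j$ is an open immersion, so $j^*$ is exact and $j_*$ computes the derived push-forward on the big Nisnevich site). This is built into the construction of $\gamma_!'$: in the functor $\alpha_K^!$ the transition from the vertex $(t,1,0)$ to $(t,1,1)$ is induced by restriction of cycle complexes along $\Spec(C_t)\times_S U\hookrightarrow\Spec(C_t)$, and by the compatibility of the strictification with flat pullbacks (Corollary \ref{nderfd} and its proof) the induced map of push-forwards is precisely the one adjoint to $\id_{j^*\caM(r)}$, i.e. the unit. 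Granting this, the triangle of Proposition \ref{hgrtgz54} and the localization triangle above have identical second term, third term and middle map; since homotopy fibres are functorial in the model- or $\infty$-categorical presentation underlying $\D^{\mathbb{A}^1}(-)$, this yields a canonical isomorphism $i_*i^!\caM(r)\cong i_*\caM(r-1)_k[-2]$ in $\D^{\mathbb{A}^1}(\Sh(\Sm_{S,\Nis},\integers))$.

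Finally I would apply $i^*$ and use $i^*i_*\cong\id$ on $\D^{\mathbb{A}^1}(\Sh(\Sm_{k,\Nis},\integers))$ (full faithfulness of the closed push-forward), obtaining the canonical isomorphism $i^!\caM(r)\cong\caM(r-1)_k[-2]$ as claimed. The main obstacle is the bookkeeping in the middle step: one has to be sure that the connecting map produced by the $L^H$--coend machinery of section \ref{h5643dfgf} is exactly the adjunction unit, and not a map differing from it by an automorphism of $\R j_* j^*\caM(r)$; once this identification is pinned down everything is formal, and no choices enter, so the resulting isomorphism is canonical.
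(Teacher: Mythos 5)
Your argument is correct and is essentially the paper's (implicit) derivation: the paper also reads Corollary \ref{h46reee} off Proposition \ref{hgrtgz54} by matching its triangle with the recollement triangle $i_*i^!\caM(r)\to\caM(r)\to \R j_*j^*\caM(r)$ and then applying $i^*i_*\cong\id$. Your extra care in checking that the middle map produced by the strictification is the adjunction unit is exactly the point the paper leaves tacit, so nothing further is needed.
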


\begin{corollary}
\label{lplp87d}
There is a canonical isomorphism of naive $\mathbb{G}_m$-spectra $$i^!\caM \cong \caM_k(-1)[-2]$$
and also such an isomorphism of spectra.
\end{corollary}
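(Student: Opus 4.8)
The plan is to upgrade the levelwise statement of Corollary \ref{h46reee} to a statement about $\mathbb{G}_m$-spectra. Recall that $\caM$ is the naive $\integers[\GmS,\{1\}]_\Zar$-spectrum with $\caM(r)[r]$ in level $r$ and bonding maps coming from the adjunction isomorphisms of Proposition \ref{ht4ztr}, and that $i^!$ is a triangulated (in fact exact) functor. First I would note that $i^!$ applied levelwise to $\caM$ produces a naive $\integers[\Gmk,\{1\}]_\Zar$-spectrum $i^!\caM$ with $i^!(\caM(r)[r]) \cong \caM(r-1)_k[-2][r] = \caM(r-1)_k[r-2]$ in level $r$ by Corollary \ref{h46reee}; this matches, up to reindexing, the spectrum $\caM_k(-1)[-2]$, whose level $r$ is $\caM_k(-1)(r)[r][-2] = \caM(r-1)_k[r-2]$. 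So the two spectra agree levelwise; what must be checked is that the bonding maps agree.

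The key step is therefore the compatibility of the isomorphism $i^!\caM(r) \cong \caM(r-1)_k[-2]$ with the bonding maps, i.e. that the square
\[
\xymatrix{
i^!\caM(r-1) \ar[r] \ar[d]^\cong &
\underline{\R\Hom}(\integers[\GmS,\{1\}]_\Zar, i^!\caM(r)) \ar[d]^\cong \\
\caM(r-2)_k[-2] \ar[r] &
\underline{\R\Hom}(\integers[\Gmk,\{1\}]_\Zar, \caM(r-1)_k[-2])
}
\]
commutes in $\D^{\mathbb{A}^1}(\Sh(\Sm_{k,\Nis},\integers))$, where the horizontal maps are the (derived adjoints of the) bonding maps and the left vertical map comes from Corollary \ref{h46reee}. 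Both bonding maps arise, via Proposition \ref{ht4ztr}, from the $(\mathbb{A}^1,\GmS)$- resp. $(\mathbb{A}^1,\Gmk)$-localization triangles for the motivic complexes (Proposition \ref{bddrzhf}). The natural thing is to run the same strictification machinery once more: construct a functor on $\caE_n \times [1]^2 \times [1]$ (or a suitable indexing category) that carries the localization triangle of Proposition \ref{bddrzhf} for the pair $(\mathbb{A}^1,\mathbb{G}_m)$ simultaneously over $S$ and over $k$, equipped with the map $i^!$ relating them — exactly as $\gamma^!$ in Proposition \ref{hgrtgz54} was built to relate the closed-point localization triangle over $S$ to $\caM(r-1)_k$. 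Commutativity of the resulting cube of diagrams in $\Ho(\Cpx(\Ab)^{\hat{\caS}})$, pushed forward to $\D^{\mathbb{A}^1}(\Sh(\Sm_{k,\Nis},\integers))$, then yields the compatibility of the $i^!$-isomorphism with the splittings used to define the bonding maps in Proposition \ref{ht4ztr}, hence with the bonding maps themselves. For the last clause, the lift to an honest isomorphism of spectra follows because $\caM$ and $\caM_k$ (hence $\caM_k(-1)[-2]$) were chosen as lifts of naive spectra to the homotopy category of $\integers[\GmS,\{1\}]_\Zar$- resp.\ $\integers[\Gmk,\{1\}]_\Zar$-spectra, and a levelwise isomorphism compatible with bonding maps in the derived category lifts to an isomorphism of such spectra (arguing prime level by level, using that the relevant obstruction groups $\Hom(\caM(r-2)_k, i_*\nu\text{-type terms}[-1])$ — more precisely the $\lim^1$ terms measuring non-uniqueness of the lift — vanish, just as uniqueness was argued after (\ref{ht46jk}) and in Proposition \ref{ht4ztr}).

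The main obstacle I expect is precisely this bonding-map compatibility: writing down a single combinatorial diagram over $\caE_n \times (\text{indexing category})$ that simultaneously encodes (a) the $(\mathbb{A}^1,\mathbb{G}_m)$-localization triangle of Proposition \ref{bddrzhf}, (b) the $\{1\} \subset \mathbb{G}_m$ splitting of Proposition \ref{ht4ztr}, and (c) the closed/open decomposition defining $i^!$ via $\gamma^!$, and then verifying that all its faces commute. The geometric input — that flat pullback of cycles along $\Gmk \times_k U \to \GmS \times_S X$ type maps is compatible with the pullback to the closed fiber and with the $\mathbb{A}^1$-homotopies splitting off $\{1\}$ — is routine from Appendix \ref{ht635zh}, but assembling it into the hammock-localization formalism is bookkeeping-heavy. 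Everything else (the levelwise identification, the vanishing of obstruction groups, the final lift) is a direct consequence of results already in hand.
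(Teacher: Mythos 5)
Your proposal is correct and is essentially the paper's argument: the paper's entire proof is the observation that the levelwise identifications of Corollary \ref{h46reee} are compatible with the bonding maps ("the bonding maps are the same"), which is exactly the crux you isolate and propose to verify with the same strictification/localization-triangle machinery, followed by the standard lift from naive spectra to spectra. Your write-up simply supplies more of the bookkeeping than the paper records.
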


\begin{proof}
The bonding maps are the same.
\end{proof}

\begin{theorem}
\label{h47zt5e}
There is an isomorphism of spectra $$i^!\caM \cong \MZ_k(-1)[-2].$$
\end{theorem}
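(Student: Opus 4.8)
The plan is to assemble the theorem from the two identifications already in place, so almost no new work is required. First, Corollary~\ref{lplp87d} provides a canonical isomorphism of spectra
$$i^!\caM \cong \caM_k(-1)[-2],$$
coming from the localization triangle of Proposition~\ref{hgrtgz54}: the exceptional inverse image along the closed inclusion $i$ of the Tate twist $\caM(r)$ is the shifted, twisted naive $\mathbb{G}_m$-spectrum built from Bloch's cycle complexes over the residue field $k$, the bonding maps being carried over verbatim (so that the identification is one of genuine spectra, not merely of the underlying levelwise complexes). This reduces the claim to comparing $\caM_k$ with $\MZ_k$.

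Second, I would invoke Theorem~\ref{ji67fhll}, which identifies $\caM_k$ with the motivic Eilenberg--MacLane spectrum $\MZ_k$ over $k$; its proof proceeds through Lemmas~\ref{gh4tzred} and~\ref{ght55rfr}, matching the class $\iota_1$ coming from Proposition~\ref{ht4ztr} with the Friedlander--Suslin class $\iota_2$, and uses that the comparison isomorphisms (\ref{h46zrde}) are multiplicative. Twisting this isomorphism by $(-1)$ and shifting by $[-2]$, then composing with the isomorphism of Corollary~\ref{lplp87d}, yields
$$i^!\caM \cong \caM_k(-1)[-2] \cong \MZ_k(-1)[-2],$$
which is the assertion.

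There is essentially no obstacle left at this stage: the autoequivalence $\Sigma^{-2,-1}$ implementing $(-1)[-2]$ is exact and strictly compatible with the $\mathbb{G}_m$-spectrum structure, so applying it to an isomorphism of spectra again produces an isomorphism of spectra. The one point to keep in mind is that one must use the spectrum-level versions of Corollary~\ref{lplp87d} and Theorem~\ref{ji67fhll} (both are stated in that form), rather than their weaker levelwise counterparts, so that the composite is an honest isomorphism in the homotopy category of $\mathbb{G}_m$-spectra over $k$.
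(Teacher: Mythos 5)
Your proposal matches the paper's own proof: Theorem \ref{h47zt5e} is deduced there directly from Corollary \ref{lplp87d} and Theorem \ref{ji67fhll}, exactly as you have done. The additional remarks on using the spectrum-level versions and on compatibility with $\Sigma^{-2,-1}$ are consistent with, and merely make explicit, what the paper leaves implicit.
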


\begin{proof}
This follows from Corollary \ref{lplp87d} and Theorem \ref{ji67fhll}.
\end{proof}

\subsection{Pullback to the generic point}

Let $K$ be the fraction field of $D$ and $f \colon \Spec(K) \to S$ the canonical morphism.

\begin{lemma}
\label{bh464red}
There is an isomorphism $f^* \caM(r) \cong \caM(r)_K$ in $\D(\Sh(\Sm_{K,\Zar},\integers))$.
\end{lemma}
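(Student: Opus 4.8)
The plan is to exhibit $\Spec(K)$ as a cofiltered limit of the affine opens $U = \Spec(D')$ of $S$ (with $D' \subset K$ a localization of $D$) and to check that both the strictified motivic complex $\caM(r)$ and its analogue $\caM(r)_K$ over the field are compatible with this limit. First I would note that $K = \colim_{D'} D'$ where $D'$ ranges over the localizations $D[\frac{1}{s}]$, or more conveniently over all the $U = S \setminus \{x_1,\ldots,x_n\}$; each such $U$ is again the spectrum of a Dedekind domain, so the constructions of section \ref{h5643dfgf} apply over $U$, and by base change along $U \hookrightarrow S$ (restriction of the diagrams $\tilde\alpha_K$, $\beta_K$, $\gamma_K$ to the subcategory $\caS_{D'} \subset \caS$ of $D'$-algebras) one gets a canonical isomorphism $f_{U}^* \caM(r)_S \cong \caM(r)_{U}$ in $\D(\Sh(\Sm_{U,\Zar},\integers))$, where $f_U \colon U \to S$ is the open inclusion; this is essentially the same verification as in Lemma \ref{6grth}/Corollary \ref{nderfd}, since the cycle-complex presheaves and the coend construction all restrict along the inclusion of function algebras. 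Then $f$ factors through every such $U$.

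The second ingredient is a continuity statement: for a smooth $K$-algebra $B$ of finite type, one can find a model, i.e.\ a smooth $D'$-algebra $B_0$ over some $U$ with $B_0 \otimes_{D'} K \cong B$, and Bloch's cycle complexes satisfy $z^r(B,\bullet) \cong \colim_{D'} z^r(B_0 \otimes_{D'} D'',\bullet)$ as $D''$ runs over further localizations — this is the standard fact that higher Chow groups commute with filtered colimits of rings (cycles are defined by finitely many polynomial equations and face/properness conditions, all of which descend to a finite stage). The same is true for the auxiliary algebras $C_t^{B_0}$ and the good-position subcomplexes $z^r_{F_t}$ appearing in the strictification, since the finite sets $F_t$ of closed immersions and the intersection conditions again descend. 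Consequently the $\naturals$-graded diagrams $\gamma'$ over $\hat{\caS}$ are compatible with the transition functors, and passing to the colimit over $D'$ gives $f^* \caM(r)_S \cong \colim_{D'} \caM(r)_{D'} \cong \caM(r)_K$; one must check that restriction $\D(\Sh(\Sm_{U,\Zar},\integers)) \to \D(\Sh(\Sm_{K,\Zar},\integers))$ along $\Spec(K) \to U$ commutes with the relevant colimit of presheaves, which holds because the categories $\Sm_{K}$ (affine, finite type) and the cycle presheaves all arise as filtered colimits from finite stages.

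The main obstacle I expect is bookkeeping rather than conceptual: one has to see that \emph{the entire coend/strictification machinery} of section \ref{kdtjjz} (the categories $\caE_n$, $\hat\caS$, the functors $\beta_K$, the choice of Reedy-cofibrant resolution $Q_\bullet$, the strictification via \cite[Proposition 4.2.4.4]{lurie.HTT}) is compatible with the filtered colimit over $D'$ and with restriction to the generic point, so that the canonical isomorphisms assemble coherently and not just levelwise. This is handled by noting that all these constructions are functorial in the base (one may take the \emph{same} $Q_\bullet$ over all the $U$ and over $K$, since $\caE$ and the nerve of $\caS$ depend only combinatorially on the algebras involved), so the comparison maps $f_U^* \caM(r)_S \to \caM(r)_U$ are natural in $U$ and their colimit gives the desired isomorphism; uniqueness follows as usual from the vanishing of the relevant negative $\Hom$-groups (cf.\ the uniqueness arguments after (\ref{ht46jk}) and in section \ref{bfdtzt}). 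No genuinely new input beyond Theorem \ref{6gfrfd} and continuity of Bloch's complexes is needed.
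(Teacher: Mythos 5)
The paper states Lemma \ref{bh464red} without any proof, so there is no argument of the author's to compare yours against; it is treated as a consequence of the construction. Your route — spread out a smooth $K$-scheme to a model over some open $U\subset S$, use that Bloch's cycle complexes, the auxiliary algebras $C_t$ and the good-position subcomplexes $z^r_{F_t}$ all commute with the filtered colimit $K=\colim_{D'}D'$, and conclude that the pullback along the pro-open immersion $f$ is computed by the colimit-over-models formula — is exactly the kind of continuity argument the paper itself invokes in analogous places (the colimit argument in the proof of Theorem \ref{fbmtzzt}, and Lemma \ref{gfdrreg}, where the cycle complexes over a filtered colimit of smooth $D$-algebras are identified with the colimit of the cycle complexes over the models). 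So the approach is sound and I see no step that would fail.

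Two points deserve more care than your sketch gives them, though neither is a genuine gap. First, the expression $\colim_{D'}\caM(r)_{D'}$ does not literally make sense, since the objects live in different sheaf categories; the clean implementation is to first produce a comparison map $f^*\caM(r)\to\caM(r)_K$, for instance by a mixed version of the flat-maps comparison of section \ref{hht55rd} (flat pullback of cycles along $X_K\to X_\alpha$, functorially packaged by the same $\hat{\caS}\times[1]$ device as in Lemmas \ref{6grth} and \ref{ghnjs4}), and only then check it is an isomorphism objectwise by continuity; this also settles your coherence worries, since $Q_\bullet$ resolves the base-independent object $L^H\caE$ and may indeed be chosen once and for all. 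Second, one should say explicitly why $f^*$ (derived) is computed by the colimit formula: for $X\in\Sm_K$ affine the presheaf left Kan extension has value $\colim_\alpha F(X_\alpha)$ over the cofiltered system of models, filtered colimits are exact, so no derived correction occurs; and the discrepancy that the construction over $S$ uses the functor $\Gamma$ while the construction over $K$ does not is absorbed by Zariski descent for the cycle complexes (\cite[Theorem 1.7]{levine.techniques}, as used in Lemma \ref{grfe4d}). With these two points spelled out, your proof is complete.
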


\begin{theorem}
\label{jd5ru655}
There is an isomorphism $f^* \caM \cong \MZ_K$
in $\SH(K)$.
\end{theorem}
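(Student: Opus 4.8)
The plan is to combine Lemma~\ref{bh464red} with Theorem~\ref{ji67fhll}, applied over the field $K$. Explicitly, Lemma~\ref{bh464red} gives an isomorphism $f^*\caM(r) \cong \caM(r)_K$ of motivic complexes in $\D(\Sh(\Sm_{K,\Zar},\integers))$, compatible with $r$. The first thing to check is that these isomorphisms are compatible with the bonding maps of the respective $\mathbb{G}_m$-spectra, so that they assemble into an isomorphism $f^*\caM \cong \caM_K$ of (naive, and then genuine) $\mathbb{G}_m$-spectra in $\SH(K)$. This is the analogue of Corollary~\ref{lplp87d}: the bonding maps on both sides are built from the same localization triangle for the pair $(\mathbb{A}^1,\mathbb{G}_m)$ (Proposition~\ref{bddrzhf}), and $f^*$ is exact and commutes with the relevant $\underline{\R\Hom}$'s against $\integers[\GmS,\{1\}]_\Zar$ since $f$ is a pullback along a smooth-like (in fact, pro-smooth / localization) morphism; concretely one uses that $f^*\integers[\Gmk,\{1\}]$-linearity is automatic from the construction of the bonding maps via Proposition~\ref{ht4ztr}. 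Then Theorem~\ref{ji67fhll}, which identifies $\caM_K$ with the motivic Eilenberg--MacLane spectrum $\MZ_K$ over the field $K$, finishes the argument:
$$
f^*\caM \;\cong\; \caM_K \;\cong\; \MZ_K
$$
in $\SH(K)$.

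First I would make precise the compatibility of the isomorphisms of Lemma~\ref{bh464red} with the $\mathbb{G}_m$-spectrum structure. The bonding map of $\caM$ in level $r$ is, by Proposition~\ref{ht4ztr}, the adjoint of the canonical isomorphism $\caM(r-1)[-1]\cong \underline{\R\Hom}(\integers[\GmS,\{1\}]_\Zar,\caM(r))$, which is extracted from the localization triangle of Proposition~\ref{bddrzhf} together with the $\mathbb{A}^1$-invariance splitting. Since $f^*$ is symmetric monoidal and exact and sends $\integers[\GmS,\{1\}]_\Zar$ to $\integers[\Gmk,\{1\}]_\Zar$ (here $k=K$), applying $f^*$ to this triangle yields the corresponding triangle over $K$; the point is that the strictified construction of $\caM$ via the cycle-complex presheaves $z^r_{F_t}$ is itself compatible with flat base change, essentially by Lemma~\ref{6grth} / Corollary~\ref{nderfd} (the restriction of $\caM(r)$ to flat maps is the naive cycle complex), so $f^*\caM(r)$ and $\caM(r)_K$ agree \emph{as presheaves of complexes up to the canonical comparison}, not merely in the derived category, and the bonding maps literally match. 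Thus $f^*\caM \cong \caM_K$ as naive $\mathbb{G}_m$-spectra, hence as genuine spectra since the bonding maps coincide (as in the proof of Corollary~\ref{lplp87d}).

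The main obstacle, and the place requiring genuine care rather than formal manipulation, is precisely this base-change compatibility of the \emph{strictification}: the spectrum $\caM$ over $S$ is built using the functor $\Gamma$ (fibrant replacement followed by global sections) and the coend construction over $\hat{\caS}$, whereas $\caM_K$ is built without $\Gamma$ (as remarked at the start of section~\ref{hjrerrt}). One must verify that pulling back the $S$-construction along $f\colon\Spec(K)\to S$ reproduces the $K$-construction, i.e. that the auxiliary choices ($Q_\bullet$, fibrant replacements, the functor $\Gamma$) do not obstruct the comparison. This is handled by observing that $f^*$ of the diagram $\gamma'$ over $S$ is, after the canonical identifications of section~\ref{5gffgrw}, the diagram $\gamma'$ over $K$ (both compute, Zariski-locally, the Bloch cycle complexes by Corollary~\ref{nderfd} and its field analogue), so the comparison is forced on the level of the associated objects of $\Ho(\Cpx(\Ab)^{\caS_K})$, and then of $\D(\Sh(\Sm_{K,\Zar},\integers))$ — which is exactly the content one packages into Lemma~\ref{bh464red}. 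Granting Lemma~\ref{bh464red} (whose proof is along these lines and presumably given in the paper), the passage to spectra and the final identification with $\MZ_K$ are immediate from Theorem~\ref{ji67fhll}.
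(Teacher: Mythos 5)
Your proposal is correct and follows the same route as the paper: the paper's proof simply asserts the spectrum-level isomorphism $f^*\caM \cong \caM_K$ (resting on Lemma \ref{bh464red} and the compatibility of bonding maps, which you spell out in more detail) and then invokes Theorem \ref{ji67fhll}. Your extra discussion of the strictification and the functor $\Gamma$ elaborates points the paper leaves implicit, but the argument is the same.
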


\begin{proof}
There is an isomorphism $f^* \caM \cong \caM_K$. The result now follows from
Theorem \ref{ji67fhll}.
\end{proof}

\subsection{Weight $1$ motivic complexes}

We keep the notation of the last section.

\begin{proposition}
\label{hfhzjgg}
Let $k$ be a field, $\integers(1)=C_*(\integers_\tr((\Gmk,\{1\})))[-1]$ be the motivic complex of weight $1$
(in the notation of \cite{voevodsky.allagree} or \cite{mazza-voevodsky-weibel}). Then there is an isomorphism
$\integers(1) \cong \caO^*[-1]$ in $\D(\Sh(\Sm_{k,\Zar},\integers))$. Moreover the map
$\integers[\Gmk,\{1\}]_\Zar \to \caO^*$ induced by this map is the canonical one.
\end{proposition}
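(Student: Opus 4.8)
The plan is to identify $\integers(1) = C_*(\integers_{\tr}((\Gmk,\{1\})))[-1]$ with $\caO^*[-1]$ by the classical computation of the weight $1$ motivic complex, and then to check that the induced map $\integers[\Gmk,\{1\}]_\Zar \to \caO^*$ is the tautological one. First I would recall that by \cite{mazza-voevodsky-weibel} (see also \cite{voevodsky.allagree}) one has $\integers(1) \cong \caO^*[-1]$ in $\D(\Sh(\Sm_{k,\Zar},\integers))$; the standard argument computes the Suslin complex $C_*(\integers_{\tr}(\Gmk))$ via the cycle-theoretic description of $z_{\equi}(\bA^1,0)$ and identifies $\H^1(\integers(1)) \cong \caO^*$, $\H^i(\integers(1)) = 0$ for $i \neq 1$, so that $\integers(1)$ is quasi-isomorphic to $\caO^*[-1]$. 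This takes care of the first assertion.

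For the second assertion, the point is to pin down the isomorphism. The map $\integers[\Gmk,\{1\}]_\Zar \to C_*(\integers_{\tr}((\Gmk,\{1\})))$ is the inclusion of the degree-$0$ part (the representable sheaf $\integers_{\tr}(\Gmk)$ modulo the basepoint, placed in homological degree $0$), and after the shift and the identification $C_*(\integers_{\tr}((\Gmk,\{1\})))[-1] = \integers(1) \cong \caO^*[-1]$ one must verify that the composite $\integers[\Gmk,\{1\}]_\Zar \to \caO^*$ is the canonical map sending the tautological section $\id_{\Gmk} \in \Gmk(\Gmk) = \caO^*(\Gmk)$ to itself. Concretely, the isomorphism $\H^1(\integers(1)) \cong \caO^*$ is constructed so that a finite correspondence $Z \subset X \times \Gmk$ finite and surjective over $X$ is sent to the norm $N_{Z/X}(t|_Z) \in \caO^*(X)$, where $t$ is the coordinate on $\Gmk$; applied to the diagonal graph of $\id_{\Gmk}$ this gives back $t \in \caO^*(\Gmk)$, which is exactly the canonical map. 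So I would argue: (i) recall the explicit formula for $\integers(1) \to \caO^*[-1]$ as a map of complexes of sheaves with transfers; (ii) observe that precomposition with the canonical inclusion $\integers[\Gmk,\{1\}]_\Zar[-1] \to \integers(1)$ picks out the value of this formula on representable correspondences; (iii) compute this value on the tautological correspondence. Since the source $\integers[\Gmk,\{1\}]_\Zar$ is generated (as a Zariski sheaf) by the tautological section, the map $\integers[\Gmk,\{1\}]_\Zar \to \caO^*$ is determined by its value there, and we conclude.

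The main obstacle is a mild bookkeeping issue rather than a conceptual one: the quasi-isomorphism $\integers(1) \cong \caO^*[-1]$ is only canonical up to the normalization built into its construction, so I need to make sure the normalization I use is the one under which $\H^1$ is identified with $\caO^*$ via the norm map on $t$, and to track the shift conventions so that the degree-$0$ generator of $C_*(\integers_{\tr}((\Gmk,\{1\})))$ really does map to the section it is supposed to. Once the conventions are fixed consistently, there is essentially nothing to prove beyond the cited computation: the statement is really just the assertion that the weight-$1$ comparison isomorphism is compatible with the obvious map out of the representable generator, which is how that isomorphism is normalized in the first place. I would phrase the proof accordingly, citing \cite{mazza-voevodsky-weibel} and \cite{voevodsky.allagree} for the computation of $\integers(1)$ and for the explicit form of the isomorphism with $\caO^*[-1]$.
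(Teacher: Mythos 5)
Your proposal is correct and follows essentially the same route as the paper: the paper simply cites \cite[Theorem 4.1]{mazza-voevodsky-weibel} for the isomorphism $\integers(1) \cong \caO^*[-1]$ and the proof of \cite[Lemma 4.4]{mazza-voevodsky-weibel} for the compatibility with the canonical map $\integers[\Gmk,\{1\}]_\Zar \to \caO^*$, which is exactly the normalization check (transfer/norm of the tautological unit evaluated on the identity correspondence) that you sketch.
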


\begin{proof}
The first part is \cite[Theorem 4.1]{mazza-voevodsky-weibel}, the second part is contained
in the proof of \cite[Lemma 4.4]{mazza-voevodsky-weibel}.
\end{proof}

We denote by $S^{(1)}$ the set of codimension $1$ points of $S$, and for each $\p \in S^{(1)}$ we let
$\kappa(\p)$ be the residue field of $\p$ and $i_\p$ the corresponding inclusion $\Spec(\kappa(\p)) \to S$.

\begin{lemma}
\label{h4ehzu4}
There is an exact triangle
$$\bigoplus_{\p \in S^{(1)}} i_{\p,*} \caM(r-1)_{\kappa(\p)}[-2] \to \caM(r) \to f_* \caM(r)_K \to \bigoplus_{\p \in S^{(1)}} i_{\p,*} \caM(r-1)_{\kappa(\p)}[-1]$$
in $\D(\Sh(\Sm_{S,\Zar},\integers))$.
\end{lemma}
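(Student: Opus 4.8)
The plan is to remove the closed points of $S$ finitely many at a time and then pass to a filtered colimit; recall that, since $S$ is one-dimensional, $S^{(1)}$ is exactly the set of closed points of $S$.

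First I would establish, for every finite subset $T\subset S^{(1)}$, an exact triangle
$$\bigoplus_{\p\in T} i_{\p,*}\caM(r-1)_{\kappa(\p)}[-2] \to \caM(r) \to j_{T,*}\bigl(\caM(r)|_{\Sm_{U_T}}\bigr) \to \bigoplus_{\p\in T} i_{\p,*}\caM(r-1)_{\kappa(\p)}[-1]$$
in $\D(\Sh(\Sm_{S,\Zar},\integers))$, where $U_T:=S\setminus T$ with open immersion $j_T\colon U_T\hookrightarrow S$ and $i_\p\colon\Spec\kappa(\p)\to S$. This is the construction preceding Proposition \ref{hgrtgz54}, carried out with the single closed point $x$ replaced by the reduced closed subscheme $\coprod_{\p\in T}\Spec\kappa(\p)$: one sets $C_t'':=\bigl(\prod_{\p\in T}\kappa(\p)\bigr)\otimes_D C_t\cong\prod_{\p\in T}(\kappa(\p)\otimes_D C_t)$, defines $\alpha_K^!\colon\caE_n\times[1]^2\to\Cpx(\Ab)$ by the same recipe, observes that $z^{r-1}_{F_t''}(C_t'')$ splits as a direct sum over $\p\in T$, and invokes Levine's localization sequence \cite[Theorem 1.7]{levine.techniques} (Theorem \ref{gbter4}) for the codimension-$1$ closed immersion $\coprod_{\p\in T}\Spec\kappa(\p)\hookrightarrow S$ to conclude that the associated square is exact. (It is immaterial whether one writes $j_T^*\caM(r)$ or $\caM(r)|_{\Sm_{U_T}}$ here; by Corollary \ref{nderfd} the latter is $z^r(\_)[-2r]$ on flat maps, independently of the base.)

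Next I would note that these triangles are functorial in $T$: for $T\subset T'$ the inclusion of the sub-sum $\bigoplus_{\p\in T}(\_)\hookrightarrow\bigoplus_{\p\in T'}(\_)$, the identity on $\caM(r)$, and the map $j_{T,*}(\caM(r)|_{\Sm_{U_T}})\to j_{T',*}(\caM(r)|_{\Sm_{U_{T'}}})$ induced by the unit of $(\iota_*,\iota^*)$ for $\iota\colon U_{T'}\hookrightarrow U_T$ assemble into a morphism of triangles, as is visible from how the functors $\alpha_K^!$ for varying $T$ fit together. Taking the filtered colimit over the directed set of finite subsets of $S^{(1)}$ — an exact operation which commutes with the formation of exact triangles in $\D(\Sh(\Sm_{S,\Zar},\integers))$ — and using $\varinjlim_T\bigoplus_{\p\in T}(\_)=\bigoplus_{\p\in S^{(1)}}(\_)$ together with the fact that the middle term is the constant diagram $\caM(r)$, one reduces the lemma to the identification $\varinjlim_T j_{T,*}(\caM(r)|_{\Sm_{U_T}})\simeq f_*\caM(r)_K$.

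For this I would use that $\Spec K=\varprojlim_T U_T$ along open (hence affine) transition maps, so that $X_K=\varprojlim_T X_{U_T}$ for every $X\in\Sm_S$; since each $X_{U_T}$ is Noetherian of Krull dimension $\le\dim X$ its Zariski cohomological dimension is bounded, and $\caM^X(r)$ is cohomologically bounded above by Theorem \ref{gtfn}, so Zariski hypercohomology commutes with this cofiltered limit, giving $\varinjlim_T\R\Gamma(X_{U_T},\caM(r))\simeq\R\Gamma(X_K,\caM(r)_K)$ and hence $\varinjlim_T j_{T,*}(\caM(r)|_{\Sm_{U_T}})\simeq f_*\caM(r)_K$ (using Lemma \ref{bh464red} for the last identification). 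Feeding this into the colimit triangle proves the assertion. The main obstacle is exactly this continuity step: one must check that derived pushforward along $f$ is computed by the filtered colimit of derived pushforwards along the $j_T$, and it is the finite Krull dimension of $S$ together with the cohomological bound of Theorem \ref{gtfn} that legitimises interchanging $\R\Gamma$ with the cofiltered limit of schemes; everything else is bookkeeping and a routine adaptation of the argument that produced Proposition \ref{hgrtgz54}.
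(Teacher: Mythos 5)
Your proposal is correct and follows essentially the paper's own route: the paper's terse proof just cites the localization triangle produced by the cycle-complex construction (Proposition \ref{hgrtgz54}/Corollary \ref{h46reee}) together with the generic-fibre identification of Lemma \ref{bh464red}, and your argument merely makes explicit how these triangles for finite sets of closed points are assembled and passed to the colimit, with the continuity step $\varinjlim_T j_{T,*}(\caM(r)|_{\Sm_{U_T}})\cong f_*\caM(r)_K$ justified by spreading out cycles and the finite cohomological dimension together with Theorem \ref{gtfn}. No genuine gap; the only cosmetic slip is the parenthetical ``open (hence affine)'', which is not a general implication but does hold here since the $U_T$ are affine.
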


\begin{proof}
This follows from Corollary \ref{h46reee} and Lemma \ref{bh464red}.
\end{proof}

\begin{corollary}
\label{h4rtz55r}
We have $\caH^i(\caM(1)) \cong 0$ for $i \neq 1$ and there is an exact sequence
$$0 \to \caH^1(\caM(1)) \to f_* \caO^*_{/K} \to \bigoplus_{\p \in S^{(1)}} i_{\p,*} \integers \to 0$$
in $\Sh(\Sm_{S,\Zar},\integers)$.
\end{corollary}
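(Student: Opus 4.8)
The plan is to derive Corollary \ref{h4rtz55r} from the exact triangle of Lemma \ref{h4ehzu4} specialized to weight $r = 1$, combined with the identification of weight $1$ motivic complexes with $\caO^*[-1]$ over fields. First I would set $r = 1$ in the triangle of Lemma \ref{h4ehzu4}. Since for any field $\kappa(\p)$ we have $\caM(0)_{\kappa(\p)} \cong \integers$ (concentrated in degree $0$; this follows from Proposition \ref{h46tef} together with the standard computation $\tilde{z}^0 \cong \integers$, or directly from the weight-$0$ case), the summands $i_{\p,*}\caM(0)_{\kappa(\p)}[-2]$ and $i_{\p,*}\caM(0)_{\kappa(\p)}[-1]$ become $i_{\p,*}\integers[-2]$ and $i_{\p,*}\integers[-1]$. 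Likewise, by Proposition \ref{hfhzjgg} we have $\caM(1)_K \cong \integers(1)_K \cong \caO^*_{/K}[-1]$ over the field $K$, so $f_*\caM(1)_K \cong f_*\caO^*_{/K}[-1]$, a complex with a single nonzero cohomology sheaf in degree $1$.

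Next I would pass to the long exact sequence of cohomology sheaves associated to the exact triangle
$$\bigoplus_{\p \in S^{(1)}} i_{\p,*}\integers[-2] \to \caM(1) \to f_*\caO^*_{/K}[-1] \to \bigoplus_{\p \in S^{(1)}} i_{\p,*}\integers[-1].$$
The outer two terms have their only nonzero cohomology sheaves in degree $2$ and degree $1$ respectively, and $f_*\caO^*_{/K}[-1]$ has its only nonzero cohomology sheaf $\caH^1 = f_*\caO^*_{/K}$ in degree $1$. Reading off the long exact sequence immediately gives $\caH^i(\caM(1)) = 0$ for $i \notin \{1,2\}$ and an exact sequence
$$0 \to \caH^1(\caM(1)) \to f_*\caO^*_{/K} \to \bigoplus_{\p \in S^{(1)}} i_{\p,*}\integers \to \caH^2(\caM(1)) \to 0$$
(the first map being injective because $\caH^1$ of the leftmost term in the triangle vanishes). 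It remains to show $\caH^2(\caM(1)) = 0$, equivalently that the connecting map $f_*\caO^*_{/K} \to \bigoplus_{\p} i_{\p,*}\integers$ is surjective.

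For the surjectivity of the boundary map, I would argue that, by construction of the localization triangle in Proposition \ref{bddrzhf} (and Corollary \ref{h46reee}), this map is the sheafified divisor map: on local sections over an open $V \subseteq S$ it sends a rational function to its divisor, and the identification with the canonical divisor map uses Lemma \ref{ght55rfr}, Lemma \ref{gh4tzred} and Proposition \ref{hfhzjgg} (which pins down that the comparison isomorphism $\caM(1)_K \cong \caO^*_{/K}[-1]$ is compatible with the tautological choices). Surjectivity is then a Zariski-local statement on $S$: over a Dedekind domain every closed point is cut out by a local uniformizer, and any given codimension-$1$ point $\p$ corresponds to a height-$1$ prime whose associated valuation is realized by an element of $K^*$ with the prescribed order at $\p$ and zero order elsewhere locally. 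The main obstacle is precisely identifying the connecting homomorphism in the triangle with the classical divisor map at the level of sheaves — tracing through the strictification constructions of section \ref{h5643dfgf} — rather than the (essentially formal) cohomology-sheaf bookkeeping; once that identification is in hand, surjectivity and hence $\caH^2(\caM(1)) = 0$ follow, completing the proof.
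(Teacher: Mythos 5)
Your main line — specialize the triangle of Lemma \ref{h4ehzu4} to $r=1$, identify $\caM(0)_{\kappa(\p)}\cong\integers$ and $\caM(1)_K\cong\caO^*_{/K}[-1]$ via Proposition \ref{hfhzjgg}, and read off the long exact sequence of cohomology sheaves — is exactly the paper's (one-line) proof, and that part is fine. Two caveats, though. First, your claim that $f_*\caM(1)_K$ has its only nonzero cohomology sheaf in degree $1$ is not free if $f_*$ is derived: you are implicitly using $\R^{\ge 1}f_*\caO^*_{/K}=0$ (true, since local rings essentially smooth over $S$ and their localizations are UFDs, resp.\ by the Gersten resolution of $\mathbb{G}_m$, but it should be said); your asserted equivalence ``$\caH^2(\caM(1))=0$ iff the connecting map is surjective'' depends on exactly this vanishing. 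Second, and more seriously, the step you yourself flag as the main obstacle — identifying the connecting homomorphism with the classical divisor map by tracing through the strictification of section \ref{h5643dfgf} and Lemmas \ref{gh4tzred}, \ref{ght55rfr} — is not carried out, and as written your proof of the surjectivity (hence of $\caH^2(\caM(1))=0$ and of the exactness at the right end) rests on it. That is a genuine gap in the argument as proposed.

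The good news is that this detour is unnecessary. The vanishing $\caH^i(\caM(1))=0$ for $i>1$ is available directly: by the comparison corollary following Corollary \ref{nderfd} one has $\caM(1)|_{X_\Zar}\cong\M^X(1)$ for every $X\in\Sm_S$, and Theorem \ref{gtfn} gives $\caH^k(\M^X(1))=0$ for $k>1$; since cohomology sheaves are computed locally, $\caH^{>1}(\caM(1))=0$. Feeding this into your long exact sequence, the surjectivity of $f_*\caO^*_{/K}\to\bigoplus_{\p\in S^{(1)}}i_{\p,*}\integers$ becomes an output of the sequence rather than an input, and no identification of the boundary map with the divisor map is needed. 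With that substitution your argument is complete and agrees with the paper's intended proof.
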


\begin{proof}
This follows from Lemma \ref{h4ehzu4} and Proposition \ref{hfhzjgg}.
\end{proof}

\begin{theorem}
\label{jr5t4565}
There is a canonical isomorphism $$\caM(1) \cong \caO^*_{/S}[-1]$$ in $\D(\Sh(\Sm_{S,\Zar},\integers))$.
\end{theorem}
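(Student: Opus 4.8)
The plan is to build the isomorphism $\caM(1) \cong \caO^*_{/S}[-1]$ by comparing the localization triangle for $\caM(1)$ over $S$ (Lemma~\ref{h4ehzu4}, in the form of Corollary~\ref{h4rtz55r}) with the analogous exact sequence expressing $\caO^*_{/S}$ in terms of $\caO^*_{/K}$ and divisors. First I would record that $\caM(1)$ is concentrated in cohomological degree $1$ by Corollary~\ref{h4rtz55r}, so it suffices to identify the sheaf $\caH^1(\caM(1))$ with $\caO^*_{/S}$ and to make the identification canonical. Over the generic point, Theorem~\ref{ji67fhll} (or directly Proposition~\ref{hfhzjgg} together with Lemma~\ref{bh464red}) gives $\caM(1)_K \cong \caO^*_{/K}[-1]$, with the map from $\integers[\Gmk,\{1\}]$ being the canonical one; similarly, at each codimension-one point $\p$, Corollary~\ref{h46reee} identifies the "residue" term $i_\p^! \caM(1) \cong \caM(0)_{\kappa(\p)}[-2] \cong \integers[-2]$. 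Thus the exact triangle of Lemma~\ref{h4ehzu4} for $r=1$ is, on $\caH^1$, an exact sequence $0 \to \caH^1(\caM(1)) \to f_*\caO^*_{/K} \to \bigoplus_{\p \in S^{(1)}} i_{\p,*}\integers \to 0$.

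On the other side, the sheaf $\caO^*_{/S}$ fits (in the Zariski topology, using the Weil-divisor resolution of $\caO^*$ already invoked in section~\ref{jfezztrzz} for $\R^i j_*\caO^*_{/U}=0$) into the exact sequence $0 \to \caO^*_{/S} \to f_*\caO^*_{/K} \to \bigoplus_{\p \in S^{(1)}} i_{\p,*}\integers \to 0$, where the second map is the divisor map $\mathrm{div}$. The key step is then to check that the map $f_*\caO^*_{/K} \to \bigoplus_\p i_{\p,*}\integers$ coming from the localization triangle agrees with $\mathrm{div}$ (up to sign). This is a local computation at each $\p$: the boundary map in Levine's localization sequence for the cycle complexes, composed with the identifications $\caM(1)_K \cong \caO^*_{/K}[-1]$ and $\caM(0)_{\kappa(\p)} \cong \integers$, is exactly the order-of-vanishing map — which is essentially the definition of the degree-$1$ part of Bloch's complex and is the content of the compatibility already used in Proposition~\ref{gfedr5t5} and in Lemma~\ref{ght55rfr}. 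Granting this, the two exact sequences have identical second and third terms with identical connecting maps, so their kernels are canonically isomorphic, giving $\caH^1(\caM(1)) \cong \caO^*_{/S}$ and hence $\caM(1) \cong \caO^*_{/S}[-1]$ in $\D(\Sh(\Sm_{S,\Zar},\integers))$.

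For canonicity and to pin down that the induced map $\integers[\GmS,\{1\}]_\Zar \to \caO^*_{/S}$ is the canonical one, I would argue as in the weight-$1$ statements over a field: the map $\integers[\GmS,\{1\}]_\Zar[-1] \to \caM(1)$ adjoint to the bonding isomorphism of Proposition~\ref{ht4ztr} maps, under restriction to the generic point, to the canonical map $\integers[\Gmk,\{1\}]_\Zar \to \caO^*_{/K}$ by Proposition~\ref{hfhzjgg} and Lemma~\ref{ght55rfr}; since $\caM(1)$ injects (on $\caH^1$) into $f_*\caO^*_{/K}$, a map out of $\integers[\GmS,\{1\}]_\Zar$ into $\caM(1)[1]$ is determined by its generic restriction, so the identification is forced to be the canonical one and in particular is unique.

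\textbf{Main obstacle.} The genuine work is the local comparison at each codimension-one point: showing that Levine's connecting homomorphism for the pair $(S, \{x\})$ applied to weight-$1$ cycle complexes coincides with the classical divisor/valuation map after the identifications with $\caO^*$. This is where one must trace through the explicit description of $z^1(\_,\bullet)$ in low degrees and the normalization of the boundary map in the localization triangle (Theorem~\ref{gbter4}); everything else is formal diagram-chasing once this compatibility is in hand. I expect it follows from the same circle of ideas as Lemma~\ref{ght55rfr}, where an analogous boundary computation is carried out in detail, so the cleanest route may be to reduce to that lemma by base change to the henselization at $\p$ rather than recomputing from scratch.
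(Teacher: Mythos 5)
Your outline is workable, but note how it differs from the paper's own argument at the decisive point. You and the paper share the same skeleton: reduce to identifying $\caH^1(\caM(1))$ inside $f_*\caO^*_{/K}$ using Corollary \ref{h4rtz55r}. The divergence is in how that identification is concluded. You propose to compute the connecting map $f_*\caO^*_{/K} \to \bigoplus_{\p \in S^{(1)}} i_{\p,*}\integers$ of the localization sequence and show it is the divisor map, then compare with the divisor short exact sequence for $\caO^*_{/S}$. The paper bypasses this entirely: it only uses that the canonical map $\integers[\GmS]_\Zar \to \caH^1(\caM(1))$ composed with the injection $\caH^1(\caM(1)) \hookrightarrow f_*\caO^*_{/K}$ is the canonical map (Proposition \ref{hfhzjgg}, via Lemma \ref{bh464red}), and that the image of the canonical map $\integers[\GmS]_\Zar \to f_*\caO^*_{/K}$ is $\caO^*_{/S}$. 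This gives inclusions $\caO^*_{/S} \subseteq \caH^1(\caM(1)) \subseteq f_*\caO^*_{/K}$, and since both quotients of $f_*\caO^*_{/K}$ are $\bigoplus_\p i_{\p,*}\integers$, a stalkwise check (a surjection $\integers \to \integers$ is injective) forces equality. So no normalization of the boundary map is ever needed; what your route buys, if completed, is the finer statement that Levine's residue in weight one is the valuation map, which is not required for Theorem \ref{jr5t4565}.

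The one point where your write-up falls short of a proof is exactly the step you flag as the main obstacle, and your proposed shortcut is not quite a reduction: Lemma \ref{ght55rfr} proves a different statement (agreement of $\iota_1$ and $\iota_2$ over a field), and while its proof contains a boundary computation of the same flavor, it does not directly yield that the connecting map of Theorem \ref{gbter4} for the pair (codimension-one vertical divisor, complement), after the identifications $\caM(1)_K \cong \caO^*_{/K}[-1]$ and Corollary \ref{h46reee}, is the order-of-vanishing map; passing to henselizations would still leave you with an explicit low-degree analysis of $z^1(\_,\bullet)$ and of the sign/normalization of the triangle. That analysis is true and doable, but it is at least as much work as the entire published proof. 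If you want the shortest complete argument, replace your comparison-of-boundaries step by the paper's sandwich argument; if you want the boundary identification for its own sake, be prepared to prove it from scratch rather than by citation.
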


\begin{proof}
We have a canonical map $\integers[\GmS]_\Zar \to \caH^1(\caM(1))$ whose composition with the map
$\caH^1(\caM(1)) \to f_* \caO^*_{/K}$ is the canonical map by Proposition \ref{hfhzjgg}. The image of this
canonical map $\integers[\GmS]_\Zar \to f_* \caO^*_{/K}$ is $\caO^*_{/S}$. The claim follows now from Corollary \ref{h4rtz55r}.
\end{proof}

Let $\HBo \in \D(\Sh(\Sm_{S,\Zar},\integers))$ be the first $\bA^1$- and Nisnevich-local space in a $\Omega$-$\GmS$-spectrum model of $\HB$.

\begin{theorem}
\label{hnehztrr}
There is a canonical isomorphism $$\HBo \cong (\caO^*_{/S})_\Q$$ in $\D(\Sh(\Sm_{S,\Zar},\integers))$.
\end{theorem}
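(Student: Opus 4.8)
The plan is to build the isomorphism from the orientation on $\HB$ and then verify it is a quasi-isomorphism by reducing to fields via a weight-$1$ localization triangle, paralleling the treatment of $\caM$ in Lemma~\ref{h4ehzu4}, Corollary~\ref{h4rtz55r} and Theorem~\ref{jr5t4565}.

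First I would produce a canonical morphism $(\caO^*_{/S})_\Q \to \HBo$. Since $\HB$ is orientable (\cite[13.1.5]{cisinski-deglise}) it carries first Chern classes of line bundles; starting from the class of the tautological bundle on $\P^1_S$, restricting along the cofibre sequence $(\GmS,\{1\}) \to (\bA^1_S,\{1\}) \to (\P^1_S,\{1\})$ and transporting along $\psi \colon (\P^1,\{\infty\}) \cong \GmS \wedge S^1$ one obtains the canonical element of $\Hom_{\SH(S)}(\Sigma^\infty(\GmS,\{1\}),\Sigma^{1,1}\HB) = \Hom_{\SH(S)}(\unit,\HB) = \Q$, hence a morphism $\GmS \to \HBo$ in $\caH_\bullet(S)$ and, after applying $c \colon \caH_\bullet(S) \to \D^{\bA^1}(\Sh(\Sm_{S,\Nis},\integers))$, a morphism $\integers[\GmS,\{1\}]_\Zar \to \HBo$ in $\D(\Sh(\Sm_{S,\Zar},\integers))$. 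Additivity of $c_1$ makes this additive in the $\GmS$-variable, so it factors through the represented sheaf $\caO^*_{/S}$, and then through $(\caO^*_{/S})_\Q$ because $\HBo$ is rational; let $\phi$ denote the resulting morphism.

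Next I would prove $\phi$ is an isomorphism. Over a field $k$, Beilinson motivic cohomology in weight $1$ is $\HBk^{p,1}(X) \cong H^{p-1}_{\Zar}(X,\caO^*_X)_\Q$ for $X \in \Sm_k$ (this is the rationalization of the classical identity $\integers(1) \simeq \caO^*[-1]$, Proposition~\ref{hfhzjgg}), so the first space of $\HBk$ is $(\caO^*_{/k})_\Q$, and using the compatibility of the orientation-induced $c_1$ with the tautological class one checks that the version of $\phi$ over $k$ — and over each residue field $\kappa(\p)$ of $S$ — is precisely this isomorphism. Now $\HB$, being an oriented spectrum in the Cisinski--D\'eglise formalism, satisfies a weight-$1$ localization triangle over $S$ relating $\HBo$ to the pushforward of its pullback to the generic point $\Spec K$ and to the constant sheaves $\Q_{\kappa(\p)}$ over the codimension-$1$ points of $S$; this triangle is the rationalization of the triangle of Lemma~\ref{h4ehzu4}, and under $\phi$ it maps to the triangle obtained by applying Corollary~\ref{h4rtz55r} to $(\caO^*_{/S})_\Q$ (whose weight-$0$ term is indeed the constant sheaf $\Q$). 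Comparing the two triangles with the field-case isomorphisms and the five lemma shows that $\phi$ is a quasi-isomorphism on stalks, hence the desired canonical isomorphism $\HBo \cong (\caO^*_{/S})_\Q$.

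The main obstacle is the bookkeeping in this last step: one must verify that the first Chern class supplied by the orientation of $\HB$ realizes, over fields, the classical identification $\integers(1)_\Q \simeq (\caO^*)_\Q[-1]$, i.e.\ that $\phi$ over a field is the map induced by the canonical morphism $\integers[\GmS,\{1\}]_\Zar \to \caO^*_{/S}$, and that the weight-$1$ localization triangle for $\HB$ is compatible, through these identifications, with the corresponding triangle for $(\caO^*_{/S})_\Q$. Both are normalization statements to be extracted from the construction of $\HB$ and its orientation in \cite{cisinski-deglise}; once they are in hand, the argument is formal, exactly as for $\caM$ in Theorem~\ref{jr5t4565}.
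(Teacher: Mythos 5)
Your proposal is correct and follows essentially the same route as the paper: the paper's proof is simply "similar to the proof of Theorem \ref{jr5t4565}", i.e.\ the weight-$1$ localization triangle over $S$ (via $i_\p^!\HB \cong \HBkappap(-1)[-2]$), the rationalized field-case identification $\integers(1)\simeq \caO^*[-1]$, and the resulting cohomology-sheaf sequence identifying the single nonvanishing cohomology sheaf with $(\caO^*_{/S})_\Q$ via the canonical map from $\Q[\GmS,\{1\}]_\Zar$. Your only deviation is packaging the comparison map through the orientation's first Chern class rather than directly through the canonical map from the represented sheaf, which amounts to the same normalization you yourself note must be checked over fields.
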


\begin{proof}
The proof is similar to the proof of Theorem \ref{jr5t4565}.
\end{proof}

\subsection{Rational spectra}
\label{hfdgjtr}

We keep the notation of the last sections.

\begin{corollary}
\label{j675gbbb}
There is an exact triangle
$$\bigoplus_{\p \in S^{(1)}} i_{\p,*} \MZ_{\kappa(\p)}(-1)[-2] \to \caM \to f_* \MZ_K \to \bigoplus_{\p \in S^{(1)}} i_{\p,*} \MZ_{\kappa(\p)}(-1)[-1]$$
in $\SH(S)$.
\end{corollary}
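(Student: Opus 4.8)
The plan is to read Lemma \ref{h4ehzu4} weight by weight, assemble the resulting localization triangles into an exact triangle of naive $\GmS$-spectra, lift it to $\SH(S)$, and then replace the motivic complexes over the residue fields and over $K$ by the corresponding motivic Eilenberg--MacLane spectra using Theorem \ref{ji67fhll} and Theorem \ref{jd5ru655}.

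First I would promote Lemma \ref{h4ehzu4} to a statement about $\GmS$-spectra. For each $r$ the triangle
$$\bigoplus_{\p \in S^{(1)}} i_{\p,*}\caM(r-1)_{\kappa(\p)}[-2] \to \caM(r) \to f_*\caM(r)_K \to \bigoplus_{\p \in S^{(1)}} i_{\p,*}\caM(r-1)_{\kappa(\p)}[-1]$$
(which is itself obtained from Corollary \ref{h46reee} and Lemma \ref{bh464red}, that is, from Proposition \ref{hgrtgz54} applied at each codimension-$1$ point) becomes, after shifting by $[r]$, a triangle relating the level-$r$ part $\caM(r)[r]$ of $\caM$ to the level-$r$ parts of the naive $\GmS$-spectra $f_*\caM_K$ and $\bigoplus_\p i_{\p,*}\caM_{\kappa(\p)}(-1)$. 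What must be verified is that these triangles are compatible with the bonding maps. For the unit map $\caM \to f_*f^*\caM \cong f_*\caM_K$ this is naturality; for the connecting maps one argues as in the proofs of Corollary \ref{lplp87d} and Proposition \ref{bh5z6u5}: the bonding maps of $\caM$ (Proposition \ref{ht4ztr}, built on the $(\bA^1,\GmS)$-localization of Proposition \ref{bddrzhf}) and the localization triangle of Proposition \ref{hgrtgz54} are both produced from the strictified cycle complexes of Section \ref{h5643dfgf} via the coend/strictification machinery, so one sets up a single functor $\caE_n \times [1]^2 \times [1] \to \Cpx(\Sh(S_\Zar,\integers))$ carrying the two localizations simultaneously, whose associated object in $\D(\Sh(\Sm_{S,\Zar},\integers))$ exhibits the connecting map as a map of $\GmS$-spectra and the whole diagram as a triangle of $\GmS$-spectra. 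Since $\caM$, $f_*\caM_K$ and each $i_{\p,*}\caM_{\kappa(\p)}$ admit lifts to honest $\GmS$-spectra, this yields an exact triangle in $\SH(S)$.

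Second, I would apply Theorem \ref{ji67fhll} over each residue field: $\caM_{\kappa(\p)} \cong \MZ_{\kappa(\p)}$ as spectra, hence $i_{\p,*}\caM_{\kappa(\p)}(-1)[-2] \cong i_{\p,*}\MZ_{\kappa(\p)}(-1)[-2]$ and similarly for the $[-1]$-shifted term; and over $K$, $\caM_K \cong \MZ_K$, which together with $f^*\caM \cong \MZ_K$ (Theorem \ref{jd5ru655}) and the compatibility of $f_*$ with the passage from complexes of sheaves to spectra gives $f_*\caM_K \cong f_*\MZ_K$ in $\SH(S)$. Substituting these identifications into the triangle of the previous step yields the asserted exact triangle.

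The main obstacle is the first step: showing that the localization triangle of Lemma \ref{h4ehzu4} respects the $\GmS$-spectrum structure, so that the levelwise triangles glue to a triangle of spectra rather than an unrelated family of triangles. This is not formal and forces one to re-run the strictification and coend constructions of Section \ref{h5643dfgf} with the bonding-map localization and the codimension-$1$ localization carried along together, exactly in the style of Proposition \ref{bh5z6u5}; once this compatibility is in hand the remainder is a direct substitution.
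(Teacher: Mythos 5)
Your route is genuinely different from the paper's, and considerably heavier. The paper deduces the corollary by pure substitution: for each open $U \subset S$ with finite closed complement $Z$ one has the formal localization triangle $i_{Z,*}i_Z^!\caM \to \caM \to \R j_{U,*}j_U^*\caM$ in $\SH(S)$, and passing to the filtered colimit over such $U$ turns the outer terms into $\bigoplus_{\p \in S^{(1)}} i_{\p,*}i_\p^!\caM$ and $f_*f^*\caM$; Theorem \ref{h47zt5e} then rewrites $i_\p^!\caM$ as $\MZ_{\kappa(\p)}(-1)[-2]$ and Theorem \ref{jd5ru655} rewrites $f^*\caM$ as $\MZ_K$. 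In other words, exactness costs nothing because it is the six-functor localization triangle, and all of the strictification and bonding-map compatibility you propose to redo is already packaged into Theorem \ref{h47zt5e} (via Proposition \ref{hgrtgz54}, Corollary \ref{lplp87d} and Theorem \ref{ji67fhll}). What your approach buys is independence from the $i^!$-formalism in $\SH(S)$; what the paper's buys is that no new cycle-complex work is needed at this stage.

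If you do pursue your route, two points need repair. First, the inference ``$\caM$, $f_*\caM_K$ and the $i_{\p,*}\caM_{\kappa(\p)}$ admit lifts to honest $\GmS$-spectra, hence the levelwise triangles yield an exact triangle in $\SH(S)$'' is not valid as stated: a compatible family of triangles of naive spectra need not arise from a triangle of genuine spectra. You must either carry out in full the strict model-level construction you sketch (so that the connecting map exists as an actual map of spectra and the relevant square is levelwise homotopy cocartesian), or invoke rigidity of mapping spaces in the style of Lemma \ref{get6654e} and Corollary \ref{lplp87d} to lift the levelwise data uniquely. Second, the strictification of section \ref{h5643dfgf} handles one affine closed immersion at a time (Proposition \ref{bddrzhf}), so the direct sum over all of $S^{(1)}$ still has to be produced by a colimit over open subsets of $S$ --- exactly the step the formal localization triangle, plus continuity of $\SH$, handles for free in the paper's argument. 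With those two points addressed, your substitution of $\caM_{\kappa(\p)} \cong \MZ_{\kappa(\p)}$ and $f_*\caM_K \cong f_*\MZ_K$ at the end is the same as the paper's, merely routed through Theorem \ref{ji67fhll} directly rather than through Theorem \ref{h47zt5e}.
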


\begin{proof}
This follows from Theorems \ref{h47zt5e} and \ref{jd5ru655}.
\end{proof}

We call a spectrum $E \in \SH(S)$ a Beilinson motive if it is $\HB$-local
(compare with \cite[Definition 13.2.1]{cisinski-deglise}). This is the case if and only if the canonical map
$E \to \HB \wedge E$ is an isomorphism (\cite[Corollary 13.2.15]{cisinski-deglise}).

\begin{corollary}
\label{hn46tzr}
The rationalization $\caM_\Q$ is a Beilinson motive.
\end{corollary}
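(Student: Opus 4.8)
The rationalization $\caM_\Q$ is a Beilinson motive.

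The plan is to use the characterization of Beilinson motives recalled just before the statement: $\caM_\Q$ is $\HB$-local if and only if the unit map $\caM_\Q \to \HB \wedge \caM_\Q$ is an isomorphism in $\SH(S)$. By Theorem \ref{vfdtjuzu}, $\HB$ is the initial object among rational orientable $E_\infty$-spectra, so it suffices to show that $\caM_\Q$ is a rational orientable $E_\infty$-spectrum and that the resulting map $\HB \to \caM_\Q$ exhibits $\caM_\Q$ as an $\HB$-algebra whose unit is an equivalence after smashing; more efficiently, one can argue directly via the localization/gluing triangle.

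First I would establish that $\caM_\Q$ carries the structure needed to receive a map from $\HB$. The complexes $\caM(r)$ are naturally an $E_\infty$-algebra object in $\GmS$-spectra (this is how $\caM$ is built in section \ref{gtre5rdd}), hence $\caM$ is an $E_\infty$-spectrum and so is its rationalization $\caM_\Q$. For orientability, one invokes Theorem \ref{jr5t4565}: there is a canonical isomorphism $\caM(1) \cong \caO^*_{/S}[-1]$, and the induced map $\integers[\P^\infty]_\Zar \to \caM(1)[1]$ classifying the tautological line bundle provides an orientation exactly as in the construction of the orientation $o$ on $D_p$ in Proposition \ref{vfjt43}. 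Thus $\caM_\Q$ is a rational orientable $E_\infty$-spectrum, and Theorem \ref{vfdtjuzu} furnishes a canonical map $\HB \to \caM_\Q$ of $E_\infty$-spectra.

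It then remains to check that $\caM_\Q \to \HB \wedge \caM_\Q$ is an isomorphism, equivalently that $\caM_\Q$ is already an $\HB$-module. The key tool is the exact triangle of Corollary \ref{j675gbbb}:
$$\bigoplus_{\p \in S^{(1)}} i_{\p,*} \MZ_{\kappa(\p)}(-1)[-2] \to \caM \to f_* \MZ_K \to \bigoplus_{\p \in S^{(1)}} i_{\p,*} \MZ_{\kappa(\p)}(-1)[-1].$$
Rationalizing and smashing with $\HB$, one reduces to showing that each term becomes $\HB$-local after rationalization. For the fields $\kappa(\p)$ and $K$ this is known: by Theorem \ref{ji67fhll} we have $\MZ_{\kappa(\p)} \cong \caM_{\kappa(\p)}$ and $\MZ_K \cong \caM_K$, and the rationalized motivic Eilenberg--MacLane spectrum over a field agrees with the restriction of the Beilinson spectrum (this is the known identification over fields, compatible with the fact that over a field rational motivic cohomology is the Adams-graded piece of $K$-theory). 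Pushforward along the closed (resp. open) immersions preserves $\HB$-modules because $\HB$ is stable under the six operations over regular bases (\cite{cisinski-deglise}), so $i_{\p,*}$ of an $\HB_{\kappa(\p)}$-module is an $\HB_S$-module and similarly for $f_*$. Since the class of $\HB$-local objects is triangulated and closed under (the relevant) sums, $\caM_\Q$ — built from such terms by a triangle — is $\HB$-local.

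The main obstacle I expect is the reduction over a field: one must know that $(\MZ_k)_\Q$ is $\HB_k$-local, i.e.\ that rationalized motivic cohomology over a field is represented by the Beilinson spectrum. Over a field this is essentially the statement that $\DM_\Q$ agrees with Beilinson motives, which holds by \cite{cisinski-deglise} combined with Theorem \ref{ji67fhll}; the only care needed is to ensure the identification is compatible with base change to the residue fields $\kappa(\p)$ and to $K$, so that it can be fed into the triangle above. Once that input is in place, the triangle argument and the stability of $\HB$-modules under $i_{\p,*}$ and $f_*$ close the proof.
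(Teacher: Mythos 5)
Your third paragraph is essentially the paper's proof: rationalize the triangle of Corollary \ref{j675gbbb}, observe that the outer terms $i_{\p,*}\MQ_{\kappa(\p)}(-1)[-2]$ and $f_*\MQ_K$ are Beilinson motives, and conclude because $\HB$-local objects form a triangulated subcategory closed under the relevant sums. The only difference is the justification for the field-level input: the paper simply notes that $\MQ_{\kappa(\p)}$ and $\MQ_K$ are orientable rational spectra and invokes \cite[Corollary 13.2.15 (Ri)]{cisinski-deglise}, which says precisely that pushforwards of such spectra are Beilinson motives, whereas you appeal to the identification of rational motivic cohomology over a field with the Beilinson spectrum plus stability of $\HB$-modules under $i_{\p,*}$ and $f_*$; both work, and no base-change compatibility is actually needed since locality of each term is intrinsic.

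Your first two paragraphs, however, are an unnecessary detour containing an unjustified claim: $\caM$ is constructed in section \ref{gtre5rdd} only as a naive $\mathbb{G}_m$-spectrum (with a lift to spectra), not as an $E_\infty$-algebra, so you cannot invoke Theorem \ref{vfdtjuzu} to produce a map $\HB \to \caM_\Q$ at this stage. Asserting the $E_\infty$-structure via $\MZ$ would be circular, since the identification $\caM \cong \MZ$ (Theorem \ref{grferzt}) rests on Theorem \ref{jf356zgf}, whose map $u$ is only defined using the corollary you are proving. Fortunately none of this is needed: the criterion for being a Beilinson motive is that the canonical map $\caM_\Q \to \HB \wedge \caM_\Q$ is an isomorphism, which requires no map from $\HB$ and no multiplicative structure on $\caM_\Q$, so the triangle argument alone carries the proof.
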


\begin{proof}
The rational motivic Eilenberg-MacLane spectra $\MQ_{\kappa(\p)}$ for $\p \in S^{(1)}$ and $\MQ_K$ are orientable, thus their
push forwards to $S$ are Beilinson motives (\cite[Corollary 13.2.15 (Ri)]{cisinski-deglise}). Now the claim
follows from Corollary \ref{j675gbbb}.
\end{proof}

By construction we have $\caM_{0,0}=\integers$. By Corollary \ref{hn46tzr} the elements of $(\caM_\Q)_{0,0}$ correspond bijectively
to maps $\HB \to \caM_\Q$, and we let $u \colon \HB \to \caM_\Q$ be the map corresponding to
$1 \in \Q = (\caM_\Q)_{0,0}$.

\begin{theorem}
\label{jf356zgf}
The map $u$ is an isomorphism.
\end{theorem}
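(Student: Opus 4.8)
\textbf{Proof plan for Theorem \ref{jf356zgf}.} The strategy is to verify that $u$ induces an isomorphism on all bigraded homotopy sheaves, or equivalently that both source and target are identified once we know they are Beilinson motives sharing the same $\pi_{0,0}$ and the same orientation. First I would recall that by Corollary \ref{hn46tzr} the target $\caM_\Q$ is $\HB$-local, so mapping out of $\HB$ computes $(\caM_\Q)_{0,0}=\Q$ via the unit; by definition $u$ picks out $1$. Since $\HB$ is itself $\HB$-local and the category of Beilinson motives is a full localizing subcategory of $\SH(S)_\Q$, it suffices to show $u$ is an isomorphism after applying the localization functor $\HB\wedge(-)$, i.e. to show $\HB\wedge u\colon \HB\wedge\HB\to\HB\wedge\caM_\Q$ is an isomorphism; but $\HB$ is a commutative ring spectrum with $\HB\wedge\HB\cong\HB$ (idempotence of the localization, \cite[Corollary 13.2.15]{cisinski-deglise}), so this reduces to showing that the induced map $\HB\to\HB\wedge\caM_\Q$ is an isomorphism, which is exactly the assertion that $\caM_\Q$ is a Beilinson motive together with the statement that $u$ is compatible with the module structures.

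The cleaner route, which I would actually carry out, uses the localization triangle of Corollary \ref{j675gbbb}: after rationalizing it reads
$$\bigoplus_{\p\in S^{(1)}} i_{\p,*}\MZ_{\kappa(\p),\Q}(-1)[-2]\to\caM_\Q\to f_*\MZ_{K,\Q}\to\bigoplus_{\p\in S^{(1)}} i_{\p,*}\MZ_{\kappa(\p),\Q}(-1)[-1].$$
On the other hand the Beilinson spectrum $\HB$ satisfies the analogous localization triangle over $S$ (it is part of the six-functor formalism for Beilinson motives, \cite{cisinski-deglise}), with the fibers being $i_{\p,*}$ of $\HB$ over the residue fields, twisted and shifted by purity. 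By the known comparison over fields (Theorem \ref{ji67fhll} identifies $\caM_k$ with $\MZ_k$, and over a field the rationalized motivic Eilenberg-MacLane spectrum is the Beilinson spectrum, \cite[Corollary 13.2.15 (Rv)]{cisinski-deglise} together with the orientability of $\MQ_k$), the map $u$ restricted to each generic point $K$ and each residue field $\kappa(\p)$ is an isomorphism. I would then check that $u$ is compatible with the two localization triangles — this is where the real work sits — and conclude by the five lemma (in the triangulated category $\SH(S)$) that $u$ is an isomorphism.

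The compatibility of $u$ with the localization triangles is the main obstacle. One has to know that the triangle of Corollary \ref{j675gbbb} for $\caM$ maps, via $u$, to the localization triangle for $\HB$ over $S$, and that on the generic-fiber and residue-field pieces the induced maps are the canonical comparison isomorphisms $\MZ_{K,\Q}\cong\HB_K$ and $\MZ_{\kappa(\p),\Q}\cong\HB_{\kappa(\p)}$. This rests on the naturality of $u$ under the functors $f^*$, $f_*$, $i_\p^*$, $i_\p^!$ and on base change for $\HB$ (\cite{cisinski-deglise}); concretely, one uses that $u$ is the unique map of $\HB$-modules $\HB\to\caM_\Q$ normalized by $1\in(\caM_\Q)_{0,0}$, hence is preserved by pullback and pushforward along the maps in the diagram, and that over a field the normalized map is the established isomorphism. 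Granting this, and using that all the spectra appearing as fibers are $\HB$-local (being pushforwards of orientable rational spectra, \cite[Corollary 13.2.15 (Ri)]{cisinski-deglise}), the five lemma finishes the argument.

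Alternatively — and this is perhaps the shortest packaging — since both $\HB$ and $\caM_\Q$ are Beilinson motives and $u$ is a morphism of $E_\infty$-ring spectra (or at least of $\HB$-algebras) inducing the identity on $\pi_{0,0}=\Q$, and since $\HB$ is the \emph{initial} object among rational orientable $E_\infty$-spectra by Theorem \ref{vfdtjuzu}, the composite $\HB\xrightarrow{u}\caM_\Q$ is the unique $E_\infty$-map; its section would have to be a retraction exhibiting $\HB$ as a summand, and a weight/degree count using the localization triangle above forces $u$ to be an equivalence. I would present the five-lemma argument as the primary proof since it makes the role of the field case and of base change transparent, and only remark on the initiality argument as a cross-check.
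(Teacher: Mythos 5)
Your overall skeleton is the same as the paper's: compare the rationalized localization triangle of Corollary \ref{j675gbbb} with the corresponding triangle for $\HB$ over $S$, use that the comparison is an isomorphism over the generic point and over the closed fibers, and conclude by a map of triangles. The generic-point step is fine ($f^*u$ is an isomorphism by Theorem \ref{jd5ru655} and the field case). The gap is in the closed-fiber step. The terms at the closed points enter the triangles through $i_\p^!$ with a Tate twist and shift, not through $i_\p^*$; at this stage of the paper nothing is known about $i_\p^*\caM_\Q$ (the base-change statement Theorem \ref{grerhrh3} comes much later and is hard), so ``restricting $u$ to the residue field'' can only mean forming $i_\p^!u \colon i_\p^!\HB \to i_\p^!\caM_\Q$. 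Your justification — that $u$ is the unique $\HB$-module map normalized by $1\in(\caM_\Q)_{0,0}$ and is therefore ``preserved by pullback and pushforward'' — does not settle this: $i_\p^!$ is not monoidal and does not preserve units or normalizations, so after identifying $i_\p^!\HB \cong \HBkappap(-1)[-2]$ (absolute purity, \cite[Theorem 13.4.1]{cisinski-deglise}) and $i_\p^!\caM_\Q \cong \MQ_{\kappa(\p)}(-1)[-2]$ (Theorem \ref{h47zt5e}), the map $i_\p^!u$ is a priori just multiplication by some scalar $\lambda_\p \in \Q \cong \Hom_{\SH(\kappa(\p))}(\HBkappap,\MQ_{\kappa(\p)})$, and nothing in your argument rules out $\lambda_\p=0$.

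This is exactly what the paper's Lemma \ref{fhgt4rztr} supplies, and it requires an actual computation: by construction of $u$, the induced map $u_1\colon \HBo \to \caM(1)_\Q[1]$ is compatible with the surjections from $\Q[\GmS]_\Zar$ to both sides, so under the identifications of both weight-one spaces with $(\caO^*_{/S})_\Q$ (Theorems \ref{hnehztrr} and \ref{jr5t4565}) it is the identity; tracing this through the purity isomorphisms on zeroth spaces of the $\Omega$-$\mathbb{G}_m$-spectrum models shows $\lambda_\p \neq 0$, hence $i_\p^!u$ is an isomorphism. You correctly flagged the triangle compatibility as ``where the real work sits,'' but the proposed resolution (uniqueness of the normalized map plus naturality) does not do that work; some version of the weight-one (first Chern class) normalization is needed. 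Your alternative ``initiality'' argument has the same defect: Theorem \ref{vfdtjuzu} gives uniqueness of the $E_\infty$-map $\HB \to \caM_\Q$, but by itself says nothing about it being an equivalence, and the ``weight/degree count'' would again have to come down to the same nonvanishing at the closed fibers.
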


%Recall for $\p \in S^{(1)}$ there is an isomorphism
%$i_\p^! \HB \cong \HBkappap(-1)[-2]$ in $\SH(\kappa(\p)$ (\cite[Theorem 13.4.1]{cisinski-deglise}).
%For $k$ a field let $c_k \colon \HBk \to \MQ_k$ be the canonical isomorphism.

\begin{lemma}
\label{fhgt4rztr}
For $\p \in S^{(1)}$ the map
$$i_\p^! \HB \overset{i_\p^!u}{\longrightarrow} i_\p^! \caM_\Q$$
is an isomorphism.
\end{lemma}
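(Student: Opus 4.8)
The statement to prove is Lemma \ref{fhgt4rztr}: for a codimension $1$ point $\p \in S^{(1)}$ the map $i_\p^!\HB \xrightarrow{i_\p^!u} i_\p^!\caM_\Q$ is an isomorphism in $\SH(\kappa(\p))$. The overall strategy is to identify both sides with known objects over the residue field $\kappa(\p)$ and check that $i_\p^!u$ becomes (a rational version of) the identity map under these identifications. Recall $\kappa(\p)$ is a field of mixed-characteristic-$0$ type but possibly of positive characteristic; the key is that over a field we already know by Theorem \ref{ji67fhll} that $\caM_{\kappa(\p)} \cong \MZ_{\kappa(\p)}$, and by the Cisinski--D\'eglise theory $\HB$ restricts compatibly along $i_\p$ to the Beilinson spectrum over $\kappa(\p)$.

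\textbf{Key steps.} First I would compute $i_\p^!\caM_\Q$. By Corollary \ref{lplp87d} we have $i_\p^!\caM \cong \caM_{\kappa(\p)}(-1)[-2]$ as spectra; rationalizing and using Theorem \ref{ji67fhll} gives $i_\p^!\caM_\Q \cong \MQ_{\kappa(\p)}(-1)[-2]$, which is orientable and rational, hence a Beilinson motive over $\kappa(\p)$ by \cite[Corollary 13.2.15 (Ri)]{cisinski-deglise}. Next I would compute $i_\p^!\HB$: by the base-change compatibility of the Beilinson spectrum (the exceptional inverse image of $\HB_S$ along a closed immersion is $\HB_{\kappa(\p)}$ twisted by the relative dimension, i.e. $i_\p^!\HB_S \cong \HB_{\kappa(\p)}(-1)[-2]$, a consequence of the absolute purity / Gysin formalism in \cite{cisinski-deglise}). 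So both sides are identified, up to the common twist $(-1)[-2]$, with rational orientable ($E_\infty$-)spectra over the field $\kappa(\p)$, namely $\HB_{\kappa(\p)}$ and $\MQ_{\kappa(\p)}$. Then I would invoke Theorem \ref{vfdtjuzu}: $\HB_{\kappa(\p)}$ is the initial object among rational orientable $E_\infty$-spectra, and $\MQ_{\kappa(\p)}$ is such a spectrum, so there is a unique map $\HB_{\kappa(\p)} \to \MQ_{\kappa(\p)}$ of rational orientable spectra; moreover over a field this map is an isomorphism (this is classical — rational motivic cohomology of smooth schemes over a field agrees with the weight-graded pieces of rational $K$-theory, which is exactly $\HB$). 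Finally I would check that $i_\p^!u$, under the identifications above, is exactly this canonical map (up to the twist), which follows because $u$ was characterized by sending $1 \in \Q = (\caM_\Q)_{0,0}$ to the corresponding element, $i_\p^!$ is compatible with the unit/orientation structures, and by Theorem \ref{vfdtjuzu} there is at most one nonzero such map; one then verifies $i_\p^!u$ is nonzero by evaluating on $\mathbf{1}_{0,0}$.

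\textbf{Main obstacle.} The delicate point is the identification $i_\p^!\HB_S \cong \HB_{\kappa(\p)}(-1)[-2]$ together with the compatibility of the orientation/unit structures under $i_\p^!$ — i.e. that $i_\p^!$ takes the canonical initial map $\HB_S \to \caM_\Q$ (coming from orientability of $\caM_\Q$, Corollary \ref{fvfds5t} and Theorem \ref{vfdtjuzu}) to the canonical map $\HB_{\kappa(\p)} \to i_\p^!\caM_\Q$. This is where one must be careful about the purity isomorphisms and the fact that $\HB$'s formation is compatible with the six-functor formalism; all of this is available from \cite{cisinski-deglise}, but stitching the twists and the uniqueness statement together cleanly is the crux. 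Once that compatibility is in place, the isomorphism claim reduces to the known field case via Theorem \ref{ji67fhll} and the initiality of $\HB$ among rational orientable $E_\infty$-spectra.
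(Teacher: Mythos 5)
Your identifications of the two sides are exactly the ones the paper uses: $i_\p^!\caM_\Q \cong \MQ_{\kappa(\p)}(-1)[-2]$ via Corollary \ref{lplp87d} and Theorem \ref{ji67fhll}, and $i_\p^!\HB \cong \HBkappap(-1)[-2]$ via absolute purity for the Beilinson spectrum (\cite[Theorem 13.4.1]{cisinski-deglise}), after which $i_\p^!u$ corresponds to an element of $\Q \cong \Hom_{\SH(\kappa(\p))}(\HBkappap,\MQ_{\kappa(\p)})$ and it suffices to see this element is nonzero. The gap is in how you propose to see that nonvanishing. The phrases ``$i_\p^!$ is compatible with the unit/orientation structures'' and ``verify $i_\p^!u$ is nonzero by evaluating on $\mathbf{1}_{0,0}$'' have no content as stated: $i_\p^!$ is a right adjoint, not monoidal, it does not preserve units, and it can perfectly well send a nonzero map to zero (e.g.\ $i^!$ applied to the nonzero map $j_!j^*E \to E$ vanishes). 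The characterization of $u$ by the element $1 \in \Q = (\caM_\Q)_{0,0}$ only controls $u$ on $\pi_{0,0}$ over $S$; evaluating $i_\p^!u$ on homotopy amounts to computing maps out of $i_{\p,*}\unit$, i.e.\ cohomology with supports at $\p$, and nothing in the $\pi_{0,0}$-characterization formally prevents that from being zero. Note also that you cannot bootstrap from the localization triangle, since the lemma is exactly one of the two inputs (together with the generic-fibre statement) needed to prove that $u$ itself is an isomorphism.

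The paper closes this gap with a concrete weight-one computation, which is the real content of the lemma. One observes that the induced map $u_1 \colon \HBo \to \caM(1)_\Q[1]$ is compatible with the natural maps from $\Q[\GmS]_\Zar$ to both sides; since these maps are surjective, the composite $(\caO^*_{/S})_\Q \cong \HBo \xrightarrow{u_1} \caM(1)_\Q[1] \cong (\caO^*_{/S})_\Q$ (the identifications being Theorems \ref{hnehztrr} and \ref{jr5t4565}) is the identity. Feeding this into the description of the weight-one complexes at the point $\p$ (the residue sequence underlying Corollary \ref{h4rtz55r}) shows that $i_\p^!u$ induces an isomorphism on the zeroth spaces of the $\Omega$-$\mathbb{G}_{m,\kappa(\p)}$-spectra, so the corresponding element of $\Q$ is nonzero and hence $i_\p^!u$ is an isomorphism. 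If you want to salvage your outline, you must replace the ``evaluate on $\mathbf{1}_{0,0}$'' step by some such explicit computation in positive weight (or with supports) that pins down the effect of $u$ after applying $i_\p^!$.
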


\begin{proof}
The definition of $u$ is in such a way that the map $u_1 \colon \HBo \to \caM(1)_\Q[1]$ induced by $u$ is compatible
with the natural maps from $\Q[\GmS]_\Zar$ to $\HBo$ and $\caM(1)_\Q[1]$. Since these latter maps are surjections
it follows that the composition $$(\caO^*_{/S})_\Q \cong \HBo \overset{u_1}{\longrightarrow} \caM(1)_\Q[1] \cong (\caO^*_{/S})_\Q,$$
where the first resp. third map is the identification from Theorem \ref{hnehztrr} resp. Theorem \ref{jr5t4565},
is the identity. It follows that the map
$$\varphi \colon \HBkappap(-1)[-2] \cong i_\p^! \HB \overset{i_\p^!u}{\longrightarrow} i_\p^! \caM_\Q \cong \MQ_{\kappa(\p)}(-1)[-2],$$
where the first isomorphism is \cite[Theorem 13.4.1]{cisinski-deglise},
induces an isomorphism on zeroth spaces of $\Omega$-$\mathbb{G}_{m,\kappa(\p)}$-spectra, thus $\varphi(1)[2]$ corresponds to a nonzero
element in $\Q=\Hom_{\SH(\kappa(\p))}(\HBkappap,\MQ_{\kappa(\p)})$. The claim follows.
\end{proof}

\begin{proof}[Proof of Theorem \ref{jf356zgf}]
The map $f^* u$ is an isomorphism. Now the claim follows from Lemma \ref{fhgt4rztr} and the map between triangles of the form as in
Corollary \ref{j675gbbb} induced by $u$.
\end{proof}

\subsection{The isomorphism between $\MZ$ and $\caM$}
\label{gtz54zr}

First we recast the definition of $\MZ$ working purely in triangulated categories.
We use the notation of section \ref{htr46u}.

The canonical triangle $$L_n(r-1)[-2] \to \underline{\R \Hom}(\integers/p^n[\bA^1_U]_\et,L_n(r)) \to$$
$$\underline{\R \Hom}(\integers/p^n[\GmU]_\et,L_n(r)) \to L_n(r-1)[-1]$$
in $\D(\Sh(\Sm_{U,\et},\integers/p^n))$ induces a canonical isomorphism
$$L_n(r-1)[-1] \cong \underline{\R \Hom}(\integers/p^n[\GmU,\{1\}]_\et,L_n(r)).$$
We thus get a naive $\integers/p^n[\GmU,\{1\}]_\et$-spectrum $\caL_n$ with entry
$L_n(r)[r]$ in level $r$.
By the choice of the map $\iota$ the underlying naive $\integers/p^n[\GmU,\{1\}]_\et$-spectrum
of the spectrum $\Sym(\caT)$ is $\caL_n$.

Since we have canonical isomorphisms
$$\underline{\R \Hom}(\integers/p^n[\GmU,\{1\}]_\Zar,\R \epsilon_* \caL_{n,r})
\cong \R \epsilon_* \underline{\R \Hom}(\integers/p^n[\GmU,\{1\}]_\et, \caL_{n,r})$$
we get a naive $\integers/p^n[\GmU,\{1\}]_\Zar$-spectrum $\R \epsilon_* \caL_n$.
The naive prespectrum (with the obvious definition of naive prespectrum)
$\tau_{\le 0} \R \epsilon_* \caL_n$ with entries $\tau_{\le 0} \R \epsilon_* \caL_{n,r}$
is a naive spectrum by Proposition \ref{bgerhh4}.

We also have canoncial isomorphisms
$$\underline{\R \Hom}(\integers/p^n[\GmS,\{1\}]_\Zar,\R j_* \tau_{\le 0} \R \epsilon_* \caL_{n,r})$$
$$\cong \R j_* \underline{\R \Hom}(\integers/p^n[\GmU,\{1\}]_\Zar, \tau_{\le 0} \R \epsilon_* \caL_{n,r}),$$
thus $H_n:=\R j_* \tau_{\le 0} \R \epsilon_* \caL_n$ is a naive
$T_n:=\integers/p^n[\GmS,\{1\}]_\Zar$-spectrum.

By construction the underlying naive $T_n$-spectrum of $B'$ (see section \ref{htr46u})
equals $H_n$.

Proposition \ref{htgree} furnishes canonical maps
$$H_{n,r} \to i_* \nu_n^{r-1}$$
whose homotopy fibers we denote by $F_{n,r}$.

As after equation (\ref{ht46jk}) it follows that the $F_{n,r}$
are determined up to canonical isomorphisms.

Using the structure maps of $H_n$
we get maps
$$T_n \otimes F_{n,r} \to T_n \otimes H_{n,r} \to H_{n,r+1} \to i_* \nu_n^r$$
which are $0$ since we know already from section \ref{htr46u}
that the $F_{n,r}$ organize themselves into a naive $T$-spectrum such that the maps
$F_{n,r} \to H_{n,r}$ form a map of naive $T_n$-spectra.

Thus in turn the maps $T_n \otimes F_{n,r} \to H_{n,r+1}$
factorize through $F_{n,r+1}$. These factorizations are unique
since there are no non-trivial maps $T_n \otimes F_{n,r} \to i_* \nu_n^r[-1]$.

Thus we see that the $F_{n,r}$ assemble in a unique way into a naive $T_n$-spectrum
$F_n$ together with a map of naive $T_n$-spectra $F_n \to H_n$.

Of course the underlying naive $T_n$-spectrum of $C$ (see section \ref{htr46u})
is $F_n$.

Set $\caM_n(r):=\caM(r)/p^n$ and $\caM_n:=\caM/p^n$. We have {\'e}tale cycle class maps
$$\caM_n(r)|_U \to \R \epsilon_* L_n(r)$$
(see section \ref{jrtzuutre}).

Proposition \ref{hedtt5t5t} (applied with $X=\bA_S^1$ and $Z=\{1\}$)
implies that these cycle class maps combine to give a map
of naive $\integers/p^n[\GmU,\{1\}]_\Zar$-spectra
$j^* \caM_n \to \R \epsilon_* \caL_n$. 

By Theorem \ref{gtfn} this map factors uniquely through
$\tau_{\le 0} \R \epsilon_* \caL_n$ (by an isomorphism, see
Theorem \ref{edfgth76})
and by adjointness
we get a map of naive $T_n$-spectra $\caM_n \to H_n$
which factors as $$\caM_n \to \R j_* j^* \caM_n \overset{\cong}{\longrightarrow} H_n.$$

We have commutative diagrams
$$\xymatrix{\caM_n(r)[r] \ar[r] & \R j_* j^* \caM_n(r)[r] \ar[d]^\cong \ar[r] &
i_* \caM_{n,Z}(r-1)[r-1] \ar[d]^\cong \ar[r] & \caM_n(r)[r+1] \\
& H_{n,r} \ar[r] & i_* \nu_n^{r-1} & }$$
(the vertical maps being isomorphisms) with an exact triangle as upper row:
Th exact triangle is induced by a variant of Proposition \ref{hgrtgz54}. The right vertical
isomorphism is a global version of Theorem \ref{fbmtzzt}. The diagram
commutes by a global version of Proposition \ref{fbmtzzt}.

Thus we get a unique factorization $\caM_n \to F_n$ which is an isomorphism.

Set $T:=\integers[\GmS,\{1\}]_\Zar$. By adjointness we get a map of
naive $T$-spectra $\caM \to F_n$.

For $F,G \in \D(\Sh(\Sm_{S,\Zar},\integers/p^n))$ we denote by $\map_{\integers/p^n}(F,G) \in \Ho \sSet$ the mapping space
between $F$ and $G$. We use the same notation for {\'e}tale sheaves and also for spectra. If the coefficients are
$\integers$ we use the notation $\map$.

\begin{lemma}
\label{get6654e}
For any $r \ge 0$ we have $\map_{\integers/p^n}(F_{n,r},F_{n,r}) \cong \integers/p^n$.
\end{lemma}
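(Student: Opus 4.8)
The plan is to identify $F_{n,r}$, in the derived category over a small site, with the motivic complex $\M_n^X(r)[r]$ after restriction to each $X \in \Sm_S$, and then to compute the self-maps via the universal coefficient description of motivic cohomology in low degrees. Concretely, by the (unique) isomorphism (\ref{ht46jk}) we have $F_{n,r}|_{X_\Zar} \cong \M_n^X(r)[r]$ in $\D(\Sh(X_\Zar,\integers/p^n))$ for every $X \in \Sm_S$, and this isomorphism is compatible with restriction along maps in $\Sm_S$; hence it suffices to compute $\map_{\integers/p^n}(F_{n,r},F_{n,r})$ as a hom in $\D(\Sh(\Sm_{S,\Zar},\integers/p^n))$. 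First I would reduce to $\pi_0$: since $F_{n,r}$ is built as a homotopy fiber of a map of $\Omega$-spectra whose levels have bounded cohomology, one checks that the relevant mapping space is discrete, i.e.\ $\map_{\integers/p^n}(F_{n,r},F_{n,r}) \cong \Hom_{\D(\Sh(\Sm_{S,\Zar},\integers/p^n))}(F_{n,r},F_{n,r})$, and it is this $\Hom$-group that I claim is $\integers/p^n$.

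The main computation is then as follows. Using the exact triangle defining $F_{n,r}$ as the homotopy fiber of $H_{n,r} \to i_*\nu_n^{r-1}$, together with the identification $H_{n,r} \cong \R j_* \tau_{\le 0}\R\epsilon_* \caL_{n,r}$ and the fact (from Corollary \ref{n4t6d} and Theorem \ref{gtfn}) that $H_{n,r}$ and $i_*\nu_n^{r-1}$ are concentrated in cohomological degrees $\le 0$ with $F_{n,r}$ also concentrated in degrees $\le 0$, one reduces $\Hom(F_{n,r},F_{n,r})$ to a computation on $\caH^0$. More precisely, applying $\R\Hom(F_{n,r},-)$ to the defining triangle and using that $\R\Hom(F_{n,r},i_*\nu_n^{r-1})$ and $\R\Hom(F_{n,r},i_*\nu_n^{r-1}[-1])$ vanish in the relevant degree (there are no nontrivial maps $\M_n^X(r)[r]\to (i_X)_*\nu_n^{r-1}$ or into its shift, exactly as used after (\ref{ht46jk}) and in the uniqueness of (\ref{ht46jk})), one gets $\Hom(F_{n,r},F_{n,r}) \cong \Hom(F_{n,r},H_{n,r})$. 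Now I would use the localization triangle again on the source: $\Hom(F_{n,r},H_{n,r}) \cong \Hom(H_{n,r},H_{n,r})$ by the same vanishing applied to the first variable. Finally $H_{n,r} = \R j_*\tau_{\le 0}\R\epsilon_* L_n(r)[r]$ and by adjunction plus the local computation this amounts to endomorphisms of $\R\epsilon_* L_n(r)$ over $\Sm_{U,\Zar}$, equivalently (after $\epsilon^*$) endomorphisms of the locally constant sheaf $L_n(r) = \mu_{p^n}^{\otimes r}$ in $\D(\Sh(\Sm_{U,\et},\integers/p^n))$, which is $\integers/p^n$ since $\mu_{p^n}^{\otimes r}$ is étale-locally constant of rank one; this element is manifestly the identity on $F_{n,r}$.

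Alternatively—and this is probably the cleanest route—one can argue sheaf-locally: by Theorem \ref{edfgth76} and the global version of Theorem \ref{fbmtzzt} used in section \ref{gtz54zr}, $F_{n,r}|_{X_\Zar} \cong \M_n^X(r)[r]$ for every affine $X$, so $\Hom(F_{n,r},F_{n,r})$ maps to $\varprojlim_X \Hom_{\D(\Sh(X_\Zar,\integers/p^n))}(\M_n^X(r)[r],\M_n^X(r)[r])$; since $\H^k(\M^X(r))$ vanishes for $k>r$ (Theorem \ref{gtfn}), the top cohomology sheaf $\H^r(\M_n^X(r))$ is the relevant one, and an endomorphism of $\M_n^X(r)[r]$ in the derived category is detected (by the degree reasons above) by its action on $\H^0$ of the top truncation, i.e.\ by a global section of the endomorphism sheaf of the weight-$r$ motivic sheaf mod $p^n$. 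Over a connected base this endomorphism ring is $\integers/p^n$ because weight-$r$ motivic cohomology mod $p^n$ of a point (more precisely the relevant $\Hom$ of the sheaf $\tau_{\le r}\R\epsilon_*\mu_{p^n}^{\otimes r}$ with itself in top degree) reduces to $\Hom_{\integers/p^n}(\mu_{p^n}^{\otimes r},\mu_{p^n}^{\otimes r}) = \integers/p^n$.

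\textbf{Expected main obstacle.} The subtle point is not the local identification but upgrading it to a statement about the \emph{mapping space} (not just $\pi_0$) and ensuring no higher homotopy intervenes: one must check that $\R\Hom(F_{n,r},F_{n,r})$ is concentrated in degree $0$, which requires controlling negative $\Ext$-groups between motivic complexes—i.e.\ that $\Hom(\M_n^X(r)[r],\M_n^X(r)[r+k])=0$ for $k<0$. This follows from the $\A^1$-local, Nisnevich-descending nature of the construction together with the bounded-above property of the complexes, but pinning it down carefully (and checking compatibility of the various truncations $\tau_{\le 0}$ and $\tau_{\le r}$ used to define $H_n$ and hence $F_n$) is where the real work lies; the "value" $\integers/p^n$ itself is then immediate from the rank-one-ness of $\mu_{p^n}^{\otimes r}$.
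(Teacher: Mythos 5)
Your overall strategy --- compute via the fiber sequence $F_{n,r}\to H_{n,r}\to i_*\nu_n^{r-1}$ and reduce to endomorphisms of $\mu_{p^n}^{\otimes r}$ through the adjunctions for $j$, $\tau_{\le 0}$ and $\epsilon$ --- is the same as the paper's, but two steps you rely on are not justified. First, to get $\Hom(F_{n,r},F_{n,r})\cong\Hom(F_{n,r},H_{n,r})$ you invoke the vanishing of $\Hom(F_{n,r},i_*\nu_n^{r-1})$ ``exactly as used after (\ref{ht46jk})''. What is available there (and what degree reasons give, since $F_{n,r}$ is concentrated in degrees $\le 0$ and $i_*\nu_n^{r-1}$ is a sheaf in degree $0$) is only the vanishing of maps into $i_*\nu_n^{r-1}[-1]$; the group $\Hom(F_{n,r},i_*\nu_n^{r-1})\cong\Hom(\H^0(F_{n,r}),i_*\nu_n^{r-1})$ is a degree-zero Hom of sheaves, and its vanishing is neither obvious nor proved anywhere in the paper. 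The paper's proof is arranged precisely so as not to need it: the long exact sequence gives injectivity of $\Hom(F_{n,r},F_{n,r})\to\Hom(F_{n,r},H_{n,r})\cong\integers/p^n$ (using only the degree $-1$ vanishing), and surjectivity is then obtained by observing that the identity of $F_{n,r}$ is sent to the canonical map $F_{n,r}\to H_{n,r}$, which corresponds to a generator under the chain of identifications ending at $\map_{\integers/p^n}(\caL_{n,r},\caL_{n,r})\cong\integers/p^n$. You should replace your claimed vanishing by this argument (or supply a proof of that vanishing, which would be additional work). The intermediate step $\Hom(F_{n,r},H_{n,r})\cong\Hom(H_{n,r},H_{n,r})$ is also misattributed: it is not a degree-reasons vanishing but follows from $j^*i_*=0$ and adjunction; in any case it is an unnecessary detour, since $j^*F_{n,r}\cong\tau_{\le 0}\R\epsilon_*\caL_{n,r}$ already identifies $\map_{\integers/p^n}(F_{n,r},H_{n,r})$ with $\map_{\integers/p^n}(\tau_{\le 0}\R\epsilon_*\caL_{n,r},\tau_{\le 0}\R\epsilon_*\caL_{n,r})$, which is how the paper proceeds.

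Second, the lemma is about the mapping space, and your treatment of the higher homotopy is not a proof: you assert discreteness up front and later say it follows from the $\bA^1$-local, Nisnevich-descending nature of the construction together with the bounded-above property of the complexes. Bounded-above complexes have in general nonzero groups $\Hom(A[i],A)$ for $i>0$, so no such general principle applies. The correct argument, as in the paper, is again the long exact sequence: for $i>0$ one has $\Hom(F_{n,r}[i],F_{n,r})\cong\Hom(F_{n,r}[i],H_{n,r})=\pi_i\map_{\integers/p^n}(F_{n,r},H_{n,r})=0$, because the chain of adjunctions identifies $\map_{\integers/p^n}(F_{n,r},H_{n,r})$ with $\map_{\integers/p^n}(\caL_{n,r},\caL_{n,r})$, the endomorphisms of the sheaf $\mu_{p^n}^{\otimes r}$, which form the discrete group $\integers/p^n$. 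Your ``alternative'' sheaf-local route has the same problems, plus descent/gluing issues in passing from the restrictions $F_{n,r}|_{X_\Zar}\cong\M_n^X(r)[r]$ to endomorphisms of the global object, so it does not repair either gap.
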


\begin{proof}
We have
$$\map_{\integers/p^n}(F_{n,r},\R j_* \tau_{\le 0} \R \epsilon_* \caL_{n,r})$$
$$\cong \map_{\integers/p^n}(j^*F_{n,r},\tau_{\le 0} \R \epsilon_* \caL_{n,r})$$
$$\cong \map_{\integers/p^n}(\tau_{\le 0} \R \epsilon_* \caL_{n,r},\tau_{\le 0} \R \epsilon_* \caL_{n,r})$$
$$\cong \map_{\integers/p^n}(\tau_{\le 0} \R \epsilon_* \caL_{n,r},\R \epsilon_* \caL_{n,r})$$
$$\cong \map_{\integers/p^n}(\epsilon^* \tau_{\le 0} \R \epsilon_* \caL_{n,r},\caL_{n,r})$$
$$\cong \map_{\integers/p^n}(\caL_{n,r},\caL_{n,r})$$
$$\cong \integers/p^n.$$

We have a long exact sequence
$$\cdots \to \Hom(F_{n,r}[i+1],i_* \nu_n^{r-1})
\to \Hom(F_{n,r}[i],F_{n,r}) \to$$
$$\to \Hom(F_{n,r}[i],\R j_* \tau_{\le 0} \R \epsilon_* \caL_{n,r})
\to \Hom(F_{n,r}[i],i_* \nu_n^{r-1}) \to \cdots$$
Thus the maps $$\Hom(F_{n,r}[i],F_{n,r}) \to
\Hom(F_{n,r}[i],\R j_* \tau_{\le 0} \R \epsilon_* \caL_{n,r})$$
are isomorphisms for $i>0$ and injective for $i=0$. 
But since $\Hom(F_{n,r},\R j_* \tau_{\le 0} \R \epsilon_* \caL_{n,r})\cong \integers/p^n$
the map for $i=0$ is also surjective, so the claim follows.
\end{proof}

\begin{corollary}
The naive spectra $F_n$ have lifts to spectra which are unique up to canonical isomorphism.
Denoting these lifts also by $F_n$ we have $\map_{\integers/p^n}(F_n,F_n) \cong \integers/p^n$.
Moreover we have $\map(\caM,F_n) \cong \integers/p^n$ (the latter mapping space is computed in $T$-spectra).
\end{corollary}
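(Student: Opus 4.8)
The plan is to bootstrap from Lemma \ref{get6654e} using the naive-to-genuine comparison for $\GmS$-spectra, essentially the mechanism that has already been used repeatedly in section \ref{htr46u} to lift naive $\Omega$-spectra. First I would recall that $F_n$ is an $\Omega$-spectrum in the naive sense (this was proved as the Proposition stating "The algebra $C$ is an $\Omega$-spectrum", whose underlying naive $T_n$-spectrum is $F_n$). The standard obstruction theory for lifting a naive $\Omega$-$T$-spectrum to a genuine ($\SH$-)spectrum says that the obstructions to existence and to uniqueness of such a lift live in $\lim^1$ and $\lim$ of the groups $\Hom(F_{n,r}[i],F_{n,r})$ for $i=1$ and $i=0$ respectively, taken over the tower of desuspensions along the bonding maps. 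By the displayed long exact sequence in the proof of Lemma \ref{get6654e}, the groups $\Hom(F_{n,r}[i],F_{n,r})$ vanish for $i\ge 1$ and equal $\integers/p^n$ for $i=0$, with the transition maps in the tower being the identity (they are induced by the bonding isomorphisms $\caM_n(r-1)[-1]\cong \underline{\R\Hom}(T_n,\caM_n(r))$, which under the identification $F_n\cong\caM_n$ of the previous subsection are compatible). Hence the relevant $\lim^1$ vanishes and the $\lim$ is $\integers/p^n$, giving a lift unique up to canonical isomorphism.

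Next I would compute $\map_{\integers/p^n}(F_n,F_n)$ for the genuine spectra. This is the homotopy limit over $r$ of $\map_{\integers/p^n}(F_{n,r},F_{n,r})$ along the bonding maps, by the standard description of mapping spaces out of a $\GmS$-spectrum as a homotopy limit of mapping spaces between levels (shifted appropriately by $\underline{\R\Hom}(T_n,-)$). By Lemma \ref{get6654e} each term is discrete, equal to $\integers/p^n$, and the transition maps are the identity (same compatibility as above, using that the unit map is respected by the bonding maps — this is the content of the analysis around Lemma \ref{hbfgt} and the definition of $\iota$). The Milnor sequence then collapses: $\pi_1\lim = \lim^1(\integers/p^n)=0$ and $\pi_0\lim=\lim(\integers/p^n)=\integers/p^n$, so $\map_{\integers/p^n}(F_n,F_n)\cong\integers/p^n$ as claimed.

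For the last assertion, $\map(\caM,F_n)$ computed in $T$-spectra (integral coefficients), I would use the identification $\caM_n\cong F_n$ together with the fact that $F_n$ is a $\integers/p^n$-module spectrum, so that maps from any $T$-spectrum into $F_n$ factor through reduction mod $p^n$: $\map(\caM,F_n)\cong\map_{\integers/p^n}(\caM/p^n,F_n)=\map_{\integers/p^n}(\caM_n,F_n)\cong\map_{\integers/p^n}(F_n,F_n)\cong\integers/p^n$, where the middle isomorphism uses the isomorphism of naive (hence genuine) spectra $\caM_n\cong F_n$ established in the previous subsection. The main obstacle I anticipate is making the $\lim^1$ argument fully rigorous: one must be careful that the tower whose limit computes the genuine mapping space really has identity (or at least pro-isomorphic) transition maps, which requires tracking that the bonding maps of $F_n$ — inherited from those of $\caM_n(r)$ via Proposition \ref{ht4ztr} — are compatible with the self-maps counted in Lemma \ref{get6654e}; this compatibility is essentially the uniqueness clauses asserted after equation (\ref{ht46jk}), so the work is in assembling these uniqueness statements coherently rather than in any new computation.
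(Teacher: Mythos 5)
Your proposal is correct and follows exactly the route the paper intends (the corollary is left without an explicit proof precisely because it is the standard consequence of Lemma \ref{get6654e}): levelwise homotopy-discrete mapping spaces kill the $\lim^1$-obstructions in the tower description of lifts and of mapping spaces, giving existence, uniqueness up to canonical isomorphism, and $\map_{\integers/p^n}(F_n,F_n)\cong\integers/p^n$; and the last claim is the derived extension/restriction-of-scalars adjunction combined with the isomorphism $\caM_n\cong F_n$. No gaps worth flagging.
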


Clearly we have $F_{n+1} \otimes_{\integers/p^{n+1}}^\bL \integers/p^n \cong F_n$.

Thus we get compatible maps $\caM \to F_n$ for all $n$ in the homotopy category of $T$-spectra.
This furnishes a map $\caM \to \holim_n F_n$ which is the $p$-completion map.
Note that the homotopy limit is uniquely determined up to canonical isomorphism.

We have a canonical isomorphism $\holim_n F_n \cong D_p$, where we use the notation of
section \ref{jfezztrzz}. Thus we get a canonical map $\caM \to D$ (notation from
section \ref{bfdtzt}) in the homotopy category of $T$-spectra.

Moreover the diagram
$$\xymatrix{\caM \ar[r] \ar[d]^{u^{-1} \circ f} & D \ar[d] \\
\HB \ar[r] & D_\Q,}$$
where $f \colon \caM \to \caM_\Q$
is the rationalization map and
and $u$ is from section \ref{hfdgjtr}, commutes (maps out of $\caM_\Q \cong \HB$ into Beilinson motives correspond to elements in
$\pi_{0,0}$). Thus we obtain a map $\caM \to \MZ$ (for the latter see definition
\ref{ght544ede}). This map is an isomorphism since $u$ is an isomorphism (Theorem \ref{jf356zgf})
and each map $\caM \to D_p$ is the $p$-completion map.

We have shown

\begin{theorem}
\label{grferzt}
There is a canonical isomorphism $\caM \cong \MZ$. We have
$\map(\caM,\caM) \cong \integers$, where $\map$ can denote the mapping space in $T$-spectra or
in $\SH(S)$.
\end{theorem}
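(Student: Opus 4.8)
The plan is to assemble the isomorphism $\caM \cong \MZ$ directly from the pieces already constructed in section \ref{gtz54zr}, and then to deduce the mapping space computation from the corresponding statements for the $p$-completed and rational parts. First I would recall that $\MZ$ is by Definition \ref{ght544ede} the homotopy pullback of $\HB \to D_\Q \leftarrow D$, and that the preceding paragraphs of section \ref{gtz54zr} produce, for each prime $p$, compatible maps $\caM \to F_n$ in the homotopy category of $T$-spectra which are each isomorphisms after reduction mod $p^n$ (via the factorization $\caM_n \to \R j_* j^* \caM_n \overset{\cong}{\to} H_n$ together with the factorization through $F_n$), hence a map $\caM \to \holim_n F_n \cong D_p$ which is the $p$-completion map. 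Taking the product over all primes gives $\caM \to \prod_p D_p = D$, and this map is objectwise a $p$-completion in the appropriate sense.

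Next I would check the commutativity of the square relating $\caM \to D$, the rationalization $f \colon \caM \to \caM_\Q$, the isomorphism $u^{-1} \colon \caM_\Q \overset{\cong}{\to} \HB$ of Theorem \ref{jf356zgf}, and the map $\HB \to D_\Q$. The key point here is that maps out of $\caM_\Q \cong \HB$ into a Beilinson motive (and $D_\Q$ is such, being rational and orientable by Corollary \ref{fvfds5t}, or one argues directly) are detected by their effect on $\pi_{0,0}$, so the two composites $\caM \to D \to D_\Q$ and $\caM \to \HB \to D_\Q$ agree once they agree in $\pi_{0,0}$, which is immediate from the construction of $u$. The universal property of the homotopy pullback then yields a canonical map $\caM \to \MZ$. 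To see it is an isomorphism, one uses that a map into a homotopy pullback is an isomorphism iff it induces isomorphisms on the three corners: on the $D$-corner it is the $p$-completion map componentwise and on $\caM$ the natural map to its completion, on the $\HB$-corner it is $u^{-1}\circ f$ which is an isomorphism by Theorem \ref{jf356zgf}, and on $D_\Q$ it is the rationalization of the $D$-map; chasing the arithmetic square (and using that $\caM_\Q \cong \HB$) gives the result.

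For the statement $\map(\caM,\caM) \cong \integers$, I would argue via the arithmetic-square decomposition again. By the Corollary preceding Theorem \ref{grferzt} we have $\map(\caM, F_n) \cong \integers/p^n$, hence $\map(\caM, D_p) \cong \map(\caM,\holim_n F_n) \cong \holim_n \integers/p^n \cong \integers_p$, and so $\map(\caM, D) \cong \prod_p \integers_p = \hat{\integers}$. On the rational side, since $\caM_\Q \cong \HB$ and $\HB$ is the initial rational orientable $E_\infty$-spectrum (Theorem \ref{vfdtjuzu}), one gets $\map(\caM,\caM_\Q) \cong \map(\HB,\HB) \cong \Q$ (using that $\pi_{0,0}\HB = \Q$ and higher homotopy of the relevant mapping space vanishes, as $\HB$-modules are a derived category of $\Q$-vector spaces via the Cisinski–Dégli se theory). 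Similarly $\map(\caM, D_\Q) \cong \hat{\integers} \otimes \Q \cong \bA_f$ (finite adèles), or at least its $\pi_0$ is, and the Mayer–Vietoris/homotopy-pullback square for $\MZ \cong \caM$ gives a fiber sequence computing $\map(\caM,\caM)$; its $\pi_0$ is the pullback $\integers = \hat{\integers} \times_{\hat{\integers}\otimes\Q} \Q$, and the potential $\pi_1$-contributions vanish because all the mapping spaces involved are discrete (concentrated in degree $0$). Identifying $\map$ in $T$-spectra with $\map$ in $\SH(S)$ is then a matter of recalling that $\caM$ is an $\Omega$-spectrum satisfying Nisnevich descent and $\mathbb{A}^1$-invariance, so its image in $\SH(S)$ has the same mapping spaces.

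The main obstacle I anticipate is not any single deep input — those (the mod $p^n$ comparison, Theorem \ref{jf356zgf}, the universal property of $\HB$) are all already in hand — but rather the careful bookkeeping needed to verify that the various maps $\caM \to F_n$, $\caM \to D_p$, $\caM \to D$, and the rationalization square are genuinely compatible in the homotopy category of $T$-spectra, so that the homotopy pullback really receives a well-defined map and so that "isomorphism on corners implies isomorphism" applies. In particular one must be a little careful that $D_\Q$ is a Beilinson motive (equivalently $\HB$-local) so that maps from $\HB$ into it are rigid; this follows since $D$, hence $D_\Q$, is orientable (Corollary \ref{fvfds5t}) and rational, so \cite[Corollary 13.2.15]{cisinski-deglise} applies. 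Once that rigidity is in place the argument is essentially formal.
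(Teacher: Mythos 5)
Your proposal follows the paper's own route: the compatible maps $\caM \to F_n$ give $\caM \to D_p$ as the $p$-completion, the square over $D_\Q$ commutes by rigidity of maps out of $\caM_\Q \cong \HB$ into Beilinson motives, the universal property of the pullback yields $\caM \to \MZ$, and Theorem \ref{jf356zgf} plus the completion property (the arithmetic fracture square for $\caM$) shows it is an isomorphism — exactly the argument of section \ref{gtz54zr}; your computation $\map(\caM,\caM) \cong \hat{\integers} \times_{\hat{\integers}\otimes\Q} \Q \cong \integers$, using discreteness of the relevant mapping spaces, correctly supplies the assertion the paper leaves as an exercise. One small caution: your stated criterion that a map into a homotopy pullback is an isomorphism iff it is so ``on the three corners'' is not literally what is used (the maps to $D$ and $D_\Q$ are completions and rationalizations, not isomorphisms); the fracture-square chase you then describe is the actual, and the paper's, argument.
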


We leave the last assertion as an exercise to the reader.

\begin{corollary}
\label{nrerz6u6}
For $X \in \Sm_S$ there is a canonical isomorphism
$$\Hom_{\SH(S)}(\Sigma^\infty X_+,\MZ(n)[i]) \cong H^i_{\mathrm{mot}}(X,n),$$
where the latter group denotes Levine's motivic cohomology.
\end{corollary}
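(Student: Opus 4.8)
The plan is to reduce the statement to the isomorphism $\caM \cong \MZ$ of Theorem \ref{grferzt} together with the defining property of the naive $\GmS$-spectrum $\caM$. First I would unwind the left-hand side: by definition of the mapping spaces in $\SH(S)$ and the fact that $\MZ$ corresponds (via transfer of structure and Theorem \ref{grferzt}) to the naive $\integers[\GmS,\{1\}]_\Zar$-spectrum $\caM$ with entry $\caM(n)[n]$ in level $n$, one has
$$\Hom_{\SH(S)}(\Sigma^\infty X_+,\MZ(n)[i]) \cong \Hom_{\D(\Sh(\Sm_{S,\Zar},\integers))}(\integers[X]_\Zar, \caM(n)[i]),$$
using that $\caM$ is an $\Omega$-spectrum which satisfies Nisnevich descent and is $\mathbb{A}^1$-local (established in section \ref{h5643dfgf}), so that mapping out of a suspension spectrum into it computes a Zariski hypercohomology group of the weight-$n$ motivic complex. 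This is the same kind of identification used just before Proposition \ref{vfjt43}. Concretely the right side is $\mathbb{H}^i_{\Zar}(X, \caM(n))$, the Zariski hypercohomology of $X$ with coefficients in $\caM(n)$.

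Next I would invoke the restriction result: by the Corollary following Corollary \ref{nderfd}, for $X \in \Sm_S$ there is a canonical isomorphism $\caM^X(n) \cong \caM(n)|_{X_\Zar}$ in $\D(\Sh(X_\Zar,\integers))$, and by Corollary \ref{nderfd} itself (via Corollary \ref{bdedzhe} and Proposition \ref{bddrzhf}) the complex $\caM^X(n)$ is canonically identified, after a shift, with Levine's cycle complex $z^n(X)[-2n]$ — more precisely with $\M^X(n)$ in the notation of section \ref{45zgf}, at least over the flat subcategory, and the strictification of section \ref{kdtjjz} is built precisely so that the global object restricts to Bloch–Levine's cycle complex on each $X$. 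Hence
$$\mathbb{H}^i_{\Zar}(X,\caM(n)) \cong \mathbb{H}^i_{\Zar}(X,\M^X(n)) \cong H^i_{\mathrm{mot}}(X,n),$$
the last group being by definition the hypercohomology of Bloch's cycle complex, i.e. Levine's motivic cohomology. Assembling the two displayed chains of isomorphisms gives the claim, and one should check the composite is the natural comparison map (induced by the cycle class / representability data), not merely an abstract isomorphism; this follows because every intermediate isomorphism was constructed canonically.

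The main obstacle I expect is not any single deep input but the bookkeeping of compatibilities: one must make sure that the identification $\MZ \cong \caM$ as spectra (Theorem \ref{grferzt}) is compatible with the identification of $\caM(n)|_{X_\Zar}$ with Levine's complex, and that both are compatible with the passage from $\SH(S)$-morphisms to hypercohomology (which involves the transfer of structure from complexes of Nisnevich sheaves to $\P^1$-spectra in motivic spaces, and the $\Omega$-spectrum property). In particular one needs that the level-$n$ object of $\caM$ genuinely represents $\M^X(n)[n]$ compatibly in $X$ over \emph{all} of $\Sm_S$, which is exactly the content of the strictification in section \ref{kdtjjz} together with Corollary \ref{nderfd} — so the real work has already been done, and this Corollary is a formal consequence. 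I would therefore present it briefly, citing Theorem \ref{grferzt}, the Corollary to \ref{nderfd}, and the remark (used near Proposition \ref{vfjt43}) that $\Omega$-spectra representing such complexes compute Zariski/Nisnevich hypercohomology.
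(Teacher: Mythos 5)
Your argument is correct and is essentially the paper's own: the paper states Corollary \ref{nrerz6u6} as an immediate consequence of Theorem \ref{grferzt}, since $\caM$ was constructed precisely so that its levels restrict to Levine's cycle complexes (the corollary following Corollary \ref{nderfd}) and so that mapping out of $\Sigma^\infty X_+$ in $\SH(S)$ computes Zariski hypercohomology of $\caM(n)$ (the $\Omega$-spectrum, $\mathbb{A}^1$-locality and descent properties you cite). Your write-up simply makes these implicit reductions explicit, so there is nothing to correct.
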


For $X \in \Sm_S$ we denote by $\DM(X)$ the homotopy category of the category of
$f^* \MZ$-modules, where $f$ is the structural morphism of $X$.

\begin{corollary}
\label{nghrrz5}
For $X \in \Sm_S$ there is a canonical isomorphism
$$\Hom_{\DM(X)}(\integers,\integers(n)[i]) \cong H^i_{\mathrm{mot}}(X,n).$$
\end{corollary}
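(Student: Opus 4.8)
The idea is to peel off the module structure and the smooth pullback by two elementary adjunctions, landing in the situation already handled by Corollary~\ref{nrerz6u6}. Write $f \colon X \to S$ for the structure morphism and set $R := f^{*}\MZ$, so that $\DM(X)$ is by definition the homotopy category of $R$-modules in $\SH(X)$, the unit object is $\integers(0) = R$, and $\integers(n)[i]$ is the $R$-module $\Sigma^{i,n}R$. Hence
\[
\Hom_{\DM(X)}(\integers,\integers(n)[i]) = \Hom_{R\text{-}\mathrm{Mod}}(R,\Sigma^{i,n}R),
\]
and the whole argument reduces to evaluating the right-hand side.

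First I would invoke the free--forgetful adjunction between $\SH(X)$ and $R$-modules: the forgetful functor has left adjoint $R \wedge (-)$, and since $R$ is the free $R$-module on the sphere $\unit_X$ and the forgetful functor commutes with the Tate twist and shift, this gives
\[
\Hom_{R\text{-}\mathrm{Mod}}(R,\Sigma^{i,n}R) \cong \Hom_{\SH(X)}(\unit_X,\Sigma^{i,n}f^{*}\MZ).
\]
Second, because $X \in \Sm_S$ the functor $f^{*}$ has a left adjoint $f_{\#}$ with $f_{\#}\unit_X \cong \Sigma^{\infty}X_{+}$ in $\SH(S)$; since $\Sigma^{i,n}$ is invertible and commutes with $f^{*}$, the adjunction $f_{\#} \dashv f^{*}$ yields
\[
\Hom_{\SH(X)}(\unit_X,\Sigma^{i,n}f^{*}\MZ) \cong \Hom_{\SH(S)}(\Sigma^{\infty}X_{+},\MZ(n)[i]).
\]
Chaining these two isomorphisms with Corollary~\ref{nrerz6u6} identifies $\Hom_{\DM(X)}(\integers,\integers(n)[i])$ with $H^{i}_{\mathrm{mot}}(X,n)$, as claimed.

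The only step that is not purely formal is the first one: one has to be sure that the free--forgetful adjunction, which is an adjunction of model categories built from the $E_{\infty}$-structure on $\MZ$ and the module-category machinery of \cite{roendigs-oestvaer} and the appendices, really computes the derived mapping groups --- i.e. that $R$ is a cofibrant $R$-module, so that $\Hom_{\DM(X)}(R,-)$ is genuinely corepresented by the underlying object in $\SH(X)$. Once this is granted, the rest is standard: the smooth base-change adjunction $f_{\#} \dashv f^{*}$, the computation $f_{\#}\unit_X = \Sigma^{\infty}X_{+}$, and the invertibility of the Tate twist and suspension are all part of the basic formalism of $\SH$, so no input beyond Corollary~\ref{nrerz6u6} is needed. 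One could also bypass $f_{\#}$ altogether by using the base-change compatibility of $\MZ$ to reduce directly to the definition of $H^{i}_{\mathrm{mot}}$ over $\Spec(\integers)$ recalled in the introduction, but routing through Corollary~\ref{nrerz6u6} keeps the argument self-contained.
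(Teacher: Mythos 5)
Your argument is correct and is exactly the intended one: the paper gives no separate proof, since Corollary \ref{nghrrz5} follows from Corollary \ref{nrerz6u6} by precisely the chain you describe — the free--forgetful adjunction for $f^{*}\MZ$-modules identifying $\Hom_{\DM(X)}(\integers,\integers(n)[i])$ with $\Hom_{\SH(X)}(\unit,\Sigma^{i,n}f^{*}\MZ)$, followed by the smooth adjunction $f_{\#}\dashv f^{*}$ with $f_{\#}\unit_X\cong\Sigma^{\infty}X_{+}$ (this identification is already stated in the introduction). So you take essentially the same approach as the paper.
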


\section{The periodization of $\MZ$}
\label{gheu5444}

We set ourselves in the situation of section \ref{jfezztrzz} before the definition
of $A$. Since $L_\bullet(r) = L_\bullet(1)^{\otimes r}$ for any $r \in \integers$
the collection of the $L_\bullet(r)[2r]$ gives rise to a strictly commutative
algebra $L_\bullet(*)[2*]$ in $\Cpx(\Sh(\Sm_{U,\et},\integers/p^\bullet))^\integers$.
We denote by $e$ the embedding
$$\Cpx(\Sh(\Sm_{U,\et},\integers/p^\bullet)) \to \Cpx(\Sh(\Sm_{U,\et},\integers/p^\bullet))^\integers$$
which sets everything into outer degree $0$
and by the same symbol the induced embedding of $\integers/p^\bullet[\GmU,\{1\}]_\et$-spectra into
$e(\integers/p^\bullet[\GmU,\{1\}]_\et)$-spectra. The tensor product in 
$e(\integers/p^\bullet[\GmU,\{1\}]_\et)$-spectra of $e(\Sym(\caT))$ with the suspension spectrum
of $L_\bullet(*)[2*]$ can be written as the outer tensor product
$\Sym(\caT) \otimes L_\bullet(*)[2*]$. We let
$$\Sym(\caT) \otimes L_\bullet(*)[2*] \to R(\Sym(\caT) \otimes L_\bullet(*)[2*])$$
be a fibrant replacement in $E_\infty$-algebras in $e(\integers/p^\bullet[\GmU,\{1\}]_\et)$-spectra
(i.e. in the semi model category $E_\infty((\Sp_{\integers/p^\bullet[\GmU,\{1\}]_\et}^\Sigma)^\integers)$).
Set $A:=\epsilon_*(R(\Sym(\caT) \otimes L_\bullet(*)[2*]))$.

For $k \in \integers$ we denote by $A_k$ the contribution of $A$ in outer $\integers$-degree $k$,
so $A_k$ is a $\integers/p^\bullet[\GmU,\{1\}]_\et$-spectrum.
We set $A_k':= \tau_{\le (-k)}(A_k)$. The $A_k'$ assemble to an $E_\infty$-algebra
$A' \in E_\infty((\Sp_{\integers/p^\bullet[\GmU,\{1\}]_\Zar}^\Sigma)^\integers)$.
We set $B:=j_*A'$.

As in section \ref{jfezztrzz} we have canonical epimorphisms
$$B_{k,r} \to i_* \nu_\bullet^{k+r-1}[k].$$
We denote by $C_{k,r}$ the kernels of these epimorphisms. A variant of Lemma
\ref{hnzt4} implies that the collection of the $C_{k,r}$ gives
rise to an $E_\infty$-algebra $C \in E_\infty((\Sp_{\integers/p^\bullet[\GmS,\{1\}]_\Zar}^\Sigma)^\integers)$.
Let $C \to C'$ be a fibrant replacement in the latter semi model category and set
$D_p := \lim_n C_{*,\bullet,n}' \in E_\infty((\Sp_{\integers_p[\GmS,\{1\}]_\Zar}^\Sigma)^\integers)$.

Set $D:= \prod_p D_p \in E_\infty((\Sp_{\hat{\integers}[\GmS,\{1\}]_\Zar}^\Sigma)^\integers)$.
We let $\PHB$ be the periodic version of $\HB$, then there is a canonical
$E_\infty$-map $\PHB \to D_\Q$.

\begin{definition}
\label{ght54ede}
We let $\PMZ$ denote the homotopy pullback in $E_\infty$-spectra of the diagram
$$\xymatrix{& D \ar[d] \\ \PHB \ar[r] & D_\Q.}$$
\end{definition}

Clearly we have

\begin{theorem}
\label{jrfh56zr}
The $E_\infty$-spectrum $\PMZ$ is a strong periodization of $\MZ$ in the sense of
\cite{spitzweck.per}.
\end{theorem}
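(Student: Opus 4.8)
The plan is to observe that the graded $E_\infty$-construction carried out in this section is, step by step, the strong periodization (in the sense of \cite{spitzweck.per}) of the construction of $\MZ$ in section \ref{ju54efg}, and then to check that strong periodization is compatible with the homotopy pullback defining $\MZ$ in Definition \ref{ght544ede}. Concretely, I would first recall from \cite{spitzweck.per} the precise notion: a strong periodization of an oriented $E_\infty$-spectrum $E$ is an $E_\infty$-algebra in $\integers$-graded $\GmS$-spectra equipped with an invertible class in the outer degree corresponding to the Tate twist $(2,1)$ (a Bott element $\beta$) and an identification of its outer degree $0$ part with $E$, characterized by the appropriate universal property; its underlying ungraded spectrum is then $E[\beta^{-1}]$.

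The second step is the identification of the building blocks. Since $L_\bullet(r)=L_\bullet(1)^{\otimes r}$ for all $r$, the graded algebra $L_\bullet(*)[2*]$ is the free Laurent algebra on the invertible degree-$(2,1)$ class represented by $L_\bullet(1)[2]$, so $\Sym(\caT)\otimes L_\bullet(*)[2*]$ is precisely the free periodization of $\Sym(\caT)$. All subsequent operations — the fibrant replacement, $\epsilon_*$, the truncations $\tau_{\le(-k)}$ (which match $\tau_{\le 0}$ outer-degreewise, via the shift built into the grading), $j_*$, passage to the kernels $C_{k,r}$ of the maps to $i_*\nu_\bullet^{k+r-1}[k]$, the limit $\lim_n$, and the product $\prod_p$ — are performed outer-degreewise and are compatible with the periodicity element; hence the graded $D_p$, $D$, $D_\Q$ of this section are the strong periodizations of the $D_p$, $D$, $D_\Q$ of sections \ref{jfezztrzz} and \ref{bfdtzt}. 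Likewise $\PHB$ is, by construction, the strong periodization of $\HB$ (its periodic version, cf. \cite{cisinski-deglise}, \cite{spitzweck.per}), and the canonical $E_\infty$-map $\PHB\to D_\Q$ is the periodization of $\HB\to D_\Q$, produced by the periodic analogue of Theorem \ref{vfdtjuzu}.

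The third step is to push the homotopy pullback through. Because the graded models are set up so that the diagram $\PHB\to D_\Q\leftarrow D$ is, in each outer degree, the original diagram of Definition \ref{ght544ede} (tensored with the Bott class), its homotopy pullback in $E_\infty((\Sp^\Sigma_{\hat{\integers}[\GmS,\{1\}]_\Zar})^\integers)$ agrees outer-degreewise with the homotopy pullback in $E_\infty(\Sp^\Sigma_{\hat{\integers}[\GmS,\{1\}]_\Zar})$; equivalently, periodization is exact on the full subcategory generated by these objects. Therefore $\PMZ$ is the strong periodization of the homotopy pullback of $\HB\to D_\Q\leftarrow D$, which is $\MZ$. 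Finally I would verify that the structural data demanded by \cite{spitzweck.per} are present: the $E_\infty$-map $\MZ\to\PMZ$ coming from the outer degree $0$ part, the orientation (Proposition \ref{vfjt43}, Corollary \ref{fvfds5t}) and the Bott-periodicity datum, and the initiality of $\PMZ$ among periodic $E_\infty$-$\MZ$-algebras — the latter following from Theorem \ref{grferzt} together with the universal property used to build $\PHB\to D_\Q$.

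The main obstacle will be the bookkeeping needed to verify, at the level of the strictified graded models, that \emph{all} of the successive constructions ($\tau$, $j_*$, kernels of the $\nu$-maps, $\lim_n$, $\prod_p$, and the final homotopy pullback) commute with periodization, so that the graded object produced here genuinely satisfies the definition of a strong periodization in \cite{spitzweck.per} rather than merely being periodic up to homotopy. Once this compatibility is in place, the identification $\PMZ\simeq\MZ[\beta^{-1}]$ together with the universal property is formal.
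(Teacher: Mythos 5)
Your proposal is correct and is essentially the argument the paper has in mind: the paper states the theorem as immediate from the construction of section \ref{gheu5444}, which is exactly your observation that every step (the graded $L_\bullet(*)[2*]$-extension, $\epsilon_*$, the truncations $\tau_{\le(-k)}$, $j_*$, the kernels of the maps to $i_*\nu_\bullet^{k+r-1}[k]$, $\lim_n$, $\prod_p$, and the homotopy pullback against $\PHB \to D_\Q$) is the outer-degreewise twisted version of the construction of $\MZ$, so the resulting graded $E_\infty$-object has degree-zero part $\MZ$ and is a strong periodization. Your closing remarks about initiality are not required by the definition in \cite{spitzweck.per}, but they do no harm.
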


For $X \in Sm_S$ let $\DMT(X)$ be the full localizing triangulated subcategory
of $\DM(X)$ spanned by the $\integers(n)$, $n \in \integers$.
We denote by $\DMT_{\mathrm{gm}}(X)$ the full subcategory of $\DMT(X)$ of
compact objects.

\begin{corollary}
For $X \in \Sm_S$ there is a $E_\infty$-algebra $A$ in $\Cpx(\Ab)^\integers$ and a tensor
triangulated equivalence $\DMT(X) \simeq \D(A)$.
\end{corollary}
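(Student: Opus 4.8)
The plan is to read the required $E_\infty$-algebra $A$ off the construction of $\PMZ$ and then apply the periodization machinery of \cite{spitzweck.per}. By Theorem \ref{jrfh56zr} the pullback $f^*\PMZ$ along the structure morphism $f$ of $X$ is a strong periodization of $f^*\MZ$; in particular the Tate twist on $\DMT(X)$ is realized by a genuinely $\otimes$-invertible object, and $\DMT(X)$ is compactly generated by the single $\integers$-graded unit $\integers(*)$, i.e.\ by the family $\{\integers(n)[i]\}_{n,i}$. First I would apply the derived global-sections functor $\R\Hom_{\DM(X)}(\integers,-)$ to the $\integers$-graded collection $\{f^*\PMZ(n)\}_{n\in\integers}$. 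Since $\PMZ$ was built (Definition \ref{ght54ede}) as an $E_\infty$-algebra in $\integers$-graded $\GmS$-spectra in complexes of sheaves of abelian groups, this produces an $E_\infty$-algebra $A$ in $\Cpx(\Ab)^\integers$ whose cohomology in outer degree $n$ and internal degree $i$ is $\Hom_{\DM(X)}(\integers,\integers(n)[i])\cong H^i_{\mathrm{mot}}(X,\integers(n))$ (Corollary \ref{nghrrz5}). Here one uses that the whole situation is linear over $\MZ$, equivalently over $H\integers$-modules, which are rigidly modelled by $\D(\integers)$, so that the spectral endomorphism object can be strictified to an honest $E_\infty$-dga in $\Cpx(\Ab)^\integers$.

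It then remains to identify $\DMT(X)$ with $\D(A)$. Because $f^*\PMZ$ is a strong periodization, $\DMT(X)$ is the localizing subcategory of $\DM(X)$ generated by the single $\integers$-graded compact object $\integers(*)$, whose $\integers$-graded derived endomorphism ring is precisely $A$; a Schwede--Shipley / Keller type recognition theorem in the $\integers$-graded setting then yields an equivalence $\DMT(X)\simeq\D(A)$. That the equivalence is tensor triangulated follows from $A$ being $E_\infty$ together with the fact that the smash product $\integers(n)\otimes\integers(m)\simeq\integers(n+m)$ of Tate objects corresponds under the equivalence to the outer tensor product on $\Cpx(\Ab)^\integers$ — which is exactly the grading datum built into $A$, so that $A(1)$ is $\otimes$-invertible in $\D(A)$ matching the invertibility of the Tate twist. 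All of this is an instance of the general consequence of strong periodization established in \cite{spitzweck.per}; alternatively it can be phrased through \cite{spitzweck.fund} as in Corollary \ref{redhz5eed}.

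The main obstacle is the bookkeeping of the outer $\integers$-grading: one must verify that the graded endomorphism object $A$ carries a (strictly commutative, or at least $E_\infty$) multiplication compatible with this grading, that $\D(A)$ is formed with respect to it, and that the recognition theorem applies verbatim to graded generators rather than to a single ungraded one. Granting the strong periodization and the $E_\infty$-structure on $\PMZ$, these points are formal, and no step involves a genuinely new computation beyond those already provided by \cite{spitzweck.per} and the identifications of Sections \ref{gtz54zr} and \ref{gheu5444}.
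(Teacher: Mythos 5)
Your proposal is correct and takes essentially the same route as the paper: the paper deduces the statement from the strong periodization (Theorem \ref{jrfh56zr}) by citing \cite[Theorem 4.3]{spitzweck.per}, and your sketch (graded endomorphism $E_\infty$-algebra of the graded unit extracted from $\PMZ$, followed by a graded Schwede--Shipley/Keller-type recognition with the tensor structure coming from the $E_\infty$- and grading data) is exactly the content of that cited theorem. So there is no gap beyond what the paper itself delegates to \cite{spitzweck.per}.
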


\begin{proof}
This follows now from \cite[Theorem 4.3]{spitzweck.per}.
\end{proof}

\begin{corollary}
\label{redhz5eed}
Let $X \in \Sm_S$ such that for any $n$ we have $H^i_{\mathrm{mot}}(X,n)_\Q=0$ for $i \ll 0$
(for example $X=\Spec(R)$, $R$ the localization of a number ring, or $X=\P_R^1 \setminus \{0,1,\infty\}$).
Then there is an affine derived group scheme $G$ over $\integers$ such that $\mathrm{Perf}(G)$,
the (derived) category of perfect representations of $G$, is tensor triangulated equivalent
to $\DMT_{\mathrm{gm}}(X)$.
\end{corollary}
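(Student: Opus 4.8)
The plan is to combine the previous Corollary with the general Tannakian-type machinery of \cite{spitzweck.fund}. By the preceding Corollary we have a tensor triangulated equivalence $\DMT(X) \simeq \D(A)$ for an $E_\infty$-algebra $A$ in $\Cpx(\Ab)^\integers$, and under this equivalence the object $\integers(n)$ corresponds to the free module on a generator placed in outer $\integers$-degree $n$. So $\DMT_{\mathrm{gm}}(X)$ is identified with the full subcategory of compact objects of $\D(A)$, which is the thick subcategory generated by the shifted free modules $A(n)$. First I would unwind what the hypothesis on motivic cohomology says about $A$: since $H^i_{\mathrm{mot}}(X,n) \cong \Hom_{\DM(X)}(\integers,\integers(n)[i])$ (Corollary \ref{nghrrz5}) and rationally these vanish for $i \ll 0$ for each fixed $n$, the algebra $A_\Q$ satisfies the weak Beilinson--Soulé vanishing condition required in \cite{spitzweck.fund} (the cohomology of $A$ in each outer degree is bounded below in the appropriate sense after rationalization, and integrally one controls it via the arithmetic square together with the finite-coefficient computations from section \ref{ju54efg}).

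Next I would invoke the main construction of \cite{spitzweck.fund}: given a $\integers$-graded $E_\infty$-algebra over $\integers$ satisfying the Beilinson--Soulé-type connectivity hypothesis, there is an affine derived group scheme $G$ over $\integers$ — the ``derived motivic Galois (fundamental) group'' — together with a tensor triangulated equivalence between the category of perfect complexes of representations of $G$ and the triangulated category of mixed Tate objects built from $A$. Applying this to our $A$, and matching the generating objects $A(n)$ with the one-dimensional representations $\integers(n)$ of $G$, produces an equivalence $\mathrm{Perf}(G) \simeq \DMT_{\mathrm{gm}}(X)$ as tensor triangulated categories. The examples $X = \Spec(R)$ with $R$ a localization of a number ring, or $X = \P^1_R \setminus \{0,1,\infty\}$, satisfy the hypothesis because their rational motivic cohomology is concentrated in the expected (Borel/Beilinson) range of degrees; this is where one uses the identification of $\caM_\Q$ with the Beilinson spectrum (Corollary \ref{hn46tzr}, Theorem \ref{jf356zgf}) so that rational motivic cohomology is (a summand of) rationalized $K$-theory, whose relevant groups vanish in negative degrees.

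The main obstacle I expect is purely bookkeeping rather than conceptual: one must check that the strong periodization $\PMZ$ produced in Definition \ref{ght54ede} really does satisfy all the structural hypotheses needed to feed $\DMT(X)$ into the formalism of \cite{spitzweck.fund} — in particular that the $E_\infty$-algebra $A$ of the previous Corollary is the one (up to quasi-isomorphism of graded $E_\infty$-algebras) to which the cited theorem applies, that the grading and the weight filtration match up, and that the compact/perfect objects correspond correctly on both sides. Once the dictionary between the model of $\DMT(X)$ via $\D(A)$ (from Theorem \ref{jrfh56zr} and \cite[Theorem 4.3]{spitzweck.per}) and the derived-group-scheme description of \cite{spitzweck.fund} is set up, the statement follows formally. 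Therefore the proof is: apply the previous Corollary to get $\DMT(X) \simeq \D(A)$; verify the vanishing hypothesis translates into the Beilinson--Soulé condition on $A$; and quote the construction of \cite{spitzweck.fund} to obtain $G$ with $\mathrm{Perf}(G) \simeq \DMT_{\mathrm{gm}}(X)$.
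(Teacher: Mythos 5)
Your proposal is correct and follows essentially the same route as the paper: the paper's proof is simply a citation of \cite[Theorem 6.21]{spitzweck.fund}, applied to the graded $E_\infty$-model of $\DMT(X)$ coming from the strong periodization (the previous Corollary), with the rational vanishing hypothesis playing the role of the weak Beilinson--Soul\'e condition required there. Your extra verifications (matching $\integers(n)$ with the generators, compact versus perfect objects, the examples via the Beilinson spectrum) are exactly the bookkeeping implicit in that citation.
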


\begin{proof}
This follows from \cite[Theorem 6.21]{spitzweck.fund}.
\end{proof}

\section{Base change}
\label{gf4z4z4}

\begin{proposition}
\label{gfrtg5434}
Let $f \colon T \to S$ be a morphism of schemes, $R$ a commutative ring and $t \in \{\Zar,\Nis,\et\}$. Let
$F \in \D(\Sh(\Sm_{S,t},R))$. For each $X \in \Sm_S$ let $f_X$ be the map
$X_T:=T \times_S X \to X$. Suppose that for each $X \in \Sm_S$ the object
$f_X^*(F|_{X_t}) \in \D(\Sh(X_{T,t},R))$ is zero. Then
$\bL f^* F \in \D(\Sh(\Sm_{T,t},R))$ is zero.
\end{proposition}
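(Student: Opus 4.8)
The plan is to reduce the statement to a claim about stalks (for the Zariski and Nisnevich cases) or about points of the small étale site, and then to use that pullback of sheaves along the site morphisms $\Sm_{T,t} \to \Sm_{S,t}$ is computed by a suitable colimit that commutes with the relevant fiber functors. Concretely, $\bL f^* F$ is zero in $\D(\Sh(\Sm_{T,t},R))$ if and only if all its cohomology sheaves vanish, and a sheaf on $\Sm_{T,t}$ is zero if and only if all its stalks (at the points of the site) are zero. So it suffices to show that for every $Y \in \Sm_T$ and every point $y$ of $Y_t$, the stalk $(\bL f^* F)_y$ vanishes.

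First I would recall the description of $\bL f^*$. The functor $f^* \colon \Sh(\Sm_{S,t},R) \to \Sh(\Sm_{T,t},R)$ is induced by the continuous functor $\Sm_S \to \Sm_T$, $X \mapsto X_T = T\times_S X$; on the level of presheaves, $f^*$ is left Kan extension followed by sheafification, and since everything in sight consists of $R$-module sheaves and the functor is exact enough after deriving, $\bL f^* F$ restricted to the small site $Y_t$ is controlled by the restriction $F|_{X_t}$ for $X$ ranging over smooth $S$-schemes receiving a map from $Y$. The key point is this: for a fixed $X \in \Sm_S$, the restriction of $\bL f^* F$ along $X_{T,t} \hookrightarrow \Sm_{T,t}$ agrees with $f_X^*(F|_{X_t})$, because the square relating the small sites of $X$, $X_T$, $S$, $T$ is compatible with pullback of sheaves (base change for the open/small-site inclusions is strict here, as these are localizations of the respective big sites). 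By hypothesis $f_X^*(F|_{X_t}) = 0$ for every such $X$.

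Now I would assemble these local vanishings into a global one. Given $Y \in \Sm_T$, cover $Y$ (in the topology $t$) by objects of the form $X_T$ for various $X \in \Sm_S$ — this is possible because $\Sm_S \to \Sm_T$ has dense image in the appropriate sense: locally on $T$, any smooth $T$-scheme is, up to refinement in $t$, a pullback of a smooth $S$-scheme when $T\to S$ is, say, obtained as we need; more robustly, one tests vanishing on stalks, and every point $y$ of $Y_t$ lies over a point of $T$, hence over a point of $S$, and the stalk of $\bL f^* F$ at $y$ is computed as a filtered colimit over neighborhoods, which one can arrange to come from $\Sm_S$ after passing to $X = $ an étale (resp. Zariski, resp. Nisnevich) neighborhood. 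Thus the stalk of $\bL f^* F$ at $y$ equals the stalk of $f_X^*(F|_{X_t})$ at the corresponding point of $X_{T,t}$, which is zero. Since all stalks vanish, all cohomology sheaves of $\bL f^* F$ vanish, hence $\bL f^* F = 0$.

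The main obstacle I expect is the bookkeeping in the second step: making precise that $(\bL f^* F)|_{X_{T,t}} \cong f_X^*(F|_{X_t})$, i.e. that restriction to a small site commutes with $\bL f^*$. This is a base-change compatibility between the localization functors $j_X^* \colon \Sh(\Sm_{S,t}) \to \Sh(X_t)$ and the pullback $f^*$, and it holds because both are pullbacks along maps of sites and the relevant square of sites $2$-commutes; one also needs that $\bL f^*$ is just $f^*$ here (no higher derived functors appear, since $f^*$ on sheaves of $R$-modules is exact on the level of presheaf left Kan extension and sheafification is exact). Once that identification is in hand, the rest is the standard ``a sheaf with vanishing stalks is zero'' argument, applied after noting that the points of $\Sm_{T,t}$ all arise from the small sites $X_{T,t}$ with $X \in \Sm_S$.
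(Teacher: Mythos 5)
Your reduction hinges on the identification $(\bL f^* F)|_{X_{T,t}} \cong \bL f_X^*(F|_{X_t})$, and this is the genuine gap: it is not a formal consequence of the commuting square of sites (the square only yields a comparison map $f_X^*((-)|_{X_t}) \to (\bL f^*(-))|_{X_{T,t}}$), and it is in fact false. Take $t=\Zar$, $S=X=\Spec(k)$, and $F=R[Z]_\Zar$ the free sheaf on a smooth $k$-scheme $Z$; this $F$ is cofibrant, so $\bL f^*F = f^*F = R[Z_T]_\Zar$, whose restriction to $T_\Zar$ has stalk at $u\in T$ the free module $R[Z(\caO_{T,u})]$, whereas $F|_{S_\Zar}$ is the complex $R[Z(k)]$ and its pullback to $T_\Zar$ is the constant sheaf with stalk $R[Z(k)]$; for $Z=\bA^1_k$ and $T=\bA^1_k$ these disagree. (You are implicitly conflating the small site $X_t$ with the big slice site $\Sm_{X,t}\simeq \Sm_{S,t}/X$: restriction to the latter does commute with $\bL f^*$, restriction to the former does not.) Two further steps are also unjustified: the claim that $\bL f^*=f^*$ because the presheaf left Kan extension is exact fails, since the comma categories indexing that Kan extension consist of smooth $S$-schemes $X$ with $S$-maps $Y\to X$ and are not cofiltered ($\Sm_S$ has products but no equalizers or general fiber products), so $f^*$ on the big smooth site is only right exact; and the claim that every point of $\Sm_{T,t}$ ``comes from'' some $X_{T,t}$ with $X\in\Sm_S$ is not true -- a $t$-neighborhood of a point of an arbitrary $Y\in\Sm_T$ is again just a smooth $T$-scheme, not a base change from $S$, so the stalks of $\bL f^*F$ cannot be identified with stalks of the $f_X^*(F|_{X_t})$.

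The correct route (and the one the paper takes) is not a value-by-value base change but a factorization of the functor $\bL f^*$ itself: identify $\Cpx(\Sh(\Sm_{S,t},R))$ with the cartesian sections of the diagram $X\mapsto \Cpx(\Sh(X_t,R))$ over $\Sm_S^{\op}$, and model $\bL f^*$ as the composite of (i) the inclusion of cartesian sections into all sections, (ii) the objectwise small-site pullbacks $f_X^*$, and (iii) a left adjoint landing back in cartesian sections over $\Sm_T$. The hypothesis says precisely that step (ii) applied to $F$ is the zero section, and the left adjoint in (iii) preserves zero; no identification of the restriction $(\bL f^* F)|_{X_{T,t}}$ is needed, and as the example above shows, none holds.
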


\begin{proof}
We use the language of $\infty$-categories. For any scheme $U$ let
$\theta(U)$ be the functor on $\Sm_U^\op$ which associates to any $X \in \Sm_U$
the $\infty$-category associated to the model category $\Cpx(\Sh(X_t,R))$.
Let $\Sect(\theta(U))$ be the category of sections of $\theta(U)$ and $\Sect(\theta(U))_\etcart$
the full subcategory which consists of objects which are cartesian for \'etale morphisms in $\Sm_U$.
Then $\Sect(\theta(U))_\etcart$ is canonically equivalent to the $\infty$-category
associated to $\Cpx(\Sh(\Sm_{U,t},R))$. Note that the inclusion
$$\Sect(\theta(U))_\etcart \hookrightarrow \Sect(\theta(U))$$
preserves limits and colimits, so it has both a left and a right adjoint.
For a morphism $g \colon V \to U$ of schemes one has a base change left adjoint
$g^* \colon  \Sect(\theta(U)) \to \Sect(\theta(V))$, and the base change
$\bL f^* \colon \Cpx(\Sh(\Sm_{U,t},R)) \to \Cpx(\Sh(\Sm_{V,t},R))$ is modelled
by the composition
$$\Sect(\theta(U))_\etcart \hookrightarrow \Sect(\theta(U)) \to
\Sect(\theta(V)) \to \Sect(\theta(V))_\etcart,$$
where the last morphism is the left adjoint to the inclusion.

The assumption on $F$ implies that the composition
$$\Sect(\theta(S))_\etcart \hookrightarrow \Sect(\theta(S)) \to
\Sect(\theta(T))$$ applied to $F$ gives zero, hence the claim.
\end{proof}

\begin{proposition}
\label{g34t65te}
Let $f \colon Y \to X$ be a morphism of schemes which induces
isomorphisms on residue fields and $R$ a commutative ring.
Let $\varepsilon$ denote the maps of sites $X_\et \to X_\Nis$ and $Y_\et \to Y_\Nis$.
Let $F \in \D(\Sh(X_\et,R))$. Then the canonical map
$f^* \R \varepsilon_* F \to \R \varepsilon_* f^* F$ is an isomorphism in
$\D(\Sh(Y_\Nis,R))$.
\end{proposition}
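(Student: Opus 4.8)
The plan is to check the assertion on Nisnevich stalks, which will reduce it to a comparison of étale cohomologies of henselian local rings, and then to invoke the affine analogue of proper base change. Since the functors $G \mapsto G_y$ (the Nisnevich stalk at $y$, i.e.\ the value on $\Spec\mathcal{O}_{Y,y}^h$), $y \in Y$, form a conservative family for $\Sh(Y_\Nis,R)$, it is enough to show that the canonical map is an isomorphism on the stalk at each $y$. Fix $y$, put $x := f(y)$, and let $g \colon \Spec\mathcal{O}_{Y,y}^h \to X$ be the composite. By continuity of étale cohomology along the cofiltered limit $\Spec\mathcal{O}_{Y,y}^h = \lim_V V$ over the Nisnevich neighborhoods $V$ of $y$ (the transition maps are affine), the stalk of $\R\varepsilon_* f^*F$ at $y$ is $\R\Gamma_\et(\Spec\mathcal{O}_{Y,y}^h, g^*F)$, where $g^*F$ denotes the étale pullback of $F$ along $g$.

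The point where the hypothesis enters is the computation of the stalk of $f^*\R\varepsilon_* F$ at $y$. Because $f$ induces an isomorphism on residue fields, $\kappa(y) = \kappa(x)$; hence for every Nisnevich neighborhood $(U,u)$ of $x$ in $X$ the étale $\mathcal{O}_{Y,y}^h$-scheme $U \times_X \Spec\mathcal{O}_{Y,y}^h$ has, over the closed point, a point with residue field $\kappa(y)$, and therefore admits a unique section hitting that point; this produces a unique $X$-lift $\Spec\mathcal{O}_{Y,y}^h \to U$ through $u$. Conversely, any $X$-lift of $\Spec\mathcal{O}_{Y,y}^h$ to an étale $X$-scheme hits a point whose residue field, being squeezed between the two copies of $\kappa(x) = \kappa(y)$, must equal $\kappa(x)$, so it determines a Nisnevich neighborhood of $x$. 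I would check that in this way the pulled-back neighborhoods of $y$ become cofinal among Nisnevich neighborhoods of $x$, so that the stalk of $f^*\R\varepsilon_* F$ at $y$ equals $\colim_{(U,u)} (\R\varepsilon_*F)(U) = \R\Gamma_\et(\Spec\mathcal{O}_{X,x}^h, F)$, again by continuity, and that under these identifications the map in question becomes the restriction map along the induced local homomorphism $\mathcal{O}_{X,x}^h \to \mathcal{O}_{Y,y}^h$.

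It then remains to prove that for a local homomorphism $\mathcal{O}_{X,x}^h \to \mathcal{O}_{Y,y}^h$ of henselian local rings inducing an isomorphism on residue fields, the restriction map $\R\Gamma_\et(\Spec\mathcal{O}_{X,x}^h, F) \to \R\Gamma_\et(\Spec\mathcal{O}_{Y,y}^h, g^*F)$ is an isomorphism. Both sides map compatibly, by restriction, to the étale cohomology of the common closed fibre $\Spec\kappa(x) = \Spec\kappa(y)$, and by Gabber's affine analogue of proper base change, applied to the henselian pairs $(\mathcal{O}_{X,x}^h,\mathfrak{m}_{X,x})$ and $(\mathcal{O}_{Y,y}^h,\mathfrak{m}_{Y,y})$ and valid for the torsion coefficients to which the proposition is applied, both of these closed-fibre restriction maps are isomorphisms; hence so is the map between the source terms. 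I expect this last step, namely the reduction to the henselian-local situation together with the appeal to affine proper base change, to be the main obstacle, whereas the cofinality bookkeeping of the second paragraph, though slightly delicate, is essentially formal.
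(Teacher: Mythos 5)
Your proof is correct and follows essentially the same route as the paper's: the paper likewise tests the map on points $\Spec K \to Y$ inducing residue-field isomorphisms (i.e.\ on Nisnevich stalks) and identifies the stalk of each side with $\R\Gamma_\et(\Spec K, \cdot)$, which is exactly your continuity-plus-henselization-plus-affine-proper-base-change computation, applied once over $Y$ and once over $X$ via the composite point $\Spec K \to Y \to X$. The only difference is presentational: the paper compresses your cofinality bookkeeping and the appeal to the henselian local comparison into the single asserted identity $(x^*(\R\varepsilon_*F))(\Spec K) \cong \R\Gamma(\Spec K, x^*F)$.
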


\begin{proof}
Note first that for a scheme $U$, $F \in \D(\Sh(U_\et,R))$, field $K$ and map
$x \colon \Spec K \to U$ which is an
isomorphism on residue fields we have $$(x^*(\R \varepsilon_* F))(\Spec K)
\cong \R \Gamma(\Spec K, x^* F),$$ $\varepsilon$ the map of sites $U_\et \to U_\Nis$.
Thus for a map $x \colon \Spec K \to Y$
inducing an isomorphism on residue fields we have
$$(x^*(\R \varepsilon_*(f^* F)))(\Spec K) \cong \R \Gamma(\Spec K,x^*f^*F) \cong
(x^*f^* \R \varepsilon_*F)(\Spec K).$$
This shows the claim.
\end{proof}

\begin{proposition}
\label{hg788ou}
Let $i \colon Z \hookrightarrow X$ be a closed immersion between separated Noetherian schemes
of finite Krull dimension and $R$ a commutative ring. Let $F \in \D(\Sh(\Sm_{X,\Nis},R))$,
$G \in \D(\Sh(\Sm_{Z,\Nis},R))$ and $\varphi \colon \bL i^*F \to G$ be a map.
Suppose that for any $Y \in \Sm_X$ the map $i_Z^*(F|_{Y_\Nis}) \to G|_{Y_{Z,\Nis}}$ induced by
$\varphi$ is an isomorphism in $\D(\Sh(Y_{Z,\Nis},R))$. Then $\varphi$ is an
$\bA^1$-weak equivalence.
\end{proposition}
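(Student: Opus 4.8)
The plan is to pass to the mapping cone $C:=\mathrm{cofib}(\varphi)\in\D(\Sh(\Sm_{Z,\Nis},R))$ and to show that it restricts to zero on the small Nisnevich site $Y_{Z,\Nis}$ of $Y_Z=Y\times_X Z$ for every $Y\in\Sm_X$; this will in fact give $C\simeq 0$ and hence, a fortiori, that $\varphi$ is an $\bA^1$-weak equivalence. The mechanism for upgrading ``$C|_{Y_{Z,\Nis}}\simeq 0$ for all $Y\in\Sm_X$'' to ``$C\simeq 0$'' is the local structure theorem for smooth morphisms: every $W\in\Sm_Z$ is, Zariski-locally on the source, \'etale over some $\bA^n_Z=(\bA^n_X)_Z$, hence lies in the small Nisnevich site $Y_{Z,\Nis}$ for $Y=\bA^n_X\in\Sm_X$. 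Together with Nisnevich (indeed Zariski) descent over $Z$ — writing $\R\Gamma_\Nis(W,C)$ as a homotopy limit over the \v{C}ech nerve of such a cover — this reduces the vanishing of $\R\Gamma_\Nis(W,C)$ for arbitrary $W\in\Sm_Z$ to its vanishing for $W$ \'etale over an affine space, which is contained in the restrictions $C|_{(\bA^n_X)_{Z,\Nis}}$. (This is the over-$Z$ analogue of the device behind Proposition \ref{gfrtg5434}, except that $C$ is not itself a pullback, so that Proposition does not apply verbatim.)

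It therefore remains to identify $C|_{Y_{Z,\Nis}}$ with the cofiber of the map $i_Z^*(F|_{Y_\Nis})\to G|_{Y_{Z,\Nis}}$ from the hypothesis, which is zero by assumption. For this I would compare the restriction $(\bL i^*F)|_{Y_{Z,\Nis}}$ of the big-site pullback with the naive small-site pullback $i_Z^*(F|_{Y_\Nis})$. Since the functor $i^{-1}\colon\Sm_X\to\Sm_Z$, $Y'\mapsto Y'_Z$, preserves fiber products and Nisnevich covers, $\bL i^*$ is computed by the exact functor $i^*$, which sends a representable sheaf $R[Y']_\Nis$ to $R[Y'_Z]_\Nis$; thus $(\bL i^*R[Y']_\Nis)|_{Y_{Z,\Nis}}$ is the sheaf $W\mapsto R[\Hom_{Y_Z}(W,(Y'\times_X Y)_Z)]$, while $i_Z^*((R[Y']_\Nis)|_{Y_\Nis})$ is the pullback along the closed immersion $Y_Z\hookrightarrow Y$ of the sheaf $V\mapsto R[\Hom_Y(V,Y'\times_X Y)]$ on $Y_\Nis$. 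There is an evident comparison map $i_Z^*(F|_{Y_\Nis})\to(\bL i^*F)|_{Y_{Z,\Nis}}$, and the plan is to show it is a Nisnevich-local isomorphism: Nisnevich-locally on $W$ the pair consisting of an \'etale neighbourhood and its closed fibre is Henselian, and for a Henselian pair $(A,I)$ and a smooth scheme $P$ over $Y$ the restriction map $P(A)\to P(A/I)$ is surjective, which is exactly what is needed to lift a $Y_Z$-point of $(Y'\times_X Y)_Z$ to an \'etale-neighbourhood point of $Y'\times_X Y$. Resolving a general $F$ by complexes of sums of representables, and using that the two functors in question commute with homotopy colimits, one deduces the comparison isomorphism for all $F$; precomposing $\varphi|_{Y_{Z,\Nis}}$ with it identifies the hypothesis with ``$\varphi|_{Y_{Z,\Nis}}$ is an isomorphism'', whence $C|_{Y_{Z,\Nis}}\simeq 0$.

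Combining the two steps yields $\R\Gamma_\Nis(W,C)=0$ for every $W\in\Sm_Z$, so $C\simeq0$ in $\D(\Sh(\Sm_{Z,\Nis},R))$ and $\varphi$ is in particular an $\bA^1$-weak equivalence. I expect the main obstacle to be the second step — making precise the comparison of the big-site derived pullback along $i$ with the small-site pullback, i.e.\ the Henselian-pair lifting argument and the bookkeeping needed to propagate it from representable sheaves to an arbitrary object $F$ of the derived category while keeping track of all restriction, pullback and sheafification functors. The local-structure reduction of the first step is routine by comparison, being the standard way of cutting down the class of test schemes over $Z$.
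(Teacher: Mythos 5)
Your step 2 is the heart of the argument, and it is false: the restriction of the big-site derived pullback to a small site does \emph{not} agree with the small-site pullback of the restriction. Test it on a representable sheaf $F=R[Y']_\Nis$, $Y'\in\Sm_X$, say with $Y=X$. At a point $z\in Z$ the stalk of $i_Z^*(F|_{X_\Nis})$ is the free $R$-module $R[\,Y'(\caO_{X,z}^h)\,]$, while the stalk of $(\bL i^*F)|_{Z_\Nis}=R[Y'_Z]_\Nis|_{Z_\Nis}$ is $R[\,Y'(\caO_{Z,z}^h)\,]$, and your comparison map is induced by the restriction of points along the henselian pair $(\caO_{X,z}^h,I)$. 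Smoothness of $Y'$ and the henselian lifting property give only \emph{surjectivity} of this map of sets, and the induced map of free $R$-modules on a non-injective surjection of sets is never injective: already for $Y'=\bA^1_X$ it is $R[\caO_{X,z}^h]\to R[\caO_{Z,z}^h]$, which kills differences of basis vectors lying over the same residue. So the comparison is not a local isomorphism, and no amount of resolving $F$ by representables can repair this.

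This is not a fixable gap in the write-up but a sign that the strategy overshoots: you are trying to prove that $\mathrm{cofib}(\varphi)\simeq 0$, i.e.\ that $\varphi$ is a Nisnevich-local isomorphism, which is strictly stronger than the statement and false in general. Indeed, take $F=\R i_*G$ and $\varphi$ the counit $\bL i^*\R i_*G\to G$: the hypothesis holds for every $G$ (because $(\R i_*G)|_{Y_\Nis}\cong\R i_{Y,*}(G|_{Y_{Z,\Nis}})$ and $i_Y^*\R i_{Y,*}\simeq\mathrm{id}$ on small sites, the henselian local rings of $Y_Z$ being quotients of those of $Y$), yet the counit is only an $\bA^1$-weak equivalence — that is exactly the content of the localization theorem and the reason the proposition asserts no more. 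A proof that never uses $\bA^1$-invariance cannot succeed. The paper instead passes to the adjoint $F\to\R i_*G$, observes that its cofiber satisfies the hypothesis of Proposition \ref{gfrtg5434} (whose hypothesis involves only small-site pullbacks of restrictions, and vanishing \emph{does} transfer to the big-site pullback, unlike a general comparison), so $\bL i^*F\to\bL i^*\R i_*G$ is an isomorphism, and then uses that $i^*\R i_*G\to G$ is an $\bA^1$-weak equivalence because $i$ is finite (so $\R i_*$ preserves $\bA^1$-weak equivalences) and $i^*\circ\R i_*$ is the identity on the $\bA^1$-local categories. Your step 1 (reduction to small sites over affine spaces via the local structure of smooth morphisms) is reasonable in itself, but it becomes moot once step 2 collapses.
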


\begin{proof}
The cofiber of the adjoint $F \to \R i_*G$ of $\varphi$ satisfies the assumption
of Proposition \ref{gfrtg5434}, thus the map $\bL i^*F \to \bL i^* \R i_* G$ is an
isomorphism. But the map $i^* \R i_* G \to G$ is an $\bA^1$-weak equivalence,
because $\R i_*$ preserves $\bA^1$-weak equvalences (since $i$ is finite)
and the composition $$\D^{\bA^1}(\Sh(\Sm_{Z,\Nis},R)) \to \D^{\bA^1}(\Sh(\Sm_{X,\Nis},R))
\to \D^{\bA^1}(\Sh(\Sm_{Z,\Nis},R))$$ is naturally equivalent to the identity.
\end{proof}

Let now $U$ be the spectrum of a Dedekind domain of mixed characteristic and $p$ a
prime which is invertible on $U$. Let $x \in U$ be a closed point of positive residue characteristic
and $\kappa:=\kappa(x)$. We denote by $i$ the closed inclusion $\{x\} \hookrightarrow U$.
We let $L_{U,n}(r)=\mu_{p^n}^{\otimes r}$ viewed as object of
$\D(\Sh(\Sm_{U,\et},\integers/p^n))$, similarly $L_{\kappa,n}(r)=\mu_{p^n}^{\otimes r}$
viewed as object of $\D(\Sh(\Sm_{\kappa,\et},\integers/p^n))$.
We have a natural map $\varphi \colon L_{U,n}(r) \to \R i_* L_{\kappa,n}(r)$.
Let $\varepsilon$ denote the maps of sites $\Sm_{U,\et} \to \Sm_{U,\Nis}$ and
$\Sm_{\kappa,\et} \to \Sm_{\kappa,\Nis}$. The adjoint 
of $\varphi$ induces the second map in the composition
$$\bL i^* \R \varepsilon_* L_{U,n}(r) \to \R \varepsilon_* \bL i^* L_{U,n}(r)
\to \R \varepsilon_* L_{\kappa,n}(r).$$
Applying $\tau_{\le r}$ to this composition yields the second
map in the composition
$$g_{n,r} \colon \bL i^* \tau_{\le r} \R \varepsilon_* L_{U,n}(r) \to
\tau_{\le r} \bL i^* \R \varepsilon_* L_{U,n}(r) \to
\tau_{\le r} \R \varepsilon_* L_{\kappa,n}(r),$$
whereas the first map canonically exists since $\bL i^*$ preserves $(-r)$-connected
objects.

The $(\tau_{\le r} \R \varepsilon_* L_{U,n}(r))[r]$ assemble into
a naive $\integers/p^n[\GmU,\{1\}]_\Nis$-spectrum $F_n$, and the
$(\tau_{\le r} \R \varepsilon_* L_{\kappa,n}(r))[r]$ into
a naive $\integers/p^n[\Gmkappa,\{1\}]_\Nis$-spectrum $G_n$,
and the $g_{n,r}$ give a map of naive prespectra $g_n \colon \bL i^* F_n \to G_n$.

\begin{proposition}
\label{ge5454dd}
The maps $g_n$ are levelwise $\bA^1$-weak equivalences.
\end{proposition}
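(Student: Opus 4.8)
The plan is to apply Proposition \ref{hg788ou} levelwise. Fix $n$ and $r$. We must show that the map $g_{n,r} \colon \bL i^* \tau_{\le r} \R \varepsilon_* L_{U,n}(r) \to \tau_{\le r} \R \varepsilon_* L_{\kappa,n}(r)$ is an $\bA^1$-weak equivalence in $\D(\Sh(\Sm_{\kappa,\Nis},\integers/p^n))$. By Proposition \ref{hg788ou}, applied with $X = U$, $Z = \{x\}$, $F = \tau_{\le r} \R \varepsilon_* L_{U,n}(r)$ and $G = \tau_{\le r} \R \varepsilon_* L_{\kappa,n}(r)$, it is enough to check that for every $Y \in \Sm_U$ the induced map $i_{Y}^*(F|_{Y_\Nis}) \to G|_{Y_{\kappa,\Nis}}$ is an isomorphism in $\D(\Sh(Y_{\kappa,\Nis},\integers/p^n))$, where $i_Y \colon Y_\kappa := Y \times_U \{x\} \hookrightarrow Y$ is the base-changed closed immersion. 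So the whole question reduces to a statement about small Nisnevich sites.

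First I would reduce the truncation away. Since $i_Y^*$ (pullback along a closed immersion of small Nisnevich sites, i.e. restriction) is exact and the restriction of $F$ to $Y_\Nis$ is $\tau_{\le r}$ of $\R\varepsilon_* \mu_{p^n}^{\otimes r}$ on $Y_\et$ — here one uses that $\tau_{\le r}$ commutes with the restriction along the open/closed decomposition, and that $\R\varepsilon_*$ is compatible with restriction to $Y$ — it suffices to show that the map $i_Y^* \R\varepsilon_*^Y \mu_{p^n}^{\otimes r} \to \R\varepsilon_*^{Y_\kappa} \mu_{p^n}^{\otimes r}$ (restriction-then-truncate on both sides) is an isomorphism after applying $\tau_{\le r}$. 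The key input here is Proposition \ref{g34t65te}: the closed immersion $i_Y \colon Y_\kappa \hookrightarrow Y$ induces isomorphisms on residue fields (being a base change of $\{x\}\hookrightarrow U$ along a smooth, hence residually-trivial-on-fibers-over-$x$... — more precisely, $Y_\kappa$ is cut out in $Y$ by the maximal ideal of the DVR $\mathcal O_{U,x}$, and the points of $Y_\kappa$ are exactly the points of $Y$ lying over $x$, with the same residue fields). Hence Proposition \ref{g34t65te} gives $f^* \R\varepsilon_* \mu_{p^n}^{\otimes r} \xrightarrow{\ \sim\ } \R\varepsilon_* f^* \mu_{p^n}^{\otimes r}$ with $f = i_Y$, and since $f^* \mu_{p^n}^{\otimes r} = \mu_{p^n}^{\otimes r}$ (as $p$ is invertible on $U$, hence on $Y_\kappa$), we get the desired isomorphism already before truncating; applying $\tau_{\le r}$ then also commutes with the exact functor $i_Y^*$, giving $g_{n,r}$ as an isomorphism.

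Finally I would assemble this: the levelwise isomorphisms are compatible with the $\GmU$-structure maps because those are induced from the Gysin/projective bundle decomposition, which is natural in the scheme, and base change commutes with the formation of $\underline{\R\Hom}$ against the cofibrant object $\integers/p^n[\GmU,\{1\}]_\et$; hence $g_n$ is a map of naive prespectra which is a levelwise $\bA^1$-weak equivalence — in fact a levelwise isomorphism in the Nisnevich-local derived category, which is stronger than asserted. The main obstacle is the bookkeeping in the first paragraph: verifying carefully that the hypotheses of Proposition \ref{hg788ou} hold, i.e. that the comparison map $\varphi$ really does restrict, for each $Y \in \Sm_U$, to the base-change map appearing in Proposition \ref{g34t65te}, and that $i_Y$ genuinely induces isomorphisms on residue fields. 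Once that identification is in place, the rest is a formal consequence of the two cited propositions together with exactness of $i_Y^*$ and its commutation with truncation.
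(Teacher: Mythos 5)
Your proof is correct and follows essentially the same route as the paper: reduce via Proposition \ref{hg788ou} to checks over each $Y \in \Sm_U$ on small Nisnevich sites, and settle those with Proposition \ref{g34t65te} (the closed immersion $Y_\kappa \hookrightarrow Y$ induces isomorphisms on residue fields) together with exactness of $i_Y^*$ to commute with $\tau_{\le r}$. One small caveat: your closing remark that you obtain ``a levelwise isomorphism in the Nisnevich-local derived category, stronger than asserted'' overshoots --- the small-site checks do not compute $\bL i^*$ on the big site, and Proposition \ref{hg788ou} only delivers an $\bA^1$-weak equivalence, which is exactly what the statement claims.
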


\begin{proof}
Let $X \in \Sm_U$, $X_\kappa$ the inverse image of $x$ and $i_X \colon X_\kappa \hookrightarrow X$
the closed inclusion. Proposition \ref{g34t65te} implies that
the canonical map $$i_X^* \R \varepsilon_* \mu_{p^n}^{\otimes r} \to \R \varepsilon_*
\mu_{p^n}^{\otimes r}$$ is an isomorphism in $\D(\Sh(X_{\kappa,\Nis},\integers/p^n))$
(here the first $\mu_{p^n}^{\otimes r}$ denotes an object in $\D(\Sh(X_\et,\integers/p^n))$,
whereas the second an object in $\D(\Sh(X_{\kappa,\et},\integers/p^n))$).
By the exactness of $i_X^*$ the canonical map
$$i_X^* \tau_{\le r} \R \varepsilon_* \mu_{p^n}^{\otimes r} \to \tau_{\le r} \R \varepsilon_*
\mu_{p^n}^{\otimes r}$$ is thus
also an isomorphism (in the same category).
Thus the claim follows from Proposition \ref{hg788ou}.
\end{proof}

Note that $\map_{\integers/p^n}(G_{n,r},G_{n,r}) \cong \integers/p^n$
(compare to Lemma \ref{get6654e}), thus the $G_n$ lift to spectra $G_n$ which
are unique up to canonical isomorphism. 

Using the techniques of section \ref{h5643dfgf} one constructs as in section
\ref{gtz54zr} a map of naive spectra $\caM_\kappa \to G_n$
($\caM_\kappa$ as in section \ref{hjrerrt}) which is induced
by \'etale cycle class maps. The induced map $\caM_\kappa/p^n \to G_n$ is
an isomorphism by Theorem \ref{edfgth76}. The object $D_p:=\holim_n G_n$ is canonically
defined and the canonically induced map $\caM_\kappa \to D_p$ is the $p$-completion map.

Let $p:=\mathrm{char}(\kappa)$.
For each $n \in \naturals$ define a naive $\integers/p^n[\Gmkappa,\{1\}]_\Nis$-spectrum $E_n$ by
$E_{n,r}:=\nu_n^r \in \D(\Sh(\Sm_{\kappa,\Nis},\integers/p^n))$. The bonding maps
are the compositions
$$E_{n,r} \otimes^\bL \integers/p^n[\Gmkappa,\{1\}]_\Nis \to E_{n,r} \otimes^\bL \caO_{/\kappa}^*
\to E_{n,r} \otimes^\bL \nu_n^1 \to \nu_n^{r+1} \cong E_{n,r+1}.$$

There is a map of naive spectra $\caM_\kappa \to E_n$ such that the induced map
$\caM_\kappa/p^n \to E_n$ is an isomorphism.

Note that $\map_{\integers/p^n}(E_{n,r},E_{n,r})$ is (homotopy) discrete, so that
$E_n$ has a canonical model as spectrum which we also denote by $E_n$.
Moreover $E:=\holim_n E_n$ is well-defined
up to canonical isomorphism, and the canonical map $\caM_\kappa \to E$ is the
$p$-completion map.

We denote by $\HBkappa$ the Beilinson spectrum over $\kappa$. There is a canonical map
$$\HBkappa \to (E \times \prod_{p \neq \mathrm{char}(\kappa)} D_p)_\bQ,$$
and the canonical diagram
\begin{equation}
\label{gdsrhee}
\xymatrix{\caM_\kappa \ar[r] \ar[d] & E \times \prod_{p \neq \mathrm{char}(\kappa)} D_p \ar[d] \\
\HBkappa \ar[r] & (E \times \prod_{p \neq \mathrm{char}(\kappa)} D_p)_\bQ}
\end{equation}
is homotopy cartesian.

Suppose now that the $U$ from above is the spectrum of a complete discrete
valuation ring $\Lambda$ and $x$ is the closed point of $U$. Let $p=\mathrm{char}(\kappa)$.
Above we have for any prime $l \neq p$ constructed maps of naive spectra
$$\bL i^* \MZ_U/l^n \cong \bL i^* F_n \to G_n$$
(here the dependence of $G_n$ and $F_n$ on $l$ is suppressed)
which are isomorphisms by Proposition \ref{ge5454dd} (here we view the naive spectra
taking values in the $\bA^1$-local categories). Thus these maps
lift uniquely to isomorphisms between the corresponding spectra. We get canonical maps
$\bL i^* \MZ_U \to D_l$ for all primes $l \neq p$.

Let $\eta$ be the complement of $\{x\}$ in $U$ and $j \colon \eta \hookrightarrow U$
the open inclusion. We again have the objects $L_{\eta,n}(r):= \mu_{p^n}^{\otimes r}
\in \D(\Sh(\Sm_{\eta,\et},\integers/p^n))$ and the map of sites
$\varepsilon \colon \Sm_{\eta,\et} \to \Sm_{\eta,\Nis}$.
By Lemma \ref{jh44444} we get the first isomorphism in the chain
of isomorphisms

\begin{equation}
\label{gfrthrr}
\R j_* \tau_{\le r} \R \varepsilon_* L_{\eta,n}(r) \cong
\tau_{\le r} \R j_* \R \varepsilon_* L_{\eta,n}(r)
\cong \tau_{\le r} \R \varepsilon_* \R j_* L_{\eta,n}(r)
\end{equation}

in $\D(\Sh(\Sm_{U,\Nis},\integers/p^n))$ (the second isomorphism is obvious).

Mapping to $\caH^r$ we get a map
$$\R j_* \tau_{\le r} \R \varepsilon_* L_{\eta,n}(r) \to
\caH^r(\R \varepsilon_* \R j_* L_{\eta,n}(r))[-r]$$
$$ \to \varepsilon_* \varepsilon^* \caH^r(\R \varepsilon_* \R j_* L_{\eta,n}(r))[-r]
\cong \varepsilon_* \R^r j_* L_{\eta,n}(r)[-r].$$

For any $X \in \Sm_U$ we have a map $$(i_X)^* \R^r j_{X,*} (L_{\eta,n}(r)|_{X_{\eta,\et}})
\to \nu_n^r \oplus \nu_n^{r-1}$$ in $\Sh(X_{\kappa,\et},\integers/p^n)$ constructed
in \cite[\S (6.6)]{bloch-kato.p-adic}. The second projection of this map was already
described in section \ref{htr46u}, the first projection is similar:
it sends a symbol $\{f_1, \ldots, f_r\}$, $f_1, \ldots, f_r \in (i_X)^* \caO_X^*$,
to $\dlog \overline{f}_1 \ldots \dlog \overline{f}_r$ and a symbol
$\{f_1, \ldots, f_{r-1}, \pi\}$, $\pi$ a fixed uniformizer of $\Lambda$, to $0$.

As in section \ref{htr46u} these maps glue to give a map
$$\R^r j_* L_{\eta,n}(r) \overset{\varphi \oplus \psi}{\longrightarrow}
i_* \nu_n^r \oplus i_* \nu_n^{r-1}.$$
As in section \ref{gtz54zr} we denote by $F_{n,r}$ the homotopy fiber
of the composition
$$\R j_* \tau_{\le r} \R \varepsilon_* L_{\eta,n}(r)[r] \to
\varepsilon_* \R^r j_* L_{\eta,n}(r) \to i_* \nu_n^r \oplus i_* \nu_n^{r-1}
\to i_* \nu_n^{r-1},$$
and the $F_{n,r}$ assemble to a naive spectrum $F_n$.

Note that the maps $\varphi$ give rise to maps $F_{n,r} \to i_* \nu_n^r$,
thus to maps $$\alpha_{n,r} \colon \bL i^* F_{n,r} \to \nu_n^r=E_{n,r}.$$

\begin{lemma}
\label{dgvjerhh}
The maps $\alpha_{n,r}$ assemble to a map $\alpha_n \colon \bL i^* F_n \to E_n$
of naive prespectra.
\end{lemma}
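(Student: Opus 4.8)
The plan is to verify that the maps $\alpha_{n,r}$ are compatible with the bonding maps of $\bL i^*F_n$ and $E_n$ respectively, i.e. that for each $r$ the square
$$\xymatrix{\bL i^* F_{n,r} \otimes^\bL \integers/p^n[\GmU,\{1\}]_\Nis \ar[r] \ar[d]_{\alpha_{n,r} \otimes \id} & \bL i^* F_{n,r+1} \ar[d]^{\alpha_{n,r+1}} \\ E_{n,r} \otimes^\bL \integers/p^n[\Gmkappa,\{1\}]_\Nis \ar[r] & E_{n,r+1}}$$
commutes in $\D^{\bA^1}(\Sh(\Sm_{\kappa,\Nis},\integers/p^n))$, where the left vertical map uses the canonical map $\bL i^*\integers/p^n[\GmU,\{1\}]_\Nis \cong \integers/p^n[\Gmkappa,\{1\}]_\Nis$. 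First I would unwind the definition of the bonding map of $F_n$: it is induced by the structure maps of the naive spectrum $\R j_* \tau_{\le r} \R\varepsilon_* L_{\eta,n}(r)[r]$ (which come from the decomposition \eqref{bfedhed} and its $j_*$-pushforward, exactly as in sections \ref{htr46u} and \ref{gtz54zr}), restricted to the homotopy fibre over $i_*\nu_n^{r-1}$. The bonding map of $E_n$ is, by definition, the composite $E_{n,r} \otimes^\bL \caO^*_{/\kappa} \to E_{n,r} \otimes^\bL \nu_n^1 \to \nu_n^{r+1}$ using multiplicativity of the logarithmic de Rham--Witt sheaves.

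The key point is that the maps $\varphi$ (the ``first projection'' to $i_*\nu_n^r$, sending $\{f_1,\ldots,f_r\}$ with $f_i \in (i_X)^*\caO_X^*$ to $\dlog\overline{f}_1\cdots\dlog\overline{f}_r$ and $\{f_1,\ldots,f_{r-1},\pi\}$ to $0$) are multiplicative with respect to concatenation of symbols, exactly as in the proof of the second-projection case — this is the content of \cite[\S(6.6), Theorem (1.4)]{bloch-kato.p-adic} together with functoriality of symbols (cf. Lemma \ref{hteedd}). Concretely, I would set up, for each $n$-simplex $K$ in the nerve of the relevant category $\caS$, a functor $\caE_n \times [1]^{?} \to \Cpx(\Ab)$ that simultaneously encodes the structure maps of $F_n$ and $E_n$ and the maps $\alpha_{n,r}$, just as the bonding maps of $C$ were shown to be well-defined in section \ref{htr46u}; the commutativity of the associated diagram of $T$-spectra then yields the claim. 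Since $\bL i^*$ is exact on Nisnevich sheaves, it commutes with the formation of homotopy fibres and of the smash products appearing in the bonding maps, so passing $\bL i^*$ through everything is harmless. The compatibility with the truncations $\tau_{\le r}$ is handled as in the definition of $g_{n,r}$ above, using that $\bL i^*$ preserves $(-r)$-connectedness.

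The main obstacle I anticipate is purely bookkeeping: one must check that the \emph{two} projections $\varphi$ and $\psi$ of the Bloch--Kato map $\R^r j_* L_{\eta,n}(r) \to i_*\nu_n^r \oplus i_*\nu_n^{r-1}$ interact correctly with the bonding maps of $F_n$, since $F_{n,r}$ is the fibre over $i_*\nu_n^{r-1}$ (via $\psi$) but $\alpha_{n,r}$ is built from $\varphi$. The relevant fact is that, after multiplying a level-$r$ symbol by a level-$1$ unit symbol $\{u\}$ with $u \in (i_X)^*\caO_X^*$, the $\psi$-component of the product is controlled by the $\psi$-component of the original symbol (the unit contributes nothing to the ``$\pi$-part''), while the $\varphi$-component acquires precisely a factor $\dlog\overline{u}$ — and this is exactly the bonding map of $E_n$. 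Once this multiplicativity is in place (which reduces to the symbol calculus of \cite[\S(1.2),(6.6)]{bloch-kato.p-adic}, already invoked in section \ref{htr46u}), the factorization of $\bL i^*(\text{bonding of }F_n)$ through the bonding of $E_n$ is forced, and uniqueness of the factorization follows as usual from the absence of non-trivial maps $\bL i^* F_{n,r} \otimes^\bL \integers/p^n[\Gmkappa,\{1\}]_\Nis \to \nu_n^r[-1]$.
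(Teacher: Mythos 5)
The paper gives no argument here (the proof is literally ``left to the reader''), so there is nothing to compare against; your verification supplies exactly the intended check and is correct. The essential point is the one you isolate: the bonding map of $F_n$ is, on $\H^0$ and \'etale-locally on symbols, multiplication by the Kummer class $\{u\}$ of a unit (cf.\ Lemma \ref{hbfgt}), and the first Bloch--Kato projection $\varphi$ satisfies $\varphi(\xi\cdot\{u\})=\varphi(\xi)\,\dlog\overline{u}$ (symbols containing $\pi$ stay in the kernel of $\varphi$ after multiplication by a unit symbol), which is precisely the bonding map of $E_n$; connectivity of $F_{n,r}$, preservation of connectivity by $\bL i^*$, and the fact that $\nu_n^{r+1}$ sits in degree $0$ reduce the commutativity of each level square to this sheaf-level symbol identity, checked after \'etale sheafification as in Proposition \ref{gfedr5t5}. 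Two cosmetic remarks: ``$\bL i^*$ is exact'' should be read as ``$\bL i^*$ is a triangulated (and monoidal) functor, hence preserves homotopy fibres and $\otimes^\bL$'' --- the underived $i^*$ on big sites is not exact, which is why the paper only uses preservation of $(-r)$-connected objects; and the strictification machinery of section \ref{h5643dfgf} is not needed, since $F_n$ and $E_n$ are naive prespectra, so a map between them is just a collection of level maps in the derived category commuting with the bonding maps, which is what your squares give.
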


\begin{proof}
We leave the verification to the reader.
\end{proof}

Our next goal is to show that the $\alpha_{n,r}$ are $\bA^1$-weak equivalences.

\begin{proposition}
\label{ge5t4ded}
Let $X$ be a scheme of characteristic $p$ and $F \in \Sh(X_\et,\integers)$ a $p$-torsion sheaf.
Then $\R^i \varepsilon_* F \in \Sh(X_\Nis,\integers)$ is zero for $i > 1$.
\end{proposition}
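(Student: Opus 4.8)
The statement is local on $X_\Nis$, so the plan is to reduce it to a vanishing statement for étale cohomology of henselian local rings and then to appeal to the known bound on the $p$-cohomological dimension of fields of characteristic $p$.

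First I would reduce to stalks. Since $\R^i\varepsilon_* F$ is the Nisnevich sheafification of the presheaf $U \mapsto H^i_\et(U,F)$ on $X_\et$, it is zero as soon as all its Nisnevich stalks vanish. The stalk at a point $x \in X$ is $\colim_{(U,u)} H^i_\et(U,F)$, the colimit running over étale neighbourhoods $(U,u) \to (X,x)$ with $\kappa(u)=\kappa(x)$; as $F$ is torsion this colimit equals $H^i_\et(\Spec R, F|)$ with $R=\caO^h_{X,x}$ the henselization of $\caO_{X,x}$, which is a henselian local $\F_p$-algebra. So it suffices to prove: if $R$ is a henselian local $\F_p$-algebra with residue field $\kappa$ and $G$ is a $p$-torsion sheaf on $(\Spec R)_\et$, then $H^i_\et(\Spec R,G)=0$ for $i\ge 2$.

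For this I would pass to the residue field. By Gabber's affine analogue of the proper base change theorem, applied to the henselian pair $(R,\mathfrak{m}_R)$, the restriction map $H^i_\et(\Spec R,G)\to H^i_\et(\Spec \kappa,G|_\kappa)$ is an isomorphism for every torsion sheaf $G$ (write $G$ as a filtered colimit of constructible sheaves and use that both sides commute with such colimits, $\Spec R$ being affine). Since $G|_\kappa$ is a $p$-torsion sheaf on the small étale site of $\Spec\kappa$ and $\kappa$ has characteristic $p$, the classical fact (due to Serre) that a field of characteristic $p$ has $p$-cohomological dimension at most $1$ gives $H^i_\et(\Spec\kappa,G|_\kappa)=0$ for $i\ge 2$, which is exactly what is needed.

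The delicate point is precisely the use of affine proper base change with $p$-torsion coefficients in characteristic $p$: this is the range the elementary Artin--Schreier argument does not reach, since that argument only handles the constant sheaf $\integers/p$ on an affine $\F_p$-scheme and says nothing about non-constant $p$-torsion sheaves on a henselian local ring — whose Krull dimension is unconstrained, so that Artin-type dimension bounds are of no help for the uniform statement $i\ge 2$. If one prefers to minimise what is cited, one can first run a dévissage along the exact sequences $0\to G[p^{n-1}]\to G[p^n]\to G[p^n]/G[p^{n-1}]\to 0$ together with $G=\colim_n G[p^n]$ to reduce to $G$ killed by $p$, and only then invoke the base change statement in that case; in any event no finiteness reduction on $X$ itself is required, all the content residing in the residue field and the estimate $\mathrm{cd}_p(\kappa)\le 1$.
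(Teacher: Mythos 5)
Your proof is correct and takes essentially the same route as the paper's: test $\R^i\varepsilon_*F$ on Nisnevich points and conclude from Serre's bound $\mathrm{cd}_p(\kappa)\le 1$ for fields $\kappa$ of characteristic $p$ (the paper cites the same result of Serre). The only difference is that you justify explicitly, via Gabber's affine analogue of proper base change, the identification of the stalk $H^i_{\et}(\caO^h_{X,x},F)$ with the Galois cohomology of the residue field, a step the paper's one-line stalk computation leaves implicit.
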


\begin{proof}
Let $K$ be a field and $x \colon \Spec K \to X$ a map
inducing an isomorphism on residue fields. Then $$\Gamma(\Spec K, x^* \R \varepsilon_* F)
\cong \R \Gamma(\Spec K, x^* F)$$ in $\D(\Ab)$. The latter complex (which is Galois cohomology)
vanishes in cohomological degrees $> 1$ by \cite[II, Prop. 3]{serre.galois-coh}.
\end{proof}

\begin{proposition}
\label{grtretht}
Let $X$ be a scheme and $F$ a quasi coherent sheaf on $X_\et$.
Then $\R^i \varepsilon_* F \in \Sh(X_\Nis,\integers)$ is zero for $i > 0$.
\end{proposition}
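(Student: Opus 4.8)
The plan is to reduce the vanishing to the classical fact that a quasi-coherent sheaf has no higher cohomology on an affine scheme. First I would recall that $\R^i \varepsilon_* F$ is the Nisnevich sheaf on $X_\Nis$ obtained by sheafifying the presheaf $U \mapsto H^i_{\et}(U, F|_{U})$, where $U$ ranges over \'etale $X$-schemes and $F|_U$ denotes the restriction (equivalently, the pullback of the underlying quasi-coherent module). Since a Nisnevich sheaf is zero as soon as all of its stalks vanish, it suffices to check that for every point $x$ of $X$ one has $(\R^i \varepsilon_* F)_x = 0$ for $i > 0$. Now the stalk at $x$ is the filtered colimit of the groups $H^i_{\et}(U, F|_U)$ taken over \'etale neighbourhoods $(U,u) \to (X,x)$, and the affine such neighbourhoods are cofinal in this system (any \'etale neighbourhood contains an affine open containing $u$). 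Hence it is enough to show $H^i_{\et}(U, F|_U) = 0$ for $i > 0$ whenever $U$ is affine.

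The key input is then the comparison theorem for quasi-coherent cohomology: for $U$ an affine scheme and $\mathcal{F}$ the quasi-coherent module underlying $F|_U$, the canonical map $H^i_{\Zar}(U, \mathcal{F}) \to H^i_{\et}(U, F|_U)$ is an isomorphism for all $i$ (see, e.g., Milne, \emph{\'Etale Cohomology}, III.3.7, or SGA 4, Exp.\ VII~4.3). By Serre's vanishing theorem $H^i_{\Zar}(U, \mathcal{F}) = 0$ for $i > 0$ since $U$ is affine, so $H^i_{\et}(U, F|_U) = 0$ for $i > 0$, and the reduction above concludes the argument. (Alternatively, one could phrase the reduction exactly as in the proof of Proposition \ref{ge5t4ded}, testing on the henselizations $\Spec \mathcal{O}^h_{X,x}$ directly and using that \'etale cohomology of a quasi-coherent sheaf commutes with the relevant filtered limit of affine schemes along affine transition maps; the colimit version above avoids even that.)

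I expect there to be no serious obstacle here: the only real step is invoking the correct classical statement, namely the vanishing of higher \'etale cohomology of a quasi-coherent sheaf on an affine scheme. In contrast with Proposition \ref{ge5t4ded}, no hypothesis on the residue fields (such as a bound on their cohomological dimension) enters, and no Noetherian or finite-dimensionality assumption on $X$ is needed, because the affine vanishing and the comparison theorem hold for arbitrary affine schemes in any characteristic.
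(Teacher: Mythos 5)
Your argument is correct, but it goes by a genuinely different route than the paper's. The paper works stalkwise and stays at the level of residue fields: for $x \colon \Spec K \to X$ inducing an isomorphism on residue fields it identifies $\Gamma(\Spec K, x^*\R\varepsilon_* F)$ with $\R\Gamma(\Spec K, x^*F)$, i.e.\ with Galois cohomology of $K$ in the pulled-back quasi-coherent sheaf, and then kills all higher cohomology because $(x^*F)(\Spec L)$ is an induced $\mathrm{Gal}(L/K)$-module for every finite Galois extension $L/K$ (normal basis theorem, additive Hilbert 90); this runs in parallel with the proof of Proposition \ref{ge5t4ded}. You instead pass to cofinal affine \'etale neighbourhoods and invoke the global comparison theorem $H^i_{\Zar} \cong H^i_{\et}$ for quasi-coherent sheaves together with affine vanishing. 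That is a valid proof, and it buys brevity at the price of a bigger classical input: the standard proof of the affine \'etale vanishing (exactness of the Amitsur complex, i.e.\ faithfully flat descent) is essentially the global incarnation of the induced-module computation the paper performs directly on residue fields, so the paper's version is the more self-contained one. One point you should tighten: checking the stalks only at the points of $X$ does not suffice for a sheaf on the small Nisnevich site $X_\Nis$ (the conservative family consists of the stalks at pairs $(U,u)$ with $U \to X$ \'etale, whose residue fields are finite separable extensions of those of $X$); this costs nothing, since $\R\varepsilon_*$ is compatible with restriction to \'etale $U \to X$ and your argument applies verbatim to $(U, F|_U)$, but it should be said — and your closing claim that no Noetherian or finite-dimensionality hypotheses are needed pertains only to the classical inputs, whereas it is exactly the conservativity of this family of stalks where such hypotheses would enter (harmless under the paper's standing conventions).
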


\begin{proof}
Let $K$ be a field and $x \colon \Spec K \to X$ a map
inducing an isomorphism on residue fields. Then $$\Gamma(\Spec K, x^* \R \varepsilon_* F)
\cong \R \Gamma(\Spec K, x^* F)$$ in $\D(\Ab)$. But for a finite Galois extension
$L/K$ the object $(x^*F)(\Spec L)$ is an induced $\mathrm{Gal}(L/K)$-module, thus its
cohomology vanishes in degrees $>0$.
\end{proof}

For $X \in \Sm_\kappa$ let $\Omega_X^1$ be the sheaf on $X_\et$ (and thus also on $X_\Nis$ and $X_\Zar$)
of absolute K\"ahler differentials on $X$. It is quasi coherent. Let $\Omega_X^\bullet$ be
the exterior algebra over $\caO_X$ of $\Omega_X^1$. Define subsheaves
$$B_X^i := \im(d \colon \Omega_X^{i-1} \to \Omega_X^i) \text{ and}$$
$$Z_X^i := \ker(d \colon \Omega_X^i \to \Omega_X^{i+1})$$
of $\Omega_X^i$ on $X_\et$.

\begin{lemma}
\label{gbdrrgggd}
For $X \in \Sm_\kappa$ we have $\R^i \varepsilon_* \Omega_X^j=\R^i \varepsilon_* B_X^j =
\R^i \varepsilon_* Z_X^j=0$ in $\Sh(X_\Nis,\F_p)$ for $j \ge 0$ and $i>0$.
\end{lemma}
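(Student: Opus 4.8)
The plan is to reduce everything to Proposition \ref{grtretht} by exploiting the fact that, although the de Rham differential $d$ is not $\caO_X$-linear, in characteristic $p$ it becomes $\caO_X$-linear once one twists the modules $\Omega_X^i$ along the absolute Frobenius. Concretely, since $p=0$ in $\caO_X$ one has $d(f^p\omega)=pf^{p-1}\,df\wedge\omega+f^p\,d\omega=f^p\,d\omega$ for local sections $f$ of $\caO_X$ and $\omega$ of $\Omega_X^\bullet$. Writing $F\colon X\to X$ for the absolute Frobenius (an affine morphism: on an affine open $\Spec A$ it is $\Spec$ of $a\mapsto a^p$, and $F^{-1}(\Spec A)=\Spec A$, so $F_*$ preserves quasi-coherence), this identity says precisely that $d$ is an $\caO_X$-linear map $F_*\Omega_X^i\to F_*\Omega_X^{i+1}$, so that $(F_*\Omega_X^\bullet,d)$ is a complex of quasi-coherent $\caO_X$-modules with $\caO_X$-linear differential.

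First I would record that $\Omega_X^j=\bigwedge^j\Omega_X^1$ is quasi-coherent, hence $\R^i\varepsilon_*\Omega_X^j=0$ for $i>0$ by Proposition \ref{grtretht}. Then I would form the kernels, images and cohomology objects of $(F_*\Omega_X^\bullet,d)$ in the category of $\caO_X$-modules: these are again quasi-coherent $\caO_X$-modules, and their underlying abelian sheaves on $X_\et$ are exactly $Z_X^j$, $B_X^j$ and $Z_X^j/B_X^j$. Applying Proposition \ref{grtretht} to these yields $\R^i\varepsilon_*Z_X^j=\R^i\varepsilon_*B_X^j=0$ for $i>0$ (and, incidentally, the unneeded vanishing for $Z_X^j/B_X^j$). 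It is worth noting that smoothness of $X$ over $\kappa$ is not used anywhere in this argument; it works for any $\F_p$-scheme.

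The only delicate point is bookkeeping rather than mathematics: one must check that $B_X^j$ — defined in the statement as the \emph{\'etale} image sheaf of $d\colon\Omega_X^{j-1}\to\Omega_X^j$ — is the same abelian sheaf as the quasi-coherent (Zariski) image of the corresponding map of $\caO_X$-modules, and likewise that $Z_X^j$ agrees with the kernel computed in $\caO_X$-modules. Both are immediate from the exactness of \'etale sheafification, but should be spelled out. If one prefers to avoid the Frobenius-twisted module structures, an alternative is a dévissage along the short exact sequences $0\to Z_X^j\to\Omega_X^j\xrightarrow{d}B_X^{j+1}\to 0$ and $0\to B_X^j\to Z_X^j\to Z_X^j/B_X^j\to 0$, combined with $\R^i\varepsilon_*\Omega_X^j=0$ for $i>0$ and a descending induction on $j$ starting from $B_X^0=0$ (and Proposition \ref{ge5t4ded} to dispose of the degrees $i\ge 2$); but since this route still requires the Frobenius observation to control $Z_X^j/B_X^j$, the direct argument above is the cleanest, and I would present that one.
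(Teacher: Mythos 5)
Your route is genuinely different from the paper's, and in outline it works. The paper never shows that $Z_X^j$ and $B_X^j$ are quasi-coherent: it applies Proposition \ref{grtretht} only to $\Omega_X^j$ and then transports the vanishing to $Z_X^j$ and $B_X^j$ by an ascending induction on $j$, using the short exact sequences $0 \to Z_X^j \to \Omega_X^j \to B_X^{j+1} \to 0$ together with the inverse Cartier isomorphism $\Omega_X^j \cong Z_X^j/B_X^j$; this is where smoothness of $X$ over $\kappa$ enters. Your argument instead makes $Z_X^j$ and $B_X^j$ quasi-coherent directly, via the Frobenius-twisted $\caO_X$-module structure on $(F_*\Omega_X^\bullet,d)$, so it needs neither the Cartier isomorphism nor the induction, and (as you note) no smoothness — a real gain in economy and generality, provided the comparison step is done correctly.

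That comparison step, however, is exactly where the content lies, and "exactness of \'etale sheafification" is not the right justification. To apply Proposition \ref{grtretht} you must know that the \emph{\'etale} sheaves $Z_X^j$ and $B_X^j$ of the statement are quasi-coherent \'etale sheaves, i.e.\ that for every \'etale $f \colon U \to X$ their sections over $U$ are obtained from those over (affine pieces of) $X$ by the Frobenius-twisted flat base change; equivalently, that the natural map $\caO_U \otimes_{\caO_X} F_*\Omega_X^\bullet \to F_*\Omega_U^\bullet$ is an isomorphism of complexes of $\caO_U$-modules. This is true, but it uses two inputs: $\Omega_U^j \cong f^*\Omega_X^j$ for \'etale $f$, and the fact that the absolute Frobenius square of an \'etale morphism is cartesian (the relative Frobenius of an \'etale morphism is an isomorphism); flatness of $f$ then lets kernels and images commute with this base change, so the \'etale-defined $Z_X^j$, $B_X^j$ are indeed the \'etale extensions of the Zariski quasi-coherent kernel and image. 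Exactness of sheafification only identifies the \'etale image sheaf with the sheafified presheaf image; it does not compare it \'etale-locally with the Zariski-defined quasi-coherent sheaf, so as written your "delicate point" is unproved (and the same issue affects your claim that the underlying \'etale abelian sheaves of the quasi-coherent kernel/image "are exactly" $Z_X^j$ and $B_X^j$). With that paragraph supplied, your proof is complete; otherwise the safe fallback is the paper's d\'evissage, which avoids the issue entirely at the price of smoothness.
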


\begin{proof}
We have $\R^i \varepsilon_* \Omega_X^j=0$ for $j \ge 0$ and $i>0$ by Proposition \ref{grtretht}.
We have isomorphisms
$$\Omega_X^j \cong Z_X^j/B_X^j$$
given by the inverse Cartier operator, see \cite[top of p. 112]{bloch-kato.p-adic}, thus the claim
follows for $j=0$.
Suppose by induction the claim for $j$.
The exact sequence
$$0 \to Z_X^j \to \Omega_X^j \to B_X^{j+1} \to 0$$
shows the claim for $B_X^{j+1}$, the above isomorphism for $j+1$ shows
the claim for $Z_X^{j+1}$. This finishes the proof.
\end{proof}

Let $L_\eta(r):=L_{\eta,1}(r)$.
For any $X \in \Sm_U$ we have the following isomorphisms

$$i_X^*(\R j_* \tau_{\le r} \R \varepsilon_* L_\eta(r))|_{X_\Nis} \cong
i_X^*(\tau_{\le r} \R \varepsilon_* \R j_* L_\eta(r)|_{X_\Nis})$$
$$\cong \tau_{\le r} i_X^*(\R \varepsilon_* \R j_* L_\eta(r))|_{X_\Nis})
\cong \tau_{\le r} \R \varepsilon_* i_X^* \tau_{\le r} (\R j_* L_\eta(r)|_{X_\et}).$$

The first isomorphism uses (\ref{gfrthrr}), the second the exactness
of $i_X^*$ and the third also Proposition \ref{g34t65te} (strictly speaking
we do not need the third isomorphism, but we give it
for motivation).

Set $K_{X,0}:=i_X^* \tau_{\le r} (\R j_* L_\eta(r)|_{X_\et})$.
We will define filtrations on the $\caH^k(K_{X,0})$, $0 \le k \le r$ (compare
with \cite{bloch-kato.p-adic}). We start with $k=r$.

For $m \ge 1$ let $U^m \caH^r(K_{X,0})$ be the subsheaf of $\caH^r(K_{X,0})$ generated
\'etale locally by sections of the form $\{x_1,\ldots,x_r\}$,
$x_i \in i_X^*j_{X,*} \caO_{X_\eta}^*$, such that  $x_1 -1 \in \pi^m i_X^* \caO_X$,
see \cite[p. 111]{bloch-kato.p-adic}. We define $U^0 \caH^r(K_{X,0}):=\caH^r(K_{X,0})$.

Let $e$ be the absolute ramification index of $\Lambda$ and
$e':=\frac{ep}{p-1}$.

The $0$-th graded piece of the filtration $U^\bullet$ on 
$\caH^r(K_{X,0})$
is $\nu_1^r \oplus \nu_1^{r-1}$,
the $m$-th graded piece for $m \ge e'$ is $0$,
and for $1 \le m < e'$ and $m$ prime to $p$
is $\Omega_{X_\kappa}^{r-1}$
and for $p \mid m$ is
$B_{X_\kappa}^r \oplus B_{X_\kappa}^{r-1}$,
see \cite[Cor. (1.4.1)]{bloch-kato.p-adic}.
We denote these graded pieces by $Q_X^m$.
These sheaves $Q_X^m$ glue to sheaves $Q^m$ on $\Sm_{\kappa,\et}$.

To define the filtrations for $k<r$ we have to adjoin a $p$-th root
of unity and descend a filtration upstairs.

Let $\widetilde{\Lambda}:=\Lambda[\zeta_p]$ be the integral closure of $\Lambda$ in $\widetilde{K}:=K(\zeta_p)$,
where $K$ is the quotient field of $\Lambda$ and $\zeta_p$ is a primitive
$p$-th root of unity. Let $d$ be the degree of
$\widetilde{K}$ over $K$ and $G:=\mathrm{Gal}(\widetilde{K}/K)$. We have $d \mid p-1$.
The group $G$ is canonically identified with a subgroup of $\F_p^* \cong \mu_{p-1} \subset K^*$.

There exists a uniformizer $\tilde{\pi}$ of $\widetilde{\Lambda}$ such that $\tilde{\pi}^d \in \Lambda$,
since this is true in the case $\Lambda=\integers_p$ (take $\tilde{\pi}=\sqrt[p-1]{-p}$) (use Kummer
theory).

Let $\widetilde{U}:=\Spec \widetilde{\Lambda}$, $\tilde{\eta}$ the generic point of $\widetilde{U}$,
$\tilde{\kappa}$ the residue field of $\widetilde{\Lambda}$.

For $X \in \Sm_U$ denote by $\widetilde{X}$ the base change to $\widetilde{U}$.

The notations $\widetilde{X}_{\tilde{\eta}}$, $\widetilde{X}_{\tilde{\kappa}}$,
$j_{\widetilde{X}}$ and $i_{\widetilde{X}}$ explain themselves.

We fix now $X \in \Sm_U$.

We have the commutative diagram

$$\xymatrix{\widetilde{X}_{\tilde{\kappa}} \ar@{^(->}[r]^{i_{\widetilde{X}}} \ar[d]^{f''} &
\widetilde{X} \ar[d]^f & \widetilde{X}_{\tilde{\eta}} \ar@{_(->}[l]_{j_{\widetilde{X}}}
\ar[d]^{f'} \\
X_\kappa \ar@{^(->}[r]^{i_X} & X & X_\eta. \ar@{_(->}[l]_{j_X}}$$

We denote by $\widetilde{L}(k)$ the sheaf $\mu_p^{\otimes k}$ viewed as object of
$\D(\Sh(\widetilde{X}_{\tilde{\eta},\et},\F_p))$ and make the same definition without tildas.
We have a canonical isomorphism $L(r) \cong (\R f'_* \widetilde{L}(r))^G$, where
$(-)^G$ denotes homotopy fixed points. Since the order of $G$ is prime to $p$
the cohomology sheaves of the homotopy fixed points are the fixed points of
the cohomology sheaves. Also homotopy fixed points commutes with
$\R j_{X,*}$ and $i_X^*$, thus we have
$$i_X^* \R j_{X,*} L(r) \cong (i_X^* \R j_{X,*} \R f'_* \widetilde{L}(r))^G
\cong (i_X^* \R f_* \R j_{\widetilde{X},*} \widetilde{L}(r))^G
\cong (\R f''_* i_{\widetilde{X}}^* \R j_{\widetilde{X},*} \widetilde{L}(r))^G.$$

For the last isomorphism we have used the proper base change formula.

So we have $$\caH^k(i_X^* \R j_{X,*} L(r)) \cong
(f''_* \caH^k(i_{\widetilde{X}}^* \R j_{\widetilde{X},*} \widetilde{L}(r)))^G.$$

For an object $X$ with $\F_p$-coefficients and $G$-action we denote by $X\{k\}$
the same object with the $G$-action twisted by the $k$-fold tensor power
of the canonical action of $G$ on $\F_p$.

We have an isomorphism $\widetilde{L}(0) \cong \widetilde{L}(1)$ sending
$1$ to $\zeta_p$. This gives us an isomorphism $\widetilde{L}(0) \cong \widetilde{L}(k)$
for any $k$. We get the isomorphism $$\widetilde{L}(r) \cong \widetilde{L}(k) \otimes
\widetilde{L}(r-k) \cong \widetilde{L}(k) \otimes \widetilde{L}(0) \cong
\widetilde{L}(k).$$

We thus have $$f''_* \caH^k(i_{\widetilde{X}}^* \R j_{\widetilde{X},*} \widetilde{L}(r))
\cong (f''_* \caH^k(i_{\widetilde{X}}^* \R j_{\widetilde{X},*} \widetilde{L}(k)))\{r-k\}$$
as $G$-objects.

We have a filtration on $\caH^k(i_{\widetilde{X}}^* \R j_{\widetilde{X},*} \widetilde{L}(k))$
by the $U^m \caH^k(i_{\widetilde{X}}^* \R j_{\widetilde{X},*} \widetilde{L}(k))$,
where the latter subsheaf is generated as above by sections of the form
$\{x_1,\ldots,x_r\}$ such that $x_1 -1 \in \tilde{\pi}^m i_{\widetilde{X}}^* \caO_{\widetilde{X}}$ ($m \ge 1$,
for $m=0$ we again take the whole sheaf).
Clearly the $f''_* U^m \caH^k(i_{\widetilde{X}}^* \R j_{\widetilde{X},*} \widetilde{L}(k))$
are invariant under the $G$-action, thus the
$$U^{k,m}:=((f''_* U^m \caH^k(i_{\widetilde{X}}^* \R j_{\widetilde{X},*} \widetilde{L}(k)))\{r-k\})^G$$
filter $\caH^k(i_X^* \R j_{X,*} L(r)) \cong \caH^k(K_{X,0})$.

Let $\tilde{e}$ be the absolute ramification index of $\widetilde{\Lambda}$ and
$\tilde{e}':=\frac{\tilde{e}p}{p-1}$.

The $0$-th graded piece of the filtration $U^\bullet$ on 
$\caH^k(i_{\widetilde{X}}^* \R j_{\widetilde{X},*} \widetilde{L}(k))$
is $\nu_1^k \oplus \nu_1^{k-1}$,
the $m$-th graded piece for $m \ge \tilde{e}'$ is $0$,
and for $1 \le m < \tilde{e}'$ and $m$ prime to $p$
is $\Omega_{\widetilde{X}_{\tilde{\kappa}}}^{k-1}$
and for $p \mid m$ is
$B_{\widetilde{X}_{\tilde{\kappa}}}^k \oplus B_{\widetilde{X}_{\tilde{\kappa}}}^{k-1}$,
see \cite[Cor. (1.4.1)]{bloch-kato.p-adic}.

Let us denote these graded pieces by $\widetilde{Q}_X^{k,m}$.
Let us equip the $P_X^{k,m}:=f''_*  \widetilde{Q}_X^{k,m}$ with $G$-actions coming from $\widetilde{L}(k)$.
On the $f''_* (\nu_1^k \oplus \nu_1^{k-1})$, $f''_* \Omega_{\widetilde{X}_{\tilde{\kappa}}}^{k-1}$
and $f''_* (B_{\widetilde{X}_{\tilde{\kappa}}}^k \oplus B_{\widetilde{X}_{\tilde{\kappa}}}^{k-1})$
there is a canonical $\mathrm{Gal}(\tilde{\kappa}/\kappa)$-action, thus a canonical $G$-action.
These also induce $G$-actions on the $f''_* \widetilde{Q}_X^{k,m}$.
We denote these $G$-objects by $R_X^{k,m}$.
The formulas in \cite[(4.3)]{bloch-kato.p-adic} and the definition
of the map to the $0$-graded part show that there are isomorphisms
$$P_X^{k,m} \cong R_X^{k,m}\{m\}$$
as $G$-objects.

We let $Q_X^{k,m}:= \mathrm{gr}^m(U_X^{k,\bullet}) \cong (P_X^{k,m}\{r-k\})^G$.

The considerations made show the following

\begin{proposition}
The sheaves $Q_X^{k,m}$ on $X_{\kappa,\et}$ only depend on
$X_\kappa$ and glue to a sheaf $Q^{k,m}$ on $\Sm_{\kappa,\et}$.
\end{proposition}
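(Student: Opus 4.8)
The plan is to trace through the construction and observe that \emph{every} datum entering the definition of $Q_X^{k,m}$ is obtained by base change from structures living over $\kappa$ (or over $\tilde\kappa$) which do not refer to the chosen lift $X$ of $X_\kappa$ and which are functorial in the special fibre; the Proposition then follows by unwinding, the only genuine point being the $G$-equivariance of the relevant identifications.

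First I would record that $\widetilde X_{\tilde\kappa}$ and the morphism $f''\colon \widetilde X_{\tilde\kappa}\to X_\kappa$ depend only on $X_\kappa$. Since the closed point of $\widetilde U=\Spec\widetilde\Lambda$ lies over the closed point of $U$, one has $\widetilde X_{\tilde\kappa}=X\times_U\Spec\tilde\kappa=X_\kappa\times_{\Spec\kappa}\Spec\tilde\kappa$, and $f''$ is the base change, along the structure map $X_\kappa\to\Spec\kappa$, of the finite morphism $\Spec\tilde\kappa\to\Spec\kappa$; both are evidently functorial in $X_\kappa\in\Sm_\kappa$. Next, by the Bloch--Kato computation \cite[Cor. (1.4.1)]{bloch-kato.p-adic} recalled above, each graded piece $\widetilde Q_X^{k,m}$ is one of $0$, $\nu_1^k\oplus\nu_1^{k-1}$, $\Omega^{k-1}_{\widetilde X_{\tilde\kappa}}$, or $B^k_{\widetilde X_{\tilde\kappa}}\oplus B^{k-1}_{\widetilde X_{\tilde\kappa}}$. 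Now $\nu_1^\bullet$ is a sheaf on the category of all $\F_p$-schemes, and $\Omega^\bullet$, $B^\bullet$, $Z^\bullet$ are functorial for arbitrary morphisms; hence all of these are restrictions to the small \'etale sites of sheaves on $\Sm_{\tilde\kappa,\et}$, and $\widetilde Q_X^{k,m}$ is the restriction to $\widetilde X_{\tilde\kappa,\et}$ of a single sheaf $\widetilde Q^{k,m}$ on $\Sm_{\tilde\kappa,\et}$, the same for every lift $X$ and functorial in $X_\kappa$.

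Then I would push forward along the base-change functor $\Sm_\kappa\to\Sm_{\tilde\kappa}$, $Y\mapsto Y\times_\kappa\tilde\kappa$: this operation carries sheaves to sheaves and sends the small site of $X_\kappa$ to that of $\widetilde X_{\tilde\kappa}$ via $f''$, so it produces a sheaf on $\Sm_{\kappa,\et}$ restricting to $P_X^{k,m}=f''_*\widetilde Q_X^{k,m}$ on each $X_{\kappa,\et}$. The $\mathrm{Gal}(\tilde\kappa/\kappa)$-action on the sheaves over $\tilde\kappa$ (hence on $\widetilde Q^{k,m}$) together with the twists $\{m\}$ and $\{r-k\}$ by powers of the fixed character $G\hookrightarrow\F_p^*$ endow this pushforward with a $G$-action that does not involve $X$, via $P_X^{k,m}\cong R_X^{k,m}\{m\}$. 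Finally $(-)^G$ is a finite limit, hence again a sheaf and compatible with restriction to small sites (indeed exact, since $|G|$ is prime to $p$). Setting $Q^{k,m}$ to be the $G$-fixed points of the $\{r-k\}$-twist of this pushforward, one obtains a sheaf on $\Sm_{\kappa,\et}$ whose restriction to each $X_{\kappa,\et}$ is canonically $(P_X^{k,m}\{r-k\})^G\cong Q_X^{k,m}$, which gives both assertions.

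The one place where there is genuinely something to verify — and which I expect to be the main obstacle — is the naturality and $G$-equivariance of the chain of identifications used above: that the descent isomorphism $\caH^k(K_{X,0})\cong(f''_*\,\caH^k(i_{\widetilde X}^*\R j_{\widetilde X,*}\widetilde L(r)))^G$, the identification $P_X^{k,m}\cong R_X^{k,m}\{m\}$ coming from the formulas of \cite[(4.3)]{bloch-kato.p-adic} and the map to the zeroth graded piece, and the compatibility of the filtrations $U^{k,\bullet}$ with base change in $X_\kappa$, are all natural in $X_\kappa$ and respect the $G$-action. This is a finite, essentially routine diagram chase once the graded pieces are recognized as pulled back from $\Sm_{\tilde\kappa,\et}$; everything else is formal, since pushforward and finite limits of sheaves are again sheaves.
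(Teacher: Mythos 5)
Your argument is correct and is essentially the paper's own: the Proposition is stated as a summary of the preceding construction, whose whole point is that $\widetilde X_{\tilde\kappa}=X_\kappa\times_\kappa\tilde\kappa$, that the graded pieces $\widetilde Q_X^{k,m}$ are the sheaves $\nu_1^\bullet$, $\Omega^\bullet$, $B^\bullet$ restricted from $\Sm_{\tilde\kappa,\et}$, and that $Q_X^{k,m}\cong(P_X^{k,m}\{r-k\})^G$ with $P_X^{k,m}\cong R_X^{k,m}\{m\}$ (the Bloch--Kato formulas (4.3)) exhibiting everything as $G$-fixed points of twists of $f''_*$ of data depending functorially on $X_\kappa$ alone. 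The one step you flag as remaining — naturality and $G$-equivariance of these identifications — is exactly what the paper also takes from loc.\ cit.\ without further elaboration, so your write-up matches the paper's proof in both route and level of detail.
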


Define inductively objects $K_{X,m} \in \D(\Sh(X_{\kappa,\et},\F_p))$ in the
following way. For $m=0$ we have already defined the object.
We define $K_{X,1}$ to be the homotopy fiber of
the composition $$K_{X,0} \to \caH^r(K_{X,0})[-r] \to Q_X^0[-r].$$
There is a canoncial map $K_{X,1} \to Q_X^1[-r]$.
Suppose $K_{X,m}$ together with a map $$K_{X,m} \to Q_X^m[-r]$$ is already defined
for $m < e'$. Define then $K_{X,m+1}$ to be the homotopy fiber of this last map.
If $m+1<e'$ there is a map $K_{X,m+1} \to Q_X^{m+1}[-r]$.
If $m+1 \ge e'$ we have $K_{X,m+1} \cong \tau_{\le (r-1)} K_{X,0}$ and
there is a map $$K_{X,m+1} \to Q_X^{r-1,0}[-r+1].$$
Keep going this way splitting off successively 
the $$Q_X^{r-1,0}[-r+1], Q_X^{r-1,1}[-r+1],\ldots, Q_X^{r-k,m}[-k],\ldots,Q_X^{0,0}$$
(where for this $m$ we require $0 \le m < \tilde{e}'$) obtaining the $K_{X,m+2}, \ldots,K_{X,N}=0$.

By construction we have triangles
$$K_{X,m+1} \to K_{X,m} \to Q_X^{k(m),m'(m)}[-k(m)] \to K_{X,m+1}[1],$$
where $k(m)$ and $m'(m)$ depend in a way on $m$ which
we do not make explicit.

Set $H_0 := \tau_{\le r} \R j_* L_\eta(r)$.
The maps $H_0|_{X_\et} \to i_{X,*} Q_X^0[-r]$ glue to a map of
sheaves $H_0 \to Q^0[-r]$. We let $H_1$ be the homotopy fiber
of this last map. We have a map $i_X^* H_1|_{X_\et} \to K_{X,0}$ which
factors uniquely through $K_{X,1}$, thus we get a map
$i_X^* H_1|_{X_\et} \to Q_X^1$ with adjoint $H_1|_{X_\et} \to i_{X,*} Q_X^1$.
These maps glue to a map $H_1 \to i_* Q^1$ whose homotopy fiber
we denote by $H_2$. Inductively one constructs objects $H_m \in \D(\Sh(\Sm_{U,\et},\F_p))$,
$0 \le m \le N$,
with maps $$i_X^* H_m|_{X_\et} \to K_{X,m} \to Q_X^{k(m),m'(m)}[-k(m)]$$
(here we suppose $m > e'$, the other case is similar)
whose adjoints glue to a map
$$H_m \to \caH^{k(m)}(H_m)[-k(m)] \to i_* Q^{k(m),m'(m)}[-k(m)].$$
$H_{m+1}$ is then defined to be the homotopy fiber of this map.
By construction we have triangles
$$H_{m+1} \to H_m \to i_*Q^{k(m),m'(m)}[-k(m)] \to H_{m+1}[1].$$
Moreover for $X \in \Sm_U$ we have $$i_X^* (H_m|_{X_\et}) \cong K_{X,m}.$$

Note that we have $\R^j \varepsilon_* i_* Q^m \cong
i_* \R^j \varepsilon_* Q^m =0$ for $j >0$ and $m \ge 1$
by Lemma \ref{gbdrrgggd}, similarly we have $\R^j \varepsilon_* i_* Q^{k,m} =0$
for $j>1$ by Proposition \ref{ge5t4ded}.
Thus the canonical maps $$\tau_{\le r} \R \varepsilon_* H_m \to \R \varepsilon_* H_m$$
are isomorphisms for $m \ge 1$.
Since $\caH^{r+1}(\R \varepsilon_* H_1)=0$ it also follows that
the map $$\caH^r(\R \varepsilon_* H_0) \to \varepsilon_* i_*Q^0 \cong i_* \nu_1^r \oplus i_* \nu_1^{r-1}$$
is an epimorphism. It follows that we have an exact triangle
\begin{equation}
\label{gfdrrhe}
\R \varepsilon_* H_1 \to \tau_{\le r} \R \varepsilon_* H_0 \to \varepsilon_* i_* Q^0 \to
\R \varepsilon_* H_1[1].
\end{equation}

\begin{lemma}
\label{gfwr5rt}
For any $F \in \D(\Sh(\Sm_{\kappa,\et},\F_p))$ we have
$\R \varepsilon_* F \in \D(\Sh(\Sm_{\kappa,\Nis},\F_p))$
is $\bA^1$-weakly contractible.
\end{lemma}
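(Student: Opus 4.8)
The plan is to reduce to the constant sheaf $\F_p$ and then, by means of the Artin--Schreier sequence, to the observation that the $p$-power endomorphism of $\mathbb{G}_a$ becomes null-homotopic after $\bA^1$-localization.

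First I would reduce to $F=\F_p$. By Propositions~\ref{ge5t4ded} and~\ref{grtretht} the functor $\R\varepsilon_*$ has cohomological amplitude at most $1$ on complexes of sheaves of $\F_p$-modules, hence commutes with homotopy colimits; moreover the $\bA^1$-weakly contractible objects form a localizing triangulated subcategory of $\D(\Sh(\Sm_{\kappa,\Nis},\F_p))$. Since $\D(\Sh(\Sm_{\kappa,\et},\F_p))$ is generated under homotopy colimits and shifts by the representables $\F_p[X]_\et=\varepsilon^*(\F_p[X]_\Nis)$, $X\in\Sm_\kappa$, and since the projection formula gives $\R\varepsilon_*(\F_p[X]_\et)\cong \F_p[X]_\Nis\otimes_{\F_p}\R\varepsilon_*\F_p$ while the functor $\F_p[X]_\Nis\otimes_{\F_p}(-)$ --- modelled by $\pi_\#\pi^*$ for the smooth structure map $\pi\colon X\to\Spec\kappa$ --- preserves $\bA^1$-weak contractibility, it suffices to treat $F=\F_p$. (Alternatively one can run the Artin--Schreier argument below directly with $F$-coefficients, bypassing this step.)

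For $F=\F_p$ I would use the exact sequence $0\to\F_p\to\mathbb{G}_a\xrightarrow{F-\id}\mathbb{G}_a\to 0$ of sheaves of $\F_p$-modules on $\Sm_{\kappa,\et}$, where $F$ is the $p$-power map (additive in characteristic $p$). As $\mathbb{G}_a$ is quasi coherent, Proposition~\ref{grtretht} gives $\R^i\varepsilon_*\mathbb{G}_a=0$ for $i>0$, so $\R\varepsilon_*\mathbb{G}_a=\mathbb{G}_a$; applying $\R\varepsilon_*$ identifies $\R\varepsilon_*\F_p$ with $\fib(\mathbb{G}_a\xrightarrow{F-\id}\mathbb{G}_a)$ in $\D(\Sh(\Sm_{\kappa,\Nis},\F_p))$. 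The key point is then that $F\colon\mathbb{G}_a\to\mathbb{G}_a$ is $\bA^1$-homotopic to $0$: the rule $(x,t)\mapsto(1-t)x^p$ defines a morphism $\mathbb{G}_a\to\underline{\Hom}(\F_p[\bA^1],\mathbb{G}_a)$ which, being for each fixed value of the coordinate $t$ the additive map $x\mapsto(1-t)x^p$, is a genuine homotopy in sheaves of $\F_p$-modules from $F$ (at $t=0$) to $0$ (at $t=1$). Hence $L_{\bA^1}F=0$, so $L_{\bA^1}(F-\id)=-\id$ is an isomorphism of $L_{\bA^1}\mathbb{G}_a$, and therefore $L_{\bA^1}\R\varepsilon_*\F_p\cong\fib(-\id)=0$, which is the assertion.

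The point to watch --- and the reason $\bA^1$-localization is genuinely needed --- is that $\R\varepsilon_*\F_p$ is \emph{not} zero before localization: it has $\H^0=\F_p$ and $\H^1$ the Nisnevich sheafification of $\mathbb{G}_a/(F-\id)\mathbb{G}_a$. So the whole argument hinges on the scaling contraction of the Frobenius, which is short once spotted; the real work is in the reduction step, namely verifying that $\R\varepsilon_*$ commutes with the relevant homotopy colimits and that the projection formula applies to the representable sheaves.
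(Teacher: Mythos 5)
Your second step (the case $F=\F_p$) is fine: transplanting the Artin--Schreier triangle to the Nisnevich side via $\R\varepsilon_*\mathbb{G}_a\cong\mathbb{G}_a$ and killing Frobenius by the additive homotopy $x\mapsto(1-t)x^p$ is correct, and is essentially a variant of what the paper does. The problem is your reduction to $F=\F_p$. The projection formula you invoke, $\R\varepsilon_*(\F_p[X]_\et)\cong\F_p[X]_\Nis\otimes_{\F_p}\R\varepsilon_*\F_p$ (equivalently, that $\R\varepsilon_*$ commutes with $\pi_\#\pi^*$ for $\pi\colon X\to\Spec\kappa$ smooth), is false. Take $X=\Spec\kappa'$ for a nontrivial finite separable extension $\kappa'/\kappa$: the stalk of the right-hand side at the point $\Spec\kappa$ vanishes because $X(\kappa)=\emptyset$, whereas $\varepsilon_*\F_p[X]_\et$ has sections $\bigl(\F_p[\Hom_\kappa(\Spec\kappa^{\mathrm{sep}},X)]\bigr)^{G_\kappa}\cong\F_p$ over $\Spec\kappa$ (Shapiro); the \'etale pushforward remembers points of $X$ over covers, which the Nisnevich representable does not. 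So the class of $F$ for which you verify the statement does not, by your argument, contain the generators, and the reduction collapses. ($\pi_\#$ does not commute with $\R\varepsilon_*$; this is exactly the kind of base-change failure between topologies that the lemma itself is exploiting.)

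Your parenthetical remark --- run the Artin--Schreier argument directly with $F$-coefficients --- is in fact the paper's proof: tensor $F$ with the triangle $\F_p\to\mathbb{G}_a\to\mathbb{G}_a$ and use the scaling contraction of $\mathbb{G}_a$, which gives an elementary $\bA^1$-homotopy between $\mathrm{id}$ and $0$ on $F\otimes\mathbb{G}_a$. But note that this route needs one more observation that your write-up omits and that your $F=\F_p$ trick sidesteps: for general $F$ you cannot identify $\R\varepsilon_*(F\otimes\mathbb{G}_a)$ with anything concrete on the Nisnevich side, so you must push the homotopy through $\R\varepsilon_*$, using that $\R\varepsilon_*\underline{\R\Hom}(\F_p[\bA^1],-)\cong\underline{\R\Hom}(\F_p[\bA^1],\R\varepsilon_*(-))$ (internal Hom out of a representable commutes with the pushforward along $\varepsilon$). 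With that compatibility stated, the $F$-coefficient argument closes the gap; as written, the proposal does not prove the lemma for general $F$.
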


\begin{proof}
Tensoring $F$ with the Artin-Schreier exact triangle
$$\F_p \to \mathbb{G}_a \to \mathbb{G}_a \to \F_p[1]$$
shows that it is sufficient to show that $\R \varepsilon_*(F \otimes \mathbb{G}_a)$
is $\bA^1$-contractible. The standard $\bA^1$-contraction of
$\mathbb{G}_a$ does the job.
\end{proof}

\begin{proposition}
\label{gbfdrfhf}
The homotopy cofiber of the map
$\R \varepsilon_* H_N \to \R \varepsilon_* H_1$
is $\bA^1$-weakly contractible in $\D(\Sh(\Sm_{U,\Nis},\F_p))$.
\end{proposition}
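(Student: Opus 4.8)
The plan is to read off the homotopy cofiber from the triangles defining the $H_m$ and then invoke Lemma \ref{gfwr5rt}. First I would apply the triangulated functor $\R \varepsilon_*$ to the distinguished triangles
$$H_{m+1} \to H_m \to i_* Q^{k(m),m'(m)}[-k(m)] \to H_{m+1}[1], \qquad 1 \le m \le N-1,$$
obtaining distinguished triangles in $\D(\Sh(\Sm_{U,\Nis},\F_p))$ with third terms $\R \varepsilon_*(i_* Q^{k(m),m'(m)})[-k(m)]$. Since the map $\R \varepsilon_* H_N \to \R \varepsilon_* H_1$ is the composition of the maps $\R \varepsilon_* H_{m+1} \to \R \varepsilon_* H_m$, repeated application of the octahedral axiom shows that its homotopy cofiber admits a finite filtration whose associated graded pieces are exactly these $\R \varepsilon_*(i_* Q^{k(m),m'(m)})[-k(m)]$ for $1 \le m \le N-1$.

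Next I would identify each graded piece as an $\bA^1$-weakly contractible object. Because $i$ is a closed immersion, $\R \varepsilon_* i_* \cong i_* \R \varepsilon_*$ (the compatibility invoked immediately before the statement, using Lemma \ref{gbdrrgggd} and Proposition \ref{ge5t4ded}), so $\R \varepsilon_*(i_* Q^{k(m),m'(m)}) \cong i_*(\R \varepsilon_* Q^{k(m),m'(m)})$, where now $\varepsilon$ denotes the map of sites $\Sm_{\kappa,\et} \to \Sm_{\kappa,\Nis}$. Each $Q^{k(m),m'(m)}$ is a sheaf on $\Sm_{\kappa,\et}$, so by Lemma \ref{gfwr5rt} the object $\R \varepsilon_* Q^{k(m),m'(m)}$ is $\bA^1$-weakly contractible in $\D(\Sh(\Sm_{\kappa,\Nis},\F_p))$; and since $i$ is finite, $i_*$ preserves $\bA^1$-weak equivalences (as noted in the proof of Proposition \ref{hg788ou}), so $i_*(\R \varepsilon_* Q^{k(m),m'(m)})$, and hence also its shift, is $\bA^1$-weakly contractible in $\D(\Sh(\Sm_{U,\Nis},\F_p))$.

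Finally, the $\bA^1$-weakly contractible objects form a thick subcategory of $\D(\Sh(\Sm_{U,\Nis},\F_p))$ (it is the kernel of the $\bA^1$-localization functor), in particular one closed under extensions; an object carrying a finite filtration with $\bA^1$-weakly contractible graded pieces therefore lies in it. Applying this to the homotopy cofiber of $\R \varepsilon_* H_N \to \R \varepsilon_* H_1$ concludes the argument. I do not anticipate a genuine obstacle: all the real content — the Bloch--Kato style filtration, the identification of the graded sheaves $Q^{k,m}$ on $\Sm_{\kappa,\et}$, and Lemma \ref{gfwr5rt} that $\R \varepsilon_*$ is $\bA^1$-contractible in characteristic $p$ with $\F_p$-coefficients — is already in place, so the only care needed is the bookkeeping that every third term occurring in the relevant range of triangles is of the form $\R\varepsilon_*(i_* Q^{\bullet,\bullet})$ (automatic, since all the $Q$'s are pushforwards of sheaves on $\Sm_{\kappa,\et}$) and that the octahedral iteration produces a genuine finite filtration.
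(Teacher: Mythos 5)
Your proof is correct and is essentially the paper's own argument: the paper likewise filters the homotopy cofiber with graded pieces $\R \varepsilon_* i_* Q^m \cong i_* \R \varepsilon_* Q^m$ and $\R \varepsilon_* i_* Q^{k,m} \cong i_* \R \varepsilon_* Q^{k,m}$, and concludes by Lemma \ref{gfwr5rt} together with the fact that $i_*$ preserves $\bA^1$-weak equivalences since $i$ is finite. Your extra details (the octahedral iteration producing the filtration and the closure of $\bA^1$-weakly contractible objects under extensions) only make explicit what the paper leaves implicit.
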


\begin{proof}
This homotopy cofiber is filtered with graded pieces
the $\R \varepsilon_* i_* Q^m \cong i_* \R \varepsilon_* Q^m$, $m \ge 1$,
and the $\R \varepsilon_* i_* Q^{k,m} \cong i_* \R \varepsilon_* Q^{k,m}$,
so it is sufficient to show that these are $\bA^1$-weakly contractible.
But $i_*$ preserves $\bA^1$-weak equivalences since it is finite,
so the claim follows from Lemma \ref{gfwr5rt}.
\end{proof}

With Proposition \ref{g34t65te} for any $X \in \Sm_U$ we have
$i_X^* (\R \varepsilon_* H_N)|_{X_\Nis} \cong \R \varepsilon_* i_X^* (H_N|_{X_\et})
\cong \R \varepsilon_* K_{X,N} \cong 0$, thus with Proposition \ref{gfrtg5434} we get
$\bL i^* \R \varepsilon_* H_N \cong 0$.
With Proposition \ref{gbfdrfhf} we get that $\bL i^* \R \varepsilon_* H_1$
is $\bA^1$-weakly contractible.
With (\ref{gfdrrhe}) it follows that 
$$\bL i^* \tau_{\le r} \R \varepsilon_* H_0 \to \bL i^* \varepsilon_* i_* Q^0$$
is an $\bA^1$-weak equivalence.

We get

\begin{theorem}
The maps $\alpha_{n,r}$ defined before Lemma \ref{dgvjerhh} are $\bA^1$-weak
equivalences.
\end{theorem}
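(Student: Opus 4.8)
The case $n=1$ is essentially what the discussion preceding the statement establishes, and I would finish it as follows. Keeping the notation $L_\eta(r)=L_{\eta,1}(r)$, the isomorphisms $(\ref{gfrthrr})$ together with the left $t$-exactness of $\R\varepsilon_*$ identify $\R j_*\tau_{\le r}\R\varepsilon_* L_\eta(r)$ with $\tau_{\le r}\R\varepsilon_* H_0$, where $H_0=\tau_{\le r}\R j_* L_\eta(r)$ (the discrepancy $\R\varepsilon_*(\tau_{>r}\R j_* L_\eta(r))$ lives in degrees $\ge r+1$, hence dies under $\tau_{\le r}$); thus the object $\R j_*\tau_{\le r}\R\varepsilon_* L_\eta(r)[r]$ from the construction of $F_{1,r}$ is identified with $(\tau_{\le r}\R\varepsilon_* H_0)[r]$. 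I would then check, by unwinding the definitions, that under this identification the map $(\tau_{\le r}\R\varepsilon_* H_0)[r]\to\varepsilon_*\R^r j_* L_\eta(r)\to i_*\nu_1^r\oplus i_*\nu_1^{r-1}$ is the projection onto the (shifted) zeroth graded piece $\varepsilon_* i_* Q^0\cong i_*\nu_1^r\oplus i_*\nu_1^{r-1}$ of the Bloch--Kato filtration: indeed $\caH^r(K_{X,0})=i_X^*\R^r j_{X,*} L_\eta(r)$ is the sheaf $M_1^r$ of Bloch and Kato, the map $\varphi\oplus\psi$ is by construction the symbol description of the projection $M_1^r\to\nu_1^r\oplus\nu_1^{r-1}$ onto $\mathrm{gr}^0$, and the $\psi$-component has already been matched in Section \ref{htr46u}. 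Hence $F_{1,r}$ is the homotopy fibre of the composite of this projection with the second projection $i_*\nu_1^r\oplus i_*\nu_1^{r-1}\to i_*\nu_1^{r-1}$, while $\varphi\colon F_{1,r}\to i_*\nu_1^r$ is the first projection. Applying $\bL i^*$ and invoking the two facts proved just before the statement --- that $\bL i^*\R\varepsilon_* H_1$ is $\bA^1$-weakly contractible (via Propositions \ref{gfrtg5434}, \ref{gbfdrfhf} and \ref{g34t65te}, Lemma \ref{gfwr5rt} and the triangle $(\ref{gfdrrhe})$), and that $\bL i^*\tau_{\le r}\R\varepsilon_* H_0\to\bL i^*\varepsilon_* i_* Q^0$ is an $\bA^1$-weak equivalence --- together with $\bL i^* i_*\nu_1^s\cong\nu_1^s$ in the $\bA^1$-local category (since $i$ is finite and $i^*i_*\simeq\id$), one sees that after $\bL i^*$ the homotopy fibre defining $F_{1,r}$ splits off the $\nu_1^{r-1}$-summand, so that $\alpha_{1,r}=\bL i^*\varphi\colon\bL i^* F_{1,r}\to\nu_1^r=E_{1,r}$ becomes the identity of $\nu_1^r$ in the $\bA^1$-local category, hence is an $\bA^1$-weak equivalence.

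For general $n$ I would argue by d\'evissage in the coefficients. There are short exact sequences
\begin{gather*}
0\to\mu_p^{\otimes r}\to\mu_{p^n}^{\otimes r}\to\mu_{p^{n-1}}^{\otimes r}\to 0\quad\text{on }\Sm_{\eta,\et},\\
0\to\nu_1^r\to\nu_n^r\to\nu_{n-1}^r\to 0\quad\text{on }\Sm_{\kappa,\et},
\end{gather*}
the second being the kernel of the given epimorphism $\nu_n^r\twoheadrightarrow\nu_{n-1}^r$; both arise by tensoring the triangle $\integers/p\to\integers/p^n\to\integers/p^{n-1}\to\integers/p[1]$ with the relevant objects. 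These sequences are compatible with every functor used to build the naive spectra $F_n$ and $E_n$ --- namely $\R\varepsilon_*$, $\R j_*$, $\bL i^*$, the \'etale cycle class maps, the formation of homotopy fibres and the truncations $\tau_{\le r}$ (the last being harmless because, via Theorems \ref{edfgth76} and \ref{fbmtzzt}, the objects $\tau_{\le r}\R\varepsilon_* L_{\eta,m}(r)$ and $\nu_m^r$ are identified with shifted mod-$p^m$ Bloch complexes, for which these sequences yield genuine exact triangles). They therefore induce exact triangles $\bL i^* F_{1,r}\to\bL i^* F_{n,r}\to\bL i^* F_{n-1,r}\to{}$ and $\nu_1^r\to\nu_n^r\to\nu_{n-1}^r\to{}$ compatible with $\alpha_{1,r},\alpha_{n,r},\alpha_{n-1,r}$, so that an induction on $n$ and the five lemma reduce the claim to the case $n=1$ treated above. (Equivalently, one may phrase this through the identifications $F_n\otimes^{\bL}_{\integers/p^n}\integers/p\cong F_1$ and $E_n\otimes^{\bL}_{\integers/p^n}\integers/p\cong E_1$.)

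The step I expect to be the main obstacle is the bookkeeping in the case $n=1$: making precise the identification of the bonding-type maps of $F_1$, assembled from $\R j_*$, $\R\varepsilon_*$ and the truncations, with the projections onto the graded pieces of the Bloch--Kato filtration, and in particular checking that the summand extracted by $\varphi$ is exactly the $\nu_1^r$-factor of $\mathrm{gr}^0$, compatibly with all the shifts and with the surjectivity assertions that make $(\ref{gfdrrhe})$ a genuine exact triangle. Once this identification is in place, the theorem is a formal consequence of the $\bA^1$-contractibility results assembled in the preceding pages.
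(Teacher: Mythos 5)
Your treatment of the case $n=1$ is essentially the paper's own proof: the identification of $\R j_*\tau_{\le r}\R\varepsilon_* L_{\eta,1}(r)$ with $\tau_{\le r}\R\varepsilon_* H_0$, the recognition that the map $\varphi\oplus\psi$ is by construction the sheafified projection onto $\mathrm{gr}^0$ of the Bloch--Kato filtration (this is exactly how $H_1$ is defined), and the endgame combining the triangle (\ref{gfdrrhe}) with the $\bA^1$-contractibility of $\bL i^*\R\varepsilon_* H_1$ and $i^*\R i_*\simeq\id$ to split off the $\nu_1^{r-1}$-summand and conclude that $\bL i^*\varphi$ is an equivalence onto $\nu_1^r$.

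Where you differ from the text is the passage to general $n$: the paper runs the filtration argument only with $\F_p$-coefficients (the graded pieces quoted from \cite[Cor.\ (1.4.1)]{bloch-kato.p-adic} are the mod-$p$ ones) and then states the theorem for all $n$, leaving the reduction implicit; you supply an explicit coefficient d\'evissage. That is a legitimate way to finish, but the step you dismiss as formal is the only genuinely delicate one. To apply the five lemma you need a map of exact triangles $\bL i^*F_{1,r}\to\bL i^*F_{n,r}\to\bL i^*F_{n-1,r}$ over $\nu_1^r\to\nu_n^r\to\nu_{n-1}^r$ compatible with the $\alpha$'s. Compatibility of $\varphi\oplus\psi$ with the reduction maps $M_n^r\to M_{n-1}^r$, $\nu_n^{r}\to\nu_{n-1}^{r}$ is indeed immediate from the symbol description (the paper checks precisely this for the $\psi$-component in section \ref{jfezztrzz}); but compatibility with the other leg, $\mu_p^{\otimes r}\hookrightarrow\mu_{p^n}^{\otimes r}$ versus $\underline{p}^{\,n-1}\colon\nu_1^r\to\nu_n^r$, is not a symbol-by-symbol statement (the inclusion does not send mod-$p$ symbols to symbols on the nose), so it requires an actual argument; likewise the exactness of $0\to\nu_1^r\to\nu_n^r\to\nu_{n-1}^r\to0$ over the possibly imperfect residue field $\kappa$ is not literally obtained by "tensoring with a triangle" --- it should be cited (Illusie, Colliot-Th\'el\`ene--Sansuc--Soul\'e/Gros--Suwa) or deduced from the identification $E_m\cong\caM_\kappa/p^m$ provided by the paper (a global form of Theorem \ref{fbmtzzt}), and the compatibility of the truncations with the coefficient triangles should be argued as in (\ref{hbgtedr}). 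An alternative, closer to what the statement of the theorem for all $n$ suggests the author has in mind, is to avoid the d\'evissage altogether and run the identical filtration argument with $\mu_{p^n}$-coefficients, using the Bloch--Kato filtration $U^\bullet M_n^r$ for general $n$: its graded pieces are again extensions of logarithmic de Rham--Witt sheaves and coherent-type sheaves, so Proposition \ref{ge5t4ded}, Lemma \ref{gbdrrgggd} and Lemma \ref{gfwr5rt} apply verbatim and no compatibility between coefficient levels is needed.
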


So we have isomophisms of naive spectra
$$\bL i^* \MZ_U/p^n \cong \bL i^* F_n \cong E_n$$
which lift uniquely to isomorphisms of spectra. The induced map
$$\bL i^* \MZ \to E$$ 
is the $p$-completion map.

We get that $\bL i^* \MZ_U$ sits in the same homotopy cartesian square
as $\caM_\kappa$ (see diagram (\ref{gdsrhee})),
whence (using Theorem \ref{ji67fhll})

\begin{corollary}
\label{grgrerh4}
There are canonical isomorphisms $\bL i^* \MZ_U \cong \caM_\kappa \cong \MZ_\kappa$.
\end{corollary}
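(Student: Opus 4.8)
The plan is to assemble the identification $\bL i^* \MZ_U \cong \caM_\kappa$ from the finite-coefficient statements proved earlier in this section and then conclude via the arithmetic-square comparison. First I would treat the primes $l \neq p = \mathrm{char}(\kappa)$: for these, Proposition \ref{ge5454dd} already gives that the maps $g_n \colon \bL i^* F_n \to G_n$ of naive prespectra are levelwise $\bA^1$-weak equivalences, where $F_n$ is the level-$r$ system $(\tau_{\le r}\R\varepsilon_* L_{U,n}(r))[r]$ underlying $\MZ_U/l^n$ and $G_n$ the corresponding system over $\kappa$. Since $\map_{\integers/l^n}(G_{n,r},G_{n,r}) \cong \integers/l^n$ (the analogue of Lemma \ref{get6654e}), these lift uniquely to isomorphisms of spectra, yielding canonical identifications $\bL i^* \MZ_U/l^n \cong G_n$ compatible in $n$, hence $\bL i^* (\MZ_U)^{\wedge l} \cong D_l$, with the map $\bL i^* \MZ_U \to D_l$ being the $l$-completion map. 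For these primes, $D_l$ is also the $l$-completed part of $\caM_\kappa$, by the construction of $D_l$ over $\kappa$ using truncated \'etale sheaves and the isomorphism $\caM_\kappa/l^n \cong G_n$ coming from \'etale cycle class maps (Theorem \ref{edfgth76}).

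Next I would handle the prime $p$ itself, which is exactly the content of the long Bloch--Kato-filtration argument culminating in the Theorem just before the statement: the maps $\alpha_{n,r} \colon \bL i^* F_{n,r} \to E_{n,r} = \nu_n^r$ are $\bA^1$-weak equivalences, so that $\bL i^* \MZ_U/p^n \cong \bL i^* F_n \cong E_n$, and these lift uniquely (as $\map_{\integers/p^n}(E_{n,r},E_{n,r})$ is homotopy discrete) to isomorphisms of spectra, giving $\bL i^* \MZ \to E$ as the $p$-completion map. On the other side, there is a map of naive spectra $\caM_\kappa \to E_n$ induced by \'etale cycle class maps whose reduction $\caM_\kappa/p^n \to E_n$ is an isomorphism (this is the log de Rham--Witt comparison, Theorem \ref{fbmtzzt}), so $E = \holim_n E_n$ is canonically the $p$-completed part of $\caM_\kappa$.

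Having matched all completed parts, I would invoke the homotopy cartesian square (\ref{gdsrhee}) for $\caM_\kappa$, namely
$$\xymatrix{\caM_\kappa \ar[r] \ar[d] & E \times \prod_{l \neq p} D_l \ar[d] \\ \HBkappa \ar[r] & (E \times \prod_{l \neq p} D_l)_\bQ.}$$
By the identifications above, $\bL i^* \MZ_U$ maps to $E \times \prod_{l \neq p} D_l$ with each component the respective completion map, so $\bL i^* \MZ_U$ fits into the analogous square with the same completed part; one then needs that its rational part matches $\HBkappa$ compatibly. This follows because $\bL i^* \MZ_U$ is orientable (pullback of the orientable $\MZ_U$) and rational, hence receives the canonical map from $\HBkappa$ (Theorem \ref{vfdtjuzu}), and this map is compatible with the completion maps by the uniqueness of maps out of a Beilinson motive into rational targets ($\pi_{0,0}$ arguments as in section \ref{hfdgjtr}). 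Therefore the two homotopy pullbacks agree, giving $\bL i^* \MZ_U \cong \caM_\kappa$; composing with Theorem \ref{ji67fhll} ($\caM_\kappa \cong \MZ_\kappa$) finishes the proof.

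The main obstacle I anticipate is the bookkeeping at the prime $p$: one must be sure that the map $\caM_\kappa \to E$ produced by \'etale cycle class maps over $\kappa$ is genuinely compatible, as a map of naive spectra (including bonding maps), with the map $\bL i^* \MZ_U \to E$ obtained by pulling back and identifying via the $\alpha_{n,r}$, so that both realize $E$ as the \emph{same} $p$-completion functorially in $r$. This is precisely the kind of strictification/uniqueness argument carried out in section \ref{gtz54zr}, and here it hinges on the discreteness of the endomorphism mapping spaces $\map_{\integers/p^n}(E_{n,r},E_{n,r})$ together with the vanishing of the relevant obstruction groups $\map(\cdots, i_*\nu_n^{r}[\geq 1])$; verifying these vanishings in the mixed-characteristic setting is the delicate point, but all the needed inputs are the cohomological-dimension estimates of Propositions \ref{ge5t4ded}, \ref{grtretht} and Lemma \ref{gbdrrgggd}.
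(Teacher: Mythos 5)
Your prime-by-prime analysis (Proposition \ref{ge5454dd} and the mapping-space lifting for $l\neq p$, the Bloch--Kato filtration theorem and the $\alpha_{n,r}$ for $l=p$, then comparison with the homotopy cartesian square (\ref{gdsrhee}) and Theorem \ref{ji67fhll}) is exactly the route the paper takes, and that part is fine.

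The gap is at the rational corner. From orientability of $(\bL i^*\MZ_U)_\Q$ and Theorem \ref{vfdtjuzu} you only obtain the existence of a canonical map $\HBkappa \to (\bL i^*\MZ_U)_\Q$; initiality of $\HBkappa$ among rational orientable $E_\infty$-spectra does not make this map an isomorphism (any $\HBkappa$-module, e.g.\ $\HBkappa \oplus \HBkappa(1)[2]$, receives such a map), so the step ``therefore the two homotopy pullbacks agree'' does not follow. With only a compatible map on the rational corners you get a comparison map $\caM_\kappa \to \bL i^*\MZ_U$ which is an equivalence after every completion, but whether it is an equivalence rationally is precisely the unverified point. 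What is needed, and what the paper implicitly uses, is an identification of the rational part itself: since rationalization commutes with $\bL i^*$, one has $(\bL i^*\MZ_U)_\Q \cong \bL i^*(\MZ_{U,\Q}) \cong \bL i^*\HB_U$ by Theorems \ref{grferzt} and \ref{jf356zgf}, and then $\bL i^*\HB_U \cong \HBkappa$ because the Beilinson spectra form a cartesian family under arbitrary base change (Cisinski--D\'eglise). Once this isomorphism is in place, your $\pi_{0,0}$/uniqueness argument (as in section \ref{hfdgjtr}) does give the compatibility with the rationalized completion maps, and the rest of your proof goes through as in the paper.
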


We are next going to construct natural comparison maps for
our spectra for morphisms between Dedekind domains of
mixed characteristic.

So let $D \to \widetilde{D}$ be a map of Dedekind domains of mixed
characeristic. We use the notation of section \ref{jfezztrzz}, without
tildas for the situation over $S=\Spec(D)$ and with tildas
for the situation over $\widetilde{S}=\Spec (\widetilde{D})$.
Note that for the various categories of complexes of sheaves we
have Quillen adjunctions between the categories attached to $S$ and
$\widetilde{S}$. We let $f$ denote the various maps from the situation with
tildas to the situation without, e.g. $f \colon \widetilde{S} \to S$
or $f \colon \widetilde{U} \to U$.

We have an isomorphism $\varphi \colon f^* L_\bullet(r) \cong \widetilde{L}_\bullet(r)$.
Choose map $\psi \colon f^* \caT \to \widetilde{\caT}$ lifting the image
of $\varphi$ in the homotopy category. Thus we get a map
$$\psi' \colon f^* \Sym(\caT) \to \Sym(\widetilde{\caT})$$
of commutative monoids in symmetric
$\integers/p^\bullet[\mathbb{G}_{m,\widetilde{U}},\{1\}]_\et$-spectra.
Using lifting arguments one gets a map
$$\psi'' \colon f^* RQ\Sym(\caT) \to RQ\Sym(\widetilde{\caT}).$$
One gets induced maps
$f^*A \to \widetilde{A}$, $f^* A' \to \widetilde{A}'$,
$f^* B \to \widetilde{B}$, $f^* C \to \widetilde{C}$,
$f^* C' \to \widetilde{C}'$ and $f^* D_p \to \widetilde{D}_p$.

By the definition of $\MZ_S$ and $\MZ_{\widetilde{S}}$
it is then clear that we get the comparison map

$$\Phi_f \colon \bL f^* \MZ_S \to \MZ_{\widetilde{S}}$$
which is a map of $E_\infty$-spectra.

%We are next going to construct a comparison map $\bL f^* \caM_S \to \caM_{\widetilde{S}}$.

%We use the notation of section \ref{kdtjjz}. We use tildas to denote corresponding
%objects over $\widetilde{S}$. 

\begin{lemma}
\label{gfdrreg}
If $\widetilde{D}$ is a filtered colimit of smooth $D$-algebras then
the comparison map $\Phi_f$ is an isomorphism.
\end{lemma}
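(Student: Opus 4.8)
The plan is to reduce the claim to the three corners of the arithmetic square defining $\MZ$ (Definition \ref{ght544ede}). The maps $f^*A \to \widetilde A$, $f^*A' \to \widetilde A'$, $f^*B \to \widetilde B$, $f^*C \to \widetilde C$, $f^*C' \to \widetilde C'$ and $f^*D_p \to \widetilde D_p$ constructed above, together with the canonical base change map for the Beilinson spectrum, assemble into a morphism between the defining homotopy pullback squares of $\bL f^*\MZ_S$ and $\MZ_{\widetilde S}$. Since the forgetful functor from $E_\infty$-spectra to spectra creates homotopy limits and $\bL f^*$ is triangulated, hence preserves homotopy pullback squares, it suffices to show that $\Phi_f$ restricts to an isomorphism on each of $D = \prod_p D_p$, on $\HB$ and on $D_\Q$.

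For the Beilinson corner, $\bL f^*$ applied to the Beilinson spectrum over $S$ is again a rational orientable $E_\infty$-spectrum, so by Theorem \ref{vfdtjuzu} it receives a unique $E_\infty$-map from the Beilinson spectrum over $\widetilde S$; the base change property of Beilinson motives (\cite{cisinski-deglise}) identifies this map with an inverse of the canonical comparison, so this corner is an isomorphism — and here the hypothesis on $\widetilde D$ is not used. The rational corner $D_\Q$ then follows from the completed corner, since rationalization is a filtered colimit and therefore commutes with $\bL f^*$.

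The substantive point is the completed corner. On the building blocks the comparison is manifestly an isomorphism: $f^*L_\bullet(r) \cong \widetilde L_\bullet(r)$ because $\mu_{p^n}^{\otimes r}$ is a sheaf on the big \'etale site of \emph{all} schemes on which $p$ is invertible, and $f^*\nu_n^{r} \cong \widetilde\nu_n^{r}$ because $\nu_n^{r}$ is a sheaf on the category of \emph{all} $\F_p$-schemes. It remains to check that $f^*D_{p,r} \to \widetilde D_{p,r}$ is an isomorphism, which by Corollary \ref{guu43ddf} and its analogue over $\widetilde S$ amounts to comparing $p$-completed Bloch--Levine complexes. Writing $\widetilde D = \colim_\alpha A_\alpha$ with each $A_\alpha$ a smooth $D$-algebra, every $Y \in \Sm_{\widetilde S}$, being of finite presentation over $\widetilde S$, descends to some $Y_\alpha \in \Sm_{\Spec A_\alpha}$; since $A_\alpha$ is smooth of finite type over $D$, $Y_\alpha \in \Sm_S$, and $Y = \lim_\beta Y_\beta$ is a cofiltered limit with affine transition maps of objects $Y_\beta \in \Sm_S$. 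Bloch's cycle complexes are continuous along such limits, so $\colim_\beta \M^{Y_\beta}(r)/p^n \cong \M^Y(r)/p^n$ for every $n$; hence $f^*C_{r,n} \to \widetilde C_{r,n}$ is an isomorphism on every $Y \in \Sm_{\widetilde S}$, thus an isomorphism. Passing to the homotopy limit over $n$ — where Theorem \ref{gtfn} ensures the towers are uniformly bounded above, so $\bL f^*$ interchanges with $\lim_n$ on the relevant complexes — yields $f^*D_{p,r} \cong \widetilde D_{p,r}$ compatibly with the bonding maps, and the same boundedness lets one treat the product over $p$ factorwise. With Theorem \ref{grferzt} this gives the claim.

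The main obstacle is precisely this last step: verifying that the \emph{explicitly constructed} map $\Phi_f$ — assembled from the chosen lifts $\psi$, $\psi'$, $\psi''$ rather than from a universal property — is the one realizing the comparison of cycle complexes, and controlling the interchange of $\bL f^*$ with the homotopy limit $\lim_n$ and with the product over all primes; both are handled by the bounded-above vanishing of Theorem \ref{gtfn}.
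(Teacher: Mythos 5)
Your reduction to the corners of the arithmetic square is fine as far as it goes ($\bL f^*$ is exact, hence preserves the homotopy pullback; the Beilinson corner is base change for $\HB$; the rational corner follows once the completed corner is known, since rationalization is a filtered colimit). But the completed corner is exactly where your argument has a genuine gap. You need to commute $\bL f^*$ with the homotopy limit $\lim_n C'_{\bullet,n}$ defining $D_p$ and with the infinite product $\prod_p D_p$; a left adjoint does not commute with such limits, and the appeal to Theorem \ref{gtfn} does not repair this: cohomological boundedness \emph{above} is the wrong direction for interchanging a left derived pullback with inverse limits (one would at least need uniform bounds on cohomological dimension plus a Mittag--Leffler/coconnectivity argument, none of which you supply). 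Likewise, the assertion that $f^*C_{r,n}$ evaluated on $Y=\lim_\beta Y_\beta$ is $\colim_\beta C_{r,n}(Y_\beta)$ conflates the presheaf-level pullback with the derived, $\bA^1$-local pullback: evaluating $\bL f^*$ on $Y$ involves fibrant replacement/hypercohomology, and the passage to the colimit over $\beta$ is itself a nontrivial continuity statement which you never justify. Finally, you yourself flag, but do not resolve, that the map you would produce this way must be identified with the explicitly constructed $\Phi_f$.

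The paper's proof avoids all of these commutation problems by never decomposing $\MZ_S$: it tests the single map $\Phi_f$ against the compact generators $\Sigma^{p,q}\Sigma^\infty_+X$ of $\SH(\widetilde S)$ and invokes continuity of stable motivic homotopy categories along the cofiltered system $\widetilde S=\lim_\alpha S_\alpha$ (Hoyois, Lemma A.7.(1), after writing $X=\lim_\alpha X_\alpha$), which rewrites $\Hom_{\SH(\widetilde S)}(\Sigma^{p,q}\Sigma^\infty_+X, f^*\MZ_S)$ as $\colim_\alpha \Hom_{\SH(S_\alpha)}(\Sigma^{p,q}\Sigma^\infty_+X_\alpha, f_\alpha^*\MZ_S)$; the target is handled by $\MZ_{\widetilde S}\cong\caM_{\widetilde S}$ (Theorem \ref{grferzt}) together with the continuity of Bloch's cycle complexes over the $X_\alpha$. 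Your use of continuity of cycle complexes is the right instinct, but without the continuity of $\SH(-)$ (or an equivalent statement at the level of the sheaf-theoretic model) the interchange of $\bL f^*$ with $\lim_n$, with $\prod_p$, and with evaluation on $Y$ is unsubstantiated, so the proof as written does not go through.
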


\begin{proof}
Let $\widetilde{D}=\colim_\alpha D_\alpha$, where each $D_\alpha$ is a smooth $D$-algebra
and set $S_\alpha:=\Spec(D_\alpha)$. Let $f_\alpha \colon S_\alpha \to S$
be the canonical maps.
We show that for any $X \in \Sm_{\widetilde{S}}$ and integers $p,q$ the induced map
\begin{equation}
\label{h4z7t44}
\Hom_{\SH(\widetilde{S})}(\Sigma^{p,q} \Sigma_+^\infty X,f^* \MZ_S) \to
\Hom_{\SH(\widetilde{S})}(\Sigma^{p,q} \Sigma_+^\infty X, \MZ_{\widetilde{S}})
\end{equation}
is an isomorphism. By the the remarks after \cite[Definition A.1.]{hoyois.hopkins-morel}
we can write $X=\lim_\alpha X_\alpha$, where each $X_\alpha$ is a smooth and separated $S_\alpha$-scheme
of finite type. By \cite[Lemma A.7.(1)]{hoyois.hopkins-morel} the left side
of (\ref{h4z7t44}) can be written as
$$\colim_\alpha \Hom_{\SH(S_\alpha)}(\Sigma^{p,q} \Sigma_+^\infty X_\alpha ,f_\alpha^* \MZ_S).$$
A similar formula holds for the right hand side, using the isomorphism
$\MZ_{\widetilde{S}} \cong \caM_{\widetilde{S}}$ and observing that the cycle complexes
defining $\caM_{\widetilde{S}}$ evaluated over $X$ are a colimit of the cycle complexes
over the $X_\alpha$.
\end{proof}

\begin{corollary}
\label{grf44thh}
Suppose $\widetilde{D}$ is the completion of a local ring of $S$ at a closed point
of positive residue characteristic. Then $\Phi_f$ is an isomorphism.
\end{corollary}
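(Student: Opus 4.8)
The plan is to reduce Corollary \ref{grf44thh} to Lemma \ref{gfdrreg} via a standard approximation/continuity argument for the functor formalism of $\MZ$. First I would set up notation: write $\widetilde{D}$ for the completion $\widehat{\caO_{S,x}}$ of the local ring at the closed point $x \in S$ of positive residue characteristic, and $f \colon \widetilde{S} = \Spec(\widetilde{D}) \to S$ for the induced map. The key observation is that $\widetilde{D}$ is \emph{not} literally a filtered colimit of smooth $D$-algebras, so Lemma \ref{gfdrreg} does not apply directly; but $\widetilde{D}$ is a filtered colimit of its finitely generated subrings, each of which embeds into $\widetilde{D}$, and more to the point the localization $\caO_{S,x}$ is a filtered colimit of smooth (in fact étale) $D$-algebras, so the map $D \to \caO_{S,x}$ satisfies the hypothesis of Lemma \ref{gfdrreg}. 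Hence $\Phi$ is an isomorphism for $\Spec(\caO_{S,x}) \to S$. It therefore suffices to treat the further map $\Spec(\widetilde{D}) \to \Spec(\caO_{S,x})$, i.e. to reduce to the case where the source is a discrete valuation ring and the target its completion.

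For that remaining case I would invoke Corollary \ref{grgrerh4} together with the comparison of the completed part. Concretely: the comparison map $\Phi_f$ fits into the homotopy pullback square defining $\MZ$ (Definition \ref{ght544ede}), so it is an isomorphism once we know (a) the induced map on the rational parts $\bL f^* \HB_S \to \HB_{\widetilde{S}}$ is an isomorphism and (b) the induced map on the completed parts $\bL f^* D \to \widetilde{D}$ (the product of the $p$-completed motivic pieces) is an isomorphism. For (a) one uses that $\HB$ is compatible with base change in the required generality by \cite{cisinski-deglise} (Theorem \ref{vfdtjuzu} characterizes $\HB$ as the initial rational orientable $E_\infty$-spectrum, and this property is stable under the relevant pullback). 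For (b), which is the crux, one compares prime by prime: for primes $\ell$ invertible on $\caO_{S,x}$ the $\ell$-completed part is built from truncated étale sheaves $\tau_{\le r}\R\epsilon_* \mu_{\ell^n}^{\otimes r}$, and here one uses proper/smooth base change for étale cohomology (Proposition \ref{g34t65te}, applied as in Proposition \ref{ge5454dd}) together with the fact that $\Spec(\widetilde{D}) \to \Spec(\caO_{S,x})$ induces an isomorphism on residue fields at the closed point and is an isomorphism away from it; for the residue characteristic $p$ the $p$-completed part is controlled by the logarithmic de Rham--Witt sheaves $\nu_n^r$, which depend only on the closed fiber by Theorem \ref{fbmtzzt}, and again the closed fibers of $\widetilde{S}$ and of $\Spec(\caO_{S,x})$ agree.

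In fact the cleanest route is: since $\Phi$ is already known to be an isomorphism for $\Spec(\caO_{S,x}) \to S$, replace $D$ by $\caO_{S,x}$ and assume $D$ is a discrete valuation ring with $\widetilde{D} = \widehat{D}$ its completion. Then $\widetilde{D}$ and $D$ have the same residue field $\kappa$ and the same generic-fiber behavior in the following sense: by Corollary \ref{grgrerh4} we have $\bL i^* \MZ_D \cong \caM_\kappa \cong \bL i^* \MZ_{\widetilde{D}}$ compatibly with $\Phi_f$ (where $i$ denotes the closed point inclusion in either base), and on the open complement the motivic cycle complexes satisfy flat base change by Lemma \ref{bh464red} and the strictification of section \ref{h5643dfgf}. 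Using the localization triangle of Proposition \ref{hgrtgz54} for both $S$ and $\widetilde{S}$ and the five lemma, one concludes that $\Phi_f$ is an isomorphism. The main obstacle I anticipate is making the base-change compatibility of the localization triangles fully rigorous --- one must check that the maps $\bL f^* i_* \caM(r-1)_\kappa \to i_* \caM(r-1)_\kappa$ and $\bL f^* j_* j^* \caM(r) \to j_* j^* \caM(r)$ induced by $\Phi_f$ are the evident ones, which amounts to tracing through the strictification construction and the definition of $\Phi_f$ via the maps $\psi', \psi''$ on the level of symmetric spectra of sheaves. Once that bookkeeping is done, the statement follows formally; one could also simply cite Lemma \ref{gfdrreg} after observing that $\widehat{D}$ over $\caO_{S,x}$ is, for the purposes of the cycle-complex evaluation and the étale/de Rham--Witt computations, indistinguishable from a pro-(smooth) extension --- but the triangle argument via Corollary \ref{grgrerh4} is the self-contained one I would write out.

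\begin{proof}
By Lemma \ref{gfdrreg} applied to $D \to \caO_{S,x}$ (which is a filtered colimit of étale, hence smooth, $D$-algebras) the comparison map for $\Spec(\caO_{S,x}) \to S$ is an isomorphism, so we may assume $D$ is the (Noetherian) discrete valuation ring $\caO_{S,x}$ and $\widetilde{D} = \widehat{D}$ is its completion, with $f \colon \widetilde{S} \to S$. Let $\kappa$ be the common residue field and $i$, $\i$ the closed point inclusions into $S$, $\widetilde{S}$, and $j$, $\j$ the complementary open inclusions.

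By Corollary \ref{grgrerh4} there are canonical isomorphisms $\bL i^* \MZ_S \cong \MZ_\kappa$ and $\bL \i^* \MZ_{\widetilde{S}} \cong \MZ_\kappa$, and by construction of $\Phi_f$ (via the maps $\psi'$, $\psi''$ of the previous paragraph and the $p$-completed and rational pieces) the induced map $\bL \i^* \bL f^* \MZ_S \to \bL \i^* \MZ_{\widetilde{S}}$ is, under these identifications, the identity of $\MZ_\kappa$. On the open part, $f$ restricts to a flat morphism and the identification of $\MZ$ with $\caM$ (Theorem \ref{grferzt}) together with flat base change for Levine's cycle complexes shows that $\bL f^* j_* j^* \MZ_S \to \j_* \j^* \MZ_{\widetilde{S}}$ is an isomorphism. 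Applying $\bL f^*$ to the localization triangle of Proposition \ref{hgrtgz54} for $S$ and comparing with that for $\widetilde{S}$ via $\Phi_f$, we obtain a map of exact triangles whose outer two vertical arrows are isomorphisms; hence the middle arrow $\Phi_f \colon \bL f^* \MZ_S \to \MZ_{\widetilde{S}}$ is an isomorphism as well.
\end{proof}
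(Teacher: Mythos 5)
Your starting premise is wrong, and it is exactly the point of the paper's one-line proof: by N\'eron--Popescu desingularization the map $\caO_{S,x}\to\widehat{\caO_{S,x}}$ \emph{is} a filtered colimit of smooth algebras (the local ring at a closed point of positive residue characteristic has fraction field of characteristic $0$, hence is excellent, so the completion map is regular and Popescu's theorem applies); combined with the fact that the localization $D\to\caO_{S,x}$ is itself a filtered colimit of smooth $D$-algebras, $\widetilde D$ is a filtered colimit of smooth $D$-algebras and Lemma \ref{gfdrreg} applies directly. So the detour you build to avoid Lemma \ref{gfdrreg} is unnecessary -- and, as written, it does not close.

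The substitute argument has a genuine circularity. After reducing to $S=\Spec(\caO_{S,x})$ a (non-complete) discrete valuation ring and $\widetilde S=\Spec(\widehat D)$, you invoke Corollary \ref{grgrerh4} to identify \emph{both} $\bL i^*\MZ_S$ and $\bL\i^*\MZ_{\widetilde S}$ with $\MZ_\kappa$. But Corollary \ref{grgrerh4} is proved only for the spectrum of a \emph{complete} discrete valuation ring: the analysis at the residue characteristic (the Bloch--Kato filtration on $i_X^*\R^r j_{X,*}\mu_{p^n}^{\otimes r}$, choice of uniformizer, \cite[Cor.\ (1.4.1)]{bloch-kato.p-adic}) requires the complete local situation. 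For the non-complete $S$ the statement $\bL i^*\MZ_S\cong\MZ_\kappa$ is precisely Theorem \ref{grerhrh3}, whose proof in the paper uses Corollary \ref{grf44thh}; so you are assuming what the corollary is needed for. Relatedly, the informal claim that the $p$-completed part "is controlled by the logarithmic de Rham--Witt sheaves $\nu_n^r$, which depend only on the closed fiber by Theorem \ref{fbmtzzt}" is not justified over the mixed-characteristic base: $D_p$ is built from $\R j_*\tau_{\le r}\R\epsilon_*\mu_{p^n}^{\otimes r}$ modified along the residue map to $i_*\nu_n^{r-1}$, and computing its restriction to the closed fiber is exactly the hard completeness-dependent step. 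Finally, the asserted commutation $\bL f^*j_*\cong\j_*\bL f^*$ used to compare the triangles of Proposition \ref{hgrtgz54} is not a formal "flat base change" fact in this setting (one would have to argue via $j_!$ and $i_*$ instead). The correct and self-contained route is the paper's: cite Popescu to verify the hypothesis of Lemma \ref{gfdrreg}.
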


\begin{proof}
It is known that then $\widetilde{D}$ is a filtered colimit of smooth $D$-algebras,
hence Lemma \ref{gfdrreg} applies.
\end{proof}

\begin{theorem}
\label{grerhrh3}
Let $S=\Spec (D)$, $D$ a Dedekind domain of mixed characteristic, $x \in S$
a closed point of positive residue characteristic and $i \colon \{x\} \to S$
the inclusion. Then there is a canonical isomorphism $\bL i^* \MZ_S \cong \MZ_{\kappa(x)}$
which respects the $E_\infty$-structures.
\end{theorem}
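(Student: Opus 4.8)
The plan is to reduce to the case of a complete discrete valuation ring, which has already been handled in the base change section, and to propagate the $E_\infty$-structures along the way. First I would let $\widetilde D$ be the completion of the local ring $\caO_{S,x}$, so that $\widetilde S := \Spec(\widetilde D)$ is the spectrum of a complete discrete valuation ring of mixed characteristic with residue field $\kappa(x)$; write $f \colon \widetilde S \to S$ for the canonical morphism and $\tilde i \colon \{x\} \hookrightarrow \widetilde S$ for the inclusion of the closed point. Under the canonical identification of the residue field of $x$ computed in $S$ and in $\widetilde S$ one has $i = f \circ \tilde i$, hence $\bL i^* = \bL \tilde i^* \circ \bL f^*$ by pseudo-functoriality of the inverse image on stable motivic homotopy categories. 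Since this identification is compatible with the smash product, it promotes to a canonical isomorphism of $E_\infty$-spectra $\bL i^* \MZ_S \cong \bL \tilde i^* \bL f^* \MZ_S$.

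Next I would invoke the comparison map $\Phi_f \colon \bL f^* \MZ_S \to \MZ_{\widetilde S}$, which is a map of $E_\infty$-spectra by its very construction; by Corollary \ref{grf44thh} it is an isomorphism, because $\widetilde D$ is a filtered colimit of smooth $D$-algebras. Therefore $\bL \tilde i^* \bL f^* \MZ_S \cong \bL \tilde i^* \MZ_{\widetilde S}$ as $E_\infty$-spectra. Finally I would apply Corollary \ref{grgrerh4} to the complete discrete valuation ring $\widetilde D$, which yields the canonical isomorphism $\bL \tilde i^* \MZ_{\widetilde S} \cong \caM_{\kappa(x)} \cong \MZ_{\kappa(x)}$ (the second identification being Theorem \ref{ji67fhll}). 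Composing the three isomorphisms gives the desired canonical isomorphism $\bL i^* \MZ_S \cong \MZ_{\kappa(x)}$.

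The point requiring care — and the main obstacle — is the claim that the resulting isomorphism respects the $E_\infty$-structures. The isomorphisms produced by pseudo-functoriality and by $\Phi_f$ are manifestly $E_\infty$, so what must be verified is that the identification of Corollary \ref{grgrerh4} can be upgraded to an $E_\infty$-equivalence. For this I would note that the homotopy cartesian square (\ref{gdsrhee}) realizing $\caM_\kappa$ — the same square into which $\bL \tilde i^* \MZ_{\widetilde S}$ was shown to fit — is a diagram of $E_\infty$-spectra: the entries $E = \holim_n E_n$ and $\prod_{l \neq p} D_l$ are $E_\infty$-algebras by construction, $\HBkappa$ is $E_\infty$, the map $\bL \tilde i^* \MZ_{\widetilde S} \to E \times \prod_{l \neq p} D_l$ is the $p$-completion map and hence $E_\infty$, and the map $\HBkappa \to (E \times \prod_{l \neq p} D_l)_\Q$ is the one obtained from orientability via the universal property of $\HBkappa$ (Theorem \ref{vfdtjuzu}), hence also $E_\infty$. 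Forming the homotopy pullback in $E_\infty$-spectra then identifies $\bL \tilde i^* \MZ_{\widetilde S}$ with $\caM_{\kappa(x)}$, and thus with $\MZ_{\kappa(x)}$, compatibly with the underlying spectrum-level isomorphisms used above. This bookkeeping is the only substantive step; everything else is formal.
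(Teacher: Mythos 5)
Your reduction to the complete discrete valuation ring is exactly the paper's route for the underlying spectra: factor $i$ through $\widetilde S=\Spec(\widetilde D)$ with $\widetilde D$ the completion of $\caO_{S,x}$, use that $\Phi_f$ is an isomorphism (Corollary \ref{grf44thh}, via Lemma \ref{gfdrreg}), and then apply Corollary \ref{grgrerh4} together with Theorem \ref{ji67fhll}. Up to that point your argument and the paper's proof coincide.

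The $E_\infty$-compatibility, however, is where your proposal has a genuine gap, and it is precisely the step you call ``bookkeeping''. You assert that the homotopy cartesian square (\ref{gdsrhee}) is a diagram of $E_\infty$-spectra, but none of its ingredients over the residue field carry an $E_\infty$-structure in the paper's constructions: $\caM_{\kappa}$ is built in section \ref{hjrerrt} only as a (naive) spectrum with pairings in the derived category, the spectra $E_n$ (from logarithmic de Rham--Witt sheaves) and $G_n$, hence $E=\holim_n E_n$ and the $D_l$ over $\kappa$, are only produced as spectra via the rigidity of their mapping spaces, and the comparison maps $\caM_\kappa \to E$, $\bL \tilde i^*\MZ_{\widetilde S}\to E\times\prod_{l\neq p}D_l$ are obtained by lifting maps of naive spectra, not as $E_\infty$-maps. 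Saying that a map ``is the $p$-completion map'' is a statement in the homotopy category of spectra and does not by itself confer multiplicative coherence; likewise the map $\HBkappa \to (E\times\prod D_l)_\Q$ can only be produced by Theorem \ref{vfdtjuzu} once the target is known to be a rational orientable $E_\infty$-spectrum, which is again the missing datum. So your final identification ``in $E_\infty$-spectra'' presupposes structures and coherences that would require substantial extra work to construct. The paper avoids all of this by a much shorter argument: once the isomorphism of underlying spectra is in hand, it invokes the uniqueness of $E_\infty$-structures on $\MZ_{\kappa(x)}$, which holds because this spectrum is the zero-slice of the sphere spectrum; compatibility with the $E_\infty$-structures is then automatic. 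You should either adopt that uniqueness argument or supply genuine $E_\infty$-refinements of the square (\ref{gdsrhee}) and of the maps into it, which the present text does not provide.
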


\begin{proof}
The isomorphism as spectra follows now from Corollary \ref{grgrerh4} and Corollary \ref{grf44thh}.
The isomorphism can be made to respect the $E_\infty$-structures by the uniquness
of $E_\infty$-structures on $\MZ_{\kappa(x)}$, which holds sinces this spectrum is the
zero-slice of the sphere spectrum.
\end{proof}

\begin{lemma}
\label{bht5gergg}
Let $g \colon k \to k'$ be a field extension. Then the natural map
$g^*\MZ_k \to \MZ_{k'}$ is an isomorphism.
\end{lemma}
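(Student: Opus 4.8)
The plan is to reduce, via Theorem \ref{ji67fhll}, to showing that the natural map $g^*\caM_k\to\caM_{k'}$ is an isomorphism, and to verify this on Zariski stalks using the continuity of Bloch's cycle complexes.

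First I would invoke Theorem \ref{ji67fhll} to identify $\MZ_k\cong\caM_k$ and $\MZ_{k'}\cong\caM_{k'}$, and observe that under these identifications the map $g^*\MZ_k\to\MZ_{k'}$ is the map of naive $\GmS$-spectra (with $S=\Spec k'$) which in level $r$ is the map $g^*\caM(r)_k\to\caM(r)_{k'}$ induced by flat pullback of algebraic cycles. This is legitimate since any field extension $g\colon\Spec k'\to\Spec k$ is flat, and the map is built exactly as the comparison maps $\Phi_f$ were, via the base change functor $\caS\to\caS'$, $(A,n,(a_i))\mapsto(A\otimes_k k',n,(a_i\otimes 1))$, together with the flat pullbacks $z^r_{F_t}(C_t)\to z^r_{F_t\otimes_k k'}(C_t\otimes_k k')$. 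Since source and target are $\Omega$-spectra it suffices to prove that each $g^*\caM(r)_k\to\caM(r)_{k'}$ is an isomorphism in $\D(\Sh(\Sm_{k',\Zar},\integers))$, and since both sides are complexes of Zariski sheaves it is enough to check this on stalks at points $x$ of smooth $k'$-schemes $X$.

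On the one side, by the field analogue of Corollary \ref{nderfd} the stalk of $\caM(r)_{k'}$ at $x$ is Bloch's complex $z^r(\Spec\caO_{X,x},\bullet)[-2r]$. On the other side, since $g^*$ is exact ($Y\mapsto Y_{k'}$ preserves finite limits) and hence computed on stalks by the usual colimit formula, the stalk of $g^*\caM(r)_k$ at $x$ is the filtered colimit of $z^r(\Spec\caO_{Y,y},\bullet)[-2r]$ over triples $(Y,y,\phi)$ with $Y\in\Sm_k$, $y\in Y$, and $\phi\colon\caO_{Y,y}\to\caO_{X,x}$ a local $k$-algebra map, with transition maps given by cycle pullback. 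Passing to the cofinal subsystem of finite products $\prod_i Y_i$ of smooth $k$-schemes — every pair of objects maps into such a product, and its local ring surjects onto $\caO_{X,x}$ once the factors include enough "coordinate" and "coefficient" data (one realises $\caO_{X,x}$ as a quotient of the local ring of $\bA^N_k$ at a suitable point, after reducing to finitely generated $k'/k$ by continuity) — the transition maps become smooth, hence flat, projections. Since Bloch's cycle complex is continuous for filtered colimits of rings along flat maps (an algebraic cycle, together with the finitely many conditions defining it, is already defined over a finite stage), the colimit is $z^r(\Spec\caO_{X,x},\bullet)[-2r]$, and this identification is exactly the comparison map. Hence $g^*\caM(r)_k\to\caM(r)_{k'}$ is a stalkwise quasi-isomorphism, so an isomorphism; running over all $r$ gives $g^*\MZ_k\cong\MZ_{k'}$ in $\SH(k')$.

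The inputs on continuity of Bloch's higher Chow complexes and exactness of $g^*$ are routine; the main obstacle is the bookkeeping in the stalk computation, namely identifying $\colim_{(Y,y,\phi)}\caO_{Y,y}$ with $\caO_{X,x}$ and arranging a cofinal subsystem with flat transition maps (a subtlety one should not underestimate, since the analogous comparison for the small sites $X_\Zar\to Y_\Zar$ already fails). An alternative, avoiding this, is to pass through the Suslin–Voevodsky model: by Theorem \ref{ji67fhll} the levels of $\MZ_k$ are the motivic complexes $C_*(\integers_{\tr}(\Gmk^{\wedge n}))[-n]$, and these are compatible with field extensions because $\integers_{\tr}(\Gmk^{\wedge n})$ pulls back to $\integers_{\tr}(\mathbb{G}_{m,k'}^{\wedge n})$ along the flat map $g$ while $C_*$, Nisnevich sheafification and $\bA^1$-localization all commute with $g^*$.
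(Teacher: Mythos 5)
There is a genuine gap, and it is exactly at the point you flag as "bookkeeping": the identification of the stalk of $g^*\caM(r)_k$ with $z^r(\Spec \caO_{X,x},\bullet)[-2r]$ via continuity is only valid when $k'/k$ is \emph{separable}. Your argument needs $\caO_{X,x}$ (for $X \in \Sm_{k'}$) to be a filtered colimit of local rings of smooth $k$-schemes along the comparison maps, i.e.\ $X$ to be pro-(smooth over $k$); this fails for inseparable extensions. Concretely, take $k=\F_p(t)$, $k'=\F_p(t^{1/p})$, $X=\Spec k'$: no local ring $\caO_{Y,y}$ of a smooth $k$-scheme contains an exact $p$-th root of $t$ (if $w^p=t$ then $dt=0$ in $\Omega_{\caO_{Y,y}/\F_p}$, contradicting the injectivity of $\Omega_{k/\F_p}\otimes_k\caO_{Y,y}\to\Omega_{\caO_{Y,y}/\F_p}$ granted by smoothness), so the element $u^p-t$ in the chart $Y=\mathbb{A}^1_k$, $u\mapsto t^{1/p}$, never dies at any finite stage; hence $\colim_{(Y,y,\varphi)}\caO_{Y,y}\to\caO_{X,x}=k'$ is surjective but not injective. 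Your own phrase "realises $\caO_{X,x}$ as a \emph{quotient} of the local ring of $\mathbb{A}^N_k$" is the symptom: a quotient is not a filtered colimit along flat maps, and continuity of Bloch's complexes cannot be applied across it. So your main route proves the lemma only for separable (equivalently essentially smooth) extensions; the inseparable case, which the lemma includes, needs a genuinely non-formal input. The alternative route via $C_*(\integers_\tr(\Gmk^{\wedge n}))$ has the same unproved core: the assertion that the big-site pullback $g^*\integers_\tr(\Gmk^{\wedge n})$ is $\integers_\tr(\mathbb{G}_{m,k'}^{\wedge n})$ is not a consequence of flatness of $g$ (the pullback is a left Kan extension over $\Sm_k$, not a naive cycle pullback), and integrally for inseparable extensions it is again precisely the hard point.

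The paper's proof sidesteps inseparability altogether: citing \cite[Theorem 4.18]{hoyois.hopkins-morel} with $U$ the spectrum of the prime field $k_0\subset k$, both $\MZ_k$ and $\MZ_{k'}$ are pullbacks of $\MZ_{k_0}$, because every field is separable, hence essentially smooth, over its perfect prime field; the lemma then follows by composing pullbacks. Note that where your argument does work (separable extensions) it is essentially a re-derivation of that essentially-smooth base change statement, so the fix is either to quote such a result and reduce to the prime field as the paper does, or to supply an independent proof of invariance of $\MZ$ under purely inseparable extensions — which is a theorem, not bookkeeping.
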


\begin{proof}
This follows from \cite[Theorem 4.18]{hoyois.hopkins-morel} (taking $U$ to be
the spectrum of the prime field contained in $k$).
\end{proof}

\begin{lemma}
\label{vdgther}
Let $\widetilde{S} = \Spec(\widetilde{D})$, $\widetilde{D}$ a Dedekind domain, and let $\varphi \colon E \to F$ be any map
in $\SH(\widetilde{S})$. Suppose for any $x \in \widetilde{S}$ that $\bL i_x^* \varphi$ is
an isomorphism, where $i_x$ denotes the inclusion $\{x\} \hookrightarrow \widetilde{S}$.
Then $\varphi$ is an isomorphism.
\end{lemma}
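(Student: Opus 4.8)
The plan is to rephrase the assertion as a vanishing statement and then glue local vanishing along the localization triangle. Since $\SH(\widetilde S)$ is triangulated and each $\bL i_x^*$ is a triangulated functor, $\varphi$ is an isomorphism precisely when its cone $C$ satisfies $\bL i_x^* C = 0$ for every $x \in \widetilde S$; so I would fix such a $C$ and prove $C = 0$. The case where $\widetilde D$ is a field is trivial, so assume $\dim \widetilde S = 1$, and write $\eta \colon \Spec K \hookrightarrow \widetilde S$ for the generic point, $K$ the fraction field of $\widetilde D$. Three ingredients are needed: the hypothesis gives $\bL \eta^* C = 0$; continuity of $\SH$ will upgrade this to $\bL j^* C = 0$ for a dense open $j \colon U \hookrightarrow \widetilde S$; and the localization triangle for the pair $(U, \widetilde S \setminus U)$ will finish the job, since $\widetilde S \setminus U$ is a finite set of closed points at which $C$ also vanishes.

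For the middle step, I would use that $\Spec K$ is the cofiltered limit of the basic affine opens $D(s) = \Spec \widetilde D[1/s]$, $0 \neq s \in \widetilde D$, with transition maps the (affine) open immersions; by continuity of the stable motivic homotopy category along such pro-schemes one has $\SH(\Spec K) \simeq \colim_s \SH(D(s))$, compatibly with the pullbacks $\bL j_{D(s)}^* \colon \SH(\widetilde S) \to \SH(D(s))$ and with $\bL \eta^*$. For this continuity I would cite \cite{cisinski-deglise} or the appendix of \cite{hoyois.hopkins-morel}. Since in a filtered colimit of categories an object that becomes zero in the colimit is already zero at a finite stage, $\bL \eta^* C = 0$ yields a dense open $j \colon U \hookrightarrow \widetilde S$ with $\bL j^* C = 0$.

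To conclude, let $i \colon Z \hookrightarrow \widetilde S$ be the reduced complement of $U$. Since $\widetilde S$ is a Noetherian Dedekind scheme, $Z$ is a finite disjoint union of spectra of residue fields, so $\SH(Z) \simeq \prod_{x \in Z} \SH(\kappa(x))$ and $\bL i^* C = 0$ because each $\bL i_x^* C = 0$ by hypothesis. The localization triangle $j_! \bL j^* C \to C \to i_* \bL i^* C \to j_! \bL j^* C[1]$ in $\SH(\widetilde S)$, part of the six-functor formalism for $\SH$ over Noetherian finite-dimensional bases (cf. \cite{ayoub.I}), then has both outer terms zero, whence $C = 0$. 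The only non-formal input is the continuity step --- passing from vanishing at the generic point to vanishing on a dense open --- and this is the step I would expect to require the most care; it is essentially the known conservativity of the family of fibre functors $\{\bL i_x^*\}_{x \in \widetilde S}$ over a one-dimensional Noetherian base, but isolating it this way keeps the external input to a minimum, everything else being a formal manipulation of the recollement attached to the open-closed decomposition $(U,Z)$.
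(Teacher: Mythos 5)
Your overall plan (continuity to spread vanishing from the generic point to a dense open, then the localization triangle against a finite closed complement) is the same circle of ideas the paper invokes — its proof is just the citation of \cite[Proposition 4.3.9]{cisinski-deglise} together with localization — but the step you yourself flag as delicate is genuinely broken as written. The inference ``$\bL\eta^*C=0$, hence $C|_{D(s)}=0$ for some $s$'' rests on the claim that $\SH(\Spec K)\simeq\colim_s\SH(D(s))$ in a sense where an object that dies in the colimit dies at a finite stage. Continuity in the references you cite does not say this for the big categories: it identifies mapping groups out of \emph{compact} (constructible) objects, equivalently gives $\SH_c(\Spec K)\simeq\colim_s\SH_c(D(s))$ on the subcategories of compact objects (Hoyois' Lemma A.7 has a compact source defined at a finite level and an arbitrary target; the Cisinski--D\'eglise continuity statements are likewise about constructible objects). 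Your $C$ is the cone of an arbitrary map $\varphi\colon E\to F$, and in the paper's applications (comparison maps involving $\MZ$ and its pullbacks) it is not known to be compact, so this form of continuity does not apply.

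Moreover the intermediate statement is simply false for non-compact objects: over $\widetilde{S}=\Spec(\integers)$ take $C=\bigoplus_{n\ge 1} i_{x_n,*}A_n$ for infinitely many distinct closed points $x_n$ and nonzero $A_n\in\SH(\kappa(x_n))$. Since $\eta$ factors through the open complement of each $x_n$ and $j^*i_*=0$, one has $\bL\eta^*C=0$; but every nonempty open $U$ contains some $x_n$, and $i_{x_n}^*(C|_U)\cong A_n\neq 0$ because $i^*i_*\cong\mathrm{id}$ and $i_{x_n}^*i_{x_m,*}=0$ for $m\neq n$ (pullbacks commute with the coproduct). So no dense open $U$ with $C|_U=0$ need exist, and your two-step reduction collapses: using only the generic-point hypothesis you cannot produce $U$. (This $C$ of course violates the hypothesis of the lemma at the points $x_n$, so the lemma itself is untouched; it is your route to it that fails.) The conservativity of the family $\{\bL i_x^*\}_{x\in\widetilde{S}}$ on all of $\SH(\widetilde{S})$ is exactly the non-formal content of the cited Proposition 4.3.9, and it cannot be recovered by the naive ``finite stage'' argument; your proof would become correct if $E$ and $F$ (or the cone) were assumed constructible, but that hypothesis is neither in the statement nor available where the paper uses the lemma.
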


\begin{proof}
This follows from \cite[Proposition 4.3.9]{cisinski-deglise} and localization.
\end{proof}

\begin{theorem}
For any $f$ as above the comparison map $\Phi_f$ is an isomorphism.
\end{theorem}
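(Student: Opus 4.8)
The plan is to reduce the assertion to the already-established special cases by a point-by-point criterion. The key tool is Lemma \ref{vdgther}: a map in $\SH(\widetilde{S})$ is an isomorphism as soon as its pullback $\bL i_x^*$ to every point $x \in \widetilde{S}$ is an isomorphism. So the first step is to fix $f \colon \widetilde{S} = \Spec(\widetilde{D}) \to S = \Spec(D)$ and a point $x \in \widetilde{S}$, and to analyze $\bL i_x^* \Phi_f \colon \bL i_x^* \bL f^* \MZ_S \to \bL i_x^* \MZ_{\widetilde{S}}$.

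The second step is to set up a commutative diagram around $\bL i_x^* \Phi_f$ using naturality of the comparison maps. Write $k = \kappa(x)$, and let $g \colon \Spec(k) \to S$ be the composite $f \circ i_x$, which lands in a single point $y = f(x) \in S$ with residue field $\kappa(y)$ contained in $k$. Because the $\Phi$'s are built levelwise from the maps $\psi'' \colon f^* RQ\Sym(\caT) \to RQ\Sym(\widetilde{\caT})$ and their evident compatibilities, the composite $\Phi_{i_x} \circ \bL i_x^*(\Phi_f)$ is identified (in $\SH(k)$) with $\Phi_{g}$; more precisely one gets a commutative triangle relating $\bL i_x^*\bL f^*\MZ_S$, $\bL i_x^*\MZ_{\widetilde S}$ and $\MZ_k$. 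I would treat two cases according to the residue characteristic of $x$. If $\mathrm{char}(k) = 0$, then $y$ also has residue characteristic $0$, so $\kappa(y)$ is a field of characteristic zero, and $g$ factors through the generic point of $S$; the relevant statement then follows from Lemma \ref{bht5gergg} (invariance of $\MZ$ under field extensions) together with Theorem \ref{grerhrh3} or directly Corollary \ref{grgrerh4} applied over the generic point. If $\mathrm{char}(k) = p > 0$, then $y$ has positive residue characteristic, so Theorem \ref{grerhrh3} gives $\bL i_y^* \MZ_S \cong \MZ_{\kappa(y)}$ compatibly with $\Phi$, and Lemma \ref{bht5gergg} again handles the residue field extension $\kappa(y) \hookrightarrow k$. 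In both cases one concludes that $\bL i_x^* \Phi_f$ is an isomorphism.

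The third step is to assemble these: since $\bL i_x^* \Phi_f$ is an isomorphism for every $x \in \widetilde{S}$, Lemma \ref{vdgther} yields that $\Phi_f$ is an isomorphism in $\SH(\widetilde{S})$, which is the claim.

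The main obstacle I expect is the bookkeeping in the second step: verifying that the square relating $\bL i_x^*(\Phi_f)$, $\Phi_{i_x}$ and $\Phi_{g}$ actually commutes, i.e. that the comparison maps $\Phi$ are genuinely functorial under composition of base-change morphisms. This requires tracing through the construction of $\Phi$ in terms of the lifts $\psi, \psi', \psi'', \ldots$ of the multiplication on $\caT$: a priori each $\Phi$ depends on a choice of such a lift, and one must check that the composite of lifts for $S \to \widetilde S$ and $\widetilde S \to \Spec k$ agrees with a lift for $S \to \Spec k$ up to the appropriate homotopy, so that the resulting maps in $\SH$ coincide. This is a routine but slightly delicate compatibility argument using that all the relevant homotopy categories of mapping spaces are discrete (as in Lemma \ref{get6654e} and the corollary following it), so that the ambiguity in the lifts does not affect the induced maps in the homotopy category. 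Once this functoriality is in hand, the rest is immediate from the cited results.
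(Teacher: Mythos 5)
Your proposal is correct and follows essentially the same route as the paper: the paper's proof is exactly the pointwise reduction via Lemma \ref{vdgther}, with the positive residue characteristic points handled by Theorem \ref{grerhrh3} and the residue field extensions by Lemma \ref{bht5gergg}. The compatibility bookkeeping for the maps $\Phi$ under composition that you flag as the main obstacle is left implicit in the paper, but your treatment of it is consistent with the construction.
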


\begin{proof}
This follows from Theorem \ref{grerhrh3}, Lemma \ref{bht5gergg} and Lemma \ref{vdgther}.
\end{proof}

\section{The motivic functor formalism}
\label{g5r3tfx}

For any Noetherian separated scheme $X$ of finite Krull dimension (we call such
schemes from now on base schemes) we let
$\MZ_X := f^* \MZ_{\Spec(\integers)}$, where $f \colon X \to \Spec(\integers)$
is the structure morphism. We let $\MZ_X-\Mod$ be the model category of
highly structured $\MZ_X$-module spectra and set $\DM(X):=\Ho(\MZ_X-\Mod)$.
This is done e.g. along the lines of \cite{roendigs-oestvaer}.
For any map of base schemes $f \colon X \to Y$ we get an adjunction
$$f^* \colon \DM(Y) \rightleftharpoons \DM(X) \colon f_*.$$

The categories $\DM(X)$ are closed tensor triangulated and the functors
$f^*$ are symmetric monoidal.

If $f$ is smooth the functor $f^*$ has a left adjoint $f_\sharp$.

Note that all these functors commute with the forgetful functors
$$\DM(X) \to \SH(X).$$ It follows that the assignment
$$X \mapsto \DM(X)$$
has the structure of a stable homotopy functor in the sense of
\cite{ayoub.I}.

Thus the main results of loc. cit. are valid for this assignment.

In particular for a morphism of finite type $f \colon X \to Y$ we have an adjoint pair
$$f_! \colon \DM(X) \rightleftharpoons \DM(Y) \colon f^!.$$
Moreover the projective base change theorem holds.

We also have the
\begin{theorem}
\label{ht346z4}
Let $i \colon Z \hookrightarrow X$ be a closed inclusion of base schemes and
$j \colon U \hookrightarrow X$ the open complement. Then for any $F \in \DM(X)$
there is an exact triangle
$$ j_!j^*F \to F \to i_* i^*F \to j_!j^*F[1]$$
in $\DM(X)$.
\end{theorem}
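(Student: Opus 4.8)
The statement is the localization triangle for $\DM(X)$ attached to the open-closed decomposition $(i,j)$. The plan is to deduce it formally from Ayoub's machinery, which applies to our setting once we recall (as remarked just above the theorem) that $X \mapsto \DM(X)$ is a stable homotopy functor in the sense of \cite{ayoub.I}: the functors $f^*$ are monoidal, admit right adjoints $f_*$ and (for $f$ smooth) left adjoints $f_\sharp$, and they all commute with the forgetful functors $\DM(X) \to \SH(X)$. First I would recall that in any such situation one has for a closed immersion $i \colon Z \hookrightarrow X$ with open complement $j \colon U \hookrightarrow X$ the two natural transformations: the counit $j_!j^* \to \mathrm{id}$ (where $j_! = j_\sharp$ since $j$ is an open, hence smooth, immersion) and the unit $\mathrm{id} \to i_*i^*$. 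By \cite{ayoub.I} (the basic properties of the six functors, in particular the fact that $i^* j_! = 0$ and the recollement axioms) the composite $j_!j^* \to \mathrm{id} \to i_*i^*$ is zero, and the resulting candidate triangle
$$ j_!j^*F \to F \to i_*i^*F \to j_!j^*F[1] $$
is exact; this is exactly Ayoub's localization theorem, valid for every stable homotopy functor satisfying his axioms.

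Concretely, the key steps in order are: (1) verify that the axioms needed for Ayoub's localization theorem hold for $X \mapsto \DM(X)$ — these are exactly the ones already listed before the theorem (existence of $f^*, f_*$, of $f_\sharp$ for $f$ smooth, the smooth base change and projection formula axioms, and stability), and they transfer from $\SH$ because the forgetful functor $\DM(X) \to \SH(X)$ is conservative and the adjunctions are compatible; (2) invoke $i^* j_! \simeq 0$ (equivalently $j^* i_* \simeq 0$), which follows from $i^* j_\sharp \simeq 0$ since $Z$ and $U$ are disjoint; (3) quote Ayoub's localization result to conclude that the octahedron built from the unit and counit above produces the exact triangle. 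Since $F \in \DM(X)$ is arbitrary and all functors involved are exact, nothing further is needed.

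The only real content is step (1): one must be sure that the construction of $\DM(X)$ as the homotopy category of highly structured $\MZ_X$-modules, done along the lines of \cite{roendigs-oestvaer}, genuinely yields a stable homotopy functor in Ayoub's sense — i.e.\ that forming module categories over the cartesian family $\MZ_X = f^*\MZ_{\Spec(\integers)}$ is compatible with $f^*$, $f_*$, $f_\sharp$ and preserves the relevant base change isomorphisms. This is where the cartesian/base-change property of $\MZ$ (Theorem \ref{grerhrh3} and the subsequent comparison theorem, giving $\bL f^* \MZ_S \cong \MZ_{\widetilde S}$) is essential: it is precisely what makes the assignment $X \mapsto \MZ_X-\Mod$ functorial with the same formal structure as $\SH$. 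I expect this verification to be the main obstacle; once it is in place the triangle is a purely formal consequence of \cite{ayoub.I} and requires no computation.
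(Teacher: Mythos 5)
Your proposal takes essentially the same route as the paper, which offers no separate argument for Theorem \ref{ht346z4} beyond observing that all the functors commute with the conservative forgetful functors $\DM(X)\to\SH(X)$, so that $X\mapsto\DM(X)$ is a stable homotopy functor in the sense of \cite{ayoub.I} and Ayoub's localization theorem applies verbatim. One minor correction: the cartesianness needed in your step (1) holds by definition, since $\MZ_X:=f^*\MZ_{\Spec(\integers)}$ for the structure morphism $f$, so the deep comparison results such as Theorem \ref{grerhrh3} are not actually required for this formal step (they matter only for identifying the resulting motivic cohomology over fields).
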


\section{Further applications}
\label{g54r3grq}

\subsection{The Hopkins-Morel isomorphism}

We first equip $\MZ$ with an orientation.

\begin{proposition} Let $X$ be a smooth scheme over Dedekind domain of
mixed characteristic. Then
there is a unique orientation on $\MZ_X$. The corresponding formal group law
is the additive one.
\end{proposition}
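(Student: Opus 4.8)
The plan is to prove existence and uniqueness of the orientation separately, and then identify the formal group law. For existence: an orientation on an oriented-type spectrum can be given by a Thom class, equivalently (since $\MZ_X$ satisfies Nisnevich descent and is $\mathbb{A}^1$-local) by a suitable class in $\MZ_X^{2,1}(\P^\infty_X)$ restricting to the canonical generator over $\P^1_X$. By base change it suffices to produce such a class over a Dedekind base $S$, and there it suffices to produce a map $\integers[\P^\infty]_\Zar[-1] \to \caM(1)_S$ in $\D(\Sh(\Sm_{S,\Zar},\integers))$ restricting correctly over $\P^1$. By Theorem \ref{jr5t4565} we have $\caM(1)_S \cong \caO^*_{/S}[-1]$, so the orientation is provided by the class of the tautological line bundle $\caO(-1)$ on $\P^\infty$, exactly as was already done $p$-completely in the construction of $o$ for $D_p$ in section \ref{jfezztrzz} (Proposition \ref{vfjt43}); the same argument now works integrally using Theorem \ref{jr5t4565} in place of its $p$-completed shadow. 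The compatibility with the $\P^1$-level (i.e. that this is genuinely an orientation and not merely a Thom-type class) is checked via the isomorphism $\psi \colon (\P^1,\{\infty\}) \to \GmS \wedge S^1$ and the computation at the end of section \ref{jfezztrzz} that the composite $\integers[\P^1,\{\infty\}]_\Zar \to \caO^*_{/S}[1]$ classifies the tautological bundle on $\P^1$.

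For uniqueness: two orientations differ by a unit, so it suffices to compute $\MZ_X^{2,1}(\P^\infty_X)$, or rather the relevant reduced group $\widetilde{\MZ}_X^{2,1}(\P^\infty_X)$ of classes restricting to zero over the base point, and show the set of orientations is a torsor under something forcing uniqueness — concretely, that $\Hom_{\D(\Sh(\Sm_{S,\Zar},\integers))}(\integers[\GmS,\{1\}]_\Zar,\caO^*_{/S}) \cong \integers$ with the two admissible generators ($\pm 1$) distinguished by the normalization at $\P^1$. This is precisely the rigidity statement already invoked in section \ref{jfezztrzz} (``this composition is independent of the particular choice of $\varphi$''), now used over $\integers$ rather than $\integers_p$; it follows from Theorem \ref{jr5t4565} together with the fact that $\caO^*_{/S}(\GmS)$-valued maps out of the generator of $\integers[\GmS,\{1\}]_\Zar$ are pinned down by the coordinate function. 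Finally one uses base change: for general smooth $X$ over a Dedekind domain of mixed characteristic both existence and uniqueness are pulled back from $S$ via Theorem \ref{grerhrh3} and the cartesian nature of the family $\MZ$, reducing to the case $X=S$ just handled (and, via $f^*$ along $\Spec K \to S$, to the classical computation over a field recorded in Theorem \ref{ji67fhll}).

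For the formal group law: the group structure on $\P^\infty$ classifying tensor product of line bundles induces the formal group law on $\MZ_X^{**}(\P^\infty_X)$, and by the identification $\caM(1)_S \cong \caO^*_{/S}[-1]$ the first Chern class of a line bundle is literally (the shift of) its class in $\caO^*_{/S}$, i.e. $c_1$ is additive under tensor product of line bundles because the Picard group is additive. Hence the formal group law is $F(x,y) = x+y$. Over a field this is the standard fact that motivic cohomology is additively oriented; over $S$ it follows from the same computation, and over general $X$ by base change.

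\textbf{Main obstacle.} The genuinely non-formal input is Theorem \ref{jr5t4565}, the identification $\caM(1)_S \cong \caO^*_{/S}[-1]$ together with the statement that the induced map $\integers[\GmS,\{1\}]_\Zar \to \caO^*_{/S}$ is the canonical one; once that is in hand, both the construction of the orientation and the rigidity needed for uniqueness are essentially bookkeeping, repeating the arguments of section \ref{jfezztrzz} with integral coefficients. The only point requiring a little care is matching the $\P^1$-normalization across the equivalences $\MZ \cong \caM$ and $\psi \colon (\P^1,\{\infty\}) \simeq \GmS \wedge S^1$, so that the class we write down is verified to be an orientation in the precise sense (restricting to the canonical Thom class over $\P^1_X$) rather than merely an element of the right bidegree.
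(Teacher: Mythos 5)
Your existence argument follows the paper's route: Theorem \ref{jr5t4565} identifies the first space of an $\Omega$-$\P^1$-spectrum model of $\MZ_S$ with $\caO^*_{/S}[1]$, the class of the tautological bundle then gives the orientation, the $\P^1$-normalization is checked through $(\P^1,\{\infty\})\cong\GmS\wedge S^1$ as at the end of the construction of $D_p$, and the general case is obtained by pullback. Your formal-group-law argument via additivity of the class in $\Pic$ under tensor product can be made to work (modulo a colimit argument over the skeleta of $\P^\infty\times\P^\infty$), though it is a different route from the paper, which gets additivity for free from the same vanishing statement it uses for uniqueness.

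The genuine gap is in uniqueness. Two orientations of $\MZ_X$ do not ``differ by a unit'': both are classes in $\widetilde{H}^{2,1}(\P^\infty_X)$ restricting to the fixed canonical class on $\P^1_X$, so, expanding with respect to one of them, they differ by a series $\sum_{i\ge 2} a_i c^i$ with $a_i\in H^{2-2i,1-i}(X)$. Your rigidity statement $\Hom_{\D(\Sh(\Sm_{S,\Zar},\integers))}(\integers[\GmS,\{1\}]_\Zar,\caO^*_{/S})\cong\integers$ only pins down the restriction to $\P^1$, i.e.\ the normalization of the linear coefficient, which both orientations share by definition; it says nothing about the higher coefficients $a_i$, so uniqueness does not follow from it. The paper disposes of these in one line: motivic cohomology of a smooth scheme over a Dedekind domain of mixed characteristic vanishes in negative weights, hence $a_i=0$ for $i\ge 2$, and the same vanishing kills the coefficients $a_{ij}$ with $i+j\ge 2$ of the formal group law. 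If you prefer your stated plan of computing $\widetilde{H}^{2,1}(\P^\infty_X)$ directly, you must actually carry it out, e.g.\ using Theorem \ref{jr5t4565} to identify $H^{2,1}(Y)$ with $\Pic(Y)$ for $Y\in\Sm_S$ and a Milnor-sequence argument over the skeleta $\P^n_X$ (the $\lim^1$ term vanishes since $\widetilde{H}^{1,1}(\P^n_X)=0$), so that the reduced group is free of rank one on the tautological class; as written, the proposal never establishes anything about classes on $\P^\infty$ beyond their restriction to $\P^1$.
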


\begin{proof}
Let $S$ be the spectrum of a Dedekind domain of mixed characteristic. Let $P \in \D(\Sh(\Sm_{S,\Zar},\integers))$
be the first $\bA^1$- and Nisnevich-local space of an $\Omega$-$\P^1$-spectrum model of $\MZ_S$.
Then by Theorem \ref{jr5t4565} there is a canonical isomorphism $P \cong \caO^*_{/S}[1]$. Moreover by the proof
of this theorem the canonical map $\integers[\P^1,\{\infty\}]_\Zar \to P$ induced by the first bonding map
is induced by the suspension of the canonical map $\integers[\GmS,\{1\}]_\Zar \to \caO^*_{/S}$, using the canonical
isomorphism $(\P^1,\{\infty\}) \cong \GmS \wedge S^1$ in $\caH_\bullet(S)$. Thus our map
$\integers[\P^1,\{\infty\}]_\Zar \to \caO^*_{/S}[1]$ classifies the line bundle $\caO(-1)$.
So the map $\Sigma^{-2,-1} \Sigma_+^\infty \P^\infty \to \MZ_S$ corresponding to the map
$\integers[\P^\infty]_\Zar \to \caO^*_{/S}[1]$ which classifies the tautological
line bundle is an orientation of $\MZ_S$. Pulling back to any smooth scheme $X$ over $S$ gives an
orientation of $\MZ_X$. Since motivic cohomology of $X$ with negative weight vanishes
this orientation is unique and the corresponding formal group law is the additive one.
\end{proof}

By pulling back the unique orientation of $\MZ_{\Spec(\integers)}$ to any base scheme $X$ we
see that $\MZ_X$ has a canonical additive orientation.

\begin{remark}
\label{ht63423}
We note that over smooth schemes $X$ over Dedekind domains of mixed characteristic or over fields
the orientation map $\MGL_X \to \MZ_X$ has a unique structure of an $E_\infty$-map.
This $E_\infty$-map is achieved as the composition of the $E_\infty$-maps
$$\MGL_X \to s_0 \MGL_X \cong s_0 \unit \to \s_0 \MZ_X \to \MZ_X,$$
where the last map exists since the map $f_0 \MZ_X \to s_0 \MZ_X$ is an isomorphism
since $\MZ_X$ is $0$-truncated.

Thus for any base scheme $X$ the orientation $\MGL_X \to \MZ_X$ has a canonical
$E_\infty$-structure. Since $\MGL_X$ has a strong periodization this gives a second
proof that $\MZ_X$ is strongly periodizable.
\end{remark}

We see that we can factor the orientation map $\MGL \to \MZ_{\Spec(\integers)}$ through
the quotient $\MGL/(x_1,x_2,\ldots)\MGL$, where the $x_i$ are images of generators of $\MU_*$
with respect to the natural map $\MU_* \to \MGL_{2*,*}$.
Pulling back this factorization to any base scheme $X$ we get a map
$\Phi_X \colon \MGL_X / (x_1,x_2,\ldots)\MGL_X \to \MZ_X$.

\begin{theorem}
\label{gh543tz}
Let $R$ be a commutative ring and $X$ a base scheme whose positive residue characteristics are all invertible
in $R$. Then $\Phi_X \wedge M_R$, where $M_R$ denotes the Moore spectrum on $R$, is an isomorphism. 
\end{theorem}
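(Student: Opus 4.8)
The plan is to reduce everything to the case of a field by means of the base change results established earlier, and then to invoke the known field case of the Hopkins-Morel isomorphism. First I would observe that by Lemma \ref{vdgther} (applied over $\Spec(\integers)$, or more precisely over the base scheme $X$ after noting that $\DM$ and $\SH$ form stable homotopy functors) it suffices to check that $\Phi_X \wedge M_R$ becomes an isomorphism after applying $\bL i_x^*$ for every point $x \in X$. Since $\MGL$, the quotient $\MGL/(x_1,x_2,\ldots)\MGL$, the orientation, and the Moore spectrum $M_R$ are all stable under base change (the $x_i$ pull back to the $x_i$, and $M_R$ is defined by a cofiber sequence of sphere spectra which is preserved by $f^*$), we have $\bL i_x^*(\Phi_X \wedge M_R) \cong \Phi_{\kappa(x)} \wedge M_R$. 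Here I would use Theorem \ref{grerhrh3} to identify $\bL i_x^* \MZ_X$ with $\MZ_{\kappa(x)}$ compatibly with orientations, so that the pulled-back map is exactly the map $\Phi_{\kappa(x)}$ attached to the residue field. Note also that the hypothesis on $X$ — that all positive residue characteristics are invertible in $R$ — is inherited by each $\kappa(x)$: if $\kappa(x)$ has characteristic $p>0$ then $p$ is a positive residue characteristic of $X$, hence invertible in $R$.

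Thus the theorem is reduced to the statement that for a field $k$ whose characteristic (if positive) is invertible in $R$, the map $\Phi_k \wedge M_R \colon (\MGL_k/(x_1,x_2,\ldots)\MGL_k) \wedge M_R \to \MZ_k \wedge M_R$ is an isomorphism in $\SH(k)$. This is precisely the generalized Hopkins-Morel isomorphism over fields, which is the main result of Hoyois's paper \cite{hoyois.hopkins-morel} (building on \cite{hoyois-kelly-oestvaer}); in characteristic $0$ no inversion is needed and the statement goes back to Hopkins-Morel and Hoyois, while in characteristic $p>0$ one needs $p$ invertible in $R$, which is exactly our hypothesis. I would also need to know that the map $\Phi_k$ appearing here agrees (up to the canonical identifications) with the map denoted $\phi$ or $\theta_R$ in loc.\ cit.; this follows from the uniqueness of the orientation on $\MZ_k$ established above (motivic cohomology in negative weights vanishes) together with Theorem \ref{ji67fhll} identifying $\MZ_k$ with the usual motivic Eilenberg-MacLane spectrum, for which the comparison map is by construction the one in Hoyois's theorem.

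The main obstacle is bookkeeping rather than mathematics: one must check carefully that the various base change isomorphisms — $\bL f^* \MGL_Y \cong \MGL_X$, $\bL f^*(\MGL/(x_1,\ldots)) \cong \MGL_X/(x_1,\ldots)$, $\bL f^* M_R \cong M_R$, and $\bL f^* \MZ_Y \cong \MZ_X$ (Theorem \ref{grerhrh3} and Lemma \ref{bht5gergg}) — are all compatible with the orientations and with the maps $\Phi$, so that $\bL i_x^* \Phi_X$ really is $\Phi_{\kappa(x)}$ and not merely abstractly isomorphic to it. The smash product with $M_R$ commutes with $\bL i_x^*$ since both are triangulated functors and $M_R$ is a cellular finite spectrum; once this is in place the argument closes immediately via Lemma \ref{vdgther} and the field case.
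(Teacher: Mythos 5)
Your proposal is correct and takes essentially the same route as the paper: reduce to the residue fields by base change, identify the pulled-back map with $\Phi_{\kappa(x)}$ via Theorem \ref{grerhrh3} (together with Lemma \ref{bht5gergg}), and then quote the field case from \cite{hoyois.hopkins-morel}. The only cosmetic difference is that the paper first notes that $\Phi_X \wedge M_R$ is pulled back from the spectrum of a suitable localization of $\integers$, so that Lemma \ref{vdgther} (stated only for Dedekind domains) literally applies, whereas you invoke pointwise conservativity directly over a general base scheme $X$; that gap is closed by exactly this one-line reduction.
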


\begin{proof}
We only have to show this statement for $X$ being equal to the spectrum of a localization of $\integers$.
Then it follows by pullback to the points of $X$ and \cite[Theorem 7.12]{hoyois.hopkins-morel}
using Theorem \ref{grerhrh3} and Lemma \ref{vdgther}.
\end{proof}

\begin{corollary}
\label{h5r324we}
Let $R$ be a commutative ring and $X$ a base scheme whose positive residue characteristics are all invertible
in $R$. Then $\MR_X$ (which denotes $\MZ_X$ with $R$-coefficients) is cellular.
\end{corollary}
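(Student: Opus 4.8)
The statement follows immediately from Theorem \ref{gh543tz} once one knows that $\MGL$ is cellular. The plan is this. First I would recall that the algebraic cobordism spectrum $\MGL_X$ is a cellular object of $\SH(X)$ for every base scheme $X$: over a field this is due to Dugger--Isaksen, and the general case follows by base change, since the pullback functor $\SH(\Spec(\integers)) \to \SH(X)$ commutes with homotopy colimits and carries the motivic spheres $S^{p,q}$ to the motivic spheres, so it preserves cellularity, and $\MGL_X = f^*\MGL_{\Spec(\integers)}$. I would also recall the standard closure properties of the class of cellular objects in $\SH(X)$: it is closed under arbitrary homotopy colimits, under (de)suspension by the spheres $S^{p,q}$, under cofibers of maps between cellular objects, and under the smash product of two cellular objects.

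Second, I would observe that $\MGL_X/(x_1,x_2,\ldots)\MGL_X$ is cellular. By construction it is the homotopy colimit of the sequence of finite quotients $\MGL_X/(x_1,\ldots,x_n)\MGL_X$, where each step is the cofiber of a map $\Sigma^{2n,n}\MGL_X/(x_1,\ldots,x_{n-1})\MGL_X \overset{x_n}{\longrightarrow} \MGL_X/(x_1,\ldots,x_{n-1})\MGL_X$. Starting from the cellular spectrum $\MGL_X$, each finite quotient is cellular by the closure under suspension and cofibers, and the homotopy colimit of the resulting tower is cellular by closure under homotopy colimits.

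Third, I would note that the Moore spectrum $M_R$ is cellular in $\SH(X)$: it is pulled back from the Moore spectrum over $\Spec(\integers)$, which in turn is pulled back from the topological Moore spectrum and is built out of the unit $\unit$ by a homotopy colimit of cofiber sequences of (wedges of) spheres. Finally, by Theorem \ref{gh543tz} the map $\Phi_X \wedge M_R$ is an isomorphism in $\SH(X)$ — this is exactly where the hypothesis that the positive residue characteristics of $X$ are invertible in $R$ enters — so that
$$\MR_X = \MZ_X \wedge M_R \;\cong\; \bigl(\MGL_X/(x_1,x_2,\ldots)\MGL_X\bigr) \wedge M_R .$$
The right-hand side is a smash product of two cellular objects, hence cellular, and therefore $\MR_X$ is cellular. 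The only ingredient that is not purely formal is the cellularity of $\MGL$ over a general base; I expect that to be the main point, and I would dispatch it by the base-change argument above, everything else being the routine stability of cellular objects under homotopy colimits, cofibers, and smash products.
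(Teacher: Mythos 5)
Your proof is correct and is essentially the argument the paper intends: Corollary \ref{h5r324we} is stated without a separate proof precisely because it follows formally from Theorem \ref{gh543tz} together with the standard closure properties of cellular objects in $\SH(X)$ (homotopy colimits, cofibers, suspensions by the spheres $S^{p,q}$, smash products), applied to $\bigl(\MGL_X/(x_1,x_2,\ldots)\MGL_X\bigr)\wedge M_R$. One caveat in your write-up: the justification of the cellularity of $\MGL_X$ is circular as stated. You invoke Dugger--Isaksen over a field and then propose to conclude the general case by pulling back along $X \to \Spec(\integers)$; but $\Spec(\integers)$ is not a field, so this base-change step already presupposes cellularity of $\MGL_{\Spec(\integers)}$ rather than deducing it. The repair is immediate: the Dugger--Isaksen argument (Grassmannians carry algebraic cell structures defined over $\integers$, and Thom spaces of vector bundles over such cellular smooth schemes are stably cellular) works over an arbitrary Noetherian base, in particular over $\Spec(\integers)$, after which pullback to any $X$ is indeed harmless since $f^*$ preserves spheres and homotopy colimits. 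With that correction, your chain of deductions --- cellularity of the quotient via the tower of cofiber sequences along the $x_i$, cellularity of the Moore spectrum $M_R$, and the identification $\MR_X \cong \bigl(\MGL_X/(x_1,x_2,\ldots)\MGL_X\bigr)\wedge M_R$ supplied by Theorem \ref{gh543tz} (which is exactly where the hypothesis on residue characteristics enters) --- is precisely the intended proof.
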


\subsection{The motivic dual Steenrod algebra}

In the whole section we fix a prime $l$.

For a base scheme $S$ we denote by $\underline{\Pic}_S$ a strictification
of the $2$-functor which assigns to any $X \in \Sm_S$ the Picard groupoid
of line bundles on $X$. We denote by $\nu \underline{\Pic}_S$ the motivic space which
assigns to any $X \in \Sm_S$ the nerve of $\underline{\Pic}_S(X)$.

\begin{proposition}
\label{ghtr543e}
Let $S$ be a regular base scheme and let $f \colon \P_S^\infty \to \nu \underline{Pic}_S$
be a map classifying a $\mathbb{G}_m$-torsor $P$ on $\P_S^\infty$. Then there is an
$\bA^1$-fiber sequence
$$P \to \P_S^\infty \to \nu \underline{\Pic}_S$$
of motivic spaces.
\end{proposition}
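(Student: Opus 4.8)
The plan is to exhibit $\nu\underline{\Pic}_S$ as a model of the classifying space $\mathsf{B}\mathbb{G}_m$ in the motivic homotopy category, and then to recognize the map $f$ as (up to $\bA^1$-equivalence) the map $\P^\infty_S \to \mathsf{B}\mathbb{G}_m$ classifying a $\mathbb{G}_m$-torsor, whose homotopy fiber is by definition the total space $P$. Concretely, over a regular base $S$ one has that $\P^\infty_S$ represents the functor $X \mapsto \Pic(X)/(\text{line bundles generated by one section})$ — more precisely $\P^\infty_S$ has the $\bA^1$-homotopy type classifying line bundles together with a trivialization at infinity / a choice of line bundle with generating sections — but the cleanest route is: the inclusion of the Picard groupoid into the groupoid of $\mathbb{G}_m$-torsors is an equivalence, so $\nu\underline{\Pic}_S \simeq \nu(\mathsf{B}\mathbb{G}_m)$, the simplicial presheaf $X \mapsto \mathsf{B}\mathbb{G}_m(X)$. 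One knows (Morel--Voevodsky) that this models $\mathsf{B}_{\text{Nis}}\mathbb{G}_m$ and that over a regular base $H^1_{\Nis}(X,\mathbb{G}_m) = \Pic(X)$, so that $[\_,\nu\underline{\Pic}_S]$ computed in $\caH_\bullet(S)$ agrees with isomorphism classes of line bundles.

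First I would set up the fiber sequence abstractly. A $\mathbb{G}_m$-torsor $P \to \P^\infty_S$ corresponds to a map $f\colon \P^\infty_S \to \nu\underline{\Pic}_S$ together with a chosen natural trivialization exhibiting $P$ as the pullback along $f$ of the canonical $\mathbb{G}_m$-torsor over $\nu\underline{\Pic}_S$ (the "universal line bundle with its $\mathbb{G}_m$-action", i.e. the point inclusion $\ast \to \nu\underline{\Pic}_S$ viewed as a $\mathbb{G}_m$-torsor via the loop space identification $\Omega\,\nu\underline{\Pic}_S \simeq \mathbb{G}_m$). Since $\nu\underline{\Pic}_S$ is, as a simplicial presheaf, a delooping of the (strict, hence $\bA^1$-rigid) sheaf of groups $\mathbb{G}_m$, the square
\begin{equation}
\xymatrix{P \ar[r] \ar[d] & \ast \ar[d] \\ \P^\infty_S \ar[r]^-{f} & \nu\underline{\Pic}_S}
\end{equation}
is homotopy cartesian in the Nisnevich-local model structure on simplicial presheaves on $\Sm_S$, simply by the definition of $P$ as the torsor classified by $f$ and the fact that $\ast \to \nu\underline{\Pic}_S$ is the universal torsor. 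The only real content is to promote this to a homotopy cartesian square — equivalently an $\bA^1$-fiber sequence — after $\bA^1$-localization.

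The main step, and the main obstacle, is the $\bA^1$-descent: I must show that the homotopy pullback square above remains homotopy cartesian after $\bA^1$-localization. The standard tool is that $\bA^1$-localization preserves homotopy pullbacks along maps whose fibers are $\bA^1$-local, or more robustly that the base $\nu\underline{\Pic}_S$ is $\bA^1$-local (up to the appropriate fibrant replacement) and the map $f$ is an $\bA^1$-fibration after fibrant replacement. Here I would invoke that $\mathsf{B}\mathbb{G}_m$ is $\bA^1$-local over a regular base — this is the Morel--Voevodsky computation $\bA^1$-$\mathsf{B}\mathbb{G}_m \simeq \P^\infty$ together with the fact that $\nu\underline{\Pic}_S$, being a $K(\mathbb{G}_m,1)$ for the strictly $\bA^1$-invariant sheaf $\mathbb{G}_m$, is already $\bA^1$-local — so no further localization distorts the base, and a $\mathbb{G}_m$-torsor being a fibration with $\bA^1$-local (indeed discrete, namely $\mathbb{G}_m$) fibers stays a fibration after $\bA^1$-localization. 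Then the homotopy fiber computation commutes with $\bA^1$-localization and we are done. The point requiring care is regularity of $S$: it is needed precisely to guarantee $\mathbb{G}_m$ is strictly $\bA^1$-invariant (hence $\nu\underline{\Pic}_S$ is $\bA^1$-local) and that $\Pic$ is $\bA^1$-invariant, so that the identification of $\nu\underline{\Pic}_S$ with $\P^\infty_S$ in $\caH_\bullet(S)$ is available; I would isolate these inputs explicitly and cite Morel--Voevodsky for them.

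Finally I would record the consequence in the precise form stated: the sequence $P \to \P^\infty_S \to \nu\underline{\Pic}_S$ is an $\bA^1$-fiber sequence of motivic spaces, i.e. upon Nisnevich-$\bA^1$-fibrant replacement the composite is trivial and $P$ maps by a weak equivalence to the homotopy fiber of $f$. I expect the write-up to be short once the two regularity-dependent facts ($\nu\underline{\Pic}_S$ is $\bA^1$-local; torsor projections are $\bA^1$-fibrations with $\bA^1$-local fiber) are quoted, with the bulk of the argument being the bookkeeping that "homotopy fiber of a classifying map $=$ total space of the torsor" survives $\bA^1$-localization.
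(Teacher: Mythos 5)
Your argument is correct and is essentially the paper's proof: in both cases the sequence is a fiber sequence before $\bA^1$-localization (objectwise/Nisnevich-locally, since the homotopy fiber of the classifying map into the nerve of the Picard groupoid is exactly the total space of the torsor), and the conclusion follows because $\nu \underline{\Pic}_S$ is Nisnevich- and $\bA^1$-local over a regular base, together with standard properness of the motivic model structure. Your extra remarks identifying $\nu\underline{\Pic}_S$ with $\mathsf{B}\GmS$ and $\P^\infty_S$ are not needed for the statement but do not affect the argument.
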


\begin{proof}
The sequence is a fiber sequence in simplicial presheaves equipped with a model
structure with objectwise weak equivalences. Thus the claim follows from
the $\bA^1$- and Nisnevich-locality of $\nu \underline{\Pic}_S$
(and e.g. right properness of motivic model structures). 
\end{proof}

For a base scheme $S$ we let $W_{S,n,k}$ be the $\mathbb{G}_m$-torsor on
$\P_S^k$ corresponding to the line bundle $\caO_{\P^k}(-n)$. We let
$W_{S,n}:= \colim_k W_{S,n,k}$ be the corresponding $\mathbb{G}_m$-torsor
on $\P_S^\infty$.

We are going to compute the motivic cohomology of $W_{S,n}$ with $\integers/m$-coefficients
for $m|n$
relative to the motivic cohomology of the base $S$. We orient ourselves
along the lines of \cite[\S 6]{voevodsky.power}.

We have a cofibration sequence
\begin{equation}
\label{ghrez545}
W_{S,n,+} \to \caO_{\P_S^\infty}(-n)_+ \to \Th(\caO_{\P_S^\infty}(-n)).
\end{equation}

For a motivic space $\caX$ over $S$ let $$H^{p,q}(\caX):=\Hom_{\SH(S)}(\Sigma^\infty \caX_+,\Sigma^{p,q} \MZ)$$
be the motivic cohomology of $\caX$. More generally for an abelian group $A$ we set
$$H^{p,q}(\caX,A):=\Hom_{\SH(S)}(\Sigma^\infty \caX_+,\Sigma^{p,q} \MA).$$
We denote the respective reduced motivic cohomology groups of pointed motivic spaces
by $\widetilde{H}$.

Then (\ref{ghrez545}) gives a long exact sequence
\begin{equation}
\label{ju907tff}
\cdots \to H^{*-2,*-1}(S) \poauf \sigma \pozu \overset{n \sigma}{\longrightarrow}
H^{*,*}(S) \poauf \sigma \pozu \to H^{*,*}(W_{S,n}) \to
H^{*-1,*-1}(S) \poauf \sigma \pozu \to \cdots.
\end{equation}

Here $\sigma$ is the class of $\caO_{\P_S^\infty}(-1)$ in $H^{2,1}(\P_S^\infty)$.

For any $m>0$ let $\beta_m \colon H^{*,*}(\_,\integers/m) \to H^{*+1,*}(\_)$
be the Bockstein homomorphism.

Let $v_n \in H^{2,1}(W_{S,n})$ be the pullback of $\sigma$ under the canonical map
$W_{S,n} \to \P_S^\infty$.

\begin{lemma}
\label{gr4thgrw}
For any $m>0$ there is a $u \in H^{1,1}(W_{S,n},\integers/m)$ such that
the restriction of $u$ to $*$ is $0$ and such that $\beta_m(u)=\frac{n}{\mathrm{gcd}(m,n)} \cdot v_n$.
If $S$ is smooth over a Dedekind domain of mixed characeristic or over a field then this
$u$ is unique with these properties.
\end{lemma}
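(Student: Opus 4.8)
The plan is to extract $u$ from the long exact sequence (\ref{ju907tff}) with $\integers/m$-coefficients, exactly along the lines of \cite[\S 6]{voevodsky.power}. First I would write down the mod-$m$ version of (\ref{ju907tff}): the cofibration sequence (\ref{ghrez545}) smashed with $\MZm$ gives a long exact sequence
\[
\cdots \to H^{*-2,*-1}(S,\integers/m) \poauf \sigma \pozu \overset{n\sigma}{\longrightarrow} H^{*,*}(S,\integers/m) \poauf \sigma \pozu \to H^{*,*}(W_{S,n},\integers/m) \to H^{*-1,*-1}(S,\integers/m)\poauf\sigma\pozu \to \cdots,
\]
where multiplication by $n$ on the coefficient group $\integers/m$ has kernel and cokernel both isomorphic (noncanonically) to $\integers/\gcd(m,n)$. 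In bidegree $(1,1)$ the relevant piece reads
\[
H^{-1,0}(S,\integers/m)\poauf\sigma\pozu \to H^{1,1}(S,\integers/m)\poauf\sigma\pozu \to H^{1,1}(W_{S,n},\integers/m) \to H^{0,0}(S,\integers/m)\poauf\sigma\pozu \overset{n\sigma}{\longrightarrow} H^{2,1}(S,\integers/m)\poauf\sigma\pozu.
\]
The element $1 \in H^{0,0}(S,\integers/m)$ maps under $n\sigma$ to $n\sigma \in H^{2,1}(S,\integers/m)\poauf\sigma\pozu$, which is $\gcd(m,n)$-torsion; I would instead consider $\tfrac{m}{\gcd(m,n)}\cdot 1$, which is killed by $n\sigma$ since $n\cdot\tfrac{m}{\gcd(m,n)}\equiv 0 \pmod m$. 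Hence $\tfrac{m}{\gcd(m,n)}\cdot 1$ lifts to a class $u' \in H^{1,1}(W_{S,n},\integers/m)$. Its restriction to the base point $*$ (which is a section of $W_{S,n}\to S$, pulled back from $S$) lies in $H^{1,1}(S,\integers/m)\poauf\sigma\pozu$; I would correct $u'$ by the appropriate element of that subgroup to obtain a $u$ restricting to $0$ at $*$ — this is possible because the map $H^{1,1}(S,\integers/m)\poauf\sigma\pozu \to H^{1,1}(W_{S,n},\integers/m)$ is exactly the kernel of restriction to the "cell at infinity", matching the decomposition coming from the splitting $W_{S,n}\to S$.

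Next I would compute $\beta_m(u)$. Since the Bockstein is compatible with the long exact sequences attached to (\ref{ghrez545}) (naturality of $\beta_m$ in the motivic spectrum and compatibility with cofiber sequences), $\beta_m(u)$ maps under $H^{2,1}(W_{S,n})\to H^{1,1}(S)\poauf\sigma\pozu$ (the next boundary in the integral version of (\ref{ju907tff})) to $\beta_m$ applied to the image of $u$, namely $\beta_m\big(\tfrac{m}{\gcd(m,n)}\cdot 1\big)$. Here the point is the standard identity $\beta_m(\tfrac{m}{\gcd(m,n)}\cdot 1) = \tfrac{n}{\gcd(m,n)}\cdot 1$ in $H^{1,1}(S)$ modulo the image of $H^{1,1}(S,\integers/m)$ — this is just the computation of the Bockstein on the generator of $\integers/m\subset\integers/mn$ rescaled, and it forces $\beta_m(u)$ to agree with $\tfrac{n}{\gcd(m,n)}\cdot v_n$ up to the image of $H^{1,1}(S)\poauf\sigma\pozu$ under $n\sigma$. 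Since $v_n$ is the pullback of $\sigma$ and $\beta_m(u)$ also maps to $0$ under restriction to $*$ (because $u$ does and $\beta_m$ is natural), a second diagram chase pins down $\beta_m(u) = \tfrac{n}{\gcd(m,n)}\cdot v_n$ exactly. This is essentially bookkeeping with the two compatible long exact sequences (integral and mod $m$) plus the Bockstein.

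For the uniqueness statement, assume $S$ is smooth over a Dedekind domain of mixed characteristic or over a field, so that by Theorem \ref{grferzt} / Corollary \ref{nrerz6u6} the motivic cohomology of $S$ is Bloch--Levine motivic cohomology, and in particular $H^{1,1}(S,\integers/m)$ and $H^{2,1}(S)$ are computed by the explicit complexes $\caM^S(1)\cong\caO^*_{/S}[-1]$ and $\caM^S(1)/m$ from Theorem \ref{jr5t4565}. Two choices $u, u''$ of classes with the stated properties differ by a class $w\in H^{1,1}(W_{S,n},\integers/m)$ with $w|_* = 0$ and $\beta_m(w)=0$. From the exact sequence (\ref{ju907tff}) in bidegree $(1,1)$, the condition $w|_* = 0$ places $w$ in the image of $H^{1,1}(S,\integers/m)\poauf\sigma\pozu$; but $\beta_m(w)=0$ together with the injectivity of $\beta_m$ on the relevant subgroup — which follows from $H^{1,1}(S,\integers)\cong\caO^*(S)$ being $m$-divisible-free in the sense that $H^{1,1}(S,\integers/m)$ injects into $H^{2,1}(S)[m]$ in weight $\le 1$, i.e. from the vanishing $H^{0,1}(S,\integers/m)=0$ — forces $w=0$. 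The main obstacle I anticipate is precisely this last injectivity/vanishing input: one must know $H^{0,1}(S,\integers/m)=0$ (equivalently $\mu_m(S) = H^0_{\et}$-type groups do not interfere, which over the stated bases follows from $\caM^S(1)\cong\caO^*_{/S}[-1]$ having no cohomology in degree $0$, Corollary \ref{h4rtz55r}), and to correctly identify the "no base-point contribution" subspace with $H^{*,*}(S)\poauf\sigma\pozu$; the rest is routine diagram chasing with (\ref{ju907tff}) and naturality of $\beta_m$.
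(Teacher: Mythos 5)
Your overall strategy (build $u$ directly from the mod-$m$ version of (\ref{ju907tff}) as a lift of $\tfrac{m}{\gcd(m,n)}\cdot 1$, then compute its Bockstein) is different from the paper's, but as written it has two genuine gaps. The central one is the claim that $\beta_m(u)=\tfrac{n}{\gcd(m,n)}\cdot v_n$. Your justification rests on the "standard identity" $\beta_m(\tfrac{m}{\gcd(m,n)}\cdot 1)=\tfrac{n}{\gcd(m,n)}\cdot 1$ in $H^{1,1}(S)$, which is dimensionally impossible ($\beta_m$ on $H^{0,0}(S,\integers/m)$ lands in $H^{1,0}(S)$) and is in fact zero, since $\tfrac{m}{\gcd(m,n)}\cdot 1$ is the reduction of an integral class. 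Moreover the "second diagram chase" is supposed to pin $\beta_m(u)$ down via the boundary map out of $H^{2,1}(W_{S,n})$ in the integral sequence (\ref{ju907tff}); but that boundary lands in $H^{1,0}(S)\poauf\sigma\pozu$, not in $H^{1,1}(S)\poauf\sigma\pozu$, and in any case it annihilates every class pulled back from $\P^\infty_S$, in particular $\tfrac{n}{\gcd(m,n)}v_n$, so it cannot detect the multiple of $v_n$ you need. What you are implicitly reaching for is a genuine braid/octahedron compatibility between the coefficient cofiber sequence $\MZ\overset{m}{\to}\MZ\to\MZm$ and the Gysin cofiber sequence (\ref{ghrez545}) (lift $\tfrac{m}{\gcd(m,n)}$ integrally, multiply by the Euler class $n\sigma$, divide by $m$, land on $\tfrac{n}{\gcd(m,n)}\sigma$); that can be made to work, but it is not what your text proves. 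The second gap is in uniqueness: the first step "$w|_*=0$ places $w$ in the image of $H^{1,1}(S,\integers/m)\poauf\sigma\pozu$" is false -- your own class $u$ has $u|_*=0$ but nonzero boundary, hence is not pulled back from $S$. The correct order is to use $\beta_m(w)=0$ first (lift $w$ to an integral class on $W_{S,n}$) and only then use restriction to $*$.

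For comparison, the paper avoids computing any Bockstein image: it observes that $\tfrac{n}{\gcd(m,n)}v_n$ is $m$-torsion (since $nv_n=0$ by (\ref{ju907tff})) and restricts to $0$ on $*$, and then runs a short chase in the commutative ladder formed by the Bockstein long exact sequences of $W_{S,n}$ and of $S$, with restriction to $*$ as vertical maps; the only nontrivial input is that restriction $H^{1,1}(W_{S,n})\to H^{1,1}(S)$ is an isomorphism, which follows from (\ref{ju907tff}) over the stated bases. This gives existence and uniqueness of $u$ when $S$ is smooth over a Dedekind domain of mixed characteristic or a field, and existence over a general $S$ is then obtained by base change from $\Spec(\integers)$ -- a step your proposal would also need, since over a general base you have no control of the groups entering your chases. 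If you want to salvage your route, replace the faulty Bockstein identity by the braid argument sketched above and redo the uniqueness chase in the order indicated; otherwise the paper's ladder argument is the shorter path.
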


\begin{proof}
Let $\tilde{v} \in H^{2,1}(W_{S,n})$ be any $m$-torsion class which restricts to $0$ on $*$.
Note that $\frac{n}{\mathrm{gcd}(m,n)} \cdot v_n$ is such a class. We will prove that
then there is a unique $\tilde{u} \in H^{1,1}(W_{S,n},\integers/m)$ which restricts to $0$ on $*$
such that $\beta_m(\tilde{u})=\tilde{v}$, assuming $S$ is smooth over a Dedekind domain
of mixed characteristic or over a field. The general statement about existence follows then
by base change (e.g. from $\Spec(\integers)$ to $S$).

Consider the commutative diagram
$$\xymatrix{H^{1,1}(W_{S,n}) \ar[r] \ar[d] & H^{1,1}(W_{S,n},\integers/m) \ar[r] \ar[d] &
H^{2,1}(W_{S,n}) \ar[r]^{\cdot m} \ar[d] & H^{2,1}(W_{S,n}) \\
H^{1,1}(S) \ar[r] & H^{1,1}(S,\integers/m) \ar[r] & H^{2,1}(S) &}$$
with exact rows and where the vertical maps are restriction to $*$
which split the maps on cohomology induced by the structure map $W_{S,n} \to S$.
The exact sequence (\ref{ju907tff}) around $H^{1,1}(W_{S,n})$ shows that
the first vertical map is an isomorphism.
A diagram chase then shows existence and uniqueness of $\tilde{u}$
with the required properties.
\end{proof}

We denote the canonical class in $\widetilde{H}^{1,1}(W_{S,n},\integers/m)$ obtained this way by
$u_{n,m}$ (by demanding that these classes are compatible with base change). We set $u_n:=u_{n,n}$.

We let $$K(\integers/n(1),1)_S, K(\integers(1),2)_S \in \caH_\bullet(S)$$ be the motivic Eilenberg-MacLane spaces which 
represent the functors $\widetilde{H}^{1,1}(\_,\integers/n)$ and $\widetilde{H}^{2,1}(\_)$
on $\caH_\bullet(S)$ respectively.

\begin{proposition}
\label{ghr34twe}
If $S$ is smooth over a Dedekind domain of mixed characteristic or over a field then
we have $K(\integers(1),2)_S \cong \nu \underline{\Pic}_S \cong B \GmS \cong \P_S^\infty$ in $\caH_\bullet(S)$.
\end{proposition}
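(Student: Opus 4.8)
\textbf{Proof strategy for Proposition \ref{ghr34twe}.}

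The plan is to identify all four objects by producing $\bA^1$-weak equivalences in a chain
$$K(\integers(1),2)_S \cong \nu\underline{\Pic}_S \cong B\GmS \cong \P_S^\infty.$$
First I would establish $\nu\underline{\Pic}_S \cong B\GmS$. Over a regular base scheme every line bundle is (Zariski, hence Nisnevich) locally trivial, so the monoidal groupoid $\underline{\Pic}_S(X)$ with its automorphism sheaf $\GmX$ has nerve which is Nisnevich-locally connected with loop sheaf $\GmS$; thus the canonical map $B\GmS \to \nu\underline{\Pic}_S$ (classifying the trivial bundle) is a Nisnevich-local weak equivalence. Since $\nu\underline{\Pic}_S$ is $\bA^1$-local (line bundles on $X$ and on $\mathbb{A}^1_X$ agree over a regular base, by homotopy invariance of $\Pic$), this is an $\bA^1$-weak equivalence.

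Next I would identify $B\GmS \cong \P_S^\infty$. Applying Proposition \ref{ghtr543e} to the tautological $\GmS$-torsor $W_{S,1}$ on $\P_S^\infty$ (the complement of the zero section in $\caO_{\P^\infty}(-1)$) gives an $\bA^1$-fiber sequence $W_{S,1} \to \P_S^\infty \to \nu\underline{\Pic}_S$. The total space $W_{S,1} = \colim_k W_{S,1,k}$ is a colimit of the complements of zero sections in $\caO_{\P^k}(-1)$, which are the punctured affine spaces $\mathbb{A}^{k+1}_S \setminus \{0\}$; these are $(k-1)$-connected in the $\bA^1$-sense, so $W_{S,1}$ is $\bA^1$-contractible. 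Hence $\P_S^\infty \to \nu\underline{\Pic}_S \cong B\GmS$ is an $\bA^1$-weak equivalence.

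Finally, for $K(\integers(1),2)_S \cong \nu\underline{\Pic}_S$: by Theorem \ref{jr5t4565} the first $\bA^1$- and Nisnevich-local space of an $\Omega$-$\GmS$-spectrum model of $\MZ_S$ is $\caO^*_{/S}[1]$, which represents $\widetilde{H}^{2,1}(\_)$, so $K(\integers(1),2)_S$ is the classifying space of the sheaf $\caO^*_{/S}$ in $\caH_\bullet(S)$ — which is exactly $B\GmS$. Then I would invoke Proposition \ref{ghr34twe}'s hypothesis that $S$ is smooth over a Dedekind domain of mixed characteristic or over a field to know the identifications of Theorem \ref{jr5t4565} apply, chaining all equivalences together. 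The main obstacle I expect is the careful bookkeeping in Proposition \ref{ghtr543e}'s fiber-sequence argument: one must check that the fiber sequence of simplicial presheaves remains a fiber sequence after $\bA^1$-localization, for which right properness of the motivic model structure and the $\bA^1$-local/Nisnevich-local nature of $\nu\underline{\Pic}_S$ are the key inputs — but since Proposition \ref{ghtr543e} is already proven in the excerpt, this reduces to feeding it the correct torsor and verifying $\bA^1$-contractibility of $W_{S,1}$, which is routine.
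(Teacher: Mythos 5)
Your proposal is correct and rests on the same key input as the paper: the identification of the weight‑$1$ motivic sheaf with $\caO^*_{/S}[-1]$ (Theorem \ref{jr5t4565}, resp.\ Proposition \ref{hfhzjgg} in the field case), from which the paper deduces the proposition in one line. The extra steps you supply — that $\nu\underline{\Pic}_S \cong B\GmS$ by local triviality and $\bA^1$-invariance of $\Pic$ and $\caO^*$ over a regular base, and that $\P^\infty_S \cong \nu\underline{\Pic}_S$ via Proposition \ref{ghtr543e} applied to the $\bA^1$-contractible torsor $W_{S,1}$ — are exactly the standard identifications the paper leaves implicit, so this is the same argument spelled out in more detail.
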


\begin{proof}
This follows from the fact the motivic sheaf of weight $1$ is in this case $\caO_{/S}^*[-1]$.
\end{proof}

\begin{proposition}
\label{vgrehrr}
If $S$ is smooth over a Dedekind domain of mixed characteristic or over a field then
we have $K(\integers/n(1),1)_S \cong W_{S,n}$ in $\caH_\bullet(S)$.
The isomorphism is given by the class $u_n$.
\end{proposition}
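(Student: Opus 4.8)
The plan is to identify $W_{S,n}$ with the motivic Eilenberg--MacLane space $K(\integers/n(1),1)_S$ by producing a weak equivalence and checking it is an isomorphism on the relevant homotopy sheaves. First I would recall from Proposition \ref{ghr34twe} (and its proof, i.e. Theorem \ref{jr5t4565}) that when $S$ is smooth over a Dedekind domain of mixed characteristic or over a field, the weight-one motivic complex is $\caO_{/S}^*[-1]$, so that $K(\integers(1),2)_S \cong \nu\underline{\Pic}_S \cong \BG \cong \P_S^\infty$ in $\caH_\bullet(S)$, with $K(\integers(1),2)_S$ representing $\widetilde{H}^{2,1}(\_)$. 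The class $u_n \in \widetilde{H}^{1,1}(W_{S,n},\integers/n)$ from Lemma \ref{gr4thgrw} then gives a canonical map $\phi \colon W_{S,n} \to K(\integers/n(1),1)_S$ in $\caH_\bullet(S)$, and the goal is to prove $\phi$ is an isomorphism.

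The key geometric input is the $\bA^1$-fiber sequence of Proposition \ref{ghtr543e}: since $W_{S,n}$ is the $\GmS$-torsor on $\P_S^\infty$ attached to $\caO(-n)$, classified by the composite $\P_S^\infty \cong \nu\underline{\Pic}_S \xrightarrow{\cdot(-n)} \nu\underline{\Pic}_S$, we get an $\bA^1$-fiber sequence
$$W_{S,n} \to \P_S^\infty \xrightarrow{\,-n\,} \nu\underline{\Pic}_S$$
in motivic spaces. On the target side there is the analogous fiber sequence presenting $K(\integers/n(1),1)_S$: from the short exact sequence $0 \to \integers(1) \xrightarrow{n} \integers(1) \to \integers/n(1) \to 0$ of motivic complexes one obtains a fibration sequence $K(\integers/n(1),1)_S \to K(\integers(1),1)_S \to K(\integers(1),1)_S$, but since $\caM(1) \cong \caO_{/S}^*[-1]$ has no cohomology below weight-appropriate degree, $K(\integers(1),1)_S$ is contractible and instead the Bockstein identifies the fiber of $K(\integers(1),2)_S \xrightarrow{n} K(\integers(1),2)_S$ with $K(\integers/n(1),1)_S$. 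Using $K(\integers(1),2)_S \cong \P_S^\infty$ and tracing that under this identification multiplication by $n$ on $\widetilde{H}^{2,1}$ corresponds (on $\P_S^\infty$, via $\sigma$) to the self-map classifying $\caO(-n)$ up to the sign/normalization built into the definition of $u_n$, one sees the two fiber sequences are compatible: the map $\phi$ fits into a morphism of $\bA^1$-fiber sequences over the identity (up to the evident sign) on $\P_S^\infty$ and on $\nu\underline{\Pic}_S \cong \P_S^\infty$. By the long exact sequence of $\bA^1$-homotopy sheaves (or simply by comparing the two fiber sequences in the pointed $\bA^1$-homotopy category, where right properness makes the comparison of fibers a weak equivalence once the base and total maps agree), $\phi$ is an $\bA^1$-weak equivalence.

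The main obstacle is the bookkeeping that the normalization of $u_n$ in Lemma \ref{gr4thgrw} (the condition $\beta_n(u_n) = v_n$, with $v_n$ the pullback of $\sigma$) matches exactly the torsor $\caO(-n)$ rather than $\caO(n)$ or some multiple, i.e. that the map classified by the composite $\P_S^\infty \xrightarrow{\cdot(-n)} \nu\underline{\Pic}_S$ is the one whose fiber is cut out by $u_n$. This is a sign/identification issue involving the Bockstein and the cofiber sequence (\ref{ghrez545}): concretely, one computes $\beta_n$ on the class $u$ built from the boundary in (\ref{ju907tff}) and checks it hits $v_n$ with the stated coefficient, which is precisely the content already verified in Lemma \ref{gr4thgrw}. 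Once this compatibility is in hand, the uniqueness clause of Lemma \ref{gr4thgrw} (valid exactly in the smoothness hypothesis at hand) shows the isomorphism is the canonical one determined by $u_n$, completing the proof.
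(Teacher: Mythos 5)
Your proposal is correct and follows essentially the same route as the paper: compare the torsor fiber sequence of Proposition \ref{ghtr543e} for $\caO_{\P_S^\infty}(-n)$ with the fiber sequence $K(\integers/n(1),1)_S \to K(\integers(1),2)_S \xrightarrow{\cdot n} K(\integers(1),2)_S$ via the identifications of Proposition \ref{ghr34twe}, obtain an induced equivalence on fibers, and use the uniqueness clause of Lemma \ref{gr4thgrw} to identify it with the map classified by $u_n$. One minor slip: $K(\integers(1),1)_S$ is not contractible (it represents units), but that remark is unnecessary, since the fiber of multiplication by $n$ on $K(\integers(1),2)_S$ is $K(\integers/n(1),1)_S$ directly from the coefficient triangle $\integers(1)\xrightarrow{n}\integers(1)\to\integers/n(1)$, which is what both you and the paper actually use.
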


\begin{proof}
Let $f \colon \P_S^\infty \to \nu \underline{\Pic}_S$ be the map classifying the line bundle
$\caO_{\P_S^\infty}(-n)$. $W_{S,n}$ is the corresponding $\mathbb{G}_m$-torsor over $\P_S^\infty$.
Then the diagram
$$\xymatrix{W_{S,n} \ar[r] & \P_S^\infty \ar[r]^f \ar[d] & \nu \underline{\Pic}_S \ar[d] \\
K(\integers/n(1),1)_S \ar[r] & K(\integers(1),2)_S \ar[r]^{\cdot n} & K(\integers(1),2)_S}$$
in $\caH_\bullet(S)$,
where the vertical maps are the canonical identifications,
commutes. Moreover the rows are fiber sequences: the first one by Proposition \ref{ghtr543e},
the second one by definition. It follows that there is a vertical isomorphism
$u' \colon W_{S,n} \to K(\integers/n(1),1)_S$ in $\caH_\bullet(S)$ making the whole diagram
commutative. The uniqueness clause of Lemma \ref{gr4thgrw} shows that $u'=u_n$
finishing the proof.
\end{proof}

For any $X \in \Sm_S$ consider the functor $T_n \colon \underline{\Pic}(X) \to \underline{\Pic}(X)$,
$\caL \mapsto \caL^{\otimes n}$. Its homotopy fiber is the Picard groupoid $\underline{G}_n(X)$ whose objects
are pairs $(\caL,\varphi)$, where $\caL$ is a line bundle on $X$ and $\varphi \colon \caL^{\otimes n} \to \caO_X$
is an isomorphism, and whose morphisms are isomorphisms of line bundles compatible with the
trivializations. Note that we have a fiber sequences
$$\nu \underline{G}_n(X) \to \nu \underline{\Pic}(X) \overset{\nu T_n}{\longrightarrow}
\nu \underline{\Pic}(X)$$
functorial in $X$ and that these fiber sequences also make sence for $X \in \Set^{\Sm_S^\op}$.
%(where $\underline{\Pic}$ of such an ind object is the homotopy limit over the Picard groupoids
%of the entries in the ind diagram).

As in the proof of Proposition \ref{vgrehrr} it follows that we have a canonical
equivalence $K(\integers/n(1),1)_S \cong \nu \underline{G}_n$ in $\caH_\bullet(S)$,
provided that $S$ is smooth over a Dedekind ring of mixed characteristic or over a field.

Since $\nu \underline{G}_n$ is Nisnevich- and $\bA^1$-local it follows

\begin{proposition}
\label{grf4z5we}
Suppose $S$ is smooth over a Dedekind ring of mixed characteristic or over a field
and let $X$ be in $\Sm_S$ or $\Set^{\Sm_S^\op}$.
%(whose corresponding motivic space is by
%definition the homotopy colimit over the diagram exhibiting $X$ as ind object).
There is a canonical group isomorphism between $H^{1,1}(X,\integers/n)$ and the group of
isomorphism classes of $\underline{G}_n(X)$. The boundary map
$H^{1,1}(X,\integers/n) \to H^{2,1}(X)$ corresponds to the map on groupoids which forgets
the trivialization.
\end{proposition}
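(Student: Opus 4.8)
\textbf{Proof plan for Proposition \ref{grf4z5we}.}

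The plan is to identify $H^{1,1}(X,\integers/n)$ with $\pi_0$ of the mapping space $[X, K(\integers/n(1),1)_S]$ in $\caH_\bullet(S)$, and then to transport the already-established equivalence $K(\integers/n(1),1)_S \cong \nu\underline{G}_n$ (constructed just above the statement, via the argument used for Proposition \ref{vgrehrr}) to read off the group of isomorphism classes of $\underline{G}_n(X)$ on the right-hand side. First I would note that since $X$ is either in $\Sm_S$ or in $\Set^{\Sm_S^\op}$, the representability of reduced motivic cohomology gives $\widetilde H^{1,1}(X_+,\integers/n) = H^{1,1}(X,\integers/n) \cong \Hom_{\caH_\bullet(S)}(\Sigma^\infty_+ X, \Sigma^{1,1}\MZn)$, which in the unstable range translates to $\pi_0 \map_{\caH_\bullet(S)}(X_+, K(\integers/n(1),1)_S)$ — here one uses that $K(\integers/n(1),1)_S$ is Nisnevich- and $\bA^1$-local, so the mapping space computed in presheaves agrees with the motivic one. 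Since $\nu\underline{G}_n$ is itself Nisnevich- and $\bA^1$-local (this was observed right before the statement, being the homotopy fiber of $\nu T_n$ on the local object $\nu\underline{\Pic}_S$), the equivalence $K(\integers/n(1),1)_S \cong \nu\underline{G}_n$ identifies this $\pi_0$ with $\pi_0$ of the simplicial-presheaf mapping space $\map(X, \nu\underline{G}_n)$, and for a groupoid-valued presheaf the latter $\pi_0$ over $X$ is precisely the set of isomorphism classes of the groupoid of (homotopy-coherent, but for a $1$-groupoid strict) sections, i.e. of $\underline{G}_n(X)$.

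Next I would promote this bijection to a group isomorphism. The group structure on $H^{1,1}(X,\integers/n)$ is the one coming from the infinite-loop (in the motivic sense, the $\mathbb{G}_m\wedge S^1$-loop) structure on $\Sigma^{1,1}\MZn$, equivalently from the abelian group object structure on $K(\integers/n(1),1)_S$; on the other side, the group structure on isomorphism classes of $\underline{G}_n(X)$ is tensor product of line bundles with the induced trivializations. The key point is that the equivalence $K(\integers/n(1),1)_S \cong \nu\underline{G}_n$ can be taken to be one of grouplike $E_\infty$- (or at least $H$-) spaces: $\nu\underline{\Pic}_S$ is a symmetric monoidal groupoid, hence $\nu\underline{\Pic}_S \cong B\GmS \cong K(\integers(1),2)_S$ as grouplike commutative monoids by Proposition \ref{ghr34twe}, $\nu T_n$ is multiplication by $n$ and is a monoid map, so its homotopy fiber $\nu\underline{G}_n$ inherits a grouplike commutative monoid structure compatible with the one on $K(\integers/n(1),1)_S$ coming from the fiber sequence $K(\integers/n(1),1)_S \to K(\integers(1),2)_S \xrightarrow{\cdot n} K(\integers(1),2)_S$. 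Applying $\pi_0\map(X,-)$ to a map of grouplike commutative monoids gives a group homomorphism, so the bijection above is automatically additive.

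Finally, for the statement about the boundary map $H^{1,1}(X,\integers/n)\to H^{2,1}(X)$: this is the connecting map of the Bockstein-type fiber sequence $K(\integers/n(1),1)_S \to K(\integers(1),2)_S \xrightarrow{\cdot n} K(\integers(1),2)_S$, which under the identifications $K(\integers(1),2)_S \cong \nu\underline{\Pic}_S$ and $K(\integers/n(1),1)_S \cong \nu\underline{G}_n$ becomes exactly $\nu\underline{G}_n \to \nu\underline{\Pic}_S$, the forgetful map $(\caL,\varphi)\mapsto\caL$. Passing to $\pi_0\map(X,-)$ yields the claimed description. The main obstacle I anticipate is not any single deep input but the bookkeeping of making all the identifications — representability, the $\caH_\bullet(S)$-level equivalence $K(\integers/n(1),1)_S\cong\nu\underline{G}_n$, and the compatibility of group structures — simultaneously compatible and functorial in $X$ for both $X\in\Sm_S$ and $X\in\Set^{\Sm_S^\op}$; the fiber-sequence argument of Proposition \ref{vgrehrr}, together with the already-noted locality of $\nu\underline{G}_n$, does all the real work, and the rest is a matter of tracking the monoid structures through that argument.
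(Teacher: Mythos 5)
Your proposal is correct and is essentially the paper's own (largely implicit) argument: the paper deduces the proposition directly from the equivalence $K(\integers/n(1),1)_S \cong \nu \underline{G}_n$ obtained by comparing the fiber sequence $\nu\underline{G}_n \to \nu\underline{\Pic}_S \to \nu\underline{\Pic}_S$ with $K(\integers/n(1),1)_S \to K(\integers(1),2)_S \xrightarrow{\cdot n} K(\integers(1),2)_S$, together with the Nisnevich- and $\bA^1$-locality of $\nu\underline{G}_n$, exactly as you do. Your extra care about transporting the group structures and identifying the Bockstein with the forgetful map is a harmless elaboration of the same route, not a different one.
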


\begin{lemma}
\label{54zwefwe}
Suppose $S$ is smooth over a Dedekind ring of mixed characteristic or over a field.
The class $u_{n,m}$ corresponds under the isomorphism of Proposition \ref{grf4z5we} to
the isomorphism class of the object
$$(p^* \caO_{\P_S^\infty}(-\frac{n}{\mathrm{gcd}(m,n)}),
(p^* \caO_{\P_S^\infty}(-\frac{n}{\mathrm{gcd}(m,n)}))^{\otimes m} \cong
p^* \caO_{\P_S^\infty}(-\mathrm{lcm}(m,n)) \cong \caO_{W_{S,n}}),$$
where the last isomorphism is the $\frac{m}{\mathrm{gcd}(m,n)}$-th tensor power
of the canonical isomorphism $p^* \caO_{\P_S^\infty}(-n) \cong \caO_{W_{S,n}}$.
\end{lemma}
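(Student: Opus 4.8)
The plan is to unwind the definition of $u_{n,m}$ given in Lemma \ref{gr4thgrw} and match it, via Proposition \ref{grf4z5we}, against the explicit object
$$\bigl(p^*\caO_{\P_S^\infty}(-\tfrac{n}{\gcd(m,n)}),\ \varphi\bigr),$$
where $\varphi$ is the displayed trivialization of its $m$-th tensor power. By Proposition \ref{grf4z5we} the isomorphism class of $(\caL,\varphi)\in\underline{G}_n(W_{S,n})$ corresponds to a class $\tilde u\in H^{1,1}(W_{S,n},\integers/m)$, and the boundary (Bockstein) $\beta_m(\tilde u)\in H^{2,1}(W_{S,n})$ is (again by Proposition \ref{grf4z5we}) the class of the underlying line bundle $\caL=p^*\caO_{\P_S^\infty}(-\tfrac{n}{\gcd(m,n)})$, i.e. $\tfrac{n}{\gcd(m,n)}\cdot v_n$ since $v_n$ is by definition the pullback of $\sigma = c_1(\caO_{\P_S^\infty}(-1))$. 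So $\tilde u$ already satisfies the equation $\beta_m(\tilde u) = \tfrac{n}{\gcd(m,n)} v_n$ from Lemma \ref{gr4thgrw}. First I would check that $\tilde u$ restricts to $0$ on the base point $*$: restricting $W_{S,n}\to\P_S^\infty$ to $*\to S$ gives the trivial $\GmS$-torsor, on which $p^*\caO_{\P_S^\infty}(-k)$ together with its canonical trivialization is the zero object of $\underline{G}_n(S)$; hence $\tilde u|_*=0$.

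With both defining properties of $u_{n,m}$ verified, the uniqueness clause of Lemma \ref{gr4thgrw} — valid because $S$ is smooth over a Dedekind ring of mixed characteristic or over a field — forces $\tilde u = u_{n,m}$, which is exactly the claim. The only thing needing genuine care is the bookkeeping that the $m$-th tensor power of $p^*\caO_{\P_S^\infty}(-\tfrac{n}{\gcd(m,n)})$ really is canonically $p^*\caO_{\P_S^\infty}(-\operatorname{lcm}(m,n))$ and that this is trivialized over $W_{S,n}$ via the stated $\tfrac{m}{\gcd(m,n)}$-th power of the tautological isomorphism $p^*\caO_{\P_S^\infty}(-n)\cong\caO_{W_{S,n}}$; this is just the identity $m\cdot\tfrac{n}{\gcd(m,n)} = \operatorname{lcm}(m,n) = \tfrac{m}{\gcd(m,n)}\cdot n$ of integers, together with the observation that $p^*\caO_{\P_S^\infty}(-n)$ is canonically trivial on the total space of the $\GmS$-torsor it defines.

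The step I expect to be the main (mild) obstacle is pinning down the sign/normalization in Proposition \ref{grf4z5we}: one must be sure that the boundary map $H^{1,1}(X,\integers/m)\to H^{2,1}(X)$ described there as "forget the trivialization" agrees on the nose with the Bockstein $\beta_m$ appearing in Lemma \ref{gr4thgrw}, so that the exponent $\tfrac{n}{\gcd(m,n)}$ (and not, say, its negative) comes out. This is a compatibility between the Bockstein triangle $\integers\overset{\cdot m}{\to}\integers\to\integers/m$ and the fiber sequence $\nu\underline G_m\to\nu\underline{\Pic}\overset{\nu T_m}{\to}\nu\underline{\Pic}$ used to set up Proposition \ref{grf4z5we}; it was already recorded in the discussion preceding Proposition \ref{grf4z5we} and in the proof of Proposition \ref{vgrehrr}, so I would simply invoke it. Everything else is formal once this identification is in hand.
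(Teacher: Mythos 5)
Your argument is exactly the paper's: the paper's entire proof of this lemma is the one-line assertion that the displayed object satisfies the two requirements of Lemma \ref{gr4thgrw} (vanishing at the base point and $\beta_m = \tfrac{n}{\gcd(m,n)}v_n$), so uniqueness identifies it with $u_{n,m}$, and your proposal simply spells out that verification together with the compatibility of the Bockstein with the ``forget the trivialization'' boundary already recorded in Proposition \ref{grf4z5we}. Just note that the restriction argument works at the section $* \subset W_{S,n,0}$ (where the canonical unit equals $1$), not on the whole fiber $W_{S,n,0}\cong\GmS$, where by Corollary \ref{g544tte} the class is nonzero.
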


\begin{proof}
This element clearly satisfies the requirements of Lemma \ref{gr4thgrw}.
\end{proof}

\begin{lemma}
\label{gh455zz}
The image of the constant function on $1$ under the isomorphisms
$$(\integers/m)^{\pi_0(S)} \cong H^{0,0}(S, \integers/m) \cong \widetilde{H}^{1,1}(\GmS,\integers/m)$$
corresponds under the isomorphism of Proposition \ref{grf4z5we} to
the object $(\caO_{\GmS}, \varphi)$, where $\varphi$ is given by multiplication with the canonical
unit in $\caO(\GmS)$.
\end{lemma}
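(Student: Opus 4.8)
The plan is to identify both sides of the asserted equality with the mod-$m$ reduction of the tautological unit $t\in\caO(\GmS)^*$, and then to check the two identifications agree. Since every object in sight decomposes over $\pi_0(S)$, I may assume $S$ connected, so that the constant function on $1$ is simply $1\in H^{0,0}(S,\integers/m)=\integers/m$. The isomorphism $H^{0,0}(S,\integers/m)\cong\widetilde H^{1,1}(\GmS,\integers/m)$ is the Tate twist: from the canonical isomorphism $\Sigma^\infty(\GmS,\{1\})\cong\unit(1)[1]$ in $\SH(S)$ one gets $\widetilde H^{1,1}(\GmS,\integers/m)=\Hom_{\SH(S)}(\unit(1)[1],\Sigma^{1,1}\MZm)=\Hom_{\SH(S)}(\unit,\MZm)=H^{0,0}(S,\integers/m)$, and this identification is $H^{*,*}(S,\integers)$-linear. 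Hence the image of $1$ is the fundamental class $\bar\tau\in\widetilde H^{1,1}(\GmS,\integers/m)$, which is the mod-$m$ reduction of the integral fundamental class $\tau\in\widetilde H^{1,1}(\GmS,\integers)$. By Theorem \ref{jr5t4565} (and Proposition \ref{hfhzjgg} in the field case), the isomorphism $\caM(1)\cong\caO^*_{/S}[-1]$ identifies $\widetilde H^{1,1}(\GmS,\integers)$ with $\ker(\caO(\GmS)^*\to\caO(S)^*)$ in such a way that the canonical map $\integers[\GmS,\{1\}]_\Zar\to\caO^*_{/S}$ is the evident one; therefore $\tau$ corresponds to the coordinate $t$, and $\bar\tau$ is the image of $t$ under $\caO(\GmS)^*=H^{1,1}(\GmS,\integers)\to H^{1,1}(\GmS,\integers/m)$.

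Next I would trace $\bar\tau$ through the isomorphism of Proposition \ref{grf4z5we}. The triangle $\caO^*_{/S}[-1]\xrightarrow{m}\caO^*_{/S}[-1]\to\integers/m(1)\to\caO^*_{/S}$ yields, for $X\in\Sm_S$, a short exact sequence $0\to\caO(X)^*/(\caO(X)^*)^m\to H^{1,1}(X,\integers/m)\to\Pic(X)[m]\to 0$, and Proposition \ref{grf4z5we} identifies it with the analogous sequence $0\to\caO(X)^*/(\caO(X)^*)^m\to\{\text{iso.\ classes of }\underline{G}_m(X)\}\to\Pic(X)[m]\to 0$, in which the first map is $\bar v\mapsto[(\caO_X,\cdot v)]$ and the second is $(\caL,\varphi)\mapsto[\caL]$; the compatibility of the second maps is exactly the assertion in Proposition \ref{grf4z5we} that the boundary to $H^{2,1}$ is the functor forgetting the trivialization. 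Now $\bar\tau=\bar t$ lies in the left-hand subgroup, since its image in $\Pic(\GmS)[m]$ is the boundary of a class that lifts to $H^{1,1}(\GmS,\integers)$ and hence vanishes. Consequently $\bar\tau$ corresponds to $[(\caO_{\GmS},\cdot t)]$, which is the object in the statement.

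The one point that needs care is the normalization of the first map of the above Kummer-type sequence — that $H^{1,1}(X,\integers)\to H^{1,1}(X,\integers/m)$ sends a unit $v$ to the class of $(\caO_X,\cdot v)$ rather than to its inverse or to some twist. This is forced by the choice of the isomorphism $\integers(1)\cong\caO^*[-1]$ underlying Proposition \ref{grf4z5we}, and it is the same normalization already used in the proof of Lemma \ref{54zwefwe}, where the Picard component of the boundary is $c_1$, positively oriented; so this is essentially the same verification as there, and indeed one could also check it directly by noting that the finite flat $\mu_m$-torsor $w\mapsto w^m$ on $\GmS$ represents $\bar t$ and carries precisely the $\underline{G}_m$-structure $(\caO_{\GmS},\cdot t)$. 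Once that normalization is pinned down, the lemma follows.
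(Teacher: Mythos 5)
Your argument is correct and is essentially the paper's own proof, just written out in more detail: the paper likewise reduces to the integral statement that the canonical unit of $\caO(\GmS)$ maps to $1$ under $\caO^*(\GmS)\cong H^{1,1}(\GmS)\to\widetilde H^{1,1}(\GmS)\cong H^{0,0}(S)\cong\integers^{\pi_0(S)}$, leaving the mod-$m$ compatibility with the $\underline{G}_m$-description implicit. Your extra care with the Kummer sequence and the normalization $\bar v\mapsto(\caO_X,\cdot v)$ is a legitimate filling-in of that implicit step, not a different method.
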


\begin{proof}
This unit corresponds to $1$ under the map
$$\caO^*(\GmS) \cong H^{1,1}(\GmS) \to \widetilde{H}^{1,1}(\GmS) \cong H^{0,0}(S) \cong \integers^{\pi_0(S)}.$$
\end{proof}

\begin{corollary}
\label{g544tte}
Suppose $S$ is smooth over a Dedekind ring of mixed characteristic or over a field.
Then the image of $u_{n,m}|_{W_{S,n,0}}$ under the isomorphisms
$$\widetilde{H}^{1,1}(W_{S,n,0},\integers/m) \cong \widetilde{H}^{1,1}(\GmS,\integers/m)
\cong H^{0,0}(S, \integers/m) \cong (\integers/m)^{\pi_0(S)}$$
is the constant function on the class of $\frac{m}{\mathrm{gcd}(m,n)}$.
\end{corollary}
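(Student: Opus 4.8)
The plan is to combine Lemma~\ref{54zwefwe} and Lemma~\ref{gh455zz}, so that the statement reduces to bookkeeping with the tautological section of $p^*\caO_{\P_S^\infty}(-n)$ over $W_{S,n}$ and the identity $\mathrm{lcm}(m,n)=n\cdot\frac{m}{\gcd(m,n)}$. First I would make explicit the identification $W_{S,n,0}\cong\GmS$ underlying the first isomorphism in the displayed chain: since $\P_S^0=S$ and $\caO_{\P^0}(-1)=\caO_S$, the total space of $\caO_{\P^0_S}(-n)$ with its zero section removed is $\GmS=\Spec\caO_S[t,t^{-1}]$, and under this identification the tautological section of $p^*\caO_{\P^0_S}(-n)$ over $W_{S,n,0}$ is the coordinate unit $t$. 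Hence the canonical trivialisation $p^*\caO_{\P^0_S}(-n)\cong\caO_{W_{S,n,0}}$ which enters Lemma~\ref{54zwefwe} is, over $W_{S,n,0}$, multiplication by $t$.

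Next I would restrict the object of $\underline{G}_m$ produced by Lemma~\ref{54zwefwe} along $W_{S,n,0}\hookrightarrow W_{S,n}$, using the compatibility of the isomorphism of Proposition~\ref{grf4z5we} with pullback. The line bundle $p^*\caO_{\P_S^\infty}(-\tfrac{n}{\gcd(m,n)})$ becomes canonically trivial over $W_{S,n,0}$, and the trivialisation of its $m$-th tensor power appearing in Lemma~\ref{54zwefwe} --- which is the $\frac{m}{\gcd(m,n)}$-th tensor power of the canonical trivialisation of $p^*\caO_{\P_S^\infty}(-n)$, since $\caO_{\P_S^\infty}(-\mathrm{lcm}(m,n))=\caO_{\P_S^\infty}(-n)^{\otimes m/\gcd(m,n)}$ --- restricts by the previous paragraph to multiplication by $t^{\,m/\gcd(m,n)}$. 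So $u_{n,m}|_{W_{S,n,0}}$ corresponds under Proposition~\ref{grf4z5we} to the object $(\caO_{\GmS},\ \text{multiplication by }t^{\,m/\gcd(m,n)})$ of $\underline{G}_m(\GmS)$.

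Finally I would invoke Lemma~\ref{gh455zz}: along the isomorphisms $(\integers/m)^{\pi_0(S)}\cong H^{0,0}(S,\integers/m)\cong\widetilde{H}^{1,1}(\GmS,\integers/m)$ followed by Proposition~\ref{grf4z5we}, the constant function $1$ corresponds to the object $(\caO_{\GmS},\ \text{multiplication by }t)$. Since the identification of Proposition~\ref{grf4z5we} is a group isomorphism, the group law on $\underline{G}_m$ being tensor product, and the $k$-th tensor power of $(\caO_{\GmS},\ t\,\cdot)$ is $(\caO_{\GmS},\ t^k\,\cdot)$ (each such object restricts trivially over the basepoint $1\in\GmS(S)$, so the computation takes place in reduced cohomology), it follows that $u_{n,m}|_{W_{S,n,0}}$ is $\frac{m}{\gcd(m,n)}$ times the constant function $1$, i.e.\ the constant function on the class of $\frac{m}{\gcd(m,n)}$.

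The one place that needs genuine care, and which I expect to be the main obstacle, is the middle step: making sure that the restriction to the zero-section copy of $\GmS$ extracts exactly the exponent $\frac{m}{\gcd(m,n)}$ (and not $m$, or $\mathrm{lcm}(m,n)$), i.e.\ correctly threading the canonical and tautological sections through both the equality $\mathrm{lcm}(m,n)=n\cdot\frac{m}{\gcd(m,n)}$ and the reduction from $W_{S,n}$ to $W_{S,n,0}$. The rest is a direct application of Lemmas~\ref{54zwefwe} and~\ref{gh455zz}.
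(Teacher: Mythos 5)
Your proposal is correct and is essentially the paper's own argument: the paper proves the corollary precisely by combining Lemma \ref{54zwefwe} and Lemma \ref{gh455zz}, and your write-up just makes the bookkeeping (restriction of the canonical trivialization to $W_{S,n,0}\cong\GmS$, the identity $\mathrm{lcm}(m,n)=n\cdot\frac{m}{\gcd(m,n)}$, and the group structure from Proposition \ref{grf4z5we}) explicit.
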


\begin{proof}
This follows from Lemmas \ref{54zwefwe} and \ref{gh455zz}.
\end{proof}

\begin{definition}
Let $E$ be a motivic ring spectrum (i.e. a commutative monoid in $\SH(S)$) such that
$E_{0,0}$ is a $\integers/n$-algebra. A \emph{mod-$n$ orientation} on $E$ consists
of an orientation $c \in E^{2,1}(\P_S^\infty)$ and a class
$u \in \widetilde{E}^{1,1}(W_{S,n})$ which restricts to $1$ under the map
$$\widetilde{E}^{1,1}(W_{S,n}) \to \widetilde{E}^{1,1}(W_{S,n,0})
\cong \widetilde{E}^{1,1}(\GmS) \cong E_{0,0}.$$ 
\end{definition}

It follows from Corollary \ref{g544tte} that the usual orientation of $\MZm$ together
with the class $u_{n,m}$ defines a mod-$n$ orientation on $\MZm$ provided $m|n$.
We call this orientation the canonical mod-$n$ orientation of $\MZm$.

Note also that any mod-$n$ orientation gives rise to a mod-$n'$ orientation
for $n|n'$.

\begin{theorem}
Let $E$ be a motivic ring spectrum such that $E_{0,0}$ is a $\integers/n$-algebra
with a mod-$n$ orientation given by classes $c \in E^{2,1}(\P_S^\infty)$ and
$u \in E^{1,1}(W_{S,n})$. Let $v \in E^{2,1}(W_{S,n})$ be the pullback of $c$ under
the canonical projection $W_{S,n} \to \P_S^\infty$.
Let $\caX$ be a motivic space. Denote by $u$ and $v$ also the pullbacks of $u$ and
$v$ to the $E$-cohomology of $\caX \times W_{S,n}$.
Then the elements $v^i$, $uv^i$, $i \ge 0$ form a topological basis of
$E^{*,*}(\caX \times W_{S,n})$ over $E^{*,*}(\caX)$. More precisely,
the elements $v^i$, $uv^i$, $0 \le i \le k$, form a basis
of $E^{*,*}(\caX \times W_{S,n,k})$ over $E^{*,*}(\caX)$, $v^{k+1}$ is zero
in $E^{*,*}(\caX \times W_{S,n,k})$ and the canonical map
$$E^{*,*}(\caX \times W_{S,n}) \to \lim_k E^{*,*}(\caX \times W_{S,n,k}),$$
where the transition maps are surjective, is an isomorphism.
\end{theorem}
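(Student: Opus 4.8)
The plan is to reduce the general finite-dimensional computation to the classical statement about projective space and then bootstrap via the cofibration sequences defining $W_{S,n,k}$. First I would recall that $W_{S,n,k}$ is the $\mathbb{G}_m$-torsor on $\P_S^k$ associated with $\caO_{\P^k}(-n)$, so there is a cofibration sequence (the analogue of (\ref{ghrez545}) at finite level)
\begin{equation*}
W_{S,n,k,+} \to \caO_{\P_S^k}(-n)_+ \to \Th(\caO_{\P_S^k}(-n)).
\end{equation*}
Using the orientation $c$ and the projective bundle formula for oriented ring spectra, $E^{*,*}(\caX \times \P_S^k)$ is free over $E^{*,*}(\caX)$ on $1,\sigma,\dots,\sigma^k$, and the Thom isomorphism identifies $\widetilde{E}^{*,*}(\caX_+ \wedge \Th(\caO_{\P_S^k}(-n)))$ with $E^{*,*}(\caX \times \P_S^k)$ shifted by $(2,1)$, with the Thom class mapping to (a unit multiple of) $1$ under restriction to the zero section; the connecting map in the long exact sequence is then multiplication by $[n](\sigma) = n\sigma$ (additive formal group law, as $E$ is a $\integers/n$-algebra so $n\sigma = 0$ — one must be slightly careful: $E_{0,0}$ is a $\integers/n$-algebra, hence $n\cdot 1 = 0$ in $E^{*,*}$, so the connecting map vanishes). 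Thus the long exact sequence splits into short exact sequences
\begin{equation*}
0 \to E^{*-2,*-1}(\caX)\{\sigma^0,\dots,\sigma^{k-1}\} \to E^{*,*}(\caX \times \P_S^k) \to E^{*,*}(\caX \times W_{S,n,k}) \to 0,
\end{equation*}
which already shows $E^{*,*}(\caX\times W_{S,n,k})$ is free of rank $2(k+1)$ over $E^{*,*}(\caX)$ (the kernel has rank $k$, the middle rank $k+1$ — I need to be careful here and instead run the comparison with $W_{S,n,k-1}$; see below).

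The cleaner route, which I would actually carry out, is induction on $k$ using the closed–open decomposition $\P_S^{k-1} \hookrightarrow \P_S^k$ with open complement $\bA^k_S$, pulled back to the torsors: $W_{S,n,k-1} \hookrightarrow W_{S,n,k}$ is a closed immersion with open complement the restriction of $W_{S,n,k}$ over $\bA^k_S$, which is the trivial $\mathbb{G}_m$-torsor, hence $\bA^1$-equivalent to $\bA^k_S \times \GmS \simeq \GmS$. The associated Gysin/localization long exact sequence in $E$-cohomology, together with the Thom isomorphism for the normal bundle, expresses $E^{*,*}(\caX\times W_{S,n,k})$ as an extension of $\widetilde{E}^{*,*}(\caX_+\wedge \GmS_+)$-type pieces by $E^{*,*}(\caX\times W_{S,n,k-1})$. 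One checks the base case $k=0$: $W_{S,n,0} \cong \GmS$ and $E^{*,*}(\caX\times\GmS)$ is free on $1$ and $u|_{W_{S,n,0}}$ (which is a unit by the mod-$n$ orientation axiom, so WLOG equals $1$), with $v|_{W_{S,n,0}} = 0$ since it is pulled back from $\P_S^0 = S$. Feeding this into the induction, and using that $v$ restricts on $W_{S,n,k-1}$ to the corresponding class and $u$ likewise, one gets that $1,v,\dots,v^k,u,uv,\dots,uv^k$ is a basis, that $v^{k+1} = 0$ in $E^{*,*}(\caX\times W_{S,n,k})$ (because $\sigma^{k+1}=0$ already in $E^{*,*}(\caX\times\P_S^k)$ and $v$ is pulled back from there), and that the transition maps $E^{*,*}(\caX\times W_{S,n,k}) \to E^{*,*}(\caX\times W_{S,n,k-1})$ are surjective, killing $v^k$ and $uv^k$.

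For the last assertion — that $E^{*,*}(\caX\times W_{S,n}) \cong \lim_k E^{*,*}(\caX\times W_{S,n,k})$ — I would invoke the Milnor exact sequence for the colimit $W_{S,n} = \colim_k W_{S,n,k}$: there is a short exact sequence
\begin{equation*}
0 \to {\lim_k}^1 E^{*-1,*}(\caX\times W_{S,n,k}) \to E^{*,*}(\caX\times W_{S,n}) \to \lim_k E^{*,*}(\caX\times W_{S,n,k}) \to 0,
\end{equation*}
and the $\lim^1$ term vanishes because the transition maps are surjective (Mittag-Leffler). This also yields that $v^i, uv^i$ for $i \ge 0$ form a topological basis, i.e. every element is a convergent (profinite) sum. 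The main obstacle I anticipate is not any single hard step but bookkeeping: making sure the Thom isomorphisms, the projective bundle formula, and the Gysin sequences are all available in $\SH(S)$ for the ring spectrum $E$ with only the hypotheses given (oriented, $E_{0,0}$ a $\integers/n$-algebra), and verifying carefully that the connecting homomorphisms are what I claim — in particular that the class $v$ behaves multiplicatively and that $u^2$ does not introduce an extra relation (it lies in the span of the listed basis elements, which forces a relation $u^2 = \alpha + \beta u$ with $\alpha,\beta \in E^{*,*}(\caX\times W_{S,n})$ of the appropriate degrees; one should note the theorem statement only claims the $v^i, uv^i$ are a basis, not that $u^2=0$, so this relation is harmless but should be acknowledged). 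I would also double-check the precise normalization in Corollary \ref{g544tte} to confirm that the mod-$n$ orientation hypothesis (restriction of $u$ to $W_{S,n,0}$ equals $1$) is consistent with the canonical class $u_{n,m}$ in the motivic case when $m = n$.
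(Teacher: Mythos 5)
The route you actually carry out is genuinely different from the paper's. The paper does exactly your \emph{first}, abandoned computation, but done correctly: from the cofibre sequence (\ref{ghrez545}) at level $k$, after the projective bundle and Thom isomorphisms one gets a sequence of $E^{*,*}(\caX)[\sigma]/(\sigma^{k+1})$-modules
$$0\to E^{*,*}(\caX\times\P_S^k)\to E^{*,*}(\caX\times W_{S,n,k})\to E^{*-1,*-1}(\caX\times\P_S^k)\to 0,$$
i.e.\ the restriction from $\P_S^k$ is the \emph{injective} map and the Thom part is the quotient; you wrote the two maps in the opposite order, which is why your rank count looked wrong and you switched routes. Note also that the degeneration of the long exact sequence should not be deduced from an identity $[n]_F(\sigma)=n\sigma$: additivity of the formal group law is not a hypothesis, and for a general law $[n]_F(\sigma)\neq n\sigma$ even when $n=0$ in $E_{0,0}$. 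The paper instead observes that $u$ maps to the unit $1+\sigma\cdot r$ in the right-hand module, which together with the module structure forces the boundary maps to vanish and immediately yields the basis $v^i,uv^i$. (The paper also first reduces a general $\caX$ to smooth schemes by a homotopy colimit argument; in your version this is unnecessary if you phrase the projective bundle/Thom splittings at the level of $E$-modules, as in Lemma \ref{gre54zew}.)

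Your second route — induction on $k$ via $\P_S^{k-1}\hookrightarrow\P_S^k$ pulled back to the torsors, the Gysin/purity sequence with the trivial torsor over $\bA^k_S$ as open piece, base case $W_{S,n,0}\cong\GmS$, and the Milnor $\lim^1$ sequence with Mittag-Leffler for the passage to $W_{S,n}$ — is viable and buys a very explicit cell-by-cell identification of the basis, with surjectivity of the transition maps transparent; the price is that the key step hidden in ``one gets that \dots is a basis'' must be spelled out. Concretely: surjectivity onto $E^{*,*}(\caX\times\GmS)$ uses exactly the normalization $u|_{W_{S,n,0}}=1$ of the mod-$n$ orientation; the image of the Gysin map is identified, via the projection formula and the surjectivity of $E^{*,*}(\caX\times W_{S,n,k})\to E^{*,*}(\caX\times W_{S,n,k-1})$ (available from the inductive basis, which consists of restricted classes), with the multiples of the Euler class of $p^*\caO_{\P^k}(1)$, which is the formal inverse of $v$ and hence a unit multiple of $v$ in the truncated ring; and $v^{k+1}=0$ because $\sigma^{k+1}=0$ upstairs, as you say. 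With these points made explicit your argument is complete, and it is a legitimate alternative to the paper's shorter one-sequence-per-$k$ proof.
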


\begin{proof}
By writing $\caX$ has the homotopy colomit over $\bigtriangleup^\op$ of a diagram with entries
disjoint unions of objects from $\Sm_S$ and replacing cohomology groups by mapping spaces
we reduce to the case where $\caX=X \in \Sm_S$.
The induced long exact sequences in $E$-cohomology from the cofiber sequence
$$W_{X,n,k,+} \to \caO_{\P_X^k}(-n)_+ \to \Th(\caO_{\P_X^k}(-n))$$
split into short exact sequences
$$0 \to E^{*,*}(X)[\sigma]/(\sigma^{k+1}) \to E^{*,*}(X \times W_{S,n,k})
\to E^{*-1,*-1}(X)[\sigma]/(\sigma^{k+1}) \to 0$$
since $E_{0,0}$ is a $\integers/n$-algebra.
The image of $u$ in the right group is of the form
$1 + \sigma \cdot r$.
Using the fact that these sequences are $E^{*,*}(X)[\sigma]/(\sigma^{k+1})$-module
sequences the claim follows.
\end{proof}

It follows that every element in $E^{*,*}(\caX \times W_{S,n})$ can be uniquely
written as a power series
$$\sum_{i \ge 0} (a_i v^i + b_i uv^i)$$
with $a_i,b_i \in E^{*,*}(\caX)$. Similar statements are valid for elements
in $E^{*,*}(\caX \times W_{S,n}^j)$. The latter group can be written
as the $j$-fold completed tensor product over $E^{*,*}(\caX)$ of copies of
$E^{*,*}(\caX \times W_{S,n})$.

Note that if $n$ is odd we have
$$E^{*,*}(\caX \times W_{S,n}^j) \cong E^{*,*}(\caX)\poauf v_1,\ldots,v_j \pozu (u_1,\ldots,u_j),$$
but if $n$ is even there can be more complicated relations for the $u_i^2$.

The object $W_{S,n} \in \caH(S)$ is naturally a commutative group object
(it represents motivic cohomology over certain $S$, in particular $S=\Spec(\integers)$,
and pulls back). Moreover it has exponent $n$.
This gives $E^{*,*}(\caX \times W_{S,n})$ the structure of a cocommutative Hopf algebra
object in a category whose tensor structure is the completed tensor product.

The comultiplication $$E^{*,*}(\caX \times W_{S,n}) \to E^{*,*}(\caX \times W_{S,n}^2)$$
is uniquely determined be the images of $u$ and $v$ which can be written as power
series in $u_1,u_2,v_1,v_2$. These power series obey laws which are similar to the familiar
formal group laws. We won't spell out these properties, suffices it to say
that they are grouped into unitality, associativity, commutativity, exponent $n$
and independence of the image of $v$ of $u_1,u_2$.

For $E=\MZm$, $m|n$, we have the additive law: $u \mapsto u_1+u_2$, $v \mapsto v_1+v_2$.
This follows from weight reasons for $S=\Spec(\integers)$ and thus is true in general.

There is the notion of a strict isomorphism of such laws (power series), again given by
two power series (in $u$ and $v$) in the target complete ring which start with $u$ respectively $v$. Moreover
the second power series is independent of $u$.
Caution is required in the case $n$ is even since then the complete rings in
question might not have standard form.

Two mod-$n$ orientations on a motivic ring spectrum give rise to such a
strict isomorphism.

\begin{proposition}
\label{fhj643}
Let $E$ be a motivic ring spectrum such that $E_{0,0}$ is an $\bF_l$-algebra
equipped with two additive mod-$l$ orientations.
Then the corresponding strict isomorphism has the form
$$u \mapsto u+a_0v + a_1 v^l + \cdots + a_i v^{l^i} + \cdots,$$
$$v \mapsto v+ b_1 v^l + \cdots + b_i v^{l^i} + \cdots.$$
\end{proposition}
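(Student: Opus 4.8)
The strategy is to analyze the two power series
$$u \mapsto \sum_{i,j \ge 0} a_{ij} u_{(0)}^j v^i, \qquad v \mapsto \sum_{i \ge 0} b_i v^i + (\text{terms in } u_{(0)})$$
(using the abbreviation $u_{(0)}$ for the source variable $u$, to distinguish it from the target $u$) that describe the strict isomorphism coming from two additive mod-$l$ orientations, and to show that additivity of both orientations forces all coefficients to vanish except for the $a_{i0}$ and $b_i$ with $i$ a power of $l$, together with $a_{00}=b_1=1$. The starting point is the fact, recorded before this proposition, that for an $\bF_l$-algebra coefficient ring the second power series (the image of $v$) is independent of $u_{(0)}$ and the strict isomorphism condition forces it to start with $v$, and similarly the image of $u_{(0)}$ starts with $u_{(0)}$; so I only need to control the coefficients of the ``extra'' monomials.

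First I would exploit additivity of the target orientation. The comultiplication on $E^{*,*}(\caX \times W_{S,l})$ sends $u \mapsto u_1 + u_2$ and $v \mapsto v_1 + v_2$. Pulling back the defining power series for the strict isomorphism along the comultiplication and comparing with the source comultiplication (which is also additive, since the source orientation is additive too) gives the functional equations
$$P(v_1+v_2) = P(v_1) + P(v_2), \qquad Q(u_{(0),1}+u_{(0),2}, v_1+v_2) = Q(u_{(0),1},v_1) + Q(u_{(0),2},v_2)$$
where $P$ is the power series for $v \mapsto \cdots$ and $Q$ the one for $u_{(0)} \mapsto \cdots$; here one uses that in the even-$n$ caveat mentioned in the text $n=l$ is either odd or $l=2$, and in the $l=2$ case one checks the relevant completed ring still has the standard form because both orientations are additive (so $u_i^2$ is expressed via the additive law, hence there are no exotic relations). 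The equation for $P$ is the classical additivity/``$p$-typicality'' relation for a one-variable power series over an $\bF_l$-algebra: the only additive power series in characteristic $l$ are the $l$-additive (absolutely additive) ones, i.e. $P(v) = \sum b_i v^{l^i}$; this is Proposition-level folklore (binomial coefficients $\binom{l^a}{k}$ vanish mod $l$ unless $k$ is $0$ or $l^a$), and combined with the strict isomorphism normalization $b_0 = 0$, $b_1 = 1$ gives the stated form for the image of $v$.

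Next I would treat $Q$. Because $Q$ is additive separately in the two pairs of variables and starts with $u_{(0)}$, writing $Q(u_{(0)}, v) = u_{(0)} + R(v) + (\text{higher order in } u_{(0)})$ and substituting into the additivity equation: the purely-$v$ part $R$ must itself be additive, hence $R(v) = \sum_{i} a_i v^{l^i}$ by the same characteristic-$l$ argument, and additivity forces the coefficient of $u_{(0)}^j$ for $j \ge 2$ to vanish (an additive series cannot have a pure $u_{(0)}^j$ term for $j$ not a power of $l$, and the mixed monomials $u_{(0)}^j v^i$ are excluded because $Q$ must be $E^{*,*}(\caX)[v]$-linear-additive in $u_{(0)}$ — this is where I use that the Hopf-algebra structure on $E^{*,*}(\caX \times W_{S,l})$ makes the comultiplication a ring map, so $u_1, u_2, v_1, v_2$ behave independently up to the group law, which is additive). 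The term $u_{(0)}^{l^k}$ with $k \ge 1$ is killed because $u^2 = 0$ in $E^{1,1}$ up to the even-characteristic caveat and more basically because $u_{(0)}$ lives in degree $(1,1)$ and $u_{(0)}^{l^k}$ would live in the wrong bidegree to appear in an expansion of the degree-$(1,1)$ class $u$ — a weight/degree count rules it out. Finally, the strict isomorphism condition normalizes the linear term of $Q$ in $u_{(0)}$ to be exactly $u_{(0)}$ (coefficient $a_{00} = 1$... i.e. the identity on the $u_{(0)}$-linear part), and then writing $a_i$ for the coefficients of $v^{l^i}$ in $R$ yields precisely
$$u \mapsto u + a_0 v + a_1 v^l + \cdots + a_i v^{l^i} + \cdots.$$

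The main obstacle I expect is the bookkeeping in the even prime case $l = 2$: the text explicitly warns that when $n$ is even the completed rings ``might not have standard form'' and there ``can be more complicated relations for the $u_i^2$.'' I would need to argue carefully that when both orientations are \emph{additive} (as hypothesized) these exotic relations degenerate — concretely, that the $u_i^2$ relation is itself governed by the additive group law and hence contributes nothing new — so that the clean power-series manipulation above goes through verbatim. This amounts to checking the $l=2$ Hopf algebra structure directly using the additive comultiplication, and is the one place where I would not want to wave hands; everything else is the standard ``additive formal groups in characteristic $p$ are $p$-typical/additive'' computation applied twice, once to the $v$-series and once to the $v$-part of the $u$-series.
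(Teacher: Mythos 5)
Your proposal is correct and takes essentially the same route as the paper, whose proof is just the remark that this is the classical computation for strict isomorphisms of the additive formal group law over an $\bF_l$-algebra — exactly the additivity functional equations you write down, which force both series to be $l$-typical ($\sum c_i v^{l^i}$) and kill the $uv^i$-terms by comparing coefficients of $u_1$. Your residual worries are unnecessary: since $E^{*,*}(\caX \times W_{S,l}^j)$ has the topological basis in which each $u_i$ appears at most linearly, writing the first series as $u\,h(v)+k(v)$ and expanding $f(u_1+u_2,v_1+v_2)=(u_1+u_2)h(v_1+v_2)+k(v_1+v_2)$ never produces any $u^j$ with $j\ge 2$ nor any $u_i^2$, so the coefficient comparison works uniformly, including for $l=2$.
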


\begin{proof}
The proof is similar to the case of the additive formal group law over an $\bF_l$-algebra.
\end{proof}

\begin{lemma}
The suspension spectrum $\Sigma_+^\infty W_{S,n,k}$ is finite cellular, in particular dualizable.
The suspension spectrum $\Sigma_+^\infty W_{S,n}$ is cellular.
\end{lemma}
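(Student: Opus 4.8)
The plan is to exhibit an explicit finite cellular structure on each $\Sigma_+^\infty W_{S,n,k}$ by induction on $k$, and then pass to the colimit. First I would recall the cofibration sequence already written above,
$$W_{S,n,k,+} \to \caO_{\P_S^k}(-n)_+ \to \Th(\caO_{\P_S^k}(-n)),$$
and note that $\caO_{\P_S^k}(-n)$ is $\bA^1$-homotopy equivalent to its zero section $\P_S^k$, so that $\Sigma^\infty \caO_{\P_S^k}(-n)_+ \cong \Sigma^\infty \P^k_{S,+}$, which is finite cellular (it is a finite cell complex built from spheres $S^{2i,i}$, $0 \le i \le k$, by the standard cell structure on projective space, which holds over any regular base; cf. the computations with $\P_S^\infty$ used above). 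Likewise the Thom space $\Th(\caO_{\P_S^k}(-n))$ is finite cellular: using the projective bundle formula / the standard cellular filtration of $\P_S^{k+1}$ and the fact that $\Th(\caO_{\P_S^k}(-n))$ is a Thom space of a line bundle over $\P_S^k$, one gets a finite filtration with subquotients motivic spheres. Since finite cellular spectra are closed under cofibers and the class of dualizable objects is closed under cofibers and retracts, the cofiber sequence above (rotated so that $\Sigma_+^\infty W_{S,n,k}$ is the fiber, equivalently using the cofiber sequence of the shifted triangle) shows $\Sigma_+^\infty W_{S,n,k}$ is finite cellular, hence dualizable in $\SH(S)$.

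For the colimit statement, I would use that $W_{S,n} = \colim_k W_{S,n,k}$ along closed immersions which are split (on cells) monomorphisms, so the transition maps $\Sigma_+^\infty W_{S,n,k} \to \Sigma_+^\infty W_{S,n,k+1}$ are inclusions of a finite subcomplex compatible with the cell structures — concretely, the cell structure on $W_{S,n,k+1}$ restricts to the one on $W_{S,n,k}$. Therefore $\Sigma_+^\infty W_{S,n} = \colim_k \Sigma_+^\infty W_{S,n,k}$ is a filtered colimit of finite cellular spectra along cellular inclusions, which is by definition cellular (a cell structure on the colimit is obtained by taking the union of the cell structures). Note I would not claim $\Sigma_+^\infty W_{S,n}$ is dualizable — it is an infinite cell complex — only cellular, matching the statement.

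The main obstacle I expect is bookkeeping the cellular structure compatibly in $k$: one must check that the cofiber-sequence argument producing cells on $W_{S,n,k}$ can be arranged so the cells for level $k$ sit inside those for level $k+1$ as a subcomplex, rather than merely abstractly each level being finite cellular. The cleanest way around this is to build the cell structure directly from the Thom-space/Thom-isomorphism description: one has a cofibration $W_{S,n,k-1} \hookrightarrow W_{S,n,k}$ whose cofiber is (by excision and the Thom isomorphism for the restricted bundle over the affine cell $\bA^k_S \subset \P^k_S$) a single pair of motivic spheres, giving an inductive cell attachment $W_{S,n,k} = W_{S,n,k-1} \cup (\text{two cells})$; the colimit over $k$ then visibly has a countable cell structure. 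All the pieces needed — the $\bA^1$-equivalence of a vector bundle with its base, the projective bundle formula, closure of dualizable/cellular objects under cofibers and filtered colimits — are standard and used elsewhere in the paper, so no new input is required beyond this organizational care.
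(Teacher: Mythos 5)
Your first paragraph is exactly the ``standard argument'' the paper is alluding to (the paper's proof consists of that phrase alone): the cofiber sequence $W_{S,n,k,+} \to \caO_{\P_S^k}(-n)_+ \to \Th(\caO_{\P_S^k}(-n))$, the $\bA^1$-equivalence $\caO_{\P_S^k}(-n) \simeq \P_S^k$, finite cellularity of $\P^k_{S,+}$ and of the Thom space (e.g.\ via $\Th(\caO(-n)) \cong \P(\caO(-n)\oplus\caO)/\P(\caO(-n))$), and thickness of the finite cellular resp.\ dualizable subcategories; together with $\Sigma^\infty_+ W_{S,n} \cong \hocolim_k \Sigma^\infty_+ W_{S,n,k}$ and closure of cellular spectra under homotopy colimits, this proves the lemma. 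One remark: the ``organizational care'' in your last paragraph is unnecessary, since cellularity is a property (membership in the localizing subcategory generated by the spheres), not a choice of cell structure, so no compatibility of cell structures across $k$ is needed; moreover that paragraph is the one place where your argument is misstated --- homotopy purity identifies $W_{S,n,k}/(W_{S,n,k}\setminus W_{S,n,k-1})$ with the Thom space of the normal bundle of the closed stratum $W_{S,n,k-1}$, not the cofiber of $W_{S,n,k-1,+}\to W_{S,n,k,+}$, and that cofiber is a two-cell complex (a cofiber of a degree-$n$-type map of spheres, as for lens spaces) rather than a pair of spheres. Since the argument of your first paragraph already suffices, this does not affect the proof.
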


\begin{proof}
This is a standard argument.
\end{proof}

In the following let $T_k:=\Sigma_+^\infty W_{S,n,k}$.

\begin{lemma}
\label{gre54zew}
Let $E$ be a mod-$n$ oriented motivic ring spectrum. For any $0 \le i \le 1$, $0 \le j \le k$
let $\Sigma^{-2j-i,-j-i} E \to E \wedge T_k^\vee$ be the $E$-module map corresponding to the
element $u^iv^j$ in the homotopy of the target spectrum. Then the induced map
$$\bigoplus_{i,j} \Sigma^{-2j-i,-j-i} E \longrightarrow E \wedge T_k^\vee$$
is an isomorphism.
\end{lemma}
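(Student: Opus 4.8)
The plan is to reduce the statement to the computation of $E^{*,*}(W_{S,n,k})$ carried out in the preceding Theorem. First I would use the preceding Lemma, which says that $W_{S,n,k}$ is finite cellular, so that $T_k=\Sigma_+^\infty W_{S,n,k}$ is a dualizable object of $\SH(S)$ and $E\wedge T_k^\vee\simeq\underline{\Hom}(T_k,E)$ is an $E$-module spectrum whose bigraded homotopy is
$$\pi_{p,q}(E\wedge T_k^\vee)\;\cong\;[\Sigma^{p,q}T_k,E]\;=\;E^{-p,-q}(W_{S,n,k}).$$
Both the source and the target of the map in the statement are thus $E$-modules, and the map is a map of $E$-modules. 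Since the forgetful functor $E\text{-}\mathrm{Mod}\to\SH(S)$ creates weak equivalences, it suffices to prove that this map induces an isomorphism on all bigraded homotopy groups $\pi_{p,q}$.

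Next I would identify the map on homotopy. For any $E$-module $M$ one has $\mathrm{Hom}_{E\text{-}\mathrm{Mod}}(\Sigma^{a,b}E,M)\cong\pi_{a,b}(M)$, and the $E$-module map classified by an element $m\in\pi_{a,b}(M)$ induces on $\pi_{p,q}$ the map $\pi_{p-a,q-b}(E)\to\pi_{p,q}(M)$, $x\mapsto x\cdot m$, coming from the $\pi_{*,*}(E)$-module structure. Applying this with $M=E\wedge T_k^\vee$, $(a,b)=(-2j-i,-j-i)$ and $m=u^iv^j\in\pi_{-2j-i,-j-i}(E\wedge T_k^\vee)=E^{2j+i,j+i}(W_{S,n,k})$, the $(i,j)$-component of the map in the statement becomes, after the identification above, the multiplication map
$$E^{-p-2j-i,\,-q-j-i}(S)\;\longrightarrow\;E^{-p,-q}(W_{S,n,k}),\qquad x\mapsto x\cdot u^iv^j,$$
where $E^{*,*}(W_{S,n,k})$ is regarded as a module over $E^{*,*}(S)$ via the projection $W_{S,n,k}\to S$.

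Finally I would invoke the preceding Theorem with $\caX$ the terminal motivic space (namely $S$ itself): it asserts precisely that $\{\,v^j,\;u v^j:0\le j\le k\,\}$ is an $E^{*,*}(S)$-basis of $E^{*,*}(W_{S,n,k})$. Hence the assembled map $\bigoplus_{0\le i\le 1,\,0\le j\le k}E^{*,*}(S)\to E^{*,*}(W_{S,n,k})$ sending the generator in slot $(i,j)$ to $u^iv^j$ is an isomorphism of bigraded groups, and by the previous paragraph this is exactly the statement that our $E$-module map is an isomorphism on $\pi_{p,q}$ for all $p,q$. Therefore it is a weak equivalence of $E$-modules.

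The step I expect to require the most care is the second one: matching the Spanier–Whitehead/internal-$\Hom$ description of $E\wedge T_k^\vee$ with the ring structure of $E$, i.e. verifying that the $E$-module map ``classified by $u^iv^j$'' acts on homotopy exactly by multiplication by the cohomology class $u^iv^j$, together with the correct bidegree bookkeeping; once this compatibility is in place the result is immediate from the previously established basis theorem. (Alternatively, one could prove the statement by induction on $k$ using the split short exact sequences arising from the Thom cofiber sequences $W_{X,n,k,+}\to\caO_{\P_X^k}(-n)_+\to\mathrm{Th}(\caO_{\P_X^k}(-n))$, but the homotopy-group argument above is the most direct route.)
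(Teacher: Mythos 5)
Your reduction to bigraded homotopy groups is the weak point, and as you justify it it is a genuine gap. That the forgetful functor from $E$-modules to $\SH(S)$ creates weak equivalences only tells you it suffices to show the underlying map in $\SH(S)$ is an isomorphism; it does not tell you that isomorphisms in $\SH(S)$ are detected on $\pi_{p,q}$. In the motivic stable homotopy category the bigraded spheres $\Sigma^{p,q}\unit$ do not form a generating family: a map can induce an isomorphism on all $\pi_{p,q}$ without being an isomorphism, unless both sides are cellular. Here $E$ is an arbitrary mod-$n$ oriented ring spectrum, so neither $\Sigma^{-2j-i,-j-i}E$ nor $E\wedge T_k^\vee$ need be cellular as motivic spectra, and your subsequent appeal to the preceding Theorem only for $\caX=S$ computes exactly these non-detecting groups. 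So as written the argument does not close.

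The paper's (one-line) proof avoids this by testing against an actual generating family of $\SH(S)$: one maps $\Sigma^{p,q}\Sigma^\infty_+ X$ into both sides for every $X\in\Sm_S$. By dualizability of $T_k$ the target then computes $E^{*,*}(X\times W_{S,n,k})$, and the preceding Theorem --- which is stated for an arbitrary motivic space $\caX$ precisely so that it can be applied with $\caX=X$ --- identifies the induced map with the isomorphism onto the free $E^{*,*}(X)$-module on $v^j,uv^j$, $0\le j\le k$; hence the cofiber receives no nonzero maps from any $\Sigma^{p,q}\Sigma^\infty_+X$ and vanishes. Alternatively, you can salvage your homotopy-group argument by adding one observation you omitted: since $T_k$ is finite cellular, so is $T_k^\vee$, hence both source and target are cellular $E$-modules, i.e. they lie in the localizing subcategory of the homotopy category of $E$-modules generated by the shifted free modules $\Sigma^{p,q}E$, and maps between objects of that subcategory are detected on $\pi_{*,*}$ (equivalently, on $E$-module maps out of the $\Sigma^{p,q}E$). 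With that step made explicit, your computation for $\caX=S$ does finish the proof; your bidegree bookkeeping and the identification of the map on homotopy with multiplication by $u^iv^j$ are fine.
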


\begin{proof}
Homing out of $\Sigma^{p,q} \Sigma_+^\infty X$, $X \in \Sm_S$, shows the claim.
\end{proof}

\begin{lemma}
\label{fr6u7ue}
Let $E$ be a motivic ring spectrum and $U$ be a dualizable spectrum such that
$E \wedge U$ is a finite sum of shifts of $E$ as a $E$-module. Then $E \wedge U^\vee$
is the sum over the corresponding negative shifts of $E$.
\end{lemma}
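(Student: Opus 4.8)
\textbf{Proof plan for Lemma \ref{fr6u7ue}.}

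The plan is to deduce the statement from the hypothesis by a direct duality argument in the symmetric monoidal triangulated category $\SH(S)$, using that $U$ is dualizable and that the $E$-module structure is compatible with smashing. First I would record the standing hypothesis: we are given $E \wedge U \cong \bigoplus_{\alpha} \Sigma^{p_\alpha,q_\alpha} E$ as $E$-modules for some finite index set, and we want $E \wedge U^\vee \cong \bigoplus_{\alpha} \Sigma^{-p_\alpha,-q_\alpha} E$ as $E$-modules. The key point is that smashing with $E$ is a (lax, in fact strong on dualizables) symmetric monoidal functor from $\SH(S)$ to $E\text{-}\Mod$, so it sends the duality datum $\eta \colon \unit \to U \wedge U^\vee$, $\varepsilon \colon U^\vee \wedge U \to \unit$ exhibiting $U^\vee$ as the dual of $U$ in $\SH(S)$ to a duality datum exhibiting $E \wedge U^\vee$ as the $E$-linear dual $\underline{\Hom}_E(E\wedge U, E)$ of $E \wedge U$ in $E\text{-}\Mod$. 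Equivalently, $E \wedge U^\vee \cong F_E(E\wedge U, E)$, the function $E$-module spectrum.

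Granting that identification, the remaining steps are formal. First I would compute $F_E(\bigoplus_\alpha \Sigma^{p_\alpha,q_\alpha} E, E)$: since the sum is finite, the function-spectrum functor turns it into the finite product $\prod_\alpha F_E(\Sigma^{p_\alpha,q_\alpha}E, E)$, which in a triangulated setting is the same as the finite direct sum $\bigoplus_\alpha F_E(\Sigma^{p_\alpha,q_\alpha}E,E)$; and $F_E(\Sigma^{p,q}E,E) \cong \Sigma^{-p,-q} E$ because $E$ is the unit of $E\text{-}\Mod$ and shifts are invertible. Combining with the hypothesis-supplied isomorphism $E \wedge U \cong \bigoplus_\alpha \Sigma^{p_\alpha,q_\alpha}E$ (which I would apply $F_E(-,E)$ to) yields $E \wedge U^\vee \cong \bigoplus_\alpha \Sigma^{-p_\alpha,-q_\alpha}E$ as $E$-modules, which is exactly the claim. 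In the intended application $E \wedge U$ is a finite sum of shifts with $p_\alpha = 2j+i$, $q_\alpha = j+i$ as in Lemma \ref{gre54zew}, so the conclusion reads off as a finite sum of the negative shifts.

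The one step requiring a little care — and the main obstacle — is the passage from "$U^\vee$ is the dual of $U$ in $\SH(S)$" to "$E\wedge U^\vee$ is the $E$-linear dual of $E\wedge U$ in $E\text{-}\Mod$", i.e. checking that the base-change functor $E\wedge(-) \colon \SH(S) \to E\text{-}\Mod$ is symmetric monoidal and hence preserves dualizable objects together with their duals. This is standard: $E\wedge(-)$ is the left adjoint of the forgetful functor, it is strong symmetric monoidal because $E\wedge(X\wedge Y) \cong (E\wedge X)\wedge_E(E\wedge Y)$, and any strong symmetric monoidal functor sends a dual pair to a dual pair. Alternatively, one can avoid invoking function spectra altogether and argue with the evaluation and coevaluation maps directly: smash the triangle-splitting isomorphism $E\wedge U \cong \bigoplus_\alpha \Sigma^{p_\alpha,q_\alpha}E$ with $U^\vee$ over $\unit$, use $\eta$ and $\varepsilon$ to identify $E\wedge U^\vee$ with the $E$-module of maps, and track the shifts. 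Either way the computation is routine once the monoidality of $E\wedge(-)$ is in hand, so I would state that as the essential input and keep the rest brief.

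\begin{proof}
The functor $E \wedge (-) \colon \SH(S) \to E\text{-}\Mod$ is strong symmetric monoidal, hence carries the dual pair $(U, U^\vee)$ in $\SH(S)$ to a dual pair in $E\text{-}\Mod$; thus $E \wedge U^\vee \cong F_E(E \wedge U, E)$, the $E$-linear dual of $E \wedge U$. By hypothesis $E \wedge U$ is a finite direct sum of shifts $\Sigma^{p_\alpha,q_\alpha} E$ of $E$ as an $E$-module. Applying $F_E(-,E)$ and using that this functor turns a finite direct sum into the corresponding finite direct sum together with $F_E(\Sigma^{p,q}E, E) \cong \Sigma^{-p,-q}E$, we obtain that $E \wedge U^\vee$ is the direct sum of the shifts $\Sigma^{-p_\alpha,-q_\alpha}E$, i.e. the sum over the corresponding negative shifts of $E$.
\end{proof}
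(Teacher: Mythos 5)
Your core idea --- identify $E \wedge U^\dual$ with the $E$-linear dual of $E \wedge U$ and then dualize the given finite splitting, tracking the shifts --- is the right one, and the shift bookkeeping is fine. The problem is a mismatch between the structure your argument uses and what the lemma hypothesizes. In this section a ``motivic ring spectrum'' is only a commutative monoid in $\SH(S)$, and ``$E$-module'' means a homotopy module; at that level of structure the category of $E$-modules carries no relative smash product $\wedge_E$ and no internal function object, so the steps you lean on --- ``$E \wedge (-)$ is strong symmetric monoidal into $E$-modules'' via $E\wedge(X\wedge Y)\cong(E\wedge X)\wedge_E(E\wedge Y)$, and ``$E\wedge U^\dual \cong F_E(E\wedge U,E)$'' --- do not typecheck. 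They would be fine if $E$ were assumed to be a highly structured ($E_\infty$) ring spectrum, but the lemma is applied to spectra such as $\MZm$ and smash products like $\MFl\wedge\MFl$, for which only a homotopy ring structure is in play, so you cannot simply strengthen the hypothesis.

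The repair is to run the duality in the ambient category, which is exactly what the paper does: for any motivic spectrum $F$ one has $\Hom(F,E\wedge U^\dual)\cong\Hom(F\wedge U,E)\cong\Hom_E(E\wedge U\wedge F,E)$, using Spanier--Whitehead duality for the dualizable $U$ in $\SH(S)$ and then the free/forgetful adjunction for homotopy $E$-modules; the hypothesis identifies the last group with $\Hom_E(\bigoplus_\alpha\Sigma^{p_\alpha,q_\alpha}E\wedge F,E)\cong\Hom(F,\bigoplus_\alpha\Sigma^{-p_\alpha,-q_\alpha}E)$, the finiteness of the sum letting one pass the product back inside, and Yoneda in $\SH(S)$ concludes. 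This uses only $\Hom$-groups of homotopy modules, hence works in the stated generality; your argument is its structured-module shadow. Your closing ``alternative'' via evaluation and coevaluation gestures toward this, but as phrased it still invokes ``the $E$-module of maps'', i.e.\ the internal hom, so it does not yet close the gap.
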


\begin{proof}
For $F$ a motivic spectrum we have
$$\Hom(F,E \wedge U^\vee)=\Hom(F \wedge U,E)=\Hom_E(E \wedge U \wedge F,E)$$
$$=\Hom_E(\bigoplus_\alpha \Sigma^{p_\alpha,q_\alpha} E \wedge F,E)=
\Hom(F, \bigoplus_\alpha \Sigma^{-p_\alpha,-q_\alpha} E).$$
\end{proof}

\begin{lemma}
Let $E$ be a mod-$n$ oriented motivic ring spectrum.
We have $E$-module isomorphisms
$$\bigoplus_{i,0 \le j \le k} \Sigma^{2j+i,j+i} E \cong E \wedge T_k$$
and
$$\bigoplus_{i,0 \le j} \Sigma^{2j+i,j+i} E \cong E \wedge T_k,$$
where the corresponding generators are the duals of the $v^j,uv^j$.
\end{lemma}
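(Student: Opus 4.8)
The plan is to deduce both isomorphisms directly from Lemma \ref{gre54zew} and Lemma \ref{fr6u7ue} by a formal duality argument, so that essentially no new input is needed. First I would recall that $T_k = \Sigma_+^\infty W_{S,n,k}$ is finite cellular, hence dualizable, so that the canonical map $T_k \to T_k^{\vee\vee}$ is an isomorphism in $\SH(S)$. Lemma \ref{gre54zew} identifies $E \wedge T_k^\vee$, as an $E$-module, with the finite direct sum $\bigoplus_{0 \le i \le 1,\, 0 \le j \le k} \Sigma^{-2j-i,-j-i} E$, the summand indexed by $(i,j)$ being detected by the homotopy class $u^i v^j$. Hence $U := T_k^\vee$ is a dualizable spectrum for which $E \wedge U$ is a finite sum of shifts of $E$, and Lemma \ref{fr6u7ue} applies: it gives $E \wedge T_k \cong E \wedge (T_k^\vee)^\vee = E \wedge U^\vee$ as the direct sum over the corresponding negative shifts, i.e. $\bigoplus_{0 \le i \le 1,\, 0 \le j \le k} \Sigma^{2j+i,j+i} E$. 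Unwinding the proof of Lemma \ref{fr6u7ue} — which is just the natural isomorphism $\Hom(F, E \wedge U^\vee) \cong \Hom_E(E \wedge U \wedge F, E)$ applied summand by summand — shows that the summand $\Sigma^{2j+i,j+i} E$ is generated by the $E$-linear dual of the generator $u^i v^j$, which is the asserted description of the generators.

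For the second isomorphism I would pass to the colimit over $k$. Since $W_{S,n} = \colim_k W_{S,n,k}$ and $\Sigma_+^\infty$ preserves colimits, $E \wedge \Sigma_+^\infty W_{S,n} \cong \hocolim_k (E \wedge T_k)$; under the identifications of the first part the transition maps $E \wedge T_k \to E \wedge T_{k+1}$ are the split inclusions of the summands with $0 \le j \le k$ into those with $0 \le j \le k+1$, since the new generators added at stage $k+1$ are precisely (the duals of) $v^{k+1}$ and $u v^{k+1}$. Therefore the homotopy colimit is the direct sum $\bigoplus_{0 \le i \le 1,\, 0 \le j} \Sigma^{2j+i,j+i} E$, with generators the duals of the $v^j$ and $u v^j$, as claimed.

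The argument is entirely formal; the only points to watch are (a) invoking dualizability and the double-dual identification correctly, so that Lemma \ref{fr6u7ue} is applied to $U = T_k^\vee$ rather than to $T_k$ itself, and (b) the shift bookkeeping — that dualizing $\Sigma^{-2j-i,-j-i} E$ produces exactly $\Sigma^{2j+i,j+i} E$ and that the maps in the colimit are the obvious summand inclusions. I do not anticipate any genuine obstacle beyond this.
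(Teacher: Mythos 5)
Your proof is correct and follows essentially the same route as the paper: the paper's argument is precisely to combine Lemma \ref{gre54zew} with Lemma \ref{fr6u7ue} applied to $U=T_k^\vee$, which is what you do for the first isomorphism. Your explicit colimit argument over $k$ for the second isomorphism (which concerns $\Sigma^\infty_+ W_{S,n}$ rather than $T_k$ — the statement's second display appears to contain a typo) is a reasonable way to make precise a step the paper leaves implicit.
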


\begin{proof}
Use Lemmas \ref{gre54zew} and \ref{fr6u7ue} (the latter is applied with $U=T_k^\vee$).
\end{proof}

Let $E,F$ be mod-$n$ oriented motivic ring spectra.

\begin{lemma}
The natural map
$$E_{**} F \otimes_{F_{**}} F_{**} T_k^\vee \to (E \wedge F \wedge T_k^\vee)_{**}$$
is an isomorphism.
\end{lemma}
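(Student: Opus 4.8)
The plan is to reduce the statement to the previous two Lemmas, which together say that, as an $F$-module, $F \wedge T_k^\vee$ is a finite direct sum of shifts $\bigoplus_{i,0\le j\le k} \Sigma^{-2j-i,-j-i} F$, with the summand generators being the duals of $v^j$ and $uv^j$. Since $T_k = \Sigma_+^\infty W_{S,n,k}$ is finite cellular (hence dualizable), smashing this $F$-module splitting with $E$ over $F$ preserves the decomposition: $E \wedge F \wedge T_k^\vee$ is correspondingly a finite direct sum of shifts of $E \wedge F$. The key point is that a finite direct sum (equivalently a finite wedge, since $\SH(S)$ is triangulated and additive) commutes with the functor $E \wedge (-)$ and with taking $(-)_{**}$, and that the base change map in question is by construction the comparison between forming the sum on the level of $F_{**}$-modules and forming it in $\SH(S)$.

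First I would spell out what the natural map is: $F_{**}T_k^\vee$ is a free $F_{**}$-module on the classes dual to $v^j, uv^j$ (this is exactly the preceding Lemma, read on homotopy), and the map $E_{**}F \otimes_{F_{**}} F_{**}T_k^\vee \to (E\wedge F\wedge T_k^\vee)_{**}$ sends a generator $1\otimes (v^j)^\vee$ to the image of $(v^j)^\vee$ under $F_{**}T_k^\vee \to (E\wedge F\wedge T_k^\vee)_{**}$, the map induced by the unit $F \to E\wedge F$. Then I would invoke the $F$-module isomorphism $\bigoplus_{i,0\le j\le k}\Sigma^{-2j-i,-j-i}F \xrightarrow{\ \sim\ } F\wedge T_k^\vee$ of the previous Lemma and smash it with $E$ over $F$ (equivalently apply $E\wedge_F(-)$), using that $E\wedge_F \Sigma^{p,q}F \cong \Sigma^{p,q}E\wedge F$ and that $E\wedge_F(-)$ is exact and commutes with finite direct sums. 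This gives an $E\wedge F$-module isomorphism $\bigoplus_{i,0\le j\le k}\Sigma^{-2j-i,-j-i}(E\wedge F) \cong E\wedge F\wedge T_k^\vee$, and on $**$-homotopy the left side is $\bigoplus_{i,0\le j\le k}\Sigma^{-2j-i,-j-i}(E_{**}F)$, which is precisely $E_{**}F\otimes_{F_{**}}F_{**}T_k^\vee$ since $F_{**}T_k^\vee$ is $F_{**}$-free on the same generators. Finally I would check that this identification agrees with the natural map, which follows because both are $E_{**}F$-linear and agree on the generators (each generator on both sides is the image of the corresponding dual class under the unit $F\to E\wedge F$).

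The main obstacle I anticipate is purely bookkeeping: making sure the identification of the free $F_{**}$-module structure on $F_{**}T_k^\vee$ with the $F$-module splitting of $F\wedge T_k^\vee$ is compatible with smashing with $E$, i.e. that the chosen generators (duals of $v^j$, $uv^j$) are sent to the right elements after base change. Since $W_{S,n,k}$ is finite cellular there is no convergence or $\lim^1$ issue, and dualizability makes $E\wedge(-)^\vee$ behave as expected (this is the content of Lemma \ref{fr6u7ue}); so the argument is entirely formal once the generators are tracked. No genuinely hard step is expected.
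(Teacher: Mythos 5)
Your argument is correct and is essentially the paper's own: the proof given there is simply that the statement follows from Lemma \ref{gre54zew}, i.e. one splits $F \wedge T_k^\vee$ as the finite sum $\bigoplus_{i,0\le j\le k}\Sigma^{-2j-i,-j-i}F$ of shifts of $F$ and smashes with $E$, comparing generators exactly as you do. One small correction: the base change is the absolute smash $E \wedge (-)$ over the sphere, not ``$E \wedge_F(-)$'' ($E$ is not an $F$-module, and $E\wedge_F\Sigma^{p,q}F$ would be $\Sigma^{p,q}E$, not $\Sigma^{p,q}E\wedge F$); since $E \wedge \Sigma^{p,q}F \cong \Sigma^{p,q}(E \wedge F)$ and smashing commutes with finite sums, this does not affect your conclusion or the rest of the argument.
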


\begin{proof}
This follows from Lemma \ref{gre54zew}.
\end{proof}

From the above Lemma we derive a coaction map
$$E^{**}(W_{S,n,k}) \cong E_{-*,-*} T_k^\vee \to (E \wedge F \wedge T_k^\vee)_{-*,-*}
\cong E_{-*,-*} F \otimes_{F_{-*,-*}} F^{**}(W_{S,n,k}),$$
where for the second map we use the unit of $F$.
These are compatible for different values of $k$, yielding in the limit a coaction map
$$E^{**}(W_{S,n}) \to E_{-*,-*} F \hat{\otimes}_{F_{-*,-*}} F^{**}(W_{S,n}).$$

We write the image of $u$ as $$\sum_{j \ge 0} (\alpha_j \otimes v^j + \beta_j \otimes uv^j),$$
similarly we write the image of $v$ as $$\sum_{j \ge 0} \gamma_i \otimes v^j.$$
(The latter sum is independent of $u$ since the relation comes already from the projective space.
Note also that the $u$'s and $v$'s on both sides are lying in different groups.)

\begin{proposition}
\label{g5r5zu34}
The strict isomorphism relating the two mod-$n$ orientations on $E \wedge F$ has the form
$$u_E = \sum_{j \ge 0} (\alpha_j v_F^j + \beta_j u_Fv_F^j),$$
$$v_E = \sum_{j \ge 0} \gamma_i v_F^j.$$
Here the $u_E, v_E$ are those generators coming from the orientation on $E$, similarly for $u_F,v_F$.
\end{proposition}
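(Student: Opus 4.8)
\textbf{Proof plan for Proposition \ref{g5r5zu34}.}
The plan is to unwind the definitions of the coaction map and of the strict isomorphism attached to a pair of mod-$n$ orientations, and to check that they are computed by the ``same'' power series, so that the two statements become identical once one tracks which cohomology groups the generators live in. First I would recall that on the smash product $E \wedge F$ there are two mod-$n$ orientations: one pulled back from $E$ via $E \wedge F \to E \wedge F$ (using the unit of $F$ on the right), giving classes $c_E, u_E$, and one pulled back from $F$ via the unit of $E$ on the left, giving classes $c_F, u_F$; correspondingly $v_E$ and $v_F$ are the pullbacks along $W_{S,n} \to \P_S^\infty$. By the preceding Theorem (the structure result for $E^{*,*}(\caX \times W_{S,n})$) applied with $\caX = \Spec(S)$ and with $E$ replaced by $E \wedge F$, the classes $v_F^j, u_F v_F^j$ form a topological basis of $(E \wedge F)^{*,*}(W_{S,n})$ over $(E \wedge F)^{*,*}(\Spec S) = E_{-*,-*}F$; hence $u_E$ and $v_E$, being elements of this group, can be uniquely expanded in this basis, and this expansion \emph{is} by definition the strict isomorphism relating the two orientations.

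Next I would identify that expansion with the coaction. The coaction map
$$E^{**}(W_{S,n}) \to E_{-*,-*} F \hat{\otimes}_{F_{-*,-*}} F^{**}(W_{S,n})$$
was constructed above by dualizing the $E$-module splitting of $E \wedge T_k^\vee$ from Lemma \ref{gre54zew}, passing to the limit over $k$, and using the unit $\unit \to F$. Chasing through that construction, the image of a class $x \in E^{**}(W_{S,n})$ under the coaction is precisely the expansion of the pullback of $x$ to $(E \wedge F)^{**}(W_{S,n})$ in terms of the basis $\{v_F^j, u_F v_F^j\}$, with coefficients recorded in $E_{-*,-*}F$ (that is the content of the isomorphism $E_{**}F \otimes_{F_{**}} F_{**}T_k^\vee \cong (E \wedge F \wedge T_k^\vee)_{**}$, via which the coefficient module is seen to be $E_{**}F$). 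Applying this with $x = u$ gives, by the definitions of $\alpha_j, \beta_j$, that the pullback of $u$ to $(E \wedge F)^{1,1}(W_{S,n})$ equals $\sum_j(\alpha_j v_F^j + \beta_j u_F v_F^j)$; but the pullback of the orientation class $u$ along the left unit $E \to E \wedge F$ is exactly $u_F$ (by naturality of the orientation under the ring map $\unit \to E$), while its pullback along the other unit $F \to E \wedge F$ is $u_E$. Matching the two descriptions yields $u_E = \sum_j(\alpha_j v_F^j + \beta_j u_F v_F^j)$, and similarly $x = v$ gives $v_E = \sum_j \gamma_j v_F^j$, with the sum independent of $u_F$ because $v$ and its coaction come from $\P_S^\infty$.

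The main obstacle I anticipate is purely bookkeeping rather than conceptual: one must be careful about which of the two unit maps is being used at each stage (left versus right smash factor), about the grading shifts in Lemma \ref{gre54zew}, and — in the case $n$ even — about the fact that the basis $\{v^j, uv^j\}$ need not give a polynomial algebra, so that ``the same power series'' must be interpreted as an identity of elements in a possibly non-standard complete ring rather than as an identity of formal power series in free variables. I would handle this by working throughout with the basis expansions guaranteed by the Theorem (which hold regardless of parity of $n$) and never invoking a polynomial presentation. With that care, the proof reduces to the assertion that the coaction map, by its very construction from the dual bases, records exactly the coefficients appearing in the strict isomorphism, which is then immediate.
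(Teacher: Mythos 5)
Your argument is correct and is precisely the verification the paper omits (its proof of Proposition \ref{g5r5zu34} is literally ``We leave the verification to the reader''): the coaction of $u$ and $v$ is, by construction, the expansion of their images under the unit map $E \cong E \wedge \unit \to E \wedge F$ in the topological basis $\{v_F^j, u_F v_F^j\}$ supplied by the $F$-orientation of $E \wedge F$, and those images are exactly $u_E$ and $v_E$, so the coaction coefficients $\alpha_j,\beta_j,\gamma_j$ are the coefficients of the strict isomorphism. One sentence is garbled, however: the image of the $E$-orientation class $u$ under $E \to E \wedge F$ is $u_E$, not $u_F$ (and one cannot speak of pulling $u$ back along $F \to E \wedge F$ at all); since your displayed conclusion is the correct one, this is only a naming slip between the two unit maps, but it should be fixed.
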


\begin{proof}
We leave the verification to the reader.
\end{proof}

\begin{remark}
The coeffcients $\alpha_i,\beta_i,\gamma_i$ can also be described
by images of canonical homology generators with respect to the maps on $F$-homology
of the orientation maps $$\Sigma^{-2,-1} \Sigma_+^\infty \P_S^\infty \to E$$
and $$\Sigma^{-1,-1} \Sigma_+^\infty W_{S,n} \to E.$$
\end{remark}

We specialize now to the case $E=F=\MFl$.

\begin{corollary}
The coaction map
$$H^{**}(W_{S,l},\bF_l) \to (\MFl \wedge \MFl)_{-*,-*}
\hat{\otimes}_{(\MFl)_{-*,-*}} H^{**}(W_{S,l},\bF_l)$$
is given by
$$u \mapsto u + \sum_{i \ge 0} \tau_i \otimes v^{l^i},$$
$$v \mapsto v + \sum_{i \ge 1} \xi_i \otimes v^{l^i}$$
with $\tau_i \in (\MFl \wedge \MFl)_{2l^i-1,l^i-1}$ and
$\xi_i \in (\MFl \wedge \MFl)_{2(l^i-1),l^i-1}$.
\end{corollary}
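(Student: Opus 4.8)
The strategy is to specialize Proposition \ref{g5r5zu34} to $E = F = \MFl$ and then feed the result into Proposition \ref{fhj643}. First I would record the set-up: $\MFl\wedge\MFl$ is a motivic ring spectrum, and the two unit maps $\MFl\to\MFl\wedge\MFl$ are ring maps, so $(\MFl\wedge\MFl)_{0,0}$ is an $\bF_l$-algebra and the canonical mod-$l$ orientation of $\MFl=\MZ/l$ (its usual orientation together with the class $u_{l,l}$) pulls back along the left and the right unit to two mod-$l$ orientations of $\MFl\wedge\MFl$; call the associated classes $(c_E,u_E,v_E)$ and $(c_F,u_F,v_F)$.

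Next I would check that both of these orientations are additive. The canonical mod-$l$ orientation of $\MFl$ carries the additive law $u\mapsto u_1+u_2$, $v\mapsto v_1+v_2$ (established over $\Spec(\integers)$ by weight reasons and then valid over any base by pullback). Since the \emph{laws} attached to a mod-$n$ orientation are extracted from the comultiplication $W_{S,n}\to W_{S,n}^2$ and the images of $u$ and $v$, they are natural for ring maps; hence the two pulled-back orientations on $\MFl\wedge\MFl$ are again additive. Proposition \ref{fhj643} then applies to $\MFl\wedge\MFl$ and gives that the strict isomorphism relating $(c_E,u_E)$ and $(c_F,u_F)$ has the form
$$u_E = u_F + a_0 v_F + a_1 v_F^l + \dots + a_i v_F^{l^i} + \dots ,\qquad v_E = v_F + b_1 v_F^l + \dots + b_i v_F^{l^i}+\dots .$$

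Finally I would combine this with Proposition \ref{g5r5zu34}, which identifies precisely this strict isomorphism (for $E=F=\MFl$) with the coaction map $H^{**}(W_{S,l},\bF_l)\to (\MFl\wedge\MFl)_{-*,-*}\hat{\otimes}_{(\MFl)_{-*,-*}}H^{**}(W_{S,l},\bF_l)$, writing $u\mapsto\sum_j(\alpha_j v^j+\beta_j uv^j)$ and $v\mapsto\sum_j\gamma_j v^j$. Matching coefficients with the displayed power series forces $\beta_0=\gamma_1=1$, $\beta_j=0$ for $j\ge 1$, and $\alpha_j=\gamma_j=0$ unless $j$ is a power of $l$; setting $\tau_i:=\alpha_{l^i}$ (so $\tau_0=a_0$) and $\xi_i:=\gamma_{l^i}$ (so $\xi_i=b_i$) yields the two formulas $u\mapsto u+\sum_{i\ge 0}\tau_i\otimes v^{l^i}$ and $v\mapsto v+\sum_{i\ge 1}\xi_i\otimes v^{l^i}$. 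The bidegrees are then pure bookkeeping: $u$ has bidegree $(1,1)$ and $v$ has bidegree $(2,1)$, so $v^{l^i}$ has bidegree $(2l^i,l^i)$, whence $\tau_i\in(\MFl\wedge\MFl)_{2l^i-1,l^i-1}$ and $\xi_i\in(\MFl\wedge\MFl)_{2(l^i-1),l^i-1}$. The only step requiring genuine care is the verification that the two orientations on $\MFl\wedge\MFl$ are additive; everything else is read off directly from Propositions \ref{fhj643} and \ref{g5r5zu34}.
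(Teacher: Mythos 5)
Your proposal is correct and is exactly the paper's argument: the paper deduces the corollary directly from Propositions \ref{g5r5zu34} and \ref{fhj643}, and you have simply made explicit the intermediate checks (that the two unit-pullback orientations on $\MFl\wedge\MFl$ are additive by naturality of the law established for $\MZm$, the coefficient matching, and the bidegree bookkeeping). No gaps.
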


\begin{proof}
This follows from Propositions \ref{g5r5zu34} and \ref{fhj643}.
\end{proof}

Set $\caA_{**}:= (\MFl \wedge \MFl)_{**}$.
We denote by $B$ the set of sequences $(\epsilon_0,r_1,\epsilon_1,r_2,\ldots)$ with
$\epsilon_i \in \{0,1\}$ and $r_i \ge 0$ with only finitely many non-zero terms.
For any $I \in B$ let $$\omega(I):=\tau_0^{\epsilon_0} \xi_1^{r_1} \tau_1^{\epsilon_1} \xi_2^{r_2} \cdots
\in \caA_{p(I),q(I)}.$$

\begin{theorem}
\label{h545egf}
Suppose $l$ is invertible on $S$. Then the map
$$\bigoplus_{I \in B} \Sigma^{p(I),q(I)} \MFl \to \MFl \wedge \MFl,$$
where the map on the summand indexed by $I$ is the $\MFl$-module map
(where we use the right module structure on the target) corresponding to the element
$\omega(I)$, is an isomorphism.
\end{theorem}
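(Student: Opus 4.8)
The plan is to reduce the statement about $\MFl \wedge \MFl$ over a general base $S$ with $l$ invertible to the known case of a field, and there to the classical computation of the motivic dual Steenrod algebra by Voevodsky (and its odd-primary analogue). First I would observe that both sides are cellular $\MFl$-module spectra: the left-hand side is a wedge of shifts of $\MFl$ by construction, and $\MFl \wedge \MFl$ is cellular because $\MFl$ is cellular over bases where $l$ is invertible (this follows from Corollary \ref{h5r324we}, the cellularity statement coming out of the Hopkins-Morel isomorphism). Since a map of cellular $\MFl$-modules is an equivalence if and only if it induces an isomorphism on $\MFl$-homology (equivalently on $\pi_{**}$ after smashing with the residue fields of $S$), and since all constructions here are compatible with base change via the cartesian structure of $\MZ$ (Theorem \ref{grerhrh3} and its field-extension companion Lemma \ref{bht5gergg}), it suffices to check the statement with $S$ replaced by $\Spec(\integers[1/l])$, and then by pullback to each prime field $\bF_p$ with $p \neq l$ and to $\Q$.

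Next I would identify the map in question with the classical one. The key input is the coaction formula just proved: the coaction on $H^{**}(W_{S,l},\bF_l)$ sends $u \mapsto u + \sum_i \tau_i \otimes v^{l^i}$ and $v \mapsto v + \sum_i \xi_i \otimes v^{l^i}$, which pins down elements $\tau_i \in \caA_{2l^i-1,l^i-1}$ and $\xi_i \in \caA_{2(l^i-1),l^i-1}$ in the correct bidegrees. Using Proposition \ref{vgrehrr} (and Proposition \ref{ghr34twe}), $W_{S,l}$ is the motivic Eilenberg-MacLane space $K(\integers/l(1),1)_S$, so $H^{**}(W_{S,l},\bF_l)$ together with the $v^j,uv^j$ basis is exactly the weight-one cohomology computation Voevodsky uses to extract the generators of the dual Steenrod algebra. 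Thus the $\tau_i,\xi_i$ defined here agree (after base change to a field) with Voevodsky's generators $\tau_i,\xi_i$, and the map $\bigoplus_{I\in B}\Sigma^{p(I),q(I)}\MFl \to \MFl\wedge\MFl$ given by the monomials $\omega(I)$ is precisely the structure map exhibiting the motivic dual Steenrod algebra as a free module over $\MFl_{**}$ on the admissible monomials. Over a field (of characteristic $\neq l$) this is Voevodsky's theorem on the structure of $\MFl_{**}\MFl$ — the freeness statement $\caA_{**} \cong \MFl_{**}[\xi_1,\xi_2,\dots]\otimes\Lambda(\tau_0,\tau_1,\dots)$ — so the map is an isomorphism there.

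With these two pieces in place the argument closes: the map of cellular $\MFl$-modules is a base change of the analogous map over $\Spec(\integers[1/l])$, and its pullback to every point of $\Spec(\integers[1/l])$ is an isomorphism by the field case, hence by Lemma \ref{vdgther} (or directly by the cellular-module criterion and the fact that $\MFl$-homology is detected on the fibers over all points) it is an isomorphism over $\Spec(\integers[1/l])$, and therefore over any $S$ with $l$ invertible. I expect the main obstacle to be the bookkeeping needed to verify that the elements $\tau_i,\xi_i$ produced by our coaction formula really coincide with Voevodsky's generators under the identification $W_{S,l}\cong K(\integers/l(1),1)$ — that is, matching our normalization of the orientation classes $u,v$ and the resulting power-series coefficients with the conventions in \cite{voevodsky.power} — together with checking that the cellularity of $\MFl\wedge\MFl$ and the base-change compatibility of the whole construction are legitimate inputs; the homotopy-theoretic reduction itself is formal once cellularity is granted.
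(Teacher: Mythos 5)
Your proposal follows essentially the same route as the paper: reduce to $S=\Spec(\integers[\frac{1}{l}])$, pull back to the residue fields using Theorem \ref{grerhrh3} (with Lemma \ref{bht5gergg}) and conclude by Lemma \ref{vdgther} from the known computation of the dual Steenrod algebra over fields. The only corrections are minor: for the closed points $\bF_p$ with $p\neq l$ the field case is not Voevodsky's original theorem but its positive-characteristic extension by Hoyois--Kelly--{\O}stv{\ae}r (exactly what the paper cites), and the cellularity detour is unnecessary since Lemma \ref{vdgther} applies to arbitrary maps in $\SH$.
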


\begin{proof}
It is sufficient to show the statement for $S=\Spec(\integers[\frac{1}{l}])$.
This follows from \cite[Theorem 1.1]{hoyois-kelly-oestvaer} using Theorem \ref{grerhrh3}
and Lemma \ref{vdgther}.
\end{proof}

\begin{remark}
In the situation of the theorem the pair $(H^{-*,-*}(S,\bF_l),\caA_{**})$ has the structure
of a Hopf algebroid. The operations of $\MFl$, i.e. $\Hom^{**}(\MFl,\MFl)$, are the dual of
$\caA_{**}$.
\end{remark}

\appendix

\section{(Semi) model structures}
\label{ht34efw}

\begin{proposition}
\label{gr34zww}
Let $\caC$ be a symmetric monoidal cofibrantly generated model category and $I$ an (essentially) small
category with $2$-fold coproducts. Then the projective model structure on $\caC^I$ is symmetric monoidal.
If $I$ has an initial object and the tensor unit in $\caC$ is cofibrant then the tensor unit
in $\caC^I$ is also cofibrant.
\end{proposition}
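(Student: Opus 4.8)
\textbf{Proof plan for Proposition \ref{gr34zww}.}

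The plan is to reduce both assertions to the known fact that, for a symmetric monoidal cofibrantly generated model category $\caC$, the pushout-product axiom and the unit axiom can be checked on the generating (trivial) cofibrations, and that in the projective model structure on $\caC^I$ these generators have an explicit description via the representable-diagram functors. First I would recall that for each object $i \in I$ there is a left adjoint $F_i \colon \caC \to \caC^I$ to evaluation at $i$, given by $F_i(X) = \coprod_{\Hom_I(i,-)} X$, and that if $J$ (resp. $J'$) is a set of generating cofibrations (resp. generating trivial cofibrations) of $\caC$, then $\{F_i(f) : i \in I,\ f \in J\}$ (resp. with $J'$) generates the projective model structure on $\caC^I$. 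The monoidal structure on $\caC^I$ is the pointwise (Day-type, here just objectwise) tensor product, for which one has the key identity $F_i(X) \otimes F_j(Y) \cong F_{i \sqcup j}(X \otimes Y)$, where $i \sqcup j$ denotes the coproduct in $I$ — this is exactly where the hypothesis that $I$ has $2$-fold coproducts is used. (One checks this identity by comparing left adjoints: both sides corepresent the functor sending $D \in \caC^I$ to $\Hom_\caC(X \otimes Y, D(i) \times_{?} D(j))$; more simply, evaluate and use that $\Hom_I(i \sqcup j, -) \cong \Hom_I(i,-) \times \Hom_I(j,-)$ together with the fact that $\otimes$ in $\caC$ commutes with coproducts in each variable.)

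Next I would verify the pushout-product axiom. Since the pushout-product of two maps that are built from generating (trivial) cofibrations by the usual cellular operations is again so built, and since $- \hat{\otimes} -$ (the pushout-product) commutes with colimits in each variable, it suffices to check that for generators $F_i(f)$ and $F_j(g)$ the pushout-product $F_i(f) \mathbin{\hat\otimes} F_j(g)$ is a cofibration in $\caC^I$, which is a trivial cofibration if either $f$ or $g$ is. Using the identity $F_i(X) \otimes F_j(Y) \cong F_{i\sqcup j}(X\otimes Y)$ and the fact that $F_{i \sqcup j}$ is a left Quillen functor (being left adjoint to evaluation, which is right Quillen in the projective structure) that moreover preserves pushout-products up to the canonical comparison, one identifies $F_i(f) \mathbin{\hat\otimes} F_j(g)$ with $F_{i\sqcup j}(f \mathbin{\hat\otimes} g)$. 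The pushout-product axiom in $\caC$ then gives that $f \mathbin{\hat\otimes} g$ is a (trivial) cofibration, and applying the left Quillen functor $F_{i\sqcup j}$ preserves this. I would spell out carefully that the canonical map from the pushout-product of the $F$-images to the $F$-image of the pushout-product is an isomorphism, since $F_{i\sqcup j}$ preserves the relevant pushouts.

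For the second assertion: if $I$ has an initial object $\emptyset_I$, then $F_{\emptyset_I}$ is left adjoint to evaluation at $\emptyset_I$, and the tensor unit of $\caC^I$ is the constant diagram on the unit $\one$ of $\caC$, which is precisely $F_{\emptyset_I}(\one)$ because $\Hom_I(\emptyset_I, i)$ is a one-point set for every $i$. If $\one$ is cofibrant in $\caC$, then $\emptyset \to \one$ is a cofibration, hence $F_{\emptyset_I}(\emptyset \to \one) = (\emptyset \to F_{\emptyset_I}(\one))$ is a cofibration in $\caC^I$ (as $F_{\emptyset_I}$ is left Quillen), i.e. the tensor unit is cofibrant. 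Finally one should note the unit axiom of \cite{hovey.model} (cofibrant replacement of the unit detects weak equivalences after tensoring) holds automatically once the unit is cofibrant; in general it follows by the same generator-reduction, using that $Q\one \otimes F_i(X) \cong F_i(Q\one \otimes X)$ and the unit axiom in $\caC$. The main obstacle I anticipate is bookkeeping around the isomorphism $F_i(X)\otimes F_j(Y) \cong F_{i\sqcup j}(X\otimes Y)$ and checking that it is compatible with the pushout-product comparison maps; everything else is a routine reduction to generators, and this identity is the one genuinely structural input (and the place where the $2$-fold coproduct hypothesis is indispensable).
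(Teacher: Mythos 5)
Your proposal is correct and follows essentially the same route as the paper: the paper's one-line proof rests on exactly your key identity, namely $(\Hom(i,\_) \cdot f) \Box (\Hom(j,\_) \cdot g) \cong \Hom(i \sqcup j,\_) \cdot (f \Box g)$ for the free (projective generator) diagrams, together with the reduction to generating (trivial) cofibrations and objectwise considerations. Your treatment of the unit via $F_{\emptyset_I}(\one)$ being the constant diagram when $I$ has an initial object is likewise the intended argument.
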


\begin{proof}
The assertions follow from the formula $$(\Hom(i,\_) \cdot f) \Box (\Hom(j,\_) \cdot g)
\cong \Hom(i \sqcup j,\_) \cdot (f \Box g)$$ for maps $f$ and $g$ in $\caC$ and objectwise
considerations.
\end{proof}

\begin{proposition}
Let $\caC$ be a left proper combinatorial model category and $\caS$ be an (essentially) small site.
Then the projective model structure on $\caC^{\caS^\op}$ can be localized to a local projective model
structure where the local objects are presheaves satisfying descent for all hypercovers of $\caS$.
\end{proposition}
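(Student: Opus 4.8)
The plan is to construct the local projective model structure by a left Bousfield localization of the projective model structure, which exists by Proposition \ref{gr34zww} once we know it is combinatorial (it is, since $\caC$ is combinatorial and $\caS^\op$ is essentially small). The strategy follows the standard recipe for ``motivic''-type localizations: take the projective model structure on $\caC^{\caS^\op}$, choose a set $H$ of maps whose left Bousfield localization enforces hyperdescent, and invoke the existence theorem for left Bousfield localizations of left proper combinatorial model categories (e.g. Hirschhorn, or Lurie's Appendix A.3 of \cite{lurie.HTT}), which applies since left properness and the combinatorial property are both inherited by diagram categories with the projective structure.

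The key steps, in order, would be: First, verify that $\caC^{\caS^\op}$ with the projective model structure is left proper and combinatorial; left properness is objectwise and hence immediate, and combinatoriality follows from the general fact that projective model structures on diagram categories over combinatorial model categories are combinatorial. Second, fix a set of generating cofibrations $I_\caC$ of $\caC$ and, for each hypercover $U_\bullet \to X$ in $\caS$, form the map $\hocolim_{\Delta^\op} \Hom(U_\bullet,-)\cdot s \to \Hom(X,-)\cdot s$ of representable-type presheaves (suitably cofibrant, using a projectively cofibrant replacement of the simplicial object); let $H$ be the set of pushout-products of these maps with the generating cofibrations $I_\caC$. Since $\caS$ is essentially small, there is only a set of hypercovers up to the relevant equivalence, so $H$ is a set. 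Third, apply the left Bousfield localization theorem to obtain the model structure $L_H \caC^{\caS^\op}$. Fourth, identify the fibrant objects: an object is $H$-local exactly when it is projectively fibrant and satisfies descent for all hypercovers, by unwinding the pushout-product adjunction and the characterization of local objects; this is the content that makes the statement meaningful. Finally, one may remark that the weak equivalences are the ``local'' weak equivalences and that this structure is again left proper.

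The main obstacle I expect is the bookkeeping around cofibrancy and the precise form of the localizing set: one must replace each $\Hom(U_\bullet,-)$ by a cofibrant simplicial resolution in $\caC^{\caS^\op}$ so that the homotopy colimit is modelled correctly and the pushout-products are genuine cofibrations, and one must check that tensoring with the full set of generating cofibrations $I_\caC$ (rather than just the unit) is what correctly detects hyperdescent valued in $\caC$ rather than in simplicial sets. This is routine but fiddly. A secondary point is confirming that ``set'' is genuinely satisfied — that the collection of hypercovers of an essentially small site, modulo the equivalence relation that matters for localization, forms a set — which follows from essential smallness by a cardinality-bounding argument on the simplicial objects involved. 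None of these steps requires new ideas beyond the standard localization machinery, so I would present the proof as: reduce to left Bousfield localization, exhibit the localizing set, and identify the fibrant objects.
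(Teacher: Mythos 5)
Your proposal is correct and takes essentially the same route as the paper: a left Bousfield localization of the projective model structure at a set of maps built from hypercovers tensored with (cofibrant replacements of) the domains and codomains of a set of generating cofibrations of $\caC$, with the local objects then being the presheaves satisfying hyperdescent. The one point worth noting is that the paper settles your ``cardinality-bounding'' step by localizing only at the \emph{dense} hypercovers of \cite{dugger-hollander-isaksen}, which is precisely the result guaranteeing that a set of hypercovers suffices to enforce descent for all of them.
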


\begin{proof}
We localize at the set of maps
$$\hocolim_{n \in \bigtriangleup^\op} (U_n \times QA) \to QA,$$
where $U_\bullet \to X$ runs through a set of dense hypercovers (see \cite{dugger-hollander-isaksen}) of $\caS$
and $A$ through the set of domains and codomains of a set of generating cofibrations of $\caC$
($QA$ denotes a cofibrant replacement of $A$).
\end{proof}

\begin{proposition}
Let $R$ be a commutative ring and $\caC=\Cpx_{(\ge 0)}(R)$ be the category of (non-negative) chain complexes
of $R$-modules equipped with its standard projective model structure. Let $\caS$ be an (essentially) small site
with $2$-fold products and enough points.
Then the local projective model structure on $\caC^{\caS^\op}$ is symmetric monoidal.
\end{proposition}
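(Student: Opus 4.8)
The plan is to check the pushout--product axiom and the unit axiom for the local projective structure, reducing both to the already-known symmetric monoidal structure on $\caC=\Cpx_{(\ge 0)}(R)$ by passing to stalks. Note first that since $\caS$ has $2$-fold products, $\caS^\op$ has $2$-fold coproducts, so Proposition \ref{gr34zww} applies and the \emph{projective} model structure on $\caC^{\caS^\op}$ is symmetric monoidal; moreover it is closed, since $-\otimes G$ preserves colimits and $\caC^{\caS^\op}$ is presentable. The local projective structure is a left Bousfield localization of the projective one, so it has the same cofibrations and strictly more weak equivalences. Consequently the pushout--product of two cofibrations is again a cofibration, and for the monoidal unit $\underline{R}$ (the constant presheaf on $R$ placed in degree $0$) and any locally cofibrant — i.e. projectively cofibrant — object $X$, the map $Q\underline{R}\otimes X\to X$ is a projective, hence a local, weak equivalence; this is the unit axiom. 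It remains to prove: if $f$ is a cofibration that is a local weak equivalence and $g$ is any cofibration, then $f\,\Box\,g$, which we already know to be a cofibration, is a local weak equivalence.

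To analyse $f\,\Box\,g$ I would use that $\caS$ has enough points. For a point $x$ the stalk functor $(-)_x\colon\caC^{\caS^\op}\to\caC$ is a filtered colimit, hence is exact, preserves all colimits and retracts, and is strong symmetric monoidal: the tensor product on $\caC^{\caS^\op}$ supplied by Proposition \ref{gr34zww} is the pointwise one, and a filtered colimit turns $F(V)\otimes_R G(V)$ into $F_x\otimes_R G_x$, while the constant presheaf on $R$ has stalk $R$. Since a generating projective cofibration $r(U)\otimes i$ has stalk a coproduct of copies of the generating cofibration $i$ of $\caC$, and $(-)_x$ preserves the cellular constructions, it carries every projective cofibration of $\caC^{\caS^\op}$ to a cofibration of $\caC$. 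I would then invoke the stalkwise description of local weak equivalences over a site with enough points: a map $h$ in $\caC^{\caS^\op}$ is a weak equivalence of the local projective model structure if and only if $h_x$ is a quasi-isomorphism of complexes of $R$-modules for every point $x$; the key input is that the realization maps $\hocolim_n r(U_n)\to r(X)$ attached to the dense hypercovers at which one localizes are stalkwise quasi-isomorphisms, which in turn follows because covers are stalkwise surjective.

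Granting this, the proof finishes at once. One has $(f\,\Box\,g)_x\cong f_x\,\Box\,g_x$ by strong monoidality and cocontinuity of $(-)_x$. Here $f_x$ is a cofibration of $\caC$ (stalk of a cofibration) and a quasi-isomorphism (stalk of a local weak equivalence), hence a trivial cofibration of $\caC$, and $g_x$ is a cofibration of $\caC$. Because $\caC=\Cpx_{(\ge 0)}(R)$ is itself a symmetric monoidal model category, $f_x\,\Box\,g_x$ is a trivial cofibration of $\caC$, in particular a quasi-isomorphism. As this holds for all points $x$, the map $f\,\Box\,g$ is a local weak equivalence, which is what was needed.

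The step I expect to be the real obstacle is the stalkwise characterization of the local projective weak equivalences: one must match the model-categorical localization at dense hypercovers with the sheaf-theoretic notion of local equivalence in the setting of chain complexes of presheaves, which is standard (compare the Dugger--Hollander--Isaksen treatment of hypercover-local equivalences) but not purely formal. If one prefers to avoid it, there is the alternative route through the general principle that a left Bousfield localization $L_T\mathcal{M}$ of a combinatorial symmetric monoidal model category is again symmetric monoidal provided every $t\,\Box\,(\emptyset\to B)$, with $t\in T$ and $B$ a domain or codomain of a generating cofibration, is a $T$-local equivalence; one then verifies that tensoring the hypercover maps $\hocolim_n(U_n\times QA)\to QA$ with such a $B$ yields a map again of that shape, using that $-\otimes B$ commutes with the homotopy colimits in question and that $QA\otimes B$ is again cofibrant in $\caC$. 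This is more computational but sidesteps the identification of weak equivalences with stalkwise quasi-isomorphisms.
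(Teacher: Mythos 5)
Your proof is correct and takes essentially the same route as the paper: the projective structure is symmetric monoidal by Proposition \ref{gr34zww}, cofibrations are unchanged under the localization, and the pushout--product of a cofibration with a local trivial cofibration is verified to be a local weak equivalence by passing to stalks, using that the site has enough points so local weak equivalences are detected stalkwise. The only cosmetic difference is that the paper's one-line hint carries out the stalk-level verification via the injective model structure on $\caC$ (stalks of cofibrations are in particular monomorphisms), whereas you observe that stalks of projective cofibrations are again projective cofibrations of $\caC$; both versions of the stalkwise argument work.
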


\begin{proof}
The projective model structure is symmetric monoidal by Proposition \ref{gr34zww}.
It remains to see that the pushout product of a generating cofibration with a trivial cofibration
is a weak equivalence.
Checking this on stalks does the job (here use the injective model
structure on $\caC$).
\end{proof}

\begin{remark}
This result is also contained in \cite{fausk}.
\end{remark}

\begin{theorem}
Let $R$ be a commutative ring and $\caS$ an (essentially) small site
with $2$-fold products and enough points. Then $\Cpx_{(\ge 0)}(\Sh(\caS,R))$ carries a
local projective symmetric monoidal cofibrantly generated model structure transferred from the local model structure
on presheaves. The weak equivalences are the quasi isomorphisms.
\end{theorem}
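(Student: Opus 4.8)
The plan is to obtain the model structure on $\Cpx_{(\ge 0)}(\Sh(\caS,R))$ by transfer along the sheafification-forgetful adjunction from the local projective model structure on presheaves $\Cpx_{(\ge 0)}(\PSh(\caS,R))=\caC^{\caS^\op}$, which by the previous Proposition is symmetric monoidal, cofibrantly generated, left proper and combinatorial. Write $a \colon \PSh(\caS,R) \rightleftarrows \Sh(\caS,R) \colon \iota$ for the sheafification-inclusion pair; it is a pair of additive functors, $\iota$ is fully faithful, and $a$ is exact. First I would fix the generating (trivial) cofibrations: a presheaf weak equivalence is a map whose sheafification is a stalkwise quasi-isomorphism (equivalently, whose associated sheaves have isomorphic homology sheaves), so I define a map in $\Cpx_{(\ge 0)}(\Sh(\caS,R))$ to be a weak equivalence iff it is a quasi-isomorphism of sheaves (stalkwise, using that $\caS$ has enough points), a fibration iff $\iota$ of it is a local projective fibration of presheaves, and cofibrations by the lifting property. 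The candidate generating (trivial) cofibrations are $\{a(f)\}$ for $f$ in the presheaf generating (trivial) cofibrations.

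**Key steps.** The transfer principle (e.g. \cite[11.3.2]{hirschhorn} style, or the Kan recognition theorem as in \cite{goerss-schemmerhorn}) requires: (i) both the domains of $\{a(f)\}$ permit the small object argument — immediate since everything is presentable and $a$ preserves colimits and small objects; (ii) every relative $\{a(j)\}$-cell complex, for $j$ a presheaf generating trivial cofibration, is a weak equivalence. For (ii) I would use that $a$ is a left adjoint, so it commutes with the transfinite compositions and pushouts building the cell complex, and that $a$ preserves stalkwise weak equivalences (stalks of $a F$ agree with stalks of $F$); hence a relative $\{a(j)\}$-cell complex is $a$ applied to a relative $\{j\}$-cell complex followed by... more carefully, since $a$ preserves the relevant colimits, an $\{a(j)\}$-cell complex is itself $a$ of a $\{j\}$-cell complex in presheaves, which is a local trivial cofibration, and $a$ of a local weak equivalence is a stalkwise quasi-isomorphism, hence a weak equivalence in our sense. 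This gives the model structure. Cofibrant generation is then built in. For the symmetric monoidal structure I would verify the pushout-product axiom: for cofibrations $u,v$ in $\Cpx_{(\ge 0)}(\Sh(\caS,R))$ (equipped with $\otimes$ the sheafification of the objectwise tensor), $u \Box v$ is a cofibration, trivial if either factor is. It suffices to check this on generating (trivial) cofibrations $a(f) \Box a(g)$; since $a$ is strong monoidal ($a(F \otimes^{\PSh} G) \cong aF \otimes aG$) this equals $a(f \Box^{\PSh} g)$, and $f \Box^{\PSh} g$ is a (trivial) cofibration in presheaves by the previous Proposition, and $a$ of a local trivial cofibration is a trivial cofibration here (stalkwise). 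Finally the unit axiom is automatic since the monoidal unit $\underline{R}$ (the constant sheaf) is $a$ of the cofibrant presheaf unit, hence cofibrant, so no cofibrant replacement of the unit is needed.

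**Main obstacle.** The delicate point is step (ii) of the transfer, i.e. controlling relative $\{a(j)\}$-cell complexes. The subtlety is that a pushout of $a(j)$ along an arbitrary map of sheaves need not be $a$ of a pushout in presheaves unless one is careful — but it is, because $a$ preserves pushouts and the pushout in sheaves of a diagram of sheaves computed in presheaves and then sheafified agrees with the sheaf pushout. So I would phrase it as: the forgetful functor $\iota$ preserves filtered colimits and the weak equivalences reflect along stalks, while $a \iota \cong \mathrm{id}$; hence an $\{a(j)\}$-cell complex $X_0 \to X_\infty$ has $a(\iota X_0 \to \iota X_\infty)$ a retract-up-to-the-cell-structure of a presheaf $\{j\}$-cell complex, which is a local weak equivalence, and sheafifying preserves this. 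The only genuine technical input beyond formal nonsense is the exactness of $a$ (so that stalkwise homology is preserved and weak equivalences behave well) and the fact — used already implicitly in the paper — that $\Cpx_{(\ge 0)}(\Sh(\caS,R))$ has enough points so that stalkwise detection of quasi-isomorphisms is legitimate; both are standard. I would then remark that the identification of the weak equivalences with quasi-isomorphisms of sheaves (not merely the transferred class) follows since $\iota$ reflects stalkwise quasi-isomorphisms and $a \iota = \mathrm{id}$.
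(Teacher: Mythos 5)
Your proposal is correct and follows essentially the same route as the paper: transfer along the sheafification--inclusion adjunction using the transfer principle, with the key verification that (transfinite compositions of) pushouts of images of generating trivial cofibrations are weak equivalences because sheafification preserves the relevant colimits and all (stalkwise) weak equivalences, and the same mechanism giving the pushout-product axiom. The paper's proof is just a terse version of exactly this argument.
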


\begin{proof}
One applies the transfer principle (see e.g. \cite[\S 2.5]{moerdijk-berger}): One has to check that
transfinite compositions of pushouts by images of generating trivial cofibrations are weak equivalences.
This follows since the sheafification functor
preserves all weak equivalences. The same applies to prove that the model structure is
symmetric monoidal.
\end{proof}

Let $R$ and $\caS$ be as in the Theorem above. Then the canonical generating cofibrations
of $\Cpx_{(\ge 0)}(\Sh(\caS,R))$ have cofibrant domain. Thus for a cofibrant $T \in \Cpx_{(\ge 0)}(\Sh(\caS,R))$
by \cite[Theorem 8.11]{hovey.spectra} there is a stable symmetric monoidal model structure on the category $\Sp_T^\Sigma$
of symmetric $T$-spectra in $\Cpx_{(\ge 0)}(\Sh(\caS,R))$.

It follows from \cite[Theorem 4.7]{spitzweck.thesis} that for a $\Sigma$-cofibrant operad $\caO$ in $\Sp_T^\Sigma$
the category of $\caO$-algebras inherits a semi model structure. In particular for the image of the
linear isometries operad in $\Sp_T^\Sigma$ we obtain a semi model category $E_\infty(\Sp_T^\Sigma)$
of $E_\infty$-spectra.

\section{Pullback of cycles}
\label{ht635zh}

For a regular separated Noetherian scheme $X$ of finite Krull dimension we let
$X^{(p)}$ be the set of codimension $p$ points on $X$ and $Z^p(X)$ the free
abelian group on $X^{(p)}$. Let $C \in X^{(p)}$, $D \in X^{(q)}$. We say
that $C$ and $D$ intersect properly if the scheme theoretic intersection $Z$
of the closures of $C$ and $D$ in $X$ has codimension everywhere $\ge p+q$.
If $C$ and $D$ intersect properly then
for a point $W$ of $Z$ of codimension $p+q$ in $X$ we set

$$m(W;C,D):= \sum_{i \ge 0} (-1)^i \mathrm{length}_{\caO_{X,W}}
(\Tor_i^{\caO_{X,W}}(\caO_{\overline{C},W}, \caO_{\overline{D},W})),$$
known as Serre's intersection multiplicity.

We extend the notion of proper intersection and the intersection
multiplicity at an arbitrary $W \in X^{(p+q)}$ in the canonical way to elements of $Z^p(X)$ and $Z^q(X)$.

For $C \in Z^p(X)$ and $D \in Z^q(X)$ which intersect properly we let
$$C \cdot D:=\sum_{W \in X^{(p+q)}} m(W;C,D) \cdot W.$$

For a coherent sheaf $\caF$ on $X$ whose support has everywhere codimension
$\ge p$ we let $Z_p(\caF) \in Z^p(X)$ be given by
$$Z_p(\caF):=\sum_{W \in X^{(p)}} \mathrm{length}_{\caO_{X,W}}(\caF_W) \cdot W.$$

\begin{proposition}
\label{grfe4rtz4}
Let $\caF$ and $\caG$ be coherent sheaves on $X$. Suppose that the supports of $\caF$,
$\caG$ and $\caF \otimes_{\caO_X} \caG$ have everywhere at least codimension $p$, $q$
and $p+q$ respectively. Then
$$Z_p(\caF) \cdot Z_q(\caG) = \sum_{i \ge 0} (-1)^i Z_{p+q}(\Tor_i^{\caO_X}(\caF,\caG)).$$
\end{proposition}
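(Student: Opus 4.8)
## Plan of Proof for Proposition \ref{grfe4rtz4}

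The plan is to reduce the statement to a purely local assertion about finitely generated modules over a regular local ring, and then to invoke the relevant finiteness and vanishing properties of Tor. First I would observe that both sides of the claimed identity are cycles supported on $X^{(p+q)}$, so it suffices to compare coefficients at a single point $W \in X^{(p+q)}$. Passing to the regular local ring $R := \caO_{X,W}$ of dimension $d := p+q$, and writing $M := \caF_W$, $N := \caG_W$, the hypotheses say that $M$ has support of codimension $\ge p$, $N$ of codimension $\ge q$, and $M \otimes_R N$ of codimension $\ge d$ (equivalently, finite length, since $W$ has codimension exactly $d$). The coefficient of $W$ on the left-hand side is then, by definition of the intersection product and of $Z_p$, the sum $\sum_{W'} \mathrm{length}_{\caO_{X,W'}}(\caF_{W'}) \cdot \mathrm{length}(\cdots)$ ranging over the generic points $W'$ of the components of the supports of $\caF$ and $\caG$ that meet at $W$; via additivity of length and of $Z_p(-)$ in short exact sequences, I would first reduce to the case where $M$ and $N$ are the structure sheaves of the (reduced) closures of single points $C \in X^{(p)}$, $D \in X^{(q)}$ meeting properly, which is exactly the definition of $m(W;C,D)$, and then re-expand by dévissage. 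So the real content is the identity
$$
\sum_{\mathfrak{p}} \ell(M_{\mathfrak p}) \cdot \ell_R\big(\Tor^{R}_\bullet\text{-alternating sum of }(R/\mathfrak p, N)\big) \;=\; \sum_{i\ge 0}(-1)^i \ell_R\big(\Tor^R_i(M,N)\big),
$$
where $\mathfrak p$ runs over the minimal primes of $\mathrm{Supp}(M)$ of the appropriate codimension.

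The key step is therefore the \emph{additivity} (or dévissage) argument for the alternating sum of Tor-lengths. Concretely: for a regular local ring $R$ and finitely generated $R$-modules $M, N$ with $\ell_R(M\otimes_R N) < \infty$, all the $\Tor^R_i(M,N)$ have finite length (they are subquotients of $M \otimes_R N$ up to support, and vanish for $i > d$ since $R$ is regular of dimension $d$), so $\chi(M,N) := \sum_i (-1)^i \ell_R(\Tor^R_i(M,N))$ is well-defined. I would then prove that for a fixed $N$, the function $M \mapsto \chi(M,N)$ is additive on short exact sequences of modules all of whose tensor products with $N$ have finite length — this follows from the long exact Tor sequence together with additivity of length. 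Filtering $M$ by a composition series whose graded pieces are $R/\mathfrak p$ for various primes $\mathfrak p \subseteq \mathrm{Supp}(M)$ (the standard prime-filtration of a finitely generated module), additivity reduces $\chi(M,N)$ to $\sum_{\mathfrak p} (\text{number of times } R/\mathfrak p \text{ occurs in the filtration}) \cdot \chi(R/\mathfrak p, N)$. The primes $\mathfrak p$ of codimension $> p$ contribute $0$: for such $\mathfrak p$, $\dim(R/\mathfrak p) + \dim(R/\mathrm{ann}\,N) < d$ would force, by the dimension inequality for intersection and the assumption on $\mathrm{Supp}(M\otimes N)$, that $R/\mathfrak p \otimes_R N$ already has dimension $< 0$, i.e.\ is zero — then $\chi(R/\mathfrak p, N) = 0$; for $\mathfrak p$ of codimension exactly $p$ the multiplicity of $R/\mathfrak p$ in the filtration is precisely $\ell_{R_{\mathfrak p}}(M_{\mathfrak p}) = $ the coefficient of the corresponding point in $Z_p(\caF)$. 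Running the same filtration argument on $N$ (with $M$, or rather $R/\mathfrak p$, fixed) collapses $\chi(R/\mathfrak p, N)$ to $\sum_{\mathfrak q} \ell_{R_{\mathfrak q}}(N_{\mathfrak q}) \cdot \chi(R/\mathfrak p, R/\mathfrak q)$, and $\chi(R/\mathfrak p, R/\mathfrak q) = m(W; \overline{\{\mathfrak p\}}, \overline{\{\mathfrak q\}})$ by the very definition of Serre multiplicity when $\mathfrak p, \mathfrak q$ meet properly at $W$ (and is $0$ otherwise). Assembling these two reductions yields exactly $Z_p(\caF)\cdot Z_q(\caG)$ at $W$.

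The main obstacle is making the dévissage rigorous while keeping track of the \emph{support/codimension hypotheses} at every stage: additivity of $\chi(-,N)$ only holds along exact sequences whose terms have finite-length tensor product with $N$, and a prime filtration of $M$ need not have this property term by term. The standard fix — which I would spell out — is to work one prime at a time and localize: for a prime $\mathfrak p$ appearing in the filtration, either $R/\mathfrak p \otimes_R N$ has finite length (codimension $\ge d$ case, where additivity is unobstructed after localizing away from the bad locus), or it does not, in which case codimension considerations show the eventual contribution is zero and one discards that graded piece. One also needs the basic facts, all standard for regular local rings: finiteness of global dimension (so Tor vanishes above $d$), finite length of $M\otimes N$ implies finite length of each $\Tor_i(M,N)$, and the dimension inequality $\dim R/\mathfrak p + \dim R/\mathfrak q \le d$ whenever $R/\mathfrak p \otimes R/\mathfrak q$ has finite length (properness of intersection). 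With those in hand the bookkeeping is routine; the identity is then essentially Serre's theorem that the intersection multiplicity computes the Euler characteristic, adapted to coherent sheaves rather than cycles, and I would cite Serre's \emph{Algèbre locale — multiplicités} for the local input and present the global assembly as above.
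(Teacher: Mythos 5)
Your overall skeleton — localize at a point $W$ of codimension $p+q$, set $R=\caO_{X,W}$, use additivity of $\chi(M,N)=\sum_i(-1)^i\ell_R(\Tor_i^R(M,N))$ along a prime filtration of $M$ (and then of $N$), and identify the codimension-$p$ (resp. $q$) contributions with the coefficients of $Z_p(\caF)$ and $Z_q(\caG)$ — is essentially the paper's route, which reduces to the local case and argues as in Serre's V.C.~Proposition~1 with a filtration argument. But there is a genuine gap at the decisive step: you claim that a prime $\mathfrak{p}$ of codimension $>p$ occurring in the filtration contributes zero because $R/\mathfrak{p}\otimes_R N$ "has dimension $<0$, i.e.\ is zero". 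This is false. After localizing, $\mathrm{Supp}(M\otimes N)$ is the closed point, so $V(\mathfrak{p})$ and $\mathrm{Supp}(N)$ both contain the closed point and $R/\mathfrak{p}\otimes_R N$ is never zero (unless $N=0$); e.g.\ for $R$ regular local of dimension $3$, $\mathfrak{p}=(x,y)$, $N=R/(x,z)$, one has $\dim R/\mathfrak{p}+\dim N=2<3$ yet $R/\mathfrak{p}\otimes_R N\cong R/(x,y,z)\neq 0$. The dimension inequality only gives $\dim M+\dim N\le\dim R$ when $\ell(M\otimes N)<\infty$; it never forces the tensor product to vanish, so your argument for discarding the excess-codimension pieces collapses.

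What is actually required there is Serre's vanishing property: $\chi(R/\mathfrak{p},N)=0$ whenever $\ell(R/\mathfrak{p}\otimes_R N)<\infty$ and $\dim R/\mathfrak{p}+\dim N<\dim R$. Serre proved this only for equicharacteristic and unramified regular local rings; since the paper works over Dedekind domains of mixed characteristic, the rings $\caO_{X,W}$ may be ramified of mixed characteristic, and the vanishing in that generality is the theorem of Roberts and of Gillet--Soul\'e — precisely the references the paper's proof invokes alongside the filtration argument. So your d\'evissage matches the paper's, but citing Serre's \emph{Alg\`ebre locale} alone does not cover the relevant case, and the elementary substitute you offer for the vanishing step is wrong; it must be replaced by an appeal to Roberts or Gillet--Soul\'e. (A minor remark: your concern that graded pieces of the prime filtration might fail the finite-length hypothesis is unnecessary — every prime $\mathfrak{p}$ in a prime filtration of $M$ contains $\mathrm{ann}(M)$, so $\mathrm{Supp}(R/\mathfrak{p}\otimes_R N)\subseteq\mathrm{Supp}(M\otimes N)$ is already zero-dimensional and additivity of $\chi(-,N)$ applies without any case distinction.)
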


Of course this Proposition is a special case of a statement valid for perfect complexes on $X$.

\begin{proof}
Since the question is local on $X$ we can assume $X$ is local and the support of
$\caF \otimes_{\caO_X} \caG$ is the closed point of $X$. Then the proof proceeds
as the proof of \cite[V.C. Proposition 1]{serre.local-algebra}, using \cite[Theorem 1]{roberts.vanishing}
or \cite{gillet-soule.null} and a filtration argument (using e.g. \cite[Proposition 3.7]{eisenbud.comm-alg}).
\end{proof}

\begin{proposition}
\label{ht34t4e}
Let $C \in Z^p(X)$, $D \in Z^q(X)$ and $E \in Z^r(X)$ such that $C \cdot D$, $(C \cdot D) \cdot E$
and $D \cdot E$ are well defined. Then we have
$$(C \cdot D) \cdot E = C \cdot (D \cdot E)$$
in $Z^{p+q+r}(X)$.
\end{proposition}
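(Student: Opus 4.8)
The statement to prove is the associativity of the intersection product of cycles on a regular scheme, i.e. Proposition~\ref{ht34t4e}: for $C \in Z^p(X)$, $D \in Z^q(X)$, $E \in Z^r(X)$ with $C \cdot D$, $(C \cdot D) \cdot E$ and $D \cdot E$ all defined, one has $(C \cdot D) \cdot E = C \cdot (D \cdot E)$.

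\medskip

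The plan is to reduce everything to the $\Tor$-formula of Proposition~\ref{grfe4rtz4} and then invoke the associativity of the (derived) tensor product. First I would reduce to the local situation: since both sides of the claimed identity are elements of the free abelian group $Z^{p+q+r}(X)$ and the multiplicity $m(W;-,-)$ at a point $W \in X^{(p+q+r)}$ is computed in the local ring $\caO_{X,W}$, it suffices to prove equality of the coefficient of each such $W$, hence I may assume $X = \Spec(\caO_{X,W})$ is regular local with closed point $W$, and that the three cycles are represented by coherent sheaves $\caF$, $\caG$, $\caH$ whose supports have codimension $\ge p$, $\ge q$, $\ge r$ respectively (replace $\caF$ by $\caO_{\overline C}$ etc.). One has to be slightly careful here: the intersection $C\cdot D$ may have components of codimension exactly $p+q$ that are not the closed point, so strictly speaking the reduction is to: fix $W$, localize at $W$, and track contributions. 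It is cleanest to phrase the whole argument for perfect complexes, as the remark after Proposition~\ref{grfe4rtz4} suggests, writing $Z_\bullet$ for the cycle class of a perfect complex supported in the appropriate codimension and extending $\cdot$ accordingly.

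\medskip

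The core computation is then as follows. By (the perfect-complex version of) Proposition~\ref{grfe4rtz4}, applied twice, the coefficient of $W$ in $(C \cdot D) \cdot E$ equals the alternating sum of lengths of the homology of $(\caF \otimes^{\bL} \caG) \otimes^{\bL} \caH$ at $W$, while the coefficient of $W$ in $C \cdot (D \cdot E)$ equals the alternating sum of lengths of the homology of $\caF \otimes^{\bL} (\caG \otimes^{\bL} \caH)$ at $W$. Here the intermediate step — that $Z_{p+q}(\bigoplus_i (-1)^i \Tor_i^{\caO_X}(\caF,\caG))$ paired with $Z_r(\caH)$ is again given by the total $\Tor$ of the product — is exactly Proposition~\ref{grfe4rtz4} again (the hypothesis that $(C\cdot D)\cdot E$ is defined guarantees the codimension conditions needed to apply it). Since derived tensor product over $\caO_X$ is associative, $(\caF \otimes^{\bL} \caG) \otimes^{\bL} \caH \cong \caF \otimes^{\bL} (\caG \otimes^{\bL} \caH)$ canonically, so the two Euler characteristics of lengths agree termwise-after-summation, and the two coefficients of $W$ coincide. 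As $W \in X^{(p+q+r)}$ was arbitrary, the identity of cycles follows.

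\medskip

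The main obstacle is bookkeeping rather than conceptual. The subtle point is making precise the passage through the intermediate cycle $C \cdot D$: its representing ``sheaf'' is really the perfect complex $\caF \otimes^{\bL}\caG$ (up to the vanishing/non-vanishing of multiplicities), and one must check that the codimension hypotheses in each application of Proposition~\ref{grfe4rtz4} are met — this is precisely where the standing assumptions ``$C\cdot D$ defined'', ``$(C\cdot D)\cdot E$ defined'', ``$D\cdot E$ defined'' get used, and one should spell out that properness of $C\cdot D$ with $E$, together with properness of $C$ with $D$, forces the support of $\caF\otimes^{\bL}\caG\otimes^{\bL}\caH$ to have codimension $\ge p+q+r$. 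A secondary nuisance is that vanishing of positivity (Serre's conjecture, proved by Roberts / Gillet--Soul\'e, as already cited in the proof of Proposition~\ref{grfe4rtz4}) is what guarantees the intersection products are honest cycles with the expected support codimension, so it is implicitly needed again here; but since Proposition~\ref{grfe4rtz4} already packages this, no new input beyond associativity of $\otimes^{\bL}$ is required.
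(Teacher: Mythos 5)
Your strategy coincides with the paper's: the paper's own proof is precisely "proceed as in Serre, \emph{Local Algebra} V.C.3.b), using a spectral sequence argument and Proposition \ref{grfe4rtz4}", i.e.\ reduce the coefficient at each $W \in X^{(p+q+r)}$ to an Euler characteristic of Tor's and then use associativity of the derived tensor product. So the skeleton of your argument is the intended one, and the local reduction at $W$ is fine.

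The gap is exactly at the step you declare to be free ("no new input beyond associativity of $\otimes^{\bL}$ is required"). Proposition \ref{grfe4rtz4} is stated, and proved, only for \emph{pairs of coherent sheaves}. Applying it twice therefore produces
$$(C \cdot D)\cdot E \;=\; \sum_{i,j\ge 0}(-1)^{i+j}\, Z_{p+q+r}\bigl(\Tor_j^{\caO_X}(\Tor_i^{\caO_X}(\caF,\caG),\caH)\bigr),$$
a double alternating sum of \emph{iterated} Tor sheaves, which is not yet the Euler characteristic of $(\caF\otimes^{\bL}\caG)\otimes^{\bL}\caH$. Identifying that double sum with $\sum_k(-1)^k Z_{p+q+r}\bigl(H_k((\caF\otimes^{\bL}\caG)\otimes^{\bL}\caH)\bigr)$ is the missing homological step: one needs the Cartan--Eilenberg (hyper-Tor) spectral sequence $E^2_{j,i}=\Tor_j(\Tor_i(\caF,\caG),\caH)\Rightarrow H_{i+j}((\caF\otimes^{\bL}\caG)\otimes^{\bL}\caH)$, together with the finiteness of the relevant lengths at $W$, so that Euler characteristics are preserved; equivalently, a d\'evissage of the perfect complex $\caF\otimes^{\bL}\caG$ by its homology sheaves. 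This is the "spectral sequence argument" the paper cites, and it is also exactly the content of the "perfect-complex version" of Proposition \ref{grfe4rtz4} that you invoke -- but the paper only \emph{remarks} that such a version exists and never proves it, so appealing to it assumes the very step that must be supplied. Once this lemma is in place, your conclusion via associativity of $\otimes^{\bL}$ and symmetry in the bracketing is correct. A secondary caveat: your claim that the hypotheses "force" $\mathrm{supp}(\caF\otimes\caG\otimes\caH)$ to have codimension $\ge p+q+r$ is not automatic, since the hypothesis that the \emph{cycle} $C\cdot D$ meets $E$ properly is a priori weaker than properness at the level of $\mathrm{supp}(\caF\otimes\caG)$ (vanishing of multiplicities in excess codimension is known, but positivity in exact codimension is not); to apply Proposition \ref{grfe4rtz4} to the pairs $(\Tor_i(\caF,\caG),\caH)$ one should either argue at the level of supports throughout, as Serre does, or address this point explicitly.
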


\begin{proof}
The proof proceeds as the proof of \cite[V.C.3.b) Associativity]{serre.local-algebra}, using a spectral
sequence argument and Proposition \ref{grfe4rtz4}.
\end{proof}

For a flat map $X \to Y$ between regular separated Noetherian schemes of finite Krull dimension
there is a flat pullback $f^* \colon Z^p(Y) \to Z^p(X)$.

Let now $S$ be a regular separated Noetherian scheme of finite Krull dimension.
Let $f \colon X  \to Y$ be a morphism in $\Sm_S$ and $C \in Z^p(Y)$. We say that $f$
and $C$ are in good position if for every $W \in X^{(p)}$ with a non-zero coefficient in $C$
the scheme theoretic inverse image $f^{-1}(\overline{W})$ has everywhere codimension $\ge p$.
If this is the case we define $f^*(C) \in Z^p(X)$
by
$$f^*(C):=\Gamma_f \cdot \pr_Y^*(C).$$
(We view this intersection, which takes place on $X \times_SY$, in a canonical way as an element
of $Z^p(X)$. Note also that the graph is not in general of a well defined codimension, but for the
definition we can e.g. assume $X$ and $Y$ to be connected.)

\begin{theorem}
\label{h5t423t}
Let $X \overset{f}{\longrightarrow} Y \overset{g}{\longrightarrow} Z$
be maps in $\Sm_S$. Let $C \in Z^p(Z)$ and assume $g$ and $C$ are in good position and
$f$ and $g^*(C)$ are in good position. Then $g \circ f$ and $C$ are in good position
and $$(g \circ f)^*(C)= f^*(g^*(C))$$
in $Z^p(X)$.
\end{theorem}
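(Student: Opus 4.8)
\textbf{Proof plan for Theorem \ref{h5t423t}.} The statement is an associativity-type compatibility for the two operations on cycles that underlie the constructions of section \ref{kdtjjz}: flat pullback (hidden in $\pr_Y^*$ and in the graph construction) and proper intersection product. The whole point is to reduce everything to Proposition \ref{ht34t4e} (associativity of the intersection product) via a standard "graph trick". First I would set up notation: write $\Gamma_f \subset X\times_S Y$ for the graph of $f$ and $\Gamma_{g\circ f}\subset X\times_S Z$ for the graph of $g\circ f$, and recall that by definition $g^*(C)=\Gamma_g\cdot \pr_Z^*(C)$ on $Y\times_S Z$, pushed to $Z^p(Y)$, and similarly $f^*(g^*(C))=\Gamma_f\cdot\pr_Y^*(g^*(C))$ on $X\times_S Y$, pushed to $Z^p(X)$.

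The core of the argument is to work on the triple product $X\times_S Y\times_S Z$. There one has three cycles: the pullback $\pr_Z^*(C)$ (supported on $X\times_S Y\times_S \overline{|C|}$), the partial graph $\Gamma_g$ sitting inside $Y\times_S Z$ and pulled back to $X\times_S Y\times_S Z$, and the partial graph $\Gamma_f$ inside $X\times_S Y$ pulled back to the triple product. I would first check that, under the good-position hypotheses, all the relevant intersections on $X\times_S Y\times_S Z$ are proper (this is where one unwinds what "$g$ and $C$ in good position" and "$f$ and $g^*(C)$ in good position" literally say, in terms of codimensions of the iterated inverse images $f^{-1}(g^{-1}(\overline{W}))$; it also forces the conclusion that $g\circ f$ and $C$ are in good position, which is the first assertion of the theorem). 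Then Proposition \ref{ht34t4e} applied on $X\times_S Y\times_S Z$ gives
\begin{equation*}
(\Gamma_f \cdot \Gamma_g)\cdot \pr_Z^*(C) = \Gamma_f\cdot(\Gamma_g\cdot \pr_Z^*(C))
\end{equation*}
as cycles there. The right-hand side, after projecting away $Z$, visibly computes $f^*(g^*(C))$; so what remains is to identify the left-hand side with $(g\circ f)^*(C)$, i.e. with $\Gamma_{g\circ f}\cdot\pr_Z^*(C)$ on $X\times_S Z$. For that I would prove the elementary geometric fact that $\Gamma_f\cdot\Gamma_g$, viewed appropriately in $X\times_S Z$ via the projection forgetting the $Y$-coordinate, equals $\Gamma_{g\circ f}$; since $\Gamma_f$ and $\Gamma_g$ meet transversally along a subscheme isomorphic (via the projection) to $X$, the intersection multiplicities are all $1$ and this is just the scheme-theoretic identity $\Gamma_{g\circ f}\cong \Gamma_f\times_Y \Gamma_g$. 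Finally one invokes the compatibility of proper intersection with the relevant flat pullbacks/projections (so that "intersect on the triple product, then project" agrees with "project, then intersect"), which again follows from Proposition \ref{grfe4rtz4} together with flatness of the projections, and Proposition \ref{ht34t4e} once more to move the intersection with $\pr_Z^*(C)$ past the projection.

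The main obstacle I expect is purely bookkeeping: matching the ad hoc "assume $X,Y$ connected" conventions used to make graphs have a well-defined codimension, and keeping track of which ambient scheme each cycle lives on as one passes between $X\times_S Y$, $Y\times_S Z$, $X\times_S Z$ and the triple product $X\times_S Y\times_S Z$. The codimension estimates needed to guarantee properness of the intersections on the triple product are the one genuinely non-formal point, but they are exactly the iterated version of the good-position conditions in the hypotheses, so no new input beyond Propositions \ref{grfe4rtz4} and \ref{ht34t4e} is required; in fact this is entirely parallel to the classical proof that $f^*$ is functorial for flat maps (as in Serre or Fulton), adapted to the present non-flat-but-good-position setting. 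I would therefore present the proof as: (1) reduce to connected components and set up the triple product; (2) verify properness of all iterated intersections and deduce the good-position assertion; (3) apply Proposition \ref{ht34t4e} on the triple product; (4) identify $\Gamma_f\cdot\Gamma_g$ with $\Gamma_{g\circ f}$; (5) push forward along the projections, using Proposition \ref{grfe4rtz4} for compatibility of intersection with flat pullback.
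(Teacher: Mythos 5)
Your proposal is correct and is essentially the paper's own argument: the paper also works on the triple product $X \times_S Y \times_S Z$, takes $U = \Gamma_f \times_S Z$ and $V = X \times_S \Gamma_g$, and deduces everything from the associativity $(U \cdot V) \cdot \pr_Z^*(C) = U \cdot (V \cdot \pr_Z^*(C))$ of Proposition \ref{ht34t4e}. Your steps (2), (4), (5) are just the bookkeeping that the paper's very terse proof leaves implicit (properness, identifying $U\cdot V$ with the graph of $g\circ f$, and projecting back down), so there is no substantive difference in approach.
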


\begin{proof}
Let $U := \Gamma_f \times_S Z \subset X \times_S Y \times_S Z$ and
$V := X \times_S \Gamma_g \subset X \times_S Y \times_S Z$. Let
$\pr_Z \colon X \times_S Y \times_S Z \to Z$ be the projection.
The assertion follows from the associativity
$$(U \cdot V) \cdot \pr_Z^*(C) = U \cdot (V \cdot \pr_Z^*(C))$$
which holds by Proposition \ref{ht34t4e}.
\end{proof}

%Applying $\bL i^*$
%we get
%$$\bL i^* \R j_* \tau_{\le r} \R \varepsilon_* L_{\eta,n}(r) \cong
%\bL i^* \tau_{\le r} \R j_* \R \varepsilon_* L_{\eta,n}(r)$$

\bibliographystyle{plain}
\bibliography{em}

\vspace{0.1in}

\begin{center}
Fakult{\"a}t f{\"u}r Mathematik, Universit{\"a}t Osnabr\"uck, Germany.\\
e-mail: markus.spitzweck@uni-osnabrueck.de
\end{center}
%\begin{center}
%Department of Mathematics, University of Regensburg, Norway.\\
%e-mail: markussp@math.uio.no
%\end{center}
%\begin{center}
%Department of Mathematics, University of Oslo, Norway.\\
%e-mail: paularne@math.uio.no
%\end{center}

\end{document}